\documentclass{cmim-l}
\usepackage{amsmath, amsbsy, amsthm, amsfonts, array,rlepsf,multicol,amssymb, graphicx}
\pagestyle{empty}
\parindent 5mm
\oddsidemargin 40pt \evensidemargin 0pt \textwidth 14cm \textheight
21.5cm

\long\def\comm#1\ent{}
\setlength{\unitlength}{1cm}

% THEOREMS -------------------------------------------------------
\newtheorem{theorem}{Theorem}[section]

\newtheorem{claim}[theorem]{Claim}

\newtheorem{proposition}[theorem]{Proposition}
\newtheorem{lemma}[theorem]{Lemma}
\newtheorem{corollary}[theorem]{Corollary}

\newtheorem{observation}[theorem]{Observation}

\newtheorem*{theorem*}{Theorem}
\newtheorem{notation}[theorem]{Notation}
\newtheorem{case}{Case}
\newtheorem{subcase}{Case}
\numberwithin{subcase}{case}
\setcounter{case}{0}

\theoremstyle{definition}
\newtheorem{definition}[theorem]{Definition}

\newtheorem{remark}[theorem]{Remark}

%MATH_____________________
\newcommand{\ie}{i.e.,\,}

\newcommand{\diam}{{\rm Diam}}
\newcommand{\age}{{\rm Age}}

\newcommand{\acl}{{\rm acl}}
\newcommand{\SU}{{\rm SU}}

\newcommand{\qftp}{{\rm qftp}}
\newcommand{\tp}{{\rm tp}}
\newcommand{\Th}{{\rm Th}}
\newcommand{\Sym}{{\rm Sym}}
\newcommand{\Lstp}{{\rm Lstp}}
\newcommand{\stp}{{\rm stp}}
\newcommand{\aut}{{\rm Aut}}

\newcommand{\indep}[1][]{\mathop{\raisebox{-.9ex}{$\underset{#1}{\smile}$}\makebox[-2.3ex]{$\mid$}
\hspace{2.3ex}}} %independence, optional parameter: over ...

 %dividing independence, optional paramter:over

 %forking independence, optional parameter:over

 %independence fuer im Text, parameter: subscript

 %dividing independence fuer im Text,parameter: subscript

 %forking independence fuer im Text,parameter: subscript

\newcommand{\notindep}[1][]{\mathop{\raisebox{-.9ex}{$\underset{#1}{\smile}$}\makebox[-2.3ex]{$\mid$}\makebox[+0.5ex]{$\not$}
\hspace{2.3ex}}} %independence, optional parameter: over ...

\makeindex

\begin{document}
\title{\Huge{\textbf{The Classification of Homogeneous Simple 3-graphs}}}

\author
{\huge{Andr\'es Aranda}\\}
 \maketitle
AMS subject classification: 03C15, 03C45, 03C98.
 \thispagestyle{empty}

\titlepage

\tableofcontents \pagestyle{headings}
\chapter{Introduction}
\section{Homogeneous Structures}
Homogeneous structures appear in the work of Roland Fra\"iss\'e from the 1950s as relational structures with very large automorphism groups (see \cite{fraisse1954extension}, \cite{fraisse1986theory}), but some trace the origins of the subject to Cantor's proof that any two countable dense linearly ordered sets without endpoints are isomorphic. That theorem is proved by a back-and-forth argument, which in model-theoretic terms says that the theory of $(\mathbb Q,<)$ eliminates quantifiers in the language $\{<\}$. 

This subject is a meeting point for permutation group theory, model theory, and combinatorics. From the model-theoretic perspective, homogeneous structures have many desirable properties: they eliminate quantifiers, are prime, have few types, algebraic closure does not grow too quickly. All these properties made a full classification, at least for some restricted languages, accessible. There exist, for example, complete classifications of the finite and countably infinite homogeneous posets (Schmerl, \cite{schmerl1979countable}), graphs (Gardiner, \cite{gardiner1976homogeneous}, Lachlan and Woodrow \cite{lachlan1980countable}), tournaments (Woodrow, \cite{Woodrow1976}, Lachlan \cite{lachlan1984countable}), and digraphs (Cherlin, \cite{cherlin1998classification}).

During the 1970s and 80s, stability theory was a rapidly growing subject. Abstractions from the dimension or rank concepts in ``real life" theories were put to work, and whole families of theories were classified. Gardiner and Lachlan found that most finite homogeneous graphs and digraphs could be classified in a similar way: there was a partition of the set of structures into families parametrised by a few numbers. This parallel discovery led to Lachlan and Shelah's study of stable homogeneous structures \cite{lachlan1984stable}, and to Cherlin and Hrushovski's work on structures with few types in \cite{cherlin2003finite}. 

\begin{definition}\label{DefHom}
	A countable first-order structure $M$ for the relational language $L=\{R_i:i\in I\}$ is \emph{homogeneous} if any isomorphism between finite substructures extends to an automorphism of $M$.
\end{definition}

We will be dealing with finite languages all the time. It is essential to have a relational language: if the language has function symbols, we would have to change ``finite substructures" to ``finitely generated substructures," since functions can be iterated. Notice that this definition is stronger than the usual definition of homogeneity in model theory. The condition there is that partial \emph{elementary maps} extend to automorphisms. This is one reason why our homogeneous structures are often called \emph{ultra}homogeneous. Any partial elementary map is a local isomorphism, and so every ultrahomogeneous structure is homogeneous, but the converse is not true. The countability assumption is not necessary, but we will not consider homogeneous structures of any higher cardinality.

If $M$ is any (not necessarily homogeneous) first-order relational structure, the set of all finite structures isomorphic to substructures of $M$ is called the \emph{age} of $M$, denoted by $\age(M)$. It is clear from Definition \ref{DefHom} that the age of a homogeneous structure $M$ is of particular importance if we wish to understand $M$. 

Given countable relational structure $M$ for a countable language, the following are true:
\begin{enumerate}
	\item{$\age(M)$ has countably many members, since $M$ itself is countable.}
	\item{$\age(M)$ is closed under isomorphism, by definition.}
	\item{$\age(M)$ is closed under forming substructures: given $A\in\age(M)$, any substructure $B$ of $A$ will be finite, and a composition of the embeddings $B\rightarrow A$ and $A\rightarrow M$ proves that $B\in\age(M)$.}
	\item{$\age(M)$ has the \emph{Joint Embedding Property} or JEP: given two structures $A,B\in\age(M)$, there exist embeddings $f:A\rightarrow C$ and $g:B\rightarrow C$ for some $C\in\age(M)$.}
\end{enumerate}

The next theorem completes the picture:

\begin{theorem}[Fra\"iss\'e]
Let $L$ be a countable first-order relational language, and $\mathcal C$ a class of finite $L$-structures.
	\begin{enumerate}
		\item{There exists a countable structure $A$ whose age is equal to $\mathcal C$ if and only if $\mathcal C$ satisfies properties 1-4.}
		\item{There exists a homogeneous structure $A$ whose age is equal to $\mathcal C$ if and only if $\mathcal C$ satisfies 1-4 and the amalgamation property: given $A,B,C\in\mathcal C$ with embeddings $f_1:A\rightarrow B$ and $f_2:A\rightarrow C$, there exists $D\in\mathcal C$ and embeddings $g_1:B\rightarrow D$ and $g_2:C\rightarrow D$ such that $g_1\circ f_1=g_2\circ f_2$. Furthermore, this structure is unique up to isomorphism.}
	\end{enumerate}
\end{theorem}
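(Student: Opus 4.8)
The plan is to treat the two parts in turn, taking the ``only if'' directions as essentially recorded in the discussion preceding the statement and concentrating the effort on the constructions. For Part 1, the forward direction is precisely properties 1--4, verified above for the age of any countable relational structure, so what remains is to build, from a class $\mathcal C$ satisfying 1--4, a countable structure $A$ with $\age(A)=\mathcal C$. I would enumerate representatives of the isomorphism types in $\mathcal C$ as $C_0,C_1,\dots$ (possible by property 1) and construct an increasing chain $A_0\subseteq A_1\subseteq\cdots$ of members of $\mathcal C$, using JEP at each stage to absorb $C_{n+1}$ into a common extension of $A_n$ and then passing to an isomorphic copy (property 2) so that the inclusions are genuine. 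Setting $A=\bigcup_n A_n$, the hereditary property (property 3) guarantees that every finite substructure of $A$ lies in $\mathcal C$, while the construction guarantees that every $C_n$ embeds into $A$; hence $\age(A)=\mathcal C$.

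For Part 2, I would first dispatch necessity. Assuming $M$ is homogeneous, take $A,B,C\in\age(M)$ with embeddings $f_1,f_2$ of $A$ into $B$ and $C$. Fixing embeddings $b:B\to M$ and $c:C\to M$, the maps $b\circ f_1$ and $c\circ f_2$ identify two isomorphic copies of $A$ inside $M$; homogeneity extends the resulting partial isomorphism to an automorphism $\alpha$ of $M$, after which $g_1=\alpha\circ b$ and $g_2=c$ amalgamate $B$ and $C$ over $A$ inside the finite substructure of $M$ they generate, which serves as the required $D\in\age(M)$.

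The core of the argument, and the step I expect to be the main obstacle, is the sufficiency direction: from $\mathcal C$ satisfying 1--4 and amalgamation, producing a homogeneous $M$ with $\age(M)=\mathcal C$. The target is the \emph{one-point extension property}: for every $A\subseteq B$ in $\mathcal C$ with $|B|=|A|+1$ and every embedding $e:A\to M$, the map $e$ extends to an embedding $B\to M$. I would build $M$ as the union of a chain $A_0\subseteq A_1\subseteq\cdots$ in $\mathcal C$, interleaving two kinds of tasks by a bookkeeping scheme: JEP-steps ensuring that every member of $\mathcal C$ eventually embeds (so $\age(M)=\mathcal C$), and amalgamation-steps that, for each pair consisting of a substructure $A\subseteq A_n$ and a one-point extension $B\supseteq A$ in $\mathcal C$, amalgamate $A_n$ with $B$ over $A$ to realise that extension in some later stage. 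Since each $A_n$ has only finitely many substructures and the scheme revisits every request, the limit $M$ satisfies the extension property. The delicate points are organising the dovetailing so that no requirement is starved, and checking that amalgamation inside $\mathcal C$ keeps every stage finite and in the class.

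Finally, homogeneity and uniqueness both follow by back-and-forth. Given a partial isomorphism between finite substructures of $M$ and a new point to be added to its domain, the one-point extension property supplies a matching point in the range; alternating the two directions and exhausting $M$ yields an automorphism extending the original map, so $M$ is homogeneous. For uniqueness, if $M$ and $N$ are both homogeneous with age $\mathcal C$, I would run the analogous back-and-forth between them, using that each satisfies the one-point extension property, to produce an isomorphism $M\to N$.
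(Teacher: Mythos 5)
The paper does not prove this theorem at all: it is quoted as classical background (Fra\"iss\'e's theorem, with references to Fra\"iss\'e's own work), so there is no internal proof to compare against. Your proposal is the standard textbook argument --- chain construction via JEP for Part 1, necessity of amalgamation via homogeneity, sufficiency via a dovetailed construction realising the one-point extension property, and back-and-forth for both homogeneity and uniqueness --- and it is correct in all essentials. The two points you flag as delicate are indeed the only ones needing care: the bookkeeping must enumerate countably many tasks (finitely many substructures of each finite stage, times countably many isomorphism types of one-point extensions over each), and one should note that an arbitrary embedding $e:A\to M$ has image inside some finite stage $A_n$, so handling extension tasks for substructures of the stages suffices; both are routine once stated.
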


In the course of the classification, the Lachlan-Woodrow Theorem will be used many times. 
\begin{theorem}[Lachlan-Woodrow 1980]\label{LachlanWoodrow}
	Let $G$ be an infinite homogeneous graph. Then either $G$ or $G^c$ is of one of the following forms:
	\begin{enumerate}
		\item{$I_m[K_n]$ with $\max(m,n)=\infty$,} 
		\item{Generic omitting $K_{n+1}$,}
		\item{Generic (the Random Graph)}
	\end{enumerate}
\end{theorem}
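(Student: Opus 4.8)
The plan is to classify $G$ through its age, which by Fra\"iss\'e's theorem is an amalgamation class of finite graphs, while exploiting two structural features. First, homogeneity and the whole target list are invariant under complementation, so I may always argue up to replacing $G$ by $G^c$. Second, for every vertex $v$ the induced graphs on the neighbourhood $N(v)$ and the non-neighbourhood $\bar N(v)$ are themselves homogeneous: given an isomorphism $f$ between finite subsets $A,B\subseteq N(v)$, the map $f\cup\{(v,v)\}$ is an isomorphism of the induced subgraphs on $A\cup\{v\}$ and $B\cup\{v\}$, which extends to some $\sigma\in\aut(G)$ fixing $v$, and $\sigma$ then restricts to an automorphism of $N(v)$ extending $f$. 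The organising dichotomy is whether adjacency is ``almost'' an equivalence relation, which I detect with the induced path $P_3$ on three vertices, using the classical fact that a graph omits $P_3$ precisely when it is a disjoint union of cliques.

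First I would dispose of the case where $G$ omits $P_3$. Then every connected component of $G$ is complete, and homogeneity forces the components to be mutually isomorphic and of equal size: any single vertex maps to any other by an automorphism, and automorphisms preserve components, so two components cannot have different cardinalities. Writing $n$ for the common clique size and $m$ for the number of components yields $G=I_m[K_n]$, and infiniteness gives $\max(m,n)=\infty$ (with the degenerate cases $K_\infty$ at $m=1$ and $I_\infty$ at $n=1$). Dually, if $G^c$ omits $P_3$ then $G$ is complete multipartite, i.e.\ the complement of a graph on the list.

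It remains to treat the case where $G$ contains induced copies of both $P_3$ and its complement $\overline{P_3}=K_2\cup K_1$. Here the goal is to show that the age is controlled entirely by the clique number $\omega(G)$ and the independence number $\alpha(G)$, in the sense that every finite graph $F$ with $\omega(F)\le\omega(G)$ and $\alpha(F)\le\alpha(G)$ embeds in $G$. I would establish this via a one-point extension property---given a finite induced subgraph $A$ and a one-vertex extension $A^{+}$ respecting the two bounds, $A^{+}$ is realised in $G$---bootstrapping from the ``seed'' patterns $P_3$ and $\overline{P_3}$ by repeated amalgamation inside $\age(G)$. This richness property is the main obstacle, and the delicate point is that the age of a homogeneous graph need not be a free amalgamation class: I must rule out every minimal forbidden subgraph other than a complete or empty graph, showing that the simultaneous presence of both local patterns leaves no room for an exotic constraint to survive amalgamation.

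Finally I would finish with Ramsey's theorem. A graph with both $\omega(G)$ and $\alpha(G)$ finite has boundedly many vertices, so infiniteness of $G$ forces at least one of the two to be infinite and forbids both being bounded at once. If both are infinite, the age is the class of all finite graphs and Fra\"iss\'e uniqueness identifies $G$ with the Random Graph. If $\omega(G)=n<\infty$ while $\alpha(G)=\infty$, the age is $\forb{K_{n+1}}$; checking that finite $K_{n+1}$-free graphs amalgamate freely---the free amalgam introduces no edges across the two sides and hence no clique spanning them---confirms this is an amalgamation class, so $G$ is the generic graph omitting $K_{n+1}$. The independence-bounded case is its complement, which completes the list.
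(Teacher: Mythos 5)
Your outline gets the frame right but leaves the theorem itself unproved. Note first that the paper does not prove this statement at all: it is quoted as an external result (Lachlan--Woodrow 1980, \cite{lachlan1980countable}), so the only question is whether your argument stands on its own. Your preliminary reductions are correct and standard: neighbourhoods of a vertex inherit homogeneity, the statement is invariant under complementation, the $P_3$-free case gives $I_m[K_n]$ (and its complement the complete multipartite case), and the closing Ramsey argument correctly sorts the three regimes for $(\omega(G),\alpha(G))$, including the free-amalgamation check that the class of finite $K_{n+1}$-free graphs is an amalgamation class.

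The gap is the middle step, which you yourself flag as ``the main obstacle'': the claim that if $G$ embeds both $P_3$ and $\overline{P_3}$, then every finite graph $F$ with $\omega(F)\leq\omega(G)$ and $\alpha(F)\leq\alpha(G)$ embeds in $G$ --- equivalently, that the only minimal forbidden configurations of $\age(G)$ are complete or empty graphs. This claim is not a lemma on the way to the Lachlan--Woodrow theorem; it essentially \emph{is} the Lachlan--Woodrow theorem. Saying you would establish it ``via a one-point extension property, bootstrapping from the seed patterns $P_3$ and $\overline{P_3}$ by repeated amalgamation'' only restates the goal: amalgamation in $\age(G)$ guarantees that each amalgamation problem has \emph{some} completion, but gives you no control over which edges or non-edges are forced in that completion, and ruling out an exotic surviving constraint (say, a forbidden induced $C_5$, or a forbidden bipartite configuration compatible with both seed patterns and with the clique and independence bounds) is exactly where all the work lies. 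In the original paper this is a long, delicate induction over finite configurations, and every subsequent treatment remains comparably involved; no short bootstrap of the kind you describe is known. As written, your proposal proves the degenerate cases and the final bookkeeping, and asserts the core.
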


Consider a group of permutations $G$ acting on a set $X$. Then $G$ acts on each Cartesian power of $X$ coordinatewise. Peter Cameron introduced the term \emph{oligomorphic} action to describe the situation where $G$ acts on a countably infinite set $X$ and $G$ has finitely many orbits on $X^n$ for each natural number $n$ (see \cite{cameron1990oligomorphic}). The following theorem is an elaborate version of Ryll-Nardzewski's Theorem.

\begin{theorem}\label{RyllNardzewski}
	Let $M$ be a countably infinite structure over a countable language and $T={\rm Th}(M)$. The following are equivalent:
	\begin{enumerate}
		\item{$M$ is $\omega$-categorical\label{Ryll1}}
		\item{Every type in $S_n(T)$ is isolated, for all $n\in\omega$\label{Ryll2}}
		\item{Each type space $S_n(T)$ is finite\label{Ryll3}}
		\item{$(M,\aut(M))$ is oligomorphic\label{Ryll4}}
		\item{For each $n>0$ there are only finitely many formulas $\varphi(x_1,\ldots,x_n)$ up to ${\rm Th}(M)$-equivalence.\label{Ryll5}}
	\end{enumerate}
\end{theorem}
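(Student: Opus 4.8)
The plan is to prove the five statements equivalent by a cycle of implications, clustering the three ``internal'' conditions (\ref{Ryll2}), (\ref{Ryll3}) and (\ref{Ryll5})---which concern only the syntax of $T$ and the topology of its type spaces---and then bridging this cluster to the ``external'' conditions (\ref{Ryll1}) and (\ref{Ryll4}) through back-and-forth arguments. Throughout I would exploit that $T=\Th(M)$ is complete, so that for two $L$-formulas $\varphi,\psi$ in the free variables $x_1,\dots,x_n$, $T$-equivalence of $\varphi$ and $\psi$ coincides with the equality of definable sets $\varphi(M)=\psi(M)$.

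First I would dispose of the cluster $\{(\ref{Ryll2}),(\ref{Ryll3}),(\ref{Ryll5})\}$ using Stone duality and compactness. The formulas in $x_1,\dots,x_n$ modulo $T$-equivalence form the Lindenbaum--Tarski Boolean algebra $B_n$, whose space of ultrafilters is exactly the Stone space $S_n(T)$. A Boolean algebra is finite precisely when its Stone space is finite, which yields (\ref{Ryll5})$\Leftrightarrow$(\ref{Ryll3}). For (\ref{Ryll3})$\Leftrightarrow$(\ref{Ryll2}) I would use that $S_n(T)$ is compact Hausdorff: in a finite Hausdorff space every point is isolated, so every type is isolated; conversely, if every type is isolated then the singletons form an open cover of a compact space, which forces $S_n(T)$ to be finite.

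Next I would link the cluster to $\omega$-categoricity and to oligomorphy. For (\ref{Ryll2})$\Rightarrow$(\ref{Ryll1}) I would run a back-and-forth between countable models $M_1,M_2\models T$: keeping finite tuples $\bar a\in M_1$, $\bar b\in M_2$ with $\tp(\bar a)=\tp(\bar b)$, I extend by a new $c$ via the isolating formula $\psi(\bar x,y)$ of $\tp(\bar a c)$, observe that $\exists y\,\psi(\bar x,y)\in\tp(\bar a)=\tp(\bar b)$, choose $d$ with $M_2\models\psi(\bar b,d)$, and note that isolation forces $\tp(\bar b d)=\tp(\bar a c)$. For (\ref{Ryll1})$\Rightarrow$(\ref{Ryll2}) I would invoke the Omitting Types Theorem: a non-isolated $p\in S_n(T)$ is omitted by some countable model while being realised in another, contradicting uniqueness of the countable model. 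Finally, (\ref{Ryll4})$\Rightarrow$(\ref{Ryll5}) is direct, since each $\varphi(M)$ is $\aut(M)$-invariant, hence a union of the finitely many orbits, bounding the number of definable subsets of $M^n$ and therefore the number of formulas up to $T$-equivalence; and (\ref{Ryll3})$\Rightarrow$(\ref{Ryll4}) follows once $M$ is shown to be $\omega$-saturated and hence $\omega$-homogeneous, so that $\aut(M)$-orbits on $M^n$ coincide with the realised $n$-types.

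The step I expect to be the main obstacle is precisely this last one: deducing $\omega$-saturation of the countable $M$ from the isolation of types over $\emptyset$. A type over a finite parameter set $\bar a$ corresponds to a type over $\emptyset$ in more variables extending $\tp(\bar a)$, and I would need to verify that the isolating formula of such an extended type is realised over the given tuple $\bar a$ itself, not merely over some conjugate---this hinges on the existential closure of the isolating formula lying in $\tp(\bar a)$. Once $\omega$-saturation is secured, the $\omega$-homogeneity of countable saturated models delivers the orbit/type correspondence and closes the cycle.
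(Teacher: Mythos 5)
The paper states Theorem \ref{RyllNardzewski} as classical background (the Ryll-Nardzewski--Engeler--Svenonius theorem) and supplies no proof of its own, so there is nothing internal to compare your argument against; judged on its merits, your proposal is correct and is essentially the standard textbook proof. Your cycle does close logically: (\ref{Ryll2})$\Leftrightarrow$(\ref{Ryll3})$\Leftrightarrow$(\ref{Ryll5}) by Stone duality and compactness of $S_n(T)$, (\ref{Ryll1})$\Leftrightarrow$(\ref{Ryll2}) by the back-and-forth and the Omitting Types Theorem (both legitimate here since the language, hence $T$, is countable), and (\ref{Ryll3})$\Rightarrow$(\ref{Ryll4})$\Rightarrow$(\ref{Ryll5}) folds condition (\ref{Ryll4}) into the cluster. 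The step you flag as the expected obstacle---extracting $\omega$-saturation of the countable $M$ from isolation of types over $\varnothing$---goes through by exactly the mechanism you name, and it is routine: if $q(y)$ is a complete type over a finite tuple $\bar a$ and $p(\bar x,y)$ is the corresponding type over $\varnothing$, then $p\upharpoonright\bar x=\tp(\bar a)$ (completeness of $q$ forces it to decide every parameter-free formula in $\bar x$ the way $M$ does); an isolating formula $\psi(\bar x,y)$ of $p$ satisfies $\psi\vdash\exists y\,\psi$, so $\exists y\,\psi(\bar x,y)$ lies in $p\upharpoonright\bar x=\tp(\bar a)$, and any witness $c\in M$ of $\psi(\bar a,y)$ realises all of $p$, hence $q$, by isolation---so the type is realised over $\bar a$ itself and no passage to a conjugate tuple arises. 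With saturation secured, countability gives strong $\omega$-homogeneity, identifying $\aut(M)$-orbits on $M^n$ with realised $n$-types, of which there are finitely many by (\ref{Ryll3}); this completes (\ref{Ryll3})$\Rightarrow$(\ref{Ryll4}) and the whole equivalence.
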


We have chosen two properties to guide our classification: first, the mostly combinatorial property of homogeneity, and second, the model-theoretical property of simplicity. One of the main theorems of simplicity theory is the Independence Theorem, which in our relational settings allows us to find simultaneous solutions to sufficiently independent (in the sense of forking) systems of relations. We will state the Independence Theorem later; first, a few basic facts about $\omega$-categorical and homogeneous structures.

\begin{proposition}
	Let $M$ be a countably infinite structure homogeneous over a finite relational language. Then $M$ is $\omega$-categorical.
\end{proposition}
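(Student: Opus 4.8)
The plan is to use the Ryll-Nardzewski theorem (Theorem \ref{RyllNardzewski}), so the goal reduces to showing that a countably infinite structure $M$ that is homogeneous over a finite relational language is oligomorphic, i.e.\ that $\aut(M)$ has only finitely many orbits on $M^n$ for each $n$. The key observation is that homogeneity translates the orbit-counting problem into a counting problem about finite substructures: two tuples $\bar a, \bar b \in M^n$ lie in the same $\aut(M)$-orbit precisely when the map $\bar a \mapsto \bar b$ is an isomorphism of the induced finite substructures. Thus I would first establish that the number of $\aut(M)$-orbits on $M^n$ is bounded by the number of isomorphism types of $L$-structures on a set of $n$ labelled points (allowing repetitions among the coordinates, which only refines into finitely many ``equality patterns'').

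The heart of the argument is therefore purely combinatorial and rests on the two hypotheses \emph{finite language} and \emph{relational}. First I would note that since $L = \{R_i : i \in I\}$ is a finite relational language, say with relation symbols of arities $k_1, \dots, k_m$, the number of distinct $L$-structures whose universe is a fixed finite set of size $n$ is finite: for each relation symbol $R_i$ of arity $k_i$ there are at most $2^{n^{k_i}}$ choices of interpretation, and the total number of structures is the product $\prod_{i=1}^m 2^{n^{k_i}}$, which is a finite number depending only on $n$ and $L$. Hence there are only finitely many isomorphism types of $L$-structures of size $n$.

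Next I would make the passage from tuples to substructures precise. Given $\bar a = (a_1, \dots, a_n) \in M^n$, consider the finite substructure $A$ of $M$ induced on the set $\{a_1, \dots, a_n\}$ together with the data of which coordinates are equal (the equality type of $\bar a$). There are only finitely many equality types on $n$ coordinates, and for each the induced substructure has at most $n$ elements, so by the previous paragraph there are finitely many possibilities in total. Now suppose $\bar a, \bar b \in M^n$ have the same equality type and induce isomorphic substructures via the map sending $a_j \mapsto b_j$. Then $\bar a \mapsto \bar b$ is an isomorphism between finite substructures of $M$, so by homogeneity (Definition \ref{DefHom}) it extends to an automorphism of $M$, which means $\bar a$ and $\bar b$ lie in the same $\aut(M)$-orbit. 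Therefore the number of orbits on $M^n$ is at most the finite number of pairs (equality type, isomorphism type of induced substructure), so $(M, \aut(M))$ is oligomorphic.

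Finally, invoking the equivalence of oligomorphicity (condition \ref{Ryll4}) with $\omega$-categoricity (condition \ref{Ryll1}) in Theorem \ref{RyllNardzewski} completes the proof. The main obstacle, and the only place where genuine care is needed, is the finiteness of the language: if $L$ were infinite, the product $\prod_i 2^{n^{k_i}}$ could diverge and there could be infinitely many isomorphism types of size-$n$ structures, so the orbit count need not be finite. The relational assumption is equally essential, since with function symbols the substructure generated by $n$ elements need not be finite, breaking the reduction from orbits to finite-substructure isomorphism types.
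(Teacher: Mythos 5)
Your proof is correct and follows essentially the same route as the paper's: count the finitely many isomorphism types of size-$n$ substructures (possible because the language is finite and relational), use homogeneity to identify isomorphism types of tuples with $\aut(M)$-orbits, and conclude via the oligomorphicity clause of Theorem \ref{RyllNardzewski}. You merely spell out the details (the explicit counting bound and the handling of equality types) that the paper's two-line proof leaves implicit.
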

\begin{proof}
	The language is finite: there can be only finitely many isomorphism types of substructures of $M$ of size $n$; by homogeneity, any two isomorphic finite substructures are in the same orbit, so by Theorem \ref{RyllNardzewski}, $M$ is $\omega$-categorical.
\end{proof}

\begin{proposition}
	The unique model $M$ of cardinality $\kappa$ of a countable $\kappa$-categorical theory is saturated.
	\label{CategoricalSaturated}
\end{proposition}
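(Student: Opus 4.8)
The plan is to reduce the statement to the existence of highly saturated models of cardinality $\kappa$ and then let categoricity finish the job. Recall that $M$ being saturated means that every complete type over a parameter set $A\subseteq M$ with $|A|<\kappa$ is realized in $M$, and that this property is invariant under isomorphism. Since $T$ is $\kappa$-categorical, $M$ is the only model of $T$ of size $\kappa$ up to isomorphism, so it suffices to show that \emph{some} model of $T$ of cardinality $\kappa$ is saturated: any such model is isomorphic to $M$, and saturation transfers across the isomorphism.

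First I would fix an infinite cardinal $\lambda<\kappa$ and aim to construct a $\lambda^+$-saturated model $N_\lambda\models T$ with $|N_\lambda|=\kappa$. The construction is the usual elementary chain: starting from any model of size $\le\kappa$, at successor stages pass to an elementary extension realizing all types over all parameter sets of size $\le\lambda$ in the current model, and take unions at limits, running the chain long enough (cofinality above $\lambda$) that the final union is $\lambda^+$-saturated. Granting such an $N_\lambda$, categoricity gives $N_\lambda\cong M$, whence $M$ is itself $\lambda^+$-saturated. Letting $\lambda$ range over all cardinals below $\kappa$ then yields that $M$ is $\kappa$-saturated, i.e.\ saturated.

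The hard part is keeping the cardinality of $N_\lambda$ equal to $\kappa$: a single elementary step can add as many as $2^\lambda$ new elements, the number of types over a set of size $\lambda$, so the naive chain may overshoot $\kappa$, and cutting back down by Löwenheim--Skolem need not preserve $\lambda^+$-saturation. This is exactly where the hypothesis of $\kappa$-categoricity must be exploited to control the type counts, for instance through the stability in small cardinals that categoricity forces, or through the machinery of special models, whose existence in cardinality $\kappa$ together with their uniqueness up to isomorphism packages the chain construction and the cardinal bookkeeping into one step. I would isolate this type-counting bound as the single lemma carrying the real content.

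In the case most relevant here, namely $\kappa=\aleph_0$, no such difficulty arises and the argument is completely elementary: a parameter set $A$ of size $<\aleph_0$ is finite, and adjoining constants for $A$ keeps the language countable and the theory $\omega$-categorical, so by Theorem~\ref{RyllNardzewski} there are only finitely many types over $A$. Each such type is then isolated, hence realized in every model of the expanded theory, in particular in $M$. Thus the countable $M$ is $\aleph_0$-saturated, which for a structure of cardinality $\aleph_0$ is precisely saturation.
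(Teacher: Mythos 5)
Your proposal is correct in the case the paper actually needs ($\kappa=\aleph_0$; every structure in this paper is countable), and it takes a genuinely different route from the paper's own proof, which consists of the single line ``By the L\"owenheim--Skolem theorem and countability.'' You reduce to exhibiting \emph{some} saturated model of cardinality $\kappa$, and for $\kappa=\aleph_0$ you give a complete argument via Theorem \ref{RyllNardzewski}: adjoining constants for a finite parameter set $A$ keeps the theory countable and $\omega$-categorical, so there are finitely many types over $A$, each isolated, hence realized in every model and in particular in $M$; thus the countable $M$ is saturated. This is cleaner than the route the paper gestures at, and it sidesteps a real pitfall in the na\"ive L\"owenheim--Skolem argument (realize an omitted type $p\in S(A)$ in an elementary extension, cut down to a countable $N\supseteq A$, invoke $N\cong M$): that isomorphism need not fix $A$ pointwise, so one still needs isolation of types --- or homogeneity, not yet available at this point in the paper --- to get a contradiction. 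For uncountable $\kappa$, your elementary-chain construction is only a sketch: keeping $|N_\lambda|\le\kappa$ genuinely requires that $\kappa$-categoricity forces $\lambda$-stability for $\lambda<\kappa$ (or the machinery of special models), which you correctly identify as the key lemma but do not prove. That is a gap for the general statement, but it is one the paper's one-line proof shares, and only the countable case is ever used downstream; what your version buys is that the case that matters is actually proved rather than asserted.
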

\begin{proof}
	By the L\"owenheim-Skolem theorem and countability.
\end{proof}

Recall that a \emph{small} substructure of a saturated model of cardinality $\kappa$ is a substructure of any cardinality $\lambda<\kappa$. In homogeneous models, partial elementary maps extend to automorphisms. It is not hard to prove that saturated models are homogeneous; as a consequence, 
\begin{proposition}\label{PropSaturation}
	In a saturated model $M$, two small substructures have the same type if and only if they belong to the same orbit under $\aut(M)$.
\end{proposition}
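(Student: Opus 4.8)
The plan is to prove both implications, with saturation entering only in the direction ``equal types $\Rightarrow$ same orbit.'' First I would pin down the hypothesis: two small substructures $A,B$ \emph{have the same type} when they admit enumerations $\bar a=(a_i)_{i<\lambda}$ and $\bar b=(b_i)_{i<\lambda}$ of a common length $\lambda<\kappa$ with $\tp(\bar a)=\tp(\bar b)$; equivalently, the induced bijection $f\colon a_i\mapsto b_i$ is a partial elementary map of $M$ whose domain $A$ has cardinality $<\kappa$.

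For the easy implication, suppose $A$ and $B$ lie in a common orbit, so $\sigma(A)=B$ for some $\sigma\in\aut(M)$. Since an automorphism preserves the satisfaction of every formula, $M\models\varphi(\bar a)$ iff $M\models\varphi(\sigma\bar a)$ for every formula $\varphi$; taking $b_i=\sigma(a_i)$ then exhibits enumerations with identical types, so $A$ and $B$ have the same type.

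For the converse I would invoke the fact recalled just above the statement, namely that a saturated model is homogeneous in the sense that every partial elementary map of small domain extends to an automorphism. Under the hypothesis of equal types, the bijection $f\colon A\to B$ of the first paragraph is exactly such a map, because $|A|<\kappa$. Any extension of $f$ to an automorphism $\sigma\in\aut(M)$ satisfies $\sigma(A)=B$, placing $A$ and $B$ in the same orbit.

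The single point needing care---and the main, if mild, obstacle---is the bookkeeping for infinite substructures: one must arrange a common index set of length $\lambda$ and witness type equality by a single bijection rather than by unrelated pointwise coincidences, so that $f$ genuinely is one partial elementary map. Once the hypothesis is phrased through $f$, the homogeneity of saturated models supplies the extending automorphism directly, and no fresh back-and-forth construction is required beyond the one already subsumed in that fact.
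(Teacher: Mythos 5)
Your proof is correct and follows essentially the same route as the paper, which derives the proposition directly from the fact that saturated models are (strongly) homogeneous, i.e.\ that partial elementary maps with small domain extend to automorphisms. Both directions are handled exactly as the paper intends, so nothing further is needed.
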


As a direct consequence of Theorem \ref{RyllNardzewski} and Proposition \ref{PropSaturation}, we have:
\begin{proposition}
	Let $M$ be a countable $\omega$-categorical structure and $A$ a finite subset of $M$. A subset $X\subset M$ is definable over $A$ if and only if $X$ is a union of orbits of the set of automorphisms of $M$ fixing $A$ pointwise.
\end{proposition}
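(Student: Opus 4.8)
The plan is to prove both directions of the biconditional, using the $\omega$-categoricity of $M$ to pass between definability and invariance under the pointwise stabilizer of $A$. Let me write $G_A=\aut(M/A)$ for the group of automorphisms of $M$ fixing $A$ pointwise. The easy direction is that definability implies invariance: if $X$ is definable over $A$ by a formula $\varphi(x,\bar a)$ with $\bar a$ enumerating $A$, then any $\sigma\in G_A$ fixes $\bar a$ and preserves satisfaction of $\varphi$, so $\sigma(X)=X$; since $\sigma$ permutes $X$, the set $X$ is a union of $G_A$-orbits. This requires nothing beyond the definition of an automorphism and the fact that first-order formulas are preserved.

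The substantive direction is the converse: if $X$ is a union of $G_A$-orbits, then $X$ is definable over $A$. First I would reduce to showing that a single $G_A$-orbit is $A$-definable, since $X$ is $\omega$-categorical-many such orbits and a finite union of definable sets is definable; here the finiteness of the number of orbits is exactly what Theorem~\ref{RyllNardzewski} (condition \ref{Ryll4}, oligomorphicity) guarantees, once we work over the finite parameter set $A$. Then I would fix a point $b$ in a given orbit and consider its type $\tp(b/A)$. By $\omega$-categoricity and Theorem~\ref{RyllNardzewski} (condition \ref{Ryll2}), this type is isolated, say by a formula $\psi(x)$ over $A$. The key claim is that the realizations of $\psi$ in $M$ form precisely the $G_A$-orbit of $b$.

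The main obstacle, and the step deserving the most care, is establishing that $\tp(b/A)$ determines the $G_A$-orbit of $b$ exactly, i.e. that two elements are $G_A$-conjugate if and only if they have the same type over $A$. For this I would invoke Proposition~\ref{PropSaturation}: since $M$ is $\omega$-categorical it is the unique countable model of its theory, hence saturated by Proposition~\ref{CategoricalSaturated}, and $A\cup\{b\}$ is a small (finite) substructure. Proposition~\ref{PropSaturation} then says two finite tuples lie in the same $\aut(M)$-orbit exactly when they have the same type; applying this to the tuples $\bar a b$ and $\bar a c$ shows that $b$ and $c$ have the same type over $A$ if and only if some automorphism carries $\bar a b$ to $\bar a c$, and such an automorphism fixes $A$ pointwise, hence lies in $G_A$. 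Thus the orbit of $b$ equals $\{x:\tp(x/A)=\tp(b/A)\}=\psi(M)$, which is $A$-definable.

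Finally I would assemble the pieces: a general $G_A$-invariant $X$ is a union of orbits $O_1,\dots,O_k$ (finitely many by oligomorphicity over $A$), each $O_i=\psi_i(M)$ is $A$-definable by the isolating formula above, and $X=\bigvee_i\psi_i(M)$ is defined over $A$ by the disjunction $\bigvee_i\psi_i(x)$. Combined with the easy direction, this yields the stated equivalence.
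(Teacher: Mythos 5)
Your proof is correct and follows exactly the route the paper intends: the paper states this proposition without proof as a direct consequence of Theorem \ref{RyllNardzewski} and Proposition \ref{PropSaturation}, and your argument is precisely the expansion of that citation (oligomorphicity/isolation of types over the finite set $A$ to get finitely many $A$-definable orbits, and saturation to identify $\aut(M/A)$-orbits with sets of realisations of types over $A$). Apart from trivial slips of wording (``$\omega$-categorical-many'' for ``finitely many'', and $\bigcup_i\psi_i(M)$ rather than $\bigvee_i\psi_i(M)$), nothing is missing.
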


\begin{proposition}
	Let $M$ be a countable $\omega$-categorical structure over a relational language $L$. Then $M$ is homogeneous if and only if $\Th(M)$ eliminates quantifiers in the language $L$.
\end{proposition}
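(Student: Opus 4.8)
The plan is to prove both implications, using Ryll--Nardzewski (Theorem \ref{RyllNardzewski}) together with the saturation of $M$ (Propositions \ref{CategoricalSaturated} and \ref{PropSaturation}), and exploiting the fact that $L$ is relational: a bijection between finite subsets of $M$ is an isomorphism of the induced substructures precisely when it preserves the quantifier-free type, and quantifier-free formulas are absolute between $M$ and its substructures. Throughout I will freely pass between ``same type,'' ``same orbit under $\aut(M)$,'' and ``related by an automorphism,'' which Proposition \ref{PropSaturation} licenses for small (in particular finite) subsets of the saturated model $M$.

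For the easy direction, assume $T=\Th(M)$ has quantifier elimination. Given an isomorphism $f\colon A\to B$ between finite substructures, I would enumerate $A=\{a_1,\dots,a_n\}$, set $b_i=f(a_i)$, and observe that $\bar a$ and $\bar b$ satisfy the same quantifier-free formulas in $M$ (since $f$ preserves them in $A$ versus $B$ and they are absolute into $M$). Quantifier elimination then upgrades this to $\tp(\bar a)=\tp(\bar b)$, whence by Proposition \ref{PropSaturation} there is an automorphism of $M$ sending $\bar a$ to $\bar b$, i.e.\ extending $f$. So $M$ is homogeneous.

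For the main direction, assume $M$ is homogeneous and first record its key consequence: any two tuples $\bar a,\bar b$ with the same quantifier-free type have the same complete type, because $a_i\mapsto b_i$ is then an isomorphism of the finite substructures they generate, which homogeneity extends to an automorphism, and automorphisms are elementary. Now I bring in $\omega$-categoricity. By Theorem \ref{RyllNardzewski} the space $S_n(T)$ is finite and each complete type $p$ is isolated, say by $\theta_p(\bar x)$; and since $L$ is finite and relational there are only finitely many quantifier-free $n$-types, each isolated by the finite quantifier-free ``diagram'' $\delta(\bar x)$ listing its atomic facts. I claim each $\theta_p$ is $T$-equivalent to the diagram $\delta_p$ of the quantifier-free part of $p$: the implication $\theta_p\to\delta_p$ is immediate, while conversely any realization of $\delta_p$ shares the quantifier-free type of a realization of $p$, hence by the homogeneity observation shares its complete type and realizes $p$. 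Finally an arbitrary $\varphi(\bar x)$ is $T$-equivalent to $\bigvee_{p\models\varphi}\theta_p$ over the finitely many types containing it, hence to the corresponding disjunction of the $\delta_p$, which is quantifier-free; this is quantifier elimination.

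The main obstacle is exactly this forward passage from ``complete types are determined by quantifier-free data'' to a single quantifier-free formula for each definable set. This is where $\omega$-categoricity is indispensable: it is precisely the finiteness of $S_n(T)$ and the isolation of every type that let me convert the semantic fact about tuples into a finite disjunction of quantifier-free diagrams. Without $\omega$-categoricity the homogeneity observation alone would not yield the syntactic reduction, so the delicate point is to organize the argument so that the finiteness supplied by Ryll--Nardzewski carries the whole weight.
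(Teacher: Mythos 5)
Your proof is correct and takes essentially the same route as the paper's: the quantifier-elimination-to-homogeneity direction via saturation, and the converse by showing that the quantifier-free type of a tuple determines its complete type under homogeneity. The only difference is that you make explicit the final syntactic step (using the finiteness and isolation of types from Ryll--Nardzewski to write every formula as a finite disjunction of quantifier-free diagrams), which the paper compresses into the remark that determining the complete type ``is enough.''
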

\begin{proof}
	If $\Th(M)$ eliminates quantifiers, homogeneity follows from saturation, by Proposition \ref{CategoricalSaturated}.

	Given an $n$-tuple $\bar a$ in $M$, its isomorphism type in the language $L$ can be expressed by a quantifier-free formula. And in a homogeneous structure, the isomorphism type of $\bar a$ determines its orbit under the action of $\aut(M)$, and therefore its complete type. This is enough as we have shown that the quantifier-free type (\ie the isomorphism type of the substructure induced on the tuple) determines the complete type of the tuple. 
\end{proof}

Quantifier elimination is a matter of language; we can always force it on a structure by adding relation symbols to the language for each possible formula. If the structure we start with is $\omega$-categorical, then we need only add finitely many predicates for each natural number $n$, corresponding to the finitely many elements of $S_n(T)$ or, equivalently, to the orbits of $\aut(M)$. 

\begin{definition}
	An \emph{$n$-graph} is a structure $(M, R_1,\ldots,R_n)$ in which each $R_i$ is binary, irreflexive and symmetric; also, for all distinct $x,y\in M$ exactly one of the $R_i$ holds and $n\geq2$. We assume that all the relations in the language are realised in a homogeneous $n$-graph.  

	For any relation $P$ in the language of an $n$-graph $M$ and any element $a$, $P(a)$ denotes the set $\{x\in M: P(a,x)\}$. We often refer to this as the $P${\emph-neighbourhood of }$a$
\label{DefnGraph}
\end{definition}
	Some more definitions:
\begin{definition}~
\begin{enumerate}
	\item{A \emph{path} of colour $i$ and length $n$ between $x$ and $y$ is a sequence of distinct vertices $x_0, x_1,\ldots, x_n$ such that $x_0=x$, $x_n=y$ and for $0\leq j\leq n-1$ the edge $(x_j,x_{j+1})$ is of colour $i$.}
	\item{Two vertices $x,y$ in an edge-coloured graph $(M, R_1,\ldots,R_n)$ are $R_i$\emph{-connected} if there exists a path of colour $i$ between them; a subset $A$ of $M$ is $R_i$-connected if any $a, a'\in A$ are $R_i$-connected by a path in $A$. A maximal $R_i$-connected subset of $M$ is an \emph{$R_i$-connected component.}}
	\item{The \emph{$R_i$-distance} between two vertices $x,y$ in an edge-coloured graph, denoted by $d_i(x,y)$, is the length of a minimal $R_i$-path between $x$ and $y$ ($\infty$ if no such path exists). The \emph{$R_i$-diameter} of an $R_i$-connected graph $A$ is defined as the supremum of $\{d_i(x,y)|x,y\in A\}$.}
	\item{An $n$-graph is \emph{$R$-multipartite} with $k$ ($k>1$ possibly infinite) parts if there exists a (not necessarily definable) partition $P_1,\ldots,P_k$ of its vertex set into nonempty subsets such that if two vertices $x,y$ are $R$-adjacent then they do not belong to the same $P_i$. We will say that $G$ is \emph{$R$-complete-multipartite} if $G$ is $R$-multipartite with at least two parts and for all pairs $a,b$ from distinct classes, $R(a,b)$ holds.}
	\item{For any relation $R$, $n\in\omega$, and $a$, $R^n(a)$ is the set of vertices at $R$-distance $n$ from $a$.}
	\item{A \emph{half-graph} for colour $R$ with $m$ pairs in an $n$-coloured graph $M$ is a set of vertices $\{a_i:i\in m\}\cup\{b_i:i\in m\}\subset M$ such that $R(a_i,b_j)$ holds iff $i<j$.}
\end{enumerate}
\end{definition}

In this document, we are concerned with homogeneous 3-graphs $M$ (that is, 3-graphs homogeneous in the language $L=\{R,S,T\}$) with simple theory. The present work is a classification of a restricted class of homogeneous structures with simple unstable theory (in fact, supersimple with finite $\SU$-rank; more on this later), and as such, extends Lachlan's classification of stable homogeneous 3-graphs:

\begin{theorem}[Lachlan 1986, \cite{lachlan1986binary}]\label{Lachlan3graphs}
	Every stable homogeneous 3-graph is isomorphic to one of the following:
	\begin{multicols}{2}
	\begin{enumerate}
		\itemsep-0.25em
		\item{$P_{**}$}
		\item{$Z$}
		\item{$Z'$}
		\item{$Q_*^i$}
		\item{$P^i_*$}
		\item{$P^i[K_m^i]$}
		\item{$K_m^i[Q^i]$}
		\item{$Q^i[K_m^i]$}
		\item{$K_m^i[P^i]$}
		\item{$K_m^i\times K_n^j$}
		\item{$K_m^i[K_n^j[K_p^k]]$}
	\end{enumerate}
	\end{multicols}
where $\{i,j,k\}=\{R,S,T\}$ and $1\leq m,n,p\leq\omega$.
\end{theorem}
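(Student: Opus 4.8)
The plan is to obtain the eleven items as a small family of \emph{primitive} cores ($P_{**}$, $Z$, $Z'$, $Q^i_*$, $P^i_*$) together with everything built from them and from monochromatic cliques by the composition $A[B]$ and the product $\times$, and to reach them by combining the rigidity that stability forces on each colour with an induction driven by invariant equivalence relations. The first step is to make precise the link between stability and the half-graph configuration defined above. Since $M$ is homogeneous it eliminates quantifiers, so every definable relation is a Boolean combination of $R$, $S$, $T$; hence if $M$ has the order property it is already witnessed by a single colour. Thus $M$ is unstable if and only if some colour admits arbitrarily large half-graphs, and stability becomes the purely combinatorial condition of a uniform bound on half-graphs in each of $R$, $S$, $T$.

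Next I would analyse each monochromatic graph $(M,R_i)$ as a reduct. It is $\omega$-categorical (a reduct of an $\omega$-categorical structure) and stable (stability passes to reducts), and the half-graph bound forbids the order property internally to colour $i$. The target is the dichotomy of Theorem~\ref{LachlanWoodrow} read monochromatically: up to complementation each colour graph is a disjoint union of cliques, the Random graph and the generic $K_{n+1}$-free graphs being unstable and therefore excluded. The delicate point is that this reduct need not itself be homogeneous, so I would use the homogeneity of the full $3$-graph to transfer the regularity, showing that ``$R_i$-adjacent-or-equal'' (or its negation) is transitive and hence defines an $\aut(M)$-invariant equivalence relation.

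Any nontrivial invariant equivalence relation $E$ arising from a colour then yields a decomposition $M\cong (M/E)[F]$, where the fibre $F$ is the structure induced on one class (a monochromatic clique $K_m^i$, possibly with further internal structure) and $M/E$ carries a well-defined induced $3$-graph, the colour between two classes being constant by homogeneity. Both $F$ and $M/E$ are again stable and homogeneous but strictly simpler, so an induction on size applies; iterating produces the composition towers $K_m^i[K_n^j[K_p^k]]$, the mixed forms $K_m^i[P^i]$, $K_m^i[Q^i]$, $P^i[K_m^i]$, $Q^i[K_m^i]$, and, when two colours give independent commuting equivalence relations, the product $K_m^i\times K_n^j$.

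The main obstacle is the \emph{primitive} base case, where no colour and no complement gives a nontrivial invariant equivalence relation: here one must show directly that only $P_{**}$, $Z$, $Z'$, $P^i_*$, $Q^i_*$ can occur. For this I would run a finite configuration analysis, using the half-graph bound together with primitivity to determine, for each pair of colours, how the neighbourhoods of a vertex intersect; stability should force a rigid ``bipartite-like'' pattern leaving only these finitely many cores. Finally, to see that the list is exhaustive and non-redundant one must check that each displayed structure is genuinely a stable homogeneous $3$-graph, i.e.\ that its age amalgamates and that it contains no unbounded half-graph; this last verification is routine but essential.
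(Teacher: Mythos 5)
This theorem is not proved in the paper at all: it is Lachlan's 1986 classification, quoted with a citation and used throughout as a black box, so your proposal can only be judged on its own merits. There it has a genuine gap at its structural core, namely the decomposition step. You claim that any nontrivial invariant equivalence relation $E$ yields $M\cong (M/E)[F]$, ``the colour between two classes being constant by homogeneity.'' That is false, and the very list you are trying to derive refutes it: in $K_m^i\times K_n^j$ the reflexive closure of $i$ is an invariant equivalence relation, yet \emph{both} $j$ and $k$ are realised between any two of its classes; and the unstable structure $C(\Gamma)$ constructed in this paper shows that homogeneity alone never forces constancy of the transversal colour (there, classes of size $2$ are joined by $S$- and $T$-edges simultaneously). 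Deciding when imprimitivity yields a composition $[\cdot]$, when it yields a product $\times$, and when (in the unstable world) it yields a cover like $C(\Gamma)$ is precisely where stability must do real work; your sketch assumes the answer, and the later mention of ``commuting equivalence relations'' giving products contradicts the constancy claim rather than repairing it.

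The other two steps also fail as stated. Lachlan--Woodrow applies to \emph{homogeneous} graphs, and the monochromatic reduct $(M,R_i)$ need not be homogeneous; your proposed transfer --- that ``$R_i$-adjacent-or-equal'' or its negation is transitive --- is exactly what must be proven, and it is outright false in the stable homogeneous primitive cores: in $P^i$ the $j$-edges form a pentagon, which is neither a union of cliques nor complete multipartite. So the transfer can only be attempted \emph{after} the primitive case is excluded, which makes the argument circular unless that case is settled first. And the primitive base case, which you defer to ``a finite configuration analysis,'' is the heart of the theorem: one must show that a primitive stable homogeneous $3$-graph is necessarily finite, and only then identify the five sporadic structures. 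Nothing in your outline rules out an infinite primitive example (the analogue, for two colours, of the generic $K_n$-free graphs, which are excluded only because they are unstable); this is where Lachlan's actual proof invokes his general machinery for stable homogeneous structures (shrinking and stretching of indiscernibles, coordinatisation), not elementary configuration-chasing. As it stands, the proposal is a plausible road map, but the three load-bearing steps --- constancy of colours across classes, the monochromatic dichotomy, and finiteness in the primitive case --- are each missing or wrong.
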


Items 1 to 5 are finite 3-graphs; for 6-11, if at least one of $m,n,p$ is infinite, the 3-graph is infinite. We will not explain what $Z$, the asterisks, and primes mean, since we are concerned only with infinite graphs. In the $j,k$-graph $P^i$ there are five vertices, and both the $j$-edges and the $k$-edges form a pentagon. The $j,k$-graph $Q^i$ is defined on 9 vertices; the $j$- and $k$-edges form a copy of $K_3\times K_3$. 

For $1\leq m,n\leq\omega$, $K_m^i\times K_n^j$ is the 3-graph with vertex set $m\times n$ and relations
\[((a_1,b_1),(a_2,b_2))\in\begin{cases}
i &\mbox{if } a_1\neq a_2\wedge b_1=b_2\\
j &\mbox{if } a_1= a_2\wedge b_1\neq b_2\\
k &\mbox{if } a_1\neq a_2\wedge b_1\neq b_2\\
\end{cases}
\]
where we again assume $\{i,j,k\}=\{R,S,T\}$.

And if $G$, $H$ are 3-graphs, then $G[H]$ is the 3-graph with vertex set $V(G)\times V(H)$ and in which the 3-graph induced on $\{(a,v):v\in V(H)\}$ is isomorphic to $H$ for each $a\in V(G)$, and for any function $f:V(G)\rightarrow V(H)$, the 3-graph induced on $\{(a,f(a)):a\in V(G)\}$ is isomorphic to $G$. More formally, $P((a,b),(c,d))$ holds in $G[H]$ if $a=c$ and $H\models P(b,d)$, or if $G\models P(a,c)$, where $P\in\{R,S,T\}$.

We often divide binary relations in two groups: forking and nonforking. We mean:
\begin{definition}
	Let $L=\{R_1,\ldots,R_n\}$ be a binary relational language. We say that $R_i$ is a \emph{forking} relation if $R(a,b)$ implies that $tp(a/b)$ forks over $\varnothing$. Otherwise, $R_i$ is \emph{nonforking}.
\end{definition}

Recall that given $A\subset B$ and $p\in S(B)$, a Morley sequence over $A$ is an $A$-indiscernible sequence $(\bar a_i:i\in I)$ that satisfies $\bar a_i\indep[A](\bar a_j:j<i)$ and $\tp(\bar a_i/Ba_0\ldots a_{i-1})$ does not fork over $A$ for all $i\in I$.

In many of our arguments, we make implicit use of the following theorem, especially the last statement, to justify the non-existence of an infinite clique of some particular colour in a neighbourhood of a vertex:
\begin{theorem}
Let $T$ be a first-order theory. The following are equivalent:
\begin{enumerate}
\item{$T$ is simple.}
\item{Forking (dividing) satisfies symmetry.}
\item{A formula $\varphi(\bar x,\bar a)$ does not divide over $A$ if and only if for some Morley sequence $I$ in $\tp(\bar a/A)$ the set $\{\varphi(\bar x,\bar c):c\in I\}$ is consistent.}
\end{enumerate}
\end{theorem}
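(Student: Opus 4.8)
The plan is to establish the cycle $(1)\Rightarrow(3)\Rightarrow(2)\Rightarrow(1)$, taking as the working definition of simplicity that no formula $\varphi(\bar x,\bar y)$ has the tree property. Before starting the cycle I would record the facts that hold in every theory, independently of simplicity. First, dividing can always be detected on indiscernible sequences: $\varphi(\bar x,\bar a)$ divides over $A$ if and only if some $A$-indiscernible sequence $(\bar a_i)_{i<\omega}$ with $\bar a_i\equiv_A\bar a$ makes $\{\varphi(\bar x,\bar a_i):i<\omega\}$ inconsistent, the reduction from an arbitrary witnessing sequence to an indiscernible one being Ramsey plus compactness. Since a Morley sequence in $\tp(\bar a/A)$ is in particular $A$-indiscernible, this immediately yields the easy direction of the biconditional in $(3)$: if $\{\varphi(\bar x,\bar c):\bar c\in I\}$ is inconsistent for some Morley sequence $I$, then $\varphi$ divides over $A$. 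Thus only the converse direction of $(3)$ carries content.

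The implication $(1)\Rightarrow(3)$ is Kim's Lemma and is the heart of the matter. The substantive claim is that consistency of $\{\varphi(\bar x,\bar c):\bar c\in I\}$ along a \emph{single} Morley sequence $I$ already forces $\varphi(\bar x,\bar a)$ not to divide over $A$. I would argue by contradiction: supposing $\varphi$ divides while some Morley sequence $(\bar b_j)$ keeps the set consistent, I build a tree $(\bar a_s)_{s\in\omega^{<\omega}}$ witnessing the tree property for $\varphi$, contradicting simplicity. The $k$-inconsistency at the immediate successors of each node is imported from the dividing witness, while the independence built into a Morley sequence is exactly what lets me graft mutually independent copies of that witness below every node, so that each branch stays consistent. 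Carrying out this recursive construction cleanly --- keeping careful track of the base set over which each level is chosen, and using the extension property of non-dividing types (and, where convenient, the independence theorem over a model) to realise the branches --- is the step I expect to be the main obstacle, since it is where simplicity is genuinely consumed and where the bookkeeping is heaviest.

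For $(3)\Rightarrow(2)$ I would first note that under $(3)$ dividing and forking coincide, so the symmetry statement in $(2)$ may be read as symmetry of dividing. To prove $\bar a\indep[A]\bar b\Rightarrow\bar b\indep[A]\bar a$, I would take a Morley sequence in the relevant type and translate non-dividing into consistency along a Morley sequence via Kim's Lemma; the mutual indiscernibility of such a sequence is symmetric in its construction, and running the equivalence in the opposite direction swaps the roles of $\bar a$ and $\bar b$. This is the routine half of the cycle once Kim's Lemma is in hand.

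Finally $(2)\Rightarrow(1)$ I would prove contrapositively. If $T$ is not simple then some $\varphi$ has the tree property, and from a witnessing tree one extracts, by compactness and a further indiscernibility extraction, a configuration on which dividing fails to be symmetric: a pair $\bar a,\bar b$ over a base $A$ with $\bar a\notindep[A]\bar b$ while $\bar b\indep[A]\bar a$. Isolating the asymmetric witness from the raw tree is the only delicate point here, but it is far lighter than the tree construction of $(1)\Rightarrow(3)$. Together the three implications close the cycle and establish the equivalence.
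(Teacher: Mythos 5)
The first thing to say is that the paper contains no proof of this statement at all: it is quoted as imported background, being Kim's theorem (the foundational equivalence of simplicity theory), for which the paper implicitly relies on the literature it cites elsewhere (Wagner's and Casanovas's books). So there is no in-paper argument to compare yours against. What you have written is recognisably the architecture of the standard proof --- the cycle $(1)\Rightarrow(3)\Rightarrow(2)\Rightarrow(1)$ with Kim's Lemma as the pivot, a tree-property contradiction for $(1)\Rightarrow(3)$, symmetry derived from Kim's Lemma, and a contrapositive extraction for $(2)\Rightarrow(1)$ --- and that plan is correct.

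Judged as a proof rather than a roadmap, however, there are concrete gaps beyond the openly deferred tree construction. (a) The left-to-right half of $(3)$ is not contentless, contrary to your claim: ``does not divide $\Rightarrow$ consistent along \emph{some} Morley sequence'' presupposes that a Morley sequence in $\tp(\bar a/A)$ exists, and over an arbitrary base this is itself a theorem needing simplicity (local character and existence of non-dividing extensions); in a general theory non-dividing extensions can fail to exist and the right-hand side of $(3)$ would be vacuously false. In the later step $(3)\Rightarrow(2)$ you may instead extract existence from $(3)$ applied to $\bar x=\bar x$, but somewhere this must be said. (b) Your parenthetical appeal to ``the independence theorem over a model'' inside the proof of $(1)\Rightarrow(3)$ risks outright circularity: in the standard development the Independence Theorem is a corollary of Kim's Lemma and symmetry, i.e.\ of the very theorem being proved; the tree construction must be carried out with only Ramsey, compactness, and the definitions. (c) The mechanism you describe for $(3)\Rightarrow(2)$ (``mutual indiscernibility\ldots is symmetric in its construction'') is not the actual argument. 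The proof that closes is: given $\bar a\indep[A]\bar b$ and $\psi(\bar y,\bar a)\in\tp(\bar b/A\bar a)$, take a Morley sequence $(\bar a_i)_{i<\omega}$ in $\tp(\bar a/A\bar b)$ over the \emph{larger} base $A\bar b$; then $\bar b$ itself realises $\{\psi(\bar y,\bar a_i):i<\omega\}$, and left transitivity of dividing (a fact valid in every theory) combined with the hypothesis $\bar a\indep[A]\bar b$ shows $(\bar a_i)$ is also Morley over $A$, so $(3)$ applies to each $\psi$. (d) In $(2)\Rightarrow(1)$ the engine is likewise missing: the asymmetric pair is produced by pairing a dividing chain along a branch of the tree with the observation that finitely satisfiable (coheir) extensions never divide, which hands you independence in one direction for free; ``compactness and a further indiscernibility extraction'' alone does not yield a witness to the failure of symmetry.
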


The central theorem of simplicity is:
\begin{theorem}
	If $B\indep[A]C$, $\tp(\bar b/AB)$ and $\tp(\bar c/AC)$ do not fork over $A$, and $\Lstp(\bar b/A)=\Lstp(\bar c/A)$, then there is $\bar a\models\Lstp(\bar b/A)\cup\tp(\bar b/AB)\cup\tp(\bar c/AC)$, with $\bar a\indep[A]BC$.
\end{theorem}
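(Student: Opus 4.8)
This is the Independence Theorem (type amalgamation over a Lascar strong type) for simple theories; the plan is to split the statement into a \emph{reduction} that installs independence from $BC$ on each side, followed by a \emph{core amalgamation} whose consistency is the real content. Throughout I would work inside a sufficiently saturated monster model and use freely the symmetry of forking and the Morley-sequence criterion for non-dividing stated above.

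First I would reduce to the case where both given tuples are already independent from $BC$ over $A$. Since $\tp(\bar b/AB)$ does not fork over $A$, I would extend it to a complete type over $ABC$ that still does not fork over $A$; a realization $\bar b^*$ of this extension then realizes $\tp(\bar b/AB)$ and satisfies $\bar b^*\indep[A]BC$, and because a non-forking extension of a Lascar strong type preserves that Lascar strong type, I may arrange $\Lstp(\bar b^*/A)=\Lstp(\bar b/A)$. Symmetrically I would obtain $\bar c^*$ realizing $\tp(\bar c/AC)$ with $\bar c^*\indep[A]BC$ and $\Lstp(\bar c^*/A)=\Lstp(\bar c/A)$. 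By hypothesis $\Lstp(\bar b^*/A)=\Lstp(\bar c^*/A)$. It then suffices to find $\bar a$ realizing $\tp(\bar b^*/AB)\cup\tp(\bar c^*/AC)$ with $\Lstp(\bar a/A)=\Lstp(\bar b^*/A)$ and $\bar a\indep[A]BC$; such an $\bar a$ is exactly the tuple demanded by the theorem.

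For this core statement the strategy is to build the realization along a Morley sequence in the common Lascar strong type and to certify consistency using the non-dividing criterion. Using $\Lstp(\bar b^*/A)=\Lstp(\bar c^*/A)$, I would place $\bar b^*$ and $\bar c^*$ on a common $A$-indiscernible (Morley) sequence, or in general connect them by a finite chain of such sequences and induct on the length of the chain. The hypothesis $B\indep[A]C$ is what lets the two demands coexist: the parameters of $\tp(\bar b^*/AB)$ and those of $\tp(\bar c^*/AC)$ are forking-independent over $A$, so after transporting by an $A$-automorphism realizing the Lascar connection, a formula coming from the $B$-side and one coming from the $C$-side impose constraints along the Morley sequence that do not divide over $A$. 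Invoking the third clause of the simplicity characterization above, the family of these constraints is consistent; by compactness the whole amalgamated type is consistent, and by realizing it as a non-forking extension over $A$ I would secure both the required Lascar strong type and the independence $\bar a\indep[A]BC$.

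The main obstacle is precisely this core consistency: showing that the $B$-side and $C$-side requirements are \emph{jointly} satisfiable rather than merely separately. Two points need care. First, reducing to a common indiscernible sequence only handles the case where the two Lascar strong types are linked by a single sequence; the general case requires iterating the amalgamation along the chain witnessing $\Lstp(\bar b^*/A)=\Lstp(\bar c^*/A)$, and one must check that independence from $BC$ is preserved at each step. Second, the equality of Lascar strong types, rather than mere equality of types, is essential: in an unstable simple theory non-forking extensions are not unique, so ordinary type equality would not let us transport one realization onto the other. The argument must therefore combine the automorphism-group description of $\Lstp$ with the symmetry of forking, and it is this interaction that I expect to be the delicate heart of the proof.
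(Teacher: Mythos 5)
First, a point of comparison: the paper does not prove this statement at all. It is quoted, immediately after the characterization of simplicity via symmetry and the Morley-sequence criterion, as ``the central theorem of simplicity'' --- this is the Kim--Pillay Independence Theorem, which the paper uses as a black box from the literature (Wagner's and Casanovas' books are its background references). So your proposal cannot be measured against a proof in the paper; it has to stand on its own as a reconstruction of the standard argument.

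As such a reconstruction it has the right skeleton --- reduce via nonforking extensions to the case $\bar b\indep[A]BC$ and $\bar c\indep[A]BC$, then amalgamate using Lascar strong types, Morley sequences and the dividing criterion --- but one step is misstated and the central step is missing. In the reduction, you must take a nonforking extension of the Lascar strong type $\Lstp(\bar b/AB)$, not of $\tp(\bar b/AB)$: a realization of a nonforking extension of the mere type over $AB$ need not have the same Lascar strong type over $A$ (already with $B=\varnothing$, equality of types does not imply equality of Lascar strong types, and an automorphism fixing $AB$ pointwise is not in general Lascar-strong over $A$). The repair is standard --- Lascar strong types admit nonforking extensions, and Lstp-equality over $AB$ implies Lstp-equality over $A$ --- but your stated justification does not give it. The serious gap is the core step. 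Saying that the $B$-side and $C$-side constraints ``do not divide over $A$'' and then ``invoking the third clause'' proves nothing: each side being separately nonforking is the \emph{hypothesis} of the theorem, and the inference from ``both sides nonforking, $B\indep[A]C$, same Lstp'' to ``the union is consistent'' is exactly the statement being proved, so as phrased your core step is circular. The standard proof closes it with two lemmas you never establish, whose proofs are the real content: (i) if $\Lstp(\bar b/A)=\Lstp(\bar c/A)$ and $\bar b\indep[A]\bar c$, then $\bar b,\bar c$ are the first two terms of a single Morley sequence over $A$ (this also makes your worry about chains of indiscernible sequences unnecessary: once independence is arranged, no chain induction is needed); and (ii) if $(\bar d_i:i<\omega)$ is a Morley sequence over a base and $p(\bar x,\bar d_0)$ does not fork over that base, then $\bigcup_i p(\bar x,\bar d_i)$ is consistent and does not fork over it. Moreover, the sequence along which (ii) is applied is not, as you propose, a sequence through the realizations $\bar b^*,\bar c^*$: in the standard argument one uses a Lascar-strong automorphism over $A$ taking $\bar c^*$ to $\bar b^*$, obtaining a copy $C''$ of $C$ with $\bar b^*\models q(\bar x,C'')$ where $q(\bar x,C)=\tp(\bar c/AC)$, and then runs (ii) along a Morley sequence of copies of $C$ over $AB$ connecting $C''$ to $C$ --- this transport is precisely where the hypothesis $B\indep[A]C$ is consumed. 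Without (i), (ii) and that transport step, what you have is a plan that flags the difficulty rather than a proof that resolves it.
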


In the primitive case, our method of classification relies on the following statement, related to the stable forking conjecture (see \cite{brower2012weak}): for a predicate $P$ in the language of a homogeneous simple primitive $3$-graph, if the formula $P(x,a)$ divides, then $P$ is a stable predicate. We will prove that this statement in Chapter \ref{ChapStableForking}.

The following theorem was recently proved by Vera Koponen in \cite{koponen2014binary}:
\begin{theorem}\label{Koponen}
Suppose that $M$ is a countable, binary, homogeneous and simple structure. Let $T$ be the complete theory of $M$. Then $T$ is supersimple with finite $\SU$-rank which is at most $|S_2(T)|$.
\end{theorem}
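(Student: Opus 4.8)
The plan is to first extract the structural consequences of the hypotheses, then isolate the two binarity facts that do the real work, and finally run a counting argument against $S_2(T)$. Since $M$ is homogeneous over a finite binary relational language, it is $\omega$-categorical by Theorem \ref{RyllNardzewski}, so $\Th(M)$ eliminates quantifiers and $|S_2(T)|$ is finite; moreover $T$ is simple, so forking is symmetric and the Independence Theorem is available. Because finite, uniformly bounded $\SU$-rank immediately yields supersimplicity, it suffices to produce the numerical bound. By monotonicity of $\SU$-rank in the base, $\SU(a/B)\le\SU(a/\varnothing)$ for every $a$ and every $B$, and by the Lascar inequalities $\SU(\bar a/B)$ is finite as soon as each $\SU(a_i/\varnothing)$ is finite; hence the whole problem reduces to bounding $\SU(a/\varnothing)\le|S_2(T)|$ for a single element $a$.

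The engine is binarity. Using quantifier elimination I would first establish that $\tp(a/B)$ is determined by the family of $2$-types $\{\tp(a,b):b\in B\}$ together with $\tp(B)$: in a binary language the quantifier-free type of $B\cup\{a\}$ is assembled from the atomic facts on pairs. The decisive second lemma is that forking is \emph{witnessed on pairs}: if $A\subseteq B$ is finite and $\tp(a/B)$ forks over $A$, then $\tp(a/Ab)$ forks over $A$ for some single $b\in B$. To prove this I would use the Morley-sequence characterization of dividing: a dividing formula may be taken to express $\tp(a/Ab_1\dots b_k)$, which by the first lemma is a conjunction of $2$-type conditions $\tp(x,b_j)=p_j$; analysing the inconsistency of the instances of this conjunction along an $A$-indiscernible sequence, binarity forces a single coordinate already to be responsible, delivering the witnessing $b$.

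With this localization in hand I would extract from $\SU(a/\varnothing)\ge n$ a chain $\varnothing=B_0\subsetneq\dots\subsetneq B_n$ with $\tp(a/B_{i+1})$ forking over $B_i$, and by the pairs lemma arrange that each step adjoins a single element $b_i$ with $\tp(a/B_ib_i)$ forking over $B_i$. Assigning to step $i$ its $2$-type $q_i=\tp(a,b_i)\in S_2(T)$, the core claim, proved using symmetry of forking and the Independence Theorem, is that a $2$-type cannot be used to fork twice over a growing base: if $q_i=q_j$ with $i<j$, then transporting the earlier independent configuration shows $b_j$ is already controlled over $B_ib_i$ up to algebraic closure, so adjoining it cannot fork, a contradiction. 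Hence $i\mapsto q_i$ is injective into the finitely many $2$-types, giving $n\le|S_2(T)|$.

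The main obstacle is exactly the two claims just highlighted, and they are where simplicity is indispensable. Establishing that dividing is witnessed on a single pair requires a careful passage between the global inconsistency of a Morley array and the purely binary data it records; and the no-repetition step is delicate because ``the same $2$-type'' must be upgraded to genuine dependence over the accumulated base $B_i$, which is precisely the role of the symmetry of forking and the Independence Theorem. The comparatively routine parts, namely $\omega$-categoricity, quantifier elimination, the Lascar inequalities for tuples, and the final count, are then assembled around this binary core.
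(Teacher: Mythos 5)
You should first be aware that the paper does not prove this statement at all: Theorem \ref{Koponen} is quoted from Koponen's paper \cite{koponen2014binary} and used as a black box (e.g.\ in Lemma \ref{Inductive}). So the benchmark is Koponen's published proof, which is a full research paper rather than a short argument. Your outer skeleton is the right shape and is unobjectionable: $\omega$-categoricity makes $|S_2(T)|$ finite, monotonicity and the Lascar inequalities (Theorem \ref{LascarIneqs}) reduce everything to bounding $\SU(a/\varnothing)$ for singletons, and a forking chain adjoining one point at a time whose steps carry pairwise distinct $2$-types would give the bound. The gap is that your two ``key lemmas'' are not reductions of the problem --- they \emph{are} the problem --- and the arguments you sketch for them break at identifiable points.

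For the localization lemma (``forking is witnessed on pairs''), you claim that along an indiscernible sequence, binarity forces a single coordinate of $\bigwedge_j p_j(x,b_j)$ to be responsible for the inconsistency. That is not a consequence of binarity plus indiscernibility: in the generic triangle-free graph --- binary, homogeneous, and not simple (cf.\ Remark \ref{RmkSimpleGraphs}) --- the formula $E(x,b_1)\wedge E(x,b_2)$ with $\neg E(b_1,b_2)$ divides over $\varnothing$, witnessed by an indiscernible sequence of non-edges $(b_1^ib_2^i)_{i\in\omega}$ with $E(b_1^i,b_2^j)$ exactly when $i\neq j$; the inconsistency is produced entirely by interactions between \emph{different} coordinates at \emph{different} indices, while neither $E(x,b_1)$ nor $E(x,b_2)$ divides over $\varnothing$. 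So the mechanism you invoke is false as stated, and the lemma itself fails without simplicity; a correct proof must show that simplicity excludes precisely such cross-instance interactions, and merely replacing ``indiscernible'' by ``Morley'' does not do this --- explaining why independence of the rows kills the cross terms is exactly where the substance of Koponen's paper lies. The injectivity step has a parallel gap: to apply the Independence Theorem over the growing base $B_i$ you need the realisations being amalgamated to have the same Lascar strong type over $B_i$ (equivalently, by lowness, Proposition \ref{CatSimpLow}, the same strong type over $B_i$). The hypothesis $\tp(a,b_i)=\tp(a,b_j)$ is a condition over $\varnothing$; it gives no control over strong types over $B_i$, nor over how $b_j$ sits relative to $B_i\cup\{b_i\}$, and that relation is precisely what decides whether $\tp(a/B_ib_j)$ forks over $B_i$. (Compare how much work the paper itself invests --- e.g.\ Observation \ref{ObsNoFER} before Lemma \ref{Lemma2}, or Proposition \ref{PairwiseIndep} --- just to secure the strong-type hypothesis before amalgamating, in far more concrete situations.) ``Transporting the earlier independent configuration'' and ``$b_j$ is already controlled up to algebraic closure'' are names for the desired conclusion, not arguments for it.
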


\begin{definition}
The \SU-rank is the least function from the collection of all types over parameters in the monster model to ${\rm On}\cup\{\infty\}$ satisfying for each ordinal $\alpha$ that $\SU(p)\geq\alpha+1$ if there is a forking extension $q$ of $p$ with $\SU(q)\geq\alpha$ .
\label{DefSU}
\end{definition}

The \SU-rank is invariant under definable bijections. Additionally, if $q$ is a nonforking extension of $p$, then $\SU(q)=\SU(p)$. A theory $T$ is supersimple if and only if $\SU(p)<\infty$ for all real types $p$. In the following theorem, we denote the Hessenberg sum of ordinals by $\oplus$.

\begin{theorem}[Lascar inequalities]
The \SU-rank satisfies the following inequalities:
\begin{enumerate}
	\item{$\SU(a/bA)+\SU(b/A)\leq\SU(ab/A)\leq\SU(a/bA)\oplus\SU(b/A)$.}
	\item{Suppose $\SU(a/Ab)<\infty$ and $\SU(a/A)\geq\SU(a/Ab)\oplus\alpha$. Then $\SU(b/A)\geq\SU(b/Aa)+\alpha$.}
	\item{Suppose $\SU(a/Ab)<\infty$ and $\SU(a/A)\geq\SU(a/Ab)+\omega^\alpha n$. Then $\SU(b/A)\geq\SU(b/Aa)+\omega^\alpha n$.}
	\item{If $a\indep[A] b$, then $\SU(ab/A)=\SU(a/A)\oplus\SU(b/A)$.}
\end{enumerate}
\label{LascarIneqs}
\end{theorem}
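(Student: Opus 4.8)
The plan is to prove the fundamental inequality (1) first, and then to derive (2), (3) and (4) from it using symmetry of forking and ordinal arithmetic. The two bounds in (1) are each established by transfinite induction on $\SU$-rank (legitimate here because, in the supersimple setting of Theorem \ref{Koponen}, every relevant $\SU$-rank is an ordinal). The combinatorial engine throughout is the \emph{pairing lemma}: in a simple theory $ab\indep[A]C$ holds if and only if $b\indep[A]C$ and $a\indep[Ab]C$; this follows at once from transitivity and symmetry of nonforking, both available since $T$ is simple. I will also repeatedly invoke the two facts recorded after Definition \ref{DefSU}: a nonforking extension preserves $\SU$-rank, and $\SU$ is invariant under definable bijections (so in particular $\SU(ab/A)=\SU(ba/A)$).

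For the lower bound $\SU(a/bA)+\SU(b/A)\leq\SU(ab/A)$ I would fix $\SU(a/bA)$ and induct on $\beta=\SU(b/A)$. The base case $\beta=0$ amounts to the monotonicity statement $\SU(ab/A)\geq\SU(a/bA)$, itself a short induction: a forking extension of $\tp(a/bA)$ by parameters $C$ forces, via the pairing lemma, a forking extension of $\tp(ab/A)$ by $C$ of at least the same rank. For the successor step, choose $C$ witnessing a forking extension of $\tp(b/A)$ of rank $\geq\beta'$; using the extension axiom I may take $C$ with $a\indep[Ab]C$, so that $\SU(a/bAC)=\SU(a/bA)$. The pairing lemma then makes $\tp(ab/AC)$ a forking extension of $\tp(ab/A)$, and the induction hypothesis over the base $AC$ gives $\SU(ab/AC)\geq\SU(a/bA)+\beta'$, whence $\SU(ab/A)\geq\SU(ab/AC)+1\geq\SU(a/bA)+\beta$. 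Limit stages are immediate from continuity of $+$ in its second argument.

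For the upper bound $\SU(ab/A)\leq\SU(a/bA)\oplus\SU(b/A)$ I would induct on $\SU(ab/A)$. Given any forking extension $\tp(ab/AC)$ of $\tp(ab/A)$, the pairing lemma shows that either $\tp(b/AC)$ forks over $A$ or $\tp(a/AbC)$ forks over $Ab$; in the first case $\SU(b/AC)<\SU(b/A)$ and in the second $\SU(a/bAC)<\SU(a/bA)$, while the remaining coordinate can only drop. The induction hypothesis bounds $\SU(ab/AC)$ by $\SU(a/bAC)\oplus\SU(b/AC)$, and strict monotonicity of $\oplus$ in each argument yields $\SU(ab/AC)+1\leq\SU(a/bA)\oplus\SU(b/A)$. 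Taking the supremum over all forking extensions delivers the bound.

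Finally, (4) follows by combining the two bounds of (1) with symmetry: $a\indep[A]b$ gives $\SU(a/bA)=\SU(a/A)$, so the upper bound reads $\SU(ab/A)\leq\SU(a/A)\oplus\SU(b/A)$, while symmetry ($b\indep[A]a$, hence $\SU(b/aA)=\SU(b/A)$) together with the lower bound applied to $ba$ gives the reverse inequality; in the finite-rank situation of this paper $\oplus$ agrees with $+$ on the relevant ordinals, so equality is immediate, and the general case needs only the recursive characterisation of $\oplus$. Statements (2) and (3) come from (1) and symmetry by chasing $\SU(b/aA)+\SU(a/A)\leq\SU(ab/A)\leq\SU(a/bA)\oplus\SU(b/A)$ and cancelling the $\SU(a/bA)$ term. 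I expect this cancellation to be the main obstacle: it is a purely ordinal-arithmetic manoeuvre that runs smoothly with Hessenberg sums in (2) but forces the special shape $\omega^\alpha n$ in (3), and it is exactly where the hypothesis $\SU(a/bA)<\infty$ is genuinely used.
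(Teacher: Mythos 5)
The paper never proves this theorem: it is quoted as a classical result of simplicity theory (Lascar's inequalities for $\SU$-rank, cf.\ Wagner, \emph{Simple Theories}, Theorem 5.1.6), so there is no internal argument to compare yours against and it must be judged on its own. Your treatment of (1), which is the core of the theorem, is the standard proof and is correct: the pairing lemma, the replacement of the witnessing set $C$ by an $Ab$-conjugate satisfying $a\indep[Ab]C$, the transfinite induction on $\SU(b/A)$ for the lower bound, and the induction for the upper bound using strict monotonicity of $\oplus$ in each argument are exactly the right ingredients; restricting to the supersimple setting, so that all ranks involved are ordinals, is legitimate here by Theorem \ref{Koponen}.

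The gap is in your reduction of (2)--(4) to (1). For infinite ordinal ranks this reduction is not an ``obstacle'' to be smoothed over: the cancellation inference is false as a statement about ordinals. Concretely, set $\SU(b/Aa)=\omega$, $\SU(a/Ab)=\omega^2$, $\SU(b/A)=\omega$, $\SU(a/A)=\omega^2+1$, $\SU(ab/A)=\omega^2+\omega$, and $\alpha=1$. These values satisfy both decompositions in (1) (note $\omega^2\oplus\omega=\omega^2+\omega$ and $\omega\oplus(\omega^2+1)=\omega^2+\omega+1$), satisfy monotonicity of $\SU$ under extensions, and satisfy the hypothesis of (2) since $\omega^2\oplus1=\omega^2+1$; yet the conclusion $\SU(b/A)\geq\SU(b/Aa)+1$ fails. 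Hence no ordinal-arithmetic manipulation of the inequalities in (1) can yield (2); the actual proof is a fresh induction on $\alpha$ in which one again chooses forking extensions and applies the inductive hypothesis over enlarged base sets. The same applies to (3), and to (4) in general: from (1) and symmetry you only get $\max\{\SU(a/A)+\SU(b/A),\,\SU(b/A)+\SU(a/A)\}\leq\SU(ab/A)$, and for ranks $\omega^2+\omega$ and $\omega^2+1$ this maximum is $\omega^2\cdot2+\omega$, strictly below the Hessenberg sum $\omega^2\cdot2+\omega+1$; equality requires an induction on $\SU(a/A)\oplus\SU(b/A)$ interleaving its recursive characterisation with extension and transitivity of nonforking. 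In the finite-rank regime guaranteed by Koponen's theorem $+$ and $\oplus$ coincide and all your cancellations are valid, so your proposal does establish the theorem in the only generality this paper ever uses; but as a proof of the statement as written, items (2)--(4) are missing their own forking-theoretic, not merely arithmetic, inductions.
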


Chapter \ref{ChapGenRes} is a collection of easy results that will be used again and again in the rest of the classification. In Chapter \ref{ChapImprimitiveFC}, we will focus on simple unstable homogeneous 3-graphs in which the reflexive closure of the relation $R$ defines an equivalence relation with finite classes. From this it follows in particular that $S$ and $T$ do not define equivalence relations, as in that case $M$ would be a stable graph. We show there that there is only one such structure such that all the predicates are realised in the union of two classes, and classify the rest of them.

We prove in Chapter \ref{ChapPrimitive} that primitive homogeneous simple 3-graphs have $\SU$-rank 1, which enables us to prove that the only such graph is the analogue in three predicates of the Random Graph. After that, we use the results in Chapter \ref{ChapGenRes} to elucidate the structure of imprimitive homogeneous 3-graphs with infinite classes.
\chapter{General results on binary supersimple structures}\label{ChapGenRes}
\setcounter{equation}{0}
\setcounter{theorem}{0}
\setcounter{case}{0}
\setcounter{subcase}{0}

We collect in this chapter a number of results that will be used later. We start with an easy but extremely useful proposition saying that all the theories we are interested in are \emph{low}. The relevance of this is that in arguments using the Independence Theorem, lowness allows us to perform the amalgamation of the nonforking extensions $\tp(b/AB)$ and $\tp(c/AC)$ if $\stp(b/A)=\stp(c/A)$. This condition is generally easier to verify than the standard $\Lstp(b/A)=\Lstp(c/A)$, and in many of the cases that we will encounter, satisfied automatically.

Recall that a simple theory is low if for every formula $\varphi(\bar x,\bar a)$ there exists a natural number $n_\varphi$ such that given any indiscernible sequence $(\bar a_i:i\in\omega)$, if the set $\{\varphi(\bar x,\bar a_i):i\in\omega\}$ is inconsistent, then it is $n_\varphi$-inconsistent.

\begin{notation}
In the rest of this work, we often say that a relation $P$ defines an equivalence relation. Since each predicate is interpreted as an irreflexive relation, this is not strictly true. What we mean is that the reflexive closure of $P$ defines an equivalence relation, or, equivalently, that every triangle with two sides of type $P$ is a $K_3^P$.
\end{notation}

\begin{proposition}
	Let $T$ be an $\omega$-categorical simple theory eliminating quantifiers in a finite relational language. Then $T$ is low.
	\label{CatSimpLow}
\end{proposition}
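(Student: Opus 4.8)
The plan is to exploit $\omega$-categoricity to reduce lowness to a finite, purely combinatorial statement, and then extract the bound $n_\varphi$ uniformly. First I would recall that by Theorem \ref{RyllNardzewski}, an $\omega$-categorical theory has, for each $n$, only finitely many formulas $\varphi(x_1,\ldots,x_n)$ up to $T$-equivalence. Since lowness is a condition that must hold for \emph{every} formula $\varphi(\bar x,\bar a)$, the first key observation is that it suffices to verify the condition for one representative from each of these finitely many equivalence classes. So I would fix a formula $\varphi(\bar x,\bar y)$ (absorbing the parameters $\bar a$ into the variables $\bar y$) and seek a single $n_\varphi$ that works against all indiscernible sequences.

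\textbf{Reducing to a uniform bound via compactness.} The core of the argument is that inconsistency of $\{\varphi(\bar x,\bar a_i):i\in\omega\}$ along an indiscernible sequence should, by $\omega$-categoricity, already be visible on a bounded initial segment. Here I would use that in an $\omega$-categorical structure the complete type of a tuple is determined by its quantifier-free (equivalently, isomorphism) type, and only finitely many $2k$-types exist. Consider the set of indiscernible sequences $(\bar a_i:i\in\omega)$ (realised in the monster model) together with the indiscernibility-blueprint, i.e. the common complete type of each finite block $\bar a_{i_1}\ldots\bar a_{i_k}$; by $\omega$-categoricity there are only finitely many such blueprints for each $k$, and in fact the Erd\H{o}s--Rado / standard extraction argument means every relevant pattern is pinned down by finitely much type-data. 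For a fixed blueprint, the statement ``$\{\varphi(\bar x,\bar a_i):i\in\omega\}$ is inconsistent'' is a closed condition; by compactness inconsistency of the whole set is equivalent to inconsistency of some finite subset, and since the sequence is indiscernible, inconsistency of \emph{some} finite subset forces inconsistency of every subset of that size. This converts inconsistency into $n$-inconsistency for \emph{some} $n$ depending only on the blueprint.

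\textbf{Making the bound depend only on $\varphi$.} The remaining step is to absorb the dependence on the blueprint. Since $T$ eliminates quantifiers in a finite relational language and is $\omega$-categorical, there are only finitely many possible blueprints of indiscernible sequences of $\bar y$-tuples (each block-type is one of finitely many isomorphism types). For each such blueprint we obtain a bound $n$ as above; taking $n_\varphi$ to be the maximum over this finite set of blueprints yields a single natural number that witnesses lowness for $\varphi$. Intersecting this with the finiteness of formulas up to $T$-equivalence gives the result for all formulas.

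\textbf{Anticipated obstacle.} The delicate point is the passage from ``for each blueprint there is a bound'' to a \emph{uniform} bound, and more subtly, ensuring that the compactness argument genuinely turns inconsistency into \emph{$n$-inconsistency} rather than merely finite inconsistency: I must use indiscernibility to argue that if \emph{some} size-$n$ subset is inconsistent then \emph{every} size-$n$ subset is, so that the finite bound produced by compactness is exactly the $n_\varphi$ demanded by the definition of lowness. Care is also needed because the definition quantifies over all indiscernible sequences, not just those realising a fixed type; here the finiteness of the type spaces $S_k(T)$ (Theorem \ref{RyllNardzewski}, item \ref{Ryll3}) is precisely what keeps the family of relevant blueprints finite, so the maximum defining $n_\varphi$ is taken over a finite set and is therefore well-defined.
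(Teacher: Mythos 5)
Your proposal is correct and takes essentially the same route as the paper's proof: reduce to finitely many ``kinds'' (blueprints) of indiscernible sequences of $\ell(\bar a)$-tuples using $\omega$-categoricity and the finite relational language, observe that consistency of $\{\varphi(\bar x,\bar a_i):i\in\omega\}$ depends only on the kind, use indiscernibility to upgrade inconsistency to $n_j$-inconsistency for each kind, and set $n_\varphi$ to be the maximum over the finitely many kinds. The one compressed step is your justification of why the blueprints form a finite family: the correct reason is that all relations have arity at most $m$, so by quantifier elimination and indiscernibility the type of the first $m$ blocks determines the entire blueprint (this is the paper's opening observation), rather than anything to do with Erd\H{o}s--Rado extraction; with that one-line repair your argument coincides with the paper's.
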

\begin{proof}
Let $\varphi(x,a)$ be a formula in $L$. Denote by $m$ the highest arity for a relation in $L$, and let $\ell(a)$ be the length of the tuple $a$. Given any indiscernible sequence $(a_i:i\in\omega)$, the first $m$ tuples of the sequence determine the type over $\varnothing$ of $a_{i_0}...a_{i_k}$ for any $i_0<\ldots<i_k$ and any $k<\omega$.

By the Ryll-Nardzewski Theorem, there are ony finitely many types of $(\ell(a)\times m)$-tuples, so there are only finitely many kinds of indiscernible sequences over $\varnothing$. We claim that, given an $A$-indiscernible sequence $(d_i:i\in\omega)$, the set $D=\{\varphi(x,d_i):i\in\omega\}$ is consistent if and only if for any $\varnothing$-indiscernible sequence $(c_i:i\in\omega)$ such that $\tp(d_0\ldots d_{m-1})=\tp(c_0\ldots c_{m-1})$, the set $C=\{\varphi(x,c_i):i\in\omega\}$ is consistent. If $D$ is consistent, then viewing $(d_i:i\in\omega)$ as indiscernible over $\varnothing$ shows one
direction.

For the other direction, suppose that $C$ is consistent but $D$ is $k$-inconsistent for some $k\in\omega$. Let $u$ satisfy $C$. In particular, $u$ satisfies
$\varphi(x,c_0)\wedge\ldots\wedge\varphi(x,c_{k-1})$. Using homogeneity, there is an automorphism $\sigma$ of $M$ taking $c_0...c_{k-1}$ to $d_0...d_{k-1}$, so $\sigma(u)$ contradicts the $k$-inconsistency of $D$.

Let $\Phi_j(x)=\{\varphi(\bar x,i):i\in I_j\}$. If $\Phi_j(x)$ is inconsistent, then by indiscernibility it is $n_j$-inconsistent for some minimal $n_j\in\omega$. If we define $n_\varphi:=\max_{j\in\{1,\ldots,k\}}n_j$, then it is clear that for any indiscernible sequence $I$ of $\ell(\bar a)$-tuples, if $\{\varphi(x,i):i\in I\}$ is inconsistent, then it is $n_\varphi$-inconsistent.
\end{proof}

The next theorem appears as Theorem 6.4.6 in Wagner's book \cite{wagner2000simple}.
\begin{theorem}
	Let $T$ be a low theory. Then Lascar strong type is the same as strong type, over any set $A$.
\end{theorem}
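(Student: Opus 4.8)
The plan is to compare two bounded, $A$-invariant equivalence relations on tuples of a fixed length: the relation $E^s_A$ of ``having the same strong type over $A$'' (equivalently, the same type over $\acl^{eq}(A)$), which is type-definable over $\acl^{eq}(A)$ and bounded; and the relation $E^L_A$ of ``having the same Lascar strong type over $A$,'' which is by definition the \emph{finest} bounded $A$-invariant equivalence relation. Thus $E^L_A$ refines $E^s_A$, so the implication $\Lstp(a/A)=\Lstp(b/A)\Rightarrow\stp(a/A)=\stp(b/A)$ is automatic and uses no hypothesis; all the content lies in the converse. Since $T$ is simple, the Kim--Pillay analysis identifies $E^s_A$ with the finest bounded \emph{type-definable} $A$-invariant equivalence relation. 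Hence, if $E^L_A$ can be shown to be type-definable, then $E^s_A$ must in turn refine $E^L_A$, forcing $E^L_A=E^s_A$. The whole theorem therefore reduces to one assertion: \emph{under lowness, the Lascar equivalence relation is type-definable.} It is standard that, for fixed $T$, this type-definability (for all $A$) is equivalent to $G$-compactness, i.e. to the Lascar distance being uniformly bounded; so the target can be restated as ``low $\Rightarrow$ bounded Lascar distance.''

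First I would recall the description of $E^L_A$ as the transitive closure of the relation $\Theta_A(a,b)$ given by ``$a$ and $b$ lie on a common infinite $A$-indiscernible sequence.'' Writing $\Theta_A^{(n)}$ for the $n$-fold composition, we have $E^L_A=\bigcup_{n<\omega}\Theta_A^{(n)}$, and the \emph{Lascar distance} $d_A(a,b)$ is the least $n$ with $(a,b)\in\Theta_A^{(n)}$ (and $\infty$ if none exists). A uniform bound $d_A\le N$ on a single strong-type class propagates by $A$-invariance to all classes and yields $E^L_A=\Theta_A^{(N)}$, which is type-definable as a finite composition of the partial type expressing $\Theta_A$; so bounding the distance is exactly what is needed. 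It is worth noting where the naive approach breaks down: one would like, given independent realizations $a\indep[A]b$ of the same strong type, to place them on a common $A$-indiscernible Morley sequence (giving $d_A\le 2$ after interposing a generic third realization), but the Independence Theorem as available to us requires $\Lstp(a/A)=\Lstp(b/A)$ — precisely the conclusion we are after. This circularity is what the lowness hypothesis is designed to break.

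The crux, and the only place the hypothesis enters, is the boundedness of $d_A$. The point is that lowness supplies, for each relevant formula $\varphi(\bar x,\bar y)$, a single number $n_\varphi$ such that inconsistency of $\{\varphi(\bar x,\bar a_i):i<\omega\}$ along \emph{any} indiscernible sequence is witnessed already at level $n_\varphi$. This uniformity both makes $\Theta_A$ behave as a genuine partial type rather than an unquantifiable ``$\exists$ over an infinite sequence,'' and prevents the tower $\Theta_A^{(1)}\subseteq\Theta_A^{(2)}\subseteq\cdots$ from growing without end: if the Lascar distance were unbounded on one strong-type class, a compactness argument would assemble the witnessing chains of increasing length into an indiscernible configuration exhibiting dividing not witnessed at any fixed finite level, contradicting the uniform $n_\varphi$. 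Hence $\Theta_A^{(N)}=\Theta_A^{(N+1)}$ for some $N$, the union stabilizes, and $E^L_A=\Theta_A^{(N)}$ is type-definable. I expect this stabilization to be the main obstacle: the reductions above are soft consequences of simplicity and of the definition of Lascar strong type, whereas converting the purely combinatorial ``uniform inconsistency'' content of lowness into an honest bound on the number of indiscernible-sequence steps is where the real work lies, and where the earlier characterization of dividing along Morley sequences must be combined with a compactness packaging of the chains.
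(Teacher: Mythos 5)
There is a genuine and fatal gap in your reduction: you have misidentified which equivalence relation the Kim--Pillay analysis singles out. In a simple theory, the finest bounded type-definable $A$-invariant equivalence relation is \emph{not} equality of strong types ($\equiv_{\acl^{eq}(A)}$); it is Kim--Pillay equivalence, i.e.\ equality of types over the bounded \emph{hyperimaginary} closure ${\rm bdd}(A)$. Strong type equality is only the finest bounded equivalence relation that is an intersection of $A$-definable \emph{finite} equivalence relations, and in general one has the chain: Lascar equivalence refines Kim--Pillay equivalence refines strong type equivalence. The entire content of the theorem is the collapse of the \emph{second} refinement, which your argument never touches. Worse, the step you call the crux -- type-definability of $E^L_A$ -- is free in every simple theory and uses no lowness at all: given $\Lstp(a/A)=\Lstp(b/A)$, one finds $c$ in the same Lascar class with $c\indep[A]ab$ (extension for Lascar strong types), and then $(a,c)$ and $(c,b)$ each extend to Morley sequences in that Lascar strong type, which are $A$-indiscernible by the Independence Theorem; so the Lascar distance inside a class is at most $2$ and $E^L_A=\Theta_A^{(2)}$ is type-definable. (Your side remark that lowness is needed to make $\Theta_A$ a genuine partial type is also wrong: $\Theta_A$ is type-definable in \emph{any} theory by compactness.) Consequently, if your minimality principle were correct, your argument would prove $\Lstp=\stp$ in every simple theory -- but that is precisely the well-known open problem (elimination of bounded hyperimaginaries) that the lowness hypothesis exists to circumvent. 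The conclusion to draw is that the minimality principle is false as you stated it, and the proof collapses.

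For comparison, the paper does not prove this statement at all; it cites Wagner's Theorem 6.4.6 (Buechler's theorem), whose proof works exactly in the gap your proposal skips over, namely the passage from the (automatically) type-definable relation $E^L_A$ to an intersection of $A$-definable finite equivalence relations. Lowness enters there as follows: the uniform bound $n_\varphi$ makes the condition ``$\varphi(x,b)$ divides over $A$'' type-definable in $b$ (one asserts the existence of an $A$-indiscernible sequence through $b$ along which $\varphi$ is $n_\varphi$-inconsistent, a closed condition by compactness). This is then used to show that the \emph{complement} of Lascar equivalence, restricted to realizations of a fixed $p\in S(A)$, is also type-definable, since inequality of Lascar strong types is witnessed by the dividing of an explicit amalgam. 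A bounded equivalence relation that is relatively definable on $p$ has finitely many classes and is given by a formula over $A$ by compactness; being a finite $A$-definable equivalence relation, it is refined by strong type equality, which together with the trivial refinement in the other direction yields $\Lstp=\stp$. So the real work is upgrading ``type-definable'' to ``definable'', not establishing type-definability -- which is where your proposal locates, and exhausts, all of its effort.
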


The immediate corollary is:
\begin{corollary}
		Let $T$ be an $\omega$-categorical simple theory eliminating quantifiers in a finite relational language. Then the Lascar strong type of any tuple is the same as its strong type, over any set $A$.\hfill$\Box$
\label{LascarTypes}
\end{corollary}

Recall that an equivalence relation with finitely many classes is referred to as a \emph{finite equivalence relation}. The classes of an $A$-definable finite equivalence relation correspond to strong types over $A$ in a saturated model. 

\begin{proposition}
\label{PairwiseIndep}
	If $M$ is a binary homogeneous simple structure in which there are no $\varnothing$-definable finite equivalence relations on $M$, then for each $n\in\omega$ greater than 1, whenever $a_1,\ldots,a_n$ are pairwise independent elements of $M$, we have for each $1\leq i\leq n$ that $a_i\indep a_1,\ldots,a_{i-1},a_{i+1},\ldots, a_n$.
\end{proposition}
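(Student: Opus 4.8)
The plan is to argue by induction on $n$, showing at each step that $\{a_1,\ldots,a_n\}$ is an independent set over $\varnothing$, which is exactly the asserted conclusion that each $a_i$ is independent from the remaining elements. The base case $n=2$ is the pairwise independence hypothesis together with symmetry of forking. For the inductive step I would assume the statement for $n-1$ and reduce the whole problem to the single independence $a_n\indep[\varnothing]a_1\ldots a_{n-1}$: since the inductive hypothesis, applied to the pairwise independent tuple $a_1,\ldots,a_{n-1}$, already gives that $\{a_1,\ldots,a_{n-1}\}$ is independent, the standard characterization of independent sets in a simple theory (a set is independent iff, in some enumeration, each element is independent over $\varnothing$ from its predecessors) would then upgrade this to independence of the full set $\{a_1,\ldots,a_n\}$, and by the symmetry of the hypotheses under permuting the indices this handles every $a_i$ at once.

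First I would record the consequence of the hypotheses that drives the amalgamation. Because $M$ is a homogeneous $n$-graph it is vertex-transitive, so all singletons realize the same type over $\varnothing$; because $T$ is $\omega$-categorical, simple, and eliminates quantifiers in a finite relational language, Corollary \ref{LascarTypes} gives $\Lstp=\stp$ over $\varnothing$; and the absence of $\varnothing$-definable finite equivalence relations on $M$ means no such relation separates two singletons, so $\stp=\tp$ on singletons over $\varnothing$. Consequently all singletons share one Lascar strong type over $\varnothing$, which makes the Lascar-strong-type hypothesis of the Independence Theorem automatic whenever the elements we amalgamate both realize $\tp(a_n/\varnothing)$.

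The heart of the argument is to construct an $a_n^{*}$ with $\tp(a_n^{*}/a_i)=\tp(a_n/a_i)$ for every $i\le n-1$ and $a_n^{*}\indep[\varnothing]a_1\ldots a_{n-1}$, and then to finish by binarity. The element $a_n^{*}$ I would build by a finite iteration of the Independence Theorem: having produced a realization independent from $a_1\ldots a_{m-1}$ and agreeing with $a_n$ over each of $a_1,\ldots,a_{m-1}$, I amalgamate it with a fresh realization of $\tp(a_n/a_m)$, taking $B=a_1\ldots a_{m-1}$ and $C=a_m$. Here $B\indep[\varnothing]C$ is supplied by the inductive hypothesis that $\{a_1,\ldots,a_{n-1}\}$ is independent; the nonforking of the first extension comes from that same induction, while the nonforking of $\tp(a_n/a_m)$ over $\varnothing$ is exactly the pairwise independence $a_n\indep[\varnothing]a_m$; and the equality of Lascar strong types over $\varnothing$ is the fact recorded in the previous paragraph. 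Once $a_n^{*}$ is obtained, quantifier elimination in the \emph{binary} language shows that $\tp(a_n^{*}/a_1\ldots a_{n-1})$ is determined by the two-element types $\tp(a_n^{*}/a_i)$ together with the fixed type of $a_1\ldots a_{n-1}$, whence $\tp(a_n^{*}/a_1\ldots a_{n-1})=\tp(a_n/a_1\ldots a_{n-1})$; as $a_n^{*}$ witnesses that this type does not fork over $\varnothing$, we get $a_n\indep[\varnothing]a_1\ldots a_{n-1}$, completing the induction.

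I expect the main obstacle to be the iterated application of the Independence Theorem, where one must verify at every amalgamation step that all three of its hypotheses genuinely hold. The delicate one is the coincidence of Lascar strong types: it is here, and only here, that the assumption of \emph{no} $\varnothing$-definable finite equivalence relations is indispensable, since without it two singletons of the same type could carry distinct Lascar strong types and the amalgamations would break down. The binarity reduction is the other essential ingredient, as without it the pairwise data $\tp(a_n/a_i)$ would not determine $\tp(a_n/a_1\ldots a_{n-1})$ and the independent copy $a_n^{*}$ would tell us nothing about $a_n$ itself.
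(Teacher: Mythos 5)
Your proof is correct and is essentially the paper's own argument: induction on $n$, the Independence Theorem with the Lascar-strong-type hypothesis discharged exactly as you describe (no $\varnothing$-definable finite equivalence relations gives $\stp=\tp$ on singletons, and lowness, via Proposition \ref{CatSimpLow}, gives $\Lstp=\stp$), and binarity of the language to transfer nonforking from the amalgamated copy to $a_n$ itself. The only divergence is organizational: the paper performs a single amalgamation per inductive step --- merging $\tp(a_n/a_1\ldots a_{n-2})$, which is nonforking by the induction hypothesis applied to the pairwise independent tuple $a_1,\ldots,a_{n-2},a_n$, with $\tp(a_n/a_{n-1})$ over the independent sets $\{a_1,\ldots,a_{n-2}\}$ and $\{a_{n-1}\}$ --- whereas you rebuild an independent copy $a_n^{*}$ by iterating the amalgamation one element at a time, which costs $n-2$ applications of the Independence Theorem where one suffices, but is equally valid.
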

\begin{proof}
	We proceed by induction on $n$. The proposition is trivial for $n=2$; suppose that it holds for all $n\leq n_0$ and $a_1,\ldots,a_{n_0+1}$ are pairwise independent but such that $\tp(a_1/a_2,\ldots,a_{n_0+1})$ divides over $\varnothing$. By the induction hypothesis, $a_1\indep a_2,\ldots,a_{n_0}$ and $a_1\indep a_{n_0+1}$, so those two types are nonforking extensions of $\tp(a_1)$. We also have $a_{n_0+1}\indep a_2,\ldots,a_{n_0}$ by induction. Let $b\models\tp(a_1/a_{n_0+1})$ and $b'\models\tp(a_1/a_2,\ldots,a_{n_0})$; this also ensures that $\stp(b)=\stp(b')$, and because $\Th(M)$ is low by Proposition \ref{CatSimpLow}, they are of the same Lascar strong type. Therefore, $\Lstp(b/\varnothing)=\Lstp(b'/\varnothing)$. By the Independence Theorem, $\Lstp(b)\cup\tp(a_1/a_{n_0+1})\cup\tp(a_1/a_2,\ldots,a_{n_0})$ is a consistent set of formulas and is realised by some $a'\indep a_2,\ldots,a_{n_0+1}$. But in this case, because the language is binary, $$\tp(a_1/a_2,\ldots,a_{n_0+1})=\tp(a'/a_2,\ldots,a_{n_0+1}),$$a contradiction.
\end{proof}

By Proposition \ref{CatSimpLow}, we can carry out the argument in Proposition \ref{PairwiseIndep} over any set of parameters, as in any low theory $a\equiv^{\stp}_A b$ if and only if $a\equiv^{\Lstp}_A b$.

Reformulating \ref{PairwiseIndep} for sequences:
\begin{observation}\label{MorleySqn}
	In a binary homogeneous primitive simple structure, if $(a_i:i\in\omega)$ is an $\varnothing$-indiscernible sequence of singletons such that $a_0\indep a_1$, then $(a_i:i\in\omega)$ is a Morley sequence over $\varnothing$.\hfill$\Box$
\end{observation}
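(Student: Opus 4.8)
The plan is to reduce the statement to Proposition \ref{PairwiseIndep} together with the finite character of forking. First I would unwind what must be verified. Since the sequence lives over $\varnothing$, the definition of a Morley sequence over $\varnothing$ requires only that $(a_i:i\in\omega)$ be $\varnothing$-indiscernible, which is given, and that $a_i\indep a_0,\ldots,a_{i-1}$ for each $i$; the two nonforking clauses of the definition coincide here, as both the base and the parameter set are empty. So the entire content of the statement is the displayed independence condition.

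Next I would check that the hypothesis of Proposition \ref{PairwiseIndep} is available. In a homogeneous $\omega$-categorical structure the $\varnothing$-definable equivalence relations are exactly the $\aut(M)$-invariant ones, and primitivity says the only such relations are equality and the universal relation. Equality has infinitely many classes and the universal relation is trivial, so $M$ carries no nontrivial $\varnothing$-definable finite equivalence relation, which is precisely what Proposition \ref{PairwiseIndep} needs.

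The third step is to promote the single instance $a_0\indep a_1$ to pairwise independence of the whole sequence. Since forking is invariant under automorphisms and, by homogeneity, the $2$-type $\tp(a_ia_j)$ determines the orbit of the pair $\{a_i,a_j\}$, the relation $a_i\indep a_j$ depends only on $\tp(a_ia_j)$; indiscernibility gives $\tp(a_ia_j)=\tp(a_0a_1)$ for all $i<j$, and symmetry of forking (valid because $T$ is simple) removes any dependence on the order, so $a_i\indep a_j$ for every $i\neq j$. Applying Proposition \ref{PairwiseIndep} to each finite initial segment $a_0,\ldots,a_n$ then yields $a_n\indep a_0,\ldots,a_{n-1}$ for every $n$. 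Finally, the finite character of nonforking upgrades independence from each finite initial segment to independence from the full set $(a_j:j<i)$, which is the remaining clause of the definition.

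The step I expect to require the most care is the passage from the hypothesis $a_0\indep a_1$ to genuine pairwise independence: one must be sure that independence of two singletons is read off from their $2$-type, which is where homogeneity and the invariance of forking are used, and that symmetry lets one ignore which of $a_i$, $a_j$ comes first. Everything else is a direct appeal to Proposition \ref{PairwiseIndep} and to the finitary nature of forking.
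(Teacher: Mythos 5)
Your proof is correct and follows exactly the route the paper intends: the Observation is stated with a box precisely because it is the ``sequence'' reformulation of Proposition \ref{PairwiseIndep}, and your three steps (primitivity rules out proper nontrivial $\varnothing$-definable finite equivalence relations, indiscernibility plus invariance and symmetry of forking upgrade $a_0\indep a_1$ to pairwise independence, then Proposition \ref{PairwiseIndep} applied to initial segments gives the Morley condition) are the intended unwinding. The only superfluous point is the appeal to finite character at the end: since the sequence is indexed by $\omega$ and the base is $\varnothing$, each set $(a_j:j<i)$ is already finite, so that step is vacuous, though harmless.
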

\begin{proposition}
	In a supersimple unstable primitive rank 1 homogeneous $n$-graph $(M;R_1,\ldots,R_n)$, $n>1$, each of the $R_i$ is unstable.
	\label{Year2}
\end{proposition}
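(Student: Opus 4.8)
The plan is to show that for each colour $R_m$ the formula $R_m(x,y)$ admits arbitrarily large half-graphs, and to manufacture these half-graphs by repeated use of the Independence Theorem. Two preliminary facts drive everything. First, since $M$ has $\SU$-rank $1$, for distinct $a,b$ we have $\SU(a/b)\in\{0,1\}$, and the relation $E(x,y)$ given by $x\in\acl(y)$ is an $\emptyset$-definable equivalence relation: reflexivity and transitivity of $\acl$ are immediate, and symmetry follows from the Lascar inequalities (Theorem \ref{LascarIneqs}), since $b\in\acl(a)$ forces $\SU(ab)=1$ and hence $\SU(a/b)=0$. Its classes are finite by $\omega$-categoricity, so by primitivity $E$ is equality; thus $\acl$ is trivial and every pair of distinct elements is independent. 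Second, primitivity makes $\aut(M)$ transitive, so there is a unique $1$-type over $\emptyset$ and, as every colour is realised somewhere, every colour is realised from every vertex.

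The key step is the following realisation principle: if $c_1,\dots,c_r$ are distinct and $\theta_1,\dots,\theta_r$ are any colours, then there is a vertex $x$, independent from $c_1\dots c_r$, with $R_{\theta_j}(x,c_j)$ for all $j$. I would prove this by induction on $r$ using the Independence Theorem. The set $\{c_1,\dots,c_r\}$ is pairwise independent, hence mutually independent by Proposition \ref{PairwiseIndep}; in particular $c_1\dots c_{r-1}\indep c_r$. Having built $x'$ with the correct colours to $c_1,\dots,c_{r-1}$ and $x'\indep c_1\dots c_{r-1}$, choose any $x''$ realising the (consistent) type $R_{\theta_r}(x,c_r)$ with $x''\indep c_r$, which is possible since this type is nonforking by triviality of $\acl$. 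Both $x'$ and $x''$ realise the unique $1$-type over $\emptyset$, and since $\Th(M)$ is low (Proposition \ref{CatSimpLow}) we have $\Lstp(x'/\emptyset)=\stp(x'/\emptyset)=\stp(x''/\emptyset)=\Lstp(x''/\emptyset)$ by Corollary \ref{LascarTypes}. The Independence Theorem then amalgamates $\tp(x'/c_1\dots c_{r-1})$ and $\tp(x''/c_r)$ into a single $x$ with the prescribed colours to all of $c_1,\dots,c_r$ and $x\indep c_1\dots c_r$.

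With this in hand I would build, for each colour $R_m$ and each $N$, a half-graph $a_0,\dots,a_{N-1},b_0,\dots,b_{N-1}$ with $R_m(a_i,b_j)$ iff $i<j$. Fix distinct $b_0,\dots,b_{N-1}$ and a colour $R_{m'}\neq R_m$ (available as $n>1$). Add the $a_i$ one at a time: given the already-constructed, mutually independent set, apply the realisation principle to obtain $a_i$ assigning colour $R_m$ to each $b_j$ with $j>i$, colour $R_{m'}$ to each $b_j$ with $j\le i$, and any fixed colour to the previously chosen $a_t$, taken independent from everything prior so that mutual independence is preserved. The resulting configuration is an $R_m$-half-graph of size $N$; as this succeeds for every $N$, the formula $R_m(x,y)$ has the order property, i.e.\ $R_m$ is unstable. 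Since $m$ was arbitrary, every $R_i$ is unstable.

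The main obstacle is the bookkeeping in the realisation principle: one must keep the growing parameter set mutually independent at every stage so that Proposition \ref{PairwiseIndep} applies and the hypotheses of the Independence Theorem are met, and one must confirm the matching-of-strong-types hypothesis, which is exactly where lowness and the transitivity of $\aut(M)$ enter. Note that the Independence Theorem supplies the amalgamating vertex directly in the monster model, so no separate verification that these colour patterns lie in $\age(M)$ is needed; the unstable hypothesis only guarantees that we are in the intended case rather than being used in the construction itself.
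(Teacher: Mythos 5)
Your proposal is correct and follows essentially the same route as the paper's proof: rank~1 plus primitivity makes algebraic closure trivial, hence every $R_i$ is nonforking, and then iterated use of the Independence Theorem (the paper cites Propositions \ref{PairwiseIndep} and \ref{NonforkingAmalgamation} and Theorem \ref{PrimitiveAlice} for exactly this amalgamation) embeds arbitrarily large half-graphs for each colour, witnessing instability. Your write-up merely expands what the paper compresses into citations, including a minor variant of Observation \ref{AlgClosure} via the Lascar inequalities.
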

\begin{proof}
	In \SU-rank 1 structures, forking is algebraic, so $\tp(a/b)$ forks iff over $\varnothing$ iff $a\in \acl(b)\setminus \acl(\varnothing)$. Therefore, each relation is non-algebraic, by primitivity, and so each relation is nonforking. Using the Independence Theorem to amalgamate partial structures over the empty set (cf. Propositions \ref{PairwiseIndep}, \ref{NonforkingAmalgamation}), we can embed infinite half-graphs for each of the $R_i$ into $M$, witnessing instability. See also Theorem \ref{PrimitiveAlice}.
\end{proof}

\begin{remark}
	{\rm The argument in Proposition \ref{PairwiseIndep} can be carried out in finitely homogeneous binary simple structures even over sets of parameters as long as we guarantee that the realisations of the types we wish to amalgamate have the same \emph{strong} type over the set of parameters, by Proposition \ref{CatSimpLow}.}
\end{remark}

\begin{definition}
	Let $L$ be a finite relational language in which each relation is binary. We will say that a family $\mathcal B$ of finite $L$-structures is the \emph{age of a random $L$-structure} if $B$ is an amalgamation class and all the minimal forbidden structures of $B$ are of size at most 2.
\end{definition}

\begin{proposition}
	Let $M$ be a binary homogeneous simple structure in which there are no $\varnothing$-definable finite equivalence relations on $M$. Suppose that all the relations in $L=\{R_1,\ldots,R_m\}$ are realised in $M$, and $R_1,\ldots,R_k$ are the only forking relations. Then the  subfamily of $\age(M)$ consisting of all finite $\{R_1,\ldots,R_k\}$-free substructures of $M$ is the age of a random $L\setminus\{R_1,\ldots,R_k\}$-structure.
\label{NonforkingAmalgamation}
\end{proposition}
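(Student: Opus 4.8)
\section*{Proof proposal}

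The plan is to verify the two defining conditions of ``the age of a random $L\setminus\{R_1,\ldots,R_k\}$-structure'': that the family $\mathcal F$ of finite $\{R_1,\ldots,R_k\}$-free substructures of $M$ is an amalgamation class, and that all of its minimal forbidden configurations have size at most $2$. The second condition is the heart of the matter, and I would establish it first, since amalgamation then follows almost for free. Heredity is immediate (being a substructure of $M$ and being $\{R_1,\ldots,R_k\}$-free are both inherited by substructures), so the real content is the converse: writing $L'=L\setminus\{R_1,\ldots,R_k\}$, if $X$ is a finite $L'$-structure all of whose two-element substructures lie in $\mathcal F$, then $X$ itself embeds in $M$ as a $\{R_1,\ldots,R_k\}$-free set.

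I would prove this realisation statement by induction on $|X|$, building the embedding one vertex at a time while maintaining that the current image is a mutually independent set. The key observation is that every edge of $X$ uses only nonforking relations, so in any faithful realisation the images $a_1,\ldots,a_{n-1}$ of the already-placed vertices satisfy $a_i\indep a_j$ for $i\neq j$, and hence are mutually independent by Proposition \ref{PairwiseIndep}. To adjoin the final vertex $x_n$ with its prescribed nonforking relations to $a_1,\ldots,a_{n-1}$, I run an inner induction on $j$: supposing I have $u$ realising the required relations to $a_1,\ldots,a_{j-1}$ with $u\indep a_1\ldots a_{j-1}$, and choosing $w$ realising the required relation to $a_j$ with $w\indep a_j$, I apply the Independence Theorem over $\varnothing$ with $B=a_1\ldots a_{j-1}$ and $C=a_j$. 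Its hypotheses hold: $B\indep C$ by Proposition \ref{PairwiseIndep}; $\tp(u/B)$ and $\tp(w/C)$ do not fork over $\varnothing$ by construction; and $u,w$ share a $1$-type, so $\stp(u)=\stp(w)$ because $M$ has no $\varnothing$-definable finite equivalence relations, whence $\Lstp(u)=\Lstp(w)$ by lowness (Corollary \ref{LascarTypes}, via Proposition \ref{CatSimpLow}). The theorem then produces an element realising $\tp(u/B)\cup\tp(w/C)$ and independent from $BC=a_1\ldots a_j$, which advances both the inner and, once $j=n-1$, the outer induction.

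With the size-$2$ characterisation in hand, amalgamation is straightforward: given $A,B,C\in\mathcal F$ with $A=B\cap C$, form the free amalgam $D$ on $B\cup C$ by keeping the relations inside $B$ and inside $C$ and assigning to each cross-pair a fixed nonforking relation realised in $M$ (such a relation is available in the only case that matters, namely when $\mathcal F$ contains an edge). Every two-element substructure of $D$ then lies in $\mathcal F$, so $D\in\mathcal F$ by the realisation statement, and the inclusions $B\hookrightarrow D$, $C\hookrightarrow D$ exhibit the amalgam; JEP is the special case $A=\varnothing$. Together with heredity and the fact that a finite relational language admits only countably many isomorphism types, this shows $\mathcal F$ is an amalgamation class whose minimal forbidden structures have size at most $2$, which is precisely the claim.

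The step I expect to be the main obstacle is the verification of the Lascar-strong-type hypothesis of the Independence Theorem inside the inductive construction: it is exactly here that the three standing assumptions must be combined---the absence of $\varnothing$-definable finite equivalence relations to pass from equal $1$-types to equal strong types, lowness to identify strong and Lascar strong types, and Proposition \ref{PairwiseIndep} to secure the independence $B\indep C$ needed to invoke the theorem at all---and keeping the mutual-independence invariant alive through the induction, so that these hypotheses remain available at every stage, is the part demanding the most care.
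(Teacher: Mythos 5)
Your proof is correct and follows essentially the same route as the paper's: an induction embedding the finite $\{R_1,\ldots,R_k\}$-free structure one vertex at a time as a mutually independent set, invoking the Independence Theorem over $\varnothing$ with its hypotheses secured exactly as you describe (Proposition \ref{PairwiseIndep} for independence, absence of $\varnothing$-definable finite equivalence relations for a unique strong type, lowness via Proposition \ref{CatSimpLow}/Corollary \ref{LascarTypes} to upgrade to Lascar strong type), and using binarity to conclude that the two amalgamated types determine the full type of the new vertex. The only cosmetic differences are that the paper applies the Independence Theorem just once per new vertex---amalgamating its type over $a_1$ with its type over $a_2,\ldots,a_n$, the latter nonforking by the outer induction hypothesis---where you re-derive this by an inner induction over initial segments, and that the paper leaves the (free) amalgamation check implicit once the embedding result is in hand.
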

\begin{proof}
	We aim to show that any finite structure not realising any of $R_1,\ldots, R_k$ embeds in $M$. All the $\{R_1,\ldots,R_k\}$-free structures of size 2 are realised in $M$ because the $R_i$ isolate 2-types. Consider an $\{R_1,\ldots,R_k\}$-free structure $B$ on $n+1$ points. We wish to show that this structure can be embedded into $M$, or, equivalently, that its isomorphism type belongs to $\rm{Age(M)}$.

Let $A=\{a_1,\ldots,a_n\}$ realise the substructure of $B$ on the first $n$ points, embedded in $M$, so $a_1\indep a_2,\ldots,a_n$. By the induction hypothesis, the type $p_1$ of $a_{n+1}$ in $B$ over $a_1$, and $p_2$, the type of $a_{n+1}$ over $a_2,\ldots,a_n$ are nonforking extensions of the unique strong type over the empty set, which by lowness (Proposition \ref{CatSimpLow}) is Lascar strong, and therefore by the Independence Theorem there is a single element $b$ of $M$ simultaneously satisfying both types, so using that $B$ is a binary structure, we get $\tp(b/a_1)\cup\tp(b/a_2,\ldots,a_n)\vdash\tp(b/a_1,\ldots,a_n)$, and conclude that $B$ can be embedded into $M$.
\end{proof}

By the same argument:
\begin{observation}
	Let $M$ be a homogeneous 3-graph of \SU-rank 2 with no definable finite equivalence relations on $M$, and suppose $S,T$ are nonforking relations. Then all finite $S,T$ structures can be embedded into the \SU-rank 2 homogeneous 3-graphs $S(a)$ and $T(a)$ for any vertex $a$.
\label{AllFiniteRFree}
\end{observation}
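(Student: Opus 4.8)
The plan is to reduce the claim to Proposition \ref{NonforkingAmalgamation} applied to $M$ itself, rather than re-running its inductive argument over the base $a$. Since $S$ and $T$ are the nonforking relations, the set of forking relations is contained in $\{R\}$, so Proposition \ref{NonforkingAmalgamation} already tells us that every finite $R$-free $L$-structure lies in $\age(M)$. The whole point is that an $R$-free structure together with an extra vertex joined to it by $S$-edges (or $T$-edges) is still $R$-free, so the embedding into a neighbourhood can be read off directly from an embedding into $M$.

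Concretely, I would proceed as follows. Given a finite $\{S,T\}$-structure $Y$ (equivalently, a finite $R$-free $3$-graph), form the structure $Y^+$ on $V(Y)\cup\{a\}$ by keeping the induced structure on $Y$ and declaring $S(a,y)$ for every $y\in Y$. This is a legitimate $L$-structure, and it has no $R$-edge: none occurs inside $Y$ by hypothesis, and every new edge $a$–$y$ is an $S$-edge. Hence $Y^+$ is $R$-free, so by Proposition \ref{NonforkingAmalgamation} we have $Y^+\in\age(M)$. Fixing an embedding $\iota\colon Y^+\hookrightarrow M$ and writing $a_0=\iota(a)$, the relation $S(a_0,\iota(y))$ holds for all $y$, so $\iota$ carries $Y$ isomorphically onto a substructure of the neighbourhood $S(a_0)$. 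Because $M$ is a homogeneous $3$-graph, any two singletons are isomorphic substructures and hence lie in one orbit, so $\aut(M)$ is transitive on vertices; composing $\iota$ with an automorphism sending $a_0$ to any prescribed vertex $a$ shows $Y$ embeds in $S(a)$ for every $a$. Replacing the cone edges by $T$-edges gives the identical conclusion for $T(a)$.

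The packaging of $S(a)$ and $T(a)$ as homogeneous $3$-graphs of $\SU$-rank $2$ is routine and I would dispatch it in a sentence: homogeneity of the induced structure on $S(a)$ follows from homogeneity of $M$ by the usual cone argument (an isomorphism between finite subsets of $S(a)$ extends to one fixing $a$, since all the relevant vertices are $S$-joined to $a$, hence to an automorphism of $M$ fixing $a$), and the rank is $2$ because $S$ is nonforking, so $\SU(y/a)=\SU(y/\varnothing)=2$ for $y\in S(a)$. I expect no serious obstacle here: the only thing genuinely needing a check is that the cone $Y^+$ is an $R$-free $L$-structure, which is immediate from the construction. The mild alternative that literally matches ``by the same argument'' would be to rerun the induction of Proposition \ref{NonforkingAmalgamation} inside $S(a)$ over the base $a$, where one would have to verify that $S$ and $T$ stay nonforking over $a$ and that pairwise independence over $a$ yields joint independence (via lowness and the remark following Proposition \ref{PairwiseIndep}); the reduction above is designed precisely to avoid that bookkeeping.
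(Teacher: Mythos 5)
Your proposal is correct and takes essentially the same route as the paper: the paper's entire proof of this observation is the single line that it ``is a direct consequence of Proposition \ref{NonforkingAmalgamation}'', and your cone construction (adjoining a vertex $S$-joined, resp.\ $T$-joined, to the given $\{S,T\}$-structure, which stays $R$-free and hence embeds in $M$, placing the structure inside $S(a_0)$, then using vertex-transitivity to move $a_0$ to any prescribed $a$) is exactly the argument that justifies that one-liner. Your side remarks on the homogeneity and rank of $S(a)$ likewise match what the paper establishes elsewhere (Proposition \ref{PropHomNeigh} and the nonforking hypothesis on $S$, $T$).
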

\begin{proof}
This is a direct consequence of Proposition \ref{NonforkingAmalgamation}.
\end{proof}

The following observation is folklore, but we include a proof for completeness.
\begin{observation}
	In a primitive $\omega$-categorical structure, $\acl(a)=\{a\}$.
	\label{AlgClosure}
\end{observation}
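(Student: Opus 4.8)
\section*{Proof proposal}

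The plan is to manufacture a nontrivial $\emptyset$-definable equivalence relation out of any nontrivial algebraic closure, contradicting primitivity. I will lean on two consequences of Ryll--Nardzewski (Theorem \ref{RyllNardzewski}): first, that $\acl(a)$ is finite for every $a$, since $b\in\acl(a)$ holds exactly when the orbit of $b$ under the stabiliser $\aut(M)_a$ is finite and there are only finitely many such orbits; and second, that every $\aut(M)$-invariant subset of $M^2$ is $\emptyset$-definable, being a union of the finitely many $2$-types. I will also use that $\acl$ is an automorphism-equivariant closure operator, so that $a\in\acl(a)$, $\acl(\acl(a))=\acl(a)$, and $g(\acl(a))=\acl(g(a))$ for every $g\in\aut(M)$.

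First I would define $E(x,y)$ to hold iff $\acl(x)=\acl(y)$. This is visibly reflexive, symmetric and transitive, and it is $\aut(M)$-invariant by equivariance of $\acl$; hence by the second fact above it is $\emptyset$-definable. Since $M$ is primitive, $E$ must be either equality or the full relation $M\times M$. The full relation is impossible: if $\acl(x)=\acl(y)$ for all $x,y$, then fixing $a$ we get $x\in\acl(x)=\acl(a)$ for every $x$, so $\acl(a)=M$, contradicting the finiteness of $\acl(a)$ against $|M|=\infty$. Therefore $E$ is equality, i.e. $\acl(x)=\acl(y)$ forces $x=y$.

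It remains to upgrade mere membership $b\in\acl(a)$ to equality $\acl(b)=\acl(a)$; this is the step I expect to be the crux, since the equivalence relation alone only separates elements with \emph{distinct} closures. Suppose $b\in\acl(a)$. Monotonicity and idempotence give $\acl(b)\subseteq\acl(\acl(a))=\acl(a)$. Primitivity of the action includes transitivity, so there is $g\in\aut(M)$ with $g(a)=b$; then $\acl(b)=\acl(g(a))=g(\acl(a))$ has the same finite cardinality as $\acl(a)$ because $g$ is a bijection. A subset of a finite set with equal cardinality is the whole set, so $\acl(b)=\acl(a)$, whence $E(a,b)$ and therefore $b=a$. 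Thus $\acl(a)=\{a\}$.

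The only delicate points are invoking the two Ryll--Nardzewski consequences correctly (finiteness of $\acl(a)$ and definability of the invariant relation $E$) and securing transitivity; for the homogeneous $n$-graphs of this paper transitivity is automatic, since any singleton maps to any other by homogeneity, so no extra hypothesis is needed beyond primitivity.
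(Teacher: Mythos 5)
Your proof is correct and is essentially the paper's own argument: both define the invariant equivalence relation $\acl(x)=\acl(y)$, use primitivity to force it to be equality (the full relation being excluded since $\acl(a)$ is finite by $\omega$-categoricity while $M$ is infinite), and upgrade $b\in\acl(a)$ to $\acl(b)=\acl(a)$ via monotonicity plus the equal-cardinality argument coming from transitivity. The only cosmetic differences are that the paper folds your final ``upgrade'' step into its proof of symmetry of the relation, and that your detour through $\emptyset$-definability of $E$ is unnecessary, since primitivity applies directly to invariant equivalence relations.
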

\begin{proof}
	The relation $x\sim y$ that holds if $\acl(x)=\acl(y)$ is an equivalence relation. It is clearly reflexive and transitive, and it is symmetric because if $y\in\acl(x)$, then $\acl(y)\subseteq\acl(x)$ and $|\acl(y)|=|\acl(x)|$, so the algebraic closures of $x$ and $y$ are equal as, by $\omega$-categoricity, they are finite sets. Hence $\sim$ is a symmetric relation, and clearly invariant. By primitivity, the $\sim$-classes are finite, and this relation is trivial. 
\end{proof}

Given a natural number $m$ and an irreflexive symmetric relation $R$, we denote the structure on $m$ vertices $v_0,\ldots,v_{m-1}$ in which for all distinct $v_i,v_j$ the formula $R(v_i,v_j)$ holds by $K_m^R$. In the following observation, a \emph{minimal} finite equivalence relation is a proper finite equivalence relation with minimal number of classes.

\begin{proposition}
	If $(M;R_0,\ldots,R_k)$ is a simple homogeneous transitive $k+1$-graph in which $R_0$ is a minimal finite equivalence relation with $m$ classes, and $R_1$ is a nonforking relation realised across any two $R_0$-classes, then the action of $\aut(M)$ induced on $M/R_0$ is $k+1$-transitive. 
	\label{EmbeddingCompleteGraphs}
\end{proposition}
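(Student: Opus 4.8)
The plan is to reduce the statement to a concrete embedding problem and then solve it with the Independence Theorem, in the spirit of Propositions \ref{PairwiseIndep} and \ref{NonforkingAmalgamation}. Since $R_0$ is $\varnothing$-definable, every automorphism of $M$ permutes the $R_0$-classes, so $\aut(M)$ does act on the finite set $M/R_0$, and by homogeneity two ordered $(k+1)$-tuples of distinct classes lie in the same orbit as soon as we can choose representatives inducing isomorphic substructures of $M$ via an order-matching map. I would therefore prove the embedding claim: for any $j\le k+1$ pairwise distinct classes $C_1,\ldots,C_j$ there are representatives $a_i\in C_i$ with $R_1(a_i,a_\ell)$ for all $i\ne\ell$, i.e. one can inscribe a copy of $K_j^{R_1}$ meeting each prescribed class in exactly one vertex. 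Granting this for $j=k+1$, any two $(k+1)$-tuples of distinct classes carry representatives forming copies of $K_{k+1}^{R_1}$; the vertex-by-vertex bijection between them is an isomorphism of induced substructures, and its extension to an automorphism (Definition \ref{DefHom}) sends the first tuple of classes onto the second in the prescribed order, yielding $(k+1)$-transitivity on $M/R_0$. (If $m<k+1$ the statement is vacuous.)

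The embedding claim I would prove by induction on $j$, the case $j=2$ being exactly the hypothesis that $R_1$ is realised across any two classes. For the inductive step, suppose $a_1,\ldots,a_n$ is a $K_n^{R_1}$ with $a_i\in C_i$ and let $C_{n+1}$ be a further class; I want $b\in C_{n+1}$ with $R_1(b,a_i)$ for all $i\le n$. Split the base as $\{a_1\}$ and $\{a_2,\ldots,a_n\}$ and apply the Independence Theorem over $\varnothing$ to the partial types $p_1(x)=\{R_1(x,a_1)\}$ and $p_2(x)=\{R_1(x,a_i):2\le i\le n\}$, each augmented by the clause ``$x\in C_{n+1}$''. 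Because $R_1$ is a nonforking relation, every cross-class $R_1$-edge is nonforking, so $p_1$ is a nonforking extension of $\tp(b/\varnothing)$ (the nonforking of $p_2$ requires joint independence, addressed below); the clause $x\in C_{n+1}$ fixes the $R_0$-class, hence the strong type over $\varnothing$, of any realisation, so $p_1$ and $p_2$ restrict to the same strong type over $\varnothing$, which by lowness (Proposition \ref{CatSimpLow}) and Corollary \ref{LascarTypes} is the same Lascar strong type. The Independence Theorem then produces a single $b$ realising $p_1\cup p_2$ with $b\indep a_1,\ldots,a_n$, and since $M$ is binary the pairwise data determine $\tp(b/a_1,\ldots,a_n)$, giving the desired $K_{n+1}^{R_1}$.

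To run the Independence Theorem I must know $a_1\indep a_2,\ldots,a_n$ and, more generally, that $b$ is jointly nonforking over the $a_i$; here lies the main obstacle. Precisely because $R_0$ is a $\varnothing$-definable finite equivalence relation, Proposition \ref{PairwiseIndep} is not available off the shelf, so pairwise independence of the clique vertices need not upgrade to mutual independence for free. I would resolve this by running the mutual-independence argument relative to the class data, i.e. over $\acl^{eq}(\varnothing)$, where the $\varnothing$-definable finite equivalence relations collapse and strong types are named; the remark following Proposition \ref{PairwiseIndep} authorises carrying out that amalgamation over parameters provided the types to be glued share a strong type, which is guaranteed once every vertex is assigned a definite class. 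In effect the induction must be carried out simultaneously for the embedding claim and for the statement ``pairwise $R_1$-related representatives in distinct classes are mutually independent,'' each step feeding the next. The delicate point throughout is the bookkeeping of strong types: at every amalgamation one must check that the two realisations being identified sit in the same class, and in the same strong type if finer $\varnothing$-definable finite equivalence relations exist, so that the genuine hypotheses of the Independence Theorem are met rather than merely the pairwise, nonforking ones.
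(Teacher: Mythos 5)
Your strategy coincides with the paper's: reduce $(k+1)$-transitivity on $M/R_0$ to inscribing $R_1$-cliques transversal to prescribed classes, and build these cliques inductively by amalgamating nonforking $R_1$-extensions with the Independence Theorem, matching (Lascar) strong types by fixing the $R_0$-class and invoking lowness (Proposition \ref{CatSimpLow}, Corollary \ref{LascarTypes}). You are in fact more explicit than the paper on the two points it leaves implicit: that the clique must be produced across an \emph{arbitrary prescribed} tuple of classes (embedding $K_{k+1}^{R_1}$ somewhere does not by itself yield transitivity), and that the vertex produced at each stage comes out independent from the whole existing clique, which is what keeps the induction running.

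There is, however, one genuine gap, and it sits exactly where you write that the case $j=2$ ``is exactly the hypothesis''. Before the Independence Theorem can be invoked, the partial types $p_1(x)=\{R_1(x,a_1)\}\cup\{x\in C_{n+1}\}$ and $p_2(x)=\{R_1(x,a_i):2\le i\le n\}\cup\{x\in C_{n+1}\}$ must be \emph{realised}: the theorem amalgamates types $\tp(b_1/a_1)$ and $\tp(b_2/a_2\ldots a_n)$ of actual elements $b_1,b_2\in C_{n+1}$. Consistency of $p_1$ says that the fixed vertex $a_1$ has an $R_1$-neighbour in the prescribed class $C_{n+1}$, whereas the hypothesis only says that \emph{some} vertex of $C_1$ has an $R_1$-neighbour in $C_{n+1}$; homogeneity does not transfer this to $a_1$, since an automorphism moving a witness onto $a_1$ may move $C_{n+1}$ to another class. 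Likewise, consistency of $p_2$ is the prescribed-clique, prescribed-class form of the induction hypothesis, so that stronger statement is what your induction must actually carry, and its base case is again the pointwise claim. The gap is fillable by a short argument you never state: the relation $P(x,y)$, defined on pairs in distinct classes by ``$x$ has an $R_1$-neighbour in $y/R_0$'', is invariant under $\aut(M)$, hence by homogeneity a union of the atomic $2$-types among $R_1,\ldots,R_k$, hence symmetric; so from a single edge $R_1(u,v)$ with $u\in C_1$, $v\in C_{n+1}$ one gets $P(v,c)$ for every $c\in C_1$ (witnessed by $u$), whence $P(c,v)$ by symmetry, i.e.\ every vertex of $C_1$ has an $R_1$-neighbour in $C_{n+1}$. (The same quantifier-elimination observation settles your residual worry about finer $\varnothing$-definable finite equivalence relations: any such relation refining $R_0$ is a subdisjunction of atoms, hence $R_0$ itself, so the class really does determine the strong type.) With that lemma inserted as the true base case your induction goes through; the paper's own proof elides the same point when it picks $a',b'$ in a common third class, but in your prescribed-class formulation---which is the one transitivity actually requires---the point cannot be skipped.
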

\begin{proof}
	It suffices to show that $M$ embeds $K_{k+1}^{R_1}$. First note that we can embed the triangle $R_1R_1R_1$ across any three $R_0$-classes. To see this, consider $a,b$ with $R_1(a,b)$. By transitivity, $a$ and $b$ are of the same type over the empty set. The relation $R_1$ is realised between any two classes; consider $a',b'$ in the same $R_0$-class such that $R_1(a,a')$ and $R_1(b,b')$. Then $a'$ and $b'$ have the same (Lascar) strong type over $\varnothing$ and $\tp(a'/a),\tp(b'/b)$ are nonforking extensions of the unique 1-type over the empty set; we can apply the Independence Theorem to find an element $c$ in the same $R_0$ class such that $abc$ is a $K_3^{R_1}$.

	The result follows by iterating the same argument, amalgamating nonforking ($R_1$) extensions of smaller complete graphs over the empty set. We can only iterate as many times as the number of $R_0$-classes.
\end{proof}
\begin{proposition}
	Let $M$ be a simple homogeneous transitive 3-graph in which $R$ defines an equivalence relation, and assume that the induced action of $\aut(M)$ on $M/R$ is transitive, but not 2-transitive, so for any pair of distinct $R$-classes $C,C'$ only one of $S,T$ is realised across $C,C'$. Then the $S,T$-graph induced on a set $X$ containing exactly one element from each $R$-class is homogeneous.
\end{proposition}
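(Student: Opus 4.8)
The plan is to lift automorphisms of $M$ to automorphisms of $X$ by passing through the induced action on $M/R$. First I record the basic shape of the induced structure. Since $X$ meets each $R$-class in exactly one point, no two elements of $X$ are $R$-related, so the $L$-structure that $M$ induces on any subset of $X$ uses only $S$ and $T$; in other words the structure induced on $X$ is genuinely an $\{S,T\}$-graph. Moreover, by the hypothesis that across any two distinct $R$-classes $C,C'$ exactly one of $S,T$ is realised, the colour of the unordered pair $\{C,C'\}$ is a well-defined invariant, and for $x,y\in X$ lying in classes $C,C'$ the relation holding between $x$ and $y$ is precisely that colour. Thus $X$ is isomorphic, as an $\{S,T\}$-graph, to $M/R$ equipped with this induced $2$-colouring.

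Now let $\phi\colon A\to B$ be an isomorphism between finite substructures of $X$. Because $A,B\subseteq X$ carry no $R$-edges, $\phi$ preserves all of $R,S,T$ and is therefore an isomorphism of finite substructures of $M$. By the homogeneity of $M$ it extends to some $\sigma\in\aut(M)$. The difficulty is that $\sigma$ need not map $X$ to $X$, so it is not directly an automorphism of the induced structure; I will correct it using the permutation $\bar\sigma$ that $\sigma$ induces on $M/R$. Define $\tau\colon X\to X$ by letting $\tau(x)$ be the unique element of $X$ lying in the class $\bar\sigma([x])=[\sigma(x)]$, where $[x]$ denotes the $R$-class of $x$. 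This is well defined and bijective because $\bar\sigma$ permutes the $R$-classes and $X$ is a transversal.

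It then remains to check two things. First, that $\tau$ is an automorphism of the $\{S,T\}$-graph $X$: for distinct $x,y\in X$ in classes $C,C'$ the relation between them is the colour of $\{C,C'\}$, the relation between $\tau(x),\tau(y)$ is the colour of $\{\bar\sigma(C),\bar\sigma(C')\}$, and these colours agree because $\sigma\in\aut(M)$ carries any witnessing cross-pair of colour $P$ between $C,C'$ to a cross-pair of colour $P$ between $\bar\sigma(C),\bar\sigma(C')$; since $\tau$ is a bijection preserving both $S$ and $T$ on a structure in which every pair is $S$ or $T$, it is an automorphism. Second, that $\tau$ extends $\phi$: if $x\in A$ then $\sigma(x)=\phi(x)\in B\subseteq X$ lies in $\bar\sigma([x])$, so by the defining property of $\tau$ we get $\tau(x)=\phi(x)$.

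I expect the only genuine obstacle to be conceptual rather than computational: recognising that the uniform colouring across class-pairs (which is exactly what the failure of $2$-transitivity on $M/R$ buys us) is precisely the ingredient making the re-selection map $\tau$ both well defined and relation-preserving. Everything else is bookkeeping with the quotient action, and simplicity of $M$ plays no role beyond furnishing the ambient setting.
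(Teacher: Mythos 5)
Your proof is correct and is essentially the paper's argument: the paper extends an isomorphism of finite transversals to $\sigma\in\aut(M)$ by homogeneity and passes to $\pi\sigma\pi^{-1}$ on $M/R$, which under the identification $X\cong M/R$ (valid precisely because each pair of classes carries a single well-defined colour) is exactly your corrected map $\tau$. The only difference is presentational: you work on the transversal $X$ and verify well-definedness and the extension property explicitly, while the paper phrases the same construction on the quotient $M/R$.
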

\begin{proof}
	Consider the graph defined on $M/R$ with predicates $\hat S,\hat T$ which hold of two distinct classes $a/R,b/R$ if for some/any $\alpha\in a/R,\beta\in b/R$ we have $S(\alpha,\beta)$ (respectively, $T(\alpha,\beta)$). This graph is clearly isomorphic to the graph induced on $X$.
\begin{claim}
	The graph interpreted in $M/R$ as described in the preceding paragraph is homogeneous in the language $\{\hat S,\hat T\}$.
\end{claim}
\begin{proof}\label{ClaimTrick}
	Let $\pi$ denote the quotient map $M\rightarrow M/R$. Given two isomorphic finite substructures $A,A'$ of $M/R$, then any transversals to $\pi^{-1}(A)$ and $\pi^{-1}(A')$ are isomorphic, so by the homogeneity of $M$ there exists an automorphism $\sigma$ taking $\pi^{-1}(A)$ to $\pi^{-1}(A')$. The map $\pi\sigma\pi^{-1}$ is an automorphism of $M/R$ taking $A$ to $A'$.
\end{proof}
And the result follows.
\end{proof}
The argument from Claim \ref{ClaimTrick} will appear again in the future. 

\begin{observation}\label{sop}
	In any homogeneous transitive $n$-graph $(M,R_1,\ldots,R_n)$, if $R_i(a)$ is an $R_i$-complete graph, then for any $b\in R_i(a)$ we have $\{a\}\cup R_i(a)=\{b\}\cup R_i(b)$.
\end{observation}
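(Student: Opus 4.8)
The plan is to prove the two inclusions between $\{a\}\cup R_i(a)$ and $\{b\}\cup R_i(b)$ separately, using transitivity to transport the completeness hypothesis from the neighbourhood of $a$ to the neighbourhood of $b$. First I would record two elementary facts. Since $R_i$ is symmetric and $b\in R_i(a)$, we have $R_i(b,a)$, so $a\in R_i(b)$. Second, by transitivity there is $\sigma\in\aut(M)$ with $\sigma(a)=b$; because automorphisms commute with the relations, $\sigma$ maps $R_i(a)$ onto $R_i(b)$ and restricts to an isomorphism between the induced substructures. Hence the substructure induced on $R_i(b)$ is an isomorphic copy of the one induced on $R_i(a)$, and in particular $R_i(b)$ is again an $R_i$-complete graph. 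This last point is the only substantive use of the hypotheses, and it is what drives the whole argument.

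For the inclusion $\{a\}\cup R_i(a)\subseteq\{b\}\cup R_i(b)$, I would check membership element by element. The vertex $a$ lies in $R_i(b)$ by the first fact. Any $x\in R_i(a)$ with $x\neq b$ is $R_i$-adjacent to $b$, since $x$ and $b$ are then two distinct vertices of the $R_i$-complete graph $R_i(a)$ (recall $b\in R_i(a)$); thus $x\in R_i(b)$. Together these place every element of $\{a\}\cup R_i(a)$ inside $\{b\}\cup R_i(b)$.

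For the reverse inclusion I would argue by contradiction. Suppose $d\in R_i(b)$ but $d\notin\{a\}\cup R_i(a)$. Then $d\neq a$ and $\neg R_i(a,d)$, so the edge between $a$ and $d$ carries some colour $R_j$ with $j\neq i$. On the other hand, both $a$ and $d$ belong to $R_i(b)$ — the former by the first fact, the latter by assumption — and they are distinct, so the $R_i$-completeness of $R_i(b)$ forces $R_i(a,d)$, contradicting $R_j(a,d)$ with $j\neq i$. Hence no such $d$ exists, giving $\{b\}\cup R_i(b)\subseteq\{a\}\cup R_i(a)$ and therefore equality.

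The main obstacle, if there is one, is simply recognising that transitivity is exactly the ingredient needed to conclude that $R_i(b)$ inherits $R_i$-completeness from $R_i(a)$; once that is in hand, the reverse inclusion reduces to a single forbidden triangle $a,b,d$ with two $R_i$-edges and one edge of a different colour, and the rest is bookkeeping with the symmetry of $R_i$ and the definition of the neighbourhoods. I would expect the finiteness of the language and any delicate model-theoretic machinery to be irrelevant here: the statement is purely combinatorial once the automorphism $\sigma$ is produced.
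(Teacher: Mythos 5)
Your proof is correct and follows essentially the same route as the paper: the key step in both is that transitivity transports $R_i$-completeness from $R_i(a)$ to $R_i(b)$, after which a vertex in one neighbourhood but not the other would force a contradictory $R_i$-edge. The paper merely states the contradiction for one inclusion and leaves the symmetric half implicit, whereas you spell out both inclusions explicitly.
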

\begin{proof}
	If $c\in R_i(b)\setminus R_i(a)$, then both $a$ and $c$ are in $R_i(b)$, which is $R_i$-complete by transitivity, and therefore $R_i(a,c)$ holds, contradiction.
\end{proof}
\begin{observation}
	If $(M,R_1,\ldots,R_n)$, where $n>1$, is a primitive homogeneous $n$-graph, then for all $i$ with $1\leq i\leq n$, the structure $R_i(a)$ is not $R_i$-complete.
	\label{NotRComplete}
\end{observation}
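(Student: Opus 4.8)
The plan is to argue by contradiction and use Observation \ref{sop} to manufacture a nontrivial invariant equivalence relation, directly contradicting primitivity. Note first that a primitive structure is in particular transitive, so $M$ is transitive and Observation \ref{sop} applies. Suppose, for a contradiction, that $R_i(a)$ is $R_i$-complete for some vertex $a$. Since $\aut(M)$ acts transitively and the property ``$R_i(x)$ is $R_i$-complete'' is preserved by automorphisms, $R_i(x)$ is $R_i$-complete for \emph{every} $x\in M$.

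The key step is then to read Observation \ref{sop} as the transitivity of the reflexive closure of $R_i$. Let $E$ be the relation defined by the $\varnothing$-formula $x=y\vee R_i(x,y)$; it is reflexive and symmetric by construction. For transitivity, suppose $E(x,y)$ and $E(y,z)$ with the nontrivial case $y\in R_i(x)$ and $z\in R_i(y)$. Observation \ref{sop} gives $\{x\}\cup R_i(x)=\{y\}\cup R_i(y)$, so $z\in\{y\}\cup R_i(y)=\{x\}\cup R_i(x)$, i.e. $z=x$ or $R_i(x,z)$, which is $E(x,z)$. Thus $E$ is a $\varnothing$-definable (hence $\aut(M)$-invariant) equivalence relation whose classes are exactly the sets $\{x\}\cup R_i(x)$.

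It remains to verify that $E$ is a \emph{proper, nontrivial} congruence, which is the only point requiring care. The classes of $E$ have size at least two because, by the $n$-graph convention, $R_i$ is realised in $M$, so some $R_i(x)$ is nonempty; hence $E$ is not equality. At the same time $E$ is not the total relation: since $n>1$, some other predicate $R_j$ (with $j\neq i$) is realised, yielding distinct $x,y$ with $R_j(x,y)$ and therefore $\neg R_i(x,y)$, so $x$ and $y$ lie in distinct $E$-classes. A nontrivial proper equivalence relation invariant under $\aut(M)$ contradicts the primitivity of $M$, completing the argument. I do not expect a genuine obstacle here: the content is entirely carried by Observation \ref{sop}, and the only thing to watch is the bookkeeping that $E$ is neither equality nor the full relation, which is where the hypotheses ``$R_i$ realised'' and ``$n>1$'' are used.
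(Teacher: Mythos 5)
Your proof is correct and follows essentially the same route as the paper: both reduce to Observation \ref{sop}, conclude that the sets $\{x\}\cup R_i(x)$ are the classes of an $\aut(M)$-invariant equivalence relation (the paper phrases these as $R_i$-connected components, you phrase them as classes of the reflexive closure of $R_i$), and then contradict primitivity using that $R_i$ is realised and $n>1$ to get nontriviality and properness. Your write-up just makes explicit the transitivity verification and the bookkeeping that the paper leaves implicit.
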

\begin{proof}
	Suppose not. Then, using Observation \ref{sop} and homogeneity, there is $i$ with $1\leq i\leq n$ such that for all $a,b$ with $R_i(a,b)$ we have $\{a\}\cup R_i(a)=\{b\}\cup R_i(b)$. Hence, $\{a\}\cup R_i(a)$ is an $R_i$-connected component. This contradicts primitivity, since $|R_i(a)|>0$ and as $n>1$, $\{a\}\cup R_i(a)\neq M$.
\end{proof}

\begin{definition}\label{DefMultipartite}
An $n$-graph is \emph{$R$-multipartite} with $k$ ($k>1$, possibly infinite) parts if there exists a (not necessarily definable) partition $P_1,\ldots,P_k$ of its vertex set into nonempty subsets such that if two vertices $x,y$ are $R$-adjacent then they do not belong to the same $P_i$. We will say that $G$ is \emph{$R$-complete-multipartite} if $G$ is $R$-multipartite with at least two parts and for all pairs $a,b$ from distinct classes, $R(a,b)$ holds.
\end{definition}

\begin{proposition}\label{PropMultipartite}
	Let $(M;R_1,\ldots,R_n)$ be an $R_i$-connected transitive homogeneous $n$-graph. If for some $a\in M$ the set $R_i(a)$ is $R_i$-complete-multipartite, then $M$ is $R_i$-complete-multipartite (and in particular is not primitive).
\end{proposition}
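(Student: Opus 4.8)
The plan is to reduce the statement to showing that the reflexive closure of $\neg R_i$ is an equivalence relation on $M$; once that is in hand, its classes are precisely the parts of a complete-multipartite decomposition, since two vertices lie in different classes exactly when they are $R_i$-adjacent, and as $R_i$ is realised there are at least two classes, so $M$ is $R_i$-complete-multipartite and this invariant partition already witnesses imprimitivity. First I would record, using transitivity of $\aut(M)$, that $R_i(b)$ is $R_i$-complete-multipartite for \emph{every} vertex $b$, since an automorphism sending $a$ to $b$ carries $R_i(a)$ isomorphically onto $R_i(b)$. In particular, inside each neighbourhood the ``same part'' relation is literally the restriction of $\neg R_i$, which is therefore an equivalence relation with at least two classes there. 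Two consequences will be used constantly: (i) no vertex $p$ is $R_i$-adjacent to all three of $u,v,w$ whenever $R_i(u,w)$, $\neg R_i(u,v)$, $\neg R_i(v,w)$, because $u,v,w\in R_i(p)$ would then violate transitivity of the same-part relation on the complete-multipartite $R_i(p)$; and (ii) the same-part relation on any $R_i(p)$ is transitive.

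The crucial intermediate step is to show that $M$ has $R_i$-diameter at most $2$, equivalently that any two $\neg R_i$-related vertices have a common $R_i$-neighbour. I would argue by contradiction from a geodesic $R_i$-path $x_0,x_1,x_2,x_3,\dots$ of length at least $3$. Being geodesic forces $\neg R_i(x_0,x_2)$ and $\neg R_i(x_1,x_3)$, so $x_0,x_2$ share a part $A$ of $R_i(x_1)$; choosing $m$ in a different part of $R_i(x_1)$ gives $R_i(m,x_0)$ and $R_i(m,x_2)$, and reading the complete-multipartite structure of $R_i(x_2)$ — where $x_1,x_3$ share a part while $m$ does not — forces $R_i(m,x_3)$. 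The resulting path $x_0,m,x_3$ of length $2$ contradicts geodesicity. This is the step I expect to be the main obstacle, since it is exactly where connectedness must be combined with the local multipartite structure to prevent bad triangles from propagating indefinitely.

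Finally I would rule out any configuration $u,v,w$ with $R_i(u,w)$ and $v$ non-adjacent to both. As $R_i(u)$ has at least two parts, pick $t$ in a part of $R_i(u)$ different from $w$'s part, so $R_i(u,t)$ and $R_i(t,w)$; fact (i) then forces $\neg R_i(t,v)$. Using the diameter bound, choose a common $R_i$-neighbour $m$ of $u$ and $v$; fact (i) again forces $\neg R_i(m,w)$. Reading $R_i(u)$, where $m$ shares $w$'s part but $t$ does not, yields $R_i(m,t)$. Now $u,v,t$ all lie in $R_i(m)$, with $u,v$ in one part (since $\neg R_i(u,v)$) and $v,t$ in one part (since $\neg R_i(t,v)$); transitivity (fact (ii)) forces $\neg R_i(u,t)$, contradicting $R_i(u,t)$. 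Hence no such configuration exists, $\neg R_i$ is transitive, and the conclusion follows.
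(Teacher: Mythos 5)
Your proof is correct, and it reaches the paper's goal --- that the reflexive closure of $\neg R_i$ is an equivalence relation --- by a genuinely different route. The paper's own proof is a single global computation: fixing $a$ and $b\in R_i(a)$, it uses the $a$-invariant partition $E_a(x,y):=R(a,x)\wedge R(a,y)\wedge\neg R(x,y)$ to pin down $R_i(b)$ exactly as $a/E_b\cup\bigl(R_i(a)\setminus b/E_a\bigr)$, concludes that $a/E_b\cup R_i(a)$ is an $R_i$-connected component and hence all of $M$ by connectedness, and reads off diameter $2$ and transitivity of non-adjacency in one stroke (all non-neighbours of $a$ lie in the single part $a/E_b$ of $R_i(b)$, so they are pairwise non-adjacent). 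You instead argue entirely by local forbidden configurations: first the geodesic argument bounding the $R_i$-diameter by $2$, then the common-neighbour argument eliminating any chordal failure of transitivity of $\neg R_i$, with facts (i) and (ii) doing all the work. What your version buys is that every step is a self-contained contradiction using only vertex-transitivity of $\aut(M)$ and the multipartite structure of neighbourhoods, with no need to verify the component claim that the paper leaves largely implicit (the paper checks closure of its candidate component only at $a$ and at $b\in R_i(a)$); what the paper's version buys is brevity and an explicit description of $M$ as $R_i(a)$ plus one part of $R_i(b)$. One caveat applying equally to both proofs: the imprimitivity clause tacitly needs the classes of $\neg R_i$ to be non-singletons, i.e.\ that $M$ is not an $R_i$-complete graph; this is excluded by the paper's standing assumption that all $n\geq 2$ relations are realised, but neither your argument nor the paper's says so.
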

\begin{proof}
	For simplicity, we will write $R$ and not $R_i$. Note first that the partition of $R(a)$ is invariant over $a$, defined by $R(a,x)\wedge R(a,y)\wedge\neg R(x,y)=:E_a(x,y)$. Take any $b\in R(a)$. By homogeneity, $R(b)$ consists of $a/E_b$ together with $R(a)\setminus(b/E_a)$. We claim that this is all there is in $M$. First note that there are no more classes in $R(b)\setminus R(a)$: if we had $c\in R(b)\setminus R(a)$ not $E_b$-equivalent to $a$, then by homogeneity we would have $R(a,c)$, contradicting $c\notin R(a)$. Therefore, $a/E_b\cup R(a)$ is an $R$-connected component of $M$; by connectedness, it is all of $M$, $\diam_R(M)=2$, and $\neg R(x,y)$ is an equivalence relation.
\end{proof}

\begin{observation}\label{ObsFiniteDiameter}
	If $(M,R_1,\ldots,R_n)$ is an $\omega$-categorical $n$-graph, then each connected component of $(M,R_i)$ has finite diameter.
\end{observation}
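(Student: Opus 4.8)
The plan is to exploit $\omega$-categoricity to bound the number of possible distances via the Ryll-Nardzewski theorem. The first step is to observe that the $R_i$-distance function $d_i(x,y)$ is invariant under $\aut(M)$: any automorphism $\sigma$ sends an $R_i$-path of length $\ell$ from $x$ to $y$ to an $R_i$-path of the same length from $\sigma(x)$ to $\sigma(y)$, so $d_i(\sigma(x),\sigma(y))\leq d_i(x,y)$, and applying $\sigma^{-1}$ yields equality. Hence $d_i$ is constant on each orbit of the action of $\aut(M)$ on $M^2$. Equivalently, since in the countable $\omega$-categorical (hence saturated, by Proposition \ref{CategoricalSaturated}) model two pairs with the same $2$-type lie in the same orbit (Proposition \ref{PropSaturation}), the value $d_i(x,y)\in\omega\cup\{\infty\}$ depends only on the $2$-type of $(x,y)$.

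Next I would invoke Theorem \ref{RyllNardzewski}: $\omega$-categoricity gives $|S_2(T)|<\infty$, so $\aut(M)$ has only finitely many orbits on $M^2$. Since $d_i$ is constant on each orbit, it attains only finitely many distinct values overall, and in particular only finitely many \emph{finite} values. Let $N$ be the maximum of these finite values. Finally, a connected component $C$ of $(M,R_i)$ is by definition $R_i$-connected, so every pair $x,y\in C$ is joined by an $R_i$-path inside $C$ and thus satisfies $d_i(x,y)<\infty$, whence $d_i(x,y)\leq N$. Therefore $\diam_{R_i}(C)=\sup\{d_i(x,y):x,y\in C\}\leq N<\infty$, as required.

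There is no genuine obstacle here; the proof is short once one notices that distance is an $\aut(M)$-invariant, hence type-invariant, quantity. The only point meriting a word of care is the passage from ``finitely many $2$-types'' to ``finitely many distance values,'' which rests on the identification of $2$-types with orbits in the saturated model (Propositions \ref{CategoricalSaturated} and \ref{PropSaturation}); everything else is immediate from the definitions of path, distance and diameter.
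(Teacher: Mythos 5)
Your proof is correct and follows essentially the same route as the paper's: both rest on the fact that $R_i$-distance is $\aut(M)$-invariant (hence determined by the $2$-type) and that $\omega$-categoricity gives only finitely many $2$-types, so only finitely many distance values can occur. The paper phrases this as a contradiction (infinite diameter would force infinitely many $2$-types) while you argue directly via a maximum finite value $N$, but this is only a cosmetic difference.
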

\begin{proof}
	Each of the $R_i$-distances is preserved by automorphisms. If one of the connected components of $(M,R_i)$ has infinite diameter, then there are infinitely many 2-types, contradicting $\omega$-categoricity.
\end{proof}

As a consequence of this observation, in $\omega$-categorical edge-coloured graphs the relation $E_i(x,y)$ which holds if there is a path of colour $i$ between $x$ and $y$ is definable. Also, in primitive $n$-coloured graphs, each $(M,R_i)$ is connected, since the equivalence relation $x\sim_{R_i}y$ that holds if $x$ and $y$ are $R_i$-connected is invariant under $\aut(M)$.

\begin{observation}\label{diam}
	If $(M,R_1,\ldots,R_n)$ is a homogeneous $n$-graph, then the diameter of each connected component of $(M,R_i)$ is at most $n$.
\end{observation}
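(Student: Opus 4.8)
The plan is to bound the $R_i$-distance between any two vertices of a common $R_i$-connected component and then take the supremum. Fix a colour $R_i$ and two vertices $x,y$ lying in the same $R_i$-component, with $d:=d_i(x,y)$, and choose a shortest $R_i$-path $x=x_0,x_1,\ldots,x_d=y$. The first thing I would record is that every initial segment of a shortest path is again shortest, so $d_i(x_0,x_j)=j$ for each $0\le j\le d$; otherwise a shortcut would yield a shorter $x$--$y$ path.

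The heart of the argument is to show that the $d$ edges $\{x_0,x_j\}$, $1\le j\le d$, all receive distinct colours. Writing $c_j$ for the colour of $\{x_0,x_j\}$, suppose $c_j=c_k$ with $1\le j<k\le d$. Then the two-element substructures induced on $\{x_0,x_j\}$ and $\{x_0,x_k\}$ are isomorphic single edges of the same colour, so the map $x_0\mapsto x_0$, $x_j\mapsto x_k$ is an isomorphism between finite substructures of $M$. By homogeneity it extends to an automorphism $\sigma\in\aut(M)$. Since $\sigma$ preserves each relation of the language, it preserves $R_i$-paths and hence $R_i$-distances, giving $j=d_i(x_0,x_j)=d_i(\sigma x_0,\sigma x_j)=d_i(x_0,x_k)=k$, a contradiction.

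With the colours $c_1,\ldots,c_d$ pairwise distinct and only $n$ colours available in the language, I conclude $d\le n$. As $x,y$ were arbitrary vertices of the component, the supremum of the $R_i$-distances, i.e.\ the $R_i$-diameter, is at most $n$, which is exactly the claimed bound.

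I expect the only step with real content to be the distinctness of the colours via homogeneity; everything else is bookkeeping about shortest paths. One point I would double-check is that no finite-diameter hypothesis is needed in advance: if the diameter exceeded $n$ I could pick a pair at distance exactly $n+1$ and run the same argument to force $n+1$ distinct colours among the $c_j$, which is impossible. Thus the homogeneity argument bounds the diameter directly, without appealing to Observation \ref{ObsFiniteDiameter} or to $\omega$-categoricity.
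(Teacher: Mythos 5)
Your proof is correct and is essentially the paper's own argument: both exploit homogeneity to show that, along a geodesic from $x_0$, pairs $\{x_0,x_j\}$ at different $R_i$-distances must receive different colours (since an automorphism fixing $x_0$ preserves $R_i$-distance), and then bound the path length by the number of colours. The only difference is presentational — you run the counting directly to get $d\le n$, while the paper assumes a geodesic of length $n+1$ and derives a contradiction by pigeonhole on the $n-1$ non-$R_i$ colours.
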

\begin{proof}
	Suppose there are $a,b\in M$ at $R_i$-distance $n+1$, so there are distinct $a=x_0,x_1,\ldots,x_{n+1}=b$ such that $R_i(x_j,x_{j+1})$ for $0\leq j\leq n$ and $R_i$ does not hold in any other pair from $\{x_0,\ldots,x_{n+1}\}$. Then the $n$ pairs $(a,x_j)$ ($2\leq j\leq n+1$) are coloured in $n-1$ colours, so at least two of them have the same colour. Using homogeneity, there is an automorphism of $M$ taking the pair with the smaller index in the second coordinate to the other pair, and therefore we can find a shorter path from $a$ to $b$.
\end{proof}

\chapter{The Imprimitive Case: Finite Classes}\label{ChapImprimitiveFC}
\setcounter{equation}{0}
\setcounter{theorem}{0}
\setcounter{case}{0}
\setcounter{subcase}{0}

In this chapter we classify the homogeneous simple unstable 3-graphs with an invariant equivalence relation. We will assume for definiteness that the equivalence relation is the reflexive closure of the predicate $R$. Note that this is not a limitation in any sense, since by Ryll-Nardzewski's Theorem in our context an invariant equivalence relation is defined by a disjunction of atomic formulas: given that we want the classes to be finite, this means that a disjunction of two atomic formulas cannot be an equivalence relation if $M$ is to be unstable.

\section{Imprimitive Structures with Finite Classes}
Let us describe the construction of an imprimitive homogeneous 3-graph with classes of size 2. Start by enumerating the random graph as $\{w_i:i\in\omega\}$, and define a 3-graph $C(\Gamma)$ on countably many vertices $\{v_i:i\in\omega\}$ where $R$ holds for pairs of vertices of the form $v_{2n}v_{2n+1}$, 
\[
S(v_i,v_j)\mbox{ if}
\begin{cases}
i\neq j, i=2m, j=2n, E(w_m,w_n)\\
i\neq j, i=2m+1, j=2n+1, E(w_m,w_n)\\
i\neq j, i=2m, j=2n+1, \neg E(w_m,w_n)\\
i\neq j, i=2m+1, j=2n, \neg E(w_m,w_n)\\
\end{cases}
\]
and all other pairs of distinct vertices satisfy $T$ ($E$ denotes the edge relation in the random graph). This structure is a finite cover in the sense of Evans (see \cite{evans1996splitting}, \cite{evans1995finite}) of a reduct of the random graph. Its theory is supersimple of rank 1, as it can be interpreted in $\Gamma\times\{0,1\}$

Our main result in this chapter is:
\begin{theorem*}
	Up to isomorphism, the only imprimitive simple unstable homogeneous 3-graph with finite classes such that all relations are realised in the union of two $R$-classes is $C(\Gamma)$.
\end{theorem*}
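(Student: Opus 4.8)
The plan is to fix a pair of $R$-classes $C_0,C_1$ witnessing the hypothesis and to analyse the bipartite $\{S,T\}$-pattern between them, after first recording some global reductions. Since the reflexive closure of $R$ is an equivalence relation with finite classes, any two elements of a class are interalgebraic; as $M$ is infinite and transitive we have $\acl(\varnothing)=\varnothing$, so $R(a,b)$ makes $\tp(a/b)$ algebraic and hence forking, and $R$ (a finite equivalence relation) is stable. Instability must therefore be witnessed by an infinite half-graph in colour $S$ or $T$; so at least one of $S,T$, say $S$, has infinite non-algebraic neighbourhoods and is nonforking. By Theorem \ref{Koponen}, $M$ is supersimple of finite $\SU$-rank, and with the $\acl$ controlled by $R$ the generic $1$-type has $\SU$-rank $1$. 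Throughout I would use lowness (Proposition \ref{CatSimpLow}, Corollary \ref{LascarTypes}) so that the Independence Theorem can be applied via equality of strong types.

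The key local fact is a pigeonhole argument on the three colours. Fix a class $C$ and an external vertex $c$; the edges from $c$ to $C$ are coloured only by $S$ and $T$, so if $|C|\geq 3$ then two distinct $a,a'\in C$ satisfy $R(a,a')$ and receive the same colour from $c$, producing a triangle of type $RSS$ or $RTT$. Thus the problem splits into two alternatives: either both $RSS$ and $RTT$ are omitted, forcing every external vertex to colour a class injectively and hence $|C|\leq 2$; or one of $RSS,RTT$ is realised and $|C|\geq 3$ is a priori possible. The heart of the matter, and what I expect to be the main obstacle, is to eliminate the second alternative, i.e. to show that no homogeneous simple unstable $3$-graph with finite $R$-classes realises $RSS$. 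The natural attack is a forking computation: if $R(a,a')$ then $a'\in\acl(a)$, so for $c$ with $S(a,c)$ the nonforking of $S$ gives $c\indep a$ and hence $c\indep aa'$; one then tries to show that the colour of $(a',c)$ is forced, so that the coexistence of $RSS$ and $RST$ demanded by homogeneity is impossible. If this does not settle it directly, the fallback is to feed the hypothetical pattern into the Independence Theorem (as in Propositions \ref{PairwiseIndep} and \ref{NonforkingAmalgamation}) and derive either a failure of amalgamation or a collapse of $M$ to a stable graph, contradicting instability.

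Granting both $RSS$ and $RTT$ are omitted, the realisation of $R$ forces $|C|=2$, and now the analysis is clean: with classes of size $2$ the unique $R$-partner is $a'\in\dcl(a)$, so the colour of $(a',c)$ is a function of $\tp(c/a)=\tp(c/aa')$; the hypothesis that both $S$ and $T$ occur between two classes then pins this function down so that $S$ and $T$ are complementary perfect matchings between any two classes. It remains to reconstruct $M$. I would fix a base class, label its two points, transport the labelling to every class along the $S$-matching to obtain a section of the quotient, and define a graph $\Gamma$ on $M/R$ by joining two classes exactly when their $S$-matching is ``parallel'' for this labelling. An amalgamation argument in the spirit of Proposition \ref{NonforkingAmalgamation}, using the Independence Theorem to realise arbitrary coherent and incoherent configurations of matchings, shows that every finite graph embeds into the section; by the Lachlan--Woodrow Theorem \ref{LachlanWoodrow} the only homogeneous graph that is universal in this sense is the random graph (the generic $K_{n+1}$-free graphs are not simple, and the families $I_m[K_n]$ and their complements are not universal), so $\Gamma$ is the random graph.

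Finally I would verify that the finite cover of $\Gamma$ determined by these matchings is, after matching the labellings, literally $C(\Gamma)$; since changing the chosen labels replaces $\Gamma$ by a switching-equivalent graph and the random graph is stable under switching, the isomorphism type of $M$ does not depend on the choices, which completes the identification $M\cong C(\Gamma)$.
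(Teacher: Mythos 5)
Your global reductions and your treatment of the size-$2$ case are sound (the $\dcl$ argument for classes of size $2$ is a clean way to obtain the complementary-perfect-matchings picture), and you have correctly located the crux: ruling out classes of size $\geq 3$, equivalently ruling out the triangles $RSS$ and $RTT$. But that step is exactly where your proposal has no proof. The ``natural attack'' cannot work: since both $S$ and $T$ are nonforking, for $R(a,a')$ and external $c$ the types determined by $S(c,a)\wedge S(c,a')$ and by $S(c,a)\wedge T(c,a')$ are \emph{both} nonforking extensions of $\tp(c/a)$, so no forking computation can force the colour of $(a',c)$; when classes have size $>2$ we have $a'\in\acl(a)\setminus\dcl(a)$, and both colourings genuinely occur in the hypothetical structures that must be eliminated. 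The fallback sentence (``feed the pattern into the Independence Theorem and derive a failure of amalgamation or a collapse to a stable graph'') is a statement of intent, not an argument. What the paper actually does at this point is the bulk of its proof: it first shows (Lemma \ref{Lemma1} and Corollary \ref{CorExistsPartition}) that between any two classes the $S$-edges organise into two disjoint copies of $K_{r,r}^S$ with class size $n=2r$ --- this already takes two nontrivial case analyses combining homogeneity, invariant partition relations, and the Independence Theorem --- and then (Lemma \ref{Lemma2}) it kills $r\geq 2$ by using the Independence Theorem to produce vertices $c,d$ realising carefully chosen $S$-types over transversal $r$-sets, whose coexistence contradicts homogeneity because the partition of a class is invariant over a point of each class (Remark \ref{RmkInvariantSubsets}). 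None of this intermediate structure theory appears in your proposal, and the pigeonhole observation by itself produces nothing to contradict.

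A secondary, repairable gap concerns the reconstruction: you apply the Lachlan--Woodrow Theorem to the graph $\Gamma$ on $M/R$ defined by ``parallel'' matchings. That graph depends on the chosen section and is only well defined up to switching; it is not a definable quotient of $M$, so its homogeneity --- which Lachlan--Woodrow requires --- is not available. The fix is to drop Lachlan--Woodrow and prove the extension axioms for $\Gamma$ directly with the Independence Theorem, which is in effect what the paper does: it shows $\age(C(\Gamma))=\age(M)$ by amalgamating one $R$-class at a time over independent unions of classes and then invokes uniqueness of the countable homogeneous structure with a given age.
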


\subsection{The proof}\label{sec:observations}
Let $M$ be a homogeneous structure with an invariant equivalence relation $E$, and denote by $M/E$ the set of equivalence classes modulo $E$ in $M$. Then there is a homomorphism $f:\aut(M)\rightarrow\Sym(M/E)$, given by $f(\sigma)(\ulcorner a\urcorner)=\ulcorner\sigma(a)\urcorner$, so that $\aut(M)$ acts on $M/E$. We refer to this action as the induced action of $\aut(M)$ on $M/E$. The orbit of a tuple of classes under this action is determined by the isomorphism type of the union of those classes in $M$.

Recall that a permutation group $G$ on $\Omega$ is $k$-transitive if it acts transitively on the set of $k$-tuples of distinct elements of $\Omega$.

\begin{observation}\label{ObsTransRE}
	Let $M$ be a homogeneous structure with an invariant equivalence relation $E$, and suppose that there is a symmetric binary predicate $S$ such that whenever $A,B$ are distinct $E$-classes, there exist $a\in A,b\in B$ such that $S(a,b)$ holds. Then the induced action of $\aut(M)$ on $M/E$ is 2-transitive.
\end{observation}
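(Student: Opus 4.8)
The plan is to unwind the definition of $2$-transitivity and reduce it in one step to the homogeneity of $M$. Recall that the induced action of $\aut(M)$ on $M/E$ is $2$-transitive precisely when, for any two ordered pairs of distinct classes $(A,B)$ and $(A',B')$, there is an automorphism $\sigma$ of $M$ whose induced map sends $A\mapsto A'$ and $B\mapsto B'$. Since $\sigma$ is an automorphism and $E$ is $\aut(M)$-invariant, $\sigma$ permutes the $E$-classes, and the induced map sends $\ulcorner x\urcorner$ to $\ulcorner\sigma(x)\urcorner$; so it suffices to produce $\sigma\in\aut(M)$ with $\sigma(a)\in A'$ and $\sigma(b)\in B'$ for suitable representatives $a\in A$, $b\in B$.

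First I would fix the two pairs $(A,B)$ and $(A',B')$ of distinct classes. By the hypothesis on $S$, since $A\ne B$ there exist $a\in A$ and $b\in B$ with $S(a,b)$, and likewise $a'\in A'$ and $b'\in B'$ with $S(a',b')$. Because $A\ne B$ and $A'\ne B'$, the two points in each pair are distinct, so $\{a,b\}$ and $\{a',b'\}$ are genuine two-element substructures of $M$.

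Next I would observe that these two substructures are isomorphic: in the $n$-graph setting at hand, $S(a,b)$ pins down the isomorphism type of the pair, and using the symmetry of $S$ the bijection $a\mapsto a'$, $b\mapsto b'$ preserves $S$ in both directions, hence is an isomorphism of the induced substructures. By the homogeneity of $M$ this partial isomorphism extends to an automorphism $\sigma\in\aut(M)$ with $\sigma(a)=a'$ and $\sigma(b)=b'$. Then the induced action sends $A=\ulcorner a\urcorner$ to $\ulcorner\sigma(a)\urcorner=\ulcorner a'\urcorner=A'$, and $B$ to $B'$, which is exactly what $2$-transitivity demands.

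The argument is essentially a single application of homogeneity, so there is no serious obstacle. The only point that requires care is the verification that an $S$-edge carries a single well-defined isomorphism type, so that the two chosen edges are genuinely isomorphic and homogeneity applies; this is precisely where both the symmetry of $S$ and the fact that $S(a,b)$ determines the $2$-type in an edge-coloured structure are used.
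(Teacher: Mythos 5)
Your proposal is correct and is essentially the paper's own proof: both pick $S$-edges $(a,b)$ and $(a',b')$ in the respective pairs of classes, invoke homogeneity to extend the local isomorphism $a\mapsto a'$, $b\mapsto b'$ to an automorphism, and use invariance of $E$ to conclude that the induced map on $M/E$ sends $(A,B)$ to $(A',B')$. Your extra remark that the symmetry of $S$ and the edge-coloured setting make the two edges genuinely isomorphic is a point the paper leaves implicit, but it is the same argument.
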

\begin{proof}
	Let $(A,B)$ and $(A',B')$ be pairs of distinct $E$-classes. We wish to prove that there exists $\sigma\in\aut(M)$ such that the image under $\sigma$ of $A$ (respectively, $B$) is $A'$ ($B'$). By hypothesis, there exists $a\in A, b\in B$ and $a'\in A, b'\in B$ such that $S(a,b)$, $S(a',b')$ holds, so that the function $a\mapsto a', b\mapsto b'$ is a local isomorphism in $M$. By homogeneity, this function is induced by some $\sigma\in\aut(M)$, and by invariance of $E$, $A$ is mapped to $A'$ and $B$ to $B'$.
\end{proof}

\begin{remark}\label{Rmk1}
The conclusion of Observation \ref{ObsTransRE} can be strengthened to $k$-transitivity in the case of a transversal symmetric $k$-ary predicate. Note that if $M$ is a homogeneous $n$-graph in which the reflexive closure of $R$ is an equivalence relation and $S$ is realised in the union of any two distinct $R$-classes, then all other relations in the language are also realised in the union of any two classes (otherwise, our global assumption that all relations are realised in $M$ is contradicted).
\end{remark}

\begin{remark}\label{Rmk2}
	If all the definable binary relations in $M$ are symmetric, then the converse to Observation \ref{ObsTransRE} is also true (as is the conclusion that all binary relations are realised in the union of any two distinct classes).
\end{remark}

\begin{observation}\label{NotCompBipart}
	Let $M$ be an imprimitive homogeneous 3-graph with $R$-classes of size $n<\omega$, and suppose that $\aut(M)$ acts 2-transitively on $M/R$. Let $A,B$ be distinct $R$-classes in $M$ and $a\in A$. Then $1\leq |S(a)\cap B|<n$.
\end{observation}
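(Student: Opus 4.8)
The plan is to work from the reduction that makes the two inequalities symmetric. First note $n\geq 2$: if the $R$-classes were singletons then $R$ would never be realised, against the standing assumption that all relations occur in $M$. Since every cross pair (between distinct classes) receives exactly one of $S,T$ and never $R$, we have $|S(a)\cap B|+|T(a)\cap B|=|B|=n$. Hence the statement $1\leq |S(a)\cap B|<n$ is equivalent to saying that $a$ is neither $S$-complete nor $T$-complete to $B$. By Remark \ref{Rmk1} (equivalently Remark \ref{Rmk2}, using that all three predicates are symmetric), $2$-transitivity of $\aut(M)$ on $M/R$ forces \emph{both} $S$ and $T$ to be realised across any two distinct classes. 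Because $S$ and $T$ play interchangeable roles here, it suffices to forbid the $S$-complete case; the lower bound $|S(a)\cap B|\geq 1$ is the same argument with $S$ and $T$ swapped.

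The strategy for ruling out $S$-completeness is to exploit $2$-transitivity on $M/R$ in two ways. First, the stabiliser of the class $A$ acts transitively on the remaining classes; second, since the action is transitive on \emph{ordered} pairs of distinct classes, there is a class-swapping automorphism sending $(A,B)$ to $(B,A)$. Together these say that the bipartite $\{S,T\}$-colouring of the cross pairs between any two classes is the same up to independent permutations of the two sides and is moreover invariant under transposition. The goal is then to show that as soon as one vertex of $A$ is $S$-complete to $B$, this colouring must be constant, i.e.\ every cross pair between $A$ and $B$ is an $S$-edge; this directly contradicts the fact that $T$ is realised across $A$ and $B$, closing the case.

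The main obstacle is exactly this propagation step: upgrading ``some $a\in A$ is $S$-complete to $B$'' to ``all cross pairs between $A$ and $B$ are $S$.'' Note that $2$-transitivity on $M/R$ by itself only pins down the \emph{pair}-type of two classes, not the behaviour of an individual vertex, so it does not immediately exclude a mixed pattern in which one row is all-$S$ while a $T$-edge appears elsewhere. I would attack this by extending partial isomorphisms of the finite induced structure on $A\cup B$ through the homogeneity of $M$ (in the style of Claim \ref{ClaimTrick}), combined with feeding in a third class via the transitivity of the stabiliser of $A$ to move the distinguished vertex while controlling its colour pattern; if an $S$-complete vertex existed, its $S$-neighbourhood would contain an entire $R$-class $B$, and I expect the contradiction to come from showing such a monochromatic configuration cannot coexist with the realisation of the other colour across the same two classes. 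Making this propagation rigorous — possibly invoking instability/simplicity to prevent a whole $R$-class from sitting inside a single-colour neighbourhood — is the delicate heart of the proof; once it is in place, the contradiction with Remark \ref{Rmk1} yields both bounds at once.
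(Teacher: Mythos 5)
Your reduction is sound: since every cross pair between distinct classes carries exactly one of $S,T$, we have $|S(a)\cap B|+|T(a)\cap B|=n$, so the claim is precisely that no vertex of $A$ is monochromatically complete to $B$, and by the $S$/$T$ symmetry it suffices to rule out $S$-completeness; the contradiction is indeed with Remark \ref{Rmk2}, which forces both $S$ and $T$ to be realised across $A,B$. But the proposal stops exactly at the step that constitutes the proof: you state the propagation (``one $S$-complete vertex forces all cross pairs between $A$ and $B$ to be $S$-edges'') as the goal, call it the delicate heart, and offer only tentative directions --- a third class, instability, simplicity --- none of which you carry out and none of which is needed. Invoking simplicity or instability is in fact off the mark: the observation assumes neither (it is stated for an arbitrary imprimitive homogeneous 3-graph with finite classes and a 2-transitive action on $M/R$), and the paper's argument uses only homogeneity and invariance of $R$.

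The missing idea is a one-line use of swap automorphisms. Suppose $S(a)\cap B=B$ and take any $b\in B$. The pair $\{a,b\}$ carries only the symmetric relation $S$, so the transposition $a\mapsto b$, $b\mapsto a$ is a partial isomorphism and extends, by homogeneity, to some $\sigma\in\aut(M)$. Invariance of the $R$-classes gives $\sigma(A)=B$ and $\sigma(B)=A$, hence $\sigma(S(a)\cap B)=S(b)\cap A$; since the left-hand side is all of $B$, the right-hand side is all of $A$, i.e.\ $b$ is $S$-complete to $A$. As $b\in B$ was arbitrary, every cross pair between $A$ and $B$ is an $S$-edge, so $T$ is not realised in $A\cup B$, contradicting Remark \ref{Rmk2}. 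This is exactly the propagation you were looking for: no third class is required, and the lower bound $|S(a)\cap B|\geq 1$ follows from the same argument with $T$ in place of $S$, just as you anticipated.
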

\begin{proof}
	Let $r$ denote $|S(a)\cap B|$. Suppose for a contradiction that $r=n$. By homogeneity and symmetry of $S$, for all $b\in B$ there is an automorphism of $M$ taking $b\mapsto a$ and $a\mapsto b$. This automorphism takes $B$ to $A$ by invariance and $S(a)\cap B$ to $S(b)\cap A$, so that $T$ is not realised in $A\cup B$, contradicting the hypothesis of 2-transitivity of the induced action of $\aut(M)$ on $M/R$ by Remark \ref{Rmk2}.
\end{proof}

\begin{observation}\label{ObsTransOnClasses}
	Let $M$ be an imprimitive homogeneous 3-graph with $R$-classes of size $n<\omega$, and suppose that $\aut(M)$ acts 2-transitively on $M/R$. Let $A,B$ be distinct $R$-classes in $M$. Then $\aut(M)_{\{B\}}\cap\aut(M)_{\{A\}}$ acts transitively on $B$ and on $A$.
\end{observation}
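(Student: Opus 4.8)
The plan is to reduce the entire statement to a two-point local isomorphism and then invoke homogeneity. Fix $b,b'\in B$; I want an automorphism $\sigma$ with $\sigma(A)=A$ and $\sigma(B)=B$ setwise and $\sigma(b)=b'$. The first thing I would observe is that there is no need to fix $A$ pointwise: if a partial isomorphism is defined on at least one element of $A$ and sends it into $A$, and sends $b$ into $B$, then any extension of it to an automorphism automatically stabilises both classes setwise, because $R$ is $\aut(M)$-invariant and $A,B$ are entire $R$-classes (an automorphism sending some $a_0\in A$ to an element of $A$ must send the $R$-class $A$ to the $R$-class of $a_0$'s image, namely $A$ again). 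So it suffices to produce $a_0,a_1\in A$ such that the edges $a_0b$ and $a_1b'$ have the same colour, since then the map $a_0\mapsto a_1,\ b\mapsto b'$ is an isomorphism between the induced two-point substructures.

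To find such $a_0,a_1$, I would use Observation \ref{NotCompBipart} in its form symmetric in $A$ and $B$: applied to the pair $(B,A)$ and the element $b\in B$ it gives $1\le|S(b)\cap A|<n$, and likewise $1\le|S(b')\cap A|<n$. In particular $S(b)\cap A$ and $S(b')\cap A$ are both nonempty. Choosing $a_0\in S(b)\cap A$ and $a_1\in S(b')\cap A$, both edges $a_0b$ and $a_1b'$ have colour $S$, so $a_0\mapsto a_1,\ b\mapsto b'$ preserves colours and is a local isomorphism (if $b=b'$ take the identity instead; the case $a_0=a_1$ causes no trouble).

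By homogeneity (Definition \ref{DefHom}) this local isomorphism extends to some $\sigma\in\aut(M)$. Since $\sigma(a_0)=a_1\in A$ and $R$ is invariant, $\sigma$ fixes $A$ setwise; since $\sigma(b)=b'\in B$, it fixes $B$ setwise. Hence $\sigma\in\aut(M)_{\{A\}}\cap\aut(M)_{\{B\}}$ with $\sigma(b)=b'$, which gives transitivity on $B$. Transitivity on $A$ then follows by running the identical argument with the roles of $A$ and $B$ exchanged.

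The only place any content enters is the appeal to Observation \ref{NotCompBipart}, and I expect that to be the conceptual obstacle rather than any calculation. The natural first attempt—fixing $A$ pointwise and permuting only the elements of $B$—fails, since the $S,T$-coloured bipartite graph between $A$ and $B$ need not admit an arbitrary permutation of $B$ with $A$ held fixed. The main point is therefore to recognise that one is free to move $A$ as well, which collapses the problem to exhibiting a single shared edge-colour; once Observation \ref{NotCompBipart} guarantees that $S$ is realised from every vertex of $B$ into $A$, the rest is a routine application of homogeneity.
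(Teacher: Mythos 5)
Your proof is correct and follows essentially the same route as the paper: the paper's own argument also picks $S$-neighbours $a_1\in S(b)\cap A$ and $a_2\in S(b')\cap A$ via Observation \ref{NotCompBipart}, notes that $b\mapsto b'$, $a_1\mapsto a_2$ is a local isomorphism, and concludes by homogeneity and invariance of $R$. Your write-up merely makes explicit the setwise-stabilisation step and the symmetric argument for $A$, which the paper leaves implicit.
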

\begin{proof}
	Take any $b,b'\in B$. By Observation \ref{NotCompBipart}, $S(b)\cap A\neq\varnothing$ and $S(b')\cap A\neq\varnothing$, so we can find $a_1,a_2\in A$ such that $S(b,a_1)$ and $S(b',a_2)$ hold, so that $b\mapsto b'$, $a_1\mapsto a_2$ is a local isomorphism. The conclusion follows by homogeneity and invariance of $R$.
\end{proof}

It follows from Observation \ref{ObsTransOnClasses} and our general setting that whenever $B,C$ and $B',C'$ are pairs of distinct $R$-classes, then the structure induced by $M$ on $B\cup C$ is isomorphic to that on $B'\cup C'$, and that for any vertices $b,c$ and classes $K,K'$ not including $b,c$, $|S(b)\cap K|=|S(b')\cap K'|$.

\begin{observation}\label{ObsEasyCase}
	Let $M$ be an imprimitive homogeneous 3-graph with $R$-classes of size $n<\omega$, and suppose that $\aut(M)$ acts 2-transitively on $M/R$ and $S$ is not an equivalence relation. Let $A,B$ be distinct $R$-classes in $M$ and $r\mathrel{\mathop:}=|S(a)\cap B|$ for any $a\in A$. If $r=1$, then $n=2$.
\end{observation}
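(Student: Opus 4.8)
The plan is to turn the hypothesis $r=1$ into a rigid combinatorial picture and then rule out $n\geq 3$ by showing that such an $n$ would force $S$ to be an equivalence relation, contradicting the hypothesis that $S$ is not one. First I would record that $r$ depends neither on $a$ nor on the second class: by the remark following Observation \ref{ObsTransOnClasses} the structure induced on any two distinct $R$-classes is the same, and $|S(b)\cap K|$ is constant. Hence for any two classes $A,B$ the relation $S$ restricted to $A\cup B$ is a perfect matching (each vertex has a unique $S$-neighbour on the other side), and every remaining cross pair is a $T$-edge. Since $R$ is realised we have $n\geq 2$, so it suffices to exclude $n\geq 3$.

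Write $\phi_{XY}\colon X\to Y$ for the matching bijection between classes $X$ and $Y$. I would say that $S$ is \emph{consistent} on a triple $A,B,C$ at a vertex $a\in A$ if $\phi_{BC}(\phi_{AB}(a))=\phi_{AC}(a)$; consistency at every vertex of every triple is exactly the statement that $S$ is an equivalence relation, since it says precisely that every $S$-cherry $b\,{-}\,a\,{-}\,c$ closes to an $S$-triangle. Encoding the three matchings by the holonomy permutation $\pi=\phi_{AC}^{-1}\phi_{BC}\phi_{AB}$ of the $n$-element set $A$, a defect (a non-closing cherry) is precisely a non-fixed point of $\pi$. Thus the goal becomes: if $n\geq 3$, then $\pi=\mathrm{id}$ for every triple, whence $S$ is an equivalence relation and we contradict the hypothesis (equivalently, we contradict the standing assumption that $M$ is unstable, since $R$ and $S$ would then be two equivalence relations with $T$ as their common complement).

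The main tool is homogeneity: every partial isomorphism of the finite substructure on $A\cup B\cup C$ extends to an automorphism of $M$. Using $n\geq 3$, I would take two matched pairs through $a_0,a_1\in A$ and a third vertex $a_2\in A$, and consider the involution exchanging the $a_0$- and $a_1$-pairs while fixing the $a_2$-pair; this is a partial isomorphism of $A\cup B$. I would then extend it to an automorphism $\sigma$ and, crucially, choose $a_0,a_1,a_2$ and $C$ so that $\sigma$ can be taken to fix the match $c_2=\phi_{AC}(a_2)$: since $\sigma$ preserves $R$-classes, fixing $c_2$ forces $\sigma(C)=C$ and hence $\sigma$ to exchange $c_0\leftrightarrow c_1$ as well. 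Transporting the $B$-to-$C$ cross relations through such a $\sigma$ then shows that the defect pattern of $\pi$ is symmetric under these exchanges, and iterating constrains the cycle type of $\pi$ to the point where a non-trivial $\pi$ becomes incompatible with homogeneity unless $n=2$.

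The main obstacle is that only $2$-transitivity of $\aut(M)$ on $M/R$ is available: the swaps above fix two classes setwise but a priori move the third, so the cross relations to $C$ are not directly controlled. The heart of the argument is to exploit the extra vertex supplied by $n\geq 3$ to manufacture, via homogeneity, an automorphism that respects all three classes simultaneously and realises a prescribed exchange; proving that a non-identity holonomy cannot survive this procedure except when $n=2$ is the delicate point. Once that is in hand, $S$ is forced to be an equivalence relation for $n\geq 3$, and the contradiction with the hypothesis yields $n=2$.
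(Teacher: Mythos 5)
Your reduction is sound as far as it goes: with $r=1$ the relation $S$ is a perfect matching between any two classes, every failure of transitivity of $S$ must occur across three distinct classes (a same-class failure would violate $r=1$), and the observation is equivalent to saying that a non-closing cherry is incompatible with homogeneity once $n\geq 3$. But that incompatibility \emph{is} the entire content of the observation, and your proposal never proves it --- you explicitly defer it as ``the delicate point.'' What your swap mechanism can deliver is only that the defect set of the holonomy $\pi$ is invariant under the exchanges you can realise by automorphisms, i.e.\ that the defect pattern is symmetric; that is no contradiction, since a fixed-point-free $\pi$ (a $3$-cycle when $n=3$) is perfectly symmetric. Worse, the swaps need not be available at all: to extend the involution to a partial isomorphism on $\{a_0,a_1,a_2,b_0,b_1,b_2,c_2\}$ you need the unique $S$-neighbour of $c_2$ in $B$ to avoid $\{b_0,b_1\}$, and when $n=3$ and $\pi$ is a $3$-cycle, \emph{every} admissible choice of $c_2$ is matched to $b_0$ or to $b_1$, so no swap exists for any pair. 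Your procedure stalls precisely on the configuration it is supposed to exclude.

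The missing idea --- and the paper's actual proof, which is a few lines --- is a two-types contradiction rather than a holonomy computation. Take the defect $S(b,c)\wedge S(c,d)\wedge T(b,d)$ with $b,c,d$ in distinct classes $B,C,D$; let $d'\in D$ be the unique $S$-neighbour of $b$ (so $d'\neq d$) and $c'\in C$ the unique $S$-neighbour of $d'$ (so $c'\neq c$, since $d$ is the unique $S$-neighbour of $c$ in $D$). If $n>2$, pick $c''\in C\setminus\{c,c'\}$. Then $\qftp(c'/bcd)=\qftp(c''/bcd)$, because $r=1$ forces both to satisfy $R(c,x)\wedge T(b,x)\wedge T(d,x)$; yet $c'$ satisfies $\varphi(y)=\exists x\,(R(d,x)\wedge S(b,x)\wedge S(x,y))$ with witness $d'$, while $c''$ does not, since the only possible witness is $d'$ and its unique $S$-neighbour in $C$ is $c'$. (In the paper's text this formula is misprinted with $R(x,y)$ in place of $S(x,y)$.) Two points with equal quantifier-free type but distinct complete types over $bcd$ contradict homogeneity, so $n=2$ --- with no need to classify $\pi$. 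This uniqueness-chain argument is exactly the step your proposal lacks, and once you have it the holonomy scaffolding becomes unnecessary.
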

\begin{proof}
	Since $S$ is not an equivalence relation, there exist pairwise inequivalent $b,c,d\in M$ such that $S(b,c)\wedge S(c,d)\wedge T(b,d)$.

Let $B,C,D$ denote the classes of $b,c,d$. By 2-transitivity on $M/R$, there exists $d'\neq d$ in $D$ such that $S(b,d')$. By the same reason, there exists $c'\neq c$ in $C$ such that $S(d',c')$ holds. 

If $n>2$, then for any $c''\in C, c''\neq c,c'$ we have $\qftp(c'/bcd)=\qftp(c''/bcd)$ because $r=1$ implies that both will satisfy $T(b,x)\wedge T(d,x)\wedge R(c,x)$, but $\tp(c'/bcd)\neq\tp(c''/bcd)$, since $c'$ satisfies the formula $\varphi(y)=\exists x(R(d,x)\wedge S(b,x)\wedge R(x,y))$, but $c''$ does not, contradicting homogeneity. In the following diagram, the heavy lines represent $R$-cliques, solid lines are $S$-edges, and dotted lines are $T$-edges.
\[
\includegraphics[scale=0.7]{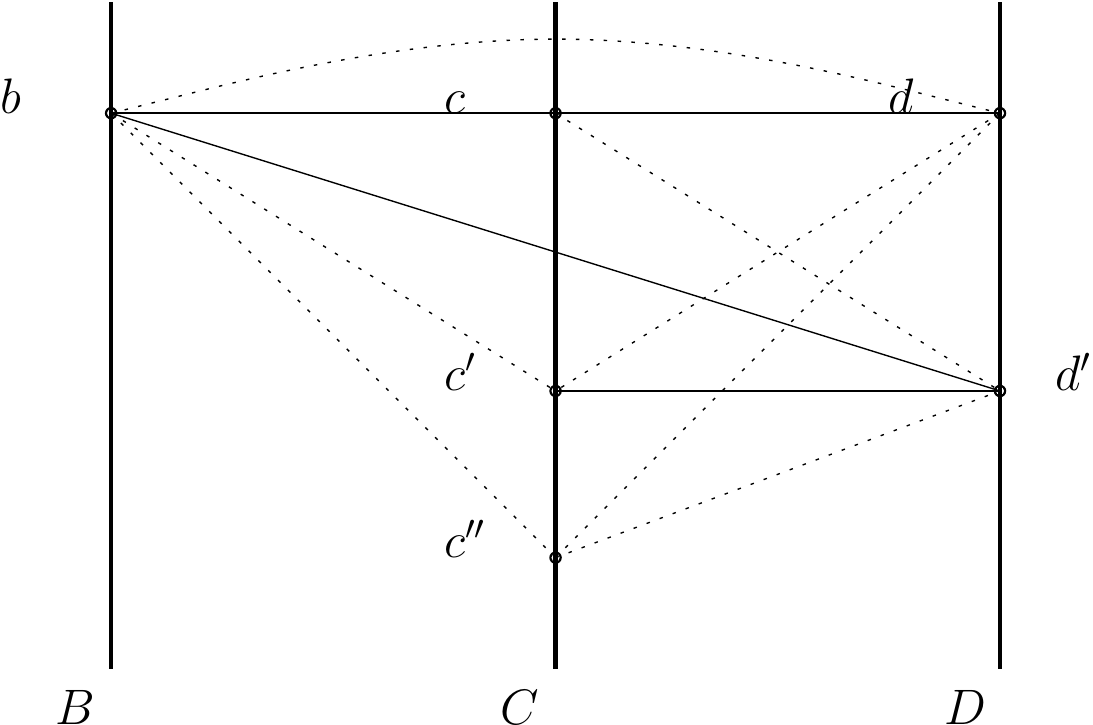}
\]
\end{proof}

As we stated before, we are interested only in simple unstable 3-graphs, because the stable ones have already been classified. A first question is which of the binary relations divide over the empty set, \ie for which $P\in\{R,S,T\}$ do we have that the formula $P(x,a)$ divides. Clearly, since we are interested in structures $M$ in which $R$ defines an equivalence relation with infinitely many classes, we have that $R$ divides. Moreover, since there are Morley sequences in any type in a simple theory, we get that at least one of $S,T$ is nonforking. Let us assume without loss of generality that $S$ is nonforking.

\begin{proposition}\label{PropSTNf}
	Let $M$ be an imprimitive simple unstable 3-graph in which $R$ defines an equivalence relation with finite classes, and assume that $S$ is nonforking. Then $T$ is nonforking.
\end{proposition}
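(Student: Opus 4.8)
The plan is to argue by contradiction: assume $T$ is forking and show this collapses $M$ onto a well-understood structure in which $T$ cannot fork after all. First I would record that, by homogeneity, whether a relation is forking depends only on the relation itself: any two $P$-edges have the same type over $\varnothing$, so $P(a,b)$ forks for one such pair iff it forks for all. Since $R$ defines an equivalence relation with finite classes it is algebraic, hence forking; assuming $T$ forks as well leaves $S$ as the only nonforking colour (recall that at least one of $S,T$ is nonforking, as a Morley sequence in the unique $1$-type is a clique in a nonforking colour). For distinct $x,y$ this yields the clean dichotomy $x\indep y \iff S(x,y)$, because $R(x,y)$ and $T(x,y)$ both force dependence while $S(x,y)$ forces independence.

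Next I would show that the partition of $M$ into $R$-classes is a congruence for the colouring. If $R(a,a')$ then $a$ and $a'$ lie in the same finite class and so are interalgebraic; since nonforking is insensitive to algebraic closure, $b\indep a \iff b\indep a'$ for every $b$ outside the class. By the dichotomy above this means $S(a,b)\iff S(a',b)$ and $T(a,b)\iff T(a',b)$, i.e. the colour joining an external vertex to a whole $R$-class is constant. Hence the induced colouring on $M/R$ is a well-defined $2$-graph, which is homogeneous by the quotient argument of Claim \ref{ClaimTrick}; writing $\bar M=(M/R;\hat S,\hat T)$ and $n$ for the class size, we obtain $M\cong \bar M[K_n^R]$.

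To finish, I would analyse $\bar M$. It is interpretable in $M$, hence simple, and it is infinite; moreover $M$ being unstable forces $\bar M$ unstable, since $M\cong\bar M[K_n^R]$ is interpretable in $\bar M$ together with a finite set and a finite blow-up preserves stability. By the Lachlan--Woodrow classification (Theorem \ref{LachlanWoodrow}), together with the fact that the generic $K_m$-free graphs and their complements are not simple, the only simple unstable homogeneous graph is the random graph, so $\bar M=\Gamma$. In $\Gamma$ algebraic closure is trivial and any two distinct vertices are independent, so a non-edge $\hat T(\bar a,\bar b)$ has $\bar a\indep\bar b$. Lifting through the finite cover $M\to M/R$ (independence transfers because each vertex is interalgebraic with its class), a $T$-edge $T(a,b)$ yields $a\indep b$, contradicting $x\indep y\iff S(x,y)$. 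Therefore $T$ must be nonforking.

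The main obstacle is the congruence step together with the passage to the quotient: one has to be careful that nonforking is genuinely unaffected by replacing a vertex with the algebraic closure of its class, and that forking in $M$ and in $\bar M$ correspond under the finite cover, so that the colouring descends to an honest homogeneous graph and the classification becomes applicable. Once the blow-up presentation $M\cong\bar M[K_n^R]$ is established, identifying $\bar M$ with $\Gamma$ and extracting the final contradiction are routine.
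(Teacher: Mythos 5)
Your proof is correct, but it takes a genuinely different route from the paper's. The paper works locally, inside the $T$-neighbourhood: assuming $T$ divides, it shows $T(a)$ is an unstable 3-graph containing no infinite $S$-cliques, then splits into cases according to how $R$ sits inside $T(a)$ ($R$-free; finite classes with only one colour across any two classes; finite classes with both colours realised), using the Lachlan--Woodrow Theorem \ref{LachlanWoodrow} to produce a copy of the Random Graph (and hence forbidden infinite $S$-cliques) in the first two cases, and iterating into nested $T$-neighbourhoods in the third. Your argument is global: the dichotomy $x\indep y\iff S(x,y)$, combined with interalgebraicity of each vertex with its finite $R$-class (via $A\indep C\iff A\indep\acl(C)$, Lemma \ref{Casanovas520}), makes the $R$-classes congruence classes for the colouring, so that exactly one of $S,T$ is realised between any two classes, $M\cong\bar M[K_n^R]$, and $\bar M\cong\Gamma$ by Lachlan--Woodrow plus simplicity; this collapses the paper's entire case analysis, including its iteration step (whose termination the paper does not really spell out), into a single structural statement. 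What you give up is that your final step needs forking computed in the standalone quotient $\bar M$ to agree with forking in $M$: interalgebraicity identifies forking in $M$ with forking in the quotient sort of $M^{\mathrm{eq}}$, but to identify the latter with forking in $(\bar M;\hat S,\hat T)$ you also need that $M^{\mathrm{eq}}$ induces no structure on $M/R$ beyond the graph structure, e.g.\ because every automorphism of $\bar M$ lifts to an automorphism of $M\cong\bar M[K_n^R]$, so the two automorphism groups induce the same orbits and $\omega$-categoricity applies. Alternatively, you can bypass the transfer entirely once you have $M\cong\Gamma[K_n^R]$: any $\varnothing$-indiscernible sequence of vertices lies in pairwise distinct $R$-classes (the classes are finite), and the extension property of $\Gamma$ together with saturation shows that $\{T(x,b_i):i\in\omega\}$ is consistent for every such sequence, so $T(x,b)$ does not divide over $\varnothing$ --- the desired contradiction. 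With that one point tightened, your argument is a clean and arguably simpler proof of the proposition.
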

\begin{proof}
	Suppose for a contradiction that $T(x,a)$ divides. Then for any Morley sequence $(c_i)_{i\in\omega}$ we have that $\{T(x,c_i):i\in\omega\}$ is inconsistent. From the fact that $R$ is algebraic and $T$ divides we can derive that any Morley sequence of vertices must be an infinite $S$-clique. Therefore, the $T$-neighbourhood of any vertex contains no infinite $S$-cliques.

Since $S$ and $T$ are unstable, there exists an infinite half-graph: we have an infinite collection of vertices $a_i,b_i$ such that $S(a_i,b_j)$ holds iff $i\leq j$. From this half-graph we can extract using Ramsey's theorem an ``indiscernible" half-graph, \ie a half-graph which, considered as a sequence of pairs $a_ib_i (i\in\omega)$ is indiscernible over $\varnothing$. We will abuse notation and name the elements of this indiscernible half-graph $a_i$ and $b_i$ ($i\in\omega$). It follows from the algebraicity of $R$ and the argument in the preceding paragraph that the $a_i$ and the $b_i$ form infinite $T$-cliques. By homogeneity, the $T$-neighbourhood of any vertex contains a copy of this indiscernible half-graph and is therefore, considered as a separete structure, an unstable 3-graph.

If $|T(a)\cap B|=1$ for any $R$-class $B$ not containing $a$, then $T(a)$ is $R$-free. By our argument, $T(a)$ is a simple unstable homogeneous $S,T$-graph, so by the Lachlan-Woodrow theorem $T(a)$ is isomorphic to the Random Graph. This contradicts our hypothesis, since the Random Graph contains infinite cliques and infinite independent sets.

Suppose then that $|T(a)\cap B|>1$. Then $R$ defines an equivalence relation with finite classes in $T(a)$, which is unstable and embeds no infinite $S$-cliques. We could have two orbits of pairs of $R$-classes in $T(a)$, so that only one of $S$ and $T$ is realised in any union of two $R$-classes. In this case, consider the (definable) structure $T(a)/R$ with the (definable) relations $\hat T$ and $\hat S$ that hold between two elements $x,y$ of $T(a)/R$ if the pair of classes in $T(a)$ that $x,y$ represent realise only the corresonding relation. Then $T(a)/R$ is a simple unstable graph, isomorphic to the Random graph by the Lachlan-Woodrow theorem, and therefore $T(a)$ embeds infinite $S$-cliques. We have reached a contradiction again.

The last case is when both $S$ and $T$ are realised in the union of any two $R$-classes in $T(a)$. Then $T(a)$ is a homogeneous simple unstable 3-graph and $\aut(M)_a$ acts 2-transitively on the set of $R$-classes induced in $T(a)$. Notice that in $T(a)$ as a separate structure $S$ is still nonforking, since the indiscernible half-graph shows that the elements of $T(a)$ are $S$-related to an infinite $T$-clique within $T(a)$. So $T(a)$ still satisfies the hypotheses of the proposition, and we can iterate the argument (take intersections $T(a_1)\cap T(a_2)\cap\ldots\cap T(a_n)$ where $a_{i+1}\in T(a_i)$) until we reach a simple unstable $R$-free graph, and therefore a contradiction as in the other cases.
\end{proof}

\begin{remark}\label{trivialremark}
Note that a union $U=A\cup B$ of two $R$-classes is homogeneous in a restricted sense: suppose that $C,D$ are isomorphic subsets of $U$. If their union includes an $S$- or a $T$-edge, then the extension of the isomorphism to an automorphism of $M$ will fix $U$ setwise and so its restriction to $U$ will be an automorphism of $U$. But there is no guarantee that if each of $C,D$ are $R$-cliques of the same size, both contained in the same class $A$, there will be and extension of the isomorphism fixing $B$ setwise as well.  
\end{remark}

\begin{observation}\label{ObsNoFER}
	Let $M$ be a simple unstable homogeneous 3-graph in which $R$ defines an equivalence relation with finite classes. Then $R$ is the only invariant equivalence relation on $M$. In particular, there are no invariant proper equivalence relations with finitely many classes in $M$.
\end{observation}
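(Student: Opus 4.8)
The plan is to use $\omega$-categoricity and quantifier elimination to reduce the statement to a short case analysis on colours. Since $M$ is homogeneous over the finite relational language $\{R,S,T\}$ it is $\omega$-categorical, so by the Ryll-Nardzewski Theorem (Theorem \ref{RyllNardzewski}) every $\aut(M)$-invariant binary relation is $\varnothing$-definable, and by quantifier elimination it is defined by a quantifier-free formula. For distinct $x,y$ exactly one of $R,S,T$ holds, so a reflexive symmetric invariant relation $E$ is completely determined by the set $\Sigma\subseteq\{R,S,T\}$ of colours it realises on distinct pairs (reflexivity just adds the diagonal). Thus every invariant equivalence relation is ``equality together with the pairs coloured by some $\Sigma$'', and it is proper precisely when $\varnothing\neq\Sigma\subsetneq\{R,S,T\}$. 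Since the reflexive closure of $R$ (the case $\Sigma=\{R\}$) is an equivalence relation by hypothesis, it remains to show that no other $\Sigma$ gives an equivalence relation.

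First I would dispose of $\Sigma=\{S,T\}$, i.e.\ $E$ the reflexive closure of the complement of $R$. Because $R$ is realised, its classes have size at least two, and since $M$ is infinite there are at least two of them. Choosing $a\neq c$ in one $R$-class and $b$ in another gives $\neg R(a,b)$ and $\neg R(b,c)$ but $R(a,c)$, so $a\mathrel{E}b$ and $b\mathrel{E}c$ while $\neg(a\mathrel{E}c)$; transitivity fails and $E$ is not an equivalence relation.

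The main step is to rule out the four remaining possibilities $\Sigma\in\{\{S\},\{T\},\{R,S\},\{R,T\}\}$, which I would treat uniformly. In each case the hypothesised $E$ is a $\varnothing$-definable equivalence relation distinct from the reflexive closure $E_R$ of $R$, and the three colours become Boolean combinations of the two equivalence relations $E_R$ and $E$. For instance, if $\Sigma=\{R,S\}$ then on distinct pairs $R=E_R$, $S=E\wedge\neg E_R$ and $T=\neg E$, and the other three cases are entirely analogous. Now an equivalence relation is a stable formula (it cannot have the order property), the stable formulas are closed under Boolean combinations, and $M$ eliminates quantifiers; hence every formula of $M$ would be stable, so $M$ would be stable, contradicting our standing assumption that $M$ is unstable. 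Concretely, such an $M$ would be one of the stable $3$-graphs $K_m^i\times K_n^j$ or $K_m^i[K_n^j[K_p^k]]$ of Theorem \ref{Lachlan3graphs}; this also recovers the remark from the introduction that $S$ and $T$ cannot define equivalence relations.

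Therefore $\Sigma=\{R\}$ is forced, and the reflexive closure of $R$ is the unique proper invariant equivalence relation on $M$. The ``in particular'' clause is then immediate: an invariant proper equivalence relation with finitely many classes would again have to be $E_R$, but $E_R$ has infinitely many classes since its classes are finite and $M$ is infinite, so no such relation exists. The only genuine obstacle is the cases $\Sigma=\{R,S\}$ and $\Sigma=\{R,T\}$, where one of $S,T$ would be complete-multipartite; the key realisation that makes them routine is that pairing the hypothesised $E$ with the given $E_R$ exhibits the whole structure as a Boolean combination of two equivalence relations, which forces stability.
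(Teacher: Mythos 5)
Your proof is correct and follows essentially the same route as the paper's: quantifier elimination reduces any invariant equivalence relation to a disjunction of $R,S,T$; the case $\Sigma=\{S,T\}$ is excluded by a direct failure of transitivity using that $R$ is realised with more than one class; and the cases $\Sigma\in\{\{S\},\{T\},\{R,S\},\{R,T\}\}$ are excluded because every colour would then be a Boolean combination of equivalence relations, hence stable, contradicting the instability of $M$. The only cosmetic difference is that the paper routes the instability of $S$ and $T$ through Proposition \ref{PropSTNf}, whereas you obtain the same contradiction directly from the Boolean closure of stable formulas, bypassing the forking considerations entirely.
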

\begin{proof}
	Since $M$ is unstable and $R$ is stable and forking, we have by Proposition \ref{PropSTNf} that $S$ and $T$ are nonforking unstable relations. In particular, $S$ and $T$ do not define equivalence relations. Since $R$ is realised and an equivalence relation, $S\vee T$, $R\vee S$ and $R\vee T$ do not define equivalence relations either. The result follows by quantifier elimination.
\end{proof}

Let $(I,<)$ be a linearly ordered set. The sequence $(a_i:i\in I)$ is $A$-independent if for every $i\in I$, $a_i\indep[A]a_{<i}$. Recall two lemmata from simplicity theory (5.14 and 5.20 (4) in \cite{casanovas2011simple}):

\begin{lemma}
Let $(a_i:i\in I)$ be $A$-independent. If $J,K$ are subsets of $I$ such that $J<K$ (that is, $j<k$ for any $j\in J, k\in K$), then $\tp((a_i:i\in K)/A(a_i:i\in J))$ does not divide over $A$.
\label{Casanovas514}
\end{lemma}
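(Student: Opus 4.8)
The plan is to reduce to finite index sets and then induct on the size of $K$. Since dividing has finite character, it suffices to prove the statement when $J$ and $K$ are finite, say $J=\{j_1<\dots<j_m\}$ and $K=\{k_1<\dots<k_n\}$ with $j_m<k_1$; writing $a_S=(a_i:i\in S)$ for $S\subseteq I$, the goal becomes that $\tp(a_K/Aa_J)$ does not divide over $A$. Because the ambient theory is simple, dividing and forking coincide and nonforking is symmetric, so this is equivalent to $a_K\indep[A]a_J$, and I would establish the latter by induction on $n=|K|$.

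For the base case $n=1$, the $A$-independence of the sequence gives $a_{k_1}\indep[A]a_{<k_1}$; since $J<K$ forces every element of $J$ to lie below $k_1$, we have that $a_J$ is contained in $a_{<k_1}$, and monotonicity of nonforking yields $a_{k_1}\indep[A]a_J$. For the inductive step, set $K'=K\setminus\{k_n\}$ and assume $a_{K'}\indep[A]a_J$ (the induction hypothesis, available since $J<K'$). From $a_{k_n}\indep[A]a_{<k_n}$ and the fact that $J\cup K'\subseteq\{i:i<k_n\}$, monotonicity gives $a_{k_n}\indep[A]a_{K'}a_J$; applying the transitivity/pairing characterization $x\indep[A]yz\iff (x\indep[A]y \wedge x\indep[Ay]z)$ with $y=a_{K'}$ and $z=a_J$, I extract $a_{k_n}\indep[Aa_{K'}]a_J$, and symmetry turns this into $a_J\indep[Aa_{K'}]a_{k_n}$. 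Combining the induction hypothesis (in the form $a_J\indep[A]a_{K'}$, by symmetry) with $a_J\indep[Aa_{K'}]a_{k_n}$ through the same characterization read in the other direction produces $a_J\indep[A]a_{K'}a_{k_n}$, i.e. $a_J\indep[A]a_K$; a final appeal to symmetry gives $a_K\indep[A]a_J$, closing the induction.

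The one point requiring care---the main obstacle---is invoking the transitivity/pairing property $x\indep[A]yz\iff (x\indep[A]y \wedge x\indep[Ay]z)$ in exactly the right direction and over the correct base, while keeping track of how symmetry interacts with the shifting base set $Aa_{K'}$. These are precisely the standard properties of nonforking available in any simple theory. By contrast, the reduction to finite $J,K$ via finite character and the bookkeeping that $J\cup K'$ lies below $k_n$ are entirely routine, so once the pairing and symmetry steps are arranged as above the argument goes through cleanly.
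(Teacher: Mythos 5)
Your argument is correct, but note that the paper contains no proof of this lemma to compare against: it is quoted, with its numbering, directly from \cite{casanovas2011simple} (Lemma 5.14 there), so the relevant comparison is with the standard proof in that source rather than with anything internal to the paper. Your route differs from the standard one in what it assumes. You convert ``does not divide over $A$'' into nonforking independence and drive the induction with symmetry together with the pairing property $x\indep[A]yz\iff(x\indep[A]y\wedge x\indep[Ay]z)$; both of these are theorems about \emph{simple} theories (indeed symmetry of dividing characterises simplicity, as the paper itself recalls), so your proof is sound precisely because the ambient context here is simple --- a reading licensed by the paper's phrase ``two lemmata from simplicity theory'' and by every later application of the lemma. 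The standard proof needs no simplicity at all: the statement is purely about dividing, and in an arbitrary theory it follows from finite character plus two facts that hold for dividing in general, namely base monotonicity (if $\tp(c/B)$ does not divide over $A$ and $A\subseteq A'\subseteq B$, then it does not divide over $A'$) and left transitivity, also called the pairs lemma (if $\tp(a/B)$ does not divide over $A$ and $\tp(b/Ba)$ does not divide over $Aa$, then $\tp(ab/B)$ does not divide over $A$). With these, your induction runs without symmetry: the induction hypothesis gives that $\tp(a_{K'}/Aa_J)$ does not divide over $A$; $A$-independence of the sequence, monotonicity, and base monotonicity give that $\tp(a_{k_n}/Aa_Ja_{K'})$ does not divide over $Aa_{K'}$; and the pairs lemma concatenates the two. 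What your approach buys is economy inside this paper, since every tool you invoke is already stated and used repeatedly here; what the dividing-only approach buys is generality: the lemma is true in every theory, and your proof as written would not establish that, since it leans on simplicity both at the translation step (forking equals dividing) and at the symmetry steps.
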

\begin{lemma}
	If $T$ is simple, then $A\indep[B]C\Leftrightarrow A\indep[B]\acl(C)\Leftrightarrow\acl(A)\indep[B]C\Leftrightarrow A\mathop{\raisebox{-.9ex}{$\underset{\acl(B)}{\smile}$}\makebox[-4.9ex]{$\mid$}
\hspace{5.0ex}}C$
\label{Casanovas520}
\end{lemma}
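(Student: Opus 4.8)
The plan is to reduce the four-way equivalence to the standard structural properties of nonforking independence in a simple theory --- invariance, symmetry, left and right monotonicity, transitivity, and base monotonicity --- together with one auxiliary fact about algebraic extensions. That fact is that \emph{algebraic extensions never fork}: for any sets $A,B$ we have $A\indep[B]\acl(B)$, because every finite tuple from $\acl(B)$ satisfies a formula over $B$ with only finitely many solutions, and a formula with finitely many solutions cannot divide; by symmetry this is the same as $\acl(B)\indep[B]A$. The same reasoning applied over $BC$ gives $A\indep[BC]\acl(BC)$ for all $A,B,C$, and this is really the only non-formal ingredient of the argument.

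First I would establish $A\indep[B]C\Leftrightarrow A\indep[B]\acl(C)$. The direction $\Leftarrow$ is immediate from right monotonicity, since $C\subseteq\acl(C)$. For $\Rightarrow$ I would first upgrade the right-hand side to $\acl(BC)$: by transitivity, $A\indep[B]\acl(BC)$ is equivalent to the conjunction of $A\indep[B]C$ and $A\indep[BC]\acl(BC)$, and the second conjunct is exactly the auxiliary fact. Hence $A\indep[B]C$ yields $A\indep[B]\acl(BC)$, and since $\acl(C)\subseteq\acl(BC)$, right monotonicity gives $A\indep[B]\acl(C)$. Applying this equivalence to $C\indep[B]A$ (via symmetry) immediately produces the third clause $A\indep[B]C\Leftrightarrow\acl(A)\indep[B]C$.

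It remains to change the base, that is $A\indep[B]C\Leftrightarrow A\indep[\acl(B)]C$. For $\Rightarrow$, from $A\indep[B]C$ I would pass to $A\indep[B]\acl(BC)$ as above and hence, by monotonicity, to $A\indep[B]\acl(B)\cup C$; base monotonicity along $B\subseteq\acl(B)\subseteq B\cup\acl(B)\cup C$, followed by discarding $\acl(B)$ from the right, then gives $A\indep[\acl(B)]C$. For $\Leftarrow$, from $A\indep[\acl(B)]C$ together with $A\indep[B]\acl(B)$ (the auxiliary fact), transitivity yields $A\indep[B]\acl(B)\cup C$, whence $A\indep[B]C$ by monotonicity.

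The one genuinely nontrivial ingredient is that algebraic formulas do not divide; everything after that is bookkeeping within the independence calculus. I expect the only real care to lie in keeping the various bases straight --- distinguishing $\acl(C)$ from $\acl(BC)$, and invoking base monotonicity in the correct direction --- rather than in any substantial model-theoretic difficulty. Since the statement is verbatim Casanovas's 5.20(4), in the paper itself the honest move is simply to cite that reference, which is what the present text does.
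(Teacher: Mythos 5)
Your proof is correct, but it does something the paper never does: the paper offers no argument at all for this lemma, simply recalling it as 5.14/5.20(4) from Casanovas's book, whereas you reconstruct the full derivation from the independence calculus. Your reconstruction is the standard one and it is sound: the single substantive ingredient is that algebraic types do not divide (so $\acl(B)\indep[B]A$ for all $A$, and by symmetry $A\indep[B]\acl(B)$, likewise over the base $BC$), and the rest is correctly managed bookkeeping --- full transitivity along the tower $B\subseteq BC\subseteq\acl(BC)$ to upgrade $A\indep[B]C$ to $A\indep[B]\acl(BC)$, right monotonicity to pass to $\acl(C)$, symmetry to transfer the result to the left-hand side, and base monotonicity plus transitivity for the change of base to $\acl(B)$. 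Two small remarks. First, your justification (``tuples in $\acl(B)$ satisfy algebraic formulas, which cannot divide'') directly establishes $\acl(B)\indep[B]A$; the form $A\indep[B]\acl(B)$ that you quote first is obtained only after applying symmetry, which is itself a theorem of simplicity --- you do say this, but the logical order matters, since without simplicity neither symmetry nor forking $=$ dividing is available. Second, full transitivity and base monotonicity of nonforking are likewise nontrivial theorems of simplicity theory rather than formal trivialities, so your proof is a reduction to those results rather than a proof from first principles; that is a perfectly reasonable level of granularity, and it is exactly the level at which Casanovas proves the statement you would otherwise cite. What your approach buys is self-containedness; what the paper's citation buys is brevity, appropriate since the lemma is quoted there only as a tool for Observation \ref{ObsIndepClasses}.
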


\begin{observation}\label{ObsRFree}
		Let $M$ be a simple unstable homogeneous 3-graph in which $R$ defines an equivalence relation with finite classes. Then any $R$-free set is $\varnothing$-independent.
\end{observation}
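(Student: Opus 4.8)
The plan is to reduce the statement entirely to Proposition \ref{PairwiseIndep}, so the work is really in checking that its two hypotheses are supplied by the earlier observations. First I would record that, in the present setting, $R$ is the unique forking relation: we assumed $S$ is nonforking, and Proposition \ref{PropSTNf} then gives that $T$ is nonforking too, so both $S$ and $T$ are nonforking. By homogeneity every $S$-edge realises the same $2$-type over $\varnothing$, and likewise every $T$-edge, so forking is uniform on each relation; hence $S(a,b)$ implies $a\indep b$ and $T(a,b)$ implies $a\indep b$.

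Next, let $X$ be an $R$-free set and take two distinct $a,b\in X$. Since $R$ is an equivalence relation and $X$ meets each class at most once, $a$ and $b$ are not $R$-related, so $S(a,b)$ or $T(a,b)$ holds; by the previous paragraph $a\indep b$ in either case. Thus the elements of $X$ are pairwise independent. To apply Proposition \ref{PairwiseIndep} I also need the absence of $\varnothing$-definable finite equivalence relations, and this is exactly Observation \ref{ObsNoFER}: $R$ is the only invariant equivalence relation on $M$, and in particular there is no invariant proper equivalence relation with finitely many classes. (Recall that in the $\omega$-categorical setting $\varnothing$-definable and $\aut(M)$-invariant coincide, and $R$ itself has infinitely many classes, so it is not a ``finite'' equivalence relation.) Applying Proposition \ref{PairwiseIndep} to any finite subset $\{a_{i_1},\ldots,a_{i_k}\}\subseteq X$ of these pairwise independent elements yields that each $a_{i_j}$ is independent from the remaining ones.

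To conclude $\varnothing$-independence of the possibly infinite set $X$, I would fix any enumeration $(a_i:i\in I)$ and invoke the finite character of nonforking: $a_i\indep a_{<i}$ holds as soon as $a_i\indep F$ for every finite $F\subseteq\{a_j:j<i\}$, and each such instance follows by applying Proposition \ref{PairwiseIndep} to the finite pairwise-independent set $F\cup\{a_i\}$. Since mutual independence does not depend on the chosen order, the result is insensitive to the enumeration. I do not expect a genuine obstacle; the only points requiring a little care are verifying that Observation \ref{ObsNoFER} really delivers the hypothesis of Proposition \ref{PairwiseIndep} and that finite character legitimately bridges the finite and infinite cases.
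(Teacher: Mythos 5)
Your proof is correct, but it is organised around a different key lemma than the paper's. The paper's own proof is an inline amalgamation argument in the style of Proposition \ref{NonforkingAmalgamation}: using that $S$ and $T$ are nonforking (Proposition \ref{PropSTNf}), it constructs any $R$-free structure one vertex at a time via the Independence Theorem, each new vertex coming out independent from its predecessors, and binarity identifies the type of the amalgamated vertex with the type of the given one. You instead never construct anything: you observe that the elements of the given $R$-free set are already pairwise independent (each non-$R$ edge is an $S$- or a $T$-edge, and by homogeneity these isolate nonforking $2$-types), and you let Proposition \ref{PairwiseIndep} upgrade pairwise independence to mutual independence---a contradiction argument whose engine is again the Independence Theorem, but invoked as a black box rather than re-run. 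The two routes rest on the same machinery (Independence Theorem, binarity, uniqueness of the strong type over $\varnothing$), but yours makes explicit what the paper's one-line proof leaves tacit: that Observation \ref{ObsNoFER} is what supplies the ``no finite equivalence relations'' hypothesis of Proposition \ref{PairwiseIndep} (with the correct reading that $R$, having infinitely many classes, is not a finite equivalence relation, and that invariant coincides with $\varnothing$-definable by $\omega$-categoricity), and that finite character of forking is what bridges from finite subsets to infinite $R$-free sets. What the paper's construction buys in exchange is economy of scope: the same argument simultaneously shows that every finite $R$-free structure embeds in $M$, with independence obtained as a by-product, whereas your reduction proves exactly the stated independence claim and nothing more.
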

\begin{proof}
	We can use the fact that $S$, $T$ are nonforking relations (Proposition \ref{PropSTNf}) and the Independence Theorem to construct any $R$-free structure, one vertex at a time. At each step, we obtain a vertex that is independent from all the previous vertices. 
\end{proof}
\begin{observation}
	If $X,Y$ are unions of $R$-classes and $X\cap Y=\varnothing$, then $X\indep Y$.
\label{ObsIndepClasses}
\end{observation}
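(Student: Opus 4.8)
The plan is to exploit the fact that, although $X$ and $Y$ are unions of $R$-classes and hence are \emph{not} $\varnothing$-independent as sets (each nontrivial class carries internal $R$-forking, since $R$ is forking), the whole of each class is algebraic over any one of its representatives. So the natural strategy is to prove the statement for transversals first, where there is no internal forking, and then inflate back to the full classes using the behaviour of forking under algebraic closure. Concretely, choose sets $X'\subseteq X$ and $Y'\subseteq Y$ consisting of exactly one vertex from each $R$-class contained in $X$ (respectively $Y$). Since $X\cap Y=\varnothing$ and distinct classes are chosen, no two vertices of $X'\cup Y'$ lie in the same $R$-class, so $X'\cup Y'$ is an $R$-free set.

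By Observation \ref{ObsRFree}, every $R$-free set is $\varnothing$-independent, so $X'\cup Y'$ is a $\varnothing$-independent set. Enumerate it so that every vertex of $X'$ precedes every vertex of $Y'$; taking $J$ to be the index set of $X'$ and $K$ that of $Y'$, we have $J<K$, and Lemma \ref{Casanovas514} (with $A=\varnothing$) yields that $\tp(Y'/X')$ does not divide over $\varnothing$, i.e.\ $Y'\indep X'$. Since $M$ is simple, forking satisfies symmetry, so equivalently $X'\indep Y'$.

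It remains to pass from the transversals to the full unions of classes. For each $R$-class $A\subseteq X$ with representative $a\in X'$, the class $A=a/R$ is finite and definable over $a$, hence $A\subseteq\acl(a)\subseteq\acl(X')$; thus $X\subseteq\acl(X')$, and likewise $Y\subseteq\acl(Y')$. Applying Lemma \ref{Casanovas520} twice to $X'\indep Y'$ (over $\varnothing$) gives first $\acl(X')\indep Y'$ and then $\acl(X')\indep\acl(Y')$. Since $X\subseteq\acl(X')$ and $Y\subseteq\acl(Y')$, monotonicity of nonforking independence yields $X\indep Y$, as required. The only point requiring mild care is that $X$ or $Y$ may comprise infinitely many classes, so the transversals may be infinite; this causes no difficulty because forking has finite character, so it suffices to run the argument on arbitrary finite subtuples. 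Notably, we do \emph{not} need the exact identity $\acl(X')=X$: the inclusion $X\subseteq\acl(X')$ together with monotonicity is all that the final step uses, which is why the conceptual heart of the argument is simply the reduction to an $R$-free transversal and the observation that finite classes are algebraic over their representatives.
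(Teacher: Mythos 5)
Your proof is correct and follows essentially the same route as the paper's: pass to transversals, invoke Observation \ref{ObsRFree} to get independence, order the $X$-part before the $Y$-part and apply Lemma \ref{Casanovas514}, then use Lemma \ref{Casanovas520} together with the algebraicity of the finite classes over their representatives to inflate back to $X\indep Y$. You merely make explicit some steps the paper leaves implicit (the appeal to symmetry, the inclusion $X\subseteq\acl(X')$, and finite character for infinitely many classes), which is a faithful elaboration rather than a different argument.
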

\begin{proof}
A transversal to the set $XY$ is independent by Observation \ref{ObsRFree}. We can order it in such a way that the elements $(c_i:i\in I)$ from a transversal to $X$ appear before the elements $(c_j:j\in J)$ of a transversal to $Y$. By Lemma \ref{Casanovas514}, $\tp((c_i:i\in I)/(c_j:j\in J))$ does not fork over $\varnothing$, and so by Lemma \ref{Casanovas520} $X\indep Y$.
\end{proof}
\begin{remark}\label{rmksubsets}
By monotonicity, if $K\subset A$, $K'\subset B$, and $A,B$ are distinct $R$-classes we get $K\indep K'$. We will make use of this fact in the next proposition.
\end{remark}

\begin{observation}\label{ObsPartition}
	Let $M$ be a homogeneous 3-graph in which $R$ defines an equivalence relation with finite classes and $S$ is not an equivalence relation, and such that $\aut(M)$ acts 2-transitively on $M/R$. If for distinct $R$-classes $A,B$ there exists a nontrivial partition $A=A_1\cup\ldots\cup A_m$, $B=B_1\cup\ldots\cup B_m$ such that the structure induced on $A_i\cup B_i$ is a $K_r^S$ for some $r$, and there are no other $S$-edges in $A\cup B$, then $m=2$.
\end{observation}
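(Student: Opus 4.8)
The plan is to reduce the statement to Observation~\ref{ObsEasyCase} by showing that the blocks $A_i\cup B_i$ are forced to be single $S$-edges, so that each vertex of $A$ has exactly one $S$-neighbour in $B$.

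First I would exploit the fact that $A$ and $B$ are $R$-classes: since the reflexive closure of $R$ is an equivalence relation, any two distinct vertices lying in the same class are $R$-adjacent. A copy of $K_r^S$ contains no $R$-edge, so a block $A_i\cup B_i$ cannot contain two vertices of $A$ (nor two of $B$); hence each $A_i$ and each $B_i$ has at most one element. Next I would rule out the degenerate possibility that a block is a lone vertex, contributing no $S$-edge: because $\aut(M)$ is $2$-transitive on $M/R$, the relation $S$ is realised across $A$ and $B$ (Remarks~\ref{Rmk1} and~\ref{Rmk2}), and by hypothesis every $S$-edge of $A\cup B$ lies inside some block; so each block must consist of one vertex of $A$ and one of $B$ joined by $S$. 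Consequently $r=2$, every $A_i$ and $B_i$ is a singleton, the $S$-edges between $A$ and $B$ form a perfect matching, $m=|A|=n$, and $|S(a)\cap B|=1$ for every $a\in A$.

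Finally I would invoke Observation~\ref{ObsEasyCase}. Its hypotheses hold --- $\aut(M)$ acts $2$-transitively on $M/R$ and $S$ is not an equivalence relation --- and we have just shown that the common value $|S(a)\cap B|$ equals $1$; therefore $n=2$, and since $m=n$ we obtain $m=2$. The only point requiring care is the degeneracy check in the middle step, namely confirming that each block really is an $S$-edge (so that the matching is perfect and $m=n$); this is what makes the hypothesis of Observation~\ref{ObsEasyCase} applicable, after which the conclusion is immediate.
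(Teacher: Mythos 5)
Your argument hinges on reading $K_r^S$ literally (as the complete $S$-graph of the paper's notation section), from which your first step concludes that no block can contain two vertices of the same class, so that every block is a single $S$-edge and everything reduces to the perfect-matching case of Observation \ref{ObsEasyCase}. But that reading cannot be what the statement means: the paper's own proof begins by choosing two \emph{distinct} vertices $b,b'$ inside one block of $B$, and the observation is applied later (Case \ref{Case1} of Lemma \ref{Lemma1}, and Corollary \ref{CorExistsPartition}) to partitions whose blocks induce $K_{r,r}^S$ with $r\geq 2$ --- that is, each $A_i$ and $B_i$ has $r$ vertices, is internally an $R$-clique, and is joined completely by $S$ across the two classes. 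The $K_r^S$ in the statement is a notational slip for $K_{r,r}^S$. Under the intended reading your first step is simply false (each block \emph{does} contain $R$-edges), and with it the reduction to Observation \ref{ObsEasyCase} collapses; note also that Observation \ref{ObsEasyCase} only treats $|S(a)\cap B|=1$, whereas here $|S(a)\cap B|=r$ is arbitrary. So what you have proved is only the degenerate subcase $r=1$, not the statement that the rest of the chapter uses.

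The missing content is the case $r\geq 2$, $m\geq 3$, and it requires a different idea, which is the paper's argument: choose $b,b'$ in one block of $B$, $b''$ in a second block, and $a\in A$ lying in a third block, so that $a$ is $T$-related to all of $b,b',b''$ (this is exactly where $m\geq 3$ is used). Then $abb'$ and $abb''$ are isomorphic $TTR$ triangles, but no automorphism of $M$ can take one to the other: $b$ and $b'$ have a common $S$-neighbour $a'$ in the block of $A$ paired with theirs, while $b$ and $b''$, lying in different blocks, have no common $S$-neighbour in $A$ at all. This contradicts homogeneity, so $m\leq 2$, i.e.\ $m=2$. (Two smaller points: your middle step --- that every block is an $S$-edge --- should be justified from the nonemptiness of the parts of the partition together with the per-block hypothesis, not from $2$-transitivity producing a single $S$-edge somewhere; and, to the paper's debit, its own proof silently omits the $r=1$ subcase, which is precisely the one your matching argument via Observation \ref{ObsEasyCase} does handle correctly.)
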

\begin{proof}
The partition would in this case be definable over $a\in A,b\in B$ as an equivalence relation $\sim$. Consider $a\in A$ and three elements $b,b',b''\in B$ such that $b\sim b'$ and $b\not\sim b''$. Then $abb'$ and $abb''$ form triangles $TTR$ with a common vertex $a$ and are therefore isomorphic, but there is no way to extend this isomorphism to the two classes since for $b,b'$ there exists $a'\in A$ such that $S(a',b)\wedge S(a',b')$, but this formula is not satisfied by $b,b''$.
\end{proof}

\begin{lemma}\label{Lemma1}
	Let $M$ be an imprimitive simple unstable homogeneous 3-graph with $R$-classes of size $n<\omega$, and suppose that $\aut(M)$ acts 2-transitively on $M/R$. Let $A,B$ be distinct $R$-classes in $M$ and $r\mathrel{\mathop:}=|S(b)\cap C|$ for any $b\in B$. Then $n=2r$.
\end{lemma}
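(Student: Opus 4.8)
The plan is to analyse the bipartite $S$-graph between two fixed distinct $R$-classes $A,B$ and to show that it is a disjoint union of equal-sized complete bipartite $S$-blocks, at which point Observation~\ref{ObsPartition} caps the number of blocks at two and yields $n=2r$. First I record that this bipartite graph is $r$-regular on both sides: as noted after Observation~\ref{ObsTransOnClasses}, the quantity $|S(x)\cap K|$ does not depend on the vertex $x$ or on the class $K\not\ni x$, so $|S(a)\cap B|=|S(b)\cap A|=r$ for all $a\in A$, $b\in B$; by Observation~\ref{NotCompBipart} we have $1\le r<n$. Throughout I write $N(b):=S(b)\cap A$, a subset of $A$ of size $r$.

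The combinatorial goal is to prove that distinct neighbourhoods among the $N(b)$ are pairwise disjoint. Granting this, every $a\in A$ lies in some $N(b)$ (since $|S(a)\cap B|=r\ge1$), so the distinct neighbourhoods partition $A$ into $m$ blocks of size $r$; dually the relation $b\sim b'\iff N(b)=N(b')$ — an $A$-definable equivalence relation preserved by $G:=\aut(M)_{\{A\}}\cap\aut(M)_{\{B\}}$, which acts transitively on $B$ by Observation~\ref{ObsTransOnClasses} — partitions $B$ into $m$ classes of common size $n/m=r$. Since all members of a $\sim$-class share the neighbourhood $N(b)$, each set $N(b)\cup(b/\!\sim)$ is a complete bipartite $S$-block, and disjointness of distinct neighbourhoods means there are no $S$-edges across blocks. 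Observation~\ref{ObsPartition} then forces $m=2$, and $n=mr=2r$.

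The crux is therefore the disjointness claim, equivalently the exclusion of an induced $S$-path given by vertices with $S(a_0,b)$, $S(a_0,b')$, $S(a_1,b)$, $T(a_1,b')$, $R(a_0,a_1)$, $R(b,b')$. The main device is that any two \emph{$SSR$-triangles} (a vertex $S$-adjacent to two $R$-adjacent vertices) are isomorphic as substructures, hence conjugate under $\aut(M)$ by homogeneity. Consequently the $\aut(M)$-invariant first-order condition on a triangle $a_0,b,b'$ that ``there is some $x$ with $R(a_0,x)\wedge S(x,b)\wedge T(x,b')$'' holds either for every $SSR$-triangle or for none. Now if some two vertices of $B$ have equal neighbourhoods — a nontrivial $\sim$-class — then the $SSR$-triangle they form with a common $S$-neighbour admits no such $x$ (it would require $x\in N(b)\setminus N(b')=\varnothing$); hence no $SSR$-triangle admits one, and distinct neighbourhoods are forced to be disjoint.

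The step I expect to be the real obstacle is thus guaranteeing a nontrivial $\sim$-class, i.e. ruling out the \emph{reduced} configuration in which all $n$ vertices of $B$ carry pairwise distinct neighbourhoods. When $r>1$, the $n$ pairwise distinct $r$-subsets $N(b)$ of the $n$-element set $A$ cannot be pairwise disjoint, so two of them must overlap without coinciding; this is precisely the forbidden path, and the triangle dichotomy above then forces every $SSR$-triangle to have a partner. Turning this symmetry into a contradiction is the delicate point: I would push it either by iterating the partner construction to manufacture ever more distinct vertices inside the finite class $A$, or by combining the instability of $S$ with Observation~\ref{ObsNoFER} (that $R$ is the only invariant equivalence relation) to exclude the resulting highly symmetric overlapping pattern. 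The base case $r=1$ needs none of this: it is already Observation~\ref{ObsEasyCase}, which gives $n=2=2r$ and anchors the argument.
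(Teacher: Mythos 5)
Your triangle-conjugacy device is sound and handles one half of the problem cleanly: if two vertices of $B$ share a neighbourhood, then since any two (ordered) $SSR$-triangles are conjugate under $\aut(M)$, no $SSR$-triangle can admit a vertex $x$ with $R(a_0,x)\wedge S(x,b)\wedge T(x,b')$, so overlapping neighbourhoods coincide, the blocks $N(b)\cup(b/\!\sim)$ give the required partition, and Observation~\ref{ObsPartition} yields $m=2$, $n=2r$. This recovers (in fact slightly more cleanly, since you avoid the auxiliary reductio $2r<n$) what the paper does in Case~\ref{Case1} of its proof. But the case you defer --- all $n$ neighbourhoods $N(b)$ pairwise distinct --- is not a loose end; it is Case~\ref{Case2} of the paper's proof and is where the only genuinely model-theoretic content of the lemma lies. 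Neither of your proposed strategies is an argument: iterating the ``partner'' construction only reproduces elements of $N(b)\setminus N(b')$, whose existence you already know, so there is no escalation against the finiteness of $A$; and invoking ``instability plus Observation~\ref{ObsNoFER}'' names ingredients without a mechanism. Note that purely combinatorially the configuration you need to exclude is abundant ($r$-regular bipartite graphs with all neighbourhoods distinct, e.g.\ the cyclic pattern $N(b_i)=\{a_i,a_{i+1}\}$ with $r=2$), so conjugacy of triangles and counting cannot suffice; some use of simplicity is unavoidable.

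What the paper does in this case is an Independence Theorem construction, carried out under the reductio hypothesis $2r<n$ (the case $2r>n$ is symmetric, using $T$-neighbourhoods; if you import it into your framework you must also set up this trichotomy, since your all-distinct case needs no refutation when $2r=n$). One picks $x,x'\in B$ with distinct traces $C_x\neq C_{x'}$ on a further class $C$, an $r$-set $X\subset C$ with $X\cap C_x=\varnothing$ and $X\cap C_{x'}\neq\varnothing$ (possible exactly because $n-r>r$), and an $r$-set $Y\subset B$ containing $x,x'$ (possible because $r\geq2$, by Observation~\ref{ObsEasyCase}). Homogeneity realises $X$ and $Y$ as the $S$-traces of actual vertices; Remark~\ref{rmksubsets} gives $X\indep Y$, Observation~\ref{ObsNoFER} gives equality of Lascar strong types, and the Independence Theorem then produces a \emph{single} vertex $a$ with $x,x'\in S(a)$ and $S(a)\cap C=X$ (equality holds since $|S(a)\cap C|=r=|X|$, so $X$ is fixed setwise by anything fixing $a$). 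Choosing $c\in X$ with $T(x,c)\wedge T(x',c)$, the map fixing $a,c$ and sending $x\mapsto x'$ is a local isomorphism, but any automorphism extending it would carry $S(x)\cap S(a)\cap C=C_x\cap X=\varnothing$ onto $S(x')\cap S(a)\cap C=C_{x'}\cap X\neq\varnothing$, contradicting homogeneity. Without this step, or a genuine substitute for it, your proof establishes the lemma only under the added hypothesis that two vertices of $B$ share a neighbourhood, so as it stands it is incomplete.
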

\begin{proof}
	Let us denote $S(a)\cap B$ by $B_a$. Each $a\in A$ picks out (via $B_a$) one of the $n\choose r$ subsets of $B$ of size $r$. By Observation \ref{ObsTransOnClasses}, each element of $B$ is in the $S$-neighbourhood of some element from $A$, so we have $$B=\bigcup_{a\in A}(S(a)\cap B)$$
We can assume by Observation \ref{ObsEasyCase} that $n>2$. Suppose for a contradiction that $2r<n$. Then we have for all distinct $a,a'\in A$, $T(a)\cap T(a')\cap B\neq\varnothing$. From this it follows that for all $a\neq a'\in A$ the formula $B_a\cap B_{a'}\neq\varnothing$ holds; to see this, suppose that we had distinct $a,a',a''\in A$ such that $B_a\cap B_{a'}\cap B=\varnothing, B_a\cap B_{a''}\neq\varnothing$. We can find $b,b'\in B$ such that $T(b,a)\wedge T(b,a'')$ and $T(b',a)\wedge T(b',a')$ hold, so by homogeneity we can move $baa''$ to $b'aa'$ (see diagram below).

\[
\includegraphics[scale=0.7]{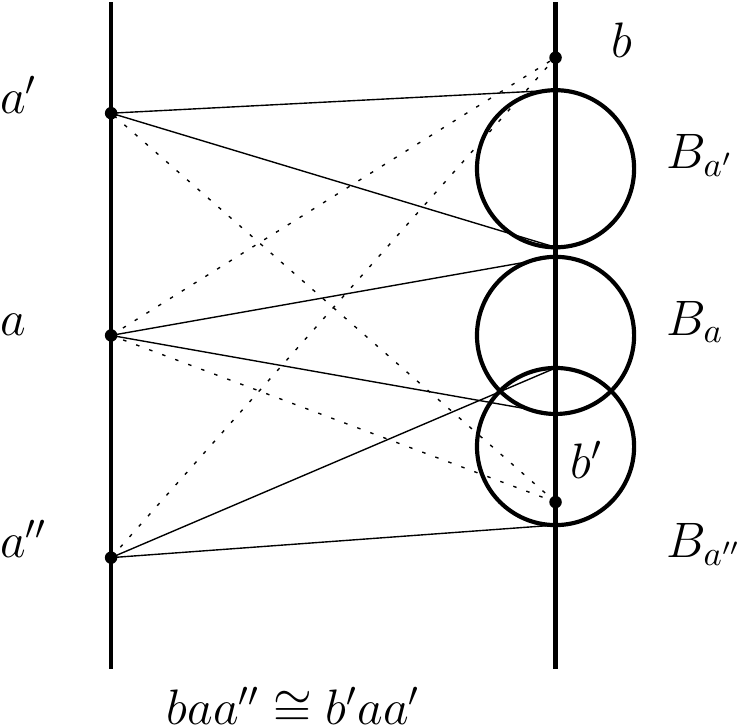}
\]

But any automorphism doing that should take $B_a\cap B_{a''}$ to $B_a\cap B_{a'}$, which is impossible since one of these is empty and the other is not. Furthermore, the number $r_2=|S(a)\cap S(a')\cap B|$ is constant for all distinct $a$ and $a'$ in $A$.

We have two possible cases: either there exist some $a,a'$ with $B_a=B_{a'}$, or for all $a\neq a'$ we have $B_a\neq B_{a'}$.

\begin{case}\label{Case1}
If for some $a,a'$ we have $B_a=B_a'$, define a relation $\sim$ on $A$ by $a\sim a'$ if $B_a=B_a'$. Note that for each $\sim$-class $k$, the set $k'=\bigcap_{\kappa\in k} S(\kappa)\cap B$ is an equivalence class of the corresponding equivalence relation defined in $B$ with respect to $A$. Also, the structure induced on $k\cup k'$ is $K_{r,r}^S$. Since $2r<n$, the equivalence relations have each at least three classes, each with at least two elements by Observation \ref{ObsEasyCase}. This cannot happen by Observation \ref{ObsPartition}.
\end{case}

\begin{case}\label{Case2}
Suppose then that for all $a\neq a'$ we have $B_a\neq B_{a'}$. Take $x,x'\in B$ and let $C$ be an $R$-class, $C_x=S(x)\cap C$, $C_{x'}=S(x')\cap C$. Choose $X\subset C$ of size $r$ such that $X\cap C_{x'}\neq\varnothing$ and $X\cap C_x=\varnothing$. Such an $X$ exists because there are more than $r$ elements in $C\setminus C_x$, by our assumption $2r<n$. By homogeneity, there exists a $\delta$ such that $C_\delta=X$. Now choose $Y\subset B$ of size $r$ containing $x,x'$ (we can find $Y$ because as a consequence of Observation \ref{ObsEasyCase}, $r\geq 2$); by the same argument there is a $\beta$ such that $B_\beta=Y$. By Remark \ref{rmksubsets}, $X\indep Y$, and also $\tp(\beta/Y)$ and $\tp(\delta/X)$ are nonforking over $\varnothing$. Furthermore, $\beta$ and $\delta$ have the same Lascar strong type over $\varnothing$ by Observation \ref{ObsNoFER}, so by the Independence Theorem there is $a\in M$ which is $S$-related to $x,x'$ and to $c$, so we have $S(a,x)\wedge T(x,c)\wedge S(a,c)$ and $S(a,x')\wedge T(x',c)\wedge S(a,c)$. 

\[
\includegraphics[scale=0.7]{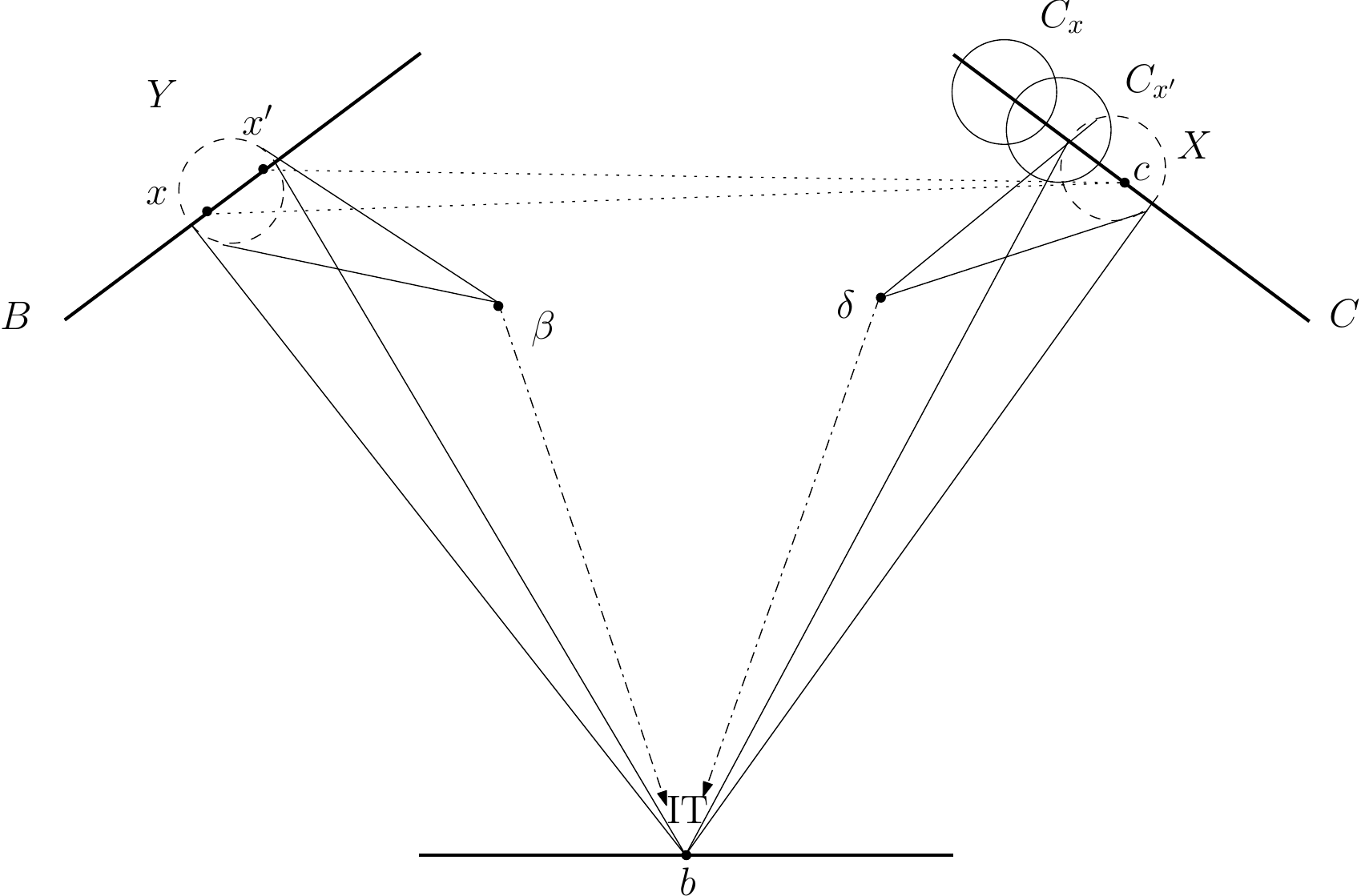}
\]

By homogeneity, there is an automorphism $\sigma$ of $M$ fixing $ac$ taking $x$ to $x'$, but then $\sigma$ takes $S(x)\cap S(b)\cap C=C_x\cap X\neq\varnothing$ to $S(x')\cap S(b)\cap C=C_{x'}\cap X=\varnothing$, a contradiction.
\end{case}

If $2r>n$, then we can carry out the same argument using $T$-neighbourhoods instead of $S$-neighbourhoods.
\end{proof}

The argument in the preceding proposition actually proves something stronger:

\begin{corollary}\label{CorExistsPartition}
	Let $M$ be an imprimitive simple unstable homogeneous 3-graph with $R$-classes of size $n<\omega$, and suppose that $\aut(M)$ acts 2-transitively on $M/R$. Let $A=a/R,B=b/R$ be distinct $R$-classes in $M$. Then there exists a partition $A=A_1\cup A_2$, $B=B_1\cup B_2$ such that the structure induced by $M$ on $A_i\cup B_i$ is $K_{r,r}^S$ and there are no more $S$-edges in $A\cup B$.
\end{corollary}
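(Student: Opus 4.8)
By Lemma~\ref{Lemma1} we have $n=2r$, so for every $a\in A$ the set $B_a:=S(a)\cap B$ has size $r$, its complement $T(a)\cap B=B\setminus B_a$ has size $r$ as well, and symmetrically $|S(b)\cap A|=r$ for every $b\in B$. The plan is to read the partition directly off the $S$-bipartite graph between $A$ and $B$, and for this it suffices to show that the neighbourhoods $B_a$ are pairwise \emph{equal or disjoint}. Indeed, if $B_a=B_{a'}$ or $B_a\cap B_{a'}=\varnothing$ for all distinct $a,a'\in A$, then the distinct values of $B_a$ are pairwise disjoint $r$-subsets of $B$ whose union is all of $B$ (every $b\in B$ satisfies $|S(b)\cap A|=r\geq1$, hence lies in some $B_a$); since $|B|=2r$ there are then exactly two such values $B_1,B_2$, and putting $A_i:=\{a\in A:B_a=B_i\}$ gives a partition with $A_i\cup B_i\cong K_{r,r}^S$ and no other $S$-edges in $A\cup B$. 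Observation~\ref{ObsPartition} then confirms that the number of parts is exactly two.

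So the real content is to exclude a \emph{partial} overlap $0<|B_a\cap B_{a'}|<r$, and this is precisely what Case~\ref{Case2} in the proof of Lemma~\ref{Lemma1} rules out; I would rerun that argument here. The only adaptation is that the hypothesis $2r<n$ is no longer available: there the auxiliary $r$-subset $X\subseteq C$ of a third class $C$ with $X\cap C_x=\varnothing$ could be chosen with room to spare, whereas now $|C\setminus C_x|=r$ forces the unique choice $X=C\setminus C_x$. Picking $x,x'\in B$ with $C_x\neq C_{x'}$ (available because the $S$-neighbourhoods into $C$ are not all equal), this forced $X$ still satisfies $X\cap C_{x'}=C_{x'}\setminus C_x\neq\varnothing$, exactly because $C_x\neq C_{x'}$ and both have size $r$. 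I would then amalgamate as in Case~\ref{Case2}: $S$ and $T$ are nonforking by Proposition~\ref{PropSTNf}, distinct $R$-classes are $\indep$ by Observation~\ref{ObsIndepClasses} (and Remark~\ref{rmksubsets} for their subsets), and all singletons share the unique strong type over $\varnothing$ by Observation~\ref{ObsNoFER}; hence the Independence Theorem yields a common realisation of the two nonforking neighbourhood types. As in the lemma, this produces an automorphism fixing the amalgamated vertex together with $a$ and moving $x\mapsto x'$, which cannot preserve the definable set $C_x\cap X$ versus $C_{x'}\cap X$ (one empty, one not), the desired contradiction.

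With partial overlaps excluded we are in Case~\ref{Case1}: the relation $a\sim a'$ iff $B_a=B_{a'}$ is an invariant equivalence relation on $A$, each $\sim$-class $k$ corresponds to the $r$-set $k'=B_a$ ($a\in k$), the structure on $k\cup k'$ is $K_{r,r}^S$, and the $k'$ partition $B$ into $r$-subsets. Since $|B|=2r$, Observation~\ref{ObsPartition} forces exactly two classes, which is the asserted partition.

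The step I expect to be the main obstacle is the amalgamation with no slack. When $2r<n$ there was genuine freedom both in choosing $X$ and in realising the required $r$-subsets as $S$-neighbourhoods of single vertices; with $X=C\setminus C_x$ forced I must check that this subset (and the subset $Y\subseteq B$ used on the other side) is still realised as some $S(\delta)\cap C$, and that the strong-type and independence hypotheses of the Independence Theorem continue to hold verbatim under the tighter sizes. A secondary point is the $\sim_A$/$\sim_B$ matching used in Case~\ref{Case1}, namely that distinct neighbourhood values are genuinely disjoint and not merely unequal; but this is exactly the partial-overlap exclusion carried out in the previous step.
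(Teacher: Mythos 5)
Your overall route mirrors the paper's own proof: reduce the statement to showing that the sets $B_a$ are pairwise equal or disjoint, exclude partial overlaps by rerunning the amalgamation of Case~\ref{Case2} of Lemma~\ref{Lemma1} with the forced witness $X=C\setminus C_x$, and finish with the Case~\ref{Case1}/Observation~\ref{ObsPartition} analysis. But the execution has a genuine gap, and it is not where you expect it. You select $x,x'\in B$ subject only to $C_x\neq C_{x'}$. When $n=2r$ this allows $C_{x'}=C\setminus C_x$, i.e.\ disjoint complementary neighbourhoods --- exactly the configuration that the corollary asserts actually occurs, so no contradiction can possibly follow from that hypothesis alone. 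Concretely, your argument breaks at the homogeneity step, not at the amalgamation: to obtain $\sigma\in\aut(M)$ fixing the amalgamated vertex $v$ and some $c\in C$ (fixing $c$ is what guarantees $\sigma(C)=C$, so that $\sigma$ really carries $C_x\cap X$ to $C_{x'}\cap X$) while moving $x$ to $x'$, you need $\{v,c,x\}\cong\{v,c,x'\}$, hence a point $c\in C$ bearing the same relation to $x$ as to $x'$. If $C_{x'}=C\setminus C_x=X$, no useful such $c$ exists: every $c\in X$ satisfies $T(x,c)\wedge S(x',c)$, and every $c\in C_x$ satisfies $S(x,c)\wedge T(x',c)$. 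In Lemma~\ref{Lemma1} this issue is invisible because the hypothesis $2r<n$ was used \emph{before} the case distinction to prove $B_a\cap B_{a'}\neq\varnothing$ for every pair, so in Case~\ref{Case2} every distinct pair automatically overlaps partially; that preliminary step is precisely what is unavailable at $n=2r$.

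The repair is short and stays within your plan, but it must use the contradiction hypothesis, which you state and then never actually invoke: assuming some pair satisfies $0<|B_a\cap B_{a'}|<r$, homogeneity (move that $R$-edge onto an $R$-edge of $B$) produces $x,x'\in B$ and a class $C\neq B$ with $0<|C_x\cap C_{x'}|<r$. Then $|C_x\cup C_{x'}|=2r-|C_x\cap C_{x'}|<2r$, so you may pick $c\in X\setminus C_{x'}$, which satisfies $S(v,c)\wedge T(x,c)\wedge T(x',c)$ (alternatively $c\in C_x\cap C_{x'}$ works, with $T(v,c)\wedge S(x,c)\wedge S(x',c)$, using $S(v)\cap C=X$); now homogeneity yields $\sigma$ fixing $v,c$ with $\sigma(x)=x'$, and $\sigma$ sends the empty set $C_x\cap X$ to the nonempty set $C_{x'}\cap X$, the desired contradiction. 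By contrast, the points you flag as the main obstacle are unproblematic: $X$ is an $R$-clique of size $r$ in $C$, hence isomorphic to $C_x$ and realised as $S(\delta)\cap C$ by homogeneity, and the hypotheses of the Independence Theorem (nonforking of the neighbourhood types via Proposition~\ref{PropSTNf} and Remark~\ref{rmksubsets}, uniqueness of the Lascar strong type via Observation~\ref{ObsNoFER}) are insensitive to whether $2r<n$ or $2r=n$.
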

\begin{proof}
	Suppose that there exist $c,c'\in A$ such that $B_c=B_c'$. Then we can define an equivalence relation $\sim_B$ on $A$ by the formula $$\varphi(u,v;a,b): (\bar R(a,u)\wedge \bar R(a,v))\rightarrow\forall x(\bar R(x,b)\rightarrow(S(x,u)\leftrightarrow S(x,v)))$$Where $\bar R$ is the reflexive closure of $R$. Therefore, we have that for all $c,c'\in A$ either $B_c\cap B_{c'}=\varnothing$ or $B_c=B_{c'}$, so that a set of representatives for $\sim_B$ classes in $A$ induces a partition of $B$ into sets of size $r$. By symmetry of $S$ and homogeneity, the $\sim_B$-classes are also of size $r$, and by the definition of $\sim_B$ the structure induced on $c/\sim_B\cup B_c$ is $K_{r,r}^S$. The argument from Case \ref{Case1} Lemma \ref{Lemma1} shows that in this situation there are exactly two equivalence classes.

	The argument from Case \ref{Case2} in Lemma \ref{Lemma1} says that we cannot have $B_c\neq B_{c'}$ for all $c\neq c'$ in $A$.
\end{proof}

In other words, in a homogeneous imprimitive 3-graph with finite classes, the classes have size $2r$ for some $r\geq 1$ and the structure induced on a pair of distinct $R$-classes is that of two disjoint copies of $K_{r,r}^S$ (and therefore the rest of the edges form two disjoint copies of $K_{r,r}^T$). Given two distinct $R$-classes $B,C$ and subsets $B_1\subset B, C_1\subset C$, we write $S(B_1,C_1)$ if the structure induced on $B_1\cup C_1$ is a complete bipartite graph (where we interpret $S$ as edges and $R$ as nonedges). 

\begin{remark}\label{RmkInvariantSubsets}
The equivalence relations on $A$ and $B$ defined in the proof of Corollary \ref{CorExistsPartition} are invariant under automorphisms fixing some $a\in A$ and $b\in B$.
\end{remark}

\begin{observation}
	Let $M$ be an imprimitive homogeneous simple unstable 3-graph in which the reflexive closure of $R$ is a nontrivial proper equivalence relation on $M$ with finite classes, and suppose that $\aut(M)$ acts 2-transitively on $M/R$. Then $\SU(M)=1$.
\end{observation}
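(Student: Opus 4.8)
The plan is to show that $R$ is the only relation capable of causing forking and that it is algebraic, so that every forking extension of the type of a single vertex is already algebraic; this pins the rank at exactly $1$. Throughout I would work with a single vertex $a$ and use that, by transitivity, $\SU(M)=\SU(a/\varnothing)$. For the lower bound $\SU(a/\varnothing)\geq 1$, I would choose $b$ with $R(a,b)$: the definition of a forking relation gives that $\tp(a/b)$ forks over $\varnothing$, while finiteness of the $R$-class forces $a\in\acl(b)$, so $\SU(a/b)=0$. Since $a\notin\acl(\varnothing)$ by transitivity this is a proper forking extension, and Definition \ref{DefSU} yields $\SU(a/\varnothing)\geq 1$.

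The substance of the argument is the upper bound $\SU(a/\varnothing)\leq 1$, which I would obtain by proving that \emph{every} forking extension $\tp(a/B)$ is algebraic. By Proposition \ref{PropSTNf} both $S$ and $T$ are nonforking, so $R$ is the unique forking relation. Suppose the class $a/R$ is not represented in $B$; setting $X=a/R$ and letting $Y$ be the union of all $R$-classes meeting $B$, the sets $X$ and $Y$ are disjoint unions of $R$-classes, so Observation \ref{ObsIndepClasses} gives $X\indep Y$, and monotonicity yields $a\indep B$, i.e. $\tp(a/B)$ does not fork over $\varnothing$. Contrapositively, if $\tp(a/B)$ forks over $\varnothing$ then $a/R$ meets $B$, say $R(a,b)$ with $b\in B$, whence $a\in b/R\subseteq\acl(B)$ because $b/R$ is a finite $B$-definable set. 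Thus no forking extension of $\tp(a/\varnothing)$ has $\SU$-rank $\geq 1$, giving $\SU(a/\varnothing)\leq 1$; combining the two bounds yields $\SU(M)=1$.

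The hard part is the clean identification of ``$\tp(a/B)$ forks over $\varnothing$'' with the purely combinatorial condition ``$a/R$ is represented in $B$''. Everything hinges on Observation \ref{ObsIndepClasses} (disjoint unions of $R$-classes are $\varnothing$-independent), which in turn rests on Observation \ref{ObsRFree} and the two simplicity lemmata; once this is in hand the rank computation is immediate and needs neither $2$-transitivity nor the detailed pair-of-classes structure from Corollary \ref{CorExistsPartition}. A minor technical point is that $B$ may be infinite, but Observation \ref{ObsIndepClasses} is stated and proved for arbitrary unions of classes, so no separate appeal to the finite character of forking is required.
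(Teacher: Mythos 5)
Your proof is correct and takes essentially the same approach as the paper: the paper's own argument also splits an arbitrary extension of the unique 1-type according to whether it contains $R(x,a)$ for some parameter $a$, concluding algebraicity (hence rank 0) in that case and nonforking via Observation \ref{ObsIndepClasses} otherwise. Your explicit lower-bound step and the monotonicity reduction from $X\indep Y$ to $a\indep B$ just spell out what the paper leaves implicit; the appeal to Proposition \ref{PropSTNf} is redundant for your case split (though it is needed anyway, since Observation \ref{ObsIndepClasses} rests on it).
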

\begin{proof}
By transitivity, there is a unique 1-type over $\varnothing$, $p$. Let $q\in S(A)$ be an extension of $p$ to $A$. We have two possibilities:
\begin{enumerate}
\item{The type $q(x)$ contains the formula $R(x,a)$ for some $a\in A$. In this case, $q$ is clearly an algebraic extension, and therefore forking and of rank 0, so $p$ is of rank 1.}
\item{The type $q(x)$ does not contain $R(x,a)$ for any $a\in A$. Then $q$ is the type of an element in an $R$-class that is not represented in $A$. By Observation \ref{ObsIndepClasses}, $q$ is a nonforking extension of $p$.}
\end{enumerate}
\end{proof}

\begin{lemma}\label{Lemma2}
	Let $M$ be an imprimitive homogeneous simple unstable 3-graph in which the reflexive closure of $R$ is a nontrivial proper equivalence relation on $M$ with finite classes, and suppose that $\aut(M)$ acts 2-transitively on $M/R$. Then the $R$-classes have size 2.
\end{lemma}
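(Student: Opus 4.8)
The plan is to build on the rigid description already obtained in Corollary \ref{CorExistsPartition}: every $R$-class has size $2r$, and across any two distinct classes $A,B$ the $S$-edges form two disjoint copies of $K_{r,r}$, so that $B$ cuts $A$ into two \emph{halves} of size $r$ (and symmetrically). Since we want $R$-classes of size $2$, I would fix this as the statement $r=1$, assume $r\geq 2$, and derive a contradiction. Two preliminary inputs drive the argument. First, because an $R$-class is an $R$-clique, \emph{every} permutation of its $2r$ vertices is a local isomorphism and hence extends to an automorphism; thus $\aut(M)$ induces the full symmetric group on each class, and consequently every $r$-subset of a class $B$ arises as $S(a)\cap B$ for some vertex $a$ (the isomorphism type of $\{a\}\cup B$ depends only on $|S(a)\cap B|=r$). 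Second, I would record that the halving is \emph{non-canonical} when $r\geq 2$: the statements ``some further class places $x,y$ in a common half'' and ``some further class separates $x,y$'' are existential, hence by quantifier elimination quantifier-free, hence constant on the unique quantifier-free $2$-type of an $R$-edge; for $r\geq 2$ both are witnessed (a half of size $r\geq 2$ contains a pair, and distinct halves separate a pair), so every $R$-edge is joined by some classes and separated by others.

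From here I would run an amalgamation argument in the style of Case \ref{Case2} of Lemma \ref{Lemma1}. Fix distinct classes $B,C$ and a pair $x,x'\in B$ that $C$ separates, so $C_x:=S(x)\cap C$ and $C_{x'}:=S(x')\cap C$ are complementary halves of $C$. As $\{x,x'\}\subset B$ and $C$ lie in distinct classes they are independent (Remark \ref{rmksubsets}), $S$ is nonforking (Proposition \ref{PropSTNf}), and there is a unique strong (= Lascar strong, by lowness) type over $\varnothing$ (Observation \ref{ObsNoFER}); so the Independence Theorem lets me amalgamate the $S$-type over $\{x,x'\}$ with a prescribed half-type over $C$, yielding $a\indep\{x,x',C\}$ with $S(a,x)\wedge S(a,x')$ (so $x,x'$ share a half relative to the class of $a$) and with $S(a)\cap C$ any chosen half of $C$. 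Since $x,x'$ are then $S$-twins over the class of $a$, the transposition fixing $a$ and swapping $x\leftrightarrow x'$ is a local isomorphism and extends to an automorphism $\tau$ of $M$.

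The heart of the matter is to force $\tau$ to fix the separating class $C$ setwise. Once that is achieved the contradiction is immediate: swapping $x\leftrightarrow x'$ interchanges the complementary halves $C_x,C_{x'}$, while fixing $a$ preserves $S(a)\cap C$; choosing $S(a)\cap C$ to meet $C_x$ and $C_{x'}$ in unequal numbers makes a set be simultaneously preserved and moved, exactly as a nonempty intersection is carried to an empty one in Case \ref{Case2}. I expect this pinning step to be the main obstacle, precisely because the equality $n=2r$ removes all the slack that Lemma \ref{Lemma1} extracted from the strict inequality $2r<n$: a generic separating class $C$ is not algebraic over $\{a,x,x'\}$, so a priori $\tau$ may move it. My intended remedy is to enlarge the amalgamated configuration, adjoining (again by the Independence Theorem) enough of $C$ together with an auxiliary class whose halving is coherent with the $a$-half but incoherent with the $C$-halving, and to use the non-canonicity established above to show that any automorphism realizing the $x\leftrightarrow x'$ swap must respect $C$.

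As a parallel route in case the swap cannot be pinned directly, I would exploit that a neighbourhood $S(a)$ is \emph{itself} a homogeneous $3$-graph: a partial isomorphism of $S(a)$ extended by $a\mapsto a$ is a partial isomorphism of $M$, so it extends to an automorphism fixing $a$. Here $R$ cuts out classes of size $r<2r$, and simplicity is inherited after naming the parameter $a$; the induced action on $S(a)/R$ is again $2$-transitive by the homogeneity of $S(a)$. A descent on the class size then reduces $r\geq 2$ to the base case settled in Observation \ref{ObsEasyCase}, where a halving into singletons forces classes of size $2$, contradicting $r\geq 2$. I regard verifying the hypotheses of this descent for $S(a)$ (in particular that $S(a)$ remains unstable) and, in the direct route, securing the pinning of $C$, as the two places where the real work lies.
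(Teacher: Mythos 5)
Your framework (Corollary \ref{CorExistsPartition}, assume $r\geq 2$, amalgamate via the Independence Theorem, contradict homogeneity) matches the paper's, but the step you yourself flag as ``the heart of the matter'' --- forcing the transposition $\tau$ that fixes $a$ and swaps $x\leftrightarrow x'$ to fix the separating class $C$ setwise --- is a genuine gap, and it cannot be closed the natural way. Since $C$ separates $x$ and $x'$, every $c_0\in C$ satisfies exactly one of $S(c_0,x)$, $S(c_0,x')$, so no point of $C$ can be adjoined to your partial isomorphism as a \emph{fixed} point; any pinning of $C$ must be merely setwise (say by adjoining a swapped pair $c_1\in C_x$, $c_2\in C_{x'}$ lying inside the half $S(a)\cap C$), and the constraint this yields is only $|S(a)\cap C\cap C_x|=|S(a)\cap C\cap C_{x'}|$, which is vacuous when $r=2$. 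Your sketched remedy (``an auxiliary class whose halving is coherent with the $a$-half but incoherent with the $C$-halving'') is not an argument. The fallback descent route is also incomplete: instability of $S(a)$ and $2$-transitivity of $\aut(M/a)$ on $S(a)/R$ do not follow from homogeneity alone (a priori only one of $S,T$ could be realised across a given pair of classes of $S(a)$, cf.\ the situation of Corollary \ref{CorFiniteClasses}), and even granting both, induction on class size only forces $r=2$; nothing in the descent then excludes classes of size $4$ in $M$, because Observation \ref{ObsEasyCase} applied at the bottom level is \emph{consistent} with what you have, not contradictory.

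The paper's proof avoids the pinning problem entirely by inverting the roles of the swapped and the fixed objects. Fix classes $A=A_0\cup A_1$ and $B=B_0\cup B_1$ with their canonical partitions, choose crossing $r$-sets $A_2\subset A$ and $B_2\subset B$ meeting both blocks of each partition (possible as $r\geq 2$), and apply the Independence Theorem \emph{twice}: once to get a vertex $c$ with $S(c)\cap A=A_2$ and $S(c)\cap B=B_2$, and once to get a vertex $d$ with $S(d)\cap A=A_0$ and $S(d)\cap B=B_2$. Taking $a\in A_2\cap A_0$ and $b\in B_1\cap B_2$, both $abc$ and $abd$ are triangles of type $SST$ with $T(a,b)$, so homogeneity would give $\sigma\in\aut(M)$ fixing $a,b$ with $\sigma(c)=d$. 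But the partition $\{A_0,A_1\}$ is definable over $\{a,b\}$ (Remark \ref{RmkInvariantSubsets}), so $\sigma$ fixes $A_0$ and $A_1$ setwise (it fixes $a\in A_0$), while $\sigma(A_2)=\sigma(S(c)\cap A)=S(d)\cap A=A_0$ --- impossible, since $A_2$ meets $A_1$. The idea you were missing is that the swap should be performed on \emph{external witnesses} ($c\mapsto d$) over parameters chosen inside the classes whose invariant partition is being exploited: then the partition is definable from the fixed points themselves, and no third class ever needs to be controlled.
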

\begin{proof}
The proof of this proposition depends on Observations \ref{ObsNoFER}, \ref{ObsRFree}, and \ref{ObsIndepClasses}.

Suppose for a contradiction that the $R$-classes in $M$ have four or more elements. Consider two distinct classes $A$ and $B$. We know from Corollary \ref{CorExistsPartition} that there exist $r$-sets $A_0,A_1$ and $B_0,B_1$ such that $S(A_0,A_1)$ and $S(B_0,B_1)$. By Observation \ref{ObsIndepClasses}, $A\indep B$; take $r$-sets $A_2\subset A, B_2\subset B$ such that all of $A_2\cap A_0, A_2\cap A_1, B_2\cap B_0, B_2\cap B_1$ are nonempty (we can do this because $r\geq 2$. By monotonicity of independence, $A_2\indep B_2$. The formulas $\bigwedge_{a\in A_2} S(x,a)$ and $\bigwedge_{b\in B_2}S(x,b)$ isolate nonforking extensions $p_{A_2}, p_{B_2}$ of the unique type over $\varnothing$; any $c_0\models p_{A_2}, c_1\models p_{B_2}$ in distinct $R$-classes are independent realisations of nonforking extensions of $p$ and satisfy the same Lascar strong type over $\varnothing$ by Observation \ref{ObsNoFER}, so we can use the Independence Theorem to find $c\indep AB$ that is $S$-related to $A_2$ and $B_2$. Similarly, we can find some $d$ that is $S$-related to $A_0$ and $B_2$. Now if we take $a\in A_2\cap A_0,b\in B_1\cap B_2$ then $abc$ and $abd$ form triangles of type $SST$ with $T(a,b)$. 

\[
\includegraphics[scale=0.9]{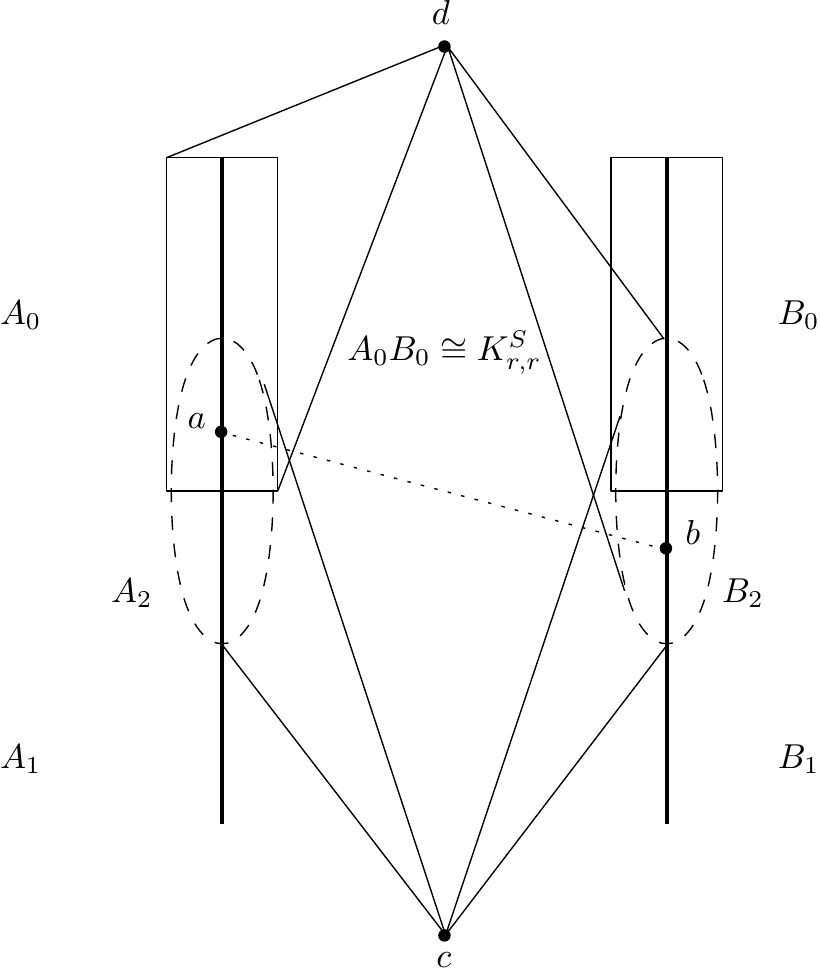}
\]

The partial isomorphism $a\mapsto a,b\mapsto b, c\mapsto d$ cannot be extended to an automorphism by Remark \ref{RmkInvariantSubsets}, since it fixes $a$ and $b$, and therefore it fixes their $\sim$ equivalence class, but at the same time the automorphism should take the $\sim$-class of $a$, $A_0=S(d)\cap A$, to $A_2=S(c)\cap A$. This contradicts the homogeneity of $M$.
\end{proof}

We now count with all the ingredients for the proof of our main result.
\begin{theorem}\label{ThmCGamma}
	Up to isomorphism, the only imprimitive simple unstable homogeneous 3-graph with finite classes such that is all predicates are realised in the union of two classes is $C(\Gamma)$.
\end{theorem}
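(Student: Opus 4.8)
The plan is to combine the structural results already established with an explicit identification of $M$ with $C(\Gamma)$. By the global hypothesis all relations are realised in the union of two $R$-classes, so by Observation \ref{ObsTransRE} and Remark \ref{Rmk1} the induced action of $\aut(M)$ on $M/R$ is $2$-transitive, and all three relations appear across any pair of distinct classes. This lets me invoke Lemma \ref{Lemma2} to conclude that the $R$-classes have size exactly $2$. Setting $r=1$, Corollary \ref{CorExistsPartition} then tells me that across any two distinct classes $A=\{a_0,a_1\}$, $B=\{b_0,b_1\}$ the $S$-edges form two disjoint copies of $K_{1,1}^S$ (i.e.\ a perfect matching in the colour $S$) and likewise the $T$-edges form a matching; so after relabelling within each class, $S(a_0,b_0)$ and $S(a_1,b_1)$ hold while $T(a_0,b_1)$ and $T(a_1,b_0)$ hold (or the mirror image).

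Next I would recover the reduct of the random graph. Since the $R$-classes have size $2$, I can try to define on $M/R$ a single binary relation $E$ and show that $(M/R,E)$ is isomorphic to $\Gamma$. The natural candidate: each pair of classes carries a perfect $S$-matching together with a $T$-matching, so the two \emph{ways} the matchings can sit relative to a fixed transversal encode one bit of information per pair of classes. Concretely, pick a transversal and declare $\hat E(A,B)$ to hold when the chosen representatives are $S$-related (and the crossing edges are $T$), and $\neg\hat E$ when they are $T$-related. The content of the $S(v_i,v_j)$ definition in the construction of $C(\Gamma)$ is exactly that this bit behaves like the random-graph edge relation. I would verify that $(M/R,\hat E)$ is a homogeneous graph using the quotient argument of Claim \ref{ClaimTrick}, and that it omits no finite configuration (every finite graph embeds), so by the Lachlan--Woodrow Theorem \ref{LachlanWoodrow} it is the random graph $\Gamma$; here simplicity and $2$-transitivity rule out the imprimitive and $K_n$-omitting cases.

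Finally I would assemble the isomorphism $M\cong C(\Gamma)$. Fixing an enumeration of $\Gamma$ as in the construction, the finite cover $C(\Gamma)$ is determined up to isomorphism by three pieces of data: that $R$ is an equivalence relation with classes of size $2$, that each pair of classes carries the $S/T$ matching pattern above, and that the induced graph on the quotient is $\Gamma$ with the prescribed parity convention linking the colour of a crossing edge to the random-graph edge. Having matched all three, a standard back-and-forth between $M$ and $C(\Gamma)$---built from the homogeneity of both structures and the $2$-transitivity on classes---produces the isomorphism. The main obstacle I anticipate is the bookkeeping in the middle step: showing that the per-pair ``bit'' is genuinely well-defined independent of the transversal (this is where the matching structure from Corollary \ref{CorExistsPartition} and the invariance in Remark \ref{RmkInvariantSubsets} are essential) and that it is consistent enough across triples of classes to reconstruct a single graph relation satisfying the parity rule of the $C(\Gamma)$ definition, rather than something that only looks like a graph locally. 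Once that consistency is nailed down, the identification with $\Gamma$ via Lachlan--Woodrow and the back-and-forth are routine.
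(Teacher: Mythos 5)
Your first step (2-transitivity, classes of size $2$ via Lemma \ref{Lemma2}, and the matching structure on pairs of classes via Corollary \ref{CorExistsPartition}) matches the paper, but the central step of your plan has a genuine gap: the per-pair ``bit'' $\hat E$ is not, and cannot be made, independent of the transversal. With classes $A=\{a_0,a_1\}$ and $B=\{b_0,b_1\}$, if $S(a_0,b_0)$ holds then $T(a_0,b_1)$ holds, so exchanging the chosen representative of $B$ flips $\hat E(A,B)$ for \emph{every} other class $A$; hence $\hat E$ is not invariant under $\aut(M)$ and is not part of the structure on $M/R$ at all. The quotient argument of Claim \ref{ClaimTrick} cannot rescue this: that argument works precisely because, in the non-2-transitive case, only one of $S,T$ is realised across each pair of classes, so ``for some/any'' coincide and any two transversals of $\pi^{-1}(A)$ are isomorphic; here both relations are realised across every pair, different transversals of the same classes induce non-isomorphic graphs, and that is exactly what breaks. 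The only transversal-independent datum is the parity of the number of $S$-edges on a transversal to three classes, i.e.\ a two-graph (a switching class of graphs), which is strictly weaker than a graph --- this is why the paper calls $C(\Gamma)$ a cover of a \emph{reduct} of the random graph. Worse, for a badly chosen transversal $(M/R,\hat E)$ need not even be random: taking $x_{A_0}=a_0$ and, in every other class, the representative $T$-related to $a_0$, makes $A_0$ an isolated vertex of $\hat E$. So the appeal to the Lachlan--Woodrow Theorem is made to a structure that the hypotheses do not provide.

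The paper avoids quotient graphs entirely: it proves $\age(C(\Gamma))=\age(M)$ by induction on the number of classes, using Observation \ref{ObsIndepClasses} (disjoint unions of classes are independent), uniqueness of the Lascar strong type of a class (primitivity of the induced action on $M/R$), and the Independence Theorem to realise the type of an $(n+1)$-st class over $\bar a_1$ and over $\bar a_2,\ldots,\bar a_n$ simultaneously; equality of ages plus homogeneity then gives $M\cong C(\Gamma)$ by uniqueness of the Fra\"iss\'e limit. Note that your concluding back-and-forth needs exactly this age equality as input, so the unresolved middle step is where all the content lies. Your idea could be repaired, but only by taking the switching-class viewpoint seriously: show that $M\cong C(G)$ for an arbitrary transversal graph $G$, that $C(G)$ depends only on the switching class of $G$, and that some switching of $G$ is the random graph (for instance by orienting all classes relative to a fixed vertex $a_0$ and proving that the $S,T$-graph induced on $S(a_0)$ is random) --- and that last point again requires an amalgamation argument via the Independence Theorem, so nothing is saved over the paper's route.
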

\begin{proof}
	Let $M$ denote an imprimitive simple unstable homogeneous 3-graph with finite classes. We know by Lemma \ref{Lemma2} that the $R$-classes in $M$ have size 2, and by Corollary \ref{CorExistsPartition} that the structure induced on any pair of $R$-classes is the graph on four vertices with two $R$-edges, two $S$-edges, and two $T$-edges, and each of these pairs of edges spans the four vertices.

	Our argument makes use of the Independence Theorem and Observation \ref{ObsIndepClasses}. Notice that, since the automorphism group of $M$ acts 2-transitively on the set of $R$-edges, then there are no invariant equivalence relations on the set of classes. Therefore, there is a unique Lascar strong type of $R$-classes/edges.

	Take any element $A$ of $\age(C(\Gamma))$. We may assume without loss of generality that $A$ is a union of $R$-classes. We argue inductively that the structure induced by $C(\Gamma)$ on any $n$-tuple of classes can be embedded into $M$. For $n=1$, this is clear as the structure is simply an $R$-edge. Similarly, by Corollary \ref{CorExistsPartition}, we have the result for $n=2$. Now suppose that we can embed the structure induced by $C(\Gamma)$ on any $n$-tuple of $R$-classes into $M$, say into $\bar a_1,\ldots,\bar a_n$. By Observation \ref{ObsIndepClasses}, $\bar a_1\indep \bar a_2,\ldots \bar a_n$; by the primitivity of the action of $\aut(M)$ on $M/R$, all classes have the same Lascar strong type over $\varnothing$. So let $\bar b_0\models\tp(\bar a_{n+1}/\bar a_1)$ and $\bar b_1\models\tp(\bar a_{n+1}/\bar a_2,\ldots,\bar a_n)$. These two types are realised in $M$ by quantifier elimination and the induction hypothesis. By the Independence Theorem, there exists a class $\bar b_{n+1}$ realising $\tp(\bar a_{n+1}/\bar a_1)\cup\tp(\bar a_{n+1}/\bar a_2,\ldots,\bar a_n)$, $\bar b_{n+1}\indep\bar a_1,\ldots,\bar a_{n}$. The structure induced by $M$ on $\bar a_1,\ldots,\bar a_n\bar b_{n+1}$ is isomorphic to the original structure in $C(\Gamma)$. This proves $\age(C(\Gamma))\subseteq\age (M)$; the same argument proves the equality, so $C(\Gamma)$ and $M$ are homogeneous structures of the same age. They are therefore isomorphic.
\end{proof}

This concludes the analysis of imprimitive unstable structures with finite classes such that all predicates are realised in the union of two classes. What happens if only two predicates are realised in the union of two classes? In that case, the structure induced on a pair of distinct classes $A,B$ is either $K_{n,n}^S$ or $K_{n,n}^T$; as a consequence, all sets containing exactly one element from each $R$-class are isomorphic.

\begin{observation}
	Let $M$ be a simple homogeneous 3-graph in which $R$ defines an equivalence relation. If $\aut(M)$ acts transitively, but not 2-transitively on $M/R$, then the $S,T$-graph induced on any $X\subset M$ containing exactly one element from each $R$-class is homogeneous.
\label{ObsInterpretedGraph}
\end{observation}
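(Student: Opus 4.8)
The plan is to observe that this statement is, up to rephrasing, identical to the proposition whose proof is recorded in Claim~\ref{ClaimTrick}, and to reuse that \emph{quotient-trick} argument verbatim. The only point deserving genuine attention is that the $\{S,T\}$-structure induced on a transversal $X$ is canonical, \ie independent of the choice of representative in each $R$-class; granting this, the graph on $X$ is isomorphic to a definable graph on $M/R$ whose homogeneity is proved exactly as before.

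First I would pin down the colour between two classes. Since the action of $\aut(M)$ on $M/R$ is transitive but not $2$-transitive, Observation~\ref{ObsTransRE} together with Remark~\ref{Rmk2} shows that neither $S$ nor $T$ can be realised across \emph{every} pair of distinct classes; in the case under discussion (the complement of the $2$-transitive situation treated in Theorem~\ref{ThmCGamma}) this means that each pair of distinct $R$-classes $A,B$ realises exactly one of $S,T$, so that the structure induced on $A\cup B$ is $K^S_{n,n}$ or $K^T_{n,n}$. Consequently the relations $\hat S,\hat T$ on $M/R$, where $\hat S(a/R,b/R)$ holds iff $S(\alpha,\beta)$ for some (equivalently, any) $\alpha\in a/R$, $\beta\in b/R$, and likewise for $\hat T$, are well defined and complementary on distinct classes; and the map sending each class to its representative in $X$ is an isomorphism from $(M/R,\hat S,\hat T)$ onto the graph induced by $M$ on $X$.

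It then remains to show that $(M/R,\hat S,\hat T)$ is homogeneous, and here I would copy Claim~\ref{ClaimTrick}. Writing $\pi\colon M\to M/R$ for the quotient map, take an isomorphism $f\colon A\to A'$ between finite substructures of $M/R$, choose transversals $Y\subseteq\pi^{-1}(A)$ and $Y'\subseteq\pi^{-1}(A')$ matched through $f$, and note that $Y$ and $Y'$ are isomorphic substructures of $M$: no two elements of a transversal are $R$-related, and by the previous paragraph an $S$- (resp.\ $T$-) edge between two members of $Y$ corresponds exactly to $\hat S$ (resp.\ $\hat T$) between the two classes, which $f$ preserves. By homogeneity of $M$ the bijection $Y\to Y'$ extends to some $\sigma\in\aut(M)$, and since $R$ is $\aut(M)$-invariant, $\sigma$ descends to the automorphism $\pi\sigma\pi^{-1}$ of $M/R$ carrying $A$ to $A'$.

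The main obstacle, and really the only nontrivial step, is the first one: verifying that the colour joining two classes is genuinely independent of the chosen representatives, so that $\hat S,\hat T$ define a graph on $M/R$ at all. Once purity of each pair is in hand, everything else is the formal lifting of Claim~\ref{ClaimTrick}, which transfers without change; the isomorphism $(M/R,\hat S,\hat T)\cong(X,S,T)$ then delivers the homogeneity of the $S,T$-graph on $X$.
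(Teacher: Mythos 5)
Your proposal is correct and takes essentially the same route as the paper: the paper's proof of this observation is exactly the quotient-trick of Claim~\ref{ClaimTrick}, defining $\hat S,\hat T$ on $M/R$ and lifting a finite isomorphism through transversals via homogeneity of $M$. Your only addition is to make explicit the well-definedness of $\hat S,\hat T$ (each pair of distinct classes realises exactly one of $S,T$, by the orbit argument behind Observation~\ref{ObsTransRE}), a point the paper leaves implicit in its ``for some/any'' phrasing.
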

\begin{proof}
	Consider the graph defined on $M/R$ with predicates $\hat S,\hat T$ which hold of two distinct classes $a/R,b/R$ if for some/any $\alpha\in a/R,\beta\in b/R$ we have $S(\alpha,\beta)$ (respectively, $T(\alpha,\beta)$). This graph is clearly isomorphic to the graph induced on $X$.
\begin{claim}
	The graph interpreted in $M/R$ as described in the preceding paragraph is homogeneous in the language $\{\hat S,\hat T\}$.
\label{ClaimInterpretsHenson}
\end{claim}
\begin{proof}
	Let $\pi$ denote the quotient map $M\rightarrow M/R$. Given two isomorphic finite substructures $A,A'$ of $M/R$, then any transversals to $\pi^{-1}(A)$ and $\pi^{-1}(A')$ are isomorphic, so by the homogeneity of $M$ there exists an automorphism $\sigma$ taking $\pi^{-1}(A)$ to $\pi^{-1}(A')$. The map $\pi\sigma\pi^{-1}$ is an automorphism of $M/R$ taking $A$ to $A'$.
\end{proof}
And the result follows.
\end{proof}

The graph on $M/R$ described in Observation \ref{ObsInterpretedGraph} is interpretable in $M$, so it must be a simple graph; if $S,T$ are unstable in $M$ it follows that $\hat S,\hat T$ are unstable in $M/R$. By the Lachlan-Woodrow Theorem, the graph on $M/R$ is isomorphic to the Random Graph. 

\begin{corollary}\label{CorFiniteClasses}
	Let $M$ be a simple homogeneous 3-graph in which $R$ defines an equivalence relation. If $\aut(M)$ acts transitively, but not 2-transitively on $M/R$. Then $M\cong\Gamma[K_n^R]$ for some $n\in\omega$.
\end{corollary}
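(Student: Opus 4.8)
The plan is to read the structure of $M$ off the interpreted quotient graph and then recognise $M$ as a blow-up of that graph. By Observation \ref{ObsInterpretedGraph} and the discussion immediately preceding this corollary, under the hypothesis that $\aut(M)$ acts transitively but not $2$-transitively on $M/R$ the interpreted graph $(M/R,\hat S,\hat T)$ is homogeneous and, being simple and unstable, isomorphic to the Random Graph $\Gamma$ by the Lachlan--Woodrow Theorem (Theorem \ref{LachlanWoodrow}). I would fix such an isomorphism $\phi:M/R\to\Gamma$, taking $\hat S$-edges to edges and $\hat T$-edges to non-edges of $\Gamma$.

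Next I would pin down the internal structure. Since the reflexive closure of $R$ is an equivalence relation, each $R$-class is an $R$-clique, and because $\aut(M)$ acts transitively on $M/R$ an automorphism carrying one class to another restricts to a bijection between them, so all classes share a common size $n$, finite in the present setting; thus each class is a copy of $K_n^R$. The well-definedness of $\hat S$ and $\hat T$ (the ``some/any'' clause in Observation \ref{ObsInterpretedGraph}) is exactly the statement that the structure induced on the union of two distinct classes $C,D$ is monochromatic complete bipartite: it is $K_{n,n}^S$ when $\hat S(C/R,D/R)$ holds and $K_{n,n}^T$ when $\hat T(C/R,D/R)$ holds. Consequently $M$ is completely determined by the data $(M/R,\hat S,\hat T)$ together with the number $n$: within a class every pair is an $R$-edge, and between two distinct classes every pair carries the single colour dictated by the quotient graph.

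To conclude I would exhibit the isomorphism $M\cong\Gamma[K_n^R]$ directly. For each class $C\in M/R$ choose a bijection $\psi_C$ from $C$ onto the fibre $\{\phi(C)\}\times V(K_n^R)$ of $\Gamma[K_n^R]$, and set $\Psi(x)=\psi_{x/R}(x)$. This is a bijection of vertex sets, and that it preserves each of $R,S,T$ is immediate from the description above: $R(x,y)$ holds iff $x,y$ lie in a common class and are distinct, iff $\Psi(x),\Psi(y)$ lie in a common fibre and are distinct, which is the $R$-relation of $\Gamma[K_n^R]$; and for $x,y$ in distinct classes $S(x,y)$ holds iff $\hat S(x/R,y/R)$ holds (by monochromaticity) iff $\phi(x/R)\phi(y/R)$ is an edge of $\Gamma$, which is exactly the $S$-relation of $\Gamma[K_n^R]$, and symmetrically for $T$. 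Hence $\Psi$ is an isomorphism.

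The only real obstacle is the monochromaticity, i.e.\ the fact that in the non-$2$-transitive case the colour between two classes does not depend on the chosen representatives; but this is precisely what Observation \ref{ObsInterpretedGraph} and the surrounding discussion provide, so here it is invoked as a black box, after which the remaining verification is pure bookkeeping. Alternatively one could avoid constructing $\Psi$ by hand and instead check that $M$ and $\Gamma[K_n^R]$ have the same age and are both homogeneous, appealing to Fra\"iss\'e's Theorem, but the explicit map is shorter and makes the identification with the composition $\Gamma[K_n^R]$ transparent.
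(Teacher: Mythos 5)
Your proposal is correct and takes essentially the same route as the paper: the corollary is obtained by combining Observation \ref{ObsInterpretedGraph} with the Lachlan--Woodrow identification of the quotient $(M/R,\hat S,\hat T)$ with the Random Graph, together with the monochromatic complete-bipartite structure between classes that non-2-transitivity forces (via the ``some/any'' well-definedness of $\hat S,\hat T$). The only difference is presentational: the paper leaves the final identification $M\cong\Gamma[K_n^R]$ implicit, whereas you spell out the blow-up isomorphism $\Psi$ explicitly, which is correct bookkeeping.
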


\chapter{Forking in Primitive Simple 3-graphs}\label{ChapStableForking}
\setcounter{equation}{0}
\setcounter{theorem}{0}
\setcounter{case}{0}
\setcounter{subcase}{0}

This chapter contains a proof of the following statement: given two vertices $a,b$ in a primitive simple homogeneous simple 3-graph $M$, if $\tp(a/b)$ divides over $\varnothing$, then the formula (relation) isolating $\tp(a/b)$ is stable. 

Supposing that we have a counterexample to that statement, since we must have at least one nonforking relation, the first distinction of cases is on the number of unstable forking relations, which can be either one or two.

This is a long argument by contradiction, and it can be at times confusing. The structure of the argument is as follows:
\[
\includegraphics[scale=0.8]{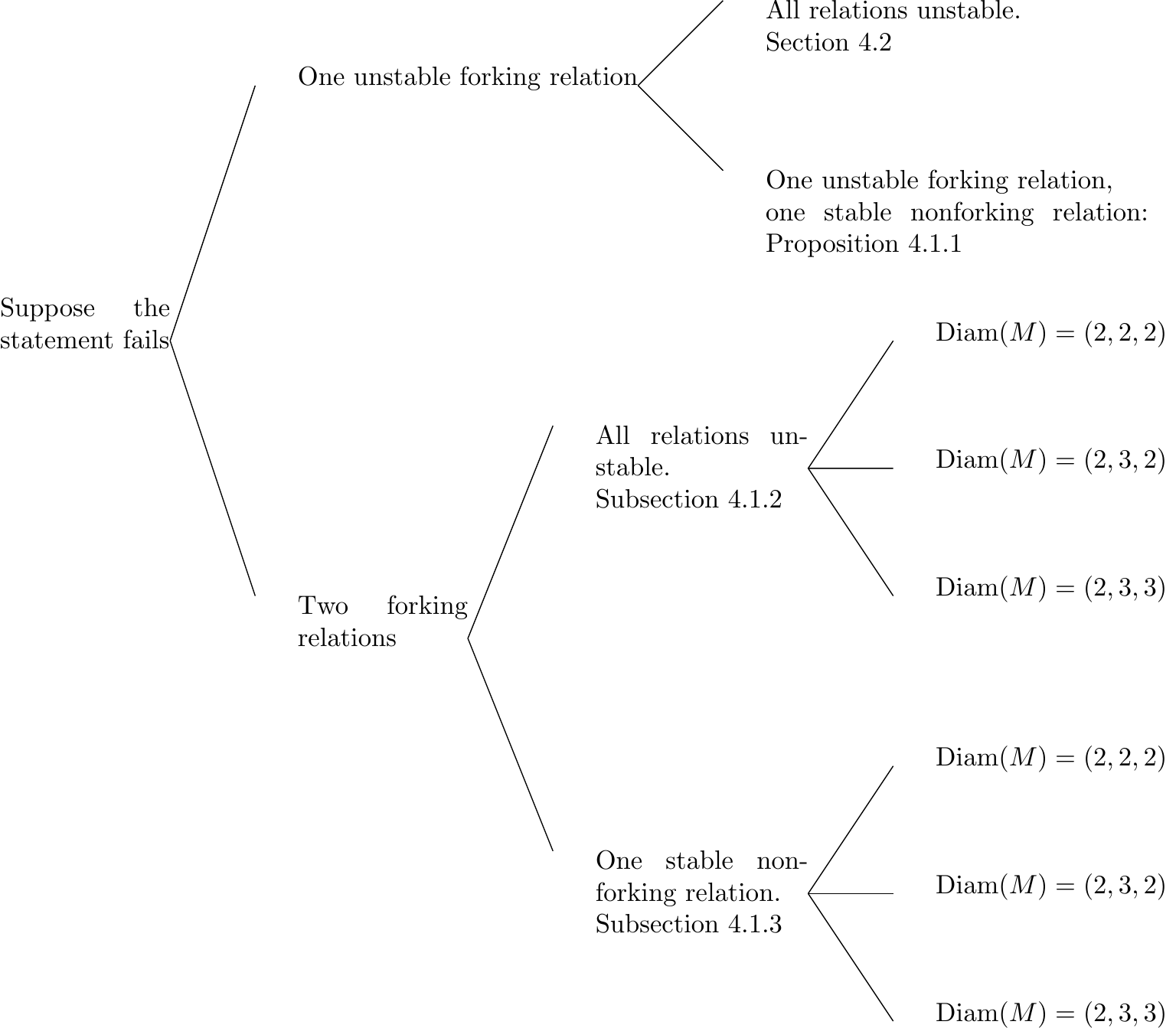}
\]

\section{Half-graphs in Primitive Homogeneous Simple 3-graphs}\label{SecUseful}
\begin{proposition}\label{UnstableNonforking}
	Let $M$ be a simple unstable primitive homogeneous $n$-graph. If there are at least two non-forking relations, then all the non-forking relations are unstable.
\end{proposition}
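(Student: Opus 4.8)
The plan is to show, for an arbitrary non-forking relation $P$, that $M$ contains an infinite $P$-half-graph; by the standard equivalence between the order property and instability, this immediately gives that $P$ is unstable. The hypothesis that there are \emph{at least two} non-forking relations is precisely what makes the construction go through: a second non-forking relation $Q\neq P$ will be used to colour every pair of the half-graph that is not a $P$-edge, so that the entire configuration is built out of non-forking colours only. This mirrors, in the higher-rank setting, the half-graph construction used in the rank-$1$ case of Proposition \ref{Year2}.

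First I would set up the amalgamation tool. Since $M$ is primitive it is transitive and admits no nontrivial invariant equivalence relation, and by $\omega$-categoricity ``invariant'' coincides with ``$\varnothing$-definable''; hence there are no $\varnothing$-definable finite equivalence relations on $M$, so the hypotheses of Proposition \ref{NonforkingAmalgamation} are satisfied. Its conclusion is that the finite $n$-graphs using only non-forking colours form the age of a random structure over the non-forking part of the language; since all relations are realised in $M$, there are no forbidden $2$-point configurations among the non-forking colours, and therefore \emph{every} finite $n$-graph whose edges all carry non-forking colours lies in $\age(M)$, i.e.\ embeds into $M$.

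Next I would fix an arbitrary non-forking relation $P$ and, using the hypothesis, a further non-forking relation $Q$. For each $m\in\omega$ define a finite $n$-graph $H_m$ on $2m$ distinct vertices $\{a_i,b_j:i,j<m\}$ by setting $P(a_i,b_j)$ exactly when $i<j$, and colouring every remaining pair with $Q$ (the cross pairs with $i\geq j$, together with all pairs inside $\{a_i:i<m\}$ and inside $\{b_j:j<m\}$, whose colours are unconstrained by the half-graph definition). Then $H_m$ uses only the non-forking colours $P$ and $Q$, so by the previous paragraph it embeds into $M$; by construction the image is a $P$-half-graph with $m$ pairs. As $m$ is arbitrary and $M$ is $\omega$-saturated (being $\omega$-categorical), $M$ realises an infinite $P$-half-graph, so $P(x,y)$ has the order property and $P$ is unstable. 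Since $P$ was an arbitrary non-forking relation, all non-forking relations are unstable.

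The one point demanding care — and the place where the ``at least two'' hypothesis is actually consumed — is the verification that $H_m$ carries no forking edge, which is exactly what lets Proposition \ref{NonforkingAmalgamation} apply. Having $Q$ available as a non-forking filler for the complement of the half-graph is indispensable here: were $P$ the only non-forking relation, one would be forced to colour the non-$P$ pairs with forking colours, pushing $H_m$ outside the amalgamation class and breaking the embedding step. Everything else is routine bookkeeping about the half-graph definition and the order property, so this seemingly innocuous colouring step is where the essential work lies.
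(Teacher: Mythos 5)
Your proof is correct and takes essentially the same route as the paper: the paper's own argument inlines, by induction with the Independence Theorem, exactly the embedding claim you obtain by citing Proposition \ref{NonforkingAmalgamation} (whose proof is that very induction), and then reads off instability from half-graphs whose non-$P$ pairs are filled with the second nonforking colour. The only difference is that you outsource the induction to the earlier general proposition rather than re-running it, which is a legitimate and slightly cleaner packaging of the identical idea.
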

\begin{proof}
	Suppose that we have at least two nonforking relations, $R_1$ and $R_2$. We will prove that all finite $\{R_1,R_2\}$-graphs can be embedded into $M$, so in particular we can find an infinite half-graph for $R_2$.

We will prove by induction that we can embed every finite graph of size $m$ as an independent set (meaning that if $a_1,\ldots,a_m$ are the vertices of the induced graph, then $a_i\indep[\varnothing]a_1\ldots a_{i-1}a_{i+1}\ldots a_m$). The case $m=2$ is our basis for induction, and follows trivially from the fact that $R_1$ and $R_2$ are nonforking.

Suppose that we can embed all $\{R_1,R_2\}$-graphs of size at most $k$ as independent sets in $M$, and we wish to embed an $\{R_1,R_2\}$-graph $G$ of size $k+1$ into $M$. Enumerate the vertices of the graph as $v_1,\ldots,v_{k+1}$. We can embed the subgraph induced on $v_1,\ldots,v_k$ as an independent set $a_1,\ldots,a_k$ in $M$. We have in particular $a_k\indep a_1\ldots a_{k-1}$. By applying the induction hypothesis again, we can find a $\beta$ which satisfies the same atomic formulas over $a_1,\ldots,a_{k-1}$ as $v_{k+1}$ over $v_1,\ldots,v_{k-1}$. Similarly, we can find $\beta'\indep a_k$ satisfying the same atomic relation over $a_k$ as $v_{k+1}$ over $v_k$. We can apply the Independence Theorem to find a common solution $b$ to $\tp(\beta/a_1\ldots a_{k-1})$ and $\tp(\beta'/a_k)$ such that $b\indep a_1\ldots a_{k+1}$. The graph induced by $M$ on $a_1,\ldots,a_k,b$ is isomorphic to $G$
\end{proof}

In the case of simple 3-graphs, we have at least one and up to three non-forking relations. Under primitivity and homogeneity, assuming that the atomic formula isolating $\tp(b/a)$ is stable if $\tp(b/a)$ divides over $\varnothing$ and Koponen's result on the finiteness of rank of binary homogeneous simple structures, it is not too hard to prove that if we have three non-forking relations then each of them is unstable and $M$ is the random 3-graph (see Theorem \ref{PrimitiveAlice}). If we have two non-forking relations, then they are unstable; the remaining relation could be stable and forking (in which case we have stable forking in the formulation we have chosen for this document), or it could be unstable and forking.

%This chapter consists of {\bf A FEW} subsections. In section \ref{SubsectionUseful} we list a number of results which will be used repeatedly in the subsequent sections. After that, we deal with the case in which there is only one unstable forking relation (section \ref{SubsecOneUnstable}); after eliminating that case, we come to the main problem, namely that of eliminating the possibility of having two unstable forking relations (section \ref{SubsecTwoForkingRels}).

We accumulate in this subsection some easy results, the conclusions of which are used repeatedly in the main proofs. 

In the course of the proofs, we will make extensive use of the Lachlan-Woodrow Theorem \ref{LachlanWoodrow} from \cite{lachlan1980countable}.

\begin{remark}\label{RmkSimpleGraphs}
From the list list of structures in the Lachlan-Woodrow Theorem, graphs in the first category are $\omega$-stable of \SU-rank 1 if just one of $m,n$ is $\omega$; the graph $I_\omega[K_\omega]$ is of rank 2. The Random Graph is supersimple unstable of \SU-rank 1, and the homogeneous $K_n$-free graphs are not simple.
\end{remark}

\begin{definition}
Given an unstable theory $\mathcal T$ and $M\models\mathcal T$, we say that a predicate $P$ is unstable in a set $X\subset M$ if we can find witnesses to the instability of $P$ within the set $X$, that is, if there exist $\bar a_i,\bar b_i\in X$ ($i\in\omega$) such that $P(\bar a_i,\bar b_j)$ holds iff $i\leq j$. Similarly, we say that a predicate $P$ is nonforking in a set defined by a formula $\varphi(\bar x,\bar a)$ if the formula $\varphi(\bar x,\bar a)\wedge\varphi(\bar a,\bar b)\wedge P(\bar x,\bar b)$ does not fork over $\bar a$. If no set $X$ is specified, $X=M$ is assumed.
\end{definition}

\begin{definition}\label{DefHG}
	A \emph{half-graph} for colour $P$ in an $n$-graph is a graph induced on set of vertices $a_i,b_i$ ($i\in\omega$) witnessing the instability of the formula $P(x,y)$.
\end{definition}

We will use half-graphs in the rest of the argument because of the information that they give us about the structure of the neighbourhoods of a vertex in a homogeneous 3-graph.

\begin{proposition}\label{IndiscerniblePairs}
	Suppose that $(a_i,b_i)_{i\in\omega}$ is an infinite half-graph for some relation $R$ in a homogeneous $n$-graph. Then we can find an infinite half-graph $(a_i',b_i')_{i\in\omega}$ that is indiscernible as a sequence of pairs $a_i'b_i'$ of type $R(a,b)$.
\end{proposition}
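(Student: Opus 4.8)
The plan is to view the half-graph not as a collection of singletons but as a single sequence of \emph{pairs}, extract an indiscernible subsequence by a Ramsey argument, and then observe that the half-graph structure is automatically inherited by any index-order-preserving subsequence. Write $c_i=a_ib_i$ for the $i$-th pair, so that $(c_i:i\in\omega)$ is a sequence of $2$-tuples in $M$; what we want is an indiscernible subsequence of this sequence that is still a half-graph.

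First I would set up the colouring. Since $M$ is homogeneous over a finite relational language it is $\omega$-categorical (by Ryll-Nardzewski, Theorem \ref{RyllNardzewski}), so for each $n$ there are only finitely many complete types of $2n$-tuples; by quantifier elimination these are exactly the finitely many isomorphism types of the induced $n$-graph on $2n$ labelled vertices. For each $n$, colour each strictly increasing $n$-tuple of indices $i_1<\dots<i_n$ by the type of the $2n$-tuple $(c_{i_1},\dots,c_{i_n})=(a_{i_1},b_{i_1},\dots,a_{i_n},b_{i_n})$. This uses finitely many colours, so the infinite Ramsey theorem yields an infinite monochromatic set; iterating over $n$ and diagonalising in the usual way produces an infinite $H\subseteq\omega$ such that $(c_i:i\in H)$ is indiscernible as a sequence of pairs. (Finiteness of the colour palette is what lets the extracted indiscernible sequence live inside $M$ itself, with no recourse to compactness or an elementary extension.)

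Finally I would verify inheritance, the only place the specific structure matters. Enumerate $H=\{h_0<h_1<\dots\}$ and put $a_k'=a_{h_k}$, $b_k'=b_{h_k}$. By construction the sequence of pairs $(a_k'b_k':k\in\omega)$ is indiscernible. Because the original half-graph satisfies $R(a_i,b_j)$ exactly when $i,j$ stand in the fixed order relation defining a half-graph, and $k\mapsto h_k$ is order-preserving, the relation $R(a_k',b_l')$ holds for precisely the same pairs of new indices $k,l$; in particular each diagonal pair remains an $R$-edge, so the subsequence is again a half-graph whose pairs are of type $R(a,b)$. The argument is genuinely routine; the only point requiring care is to perform the extraction at the level of pairs (that is, for the sequence $(c_i)$ of $2$-tuples) rather than for the interleaved sequence of singletons, and to note that the half-graph relation, depending only monotonically on the indices, survives any order-preserving passage to a subsequence.
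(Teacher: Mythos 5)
Your overall strategy --- treat the half-graph as a sequence of pairs, extract an indiscernible subsequence by Ramsey, and note that the half-graph condition survives any order-preserving reindexing --- is exactly the paper's, and your final inheritance check is correct. The gap is in the middle step. You claim that iterating Ramsey over all $n$ and ``diagonalising in the usual way'' yields an infinite $H$ with $(c_i : i\in H)$ indiscernible, and that finiteness of the colour palette at each level is what keeps everything inside $M$ with no appeal to compactness. That inference is not sound as stated. The iteration produces a decreasing chain $Y_2\supseteq Y_3\supseteq\cdots$ with $Y_n$ homogeneous for the colouring of $n$-subsets, and the diagonal $H=\{h_n : n\in\omega\}$ with $h_n\in Y_n$ only controls $n$-subsets drawn from the tail $\{h_k : k\geq n\}$: an $n$-subset containing some $h_k$ with $k<n$ need not lie in $Y_n$, so its type is not pinned down. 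This is precisely why the textbook extraction of indiscernibles passes to the Ehrenfeucht--Mostowski type and invokes compactness, producing an indiscernible sequence that is generally \emph{not} a subsequence of the original. The failure is real, not cosmetic: over a language with relations of unbounded arity one can build a sequence with two colours at every level admitting no indiscernible subsequence whatsoever (make the arity-$n$ relation hold on an $n$-set of indices exactly when its minimum is $<n$). So ``finitely many colours per level'' is not the reason the argument stays inside $M$.

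What rescues your construction --- and is the paper's actual one-line argument --- is that the language of an $n$-graph is \emph{binary}. By quantifier elimination the type of any tuple of vertices is determined by the types of its pairs of vertices, so the type of an increasing tuple of pairs $(c_{i_1},\dots,c_{i_k})$ is determined by the $4$-vertex types of $(c_i,c_j)$ for $i<j$ together with the constant type $R$ of each single pair. Hence a single application of Ramsey to the colouring of $2$-subsets of the index set --- your $k=2$ level, the paper's colouring of pairs of $R$-edges --- already makes the subsequence indiscernible; all higher levels come for free and no diagonalisation is needed. (More generally, for a finite relational language of maximal arity $r$ one homogenises levels $2,\dots,r$ and simply takes the last set $Y_r$; the diagonal is never the right device in this setting.) You state binarity at the start of your proof but never use it at the one point where it is essential; with that repair your argument collapses to the paper's.
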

\begin{proof}
	The half-graph $(a_i,b_i)_{i\in\omega}$ is an infinite sequence of pairs of type $R$. Colour the pairs of $R$-edges $(a_ib_i,a_jb_j)$, $i\neq j$ according to the type of the four-vertex set $\{a_i,b_i,a_j,b_j\}$. By Ramsey's theorem, there is an infinite monochromatic $X\subset\omega$ for this colouring. The set of pairs $\{a_ib_i:i\in X\}$ is indiscernible because the language is binary, and it is a half-graph for $R$ with the ordering induced by $\omega$ on $X$.
\end{proof}

\begin{definition}
	We denote the set of relations from $L$ which are unstable in models of a theory $\mathcal T$ as $L^u(\mathcal T)$. Given two distinct binary relation symbols $R,R'$ in $L^u(\mathcal T)$, we say that $R$ and $R'$ are \emph{compatible} if there exists an indiscernible sequence of pairs of type $R$ which witnesses the instability of $R$ and $R'$. We denote compatibility by $R\sim_{\mathcal T}R'$, omitting $\mathcal T$ whenever it is clear which theory we are referring to. Given witnesses $\alpha_i,\beta_i$ to the compatibility $R\sim_{\mathcal T}R'$, we refer to the sets $A=\{\alpha_i:i\in\omega\}$ and $B=\{\beta_i:i\in\omega\}$ as the \emph{horizontal} cliques of $R\sim_{\mathcal T}R'$.
\end{definition}

\begin{observation}\label{IndiscernibleHalfGraphs}
	Let $M$ be an unstable $n$-graph and suppose that $R$ is an unstable relation. Then there exists $R'\in L^u(\Th(M))$ such that $R$ and $R'$ are compatible. 
\end{observation}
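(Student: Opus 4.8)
The plan is to feed the instability of $R$ into Proposition \ref{IndiscerniblePairs} and then observe that the resulting indiscernible half-graph already \emph{contains}, along its reverse diagonal, a half-graph for a second colour $R'$ which indiscernibility forces to be a single well-defined relation. So first I would apply Proposition \ref{IndiscerniblePairs} to $R$: since $R$ is unstable there is an infinite half-graph for $R$, and the proposition upgrades it to an infinite half-graph $(a_i,b_i)_{i\in\omega}$ that is indiscernible as a sequence of pairs $a_ib_i$ of type $R$. Adopting the convention of that proposition (in which the diagonal pairs are genuine $R$-edges), we have $R(a_i,b_i)$ for all $i$ and $R(a_i,b_j)$ exactly when $i\le j$.

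Next I would identify $R'$. Fix indices $i>j$. Then $R(a_i,b_j)$ fails, so the edge $a_ib_j$ has some colour $R'\ne R$. Because the sequence of pairs is indiscernible, the quantifier-free type of the four-element set $\{a_i,b_i,a_j,b_j\}$ depends only on whether $i<j$ or $i>j$; hence the colour of $a_ib_j$ for $i>j$ is one and the same relation $R'$, which is thereby well defined and distinct from $R$. The point of this step is exactly that indiscernibility collapses the a priori many possible ``lower-triangle'' colours into a single relation.

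Finally I would check that $R'\in L^u(\Th(M))$ and that the very same sequence certifies compatibility. By construction $R'(a_i,b_j)$ holds iff $i>j$; putting $c_k=b_k$ and $d_\ell=a_\ell$ and using that every relation of an $n$-graph is symmetric, we obtain $R'(c_k,d_\ell)$ iff $k<\ell$, an order property, so $R'$ is unstable. Moreover the single indiscernible sequence of $R$-pairs $(a_i,b_i)$ simultaneously displays a half-graph for $R$ (through the edges $a_ib_j$ with $i\le j$) and a half-graph for $R'$ (through the edges $a_ib_j$ with $i>j$), which is precisely what is required for $R\sim_{\Th(M)}R'$. I expect no real obstacle here beyond bookkeeping: the only thing to be careful about is fixing the half-graph convention so that the diagonal pairs really are $R$-edges, after which the argument is purely combinatorial and needs no amalgamation or forking input.
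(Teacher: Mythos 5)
Your proof is correct and follows essentially the same route as the paper: the paper's own proof simply redoes the Ramsey extraction of Proposition \ref{IndiscerniblePairs} inline and then notes, exactly as you do, that indiscernibility of the sequence of $R$-pairs forces all lower-triangle edges $a_ib_j$ ($j<i$) to carry a single colour $R'\neq R$, which is then unstable and compatible with $R$ via the same sequence. Citing Proposition \ref{IndiscerniblePairs} instead of repeating its proof, and flagging the diagonal-edge convention, are just cleaner bookkeeping, not a different argument.
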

\begin{proof}
	By instability of $R$, there exist parameters $a_i$ and $b_i$, $i\in\omega$, such that $R(a_i,b_j)$ holds iff $i\leq j$. Consider the set of $R$-edges $\{a_ib_i:i\in\omega\}$, and colour the pairs of distinct edges in this set according to the 4-type they satisfy. By Ramsey's theorem, there is an infinite monochromatic set. As a set of vertices, this set witnesses the instability of $R$, and because it is indiscernible over $\varnothing$ as a set of $R$-edges, all the edges $a_ib_j$ with $j<i$ are of the same type $R'\neq R$.
\end{proof}

\begin{definition}
	We call a set of parameters witnessing the compatibility of two unstable relations $R,R'$ an \emph{indiscernible half-graph} for $R,R'$.
\end{definition}

\begin{remark}\label{RmkCompatibilityGraph}
	Compatibility is a graph relation on $L^u(\mathcal T)$ when $\mathcal T$ is the theory of an infinite $n$-graph ($L=\{R_1,\ldots,R_n\}$). The compatibility graph on $L^u(\mathcal T)$ has no isolated vertices (all vertices have degree at least 1). In particular, when $|L|=3$ and all relations are unstable, the compatibility graph is connected.
\end{remark}

Indiscernible half-graphs for $R,R'$ give us valuable information about the $R$- and $R'$-neighbourhoods of vertices.

\begin{proposition}\label{PropCliques}
	Let $M$ be a simple primitive homogeneous $n$-graph in which $R$ divides and $S_1,\ldots,S_k$ are the nonforking relations with respect to $\Th(M)$ in the language of the $n$-graph. Then for any $a\in M$ the set $R(a)$ does not contain any infinite $S_i$-cliques for all $i\in\{1,\ldots k\}$.
\end{proposition}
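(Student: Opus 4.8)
The plan is to argue by contradiction, exploiting the characterisation of dividing in terms of Morley sequences together with the observation that, in a binary homogeneous structure, a clique is \emph{automatically} an indiscernible sequence. So suppose, for some vertex $a$ and some $i$, that $R(a)$ contained an infinite $S_i$-clique $\{b_j:j\in\omega\}$.

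The first and key step is to notice that this clique, enumerated as the sequence $(b_j)_{j\in\omega}$, is already $\varnothing$-indiscernible, so that no extraction of a subsequence is necessary. Since $M$ is primitive it is in particular transitive, hence all singletons realise the unique $1$-type; and since the language is binary, the quantifier-free type of any increasing tuple $b_{j_1}\cdots b_{j_m}$ is determined by its pairwise relations, every one of which is $S_i$. By quantifier elimination this pins down the complete type of every such tuple, and that is exactly $\varnothing$-indiscernibility of $(b_j)_{j\in\omega}$.

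Next I would upgrade indiscernibility to a Morley sequence and derive the contradiction. Because $S_i$ is nonforking, $S_i(b_0,b_1)$ gives $b_0\indep b_1$, so Observation \ref{MorleySqn} applies and $(b_j)_{j\in\omega}$ is a Morley sequence over $\varnothing$ in $\tp(b_0/\varnothing)$. On the other hand $a\in R(b_j)$ for every $j$, so $a$ realises $\{R(x,b_j):j\in\omega\}$, witnessing that this set is consistent. By the characterisation of simplicity (a formula does not divide over $\varnothing$ precisely when $\{\varphi(x,c):c\in I\}$ is consistent for some Morley sequence $I$ in the type of the parameter), it follows that $R(x,b_0)$ does \emph{not} divide over $\varnothing$. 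Since transitivity gives $\tp(b_0/\varnothing)=\tp(a/\varnothing)$, dividing of $R(x,b_0)$ and of $R(x,a)$ are the same statement, and this contradicts the hypothesis that $R$ divides.

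The main conceptual obstacle is the natural temptation to \emph{extract} an indiscernible subsequence from the clique in order to manufacture a Morley sequence; doing so would destroy all control over the common $R$-neighbour $a$, leaving no consistency statement to contradict dividing. The point I would stress is that in a binary homogeneous structure this extraction is unnecessary — the clique is indiscernible as it stands — so the very sequence that witnesses consistency of $\{R(x,b_j):j\in\omega\}$ is itself the Morley sequence to which Kim's lemma is applied. Everything else (the single $1$-type coming from primitivity, and $b_0\indep b_1$ coming from $S_i$ being nonforking) is routine bookkeeping.
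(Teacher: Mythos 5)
Your proof is correct and follows essentially the same route as the paper's: both arguments rest on Observation \ref{MorleySqn} to turn a clique in a nonforking colour into a Morley sequence over $\varnothing$, and then invoke the Kim's-lemma characterisation of dividing to conclude that no vertex can be $R$-related to the whole clique. The only difference is presentational — you argue by contradiction and spell out the indiscernibility of the clique (via binarity, transitivity and quantifier elimination), a step the paper leaves implicit when it applies Observation \ref{MorleySqn}.
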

\begin{proof}
	It follows from Observation \ref{MorleySqn} that any Morley sequence over $\varnothing$ is an infinite $S_i$-clique for some $i\in\{1,\ldots,k\}$. By simplicity, dividing is witnessed by Morley sequences, so for any $S_i$-clique enumerated as $(a_j:j\in\omega)$, the set $\{R(x,a_j):j\in\omega\}$ is inconsistent.
\end{proof}

As a consequence,
\begin{observation}\label{NoInfiniteCliques}
	If $(M;R,S,T)$ is a homogeneous primitive simple 3-coloured graph in which $R$ is a forking relation and $S, T$ are nonforking, then there are no infinite $S$- or $T$-cliques in $R(a)$.
\end{observation}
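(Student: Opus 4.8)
The statement is precisely the $n=3$ instance of Proposition \ref{PropCliques}, so the plan is to verify that the hypotheses line up and then invoke it directly. In the present setting $L=\{R,S,T\}$ and $R$ is assumed forking; since $\Th(M)$ is simple, forking coincides with dividing, so $R$ divides in the sense required by Proposition \ref{PropCliques}. The relations $S$ and $T$ are by hypothesis exactly the nonforking ones, so taking $k=2$, $S_1=S$ and $S_2=T$, Proposition \ref{PropCliques} tells us that for every $a\in M$ the set $R(a)$ contains no infinite $S$-clique and no infinite $T$-clique. This is verbatim the assertion to be proved, so no further work is needed beyond the identification of the forking relation $R$ with a dividing relation.

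For the reader's convenience I would recall the underlying mechanism. By Observation \ref{MorleySqn}, in a primitive simple homogeneous structure any $\varnothing$-indiscernible sequence of singletons $(a_i:i\in\omega)$ with $a_0\indep a_1$ is a Morley sequence over $\varnothing$. Such a sequence consists of pairwise independent pairs, so no pair can be an $R$-edge (as $R$ divides, an $R$-pair is dependent); by indiscernibility the sequence is therefore a monochromatic clique in one of the nonforking colours $S$ or $T$. Since $R$ divides and dividing is witnessed by a Morley sequence, for any such infinite $S$- or $T$-clique $(a_j:j\in\omega)$ the set $\{R(x,a_j):j\in\omega\}$ is inconsistent; hence no single vertex $a$ can be $R$-adjacent to every element of the clique, and $R(a)$ cannot contain it.

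I do not anticipate any genuine obstacle here, as the result is an immediate specialisation. The only point meriting explicit mention is the passage from ``$R$ is forking'' to ``$R$ divides,'' which is immediate from the simplicity of $\Th(M)$ and the equivalence of forking and dividing in simple theories.
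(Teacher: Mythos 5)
Your proposal is correct and matches the paper exactly: the paper's own proof of Observation \ref{NoInfiniteCliques} is precisely the observation that it is the $n=3$, $k=2$ specialisation of Proposition \ref{PropCliques} (hence the bare $\Box$), and your recollection of the mechanism reproduces the paper's proof of that proposition via Observation \ref{MorleySqn} and the witnessing of dividing by Morley sequences.
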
\hfill$\Box$

\begin{notation}
	Whenever we draw a 3-graph, $R$ is represented by plain lines, $S$ by dashed lines, and $T$ by dotted lines.
\end{notation}

In any primitive $\omega$-categorical $n$-graph $M$, we necessarily have finite diameter for each of the predicates $R_i$, since $R_i$-connectivity is an equivalence relation and there are only finitely many types of pairs of elements (see Observation \ref{ObsFiniteDiameter}). We denote the $R_i$-diameter of $M$ by $\diam_{R_i}(M)$. Given a predicate $R_i$, we denote the set of elements at $R_i$-distance $m$ from $a$ by $R_i^m(a)$.

\begin{definition}\label{DefRefineComp}
We will use the notation $R\sim^ST$ to indicate that there exists an indiscernible half-graph witnessing $R\sim T$ such that one of the horizontal cliques is of colour $S$.
\end{definition}
\begin{proposition}\label{PropHomNeigh}
Let $M$ be a homogeneous binary structure and $S$ a predicate in the language of $M$. Then $S(a)$ is a homogeneous $m$-graph for some $m\leq n$.
\end{proposition}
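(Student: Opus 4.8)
The plan is to exploit the homogeneity of $M$ by lifting partial isomorphisms of the neighbourhood $S(a)$ to automorphisms of $M$ that fix the base vertex $a$; this is in the same spirit as the device used in Claim~\ref{ClaimTrick}. First I would pin down the meaning of $m$: since $M$ is an $n$-graph, between any two distinct vertices of $S(a)$ exactly one of the $n$ relations of $L$ holds, so letting $m$ be the number of relations from $L$ that are actually realised between distinct elements of $S(a)$, we have $m\leq n$, and the structure induced by $M$ on $S(a)$ is an $m$-graph in the sublanguage consisting of those realised relations. (If $m\leq 1$ the neighbourhood is a clique and homogeneity is immediate, so the content is in the case $m\geq 2$.)

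For homogeneity, let $B,C$ be finite substructures of $S(a)$ and $f\colon B\to C$ an isomorphism of the induced $m$-graphs. Because only the $m$ realised relations occur inside $S(a)$, the map $f$ is simultaneously an isomorphism of the substructures induced by $M$ in the full language $L$. I would then extend $f$ to the map $\hat f=f\cup\{(a,a)\}$ with domain $B\cup\{a\}$ and range $C\cup\{a\}$. The key point is that $\hat f$ is a partial isomorphism of $M$: it agrees with $f$ on $B$, it fixes $a$, and for every $b\in B$ the edge $ab$ has colour $S$ (since $b\in S(a)$) while the edge $a\,f(b)$ also has colour $S$ (since $f(b)\in C\subseteq S(a)$). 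As the language is binary, matching the colour on every pair is all that is required.

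Since $M$ is homogeneous, $\hat f$ extends to an automorphism $\sigma\in\aut(M)$. Because $\sigma(a)=a$ and $\sigma$ preserves $S$, it maps $S(a)=\{x:S(a,x)\}$ onto $S(\sigma(a))=S(a)$, so $\sigma\upharpoonright S(a)$ is a well-defined automorphism of the induced $m$-graph, and it extends $f$. Hence every isomorphism between finite substructures of $S(a)$ extends to an automorphism of $S(a)$, which is exactly the required homogeneity.

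The argument is routine; the only place where the hypotheses genuinely bite is the verification that $\hat f$ is a partial isomorphism. There I rely on the $n$-graph property that exactly one colour holds on each pair, so that the edge from $a$ to any of its $S$-neighbours is uniformly of colour $S$; this is what guarantees that fixing $a$ is compatible with $f$. For a general binary structure, in which several relations (or none) could hold between $a$ and an element of $S(a)$, this step could fail, so this is the one point to watch.
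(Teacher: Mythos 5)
Your proof is correct and takes essentially the same route as the paper: the paper's two-line argument is exactly your construction, namely that an isomorphism $f\colon B\to C$ between finite substructures of $S(a)$ extends to a partial isomorphism $aB\cong aC$ of $M$ (your $\hat f$), which by homogeneity of $M$ extends to some $\sigma\in\aut(M/a)$ fixing $S(a)$ setwise, whose restriction is the required automorphism of $S(a)$. Your additional remarks — pinning down $m$ as the number of realised relations, and noting that the verification of $\hat f$ being a partial isomorphism uses the $n$-graph property that exactly one colour holds on each pair — are correct elaborations of steps the paper leaves implicit.
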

\begin{proof}
$A\cong B$ for finite $A,B\subset S(a)$ implies $aA\cong aB$, so by the homogeneity of $M$ there is $\sigma\in\aut(M/a)$ taking $A$ to $B$ ($\sigma$ clearly fixes $S(a)$ setwise). The restriction of $\sigma$ to $S(a)$ is an automorphism of $S(a)$ taking $A$ to $B$.
\end{proof}
\begin{observation}
	Let $M$ be a primitive simple homogeneous graph and suppose that $S$ divides, $\diam_S(M)=3$, and $S^2(a)=T(a)$. If $T\sim^SS$, then $S(a)$ is isomorphic to the Random Graph.
\label{ObsIsoRG}
\end{observation}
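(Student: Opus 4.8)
The plan is to reduce the statement to a direct application of the Lachlan--Woodrow Theorem \ref{LachlanWoodrow}: I will show that $S(a)$ is an infinite homogeneous graph in the two colours $S,T$ which is moreover \emph{simple} and \emph{unstable}, and then observe that the only such graph is the Random Graph.

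First I would pin down that $S(a)$ is really a two-coloured graph. By transitivity the hypotheses $\diam_S(M)=3$ and $S^2(a)=T(a)$ hold uniformly at every vertex, and since $M$ is $S$-connected the partition of $M\setminus\{a\}$ into $S^1(a),S^2(a),S^3(a)$ must coincide with $S(a),T(a),R(a)$; in particular $R(a)=S^3(a)$. Now if $x,y\in S(a)$ then the path $x\,a\,y$ gives $d_S(x,y)\le 2$, whereas $R(x,y)$ would force $y\in R(x)=S^3(x)$ and hence $d_S(x,y)=3$, a contradiction. So no $R$-edge occurs inside $S(a)$, and by Proposition \ref{PropHomNeigh} the neighbourhood $S(a)$ is a homogeneous graph in the language $\{S,T\}$. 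Being interpretable in the simple structure $M$, it is simple, so by Remark \ref{RmkSimpleGraphs} it is either one of the stable graphs $I_m[K_n]$ (or a complement) or the Random Graph.

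The heart of the argument is to exhibit instability inside a single $S$-neighbourhood, and this is exactly what $T\sim^S S$ is for. I would take an indiscernible half-graph $(a_i,b_i)_{i\in\mathbb Z}$ witnessing $T\sim S$ one of whose horizontal cliques, say $B=\{b_i\}$, is an $S$-clique (swapping the two horizontal cliques if it is the $A$-side that carries the colour $S$). Fixing the base vertex $b_0$, the remaining $b_j$ lie in $S(b_0)$ because $B$ is an $S$-clique, and a full tail of the $a_i$ lies in $S(b_0)$ because the backward cross-edges from that side are $S$-edges; the induced $\{S,T\}$-structure on the corresponding sub-half-graph is again a half-graph. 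Thus $S(b_0)$ contains an infinite half-graph and is an unstable graph, and by transitivity so is $S(a)$ for every $a$.

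Combining the two parts, $S(a)$ is an infinite homogeneous simple unstable graph, so by the Lachlan--Woodrow classification --- the stable family $I_m[K_n]$ (and complements) being excluded by instability and the Henson graphs by simplicity --- it must be the Random Graph. The step I expect to require the most care is the placement of the half-graph: one has to check, depending on which horizontal clique carries the colour $S$ and on the orientation of the half-graph, that the correct tail of vertices actually falls into the $S$-neighbourhood of the chosen base point, so that the induced $\{S,T\}$-configuration is genuinely a half-graph rather than, say, a mere $S$-clique. This bookkeeping is precisely what the refined notation $\sim^S$ of Definition \ref{DefRefineComp} is designed to control.
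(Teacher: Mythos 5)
Your proof is correct and takes essentially the same route as the paper's: establish that $S(a)$ is $R$-free, transfer the half-graph witnessing $T\sim^S S$ into a single $S$-neighbourhood, and then combine Proposition \ref{PropHomNeigh}, simplicity via interpretability in $M$, and the Lachlan--Woodrow classification (Remark \ref{RmkSimpleGraphs}). The only cosmetic difference is that you place a tail of a $\mathbb{Z}$-indexed half-graph directly inside $S(b_0)$ for a base point $b_0$ on the $S$-coloured horizontal clique, whereas the paper embeds the initial segments $(a_ib_i)_{i\leq n}$ into $S(a_{n+1})$ and finishes by compactness.
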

\begin{proof}
	It follows from $\diam_S(M)=3$ and $S^2(a)=T(a)$ that $S(a)$ is $R$-free. Consider an indiscernible graph witnessing $T\sim^SS$:
\[
\includegraphics[scale=0.7]{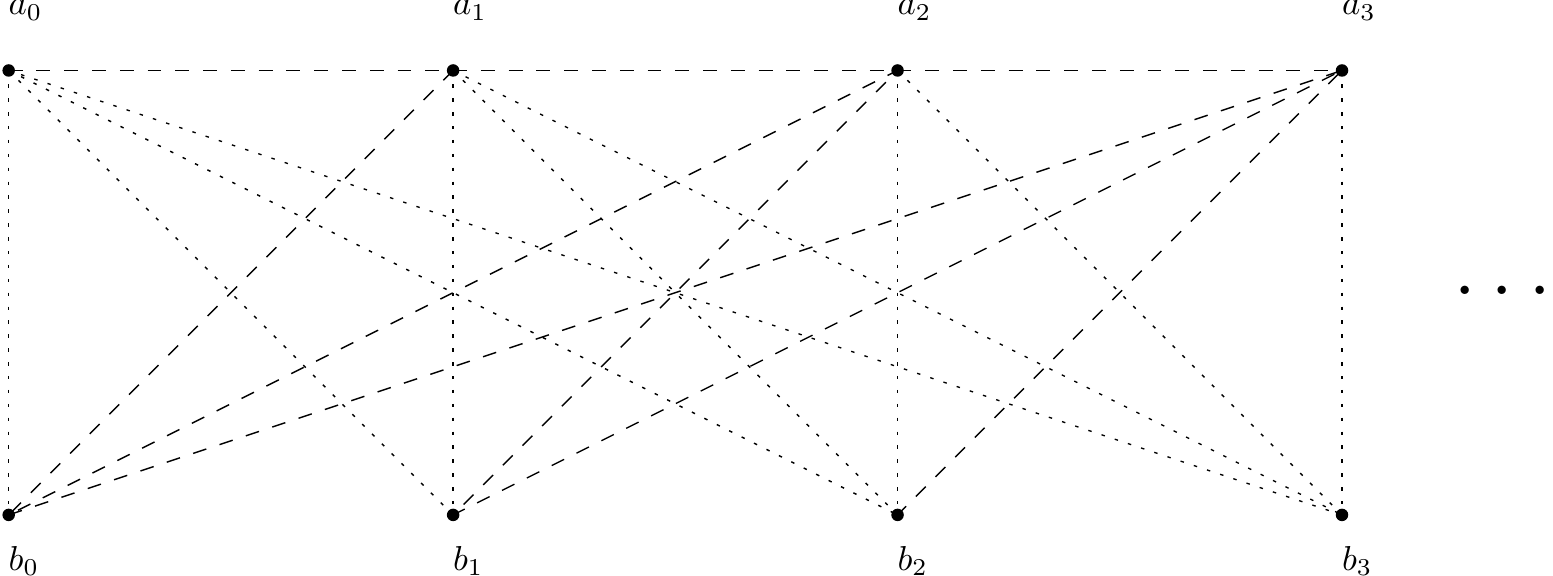}
\]
The indiscernible half-graph sketched above can be embedded in $S(a)$ by compactness and homogeneity, since each initial segment $(a_ib_i)_{i\leq n}$ can be embedded in $S(a)$ by transitivity of $M$ and the fact that $(a_ib_i)_{i\leq n}$ is contained in $S(a_{n+1})$. Therefore $S(a)$ is isomorphic to a homogeneous (by Proposition \ref{PropHomNeigh}) unstable graph, which must be simple because it is interpretable in $M$. The result follows from Remark \ref{RmkSimpleGraphs}.
\end{proof}

\begin{proposition}
	Let $M$ be a primitive simple homogeneous graph and suppose that $S$ divides, $R$ is nonforking, $\diam_S(M)=2$ and $S\sim^ST$. If $R$ defines an equivalence relation on $S(a)$, then $S$ and $T$ are nonforking in $S(a)$, and $S(a)$ is isomorphic to $C(\Gamma)$ or to $\Gamma^{S,T}[K_n^R]$ for some $n\in\omega$.
\end{proposition}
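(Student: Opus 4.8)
The plan is to recognise $S(a)$, as a structure in its own right, as an imprimitive simple unstable homogeneous $3$-graph whose distinguished equivalence relation is $R$ with finite classes, and then to quote the classification of such structures obtained in Chapter \ref{ChapImprimitiveFC} (Theorem \ref{ThmCGamma} and Corollary \ref{CorFiniteClasses}). Thus almost all of the work consists in verifying that the hypotheses of that classification hold for $S(a)$.

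First I would record the structural preliminaries. By Proposition \ref{PropHomNeigh}, $S(a)$ is a homogeneous $m$-graph; being interpretable in $M$ it is simple, and since any two vertices of $S(a)$ are conjugate by an element of $\aut(M/a)$, the structure $S(a)$ is vertex-transitive, so $\aut(S(a))$ acts transitively on $S(a)/R$. By hypothesis $R$ is an equivalence relation on $S(a)$; its classes are $R$-cliques, and because $S$ divides and $R$ is nonforking, Proposition \ref{PropCliques} (applied with $S$ in the role of the dividing relation) shows that $S(a)$ contains no infinite $R$-clique. Hence the $R$-classes of $S(a)$ are finite. If these classes are trivial then $R$ is not realised in $S(a)$, so $S(a)$ is a simple unstable homogeneous graph in the two colours $S,T$, and the conclusion is immediate from Theorem \ref{LachlanWoodrow} (it is the random graph $\Gamma^{S,T}=\Gamma^{S,T}[K_1^R]$); so we may assume the classes are nontrivial and $S(a)$ is genuinely imprimitive.

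The crucial remaining point, and the step I expect to be the main obstacle, is to show that $S(a)$ is unstable, which is where the hypothesis $S\sim^S T$ enters. Fix an indiscernible half-graph $(a_i,b_i)_{i\in\omega}$ witnessing $S\sim T$, with $S(a_i,b_j)$ holding iff $i\le j$ and with one horizontal clique an $S$-clique. Suppose the $S$-clique is $\{a_i:i\in\omega\}$ (the other case is symmetric, extending the sequence on the opposite side). Extending the indiscernible sequence by a single extremal pair $(a_{-1},b_{-1})$ below all the others—possible by the extension property of indiscernible sequences together with compactness and the homogeneity of $M$, exactly as in Observation \ref{ObsIsoRG}—the new vertex $a_{-1}$ satisfies $S(a_{-1},a_i)$ for all $i$ (it joins the horizontal $S$-clique) and $S(a_{-1},b_j)$ for all $j$ (since $-1\le j$ always). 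Consequently every vertex of the half-graph lies in $S(a_{-1})$, so $S(a_{-1})$ contains a half-graph for $S$; by vertex-transitivity $S(a)\cong S(a_{-1})$ is unstable. One should check that this added vertex, being $S$-adjacent to everything it must be, is compatible with the global constraint $\diam_S(M)=2$, so that no contradiction arises from the embedding.

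It remains to feed $S(a)$ into the classification of Chapter \ref{ChapImprimitiveFC}. In $\Th(S(a))$ the relation $R$ is algebraic, hence forking, so by the existence of Morley sequences at least one of $S,T$ is nonforking in $S(a)$; Proposition \ref{PropSTNf}, applied to $S(a)$, then yields that both $S$ and $T$ are nonforking in $S(a)$, which is the first assertion. Finally, since $\aut(S(a))$ acts transitively on $S(a)/R$, there are two cases. If the action is $2$-transitive, then, $S$ and $T$ being symmetric, Remark \ref{Rmk2} forces both to be realised across any two classes, so all predicates are realised in the union of two classes and Theorem \ref{ThmCGamma} gives $S(a)\cong C(\Gamma)$. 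If the action is transitive but not $2$-transitive, Corollary \ref{CorFiniteClasses} gives $S(a)\cong\Gamma^{S,T}[K_n^R]$ for some $n\in\omega$. This exhausts the possibilities and completes the argument.
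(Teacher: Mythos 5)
Your proposal is correct and follows essentially the same route as the paper: Proposition \ref{PropCliques} gives finite $R$-classes in $S(a)$, the half-graph witnessing $S\sim^ST$ is pushed into $S(a)$ by the same compactness-and-homogeneity trick as in Observation \ref{ObsIsoRG} to get instability, and then Theorem \ref{ThmCGamma} and Corollary \ref{CorFiniteClasses} yield the two possible isomorphism types. Your extra steps (the degenerate case of trivial classes, and deducing that both $S$ and $T$ are nonforking via algebraicity of $R$, Morley sequences, and Proposition \ref{PropSTNf}) are just explicit versions of what the paper leaves implicit in its appeal to the Chapter \ref{ChapImprimitiveFC} machinery.
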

\begin{proof}
	It follows from Proposition \ref{PropCliques} that the $R$-classes in $S(a)$ are finite. From $S\sim^ST$ we get that $S,T$ are unstable in $S(a)$. The conclusions follow from Theorem \ref{ThmCGamma} and Corollary \ref{CorFiniteClasses}.
\end{proof}

\begin{proposition}\label{PropTwoInfiniteCliques}
	Let $M$ be a primitive simple unstable homogeneous 3-graph. Then $M$ embeds infinite monochromatic cliques in at least two colours.
\end{proposition}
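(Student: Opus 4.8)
The plan is to argue by contradiction: suppose that at most one colour, say $C$, occurs as the colour of an infinite monochromatic clique in $M$. Since $M$ is simple there are Morley sequences over $\varnothing$, and, as recorded in the proof of Proposition \ref{PropCliques} (via Observation \ref{MorleySqn}), every such Morley sequence is an infinite clique in one of the nonforking colours. Hence \emph{some} infinite monochromatic clique exists, so by our assumption it has colour $C$ and $C$ is a nonforking relation. This first step is routine once one recalls that all pairs in a Morley sequence realise the same nonforking $2$-type.

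Next I would exploit instability. As $M$ is an unstable $3$-graph, some relation is unstable, so by Observation \ref{IndiscernibleHalfGraphs} there is an indiscernible half-graph $(a_i,b_i)_{i\in\omega}$ witnessing $P_1\sim P_2$ for two distinct relations $P_1\neq P_2$, with $P_1$ on the diagonal and $P_2$ below it. Its two horizontal cliques $A=\{a_i\}$ and $B=\{b_i\}$ are infinite monochromatic cliques, so by the contradiction hypothesis both have colour $C$. The key observation is that each horizontal clique sits inside a single-colour neighbourhood of a vertex of the opposite clique: according to the half-graph pattern, $B$ (minus possibly one point) is contained in $P_1(a_0)$ and $A$ (minus possibly one point) is contained in $P_2(b_0)$. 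Thus $P_1(a_0)$ and $P_2(b_0)$ each contain an infinite clique of the \emph{nonforking} colour $C$. By Proposition \ref{PropCliques} (equivalently Observation \ref{NoInfiniteCliques}), a forking relation never has an infinite clique of a nonforking colour in any neighbourhood; therefore neither $P_1$ nor $P_2$ can be forking, i.e. both are nonforking.

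Finally, $P_1\neq P_2$ are two distinct nonforking relations. Since $M$ is primitive it carries no invariant, hence no $\varnothing$-definable, finite equivalence relation, so Proposition \ref{NonforkingAmalgamation} applies and every finite $\{P_1,P_2\}$-structure lies in $\age(M)$; in particular $M$ contains arbitrarily large $P_1$-cliques and $P_2$-cliques, and extending such a clique one vertex at a time through homogeneity yields genuine infinite $P_1$- and $P_2$-cliques. These are infinite monochromatic cliques in two distinct colours, contradicting the assumption that only $C$ occurs. (One may instead quote Proposition \ref{UnstableNonforking}, whose proof already embeds all finite $\{P_1,P_2\}$-graphs.) This contradiction shows that at least two colours carry infinite monochromatic cliques.

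The main obstacle is precisely the case in which the two horizontal cliques of the half-graph share the colour $C$: there the half-graph by itself exhibits only one colour of infinite clique and a naive reading gives nothing new. The neighbourhood-containment trick combined with Proposition \ref{PropCliques} is what rescues this case, converting ``an infinite $C$-clique lives inside a $P_i$-neighbourhood'' into ``$P_i$ is nonforking'', and thereby forcing a second nonforking relation into existence. The one point to verify carefully is that the chosen sub-cliques really do lie in a single $P_i$-neighbourhood; this depends only on the ordering convention of the half-graph ($i\le j$ versus $i<j$) and is settled by picking the appropriate vertex $a_0$ or $b_0$.
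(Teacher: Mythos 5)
Your proof is correct, but it follows a genuinely different route from the paper's. The paper gives a short direct argument: at least one relation, say $R$, is nonforking, so the Independence Theorem and homogeneity yield infinite $R$-cliques; at least one of the remaining relations, say $S$, is unstable (a Boolean combination of stable relations is stable), hence non-algebraic, so $S(a)$ is infinite; if $S$ is nonforking the Independence Theorem gives infinite $S$-cliques, while if $S$ forks, then $S(a)$ embeds no infinite $R$-cliques by Proposition \ref{PropCliques}, and Ramsey's Theorem applied inside the infinite set $S(a)$ produces an infinite monochromatic clique, necessarily of colour $S$ or $T$ --- a second colour either way. You argue instead by contradiction through the half-graph machinery: Observation \ref{IndiscernibleHalfGraphs} supplies an indiscernible half-graph for some compatible pair $P_1\sim P_2$; under the single-colour hypothesis both horizontal cliques have the nonforking colour $C$, and the containments $B\subseteq P_1(a_0)$ and $A\setminus\{a_0\}\subseteq P_2(b_0)$, combined with the contrapositive of Proposition \ref{PropCliques}, force both $P_1$ and $P_2$ to be nonforking; Proposition \ref{NonforkingAmalgamation} (whose hypothesis on finite equivalence relations does follow from primitivity, as you note) or the proof of Proposition \ref{UnstableNonforking} then yields infinite cliques in the two distinct colours $P_1\neq P_2$, a contradiction. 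Both arguments hinge on Proposition \ref{PropCliques}, but where the paper needs only Ramsey on a single neighbourhood, you need the compatibility apparatus plus the amalgamation proposition; what your route buys is slightly more information along the way --- under the one-colour assumption it exhibits two distinct \emph{nonforking} relations, not merely a second clique colour --- and the point you flagged about the half-graph ordering convention is settled exactly as you describe.
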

\begin{proof}
	At least one of the relations, say $R$, is nonforking, so by the Independence Theorem and homogeneity $M$ embeds infinite $R$-cliques. Of the other relations, at least one, without loss of generality $S$, is unstable, and therefore non-algebraic. 

If $S$ is nonforking, then there are infinite $S$-cliques by the Independence Theorem. And if $S$ divides, then the $S$-neighbourhood of any $a\in M$ does not embed infinite $R$-cliques by Proposition \ref{PropCliques}, but it must embed an infinite monochromatic clique by Ramsey's Theorem. 
\end{proof}

\begin{proposition}\label{PropNoSnotTnoCliquesR}
	There are no primitive homogeneous simple 3-graphs $M$ in which $R\sim^SS$, $T$ is nonforking, $S$ and $R$ divide, and $R$ does not form infinite cliques.
\end{proposition}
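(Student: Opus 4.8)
The plan is to argue by contradiction: assume such an $M$ exists and extract a forbidden clique. First I would take inventory of the monochromatic cliques. By Proposition \ref{PropTwoInfiniteCliques}, $M$ embeds infinite monochromatic cliques in at least two colours; since by hypothesis $R$ forms none, $M$ embeds infinite $S$- and infinite $T$-cliques. Because $S$ and $R$ both divide while $T$ is nonforking, Proposition \ref{PropCliques} tells us that neither $R(a)$ nor $S(a)$ contains an infinite $T$-clique, and by hypothesis neither contains an infinite $R$-clique. Since $M$ is primitive, $\acl(a)=\{a\}$ (Observation \ref{AlgClosure}), so $R(a)$ and $S(a)$ are infinite; applying Ramsey's theorem inside these neighbourhoods (homogeneous, by Proposition \ref{PropHomNeigh}) shows that $S$ is the only colour forming infinite cliques inside $R(a)$ and $S(a)$.

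Next I would rigidify the compatibility witness. Fix, using Proposition \ref{IndiscerniblePairs}, an indiscernible half-graph $(a_i,b_i)_{i\in\omega}$ witnessing $R\sim^S S$, so that $R(a_i,b_j)$ holds iff $i\le j$, the backward edges $a_ib_j$ ($i>j$) are of colour $S$, and each horizontal set $A=\{a_i\}$, $B=\{b_i\}$ is monochromatic by indiscernibility, say of colours $X$ and $Y$. I claim $X=Y=S$. Indeed $\{a_i:i>n\}\subseteq S(b_n)$ is an infinite $X$-clique, and $\{b_j:j\ge n\}\subseteq R(a_n)$ is an infinite $Y$-clique; if either colour were $R$ we would contradict the no-$R$-clique hypothesis, and if either were $T$ we would contradict the clique inventory of the previous paragraph (no infinite $T$-clique in $S(b_n)$ or $R(a_n)$, by Proposition \ref{PropCliques}). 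Hence both horizontal cliques are $S$-cliques, and the configuration is completely determined: two infinite $S$-cliques $A,B$ with $a_i$ $R$-complete to the tail $\{b_j:j\ge i\}$ and $S$-complete to the initial segment $\{b_j:j<i\}$.

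Finally, with this rigid picture in hand I would run a neighbourhood descent in the spirit of Proposition \ref{PropSTNf}. The point is that $a_0$ is $R$-complete to the infinite $S$-clique $B$, so $R(a_0)$ is a homogeneous (Proposition \ref{PropHomNeigh}) simple structure containing an infinite $S$-clique but, by the inventory above, no infinite $R$- or $T$-clique. If $R$ is not realised in $R(a_0)$, then $R(a_0)$ is a homogeneous simple graph in the two colours $S,T$; one shows it is unstable and hence, by the Lachlan--Woodrow Theorem \ref{LachlanWoodrow}, isomorphic to the Random Graph, which embeds infinite $T$-cliques, contradicting Proposition \ref{PropCliques}. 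If instead $R$ is realised in $R(a_0)$, one passes to a suitable sub-neighbourhood and iterates the intersection of neighbourhoods exactly as in Proposition \ref{PropSTNf} until an $R$-free simple unstable graph is reached, forcing the same contradiction.

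I expect the third step to be the main obstacle, for two linked reasons. First, $R$ and $S$ both fork, so the Independence Theorem amalgamates only into $S$- and $T$-structure and cannot be used naively to build the forbidden clique directly; the contradiction must be squeezed out of the rigid half-graph transported into a neighbourhood. Second, the half-graph of the second paragraph restricts to a pure $S$-clique inside $R(a_0)$, so the instability of the descended structures is \emph{not} immediate and has to be re-established at each stage of the descent (so that Lachlan--Woodrow applies), while simultaneously verifying that the class of forbidden cliques is preserved under the iterated intersections.
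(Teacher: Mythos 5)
Your first two steps are correct: the clique inventory is right, and the observation that both horizontal cliques of the $R\sim^{S}S$ witness are forced to be $S$-cliques is a genuine (and correct) sharpening of the definition. The gap is in the third step, and it is the one you yourself flag: instability does \emph{not} descend into $R(a_0)$, and nothing in your outline can re-establish it. As you note, the half-graph meets $R(a_0)$ only in the pure $S$-clique $B$ (the $a_i$ with $i>0$ are $S$-related to $a_0$, hence outside $R(a_0)$), so all you know about the terminal $R$-free structure of your descent is that it is a homogeneous simple $S,T$-graph with no infinite $T$-cliques and containing an infinite $S$-clique. The Lachlan--Woodrow Theorem does not then force the Random Graph: the stable graph $K_m^T[K_\omega^S]$ (finitely many infinite $S$-cliques, $T$ across them) satisfies every constraint you have established, so no contradiction follows. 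The termination of the iteration is not the problem --- an $R$-edge inside each successive intersection would produce arbitrarily large finite $R$-cliques, hence by $\omega$-categoricity an infinite one --- the problem is that the structure you terminate at is simply not forced to be unstable, and without instability the whole strategy of ``Random Graph, hence infinite $T$-cliques, hence contradiction with Proposition \ref{PropCliques}'' collapses.

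The paper's proof makes exactly the opposite choice of neighbourhood, and that is the missing idea: it descends into $S(a)$, where instability \emph{does} transfer. Because the horizontal cliques and the backward edges of the witness to $R\sim^{S}S$ are all of colour $S$, each initial segment $(a_ib_i)_{i\leq n}$ of the indiscernible half-graph lies inside $S(a_{n+1})$; by homogeneity and compactness (as in Observation \ref{ObsIsoRG}) this makes $R$ and $S$ unstable in $S(a)$. With that in hand the paper needs no iteration at all: if $T$ is realised in $S(a)$, then $S(a)$ is shown to be primitive --- the only candidate equivalence relations are $T$ (excluded via Theorem \ref{ThmCGamma} and Corollary \ref{CorFiniteClasses}, which would yield infinite $R$-cliques) and $S\vee R$ (excluded because its classes would be copies of the Random Graph, again yielding infinite $R$-cliques) --- and a primitive unstable 3-graph with no infinite $R$- or $T$-cliques contradicts Proposition \ref{PropTwoInfiniteCliques}; if $T$ is not realised in $S(a)$, then $S(a)$ is an unstable two-colour homogeneous graph, hence the Random Graph by Lachlan--Woodrow, contradicting the absence of infinite $R$-cliques. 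If you want to salvage your outline, replace $R(a_0)$ by $S(a_0)$ throughout; your steps 1 and 2 then feed directly into this case division.
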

\begin{proof}
	Suppose for a contradiction that such an $M$ exists, and consider $S(a)$ for some $a\in M$. There are two cases to analyse:
\begin{case}
	If $SST\in\age(S(a))$, then we make the following claim:
\begin{claim}
	$S(a)$ is primitive. 
\end{claim}
\begin{proof}
From $S\sim^SR$, we get that $S$ and $R$ are unstable in $S(a)$, so the only formulas that could define an equivalence relation in $S(a)$ are $T$ and $S\vee R$. 

To eliminate the possibility of $S\vee R$ defining an equivalence relation, note that if it did define one, then by the instability of $S,R$ its classes would be infinite. And since $T$ does not form infinite cliques  in $S(a)$, $S\vee R$ would have only finitely many infinite classes. Each class is a simple unstable homogeneous graph, isomorphic to the Random Graph. This contradicts the hypothesis that $R$ does not form infinite cliques.

Finally, $T$ does not define an equivalence relation because in that case we could also find infinite $R$-cliques within $S(a)$, by Theorem \ref{ThmCGamma} and Corollary \ref{CorFiniteClasses}.
\end{proof}
From this claim it follows that $S(a)$ is a homogeneous primitive simple 3-graph in which there are no infinite $R$-or $T$-cliques, contradicting Proposition \ref{PropTwoInfiniteCliques}.
\end{case}
\begin{case}
	If $SST\not\in\age(S(a))$, then $S(a)$ is an unstable homogeneous $R,T$-graph, again contradicting (by the Lachlan-Woodrow Theorem) the hypothesis of no infinite $R$-cliques in $M$.
\end{case}
\end{proof}
\setcounter{case}{0}
\setcounter{subcase}{0}

\begin{proposition}\label{PropImprimitiveInfClasses}
	There are no homogeneous simple 3-graphs in which $S$ defines an equivalence relation with infinitely many infinite classes, $R$ and $T$ are unstable, and $R$ does not form infinite cliques.
\end{proposition}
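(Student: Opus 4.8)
The plan is to argue by contradiction, first pinning down the forking data and then splitting on whether $\aut(M)$ acts $2$-transitively on $M/S$.

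First I would settle which relations fork. Since $S$ is an equivalence relation it is stable, and since it has infinitely many infinite classes it divides: writing $C_a=a/S\in\dcl(a)$ we have $\SU(a/C_a)=1$ (a generic point of the trivial infinite clique $C_a$) and $\SU(C_a)\ge 1$ (there are infinitely many classes, so $C_a\notin\acl(\varnothing)$), whence $\SU(a)\ge 2$ by the Lascar inequalities \ref{LascarIneqs}, while $\SU(a/b)\le\SU(a/C_a)=1$ whenever $b$ lies in the class of $a$. Next I would note that the only $\varnothing$-definable equivalence relations are equality, $S$, and the universal relation: by quantifier elimination such a relation is a union of atomic $2$-types, and $R$, $T$, $R\vee T$, $S\vee R$, $S\vee T$ all fail transitivity. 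Hence there is no proper finite $\varnothing$-definable equivalence relation, so singletons have a unique strong type and the Independence Theorem is available exactly as in Propositions \ref{PairwiseIndep} and \ref{NonforkingAmalgamation}. A Morley sequence over $\varnothing$ is a clique in a nonforking colour and cannot be an $S$-clique (as $S$ forks), so at least one of $R,T$ is nonforking; if $R$ were nonforking the Independence Theorem would build an infinite $R$-clique (the argument of Proposition \ref{PropTwoInfiniteCliques}), contrary to hypothesis. Therefore $R$ divides and $T$ is nonforking, and by pairwise $T$-independence together with Proposition \ref{PairwiseIndep} every infinite $T$-clique is a Morley sequence.

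I would then analyse the neighbourhood $R(a)$, which is homogeneous and simple by Proposition \ref{PropHomNeigh}. Because $R$ divides, $R(a)$ contains no Morley sequence and hence no infinite $T$-clique; by hypothesis it contains no infinite $R$-clique; Ramsey's theorem then forces an infinite $S$-clique, so $R(a)$ meets some class in an infinite set. Now comes the dichotomy. If the induced action on $M/S$ is not $2$-transitive, then between any two classes only one of $R,T$ is realised, the cross-class relation is uniform, and (by the argument of Claim \ref{ClaimTrick}, cf. Observation \ref{ObsInterpretedGraph}) the quotient is a homogeneous $\{R,T\}$-graph; being interpretable it is simple, and being unstable it is the Random Graph by Lachlan-Woodrow \ref{LachlanWoodrow} and Remark \ref{RmkSimpleGraphs}. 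A monochromatic $R$-clique in the quotient lifts, one vertex per class, to an infinite $R$-clique in $M$, a contradiction.

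It remains to treat the $2$-transitive case, which I expect to be the main obstacle. Here the isomorphism type of a union of two classes is the same for every pair, and since both $R$ and $T$ occur across classes in $M$ they occur across \emph{every} pair (Remark \ref{Rmk2}); thus $R(a)$ meets every class $\neq C_a$ in an infinite set, so $R(a)$ is itself a homogeneous simple structure in which $S$ is an equivalence relation with infinitely many infinite classes and $R$ forms no infinite clique — the very hypotheses of the proposition, relativised. I would close by induction on $\SU(M)$, which is finite by Koponen's Theorem \ref{Koponen}: since $R(a,x)$ forks, the generic type of $R(a)$ over $a$ has $\SU$-rank at most $\SU(M)-1$, while an equivalence relation with infinitely many infinite classes inside $R(a)$ forces that rank to be at least $2$, so the inductive hypothesis applies once $R(a)$ is seen to inherit all the hypotheses. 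The delicate points are exactly this inheritance: one must show $R$ and $T$ remain unstable inside $R(a)$ — by embedding an indiscernible half-graph into $R(a)$ through compatibility, in the style of Observation \ref{ObsIsoRG} — and dispose of the degenerate case where $R(a)$ collapses to a $2$-graph, where the absence of infinite $R$- and $T$-cliques together with simplicity and Remark \ref{RmkSimpleGraphs} again forces a contradiction. The base of the induction is vacuous, since any structure carrying an equivalence relation with infinitely many infinite classes has $\SU$-rank at least $2$.
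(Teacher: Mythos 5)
Your opening paragraph (the forking data: $S$ forks, $R$ forks by the Independence Theorem, $T$ is the unique nonforking relation, $T$-cliques are Morley sequences) matches the paper exactly, and your Case A is in substance the paper's first case: the paper splits on whether the triangle $RST$ embeds rather than on $2$-transitivity, then concludes the quotient $M/S$ is a homogeneous unstable graph with no infinite $\hat R$-cliques, hence a Henson graph, contradicting simplicity; you run the same argument in the other direction (simple unstable $\Rightarrow$ Random Graph $\Rightarrow$ an infinite $\hat R$-clique lifts). However, even Case A leans on an unproved claim: that failure of $2$-transitivity forces \emph{every} pair of classes to be uniform. A priori a ``mixed'' pair (both $R$ and $T$ realised across it) can coexist with uniform pairs under a non-$2$-transitive action. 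The claim is true, but only because a mixed pair puts $RST$ into $\age(M)$, and then homogeneity (mapping the $RST$ triangle appropriately) shows $R$ is realised across every pair of classes, whence $2$-transitivity by Observation \ref{ObsTransRE}. The paper's dichotomy on $RST\in\age(M)$ yields uniformity in one line; your dichotomy needs this lemma spelled out.

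The genuine gap is in Case B. Having shown that $R(a)$ meets every $S$-class, you propose an induction on $\SU$-rank whose inductive step requires $R(a)$ to inherit \emph{all} hypotheses of the proposition, in particular instability of $R$ and $T$ inside $R(a)$. You flag this as delicate and suggest doing it ``in the style of Observation \ref{ObsIsoRG}'', but that method cannot work here: in any indiscernible half-graph witnessing $R\sim T$, the horizontal clique $\{b_j\}$ on the side satisfying $R(a_0,b_j)$ for all $j$ can be neither an $R$-clique (excluded by hypothesis) nor a $T$-clique (it would be a Morley sequence inside $R(a_0)$, contradicting that $R$ forks, cf.\ Proposition \ref{PropCliques}), so it is an $S$-clique; consequently no initial segment of the half-graph lies inside a single $R$-neighbourhood, and the embedding trick of Observations \ref{ObsIsoRG} and \ref{UnstableInR} has nothing to grab onto. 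So the step your induction rests on is unproven and not obtainable by the route you indicate. It is also unnecessary: at the point you reached, $R(a)$ meets infinitely many $S$-classes, so a transversal (one point per class met) is an infinite subset of $R(a)$ in which $S$ is not realised; by Ramsey it contains an infinite $R$-clique (excluded by hypothesis) or an infinite $T$-clique (excluded since $T$-cliques are Morley sequences and $R$ forks). This immediate contradiction is exactly how the paper closes its second case --- and it is the same transversal-plus-Ramsey argument you yourself used earlier to produce an infinite $S$-clique in $R(a)$, just pointed at a transversal instead of at a class.
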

\begin{proof}
	Under these hypotheses, $T$ is the only nonforking relation in the language of $M$: $S$ clearly divides as it is an equivalence relation with infinitely many classes; it is not possible for $R$ to be nonforking because there is only one strong type of elements in $M$, so the Independence Theorem would give us infinite $R$-cliques if $R$ were non-dividing. 

	There are two cases, depending on whether the triangle $RST$ embeds into $M$ or not. 
\begin{case}
If $RST$ does not embed into $M$, then the structure induced on the union of a pair of distinct $S$-classes $K, K'$ is either $T$-free or $R$-free. Therefore, the half-graphs witnessing $R\sim T$ are transversal to an infinite number of $S$-classes; in other words, any transversal to all $S$-classes is unstable and $K_n^R$-free.

	Consider the infinite graph defined on $M/S$ with predicates $\hat R,\hat T$ which hold of two distinct classes $a/S,b/S$ if for some/any $\alpha\in a/S,\beta\in b/S$ we have $R(\alpha,\beta)$ (respectively, $T(\alpha,\beta)$). 
\begin{claim}
	The graph interpreted in $M/S$ as described in the preceding paragraph is homogeneous in the language $\{\hat R,\hat T\}$.
\end{claim}
\begin{proof}
By the same argument as in Corollary \ref{CorFiniteClasses}.
\end{proof}

	As a consequence of the claim, the Lachlan-Woodrow Theorem, and the facts that in $M$ the predicates $R$ and $T$ are unstable, and that $M$ does not embed infinite $R$-cliques, we have that $M/S$ is isomorphic to some universal homogeneous $K_n$-free graph (for some $n\in\omega$), which are not simple. This contradicts the simplicity of $M$.
\end{case}
\begin{case}
If $RST$ embeds into $M$, then for any $a\in M$ the set $R(a)$ meets every $S$-class in $M$ not containing $a$. The reason is that, since $RST\in\age(M)$, then there exists an $S$-class with two elements $c,c'$ such that $R(a,c)\wedge T(a,c')$. An element $b$ in any class that does not contain $a$ satisfies $R(a,b)$ or $T(a,b)$, so by homogeneity $ab\cong ac$ or $ab\cong ac'$, and by homogeneity there is a $b'$ in the same class as $b$ that satifies the other formula.

By the usual argument, there are no infinite $T$-cliques in $R(a)$ for any $a\in M$, and by the hypothesis of no infinite $R$-cliques in $M$, we get that the only infinite cliques in $R(a)$ are $S$-cliques. But a transversal to $R(a)$ must contain an infinite monochromatic clique, by Ramsey's Theorem. We have reached a contradiction.
\end{case}
\end{proof}

\section{One unstable forking relation}\label{SubsecOneUnstable}

If $R$ is the only forking relation, and it is unstable, then it follows from Proposition \ref{UnstableNonforking} that all relations are unstable. Let us look more closely into the infinite half-graphs witnessing the instability of the forking relation $R$. By Remark \ref{RmkCompatibilityGraph}, the compatibility graph is connected.

\begin{observation}\label{UnstableInR}
	Let $M$ be a primitive homogeneous simple 3-graph, and suppose that $R$ is a forking unstable relation and $S,T$ are nonforking and unstable. Then we can find witnesses to the instability of $R$ within $R(a)$.
\end{observation}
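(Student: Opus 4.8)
The plan is to start from the instability of $R$ and manufacture an entire half-graph living inside a single $R$-neighbourhood. By Proposition \ref{IndiscerniblePairs}, instability of $R$ yields an infinite half-graph $(a_i,b_i)_{i\in\omega}$ that is indiscernible as a sequence of pairs of type $R$, with $R(a_i,b_j)$ holding exactly when $i\le j$. Indiscernibility makes each horizontal clique monochromatic: say $a_ia_j$ always has colour $X$ and $b_ib_j$ always has colour $Y$. The goal is to show that the whole configuration $\{a_i,b_i:i\in\omega\}$ embeds into $R(c)$ for some vertex $c$, which by transitivity of $\aut(M)$ will give witnesses to the instability of $R$ inside $R(a)$ for every $a$. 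The only fact about neighbourhoods I will need is that $R(a)$ contains no infinite $S$- or $T$-clique (Observation \ref{NoInfiniteCliques}, or Proposition \ref{PropCliques}).

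The key step is to pin down the colour $Y$ of the bottom row. Fix $a_0$: since $R(a_0,b_j)$ holds for every $j\ge 0$, the whole set $\{b_j:j\in\omega\}$ lies in $R(a_0)$, and it is an infinite clique of colour $Y$. As $R(a_0)$ embeds no infinite $S$- or $T$-clique, we are forced to have $Y=R$; that is, the bottom row is an $R$-clique. (A symmetric remark: stretching the indiscernible sequence by one pair at the top produces a vertex $b_\omega$ with $R(a_i,b_\omega)$ for all $i$, so $\{a_i:i\in\omega\}\subseteq R(b_\omega)$ and the same argument gives $X=R$; however, $Y=R$ alone is what the proof uses.)

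With $Y=R$ in hand, the folding is immediate: for each $n$, the finite initial segment $\{a_i,b_i:i\le n\}$ is contained in $R(b_{n+1})$, because $R(a_i,b_{n+1})$ holds for $i\le n$ (as $i\le n+1$) and $b_ib_{n+1}$ has colour $Y=R$ for $i\le n$. By transitivity every such finite half-graph embeds into $R(a)$, and then, exactly as in Observation \ref{ObsIsoRG}, compactness together with the homogeneity of $R(a)$ (Proposition \ref{PropHomNeigh}) lets us embed the full infinite half-graph into $R(a)$. This exhibits the required witnesses to the instability of $R$ within $R(a)$.

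The step I expect to be the real obstacle is the determination $Y=R$, and more conceptually the reason the folding cannot fail. If a horizontal clique had a nonforking colour it would be a Morley sequence by Observation \ref{MorleySqn}, and then no single vertex could be $R$-related to all of it, since $R$ forks; so no half-graph carrying that row could sit inside one $R$-neighbourhood. The absence of infinite $S$- and $T$-cliques in $R$-neighbourhoods is precisely what rules this out: it forces the bottom row to be an $R$-clique, and hence makes the folding into a single neighbourhood possible.
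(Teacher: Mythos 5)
Your proof is correct, and its core is the same as the paper's: extract an indiscernible half-graph (Proposition \ref{IndiscerniblePairs}) and use the absence of infinite $S$- or $T$-cliques in $R$-neighbourhoods (Proposition \ref{PropCliques}) to force a horizontal clique to have colour $R$. Where you diverge is the finish. The paper asserts that \emph{both} rows are $R$-cliques and then observes that the entire half-graph therefore sits inside $R(a_0)$ outright, with no compactness step; note, though, that for the top row this assertion is not immediate, since no single $R(b_j)$ contains all the $a_i$ --- it needs exactly the kind of auxiliary argument you sketch in your parenthetical remark (a vertex $b_\omega$ realising $\{R(a_i,x):i\in\omega\}$ by saturation, or arbitrarily large finite $X$-cliques in $R(a)$ plus $\omega$-categoricity). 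Your route sidesteps that: having pinned down only the bottom row, you fold each finite initial segment into $R(b_{n+1})$ and then invoke the embedding-by-initial-segments argument of Observation \ref{ObsIsoRG} (homogeneity of $R(a)$ via Proposition \ref{PropHomNeigh}, plus the standard one-point extension argument) to place the full infinite half-graph in $R(a)$. So the paper's proof is shorter but leaves the top-row claim implicit, while yours trades that for a compactness step the paper itself uses elsewhere; both are sound.
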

\begin{proof}
	Let $(a_i,b_i)_{i\in\omega}$ be a half-graph for $R$. Since $S,T$ are nonforking, it follows from Proposition \ref{PropCliques} that $R(a)$ does not contain infinite $S$- or $T$- cliques. We know that $R(a)$ is an infinite set because $R$ is unstable, so the only infinite cliques in $R(a)$ are of colour $R$. From this it follows that when we extract an indiscernible half-graph from a set of witnesses for the instability of $R$ as in Proposition \ref{IndiscerniblePairs}, then the $a_i$ and the $b_i$ form infinite $R$-cliques. Clearly all the elements of the indiscernible half-graph are in $R(a_0)$.
\end{proof}

\begin{remark}
	As a direct consequence of Observation \ref{UnstableInR}, there are no primitive homogeneous simple 3-graphs $M$ such that $R$ is forking and unstable and the $R$-diameter of $M$ is 3, as in this case $R(a)$ would be an infinite simple unstable homogeneous graph, isomorphic to the Random Graph by the Lachlan-Woodrow Theorem. But in this is imposible since there are no infinite $S$- or $T$-cliques in $R(a)$.
\end{remark}

\begin{lemma}
	There are no primitive homogeneous simple 3-graphs in which all the predicates are unstable and only one of them, $R$, divides.
\label{PropNoSimpleOneDividing}
\end{lemma}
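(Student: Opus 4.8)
The plan is to pass to the $R$-neighbourhood $R(a)$ of a vertex and derive a contradiction from its internal clique structure. Since $R$ is the unique forking relation, $S$ and $T$ are nonforking, and Proposition \ref{UnstableNonforking} guarantees they are unstable. By Proposition \ref{PropHomNeigh} the set $R(a)$ is a homogeneous structure; it is transitive because $\aut(M)_a$ acts transitively on $R(a)$ (any two $R$-edges from $a$ are isomorphic, so homogeneity supplies an automorphism fixing $a$ and moving one endpoint to the other), and it is simple because it is interpretable in $M$. The key tension is that, by Observation \ref{UnstableInR}, $R$ is unstable in $R(a)$ and $R(a)$ contains infinite $R$-cliques (the horizontal cliques of an indiscernible half-graph for $R$ all lie in $R(a)$), whereas by Proposition \ref{PropCliques} it contains no infinite $S$- or $T$-clique. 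I will show this configuration is impossible.

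First I would pin down that $R(a)$ uses all three colours. By Observation \ref{NotRComplete} the set $R(a)$ is not $R$-complete, so at least one of $S,T$ is realised in it. If exactly one is, then $R(a)$ is an unstable (because $R$ is unstable in it) homogeneous simple graph, hence by the Lachlan--Woodrow Theorem \ref{LachlanWoodrow} together with Remark \ref{RmkSimpleGraphs} isomorphic to the Random Graph; but the Random Graph has infinite cliques in both colours, contradicting the absence of infinite $S$- or $T$-cliques in $R(a)$. Hence $R(a)$ is a genuine $3$-graph.

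Now I would split on whether $R(a)$ is primitive. If it is, then it is a primitive simple unstable homogeneous $3$-graph, so Proposition \ref{PropTwoInfiniteCliques} forces it to embed infinite monochromatic cliques in at least two colours; since only $R$-cliques can be infinite in $R(a)$, this is a contradiction. If $R(a)$ is imprimitive, then by quantifier elimination some proper nonempty set $C\subseteq\{R,S,T\}$ of ``internal'' colours defines an invariant equivalence relation (six possibilities for $C$), and I would rule out each. When $R$ is the unique internal colour, $R$ defines an equivalence relation and is therefore stable in $R(a)$, contradicting Observation \ref{UnstableInR}; the same stability contradiction occurs when $C=\{S,T\}$, since then the across-class colour $R$ is complete multipartite and hence stable. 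In the four remaining cases ($C=\{S\},\{T\},\{R,S\},\{R,T\}$) the monochromatic cliques that arise across or within classes are forbidden from being infinite, so Ramsey's theorem forces the relevant classes, or the number of classes, to be finite; applying Theorem \ref{ThmCGamma} and Corollary \ref{CorFiniteClasses} with the appropriate colour in the role of the class relation (or directly recognising a Random-Graph factor inside a class) then produces an infinite $S$- or $T$-clique in $R(a)$, again a contradiction.

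The main obstacle is the imprimitive case: one must verify that \emph{every} invariant equivalence relation on $R(a)$ clashes with the clique constraints, and the delicate point is that in the subcases $C=\{R,S\},\{R,T\}$ the instability of $R$ inside a single class --- which is what lets one invoke the graph classification within that class --- has to be extracted by a pigeonhole argument from the global indiscernible half-graph for $R$, using that only finitely many classes are available there.
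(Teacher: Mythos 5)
Your proof is correct and follows essentially the same route as the paper's: pass to $R(a)$, use Proposition \ref{PropCliques} and Observation \ref{UnstableInR} to see that it is an unstable, simple, homogeneous structure whose only infinite monochromatic cliques are $R$-cliques, force imprimitivity via Proposition \ref{PropTwoInfiniteCliques}, and then rule out every candidate invariant equivalence relation using stability of equivalence relations, Theorem \ref{ThmCGamma} with Corollary \ref{CorFiniteClasses}, and the Lachlan--Woodrow Theorem. The only differences are organisational: you explicitly verify that $R(a)$ realises all three colours (which the paper leaves implicit in calling $R(a)$ a 3-graph), and where the paper uses the second unstable predicate coming from the indiscernible half-graph to dismiss four of the six cases at once, you dispatch $\{S\},\{T\},\{R,S\},\{R,T\}$ directly by the finite-class and Random-Graph-factor arguments --- both are valid.
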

\begin{proof}
	Suppose for a contradiction that $M$ is a 3-graph as in the statement of this proposition. By simplicity and instability of $R$, $R(a)$ is an infinite simple 3-graph not embedding infinite cliques of colour $S$ or $T$. By Observation \ref{UnstableInR}, it is unstable. It follows from Proposition \ref{PropTwoInfiniteCliques} that $R(a)$ is not primitive.

	By Observation \ref{UnstableInR}, there is at least one more predicate that is unstable in $R(a)$. Let us suppose without loss of generality that $S$ is such a predicate. From the instability of $R,S$ we get directly that $R,S,R\vee T,S\vee T$ do not define equivalence relations. If $T$ defined an equivalence relation on $R(a)$, then its classes would be finite and by Theorem \ref{ThmCGamma} and Corollary \ref{CorFiniteClasses} we could find infinite $T$-cliques. And if $R\vee S$ defines an equivalence relation, then it has finitely many infinite classes (since $R(a)$ does not embed infinite $T$-cliques by Proposition \ref{PropCliques}), each of which is a homogeneous unstable graph, isomorphic to the Random Graph by the Lachlan-Woodrow Theorem. Again we find infinite $S$-cliques in $R(a)$, a contradiction. So there are no invariant proper nontrivial equivalence relations on $R(a)$, contradicting the first paragraph of this proof.
\end{proof}

\section{Unstable 3-graphs not embedding an infinite $R$-clique.}

\begin{proposition}
	Let $M$ be a simple homogeneous 3-graph in which $S$ and $T$ are nonforking relations, and which does not embed infinite $R$-cliques. Then $M$ is imprimitive.
\label{PropImprimitivity}
\end{proposition}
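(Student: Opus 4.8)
The plan is to argue by contradiction: I assume $M$ is primitive and derive a contradiction by applying Ramsey's theorem to an $R$-neighbourhood. Primitivity gives transitivity, so there is a unique $1$-type over $\varnothing$, and by Observation \ref{AlgClosure} we have $\acl(a)=\{a\}$ for every $a$. The first substantive step is to show that, under this assumption, $R$ must be a forking relation. If it were not, then $R$, $S$ and $T$ would all be nonforking; since a primitive structure has no nontrivial $\varnothing$-definable finite equivalence relations, Proposition \ref{NonforkingAmalgamation} applies with an empty set of forking relations and forces $\age(M)$ to be the age of a random $3$-graph. In particular $M$ would embed $K_n^R$ for every $n$ and hence an infinite $R$-clique, contradicting the hypothesis. (Equivalently, one builds such a clique one vertex at a time, amalgamating the nonforking types $R(x,a_i)$ over $\varnothing$ by the Independence Theorem exactly as in Proposition \ref{EmbeddingCompleteGraphs}.) Thus $R$ forks.

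Next I would show that $R(a)$ is infinite for any $a\in M$. Because $R$ is realised and $M$ is transitive, $R(a)\neq\varnothing$; and since $R(a)$ is $\{a\}$-definable and omits $a$, were it finite it would lie inside $\acl(a)=\{a\}$, which is impossible. Now I assemble the clique restrictions on the induced $3$-graph $R(a)$: it contains no infinite $R$-clique by hypothesis, and by Observation \ref{NoInfiniteCliques} (applicable precisely because $M$ is primitive, $R$ forks, and $S,T$ are nonforking) it contains no infinite $S$-clique and no infinite $T$-clique. Hence the infinite complete graph on $R(a)$, with its edges coloured in the three colours $R,S,T$, has no infinite monochromatic clique in any colour.

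This is the contradiction, since Ramsey's theorem guarantees that an infinite complete graph with finitely many edge-colours must contain an infinite monochromatic clique. Therefore $M$ cannot be primitive, i.e.\ $M$ is imprimitive. I expect the only real content to be the first step—correctly invoking Proposition \ref{NonforkingAmalgamation} (or the Independence-Theorem construction) in the degenerate case where there are \emph{no} forking relations, so as to deduce that $R$ forks—together with checking that the hypotheses of Observation \ref{NoInfiniteCliques} are genuinely met; once $R$ is known to fork and the three ``no infinite monochromatic clique'' facts are in hand, the Ramsey step is an immediate finish and I foresee no further obstacle.
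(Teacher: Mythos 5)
Your proposal is correct and follows essentially the same route as the paper: assume primitivity, deduce that $R$ forks (else the Independence Theorem/amalgamation yields arbitrarily large $R$-cliques), note $R(a)$ is infinite but can contain no infinite $R$-, $S$-, or $T$-clique (the latter two because $S$- and $T$-cliques are Morley sequences, i.e.\ Observation \ref{NoInfiniteCliques}), and contradict Ramsey's theorem. The only difference is cosmetic: you route the first step through Proposition \ref{NonforkingAmalgamation} and the infinitude of $R(a)$ through $\acl(a)=\{a\}$, where the paper cites the Independence Theorem and $\omega$-categoricity directly.
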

\begin{proof}
	Suppose for a contradiction that $M$ is primitive. Then $R$ is a forking relation, since otherwise the Independence Theorem and homogeneity would allow us to find arbitrarily large $R$-cliques. Since $M$ is primitive, we have that each of $R,S,T$ is non-algebraic, by $\omega$-categoricity. Consider $R(a)$ for any $a\in M$; this is an infinite set which cannot contain infinite cliques of colour $S$ or $T$ because that would witness that $R$ is nonforking ($S$- and $T$-cliques form Morley sequences over $\varnothing$), and cannot contain infinite $R$-cliques either, because $M$ does not embed infinite $R$-cliques. This contradicts Ramsey's theorem.
\end{proof}
\begin{remark}
Notice that if $M$ is a primitive simple homogeneous 3-graph not embedding infinite $R$-cliques and satisfying $S\sim^ST, S\sim ^TT$, then $S$ and $T$ are nonforking. More generally, if $R$ divides and $S\sim^ST, S\sim ^TT$, then $S$ and $T$ are nonforking.
\end{remark}
\begin{proposition}
	Let $M$ be a simple unstable homogeneous 3-graph in which $R$ is a stable relation and $S,T$ form infinite cliques. If $M$ does not embed infinite $R$-cliques then $M$ is imprimitive.
\end{proposition}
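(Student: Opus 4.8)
The plan is to reduce the statement to Proposition~\ref{PropImprimitivity}, which already delivers imprimitivity as soon as we know that both $S$ and $T$ are nonforking. Thus the whole task becomes: under the stated hypotheses, show that $S$ and $T$ are nonforking. I would argue by contradiction, assuming $M$ is primitive, and contradict this by establishing the nonforking of $S,T$ and then invoking Proposition~\ref{PropImprimitivity}.

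First I would record the cheap structural facts. Since $M$ is primitive and embeds no infinite $R$-clique, $R$ must be a forking (hence dividing) relation: otherwise the Independence Theorem together with homogeneity would build arbitrarily large $R$-cliques, exactly as in the opening lines of the proof of Proposition~\ref{PropImprimitivity}. Next, because $M$ is unstable while $R$ is stable, some relation in $\{S,T\}$ is unstable; by Remark~\ref{RmkCompatibilityGraph} the compatibility graph on the unstable relations has no isolated vertex, and as $R\notin L^u(\Th(M))$ the only possible partner of an unstable relation in $\{S,T\}$ is the other one. Hence both $S$ and $T$ are unstable and $S\sim T$.

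The heart of the argument is to upgrade $S\sim T$ to the refined compatibilities $S\sim^S T$ and $S\sim^T T$ of Definition~\ref{DefRefineComp}. Extracting an indiscernible half-graph witnessing $S\sim T$ as in Proposition~\ref{IndiscerniblePairs}, its two horizontal cliques are infinite monochromatic cliques, and since $M$ has no infinite $R$-clique their colours must lie in $\{S,T\}$. The hypothesis that $S$ and $T$ each form infinite cliques is what I would exploit to guarantee that the compatibility can actually be witnessed with an $S$-coloured horizontal clique: starting from an infinite $S$-clique and using the instability of $S$ against $T$, one assembles, via homogeneity and compactness, a half-graph sitting on top of that clique, so that the surviving horizontal clique has colour $S$, giving $S\sim^S T$. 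Since the hypotheses are symmetric in $S$ and $T$, the same construction with the roles exchanged produces $T\sim^T S$, which is precisely $S\sim^T T$.

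With both refined compatibilities in hand, the remark following Proposition~\ref{PropImprimitivity} applies: because $R$ divides and $S\sim^S T$, $S\sim^T T$ hold, both $S$ and $T$ are nonforking. Proposition~\ref{PropImprimitivity} then forces $M$ to be imprimitive, contradicting primitivity and completing the argument. I expect the main obstacle to be exactly the production of an $S$-coloured horizontal clique: the forking status of $S$ is not yet available at that point, so one cannot simply amalgamate $S$-edges by the Independence Theorem, and the construction must either be carried out directly on a prescribed infinite $S$-clique or else the stubborn ``all horizontal cliques are $T$-coloured'' configuration must be excluded by a neighbourhood analysis—observing, for instance, that such a configuration would plant an infinite clique of a nonforking colour inside some $S(a)$, in conflict with Proposition~\ref{PropCliques}.
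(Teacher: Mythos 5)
Your opening moves match the paper's: primitivity forces $R$ to divide (Independence Theorem plus homogeneity), stability of $R$ forces $S\sim T$, and the absence of infinite $R$-cliques forces at least one of $S\sim^ST$, $S\sim^TT$; your closing move (both refined compatibilities give that $S,T$ are nonforking, then Proposition \ref{PropImprimitivity} finishes) is also sound, and in the sub-case where $S\sim^ST$ holds with $S$ nonforking it coincides with what the paper does. The gap is exactly where you suspected it, and neither of your two proposed repairs closes it. Consider the case where $S$ is the nonforking relation, $T$ divides, and \emph{every} indiscernible half-graph witnessing $S\sim T$ has both horizontal cliques of colour $T$ (\ie $S\not\sim^ST$ and $S\sim^TT$). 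Your ``assembly'' of a half-graph on top of a prescribed infinite $S$-clique has no justification: to realise the finite approximations of that configuration in $M$ you would need to amalgamate $S$- and $T$-edges freely, which is precisely the Independence Theorem argument requiring \emph{both} relations to be nonforking --- the very thing being proved; homogeneity only moves around configurations already known to embed in $M$, and compactness needs every finite piece to lie in $\age(M)$, which is exactly what is in doubt (indeed, if all finite pieces embedded, Ramsey would yield $S\sim^ST$, so in this case some finite piece genuinely fails to embed). Your fallback via Proposition \ref{PropCliques} fails here too: the stubborn configuration plants infinite $T$-cliques inside $S(a_0)$ and inside $T(b_0)$, but $T$ is the \emph{forking} colour, so these are not cliques of a nonforking colour; and Proposition \ref{PropCliques} constrains only neighbourhoods of \emph{dividing} relations (here: $T(a)$ may contain no infinite $S$-cliques), so it says nothing about $S(a)$ and is not violated by $T$-cliques sitting in $T$-neighbourhoods. (To be fair, your observation does dispose of the mirror sub-case where $T$ is nonforking and $S$ divides: there the infinite $T$-clique planted inside $S(a_0)$ really does contradict Proposition \ref{PropCliques}.)

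The remaining case is where the paper does its real work, and it is not a one-line patch: assuming $S$ nonforking, $T$ dividing and $S\sim^TT$, the paper shows $T(a)$ is a simple unstable homogeneous 3-graph containing no infinite $S$- or $R$-cliques, hence imprimitive by Proposition \ref{PropTwoInfiniteCliques}; the instability of $S,T$ inside $T(a)$ (transported there by the half-graph with $T$-coloured horizontal cliques) rules out $S$, $T$, $R\vee S$, $R\vee T$ as equivalence relations on $T(a)$, and $S\vee T$ is ruled out by Lachlan--Woodrow (its classes would be Random Graphs, hence contain infinite $S$-cliques); so $R$ defines an equivalence relation with finite classes on $T(a)$, and Theorem \ref{ThmCGamma} together with Corollary \ref{CorFiniteClasses} forces $T(a)\cong C(\Gamma^{ST})$ or $T(a)\cong\Gamma^{ST}[K_n^R]$, both of which embed infinite $S$-cliques, contradicting Proposition \ref{PropCliques}. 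Without this structural analysis (or some substitute for it), your argument does not go through: the case split on which of $S,T$ forks is unavoidable, and the half of it you cannot reach with your tools is the one that carries all the difficulty.
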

\begin{proof}
	Since $R$ is stable, we have $S\sim T$, and as $M$ does not embed infinite $R$-cliques, we have either $S\sim^ST$ or $S\sim^TT$. Suppose for a contradiction that $M$ is primitive. Then $R$ is not algebraic, by $\omega$-categoricity, and divides, by primitivity and the Independence Theorem. This implies that one of the unstable relations, say $S$, is nonforking. 

	If $S\sim^ST$ and $S$ does not divide, then $T$ does not divide by simplicity (as $T(b)$ contains infinite $S$-cliques), so by Proposition \ref{PropImprimitivity} $M$ is imprimitive.

	Therefore, we must have $S\not\sim^ST$ and $S\sim^TT$ and $T$ divides as otherwise we could use the Independence Theorem to embed each finite substructure of an indiscernible half-graph witnessing $S\sim^ST$ into $M$. Therefore $T(a)$ does not contain any infinite $S$- or $R$-cliques and is imprimitive by Proposition \ref{PropTwoInfiniteCliques}. From $S\sim^TT$ we get that $S$, $T$ are unstable in $T(a)$ and therefore $S,T,R\vee S,R\vee T$ do not define equivalence relations in $T(a)$. The formula $S\vee T$ does not define an equivalence relation because its classes would be isomorphic to the Random Graph, which is impossible as $T(a)$ does not contain infinite $S$-classes. So $R$ must define an equivalence relation with finite classes, and is isomorphic to $C(\Gamma^{ST})$ or $\Gamma^{ST}[K_n^R]$. In any case, $T(a)$ embeds infinite $T$-cliques. We have reached a contradiction in every possible case stemming from the assumption of primitivity, so $M$ must be imprimitive.
\end{proof}

\comm
\begin{proposition}
	Let $M$ be a primitive simple unstable 3-graph without infinite $R$-cliques. If $R\sim S$, then $M$ is semilinear.
\end{proposition}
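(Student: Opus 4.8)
The plan is to extract a definable tree-like (semilinear) structure from the compatibility datum $R\sim S$, using the absence of infinite $R$-cliques to prevent the relevant amalgamations from producing a generic (random) configuration.

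First I would fix the forking behaviour. Since $M$ is primitive and $\omega$-categorical, $\acl(a)=\{a\}$ (Observation \ref{AlgClosure}) and there is a single $1$-type over $\varnothing$; hence any nonforking colour yields arbitrarily large, and so infinite, monochromatic cliques through the Independence Theorem. As $M$ omits infinite $R$-cliques, $R$ must divide. By Proposition \ref{PropImprimitivity}, $S$ and $T$ cannot both be nonforking when $M$ is primitive, so exactly one of $S,T$ is nonforking and the other divides; and $R\sim S$ forces $R$ and $S$ to be unstable. I would then record, via Proposition \ref{IndiscerniblePairs}, an indiscernible half-graph $(a_i,b_i)_{i\in\omega}$ witnessing $R\sim S$, with $R(a_i,b_j)$ for $i\le j$ and $S(a_i,b_j)$ for $i>j$; since neither horizontal clique can be an infinite $R$-clique, each is an $S$- or $T$-clique, and Proposition \ref{PropCliques} tells us which colours the neighbourhoods $R(a)$ and $S(a)$ may contain.

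The central step is to turn this triangular adjacency into a global, $\varnothing$-definable semilinear order. Because the language is symmetric, no \emph{binary} definable linear order can exist, so the order must be encoded by a definable ternary ``betweenness'' relation read off the half-graph (which point lies on the branch between the other two). I would define this relation through the quantifier-free type of the relevant triangles in the indiscernible half-graph and then use homogeneity together with the Independence Theorem, amalgamating nonforking extensions over $\varnothing$ exactly as in Propositions \ref{NonforkingAmalgamation} and \ref{PropCliques}, to show that every finite semilinear configuration embeds in $M$ and that the relation is globally coherent. Analysing $R(a)$ and $S(a)$ as homogeneous graphs (Proposition \ref{PropHomNeigh}) and invoking the Lachlan--Woodrow Theorem \ref{LachlanWoodrow} under the constraint that $R(a)$ omits the infinite cliques of the nonforking colour (Proposition \ref{PropCliques}) should pin down these neighbourhoods as branches of the tree rather than as random graphs.

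The main obstacle is the coherence of this ternary relation: verifying that the locally-defined ``between'' relation is genuinely transitive and that the predecessors of every point are linearly ordered, i.e. that the axioms of a semilinear order hold on all of $M$ and not merely on one indiscernible half-graph. Concretely, this reduces to a forbidden-configuration analysis on three half-graph points, controlled by the fact that $R$ divides --- so that $R$-triangles are restricted by Proposition \ref{PropCliques} --- and by $R\sim S$ fixing the off-diagonal colour; the hypothesis of no infinite $R$-clique is precisely what blocks the alternative, generic amalgam. Once semilinearity is established, homogeneity and $\omega$-categoricity identify $M$ with the generic semilinear $3$-graph, a structure with the strict order property, which in turn is incompatible with the simplicity of $M$ and so matches the non-existence results proved for the neighbouring configurations.
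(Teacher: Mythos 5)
Your opening paragraph is fine and matches the start of the paper's argument: primitivity plus the Independence Theorem forces $R$ to divide, Proposition \ref{PropImprimitivity} shows $S$ and $T$ cannot both be nonforking, and $R\sim S$ makes $R$ and $S$ unstable. From there, however, you attack the wrong statement, because you have misread what ``semilinear'' means in this paper. By Definition \ref{DefSemilinear} and Proposition \ref{Semilinear3Graph}, a semilinear 3-graph is not a tree-like (semilinear) \emph{order}: it is a 3-graph in which some colour $P$ with $P(a)$ infinite defines an equivalence relation on $P(a)$ with finitely many classes, so that the maximal $P$-cliques (the ``lines'') and the vertices form a semilinear incidence \emph{space} --- two points on at most one common line, finitely many lines through each point. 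Nothing in the statement asks for a definable betweenness relation or a linear order on predecessors, and your closing step exposes the misreading: you argue that semilinearity gives the strict order property and so contradicts simplicity. If that were the content, the proposition could only hold vacuously; in fact it is a positive structural conclusion, compatible with simplicity at this stage, whose non-existence consequences are extracted only later (Chapter \ref{ChapPrimitive}) by a long analysis of the semilinear geometry via generalised quadrangles, type-counting, and Theorem \ref{ThmThomas}. A semilinear space defines no order, so no quick strict-order-property argument is available --- nor needed.

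Concretely, what is missing is the entire second half of the proof: you never identify which colour plays the role of $P$, and you never analyse its neighbourhood. The paper's route is: by Proposition \ref{PropOneIsStable}, $T$ is stable; one then shows $R\not\sim^SS$ (otherwise $S(a)$ would be a homogeneous unstable 3-graph with no infinite $R$- or $T$-cliques, ruled out by the Lachlan--Woodrow Theorem \ref{LachlanWoodrow} together with Theorem \ref{ThmCGamma} and Corollary \ref{CorFiniteClasses}); hence $R\sim^TS$, the horizontal $T$-cliques of that half-graph sit inside $R(a)$, and Proposition \ref{PropCliques} then forces $T$ to divide; finally one analyses $T(a)$, whose only infinite cliques are $T$-cliques, through Lachlan's Theorem \ref{Lachlan3graphs}, Proposition \ref{PropMultipartite} and Theorem \ref{ThmThomas}, concluding in every case that $T$ defines an equivalence relation on $T(a)$ with finitely many infinite classes --- which is exactly the asserted semilinearity, with lines the maximal $T$-cliques. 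Your proposed central step cannot substitute for this: the Independence Theorem amalgamations you invoke produce \emph{more} configurations (randomness), not the coherence of a ternary ``betweenness'' relation, and there is no reason such a relation read off one indiscernible half-graph is invariant, transitive, or tree-like. As it stands, the proposal does not prove the proposition.
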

\begin{proof}
	By Proposition \ref{PropOneIsStable}, $T$ is stable. Since $R$ does not form infinite cliques, we have $R\sim^SS$ or $R\sim^TS$ and by the usual arguments $R$ divides.
\begin{claim}
	$R\not\sim^SS$.
\end{claim}
\begin{proof}
	Suppose for a contradiction $S\sim^SR$, so $R$ and $S$ are unstable in $S(a)$. Then $S$ divides because $R$ divides and there are infinite $S$-cliques in $R(a)$. This implies that $T$ is a nonforking relation. By Proposition \ref{PropTwoInfiniteCliques}, $S$ and $T$ form infinite cliques in $M$. The instability $R\sim^SS$ also implies $SSR\in\age(M)$. We have two cases, depending on the $S$-diameter of $M$
\begin{enumerate}
\item{If $\diam_S(M)=3$, then $SST\not\in\age(M)$ and $S(a)$ is an $RS$-graph without infinite $R$-cliques. This is impossible by the Lachlan-Woodrow Theorem, as the only simple unstable graph is the Random Graph.}
\item{If $\diam_S(M)=2$, then $S(a)$ is a 3-graph in which $R,S$ are unstable, and $S(a)$ does not embed infinite $R$- or $T$-cliques. Then $S(a)$ is imprimitive by Proposition \ref{PropTwoInfiniteCliques} and one of $T$, $R\vee S$ defines an equivalence relation on $S(a)$. If $R\vee S$ defines an equivalence relation, then each class is isomorphic to the Random Graph and therefore embeds infinite $R$-cliques, a contradiction. And if $T$ defines an equivalence relation, then $S(a)$ is isomorphic to one of $C(\Gamma^{RS})$ (if $\aut(M/a)$ acts 2-transitively on $S(a)/T$) or $\Gamma^{RS}[K_n^T]$. In any case, there are infinite $R$-cliques.}
\end{enumerate}
Therefore, $R\not\sim^SS$.
\end{proof}
From this claim and $R\sim S$ it follows that $R\sim^TS$ holds. Therefore, $T$ divides, since $R$ divides and $R(a)$ contains infinite $T$-cliques, by $R\sim^TS$. Now we have three cases:
\begin{enumerate}
	\item{If $T(a)$ is a $TS$-graph, then $T(a)$ is an infinite stable graph. By the Lachlan-Woodrow Theorem, it is imprimitive, so by Proposition \ref{PropMultipartite} $T$ is an equivalence relation on $T(a)$ which must have finitely many infinite classes since there are no infinite $S$-cliques in $T(a)$. It follows that $M$ is semilinear of $T$-diameter 3.}
	\item{If $T(a)$ is a $TR$-graph, then $M$ is semilinear by the same argument as in the previous case.}
	\item{If $T(a)$ realises all predicates, then by Proposition \ref{PropTwoInfiniteCliques} $T(a)$ is imprimitive. }
\end{enumerate}
If $T(a)$ is an unstable 3-graph, then there are witnesses to $R\sim S$ in $T(a)$ and $T$ defines an equivalence relation ($R\vee S$ is not an equivalence relation because its classes would be isomorphic to the Random Graph) with infinite classes. It is not possible for $T$ to have infinitely many infinite classes as in that case $M$ would interpret a weak pseudoplane, contradicting Theorem \ref{ThmThomas}. Therefore $T$ has finitely many infinite classes and $M$ is semilinear. 

And if $T(a)$ is stable, then it is isomorphic to one of $P^S[K_\omega^S], Q^T[K_\omega^T], K_\omega^T\times K_n^S, K_\omega^T\times K_n^T$, or a wreath product $K_m^i[K_n^j[K_t^k]]$ where $\{i,j,k\}=\{R,S,T\}$, $i\neq T$ and the subindex corresponding to the $T$ subindex is $\omega$, by Lachlan's Theorem \ref{Lachlan3graphs} and Proposition \ref{PropMultipartite}. This leaves us with only one case to eliminate, namely $T(a)\cong K_m^S[K_\omega^T[K_n^R]$ or $T(a)\cong K_m^R[K_\omega^T[K_n^S]$, in all other cases $M$ is semilinear.

Suppose then that $T(a)$ is isomorphic to $K_m^S[K_\omega^T[K_n^R]$.
\end{proof}
\ent

\begin{proposition}
	Let $M$ be a homogeneous simple unstable 3-graph not embedding infinite $R$-cliques. If $R\sim^SS$, then $M$ is imprimitive.
\label{PropImprimitivity1}
\end{proposition}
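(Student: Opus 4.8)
The plan is to argue by contradiction: suppose $M$ is primitive and derive a clash with Proposition \ref{PropNoSnotTnoCliquesR}. Since that earlier proposition already rules out the relevant configuration, the real content here is purely bookkeeping about forking: I must pin down exactly which of $R,S,T$ divides. First I would observe that $R$ must divide. Indeed, if $R$ were nonforking, then, exactly as in the proof of Proposition \ref{PropImprimitivity}, the Independence Theorem together with homogeneity would let us amalgamate $R$-edges into arbitrarily large $R$-cliques, contradicting the hypothesis that $M$ embeds no infinite $R$-clique. As every relation is either forking or nonforking, $R$ divides.

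Next I would read off the forking behaviour of $S$ and $T$ from the compatibility hypothesis. By $R\sim^S S$ there is an indiscernible half-graph $(a_i,b_i)_{i\in\omega}$ with $R(a_i,b_j)$ holding iff $i\le j$, and with one horizontal clique $H$ an infinite $S$-clique. The crucial point is that $R$-neighbourhoods then contain arbitrarily large finite $S$-cliques: if $H=\{b_i:i\in\omega\}$ then $H\subseteq R(a_0)$, since $R(a_0,b_j)$ holds for every $j$; and if $H=\{a_i:i\in\omega\}$ then $\{a_0,\dots,a_j\}\subseteq R(b_j)$ is an $S$-clique of size $j+1$ for each $j$. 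Because $M$ is transitive (being homogeneous), all $R$-neighbourhoods are isomorphic, and each is a homogeneous $\omega$-categorical structure by Proposition \ref{PropHomNeigh}; such a structure whose age contains $S$-cliques of every finite size contains an infinite one, so the generic $R(a)$ embeds an infinite $S$-clique. Since $R$ divides, Proposition \ref{PropCliques} forbids infinite nonforking cliques inside $R(a)$, which forces $S$ to divide as well. Finally, a Morley sequence over $\varnothing$ in the unique $1$-type (Observation \ref{MorleySqn}) is an infinite clique of some nonforking colour; as both $R$ and $S$ divide, that colour must be $T$, so $T$ is nonforking.

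At this stage $M$ is a primitive homogeneous simple $3$-graph with $R\sim^S S$, in which $T$ is nonforking, both $S$ and $R$ divide, and $R$ forms no infinite cliques, which is precisely the situation excluded by Proposition \ref{PropNoSnotTnoCliquesR}. This contradiction shows $M$ cannot be primitive, hence $M$ is imprimitive. I expect the main obstacle to be the middle paragraph, specifically making sure that the guaranteed horizontal $S$-clique genuinely yields an infinite $S$-clique inside a \emph{single} $R$-neighbourhood rather than one spread across infinitely many classes; this is why the two placements of $H$ are treated separately and why the passage from unboundedly large finite cliques to an infinite clique is routed through transitivity and the homogeneity of neighbourhoods (Proposition \ref{PropHomNeigh}) rather than a naive appeal to saturation.
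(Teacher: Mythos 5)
Your proof is correct, and it is routed differently from the paper's in a way worth noting. The forking bookkeeping is the same as the paper's opening paragraph ($R$ divides by primitivity and the Independence Theorem, $S$ divides because $R$-neighbourhoods contain infinite $S$-cliques, hence $T$ is the unique nonforking relation), but you are more careful than the paper on the one delicate point: the paper simply asserts that $R\sim^SS$ puts infinite $S$-cliques inside $R(a)$, whereas you split according to which horizontal clique carries colour $S$ and, in the case where it is the $a_i$'s, pass from arbitrarily large finite $S$-cliques in the various $R(b_j)$'s to an infinite one in a single neighbourhood via transitivity, Proposition \ref{PropHomNeigh}, and $\omega$-categoricity. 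After that the two proofs diverge: the paper re-derives the contradiction by hand, analysing $S(a)$ directly --- it is unstable with no infinite $R$- or $T$-cliques, realises $T$ by the Lachlan-Woodrow Theorem, is imprimitive by Proposition \ref{PropTwoInfiniteCliques}, and then each candidate equivalence relation on it ($R\vee S$, or $T$ with finite classes) is eliminated via Theorem \ref{ThmCGamma} and Corollary \ref{CorFiniteClasses} --- whereas you observe that the configuration you have isolated (primitive, $R\sim^SS$, $T$ nonforking, $R$ and $S$ dividing, no infinite $R$-cliques) is exactly the one forbidden by Proposition \ref{PropNoSnotTnoCliquesR}. That proposition precedes this statement and its proof does not depend on it, so there is no circularity; since its proof is essentially the same $S(a)$-analysis the paper inlines here, your factoring buys brevity and avoids duplicating an argument already on record. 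One cosmetic quibble: Observation \ref{MorleySqn} is the converse implication (indiscernible independent sequences are Morley), and what you actually invoke is the standard fact, used throughout the paper, that Morley sequences of singletons exist in a simple theory and form monochromatic cliques of a nonforking colour; citing that fact directly would be cleaner.
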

\begin{proof}
	Suppose for a contradiction that $M$ is primitive. Then $R$ divides over $\varnothing$ by primitivity and the Independence Theorem, and $S$ divides because $R(a)$ contains infinite $S$-cliques, by $R\sim^SS$, and $T$ is the only nonforking relation. 

	Again by $R\sim^SS$ the homogeneous simple 3-graph induced on $S(a)$ is unstable, contains $S$- and $R$-edges and does not embed infinite $R$- or $T$-cliques. By the Lachlan-Woodrow Theorem, $T$ is realised in $S(a)$ because the only simple unstable graph is the Random Graph. By Proposition \ref{PropTwoInfiniteCliques}, $S(a)$ is imprimitive. Additionally, $R,S$ are unstable in $S(a)$, and so $R,S,R\vee T, S\vee T$ do not define equivalence relations in $S(a)$. Therefore, either $R\vee S$ is an equivalence relation with finitely many infinite classes in $S(a)$, or $T$ is an equivalence relation with finite classes on $S(a)$. It is not possible for $R\vee S$ to be an equivalence relation because in that case each class would be isomorphic to the Random Graph, contradicting that $S(a)$ does not contain infinite $R$-cliques. 

And if $T$ defines an equivalence relation with finite classes, then by Theorem \ref{ThmCGamma} and Corollary \ref{CorFiniteClasses}, all the imprimitive simple unstable homogeneous 3-graphs with finite classes embed infinite cliques in two colours, contradicting that $S(a)$ does not embed infinite $R$-cliques.
\end{proof}
\begin{proposition}
	Let $M$ be an unstable simple homogeneous 3-graph in which $T$ defines an equivalence relation with infinitely many infinite classes and $R\sim^SS$. Then $M$ embeds infinite $R$-cliques.
\label{PropUnstabInfCliques}
\end{proposition}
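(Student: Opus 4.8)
The plan is to argue by contradiction and reduce the statement to Proposition \ref{PropImprimitiveInfClasses}, exploiting that the language is symmetric in the two unstable relations. First I would unpack the hypotheses. Since compatibility $\sim$ is only defined between members of $L^u(\Th(M))$, the assumption $R\sim^SS$ forces both $R$ and $S$ to be unstable; meanwhile $T$, being an equivalence relation, is stable, and since it has infinitely many classes it divides. Exactly as in the opening paragraph of Proposition \ref{PropImprimitiveInfClasses}, $R$ cannot be nonforking (otherwise the unique strong type together with the Independence Theorem would produce infinite $R$-cliques), so $R$ divides and $S$ is the unique nonforking relation.

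Now suppose, for a contradiction, that $M$ embeds no infinite $R$-clique. Then $M$ is a homogeneous simple $3$-graph in which $T$ is an equivalence relation with infinitely many infinite classes, $R$ and $S$ are unstable, and $R$ forms no infinite clique. This is precisely the configuration excluded by Proposition \ref{PropImprimitiveInfClasses} after interchanging the names of $S$ and $T$: that argument never distinguishes the forking non-equivalence relation from the other unstable predicate beyond their common instability and the absence of infinite $R$-cliques, so the relabelling is legitimate and yields a contradiction. Hence $M$ must embed infinite $R$-cliques.

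To make the reduction self-contained I would simply transpose $S$ and $T$ in the two-case analysis of Proposition \ref{PropImprimitiveInfClasses}, splitting on whether the triangle $RST$ lies in $\age(M)$. If $RST\notin\age(M)$, then between any two distinct $T$-classes only one of $R,S$ occurs, so the half-graphs witnessing $R\sim S$ must meet infinitely many $T$-classes; the quotient $M/T$ equipped with the induced predicates $\hat R,\hat S$ is then homogeneous by the argument of Claim \ref{ClaimInterpretsHenson}, and since $R,S$ are unstable while $M$ has no infinite $R$-clique, the Lachlan--Woodrow Theorem \ref{LachlanWoodrow} identifies $M/T$ with a universal homogeneous $K_n$-free graph, which is not simple---a contradiction. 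If $RST\in\age(M)$, then for every $a$ the set $R(a)$ meets every $T$-class other than that of $a$; a transversal $X\subseteq R(a)$ to these $T$-classes is infinite and $T$-free, hence coloured only by $R$ and $S$, so by Ramsey's Theorem it contains an infinite monochromatic clique of colour $R$ (excluded by hypothesis) or of colour $S$ (excluded because $S$ is nonforking, $R$ divides, and infinite $S$-cliques are Morley sequences, cf. Proposition \ref{PropCliques}), again a contradiction.

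The only genuine point to verify is that the proof of Proposition \ref{PropImprimitiveInfClasses} is truly symmetric under exchanging the two non-equivalence relations, so that the relabelling is valid; I expect this to be routine, since that argument uses nothing about its $R$ and $T$ beyond instability, the no-infinite-clique hypothesis on $R$, and the homogeneity of the quotient. I would also note that the extra content of $R\sim^SS$ over $R\sim S$---that one horizontal clique is an $S$-clique---is not actually needed for the conclusion: only the instability of $R$ and $S$ that it entails is used.
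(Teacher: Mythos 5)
Your proof is correct, but it takes a genuinely different route from the paper's, which is direct rather than by contradiction. The paper splits on forking: if both $R$ and $S$ are nonforking, the Independence Theorem and compactness give infinite $R$-cliques transversal to the $T$-classes; otherwise it uses exactly the extra content of $R\sim^SS$ that you discard, namely that the horizontal $S$-clique of the witnessing half-graph sits inside $R$-neighbourhoods, so that if $R$ divided then $S$ would have to divide as well (an infinite $S$-clique would otherwise be a Morley sequence witnessing that $R(x,a)$ does not divide); since $T$ also divides, no relation would be nonforking, which is impossible, hence $R$ is nonforking and the Independence Theorem concludes. Your reduction to Proposition \ref{PropImprimitiveInfClasses} with the roles of $S$ and $T$ interchanged is legitimate, and in fact you do not need to verify any symmetry of that proposition's \emph{proof}: its statement is about abstract 3-graphs, and its hypotheses (one predicate an equivalence relation with infinitely many infinite classes, the other two unstable, no infinite cliques in the remaining forking colour) are exactly what you have after relabelling, so you only instantiate the statement. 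What your route buys is the observation you make at the end---the superscript in $R\sim^SS$ is superfluous, instability of $R$ and $S$ suffices---so your argument proves a slightly more general statement; what it costs is importing the heavier machinery inside Proposition \ref{PropImprimitiveInfClasses} (the quotient graph, the Lachlan-Woodrow Theorem, non-simplicity of Henson graphs, Ramsey), where the paper needs only half-graphs and Morley sequences. One ordering slip to fix: you assert that ``$R$ cannot be nonforking'' in your first paragraph, before assuming that $M$ has no infinite $R$-cliques; without that assumption this is not absurd---it is the desired conclusion---so the forking analysis belongs after the contradiction hypothesis, or should be phrased as ``either $R$ is nonforking and we are done, or $R$ divides and then \dots''.
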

\begin{proof}
	First note that $T$ is a forking relation because it is a nontrivial equivalence relation with infinitely many classes. If both $R$ and $S$ are nonforking, then the Independence Theorem implies that we can embed any finite $R,S$-graph as a transversal to some $T$-classes, so in particular we can embed arbitrarily large finite $R$-cliques and the result follows by compactness.

	Suppose then that one of $R$, $S$ divides. The instability $R\sim^SS$ implies in particular that if $R$ divides then so does $S$, and this is not possible because there must be Morley sequences of vertices (so at least one relation is nonforking). As a consequence, $R$ is a nonforking relation and by the Independence Theorem there are infinite $R$-cliques in $M$.
\end{proof}
In the next Proposition, we use the symbol $R\sim^T_TS$ to say that there exists an indiscernible half-graph witnessing $R\sim S$ such that both monochromatic cliques are of colour $T$. This is stronger than $R\sim^TS$, but weaker than $R\sim^TS\wedge R\not\sim^SS$.
\begin{proposition}
	Let $M$ be an unstable simple homogeneous 3-graph in which $T$ defines an equivalence relation with infinite classes and $R\sim^T_TS$. Then $\aut(M)$ acts 2-transitively on $M/T$ and each pair of distinct $T$-classes is isomorphic to the Random Bipartite Graph.
\label{PropUnstableImprimitiveRBG}
\end{proposition}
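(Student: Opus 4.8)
The plan is to read everything off the indiscernible half-graph witnessing $R\sim^T_T S$, whose two horizontal cliques, being $T$-cliques, lie inside two single $T$-classes $\mathcal A$ and $\mathcal B$. First I would record the triangles this produces. Writing the half-graph as $a_i,b_j$ with $a_i\in\mathcal A$, $b_j\in\mathcal B$ and $R(a_i,b_j)$ iff $i\leq j$, the triples $a_0a_1b_0$, $a_0a_1b_1$ and $a_2b_0b_1$ realise $TRS$, $TRR$ and $TSS$ respectively, so all three belong to $\age(M)$. The crucial one is $TRS$: a single $R$-edge between two classes extends, inside one of the classes, to a $T$-related vertex joined to the far endpoint by $S$ (and symmetrically an $S$-edge extends to an $R$-edge), so by homogeneity both $R$ and $S$ are realised across every pair of distinct $T$-classes. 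Since $R$ is symmetric and realised between any two classes, Observation \ref{ObsTransRE} gives that $\aut(M)$ acts $2$-transitively on $M/T$; this half of the statement needs nothing beyond the half-graph and homogeneity.

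For the second half I would study the bipartite graph $G$ induced on $\mathcal A\cup\mathcal B$, with bipartition given by the two $T$-classes, $R$-edges read as edges and $S$-edges as non-edges (no edge lies inside a class, since within a class only $T$ occurs). The set $\mathcal A\cup\mathcal B$ is definable over one vertex from each class, so $G$ is a relativisation of $M$: it is $\omega$-categorical and simple. It is unstable because the whole indiscernible half-graph for $R\sim^T_T S$ already lives in $\mathcal A\cup\mathcal B$, witnessing instability of the edge relation of $G$. The key structural point is that $G$ is a homogeneous bipartite graph: an isomorphism between finite sub-bipartite-graphs meeting both sides is a partial isomorphism of $M$, and its extension to $\aut(M)$ fixes each of $\mathcal A,\mathcal B$ setwise (it preserves $T$-classes and sends each side into itself), hence restricts to an automorphism of $G$; the degenerate one-sided configurations are absorbed using transitivity on each side. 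An $\omega$-categorical, simple, unstable, homogeneous bipartite graph can only be the Random Bipartite Graph: the complete and empty bipartite graphs are stable, and the generic $K_{s,t}$-free bipartite graphs are not simple. This gives the conclusion for every pair of distinct classes, using $2$-transitivity on $M/T$ to transport the isomorphism type from $(\mathcal A,\mathcal B)$ to an arbitrary pair.

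The hard part is the genericity underlying $G\cong$ Random Bipartite Graph, and specifically the homogeneity of $G$ on configurations contained in a single side. The difficulty is structural: to produce, say, a common $R$-neighbour in $\mathcal B$ for two prescribed vertices of $\mathcal A$ one must add a vertex to the class $\mathcal B$, which is a forking extension over the elements of $\mathcal B$ already present, so the Independence Theorem — amalgamating only \emph{nonforking} extensions — does not apply directly. The device that partially rescues a self-contained construction is that inside an infinite $T$-class the induced structure is a pure $T$-clique, so distinct elements of a class are independent over any single fixed element of it; amalgamating over such an interior base point, seeded by the base cases $TRR$, $TRS$, $TSS$ above, realises many finite bipartite patterns. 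However, separating two targets lying in the same class forces a class-interior point into the base, which re-introduces the very target one was trying to free, so the naive induction does not close. This is why I would finish through the half-graph together with the classification of homogeneous bipartite graphs rather than through a purely forking-theoretic amalgamation.
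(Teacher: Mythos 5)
Your first half is correct and is essentially the paper's argument: the triangles read off the indiscernible half-graph show that both $R$ and $S$ are realised across every pair of distinct $T$-classes, and $2$-transitivity on $M/T$ follows by homogeneity via Observation \ref{ObsTransRE}.

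The second half has a genuine gap, and it is exactly the one you flag yourself: you never establish homogeneity of the bipartite graph $G$ on configurations contained in a single class, equivalently the extension axioms of the Random Bipartite Graph --- and that \emph{is} the content of the statement. The two-sided case of homogeneity is fine, but reducing the one-sided case to it requires attaching to each one-sided set a vertex of the opposite class with prescribed relations, which is precisely the extension property you have not proved; and the classification of homogeneous bipartite graphs cannot be invoked before homogeneity of $G$ is known. Your third paragraph honestly records that the forking-theoretic attempts (Independence Theorem over an interior base point) do not close, so the proposal is circular at its crux. The missing idea is the paper's key move, and it needs no forking theory: indiscernibility makes the half-graph a reservoir of \emph{all} threshold extension patterns, not just triangles. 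With $R(a_i,b_j)$ iff $i\leq j$, the vertex $a_{n+1}$ is $S$-related to $b_1,\ldots,b_n$ and $R$-related to $b_{n+1},\ldots,b_{n+m+1}$, all lying in one $T$-clique. Since two disjoint finite subsets $X,Y$ of a $T$-class carry no structure beyond being a $T$-clique of size $|X|+|Y|$, a bijection sending $X$ to an initial segment identifies $X\cup Y$ with $\{b_1,\ldots,b_{n+m}\}$, and homogeneity of $M$ (not of $G$) transports $a_{n+1}$ to a vertex $S$-related to all of $X$ and $R$-related to all of $Y$. This gives the extension axioms directly, after which your one-sided problem dissolves: extend each one-sided configuration by such a witness and apply the two-sided case, which is the paper's closing paragraph. (There remains a subtlety, which the paper itself passes over silently, of landing the transported witness in the \emph{prescribed} opposite class rather than merely in some class other than the one containing $X\cup Y$; but your proposal never reaches the point where that refinement matters, since without the half-graph transport you have no witnesses for any pattern of size greater than two.)
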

\begin{proof}
	Consider an indiscernible half-graph witnessing $R\sim^T_TS$. This half-graph shows that there exist two $T$-classes $C,C'$ such that $R$ and $S$ are realised in the structure induced by $M$ on $C\cup C'$, so by homogeneity all pairs of $T$-classes are in the same $\aut(M)$-orbit and so $\aut(M)$ acts 2-transitively on $M/T$. Consequently, we can find witnesses to $R\sim^T_TS$ in the union of any pair of distinct $T$-classes.

\[
\includegraphics[scale=0.7]{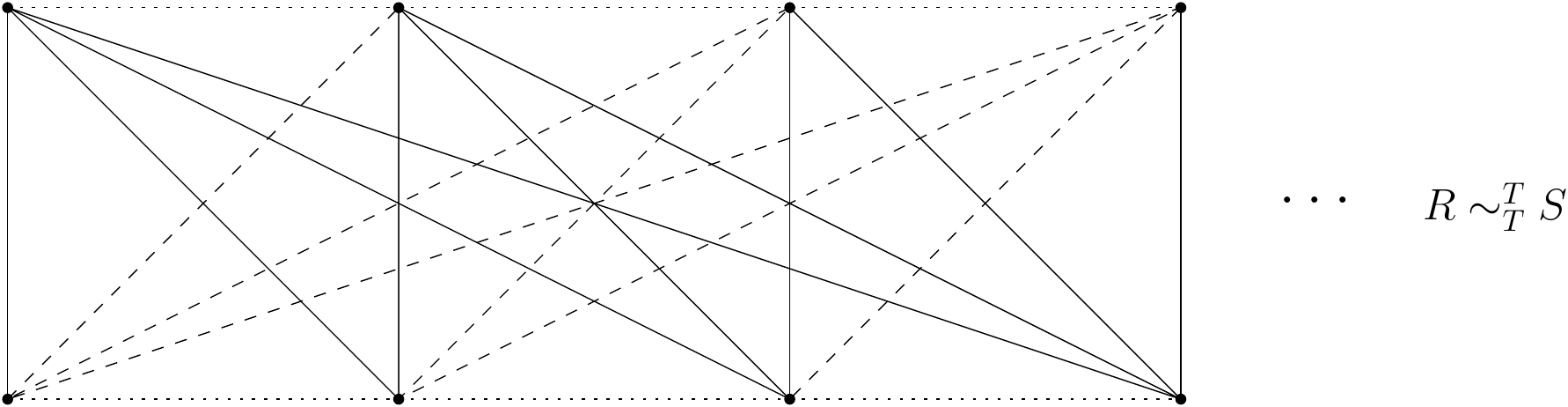}
\]

	Now let $C,C'$ be two distinct $T$-classes, and consider two disjoint finite subsets $X,Y\subset C$ of size $n$ and $m$, respectively. Using our witnesses $a_i,b_i$ ($i\in\omega$) for $R\sim^T_TS$, we know that there exists a vertex $v$ which is $T$-inequivalent to $n+m$ distinct elements (namely, $v=a_n+1$, which is $T$-inequivalent to $b_1,\ldots,b_n$, and $b_{n+1},\ldots,b_{n+m+1}$) and such that $S(v,b_i)$ holds for $i\in\{1,\ldots,n\}$ and $R(v,b_j)$ holds for $j\in\{n+1,\ldots,n+m+1\}$. Therefore, each pair of classes satisfies the extension axioms of the Random Bipartite Graph. 

To see that the union of any two classes $C,C'$ is a homogeneous 3-graph, note that if we take two isomorphic finite sets that meet both classes, then there is by homogeneity of $M$ an automorphism taking $C$ to $C'$, which will fix $C\cup C'$ setwise. And if the finite sets $A,B$ are contained in the same class $C$, then by the argument in the preceding paragraph we can find elements $c_A,c_B$ such that $S(c_A,a)$ for all $a\in A$ and $S(c_B,b)$ for all $b\in B$, and so there is an automorphism $\sigma$ of $M$ fixing $C\cup C'$ such that $\sigma(c_AA)=c_bB$.

Using homogeneity and the fact that all pairs of $T$-classes are in the same $\aut(M)$-orbit, the result follows.
\end{proof}

\begin{proposition}
	Let $M$ be a homogeneous simple unstable 3-graph with $S\sim^ST$ and not embedding infinite $R$-cliques. Then $M$ is imprimitive.
\label{PropImprimitivity2}
\end{proposition}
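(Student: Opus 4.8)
The plan is to argue by contradiction: assume $M$ is primitive and derive a contradiction, ultimately by showing that \emph{both} $S$ and $T$ are nonforking and then invoking Proposition \ref{PropImprimitivity}. First I would record the basic structural facts. Since $M$ is primitive, $\acl(a)=\{a\}$ by Observation \ref{AlgClosure}, so every relation is non-algebraic; if $R$ were nonforking the Independence Theorem and homogeneity would build arbitrarily large, hence (by $\omega$-categoricity) infinite $R$-cliques, against the hypothesis, so $R$ divides. As in Observation \ref{ObsIsoRG}, the hypothesis $S\sim^ST$ lets me embed an indiscernible half-graph witnessing $S\sim T$ into $S(a)$ for every $a$ (each initial segment lies in a common $S$-neighbourhood, because one horizontal clique is an $S$-clique), so $S$ and $T$ are unstable in $S(a)$; note that $S(a)$ is homogeneous by Proposition \ref{PropHomNeigh}, simple (being interpretable in $M$), and embeds no infinite $R$-clique. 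Since $M$ is simple some relation is nonforking (Morley sequences exist and are monochromatic cliques, Observation \ref{MorleySqn}), and as $R$ divides at least one of $S,T$ is nonforking.

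Next I would rule out that $S$ divides. If it did, then $T$ would be the unique nonforking relation, so by Proposition \ref{PropCliques} $S(a)$ contains no infinite $T$-clique; together with the absence of infinite $R$-cliques this forces every infinite monochromatic clique in $S(a)$ to be an $S$-clique. Since $S(a)$ is unstable, Proposition \ref{PropTwoInfiniteCliques} then shows $S(a)$ cannot be a primitive $3$-graph, and it cannot be a $2$-graph either (an unstable $\{S,T\}$-graph would be the Random Graph by the Lachlan--Woodrow Theorem \ref{LachlanWoodrow}, hence contain infinite $T$-cliques). So $S(a)$ is imprimitive; as $S,T$ are unstable, the only possible invariant equivalence relation is $R$ (necessarily with finite classes) or $S\vee T$, and in either case --- via Theorem \ref{ThmCGamma} and Corollary \ref{CorFiniteClasses} in the finite-class case, or the Random-Graph structure of an $S\vee T$-class --- $S(a)$ would embed infinite $T$-cliques, a contradiction. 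Hence $S$ is nonforking.

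It remains to rule out that $T$ divides; this is where I would use the half-graph most directly. With $S$ nonforking and $T$ dividing, look again at the indiscernible half-graph witnessing $S\sim^ST$: it has infinitely many $T$-edges running to an $S$-coloured horizontal clique, so a suitable vertex is $T$-adjacent to arbitrarily large (indeed, after reversing the sequence if necessary, infinite) subsets of that $S$-clique. Thus the age of the homogeneous structure $T(a)$ contains $K^S_k$ for every $k$, and therefore $T(a)$ embeds an infinite $S$-clique; since $T$ divides and $S$ is nonforking, this contradicts Proposition \ref{PropCliques}, which forbids infinite cliques of a nonforking colour inside the neighbourhood of a forking relation. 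Consequently $T$ is nonforking as well, and with both $S$ and $T$ nonforking and no infinite $R$-cliques, Proposition \ref{PropImprimitivity} forces $M$ to be imprimitive, contradicting our assumption. I expect the main obstacle to be the elimination of the case ``$S$ divides'': unlike the clean clique argument that disposes of ``$T$ divides,'' it requires the full imprimitive analysis of $S(a)$ (identifying the candidate equivalence relations $R$ and $S\vee T$ and extracting infinite cliques of the forbidden colour from Theorem \ref{ThmCGamma} and Corollary \ref{CorFiniteClasses}, much as in Proposition \ref{PropImprimitivity1}), and one must check carefully that the embedded half-graph really certifies instability of both $S$ and $T$ \emph{inside} $S(a)$ rather than merely in $M$.
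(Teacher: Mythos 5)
Your proof is correct and follows essentially the same route as the paper's: assume primitivity, deduce that $R$ divides, embed the half-graph witnessing $S\sim^ST$ into $S(a)$, rule out ``$S$ divides'' via Lachlan--Woodrow, Proposition \ref{PropTwoInfiniteCliques} and the equivalence-relation analysis on $S(a)$ (with Theorem \ref{ThmCGamma} and Corollary \ref{CorFiniteClasses}), and rule out ``$T$ divides'' by noting that the half-graph places arbitrarily large $S$-cliques inside a $T$-neighbourhood, against Proposition \ref{PropCliques}. The only cosmetic differences are the order of the two cases and that you finish by citing Proposition \ref{PropImprimitivity}, where the paper runs the same Ramsey argument on $R(a)$ inline.
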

\begin{proof}
	Suppose for a contradiction that $M$ is primitive. Then $R$ divides, by the Independence Theorem. By $\omega$-categoricity and primitivity, $R$ is not algebraic.
	
	One of $S,T$ is nonforking. If $S$ is nonforking then an indiscernible sequence witnessing $S\sim^ST$ also witnesses that $T$ is nonforking. This is impossible because in this case $R(a)$ is forced to be finite as it does not embed infinite cliques of any colour.

	Assume then that $T$ is nonforking and $S$ divides. First, note that $\diam_S(M)=2$ as we know that $SST\in\age(M)$ by $S\sim^ST$ and so $T(a)\subset S^2(a)$; if $S(a)$ were $R$-free, then it would be isomorphic to the Random Graph by the Lachlan-Woodrow Theorem, and so it would contain infinite $T$-cliques. 

	So $\diam_S(M)=2$ and $S(a)$ is an unstable 3-graph not embedding infinite $R$- or $T$-cliques. By Proposition \ref{PropTwoInfiniteCliques}, $S(a)$ is imprimitive. By $S\sim^ST$, one of $S\vee T$ or $R$ defines an equivalence relation, and it cannot be $S\vee T$ by the Lachlan-Woodrow Theorem (its classes would have to be isomorphic to the Random Graph, and thus embed infinite $T$-cliques), so $T$ defines an equivalence relation with finite classes, which is again impossible as $T$ would be algebraic and we would not be able to find witnesses to $S\sim^ST$ in $S(a)$.
\end{proof}

\section{Homogeneous unstable 3-graphs with two forking relations}\label{SubsecTwoForkingRels}

In this section we prove the non-existence of simple unstable primitive homogeneous 3-graphs with two forking relations. There are two cases: either the nonforking relation is stable, or it is unstable. We treat both cases simultaneously whenever possible.

If all relations are unstable, then none of $R,S,T$ is an algebraic predicate, so we have over any $a$ three infinite orbits of vertices. And if the nonforking relation $R$ is stable, the Independence Theorem and homogeneity guarantee that one can embed infinite $R$-cliques in $M$, so again we get three infinite orbits of vertices over $a$.

\begin{observation}
	Let $M$ be a simple \comm primitive \ent homogeneous 3-graph in which $R$ is the only nonforking relation. Then $R(a)$ is isomorphic to $M$.
\label{IsoToM}
\end{observation}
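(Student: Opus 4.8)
The plan is to prove this by showing that $R(a)$ and $M$ are homogeneous $3$-graphs with the same age, and then invoking the uniqueness part of Fra\"iss\'e's theorem to conclude that they are isomorphic. That $R(a)$ is homogeneous is immediate from Proposition \ref{PropHomNeigh}, and the inclusion $\age(R(a))\subseteq\age(M)$ is trivial because $R(a)$ is an induced substructure of $M$. So the entire content is the reverse inclusion $\age(M)\subseteq\age(R(a))$.

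First I would record that $M$ is vertex-transitive: any bijection between two one-element sets is an isomorphism of the corresponding (relationless, since the language is irreflexive) substructures, so by homogeneity it extends to an automorphism. Hence there is a unique $1$-type $p$ over $\varnothing$, and the age of $R(a)$ is independent of the choice of $a$.

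The key step is to realise, for an arbitrary finite $B\in\age(M)$ (fixed inside $M$), an apex $a$ that is $R$-adjacent to all of $B$. I would take $a\models q$, where $q\in S(B)$ is a nonforking extension of $p$; such a $q$ exists by simplicity. Then $a\indep B$, so by monotonicity $a\indep b$ for every $b\in B$; in particular $a\neq b$ (a vertex is not independent from itself), and $\tp(a/b)$ does not fork over $\varnothing$. Since the language is binary, $\tp(a/b)$ is determined by the single relation holding between $a$ and $b$; as $S$ and $T$ are forking and $R$ is the only nonforking relation, that relation must be $R$. Thus $R(a,b)$ holds for every $b\in B$, so $B$ embeds (unchanged as an induced substructure) into $R(a)$. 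By vertex-transitivity this gives $B\in\age(R(a))$ for our fixed $a$, whence $\age(M)\subseteq\age(R(a))$. Combining the two inclusions, the ages coincide, and since both structures are homogeneous they are isomorphic.

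The step I expected to be the main obstacle is producing a single vertex $R$-adjacent to an arbitrary configuration $B$ — especially one carrying $S$- and $T$-edges — since the naive strategy of splitting $B$ and amalgamating two common $R$-neighbours via the Independence Theorem fails: the forking edges prevent splitting $B$ into $\varnothing$-independent pieces. What dissolves the difficulty is the observation that binarity identifies a nonforking $2$-type with an $R$-edge, so merely realising \emph{any} nonforking extension of $p$ over $B$ automatically supplies the required apex, and no Independence-Theorem amalgamation is needed at all.
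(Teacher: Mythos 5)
Your proof is correct and follows essentially the same route as the paper: the paper also realises a vertex $v$ with $v\indep A$ for an arbitrary finite $A\in\age(M)$ and observes that, since $S$ and $T$ divide over $\varnothing$ and the language is binary, every edge from $v$ to $A$ must be $R$, giving $\age(M)=\age(R(a))$ and hence isomorphism by homogeneity. Your write-up merely makes explicit the points the paper leaves implicit (existence of the nonforking extension by simplicity, homogeneity of $R(a)$, and that no Independence Theorem amalgamation is required).
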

\begin{proof}
	The $R$-neighbourhood of any vertex $a$, $R(a)$, is homogeneous in the language $\{R,S,T\}$, so it suffices to prove that $\age(M)=\age(R(a))$. Take any finite structure $A$ embedded into $M$, and find a vertex $v$ satisfying $v\indep[\varnothing]A$. Then for each $a\in A$, $R(v,a)$ holds, as the other two relations divide over $\varnothing$.
\end{proof}

\begin{remark}
	In particular, the argument in Observation \ref{IsoToM} implies that the $R$-diameter of any simple primitive homogeneous 3-graph in which $R$ is the only nonforking relation is 2.
\end{remark}

\begin{definition}
	The \emph{diameter triple} of a 3-graph $M$ is $$D(M)=(\diam_R(M), \diam_S(M),\diam_T(M))$$
\end{definition}

This leaves us with four possible diameter triples for a primitive homogeneous simple 3-graph with only one nonforking relation: (2,3,3), (2,3,2), (2,2,3), (2,2,2).

\begin{observation}
	Let $M$ be a simple primitive homogeneous 3-graph in which $R$ is the only nonforking relation. If $S\sim T$, then at least one of the following conclusions holds:
\begin{enumerate}
	\item{The predicates $S$ and $T$ are unstable in $S(a)$ and we can embed infinite $S$-cliques in both $S(a)$ and $T(a)$.}
	\item{The predicates $S$ and $T$ are unstable in $T(a)$ and we can embed infinite $T$-cliques in both $S(a)$ and $T(a)$.}
\end{enumerate}
\label{ObsTwoOutcomes}
\end{observation}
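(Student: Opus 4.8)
The plan is to extract from $S\sim T$ a single indiscernible half-graph and to read both conclusions off the colours of its two horizontal cliques. First I would fix, using the hypothesis $S\sim T$ together with Observation \ref{IndiscernibleHalfGraphs} and Proposition \ref{IndiscerniblePairs}, an indiscernible half-graph $(a_i,b_i)_{i\in\omega}$. By binarity and indiscernibility the colour of an edge $a_ib_j$ with $i\neq j$ depends only on whether $i<j$ or $i>j$; call these two colours $P_<$ and $P_>$. Since the one sequence must witness the instability of \emph{both} $S$ and $T$, and $S\neq T$, we are forced to have $\{P_<,P_>\}=\{S,T\}$, and after possibly reversing the order (which preserves indiscernibility) I may assume $P_<=S$ and $P_>=T$. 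Write $c_A$ and $c_B$ for the monochromatic colours of the horizontal cliques $A=\{a_i\}$ and $B=\{b_i\}$.

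Second, I would pin down $c_A,c_B$ and harvest the cliques. The edges $a_ib_N$ with $i<N$ are $S$, so $\{a_i:i<N\}\subseteq S(b_N)$ is a $c_A$-clique; the edges $a_Nb_j$ with $j<N$ are $T$, so $\{b_j:j<N\}\subseteq T(a_N)$ is a $c_B$-clique. As $S,T$ fork while $R$ does not, Proposition \ref{PropCliques} forbids infinite $R$-cliques in $S(b_N)$ and in $T(a_N)$, whence $c_A,c_B\in\{S,T\}$. The same index computations also place $\{b_i:i>M\}$ (a $c_B$-clique) inside $S(a_M)$ and $\{a_i:i>M\}$ (a $c_A$-clique) inside $T(b_M)$. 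Passing between neighbourhoods of different vertices by transitivity, I conclude that \emph{both} $S(a)$ and $T(a)$ contain infinite $c_A$-cliques and infinite $c_B$-cliques.

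Third, I would locate a witnessing sub-half-graph inside a single neighbourhood. If $c_A=S$, the tail $\{a_i,b_i:i>M\}$ lies in $S(a_M)$, since the edges $a_Ma_i$ are $S$ and the edges $a_Mb_i$ (for $i>M$) are $S$; this tail is itself a half-graph with an $S$-coloured off-diagonal and a $T$-coloured off-diagonal, so $S$ and $T$ are unstable in $S(a_M)$, hence in $S(a)$ by Proposition \ref{PropHomNeigh} and transitivity. The symmetric computations show that $c_B=S$ likewise forces instability of $S,T$ in $S(a)$ (via an initial segment inside $S(b_M)$), while $c_A=T$ or $c_B=T$ forces instability of $S,T$ in $T(a)$.

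Finally I would assemble the dichotomy. Since $c_A,c_B\in\{S,T\}$, either $S$ or $T$ occurs among them. If $S\in\{c_A,c_B\}$, then $S,T$ are unstable in $S(a)$ and, by the clique facts of the second step, both $S(a)$ and $T(a)$ contain infinite $S$-cliques, which is conclusion (1); if $T\in\{c_A,c_B\}$ the same reasoning yields conclusion (2), and at least one case always applies. I expect the main difficulty to be bookkeeping rather than conceptual: keeping the index inequalities consistent so that the intended tail or initial segment really lands inside the intended $S$- or $T$-neighbourhood, and verifying that the extracted sub-half-graph retains \emph{both} an $S$- and a $T$-coloured off-diagonal, so that it genuinely witnesses the instability of both predicates there.
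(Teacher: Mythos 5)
Your overall strategy coincides with the paper's: its proof of Observation \ref{ObsTwoOutcomes} is precisely ``the horizontal cliques cannot be $R$-cliques by Proposition \ref{PropCliques}, so $S\sim^ST$ or $S\sim^TT$ holds, and the conclusion follows'', and your first two steps are a correct expansion of the first half of that sentence. One bookkeeping slip there: the clause ``whence $c_A,c_B\in\{S,T\}$'' cannot rest on the initial segments $\{a_i:i<N\}\subseteq S(b_N)$ and $\{b_j:j<N\}\subseteq T(a_N)$, which are \emph{finite} and hence perfectly compatible with Proposition \ref{PropCliques}; it is the infinite tails $\{b_i:i>M\}\subseteq S(a_M)$ and $\{a_i:i>M\}\subseteq T(b_M)$, which you only list in the following sentence, that actually exclude $R$.

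The genuine gap is in the mixed case $c_A=T$, $c_B=S$ (under your normalisation $P_<=S$, $P_>=T$). In that case every configuration you can place inside a single neighbourhood of the required colour is a finite initial segment: a half-graph of size $N$ inside $S(b_N)$, an $S$-clique of size $N$ inside $T(a_N)$ (and symmetrically for conclusion (2)). Neither ``transitivity'' nor Proposition \ref{PropHomNeigh} converts ``for every $N$ there is a vertex $v$ with a size-$N$ configuration inside $S(v)$'' into ``there is an \emph{infinite} such configuration inside $S(a)$'': transitivity moves configurations between conjugate base points, it does not let $N\to\infty$. Consequently, two of the four clique containments claimed in your second step do not follow as stated, and your third-step claim that ``$c_B=S$ forces instability of $S,T$ in $S(a)$ via an initial segment inside $S(b_M)$'' asserts instability (an infinite half-graph) from finite witnesses. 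The missing device is the compactness/saturation argument the paper makes explicit in Observation \ref{ObsIsoRG}: by homogeneity all these finite configurations sit over conjugate base vertices, so the partial type expressing an infinite half-graph (or infinite monochromatic clique) inside $S(a)$, respectively $T(a)$, is finitely satisfiable, and by $\omega$-categoricity (saturation) it is realised. With that single step added, your case analysis closes; in the aligned cases ($c_A=S$, or $c_B=T$) your tail arguments are complete and need no compactness at all.
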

\begin{proof}
	We have either $S\sim^ST$ or $S\sim^TT$ by Proposition \ref{PropCliques}. The conclusion follows.
\end{proof}

\subsection{Simple homogeneous 3-graphs with $D(M)=(2,3,3)$}

\begin{proposition}\label{PropDiam233}
	Let $M$ be a primitive simple homogeneous 3-graph in which all relations are unstable and only one is nonforking, and which satisfies $D(M)=(2,3,3)$. If $S^3(a)=R(a)$ and $R\sim S$, then $T^3(a)=R(a)$.
\end{proposition}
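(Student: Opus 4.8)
The plan is to reduce the statement, using homogeneity, to excluding a single ``cyclic'' configuration of the distance functions, and then to rule out that configuration by a neighbourhood analysis of the kind carried out in Propositions \ref{PropImprimitivity1}--\ref{PropImprimitivity2}.

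First I would note that, by transitivity together with homogeneity, the value $d_T(a,x)$ is an invariant of the pair $(a,x)$ under $\aut(M)$ and so depends only on the colour of the edge $ax$; hence $d_T(a,\cdot)$ is constant on each of the orbits $R(a)$, $S(a)$, $T(a)$, taking the value $1$ on $T(a)$ and a value in $\{2,3\}$ on each of $R(a)$, $S(a)$. Since $\diam_T(M)=3$ and $(M,T)$ is connected by primitivity, $T^1(a),T^2(a),T^3(a)$ are all nonempty and partition $M\setminus\{a\}$; as $T^1(a)=T(a)$, the remaining two layers partition $R(a)\cup S(a)$. The two orbit-values cannot both equal $2$ (else $T^3(a)=\varnothing$) nor both equal $3$ (else $T^2(a)=\varnothing$), each contradicting $\diam_T(M)=3$; so one of $R(a),S(a)$ is at $T$-distance $2$ and the other at $T$-distance $3$. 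If $d_T(R(a))=3$ we obtain $T^3(a)=R(a)$ as desired, so the whole content of the proposition is to exclude the ``cyclic'' case $T^2(a)=R(a)$, $T^3(a)=S(a)$.

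Assuming the cyclic case, I would read off the inter-orbit adjacencies from the two distance functions: combining $S^1(a)=S(a)$, $S^2(a)=T(a)$, $S^3(a)=R(a)$ with $T^1(a)=T(a)$, $T^2(a)=R(a)$, $T^3(a)=S(a)$, the standard observation that non-consecutive layers carry no edge of the relevant colour gives that there is no $S$-edge between $S(a)$ and $R(a)$ and no $T$-edge between $S(a)$ and $T(a)$. I would then treat $S(a)$ as a homogeneous $3$-graph (Proposition \ref{PropHomNeigh}), determine which colours are realised inside it from the admissible triangle types $\{S,S,\cdot\}$, and use the compatibility $R\sim S$ (embedding an initial segment of an indiscernible half-graph into $S(a)$ by the transitivity-and-homogeneity device from the proof of Observation \ref{ObsIsoRG}) to certify that $R$ and $S$ remain unstable inside $S(a)$.

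The contradiction then comes from the clique bound: by Proposition \ref{PropCliques}, $S(a)$ contains no infinite $R$-clique, whereas every homogeneous structure permitted for $S(a)$ forces one. Concretely, if only two colours are realised in $S(a)$ then by the Lachlan--Woodrow Theorem \ref{LachlanWoodrow} it is the Random Graph and hence contains infinite cliques in both of its colours; and if all three colours are realised then, $S(a)$ being imprimitive by Proposition \ref{PropTwoInfiniteCliques}, Theorem \ref{ThmCGamma} and Corollary \ref{CorFiniteClasses} identify it as $C(\Gamma)$ or $\Gamma[K_n^R]$, which again embed infinite cliques in two colours including $R$. In each subcase one produces an infinite $R$-clique inside $S(a)$, against Proposition \ref{PropCliques}. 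I expect this last step to be the main obstacle: it requires running the full neighbourhood classification while keeping track of the adjacency restrictions derived in the cyclic case, and in particular verifying that $R\sim S$ genuinely descends to $S(a)$, so that the relevant relations there are unstable and the classification theorems actually apply.
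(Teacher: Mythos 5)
Your reduction to excluding the ``cyclic'' case $T^2(a)=R(a)$, $T^3(a)=S(a)$, and the forbidden triangles you extract from it ($SSR$ from the hypothesis $S^3(a)=R(a)$, and $TTS$ in the cyclic case) are correct and match the paper's setup. The gap is in the step where you transfer the compatibility $R\sim S$ into $S(a)$. To embed an indiscernible half-graph witnessing $R\sim S$ into $S(a)$ by the device of Observation \ref{ObsIsoRG}, you need a horizontal clique of colour $S$, i.e.\ $R\sim^S S$. But any indiscernible half-graph for $R\sim S$ with a horizontal $S$-clique contains the triangle $SSR$: take $a_i,a_{i'}$ in the horizontal clique and $b_j$ with $i\le j<i'$, so that the three edges are $S(a_i,a_{i'})$, $R(a_i,b_j)$, $S(a_{i'},b_j)$. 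This triangle is forbidden by your own step 2, and a horizontal $R$-clique is impossible by Proposition \ref{PropCliques}; hence $R\sim^T S$ is forced, the half-graph lives inside $T$-neighbourhoods, and nothing destabilises $S(a)$. In fact, in the cyclic case $S(a)$ is an $R$-free \emph{stable} homogeneous $S,T$-graph: any half-graph inside it would witness $S\sim T$ with a horizontal $S$- or $T$-clique, and either option embeds the forbidden $TTS$. So no contradiction can be extracted from $S(a)$ at all.

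Even granting instability of $S(a)$, your final clique-bound contradiction fails on its own terms: since $S(a)$ is $R$-free, the only unstable candidate is the Random Graph in colours $S,T$, which contains infinite $S$- and $T$-cliques but no $R$-edges whatsoever; and the structures $C(\Gamma)$ and $\Gamma[K_n^R]$ have \emph{finite} $R$-classes. None of these embeds an infinite $R$-clique, so Proposition \ref{PropCliques} (which only outlaws infinite $R$-cliques in $S(a)$) is never violated. The paper's proof avoids both problems by working with $T(a)$ instead: by Remark \ref{RmkCompatibilityGraph}, $T$ must be compatible with some relation; $S\sim T$ is impossible since either $S\sim^S T$ or $S\sim^T T$ would embed $TTS$; hence $R\sim T$, and its horizontal cliques must be of colour $T$ (not $R$ by Proposition \ref{PropCliques}, not $S$ by the forbidden $TTS$). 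The half-graph for $R\sim^T T$ does embed into $T(a)$, so $T(a)$ is an unstable homogeneous $R,T$-graph (it is $S$-free), hence isomorphic to the Random Graph by Remark \ref{RmkSimpleGraphs} --- and this graph has $R$ as one of its two colours, so it contains infinite $R$-cliques, contradicting Proposition \ref{PropCliques}. The point your proposal misses is that the instability must be pushed into the neighbourhood whose two realised colours include the nonforking relation $R$; in the cyclic case that is $T(a)$, not $S(a)$.
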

 \begin{proof}
	It follows from $S^3(a)=R(a)$ that $S(a)$ is $R$-free, so $SSR$ is a forbiddden triangle. Suppose for a contradiction $T^3(a)=S(a)$. Then $R(a)=T^2(a)$ and $T(a)$ is $S$-free, so $TTS$ is a forbidden triangle. By Remark \ref{RmkCompatibilityGraph}, we have $R\sim T$ or $S\sim T$.

	We cannot have $S\sim T$, as this implies (by Proposition \ref{PropCliques}) one of $S\sim^ST,S\sim^TT$, and an indiscernible half-graph witnessing either of these compatibilities embeds $TTS$. Therefore, we must have $R\sim^TT$ and $R\not\sim^ST$. This implies that $T(a)$ is an unstable homogeneous $R,T$-graph, isomorphic to the Random Graph by Remark \ref{RmkSimpleGraphs}. But that contradicts Proposition \ref{PropCliques}.
\end{proof}

\begin{observation}\label{ObsSTUnstable}
	Let $M$ be a simple homogeneous 3-graph and suppose that $\age(\Gamma^{S,T})\subset\age(M)$. Then $S$ and $T$ are unstable in $S(a)$ and $T(a)$.
\end{observation}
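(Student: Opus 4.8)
The plan is to deduce the whole statement from the universality of $M$ as a homogeneous (Fra\"iss\'e) structure, exploiting the hypothesis $\age(\Gamma^{S,T})\subseteq\age(M)$ in its strong form: every finite $\{S,T\}$-structure---equivalently, every finite $R$-free $2$-coloured pattern---already occurs in $M$. Since $M$ is homogeneous it is universal in the usual sense: any countable structure $N$ with $\age(N)\subseteq\age(M)$ embeds into $M$ (build the embedding one vertex at a time, at each stage using homogeneity of $M$ to extend the current partial embedding, which is possible because every finite subconfiguration lies in $\age(M)$). In particular, every countable $\{S,T\}$-structure embeds into $M$.

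The key device is an apex construction that forces an entire copy of $\Gamma^{S,T}$ to sit inside a single $S$-neighbourhood. Let $N_S$ be the countable $\{S,T\}$-structure obtained from $\Gamma^{S,T}$ by adjoining one new vertex $c$ with $S(c,x)$ for every vertex $x$ of $\Gamma^{S,T}$ and leaving the internal $S,T$-colours of $\Gamma^{S,T}$ untouched. Every pair in $N_S$ receives a colour from $\{S,T\}$, so $N_S$ is again a countable $\{S,T\}$-structure and $\age(N_S)\subseteq\age(\Gamma^{S,T})\subseteq\age(M)$. By universality $N_S$ embeds into $M$; write $a$ for the image of $c$. Because embeddings preserve all relations and $c$ was $S$-joined to everything, the image of $\Gamma^{S,T}$ is an isomorphic copy of $\Gamma^{S,T}$ lying entirely inside $S(a)$ and carrying only $S$- and $T$-edges.

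Now both $S$ and $T$ are unstable in $\Gamma^{S,T}$: this is just the random graph with edges named $S$ and non-edges named $T$, whose age contains half-graphs in either colour, so by universality of $\Gamma^{S,T}$ for its own age infinite half-graphs for $S$ and for $T$ embed into it (one may also note $\Gamma^{S,T}\cong(\Gamma^{S,T})^{c}$ and argue by the symmetry swapping $S$ and $T$). These half-graphs live inside the embedded copy of $\Gamma^{S,T}$, hence inside $S(a)$, which is exactly what is required for $S$ and $T$ to be unstable in $S(a)$ in the sense of the definition. Running the identical argument with an apex $T$-joined to $\Gamma^{S,T}$ (call the resulting structure $N_T$) places a copy of $\Gamma^{S,T}$ inside $T(a')$ for the image $a'$ of the new apex, giving $S$ and $T$ unstable in $T(a')$ as well.

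The argument has essentially no obstacle beyond bookkeeping; the only point needing care is that instability \emph{in} $S(a)$ requires the whole infinite half-graph to lie in one neighbourhood, which is precisely what the apex guarantees, together with the fact that we are embedding an infinite configuration, which is licensed by universality rather than by mere membership of the finite pieces in $\age(M)$. Finally, since $S(a)$ is homogeneous by Proposition \ref{PropHomNeigh} and, in the primitive (hence transitive) setting where this observation is applied, all $S$-neighbourhoods are isomorphic, the conclusion holds uniformly for every vertex and not only for the images of the apices.
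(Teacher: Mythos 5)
Your proof is correct and follows essentially the same route as the paper: both exploit the universality of $M$ coming from homogeneity plus $\age(\Gamma^{S,T})\subseteq\age(M)$, and both use an apex vertex joined by $S$ (resp.\ $T$) to a configuration witnessing instability, thereby forcing that configuration into $S(a)$ (resp.\ $T(a)$), with transitivity giving the statement for every vertex. The only difference is cosmetic: the paper attaches the apex directly to a half-graph for $S,T$, whereas you attach it to a full copy of $\Gamma^{S,T}$ and then extract the half-graphs inside it.
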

\begin{proof}
	Any countable $S,T$-graph can be embedded in $S(a)$. In particular, the graph consisting of a half-graph $H$ for $S,T$ and an additional vertex $v$ satisfying $T(v,h)$ for all $h\in H$. This proves the result for $T(a)$; the same argument yields the corresponding statement for $S(a)$.
\end{proof}

\begin{proposition}\label{Prop233SnotsimT}
	Let $M$ be a primitive simple homogeneous 3-graph in which all relations are unstable and only one is nonforking, and which satisfies $D(M)=(2,3,3)$. If $S^3(a)=R(a)$ and $R\sim S$, then $S\not\sim T$.
\end{proposition}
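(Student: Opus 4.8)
The plan is to argue by contradiction: assume $S\sim T$ and derive a contradiction. First I would record the forbidden triangles. From $S^3(a)=R(a)$ one gets, exactly as in the proof of Proposition \ref{PropDiam233}, that $S(a)$ is $R$-free, i.e. $SSR\notin\age(M)$; and Proposition \ref{PropDiam233} itself gives $T^3(a)=R(a)$, so by the same reasoning $T(a)$ is $R$-free and $TTR\notin\age(M)$. In particular $S^2(a)=T(a)$ and $T^2(a)=S(a)$, and by Proposition \ref{PropHomNeigh} both $S(a)$ and $T(a)$ are homogeneous $\{S,T\}$-graphs. Note that the direct method of Proposition \ref{PropDiam233} — embedding a forbidden triangle into the half-graph witnessing the compatibility — is unavailable here, since any half-graph witnessing $S\sim T$ uses only $S$- and $T$-edges and so cannot contain the $R$-edge of $SSR$ or $TTR$. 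This forces a detour through the neighbourhoods.

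Next I would convert $S\sim T$ into information about a neighbourhood. By Proposition \ref{PropCliques} neither horizontal clique of a half-graph witnessing $S\sim T$ can be an $R$-clique (one horizontal clique lies inside some $S(\cdot)$ and the other inside some $T(\cdot)$, and these contain no infinite $R$-clique), so both horizontal cliques are coloured in $\{S,T\}$; hence at least one of $T\sim^S S$ or $S\sim^T T$ holds. If $T\sim^S S$, then since $S$ divides, $\diam_S(M)=3$ and $S^2(a)=T(a)$, Observation \ref{ObsIsoRG} gives $S(a)\cong\Gamma^{S,T}$; if $S\sim^T T$, the mirror image of Observation \ref{ObsIsoRG} (using $T^3(a)=R(a)$ and $T^2(a)=S(a)$) gives $T(a)\cong\Gamma^{S,T}$. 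Either way one of the two neighbourhoods is the Random Graph, so $\age(\Gamma^{S,T})\subseteq\age(M)$, and Observation \ref{ObsSTUnstable} then forces both $S(a)$ and $T(a)$ to be unstable; being $R$-free, homogeneous and simple, each is isomorphic to $\Gamma^{S,T}$ by the Lachlan--Woodrow Theorem \ref{LachlanWoodrow}.

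Finally I would feed in the hypothesis $R\sim S$. Fix a half-graph witnessing it and analyse the colours of its horizontal cliques $A$ and $B$. An $S$-coloured horizontal clique is impossible, since together with a diagonal $R$-edge and an off-diagonal $S$-edge it produces an $SSR$-triangle; and an $R$-coloured horizontal clique is impossible, since each horizontal clique (minus one vertex) sits inside an $R$-free neighbourhood of the form $S(\cdot)$. Hence both horizontal cliques are $T$-cliques, i.e. $R\sim^T_T S$. Thus the assumption $S\sim T$ leaves us with the fully determined picture $S(a)\cong T(a)\cong\Gamma^{S,T}$, $R(a)\cong M$, together with the rigid incidence rules forced by the two forbidden triangles (no $S$-edge joins $S(a)$ to $R(a)$, and no $T$-edge joins $T(a)$ to $R(a)$) and the existence of the $R\sim^T_T S$ half-graph. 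The contradiction is to be extracted from this over-determined configuration.

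The main obstacle is precisely this last step. All the local data — the triangle census and the $S$- and $T$-diameters — turn out to be mutually consistent, so the impossibility cannot be seen at the level of single triangles and must be read off globally. I expect the cleanest route is to show that the completely determined structure just described is incompatible with simplicity of $\Th(M)$: either the rigid colour-separation of $S(a)$, $T(a)$, $R(a)$ combined with $R(a)\cong M$ and the genericity of the two shells lets one build a tree of consistent-but-pairwise-inconsistent instances of an $S$- or $T$-formula, violating the tree property, or else it collapses $\diam_S(M)$ to $2$, contradicting $S^3(a)=R(a)\neq\varnothing$. Pinning down which of these obstructions actually occurs, and making it rigorous, is the crux of the proof.
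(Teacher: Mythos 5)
Your setup is correct and matches the paper's: from Proposition \ref{PropDiam233} you get $R(a)=S^3(a)=T^3(a)$, hence $S(a)$ and $T(a)$ are $R$-free, and under the assumption $S\sim T$ both are isomorphic to the Random Graph $\Gamma^{S,T}$ (the paper gets this directly from Observation \ref{ObsSTUnstable} and the Lachlan--Woodrow Theorem; your detour through Observation \ref{ObsIsoRG} and the refinement $R\sim^T_TS$ is harmless but not needed). The problem is that you stop exactly where the proof has to happen: you concede that the contradiction must be ``extracted from this over-determined configuration'' without saying how, and the two routes you sketch --- a tree-property/TP2 construction, or collapsing $\diam_S(M)$ to $2$ --- are neither carried out nor the right tools; the actual obstruction is far more elementary.

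The missing idea is a short local argument about $R$-neighbourhoods of points \emph{inside} $S(a)$, not about $R(a)$ itself. Take any $b\in S(a)$. Since $R$ is the only nonforking relation, Observation \ref{IsoToM} and homogeneity give $R(b)\cong M$; in particular $R(b)$ is primitive, so its $T$-graph is connected. Now locate $R(b)$ in the decomposition over $a$: since $S(a)$ is $R$-free and $\neg R(a,b)$, we have $R(b)\subseteq T(a)\cup R(a)$, and both pieces are nonempty. Indeed $R(b)\cap R(a)\neq\varnothing$ because $\diam_R(M)=2$, and $R(b)\cap T(a)\neq\varnothing$ because $RST\in\age(M)$: the triangles $SSR$ and $TTR$ are forbidden by the diameter triple, so if $RST$ were forbidden as well, $S\vee T$ would define an equivalence relation, contradicting primitivity. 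Finally, $R(a)=T^3(a)$ means there are no $T$-edges between $T(a)$ and $R(a)$, so the $T$-graph on $R(b)$ is disconnected --- contradicting $R(b)\cong M$. That closes the proof with no appeal to the tree property or to any diameter collapse.
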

 \begin{proof}
	We know from Proposition \ref{PropDiam233} that under these hypotheses $R(a)=T^3(a)=S^3(a)$, so $T(a)$ and $S(a)$ are $R$-free. If we had $S\sim T$, then $S(a)$ and $T(a)$ are both isomorphic to the Random Graph, by the Lachlan-Woodrow Theorem and Observation \ref{ObsSTUnstable}. Consider any $b\in S(a)$; the set $R(b)$ should be isomorphic to $M$, by homogeneity and Observation \ref{IsoToM}. Notice that, since the $R$-diameter of $M$ is 2, we have $R(b)\cap R(a)\neq\varnothing$. 
\begin{claim}
	$R(b)\cap T(a)\neq\varnothing$.
\end{claim}
\begin{proof}
	We know that $S(a)$ is isomorphic to the Random Graph in predicates $S,T$. Therefore, all triangles with edges in $S,T$ are realised in $M$. From the diameter triple we get that the triangles $SSR,TTR$ are forbidden. From this it follows by primitivity that $RST\in\age(M)$, as otherwise $S\vee T$ would define an equivalence relation on $M$. Therefore, $R(b)\cap T(a)\neq\varnothing$.
\end{proof}

Therefore $R(b)$ consists of a nonempty subset of $T(a)$ and a nonempty subset of $R(a)$. Notice that as the $T$-diameter of $M$ is 3 and $R(a)=T^3(a)$, we do not have $T$-edges from $T(a)$ to $R(a)$, so the $T$-graph on $R(b)$ is disconnected, contradicting the primitivity of $M$ by Observation \ref{IsoToM}.
\end{proof}
\setcounter{case}{0}
\begin{lemma}
There are no primitive simple unstable homogeneous 3-graphs in which only $R$ is nonforking and $D(M)=(2,3,3)$.
\label{LemmmaNo233}
\end{lemma}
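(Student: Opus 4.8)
The plan is to run everything through Observation \ref{IsoToM}, which gives $R(a)\cong M$ for every vertex $a$; thus each $R(a)$ is again a primitive simple unstable homogeneous $3$-graph with only $R$ nonforking and $D(R(a))=(2,3,3)$, and by primitivity each of $(R(a),R),(R(a),S),(R(a),T)$ is connected. Since the three colours are the only $1$-types over $a$ and $\diam_S(M)=3$, exactly one of $R(a),T(a)$ sits at $S$-distance $3$ from $a$; equivalently exactly one of $SSR,SST$ is omitted from $\age(M)$, and symmetrically exactly one of $TTR,TTS$. Because $\diam_R(M)=2$ we always have $RRS,RRT\in\age(M)$. This gives four configurations: (I) $SSR,TTR$ omitted; (II) $SSR,TTS$ omitted; (III) $SST,TTR$ omitted; (IV) $SST,TTS$ omitted, with (II) and (III) interchanged by the $S\leftrightarrow T$ symmetry of the hypotheses. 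As in Propositions \ref{PropDiam233} and \ref{Prop233SnotsimT} I would first assume all three relations unstable; if instead the nonforking relation $R$ is stable then the compatibility graph (Remark \ref{RmkCompatibilityGraph}) is carried by $\{S,T\}$, forcing $S\sim T$, which as the case analysis below shows is incompatible with (II)--(IV) and so leaves only (I).

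\emph{Configuration (I).} Here $SSR$ omitted makes $S(a)$ an $R$-free set, so for $b\in S(a)$ we have $R(b)\cap S(a)=\varnothing$, and since $a\notin R(b)$ also $R(b)\subseteq T(a)\cup R(a)$. If $RST$ were omitted as well, then $SSR,RST,TTR$ would all be omitted, making $S\vee T$ (the complement of $R$) a proper nontrivial invariant equivalence relation and contradicting primitivity; hence $RST\in\age(M)$. Together with $RRS\in\age(M)$ this makes both $R(b)\cap T(a)$ and $R(b)\cap R(a)$ nonempty. Finally $TTR$ omitted forbids a $T$-edge between $T(a)$ and $R(a)$, so the $T$-graph on $R(b)$ is disconnected, contradicting $T$-connectedness of $R(b)\cong M$. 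This argument uses neither instability of $R$ nor compatibility, so it also disposes of the stable-$R$ situation.

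\emph{Configurations (II) and (III).} I would first show $S\not\sim T$: by Proposition \ref{PropCliques} the horizontal cliques of any indiscernible half-graph witnessing $S\sim T$ must have colour $S$ or $T$, and inspecting the $4$-point patterns of such a half-graph always produces one of the omitted triangles ($TTS$ in (II), $SST$ in (III)). With $S\not\sim T$, connectedness of the compatibility graph forces $R\sim S$ and $R\sim T$. Then in (II) the pair $S^3(a)=R(a)$, $R\sim S$ meets the hypotheses of Proposition \ref{PropDiam233}, whose conclusion $T^3(a)=R(a)$ contradicts $T^3(a)=S(a)$; the $S\leftrightarrow T$ mirror of Proposition \ref{PropDiam233} settles (III).

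\emph{Configuration (IV), the main obstacle.} The same half-graph inspection gives $S\not\sim T$, hence $R\sim S$ and $R\sim T$; moreover $S(a)$ is $T$-free and $T(a)$ is $S$-free, and reading off any triangle $a,b,c$ with $b\in S(a),c\in T(a)$ shows that \emph{every} edge between $S(a)$ and $T(a)$ is coloured $R$. By Proposition \ref{PropCliques} neither neighbourhood contains an infinite $R$-clique, so neither can be the Random Graph; hence by Theorem \ref{LachlanWoodrow} and Remark \ref{RmkSimpleGraphs} each of $S(a),T(a)$ is a \emph{stable} homogeneous graph ($I_m[K_n]$ or its complement). The plan is to combine this stable structure with the complete $R$-join between $S(a)$ and $T(a)$ and with $R(b)\cong M$ to force either an infinite $R$-clique inside some $S$- or $T$-neighbourhood (against Proposition \ref{PropCliques}) or a monochromatic disconnection inside a copy of $M$ (against primitivity, as in (I)). The hard part will be exactly here: unlike in (I), $R(b)$ now also meets $S(a)$, and since $RST,SSR,TTR$ all lie in $\age(M)$ the naive $S$- or $T$-disconnection of $R(b)$ breaks down, so the contradiction must be extracted from the precise shape of the two stable neighbourhoods.
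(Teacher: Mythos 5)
Your configurations (I), (II), (III) and the stable-$R$ reduction are correct, and they parallel the paper's own proof: the paper's Case 1 together with its Claims \ref{Claim233RsimS} and \ref{Claim233S3} disposes of everything with $SSR$ omitted by reducing it to your configuration (I) and applying the same monochromatic-disconnection argument inside $R(b)\cong M$, and your shortcut for (II)/(III) via Proposition \ref{PropDiam233} is legitimate. The problem is configuration (IV): there you prove only $S\not\sim T$ and then stop at a plan, and the plan points away from where the contradiction actually lies. As you yourself observe, no disconnection of $R(b)$ is available once $SSR$, $TTR$ and $RST$ are all realised, and the paper never uses the stable form of $S(a)$ and $T(a)$, the complete $R$-join between them, or $R(b)\cong M$ at this stage. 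So the lemma is not proved as written; (IV) is a genuine gap, not a deferred computation.

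The missing step is short and uses exactly the tool you already deployed for (II) and (III), just aimed at a different pair: run the horizontal-clique colour analysis on an indiscernible half-graph witnessing $R\sim S$ rather than $S\sim T$. In (IV) the set $S(a)$ is $T$-free and, by Proposition \ref{PropCliques}, contains no infinite $R$-clique, hence is a stable homogeneous $R,S$-graph. Take witnesses with $S(a_i,b_j)$ if and only if $i\leq j$ and $R(a_i,b_j)$ otherwise. The $b$-side lies in $S(a_0)$, and every edge among the $a_i$ lies in some $S(b_N)$; so neither horizontal clique can be of colour $R$ (an infinite, or arbitrarily large finite, $R$-clique would appear in a copy of $S(a)$, against Proposition \ref{PropCliques}) nor of colour $T$ (since $S(a)$ is $T$-free). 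Both must then be $S$-cliques, whence the whole half-graph apart from $a_0$ sits inside $S(a_0)$, making $S(a)$ an \emph{unstable} homogeneous $R,S$-graph, i.e.\ the Random Graph, which embeds infinite $R$-cliques --- contradiction. Hence $R\not\sim S$, and symmetrically, using that $T(a)$ is $S$-free, $R\not\sim T$. Combined with your $S\not\sim T$, the unstable relation $S$ is compatible with nothing, contradicting Observation \ref{IndiscernibleHalfGraphs} (equivalently, Remark \ref{RmkCompatibilityGraph}). This is precisely how the paper closes the case where $SST$ is omitted --- it handles your (III) and (IV) in one stroke --- so the repair is available entirely within your own framework.
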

\begin{proof}
There are two main cases, depending on whether $R$ is stable or unstable.

\begin{case}Suppose that $R$ is a stable nonforking relation and $S,T$ are unstable and forking. In that case $S\sim T$ is the only edge in the compatibility graph, and we cannot have $S\sim^RT$ by Proposition \ref{PropCliques}, so we must have $S\sim^TT$ or $S\sim^ST$. In either case we obtain $R(a)=S^3(a)=T^3(a)$, so each of $S(a)$ and $T(a)$ is $R$-free and unstable, therefore isomorphic to the Random Graph. Now take $b\in T(a)$ and consider $R(b)$. Since $T(a)$ is $R$-free, we must have $R(b)=(R(b)\cap S(a))\cup (R(b)\cap R(a))$, and each of the intersections is nonempty because of the $R$-diameter 2. The set $R(b)$ should be isomorphic to the primitive structure $M$ by homogeneity and Observation \ref{IsoToM}, but $\diam_S(M)=3$ implies that there are no $S-edges$ from $R(b)\cap S(a)$ to $R(b)\cap R(a)$, so the $S$-graph of $R(b)$ is disconnected and $R(b)$ is imprimitive, contradiction.
\end{case}
\begin{case}If $R$ is unstable, we make the following claims:
\begin{claim} $R\not\sim S$\label{Claim233RsimS}
\end{claim}
\begin{proof}
By Proposition \ref{PropDiam233}, $R(a)=S^3(a)=T^3(a)$, so $S(a)$ and $T(a)$ are $R$-free. We claim that $S(a)$ and $T(a)$ are stable graphs. The reason is that the only simple unstable homogeneous graph is the Random Graph, but if any of $S(a), T(a)$ were isomorphic to the Random Graph, then we would be able to find witnesses to $S\sim T$, which is impossible by Proposition \ref{Prop233SnotsimT}. By the Lachlan-Woodrow Theorem, all the infinite stable graphs realising more than one 2-type are imprimitive, so $\aut(M/a)$ acts imprimitively on $S(a)$ and $T(a)$ and therefore $S$ is an equivalence relation on $S(a)$ and $T$ is an equivalence relation on $T(a)$ (this follows from Proposition \ref{PropMultipartite}). 

Ir follows from the stability of $S(a), T(a)$ that $R\sim^TS$ and $R\sim^ST$, so $S$ has infinitely many classes in $S(a)$, and $T$ has infinitely many classes in $T(a)$.
\[
\includegraphics[scale=0.8]{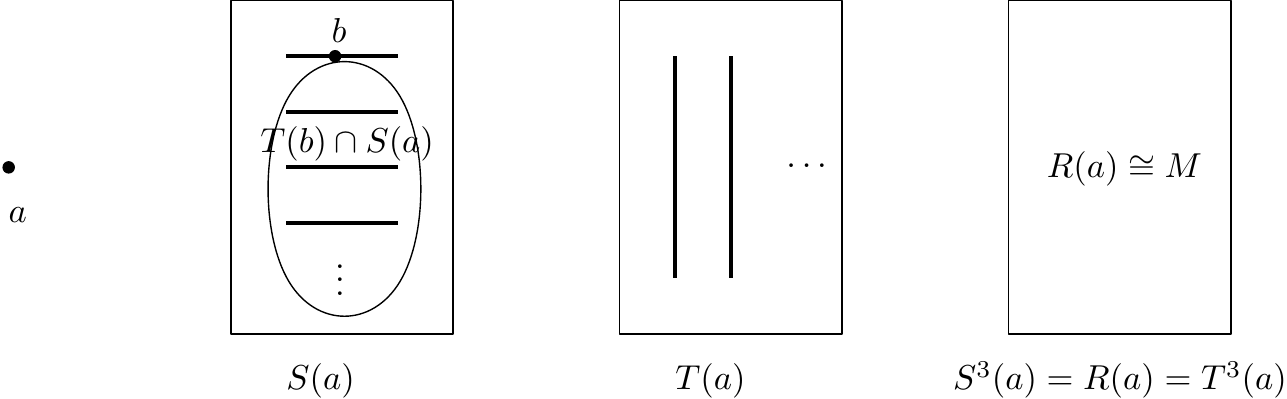}
\]
Then we argue as follows: consider $b\in S(a)$. Its $T$-neighbourhood should be isomorphic to $T(a)$, but $T$ is not an equivalence relation on $T(b)$, since $T(b)\cap T(a)$ consists of infinite $S$-cliques separated by $T$, so we have triangles $TTS$ in $S(b)$.
\end{proof}
\begin{claim}
$S^3(a)\neq R(a)$\label{Claim233S3}
\end{claim}
\begin{proof}
From Claim \ref{Claim233RsimS}, we know that in this case $R\not\sim S$, so by Remark \ref{RmkCompatibilityGraph} we have $R\sim T$ and $S\sim T$. By Observation \ref{ObsTwoOutcomes}, at least one of $S(a),T(a)$ is isomorphic to the Random Graph, so both of them must be isomorphic to the Random graph. Additionally, $T^2(a)=S(a)$, so $T^3(a)=R(a)$. 

	Take any $b\in S(a)$. The $R$-neighbourhood of $b$ consists of a nonempty subset $X$ of $T(a)$ and a nonempty subset $Y$ of $R(a)$. From $T^3(a)=R(a)$ it follows that there are no $T$-edges from $X$ to $Y$, so the $T$ is disconnected in $R(b)$ and therefore $R(b)$ is imprimitive, contradicting the primitivity of $M$ by homogeneity and Observation \ref{IsoToM}.
\end{proof}
\end{case}
	From Claim \ref{Claim233S3}, we get that $S^2(a)=R(a)$, so $S(a)$ is $T$-free. By the Lachlan-Woodrow Theorem, $S(a)$ is a stable graph, because it is simple and does not embed infinite $R$-cliques by Proposition \ref{PropCliques}.

	Our first claim is that we cannot have $R\sim S$ under these hypotheses. The horizontal cliques in any half-graph for $S$ cannot be of type $R$ by Proposition \ref{PropCliques}, and cannot be of type $T$ because $S(a)$ is $T$-free, so they must be of type $S$. But this implies that $S(a)$ is unstable, contradicting the last paragraph.

	Therefore we have $R\sim T$ and $S\sim T$ by Remark \ref{RmkCompatibilityGraph}. In any indiscernible half-graph for $S\sim T$, we cannot have any horizontal cliques of type $R$ by Proposition \ref{PropCliques}, and cannot have horizontal cliques of type $T$ because $S(a)$ is $T$-free. But having a horizontal clique of type $S$ implies that $S$ is unstable in $S(a)$, impossible since there are no infinite $R$-cliques in $S(a)$ (in that case $S(a)$ would be isomorphic to the Random Graph in predicates $R,S$).
\end{proof}

\comm
\begin{proposition}\label{Prop233RsimS}
	There are no primitive simple homogeneous 3-graphs $M$ in which all relations are unstable and only one is nonforking with $D(M)=(2,3,3)$, $S^3(a)=R(a)$ and $R\sim S$.
\end{proposition}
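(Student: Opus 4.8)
The plan is to assume such an $M$ exists and derive a contradiction by determining the isomorphism types of the neighbourhoods $S(a)$ and $T(a)$ exactly. Since the nonforking relation of a primitive simple homogeneous $3$-graph has $R$-diameter $2$ (the remark following Observation \ref{IsoToM}) and $D(M)=(2,3,3)$, the unique nonforking relation is $R$, and $R(a)\cong M$ by Observation \ref{IsoToM}. Proposition \ref{PropDiam233} gives $T^3(a)=R(a)$, so together with $S^3(a)=R(a)$ the distance partitions are $S^2(a)=T(a)$ and $T^2(a)=S(a)$. A two-edge $S$-path cannot close up with an $R$-edge (that would place an $R$-neighbour at $S$-distance $\le 2$ rather than $3$), so the triangle $SSR$ is forbidden and $S(a)$ is $R$-free; symmetrically $TTR$ is forbidden and $T(a)$ is $R$-free. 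Finally Proposition \ref{Prop233SnotsimT} gives $S\not\sim T$; as all three relations are unstable, the compatibility graph is connected (Remark \ref{RmkCompatibilityGraph}), so $R\sim T$ holds alongside the hypothesised $R\sim S$.

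Next I would pin down the neighbourhood structure. Each of $S(a),T(a)$ is an infinite $R$-free graph, homogeneous by Proposition \ref{PropHomNeigh} and simple (being interpretable in $M$); were either unstable it would be the Random Graph by the Lachlan--Woodrow Theorem \ref{LachlanWoodrow}, and an $S,T$-half-graph inside it would witness $S\sim T$, contradicting the first paragraph. Hence $S(a)$ and $T(a)$ are stable, so of the form $I_m[K_n]$ or its complement. By Observation \ref{NotRComplete} neither is a single clique, and by Proposition \ref{PropMultipartite} $S(a)$ cannot be $S$-complete-multipartite (else $M$ itself would be $S$-complete-multipartite, hence imprimitive); the dual argument rules out $T(a)$ being $T$-complete-multipartite. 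Therefore $S$ is an equivalence relation on $S(a)$ (classes are $S$-cliques, $T$ between distinct classes) and, symmetrically, $T$ is an equivalence relation on $T(a)$.

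The crux is to show these $S$-classes are simultaneously nontrivial and infinite in number, which I would extract from the horizontal cliques of the compatibility half-graphs. In an indiscernible half-graph witnessing $R\sim S$ the two horizontal cliques are monochromatic: a clique of colour $S$ would create an $SSR$ triangle with a diagonal $R$-edge, and a clique of colour $R$ is impossible because two of its vertices lie together in some $S$-neighbourhood, which is $R$-free. So both horizontal cliques have colour $T$, i.e. $R\sim^{T}_{T}S$, and the lower clique sits inside an $S$-neighbourhood as an infinite $T$-clique; transversality then forces $S(a)$ to have infinitely many $S$-classes. Running the same computation on an $R\sim T$ half-graph, now using that $TTR$ is forbidden and that $T$-neighbourhoods are $R$-free, gives $R\sim^{S}_{S}T$; one of its horizontal cliques is an infinite $S$-clique, which places an infinite $S$-clique inside $S(a)$, hence inside a single $S$-class. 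By homogeneity every $S$-class of $S(a)$ is then an infinite $S$-clique, so $S(a)$ consists of infinitely many infinite $S$-cliques with $T$ holding between distinct classes.

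Granting this, the contradiction is quick. Fix $b\in S(a)$; by transitivity $T(b)\cong T(a)$, so $T$ is an equivalence relation on $T(b)$. Now $T(b)$ contains every element of $S(a)$ lying in an $S$-class other than that of $b$. Choosing two distinct $S$-classes $C_1,C_2$ of $S(a)$, both different from the class of $b$, with $x,x'\in C_1$ and $x''\in C_2$, we obtain $S(x,x')$ together with $T(x,x'')$ and $T(x',x'')$, and all of $x,x',x''$ lie in $T(b)$: a $TTS$ triangle inside $T(b)$, contradicting that $T$ is an equivalence relation there. I expect the main obstacle to be the third paragraph, namely the bookkeeping that fixes the colours of the horizontal cliques and thereby certifies that the $S$-classes are at once numerous and nontrivial; the opening reductions and the closing triangle argument are routine once the neighbourhood structure is in hand.
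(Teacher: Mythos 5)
Your proof is correct and follows essentially the same route as the paper's own argument (Claim \ref{Claim233RsimS} inside the proof of Lemma \ref{LemmmaNo233}): stable $R$-free neighbourhoods via Proposition \ref{Prop233SnotsimT} and the Lachlan-Woodrow Theorem, $S$ and $T$ as equivalence relations on $S(a)$ and $T(a)$, the horizontal-clique bookkeeping giving $R\sim^T_TS$ and $R\sim^S_ST$, and the final $TTS$ triangle inside $T(b)$ for $b\in S(a)$. If anything, your write-up is more complete than the paper's: the paper asserts without justification that the $S$-classes of $S(a)$ are \emph{infinite} $S$-cliques (which the closing triangle argument genuinely needs), whereas you derive this explicitly from the horizontal $S$-cliques of the $R\sim T$ half-graph.
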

\begin{proof}
	And if $T$ has two classes in $T(a)$ and $S$ has two classes in $S(a)$, then $S(a)$ does not embed infinite $T$-cliques, but $S(b)$ does, for any $b\in T(a)$.
\end{proof}
\begin{proposition}
	There are no primitive simple homogeneous 3-graphs $M$ in which all relations are unstable and only one is nonforking with $D(M)=(2,3,3)$ and $S^3(a)=R(a)$.
\label{Prop233S3}
\end{proposition}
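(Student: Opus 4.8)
The plan is to argue by contradiction, organised as a case analysis on the compatibility graph of the three (all unstable) relations. The lever throughout is Observation \ref{IsoToM}: since $R$ is the unique nonforking relation, $R(a)\cong M$ for every $a$, so each $R$-neighbourhood is itself primitive, and any colour-disconnection we can force inside some $R(b)$ contradicts primitivity. First I would unpack the geometry forced by $D(M)=(2,3,3)$ together with $S^3(a)=R(a)$: over $a$ the vertex set is $\{a\}\cup S(a)\cup T(a)\cup R(a)$ with $S^2(a)=T(a)$ and $S^3(a)=R(a)$, and $S(a)$ is $R$-free — for if $b,c\in S(a)$ had $R(b,c)$, then $b$-$a$-$c$ is an $S$-path of length $2$, forcing $d_S(b,c)\le 2$ and contradicting $S^3(a)=R(a)$ read from $b$. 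Hence $SSR$ is a forbidden triangle. Since all of $R,S,T$ are unstable, Remark \ref{RmkCompatibilityGraph} makes the compatibility graph on $\{R,S,T\}$ connected with no isolated vertex, so at least two pairs are compatible.

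The first step is to eliminate $R\sim S$. If $R\sim S$, Proposition \ref{PropDiam233} gives $T^3(a)=R(a)$ (so $T(a)$ is $R$-free too), and Proposition \ref{Prop233SnotsimT} gives $S\not\sim T$; by connectivity we then also have $R\sim T$. Now $S(a)$ and $T(a)$ are $R$-free homogeneous $\{S,T\}$-graphs (Proposition \ref{PropHomNeigh}), simple because interpretable in $M$. Neither can be the Random Graph: that would make both $S$ and $T$ unstable in it and yield a witness to $S\sim T$, against $S\not\sim T$. So both are stable, hence imprimitive by Lachlan--Woodrow (Theorem \ref{LachlanWoodrow}), and Proposition \ref{PropMultipartite} upgrades this to: $S$ is an equivalence relation on $S(a)$ and $T$ is one on $T(a)$, each with infinitely many infinite classes (the half-graphs witnessing $R\sim S$ and $R\sim T$ supply the infinitely many classes). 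The contradiction then comes from transitivity: for $b\in S(a)$ we have $T(b)\cong T(a)$, on which $T$ ought to be an equivalence relation, yet the ambient configuration of $T(b)\cap T(a)$ produces a $TTS$ triangle inside $T(b)$, which no equivalence relation admits. Hence $R\not\sim S$.

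With $R\sim S$ ruled out, connectivity forces $R\sim T$ and $S\sim T$. Applying Observation \ref{ObsTwoOutcomes} to $S\sim T$ makes one of $S(a),T(a)$ isomorphic to the Random Graph, and the matching distance analysis forces both to be Random Graphs; in particular $T(a)$ is $R$-free, so $TTR$ is forbidden. Tracking $T$-distances as before yields $T^2(a)=S(a)$ and hence $T^3(a)=R(a)$, so there are no $T$-edges between $T(a)$ and $R(a)$. Now take $b\in S(a)$: since $S(a)$ is $R$-free, $R(b)$ is disjoint from $S(a)$, so $R(b)\subseteq T(a)\cup R(a)$, and both pieces $R(b)\cap T(a)$ and $R(b)\cap R(a)$ are nonempty (using $\diam_R(M)=2$ and $R(b)\cong M$). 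As there are no $T$-edges across these two pieces, the $T$-graph on $R(b)$ is disconnected, contradicting the primitivity of $R(b)\cong M$ (Observation \ref{IsoToM}). This closes the last case, so no such $M$ exists.

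The hard part will be the distance bookkeeping and the refined compatibilities: one must pin down precisely which of $S(a),T(a)$ is $R$-free and which colour the horizontal cliques of each indiscernible half-graph carry, since these are exactly the inputs that let Lachlan--Woodrow classify the neighbourhoods as stable multipartite graphs or as the Random Graph, and that place the $S$- and $T$-diameters so that $T^3(a)=R(a)$. Once the diameters sit correctly, every branch is closed by the same mechanism — a forced monochromatic disconnection inside an $R$-neighbourhood, which is impossible because $R(a)\cong M$ is primitive.
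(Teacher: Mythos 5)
Your proposal is correct and follows essentially the same route as the paper, which handles this statement inside Lemma \ref{LemmmaNo233} (Case 2) via Claims \ref{Claim233RsimS} and \ref{Claim233S3}: first eliminate $R\sim S$ using Propositions \ref{PropDiam233} and \ref{Prop233SnotsimT}, Lachlan--Woodrow plus Proposition \ref{PropMultipartite}, and a $TTS$-triangle contradiction in a $T$-neighbourhood; then, with $R\sim T$ and $S\sim T$ forced, both $S(a)$ and $T(a)$ become Random Graphs, $T^3(a)=R(a)$ follows, and the $T$-graph on $R(b)$ for $b\in S(a)$ is disconnected, contradicting Observation \ref{IsoToM}. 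One small point (a slip the paper's own text shares): the $TTS$ triangle should be located in $T(b)\cap S(a)$ rather than $T(b)\cap T(a)$ --- since $T$ is an equivalence relation on $T(a)$, that latter set cannot contain a $TTS$ triangle; what actually does the work is that $S(a)\setminus(b/S)\subseteq T(b)$ consists of infinite $S$-cliques pairwise joined by $T$-edges, which already yields $TTS$ inside $T(b)\cong T(a)$.
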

\begin{proof}
\begin{proposition}
	There are no primitive simple homogeneous 3-graphs $M$ in which all relations are unstable and only $R$ is nonforking with $D(M)=(2,3,3)$.
\label{PropNo233}
\end{proposition}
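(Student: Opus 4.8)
The plan is to fix such an $M$, extract the structure forced by $R$ being the unique nonforking relation together with $D(M)=(2,3,3)$, and then run a case split that is contradictory in every branch. First I would record the standing facts. Since $R$ is the only nonforking relation, Observation \ref{IsoToM} gives $R(a)\cong M$, so $R(a)$ is primitive and $\diam_R(M)=2$; since $S$ and $T$ both divide, Proposition \ref{PropCliques} shows that neither $S(a)$ nor $T(a)$ contains an infinite $R$-clique. As every relation is unstable, the compatibility graph on $\{R,S,T\}$ is connected (Remark \ref{RmkCompatibilityGraph}). Next I would pin down the layers: because $\diam_S(M)=3$ and each $S^m(a)$ with $m\ge1$ is a union of the orbits $R(a),S(a),T(a)$ of $\aut(M/a)$, the sets $S^2(a)$ and $S^3(a)$ are nonempty and exhaust $R(a)\cup T(a)$, forcing $\{S^2(a),S^3(a)\}=\{R(a),T(a)\}$. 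The single most useful observation is that for $b,c\in S(a)$ the path $b$--$a$--$c$ has $S$-length $2$, so $c\notin S^3(b)$; hence if $S^3(a)=R(a)$ then $S(a)$ is $R$-free, while if $S^2(a)=R(a)$ (so $S^3(a)=T(a)$) then $S(a)$ is $T$-free. The case split is on these two possibilities.

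Suppose first $S^3(a)=R(a)$, so $S(a)$ is $R$-free. If $R\sim S$, then Proposition \ref{PropDiam233} upgrades this to $T^3(a)=R(a)$ (so $T(a)$ is $R$-free as well) and Proposition \ref{Prop233SnotsimT} gives $S\not\sim T$; connectivity then forces $R\sim T$. Now $S(a)$ and $T(a)$ are $R$-free $\{S,T\}$-graphs, and neither can be the Random Graph, as that would manufacture witnesses to $S\sim T$; so by Lachlan--Woodrow (Theorem \ref{LachlanWoodrow}) and Proposition \ref{PropMultipartite} each is a stable imprimitive graph on which $S$ (resp.\ $T$) is an equivalence relation, and the surviving instabilities force infinitely many classes in each. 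I would then obtain the contradiction by passing to a neighbour of a neighbour: for $b\in S(a)$ the set $T(b)\cong T(a)$ should carry $T$ as an equivalence relation, but $T(b)\cap T(a)$ meets infinitely many $S$-classes separated by $T$, producing a $TTS$-triangle and contradicting that $T$ is an equivalence relation there. If instead $R\not\sim S$, connectivity gives $R\sim T$ and $S\sim T$; Observation \ref{ObsTwoOutcomes} (using $S\sim T$) forces both $S(a)$ and $T(a)$ to be isomorphic to the Random Graph, whence $T^3(a)=R(a)$ as well. Then for $b\in S(a)$ the set $R(b)\cong M$ splits into a nonempty part of $T(a)$ and a nonempty part of $R(a)$ with no $T$-edges between them (as $T^3(a)=R(a)$), so the $T$-graph on $R(b)$ is disconnected, contradicting the primitivity of $M\cong R(b)$.

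Suppose instead $S^2(a)=R(a)$, so $S(a)$ is $T$-free and hence an $\{R,S\}$-graph with no infinite $R$-clique; by Lachlan--Woodrow and Remark \ref{RmkSimpleGraphs} it is stable, since the only simple unstable graph is the Random Graph and that has infinite $R$-cliques. Now $R\sim S$ is impossible: in a half-graph witnessing the instability of $S$ the horizontal cliques cannot be $R$ (Proposition \ref{PropCliques}) nor $T$ ($S(a)$ is $T$-free), hence must be $S$, forcing $S(a)$ to be unstable. Therefore $R\not\sim S$, and connectivity forces $R\sim T$ and $S\sim T$; but a half-graph witnessing $S\sim T$ again has horizontal cliques that are neither $R$ nor $T$, hence $S$, so $S$ is unstable in $S(a)$ and $S(a)$ would be the Random Graph in colours $\{R,S\}$, again forcing infinite $R$-cliques. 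As every branch is impossible, no such $M$ exists.

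The hard part, I expect, will be the $R\sim S$ branch of the first case, where one must convert ``$S(a)$ and $T(a)$ are stable imprimitive graphs with infinitely many equivalence classes'' into a concrete forbidden triangle by examining $T(b)$ for $b\in S(a)$; keeping straight which relation is the equivalence relation on which neighbourhood, and arguing precisely why the structure induced on $T(b)\cap T(a)$ must realise $TTS$, is where the reasoning is most delicate. By contrast, the recurring ``the horizontal cliques of this half-graph must have colour $X$'' steps are routine once the layer structure and Proposition \ref{PropCliques} are in hand.
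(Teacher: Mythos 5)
Your proposal is correct and follows essentially the same route as the paper's own proof (Case 2 of Lemma \ref{LemmmaNo233} together with its closing paragraphs): your sub-case $S^3(a)=R(a)$ with $R\sim S$ is the paper's Claim \ref{Claim233RsimS}, your sub-case with $R\not\sim S$ is Claim \ref{Claim233S3}, and your case $S^2(a)=R(a)$ reproduces the paper's concluding horizontal-clique colour analysis. The only difference is organisational (you split on the layer structure $S^3(a)=R(a)$ versus $S^2(a)=R(a)$ first, rather than phrasing the steps as claims); the results invoked (Propositions \ref{PropDiam233}, \ref{Prop233SnotsimT}, \ref{PropCliques}, Observations \ref{ObsTwoOutcomes}, \ref{IsoToM}, Remark \ref{RmkCompatibilityGraph}, the Lachlan--Woodrow Theorem and Proposition \ref{PropMultipartite}) and the contradiction reached in each branch coincide with the paper's.
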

\begin{proof}
\end{proof}

\begin{lemma}\label{Lemming233}
	There are no primitive simple unstable 3-graphs in which only $R$ is nonforking with $\diam(M)=(2,3,3)$.
\end{lemma}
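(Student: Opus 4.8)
The plan is to assume such an $M$ exists and derive a contradiction, splitting on whether the unique nonforking relation $R$ is stable or unstable. The workhorse throughout is Observation \ref{IsoToM}: every $R$-neighbourhood $R(b)$ is isomorphic to $M$, hence primitive with diameter triple $(2,3,3)$, so in particular its $S$-graph and its $T$-graph are connected. A second elementary observation is that each of $R(a),S(a),T(a)$ is a single orbit of $\aut(M)_a$, so the $S$-distance from $a$ is constant on each of them; since $S^2(a)\cup S^3(a)=R(a)\cup T(a)$ while $\diam_S(M)=3$ forces both $S^2(a)$ and $S^3(a)$ to be nonempty, exactly one of $R(a),T(a)$ equals $S^3(a)$ and the other equals $S^2(a)$ (and symmetrically for $T$). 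Finally I will repeatedly combine Proposition \ref{PropCliques} (no infinite $R$-clique lives inside the dividing neighbourhoods $S(a)$ or $T(a)$) with the Lachlan--Woodrow Theorem \ref{LachlanWoodrow} and Remark \ref{RmkSimpleGraphs} to pin down the isomorphism types of the neighbourhoods.

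Suppose first that $R$ is stable. Since $M$ is unstable and the compatibility graph has no isolated vertices (Remark \ref{RmkCompatibilityGraph}), both $S$ and $T$ are unstable and the only compatibility edge is $S\sim T$. In any half-graph witnessing $S\sim T$ a horizontal clique cannot have colour $R$, for it would be an infinite $R$-clique inside a dividing neighbourhood, against Proposition \ref{PropCliques}; both horizontal cliques therefore have colour $S$ or $T$, and combining them with the mixed off-diagonal of the half-graph puts both an $SST$- and a $TTS$-triangle into $\age(M)$. This forces $R(a)=S^3(a)=T^3(a)$, so $S(a)$ and $T(a)$ are $R$-free, unstable, hence isomorphic to the Random Graph. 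The contradiction is then a connectivity argument: fix $b\in T(a)$ and consider $R(b)\cong M$. As $T(a)$ is $R$-free, $R(b)=(R(b)\cap S(a))\cup(R(b)\cap R(a))$ with both pieces nonempty (using $\diam_R(M)=2$), while $R(a)=S^3(a)$ forbids any $S$-edge between a vertex of $S(a)$ and one of $R(a)$; thus the $S$-graph of $R(b)$ is disconnected, contradicting the primitivity of $R(b)\cong M$.

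Now suppose $R$ is unstable, so all three relations are unstable and the compatibility graph on $\{R,S,T\}$ is connected. I first dispose of the sub-case $S^3(a)=R(a)$. If moreover $R\sim S$, then Proposition \ref{PropDiam233} gives $R(a)=S^3(a)=T^3(a)$, so $S(a),T(a)$ are $R$-free; they cannot be Random Graphs, as that would witness $S\sim T$ against Proposition \ref{Prop233SnotsimT}, so by Lachlan--Woodrow they are imprimitive stable graphs carrying equivalence relations ($S$ on $S(a)$ and $T$ on $T(a)$) with infinitely many classes, and comparing $T(a)$ with $T(b)$ for $b\in S(a)$ exhibits a $TTS$ configuration incompatible with the equivalence structure. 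If instead $R\not\sim S$, connectivity forces $R\sim T$ and $S\sim T$, Observation \ref{ObsTwoOutcomes} makes both $S(a),T(a)$ Random Graphs with $T^3(a)=R(a)$, and the disconnection argument (applied to the $T$-graph of $R(b)$ for $b\in S(a)$) again breaks primitivity. Hence $S^3(a)\neq R(a)$, so $S^2(a)=R(a)$; then $S(a)$ is $T$-free and, being simple with no infinite $R$-clique (Proposition \ref{PropCliques}), is a stable graph by Lachlan--Woodrow. But a horizontal clique of a half-graph witnessing any compatibility of the unstable relation $S$ cannot be coloured $R$ (Proposition \ref{PropCliques}) or $T$ ($S(a)$ is $T$-free), hence must be coloured $S$, making $S$ unstable inside the $R$-clique-free stable graph $S(a)$ --- the final contradiction.

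I expect the connectivity/disconnection steps to be the easy part once the layer structure ($S^3(a)=R(a)$, or $T^3(a)=R(a)$) has been established. The main obstacle is the bookkeeping: keeping track of which neighbourhood is $S^2(a)$ versus $S^3(a)$, correctly transferring the properties ``$R$-free'', ``stable'', and ``$\cong$ Random Graph'' across the recursive identification $R(b)\cong M$, and reconciling the compatibility data ($R\sim S$ versus $R\sim T$ versus $S\sim T$, and its refinements $\sim^S,\sim^T$) with the clique constraint of Proposition \ref{PropCliques} and with Lachlan--Woodrow in every sub-case. Ruling out $R\sim S$ in the unstable case, where these strands interact most tightly, is where the delicate work lies.
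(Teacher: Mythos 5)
Your proposal is correct and follows essentially the same route as the paper's proof of Lemma \ref{LemmmaNo233}: the same split on the stability of $R$, the same use of Observation \ref{IsoToM} and Proposition \ref{PropCliques} to force $R(a)=S^3(a)=T^3(a)$ and disconnect the $S$- (resp.\ $T$-) graph of $R(b)$, the same appeal to Propositions \ref{PropDiam233}, \ref{Prop233SnotsimT} and Observation \ref{ObsTwoOutcomes} when $R$ is unstable, and the same final contradiction from the stable, $T$-free graph $S(a)$ once $S^2(a)=R(a)$. The only differences are cosmetic: you phrase the paper's two claims as sub-cases of $S^3(a)=R(a)$, and you merge the paper's separate eliminations of $R\sim S$ and $S\sim T$ into a single horizontal-clique argument.
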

\begin{proof}
	
\end{proof}
\ent

\subsection{Simple homogeneous 3-graphs with $D(M)=(2,3,2)$ or $D(M)=(2,2,3)$}

We can eliminate these two cases simultaneously because of the symmetry of the hypotheses on the predicates $S$ and $T$.

First, the easy case:
\begin{proposition}
	There are no primitive simple unstable 3-graphs in which $R$ is stable and nonforking and $S,T$ are unstable nonforking, with $D(M)=(2,3,2)$ or $D(M)=(2,2,3)$.
\end{proposition}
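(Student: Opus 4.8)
The plan is to show that the three hypotheses ``$R$ nonforking'', ``$S$ nonforking'', and ``$T$ nonforking'' together force $M$ to be the homogeneous random $3$-graph, whose diameter triple is $(2,2,2)$; this is incompatible with $D(M)=(2,3,2)$ or $D(M)=(2,2,3)$, so no such $M$ can exist. In particular the $S$--$T$ symmetry plays no role: the same computation rules out \emph{every} diameter triple except $(2,2,2)$.

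First I would record that under the stated hypotheses all of $R,S,T$ are nonforking, so there are no forking relations in the language at all. Since $M$ is primitive and $\omega$-categorical, Observation \ref{AlgClosure} gives $\acl(a)=\{a\}$, and primitivity rules out any proper nontrivial $\varnothing$-definable equivalence relation on $M$; in particular $M$ carries no $\varnothing$-definable finite equivalence relation. Combined with transitivity (from primitivity) this yields a single strong type of singletons over $\varnothing$, which by lowness (Proposition \ref{CatSimpLow} and Corollary \ref{LascarTypes}) coincides with the unique Lascar strong type.

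Next I would invoke Proposition \ref{NonforkingAmalgamation} in the degenerate case where the list of forking relations is empty ($k=0$): the subfamily of $\age(M)$ consisting of all $\varnothing$-free substructures is then \emph{all} of $\age(M)$, and the proposition identifies it as the age of a random $\{R,S,T\}$-structure. Unwinding the amalgamation argument, one builds an arbitrary finite $3$-graph one vertex at a time, using the Independence Theorem to amalgamate the two nonforking extensions $\tp(b/a_1)$ and $\tp(b/a_2,\dots,a_n)$ over $\varnothing$ (legitimate precisely because they realise the common Lascar strong type identified above, and because the language is binary). Hence every finite $3$-graph embeds in $M$, so $M$ is the homogeneous random $3$-graph.

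Finally I would compute the diameters. Each predicate $P\in\{R,S,T\}$ is realised and proper, so some pair $a,b$ is not $P$-adjacent; genericity then supplies $c$ with $P(a,c)\wedge P(c,b)$, giving $P$-distance exactly $2$, while $P$-adjacent pairs are at distance $1$. Therefore $\diam_R(M)=\diam_S(M)=\diam_T(M)=2$, i.e.\ $D(M)=(2,2,2)$, contradicting both $(2,3,2)$ and $(2,2,3)$. I do not expect a genuine obstacle here: this is the ``easy case'' precisely because the absence of any forking relation collapses $M$ to the random $3$-graph, and the only point requiring a word of care is the legitimacy of applying Proposition \ref{NonforkingAmalgamation} with no forking relations present.
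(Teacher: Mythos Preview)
Your argument is correct for the statement exactly as printed: if $R,S,T$ are all nonforking in a primitive simple homogeneous $3$-graph, then Proposition~\ref{NonforkingAmalgamation} (with empty set of forking relations) shows $M$ is the random $3$-graph, whose diameter triple is $(2,2,2)$, and you are done. There is nothing wrong with your reasoning.

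However, you should be aware that the statement almost certainly contains a typo. This proposition sits in the section on \emph{two forking relations}, where throughout $R$ is the unique nonforking relation and $S,T$ are forking; the phrase ``$S,T$ are unstable nonforking'' should read ``$S,T$ are unstable and forking''. The paper's own proof confirms this: it simply says ``By the same argument as in Case~1 of Lemma~\ref{LemmmaNo233}'', and that Case~1 explicitly assumes $R$ stable nonforking while $S,T$ are unstable \emph{forking}. The argument there is quite different from yours: from $R$ stable one gets $S\sim T$ as the only compatibility, hence $S\sim^S T$ or $S\sim^T T$ (no horizontal $R$-cliques by Proposition~\ref{PropCliques}); one then shows the relevant neighbourhoods are $R$-free unstable graphs, hence Random Graphs, and obtains a contradiction by looking at $R(b)$ for $b$ in such a neighbourhood and showing its $S$-graph (or $T$-graph) is disconnected, contradicting $R(b)\cong M$ via Observation~\ref{IsoToM}.

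So: your proof handles the printed statement cleanly, but does not touch the intended one, since your argument relies essentially on all three relations being nonforking. If you were asked to prove the intended (corrected) statement, you would need to abandon the ``collapse to the random $3$-graph'' strategy and instead analyse the half-graph compatibilities and neighbourhood structure as in Case~1 of Lemma~\ref{LemmmaNo233}.
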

\begin{proof}
	By the same argument as in Case 1 of Lemma \ref{LemmmaNo233}.
\end{proof}
\begin{observation}\label{Obs232SRfree}
	Let $M$ be a primitive homogeneous simple 3-graph in which all relations are unstable and only $R$ is nonforking. If $D(M)=(2,3,2)$, then $R(a)=S^3(a)$.
\end{observation}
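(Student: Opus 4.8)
The plan is to pin down the three $\aut(M)$-orbits around a fixed vertex $a$ and then locate each one inside the $S$-distance filtration of $M$. First I would recall from Observation \ref{IsoToM} that, since $R$ is the only nonforking relation, $R(a)\cong M$; in particular $R(a)$ is again primitive and realises all three colours, and $\diam_R(M)=2$. Because $M$ is a homogeneous $3$-graph and the language is binary, the stabiliser $\aut(M)_a$ has exactly three orbits on $M\setminus\{a\}$, namely $R(a),S(a),T(a)$, these being the three $2$-types over $a$. The $S$-distance $d_S(a,\cdot)$ is $\aut(M)_a$-invariant and, by Observation \ref{ObsFiniteDiameter} together with the hypothesis $\diam_S(M)=3$, takes only the values $1,2,3$ on $M\setminus\{a\}$; hence it is constant on each orbit. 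Since $S(a)=S^1(a)$ and $M=\{a\}\cup S(a)\cup R(a)\cup T(a)$ is filtered as $S^0(a)\cup S^1(a)\cup S^2(a)\cup S^3(a)$, with both $S^2(a)$ and $S^3(a)$ nonempty (a geodesic from $a$ to a vertex at $S$-distance $3$ meets every shell), I would conclude that one of $R(a),T(a)$ equals $S^2(a)$ and the other equals $S^3(a)$. The whole statement therefore reduces to deciding the colour of the shell $S^2(a)$: I must show it is $T(a)$, equivalently that $R(a)=S^3(a)$.

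For the tie-break I would argue that $S^2(a)$ cannot be $R(a)$. The clean handle is infinite $R$-cliques: $R(a)\cong M$ contains infinite $R$-cliques, whereas by Proposition \ref{PropCliques} (see Observation \ref{NoInfiniteCliques}) neither $S(a)$ nor $T(a)$ contains one, since $S$ and $T$ are forking. Thus among the shells $S^1(a),S^2(a),S^3(a)$ exactly the one equal to $R(a)$ carries infinite $R$-cliques, and $S^1(a)=S(a)$ never does. It is therefore enough to prove that $S^2(a)$ embeds no infinite $R$-clique, which forces $R(a)=S^3(a)$. Equivalently, in terms of $\age(M)$, I would show that the colour forced on the third edge of an $S$-$S$ geodesic of length $2$ is $T$ and not $R$, i.e. that the triangle with two $S$-edges and one $R$-edge does not embed in $M$ while the one with two $S$-edges and a $T$-edge does.

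To establish this I would assume for contradiction $R(a)=S^2(a)$, $T(a)=S^3(a)$ and exploit the $S$-shell gap together with $\diam_T(M)=2$ in the style of Lemma \ref{LemmmaNo233}: for a well-chosen vertex $b$ the neighbourhood $R(b)\cong M$ splits, according to the $S$-shell of $a$ that its points occupy, into pieces contained in $S^1(a)$, $S^2(a)$ and $S^3(a)$, and no $S$-edge joins the $S^1$-piece to the $S^3$-piece. Primitivity of $R(b)$ (via Observation \ref{IsoToM} and Proposition \ref{PropTwoInfiniteCliques}) must then be contradicted by exhibiting a disconnection of one of the interpreted colour graphs on $R(b)$.

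The main obstacle is exactly this last tie-break, and it is genuinely the delicate point, because at the purely combinatorial level $R$ and $T$ enter $D(M)=(2,3,2)$ symmetrically: both have diameter $2$ and both are unstable, so any argument must feed on the one asymmetry available, namely that $R$ is nonforking (whence $R(a)\cong M$ carries infinite $R$-cliques) while $T$ forks (whence $T(a)$ carries none). The naive disconnection is further hampered by $\diam_R(M)=2$, which forces every pair of non-$R$-adjacent vertices to share an $R$-neighbour and so keeps the middle shell $S^2(a)\cap R(b)$ nonempty; the separation must therefore be produced in a colour and at a vertex $b$ where the clique-content asymmetry, rather than mere shell incidence, does the work.
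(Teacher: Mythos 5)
Your first two paragraphs are sound and agree with what the paper leaves implicit: $S$-distance is preserved by $\aut(M)_a$, hence constant on the three orbits $R(a),S(a),T(a)$, so with $\diam_S(M)=3$ exactly one of $R(a),T(a)$ equals $S^2(a)$ and the other equals $S^3(a)$; the observation is therefore equivalent to ruling out $R(a)=S^2(a)$, i.e.\ to showing the triangle $SSR$ is forbidden. The genuine gap is that your third paragraph --- which is the entire content of the observation --- never produces the contradiction. You propose to disconnect a colour graph on $R(b)\cong M$ as in Lemma \ref{LemmmaNo233}, but, as you yourself note, that argument dies here: since $\diam_R(M)=2$, every $b\notin\{a\}\cup R(a)$ satisfies $R(b)\cap R(a)\neq\varnothing$, so the middle shell $S^2(a)=R(a)$ always bridges $R(b)\cap S(a)$ and $R(b)\cap T(a)$, and no shell-incidence separation exists in any colour. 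Deferring to an unspecified ``clique-content asymmetry'' at a ``well-chosen $b$'' is not an argument; the tie-break is exactly the missing idea.

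What the paper does instead is an instability (half-graph) analysis, not a connectivity argument. Assume $R(a)=S^2(a)$; then $SST$ is forbidden, so $S(a)$ is $T$-free. Since all three relations are unstable, the compatibility graph of Remark \ref{RmkCompatibilityGraph} has no isolated vertex. Now constrain horizontal cliques of indiscernible half-graphs: they can never be $R$-cliques, because an infinite $R$-clique on one side sits inside $S(b_0)$ or $T(b_0)$ for a vertex $b_0$ on the other side, contradicting Proposition \ref{PropCliques} (the forking relations $S,T$ admit no infinite cliques of the nonforking colour $R$ in their neighbourhoods); and for $S\sim T$ they cannot be $S$- or $T$-cliques either, since either choice embeds the forbidden $SST$. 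Hence $S\not\sim T$, which forces $R\sim S$, and the same two constraints force the horizontal cliques of any witness of $R\sim S$ to be $S$-cliques. Such a half-graph (after extending the indiscernible sequence) lies inside the $S$-neighbourhood of one of its own vertices, so $S(a)$ is an unstable, simple, homogeneous $R,S$-graph, hence isomorphic to the Random Graph by the Lachlan--Woodrow Theorem --- but then $S(a)$ embeds infinite $R$-cliques, contradicting Proposition \ref{PropCliques} again. So the asymmetry you correctly identified ($R(a)$ has infinite $R$-cliques, $S(a)$ and $T(a)$ do not) is indeed what lands the final blow, but the mechanism that delivers it is the forbidden-triangle/compatibility analysis, which your proposal lacks entirely.
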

\begin{proof}
	Suppose for a contradiction that $R(a)=S^2(a)$. Then $S(a)$ is $T$-free. We cannot have $S\sim T$ in this case: as usual, we cannot have horizontal $R$-cliques; and since $SST$ is forbidden, we cannot have horizontal cliques of colour $S$ or $T$, either. Remark \ref{RmkCompatibilityGraph} forces then $R\sim S$ and $R\sim T$. 

Let us analyse a set of witnesses for $R\sim S$. The horizontal cliques are forced to be of colour $S$ because $SST$ is forbidden; but this implies that $S(a)$ is an unstable $R,S$-graph, isomorphic to the Random Graph by the Lachlan-Woodrow Theorem. This contradicts Proposition \ref{PropCliques}.
\end{proof}

\begin{observation}
	Suppose that $M$ is a simple unstable homogeneous 3-graph such that $\age(\Gamma^{S,T})\subset\age(M)$. If $M$ embeds $K_n^R$ but not $K_{n+1}^R$, then $M$ is imprimitive and isomorphic to one of $C^R(\Gamma), \Gamma[K_n^R]$, or $K_n^R[\Gamma]$.
\label{ObsUnstableNoRCliques}
\end{observation}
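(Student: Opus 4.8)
The plan is to establish imprimitivity first and then read off the three structures from the invariant equivalence relation. First I would record the standing facts: since $\age(\Gamma^{S,T})\subseteq\age(M)$, both $S$ and $T$ are unstable and $M$ embeds arbitrarily large finite $S$- and $T$-cliques (hence infinite ones by $\omega$-categoricity), while by hypothesis there is no infinite $R$-clique; moreover $S,T$ remain unstable in $S(a)$ and $T(a)$ by Observation \ref{ObsSTUnstable}.

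The core of the argument is to show $M$ is imprimitive, which I would do by contradiction. Assuming $M$ primitive, $R$ must divide: a nonforking $R$ together with the unique strong type given by primitivity and Observation \ref{AlgClosure} would build infinite $R$-cliques via the Independence Theorem, as in Proposition \ref{PropTwoInfiniteCliques}. Also $S,T$ cannot both be nonforking, since then $R(a)$ would be infinite with no infinite monochromatic clique (Proposition \ref{PropCliques} plus the absence of infinite $R$-cliques), violating Ramsey. Using the $S\leftrightarrow T$ symmetry of the hypotheses I assume $S$ nonforking and $T$ dividing, and I analyse $T(a)$: it is an infinite, unstable, simple, homogeneous $3$-graph (Proposition \ref{PropHomNeigh}) with no infinite $S$- or $R$-clique (Proposition \ref{PropCliques} and the global hypothesis). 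It cannot be $R$-free, since an $R$-free such graph would be the random graph by Lachlan-Woodrow (Theorem \ref{LachlanWoodrow}) and carry infinite $S$-cliques. Thus $T(a)$ is a genuine $3$-graph embedding infinite cliques in only one colour, hence imprimitive by Proposition \ref{PropTwoInfiniteCliques}; its invariant equivalence relation is then forced to be $R$ or $S\vee T$, the other candidates being excluded because $S,T$ are unstable there and because $R\vee S$, $R\vee T$ would each make a class a two-coloured random graph and so produce an infinite $S$- or $R$-clique, as in the argument of Proposition \ref{PropImprimitivity1}. In the first subcase Theorem \ref{ThmCGamma} and Corollary \ref{CorFiniteClasses} force infinite cliques in two colours; in the second a single class is a homogeneous simple unstable $\{S,T\}$-graph, hence $\Gamma^{S,T}$. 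Both alternatives yield an infinite $S$-clique in $T(a)$, the desired contradiction, so $M$ is imprimitive.

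With imprimitivity in hand I would identify a nontrivial invariant equivalence relation $E$ on $M$. As $E$ is a disjunction of atoms (quantifier elimination), it is not $S$ or $T$ (both unstable) and not $R\vee S$ or $R\vee T$ (each would make a class a homogeneous unstable two-coloured graph containing $R$, hence the random graph with an infinite $R$-clique, against the hypothesis); so $E=R$ or $E=S\vee T$. If $E=R$, the classes are finite $R$-cliques of a common size $n$ and $\aut(M)$ acts transitively on $M/R$: when the action is not $2$-transitive, Corollary \ref{CorFiniteClasses} gives $M\cong\Gamma[K_n^R]$, and when it is $2$-transitive both $S$ and $T$ must occur between classes (otherwise the missing colour is never realised), so all relations are realised on a union of two classes and Theorem \ref{ThmCGamma}, together with Lemma \ref{Lemma2} forcing $n=2$, gives $M\cong C^R(\Gamma)$. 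If $E=S\vee T$, then $R$ is complete multipartite with finitely many parts (else an infinite $R$-clique), each part being infinite and, by the neighbourhood argument above, a homogeneous simple unstable $\{S,T\}$-graph, hence $\cong\Gamma^{S,T}$ by Lachlan-Woodrow; the whole structure is then $K_n^R[\Gamma]$ with $n$ the number of parts, equal to the maximal $R$-clique size.

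The main obstacle is the imprimitivity step. One has essentially only the single neighbourhood $T(a)$ to work with, and the whole contradiction must be squeezed out of it by pinning its invariant equivalence relation down to exactly $R$ or $S\vee T$ and then exhibiting the forbidden infinite $S$-clique in each case. Everything after imprimitivity is bookkeeping on top of Theorem \ref{ThmCGamma}, Corollary \ref{CorFiniteClasses}, and the Lachlan-Woodrow Theorem.
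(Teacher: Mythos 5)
Your proof is correct, and its skeleton---imprimitivity by contradiction, then quantifier elimination reducing the invariant equivalence relation to $R$ or $S\vee T$, with Theorem \ref{ThmCGamma}, Corollary \ref{CorFiniteClasses} and the Lachlan-Woodrow Theorem supplying the three structures---is the same as the paper's. The genuine divergence is inside the imprimitivity step. The paper never sees your ``mixed'' case ($S$ nonforking, $T$ dividing): from $\age(\Gamma^{S,T})\subset\age(M)$ it extracts not just instability but the stronger fact that \emph{both} $S(a)$ and $T(a)$ embed infinite $S$- and $T$-cliques (a finite $S$-clique joined entirely by $T$ to one extra vertex is an $S,T$-graph, hence in $\age(M)$, so by homogeneity and $\omega$-categoricity $T(a)$ carries infinite $S$-cliques, and symmetrically); once one of $S,T$ is nonforking, those cliques are Morley sequences, so by the dividing criterion the other relation is nonforking as well, and only the Ramsey contradiction on $R(a)$---your first case---remains. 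Your substitute for this one-liner, a secondary imprimitivity analysis of $T(a)$ that excludes $R\vee S$ and $R\vee T$ and then reads off forbidden infinite $S$-cliques from Theorem \ref{ThmCGamma}, Corollary \ref{CorFiniteClasses} and Lachlan-Woodrow, is sound (each inference is available under your case hypotheses, via Propositions \ref{PropHomNeigh}, \ref{PropCliques} and \ref{PropTwoInfiniteCliques}), but it is considerably heavier: it re-runs inside $T(a)$ the classification machinery that the age-containment trick makes unnecessary. What your route buys is that it leans only on structural results rather than a second use of the Morley-sequence characterisation of dividing; what the paper's buys is brevity and a reusable observation---the age containment alone already plants infinite cliques of both unstable colours inside every $S$- and $T$-neighbourhood---which is precisely the lever this section of the paper pulls repeatedly.
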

\begin{proof}
	Note first that if $M$ is primitive, then $R$ is a forking relation, because the Independence Theorem guarantees the existence of infinite monochromatic cliques for nonforking relations. From $\age(\Gamma^{S,T})\subset\age(M)$ we get that there are infinite $S$- and $T$-cliques in $M$, and that $S,T$ are unstable and that both $S(a), T(a)$ embed infinite $S$- and $T$-cliques. At least one of $S, T$ is nonforking, so both of them must be nonforking since both embed infinite $S$- and $T$-cliques (and therefore a Morley sequence over $\varnothing$). This is impossible because by primitivity $R$ is non-algebraic, so $R(a)$ contains an infinite monochromatic clique. But $R(a)$ does not contain infinite $S$- or $T$-cliques because $R$ divides, and it clearly does not embed any infinite $R$-cliques. Therefore, $M$ is imprimitive.

	We know that $S\sim^ST$ and $S\sim^TT$ hold, so $S,T,S\vee R, T\vee R$ are not equivalence relations. If $R$ is an equivalence relation, then its classes are finite, isomorphic to $K_n^R$, and $M$ is isomorphic to $C^R(\Gamma)$ or $\Gamma[K_n^R]$ by Corollary \ref{CorFiniteClasses}.

	And if $S\vee T$ is an equivalence relation, then since there are no infinite $R$-cliques in $M$ we know that $S\vee T$ has finitely many classes, each of which is an infinite unstable graph. Therefore, $M$ is isomorphic to $K_n^R[\Gamma]$.
\end{proof}

\begin{proposition}
	Let $M$ be a primitive homogeneous simple 3-graph in which all relations are unstable and only one is nonforking. If $D(M)=(2,3,2)$ and $S\sim^ST$, then $S(a)$ is isomorphic to the Random Graph and $T(a)$ is imprimitive, isomorphic to $C^R(\Gamma)$ or $K_n^R[\Gamma]$ for some $n\geq2$.
\label{PropStructure232}
\end{proposition}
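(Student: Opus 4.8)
The plan is to stratify $M$ by $S$-distance from $a$ and read off $S(a)$ and $T(a)$ one at a time. First I would apply Observation \ref{Obs232SRfree} to get $R(a)=S^3(a)$, so that $S(a)=S^1(a)$ and $T(a)=S^2(a)$; in particular $\diam_S(M)=3$ together with $S^2(a)=T(a)$ forces $S(a)$ to be $R$-free (if $b,c\in S(a)$ had $R(b,c)$, then $a\in S(b)\cap S(c)$ would put $c$ at $S$-distance at most $2$ from $b$, contradicting $R(b)=S^3(b)$). Since $S\sim^S T$ is literally the condition $T\sim^S S$ (compatibility is symmetric and the superscript only records the colour of a horizontal clique), Observation \ref{ObsIsoRG} applies directly and yields $S(a)\cong\Gamma^{S,T}$.

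Next I would transfer the age downward to $T(a)$. From $S(a)\cong\Gamma^{S,T}$ we get $\age(\Gamma^{S,T})\subseteq\age(M)$, and the embedding trick of Observation \ref{ObsSTUnstable} promotes this to $\age(\Gamma^{S,T})\subseteq\age(T(a))$: given a finite $\{S,T\}$-graph $G$, adjoin a vertex $v$ that is $T$-adjacent to every vertex of $G$; the resulting $\{S,T\}$-graph lies in $\age(M)$, so $G$ embeds into $T(v)\cong T(a)$. In particular $S$ and $T$ are unstable in $T(a)$. Because $T$ divides, Proposition \ref{PropCliques} shows $T(a)$ contains no infinite $R$-clique, hence embeds $K_n^R$ but not $K_{n+1}^R$ for some finite $n$. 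Thus $T(a)$ is a simple unstable homogeneous $3$-graph with $\age(\Gamma^{S,T})\subseteq\age(T(a))$ and bounded $R$-cliques, and Observation \ref{ObsUnstableNoRCliques} applies: $T(a)$ is imprimitive and isomorphic to one of $C^R(\Gamma)$, $\Gamma[K_n^R]$, or $K_n^R[\Gamma]$.

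The heart of the argument --- and the step I expect to be the only real obstacle --- is eliminating $\Gamma[K_n^R]$. The key point is that $S(x)\cong\Gamma^{S,T}$ is $R$-free for \emph{every} $x$ by transitivity, so an $SSR$ triangle (two $S$-edges meeting at a vertex together with an $R$-edge between their other endpoints) never embeds in $M$. In $\Gamma[K_n^R]$ with $n\ge 2$ the quotient by $R$ is $\Gamma^{S,T}$, which realises $S$; picking two $R$-classes $P,Q$ joined by $S$, a vertex $b\in P$, and distinct $c,c'\in Q$ gives $S(b,c)\wedge S(b,c')\wedge R(c,c')$, an $SSR$ triangle inside $T(a)\subseteq M$ --- a contradiction. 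By contrast, in both $C^R(\Gamma)$ and $K_n^R[\Gamma]$ any two $S$-neighbours of a vertex are $R$-free, so no such triangle arises. This rules out $\Gamma[K_n^R]$, leaving $T(a)\cong C^R(\Gamma)$ or $K_n^R[\Gamma]$; imprimitivity excludes $n=1$ (which would give the primitive $\Gamma$), so $n\ge 2$ in the latter case, as required.
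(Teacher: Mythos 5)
Your proof is, in outline and in almost every detail, the paper's own proof: Observation \ref{Obs232SRfree} gives $R(a)=S^3(a)$ and hence $R$-freeness of $S(a)$; Lachlan--Woodrow (via Observation \ref{ObsIsoRG}) gives $S(a)\cong\Gamma^{S,T}$; the age-transfer trick of Observation \ref{ObsSTUnstable} puts $\age(\Gamma^{S,T})$ inside $\age(T(a))$; Proposition \ref{PropCliques} bounds the $R$-cliques in $T(a)$; Observation \ref{ObsUnstableNoRCliques} lists the three candidates; and the forbidden triangle $SSR$ eliminates $\Gamma[K_n^R]$, exactly as in the paper (the paper disposes of that case in one clause, you spell it out --- correctly).

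There is, however, one genuine though small gap: you never show that $R$ is realised in $T(a)$ at all, and this is precisely the step at which the paper uses the remaining coordinate of the diameter triple, $\diam_T(M)=2$. Everything you establish before invoking Observation \ref{ObsUnstableNoRCliques} --- namely $\age(\Gamma^{S,T})\subseteq\age(T(a))$ and the absence of infinite $R$-cliques --- is also consistent with $T(a)$ being $R$-free, i.e.\ with $T(a)\cong\Gamma^{S,T}$, which is primitive and contradicts the desired conclusion. Observation \ref{ObsUnstableNoRCliques} cannot be applied in that situation: by the paper's convention (Definition \ref{DefnGraph}) a homogeneous $3$-graph realises all three relations, and the observation is false for $R$-free structures. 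Consequently your closing appeal to ``imprimitivity excludes $n=1$'' is circular, since the imprimitivity is itself the conclusion of the observation you are trying to apply. The fix is one line, and it is the line the paper includes: since $\diam_T(M)=2$ and $(M,T)$ is connected by primitivity, any pair $x,y$ with $R(x,y)$ satisfies $d_T(x,y)=2$, so $TTR\in\age(M)$ and $T(a)$ realises $R$-edges; in particular the maximal $n$ with $K_n^R\in\age(T(a))$ satisfies $n\geq2$, and the rest of your argument then goes through verbatim.
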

\begin{proof}
	From Observation \ref{Obs232SRfree}, we know that $S(a)$ is $R$-free. From $S\sim^ST$, we get that $S(a)$ is an unstable homogeneous graph, and therefore isomorphic to the Random Graph. It follows from Observation \ref{ObsIsoRG} that $T(a)$ is also unstable, and $\age(\Gamma^{S,T})\subset\age(T(a))$. We know that $R$ is realised in $T(a)$ because the $T$-diameter of $M$ is 2, but $T(a)$ does not embed infinite $R$-cliques. By Observation \ref{ObsUnstableNoRCliques}, $T(a)$ is isomorphic to $C^R(\Gamma)$ or $K_n^R[\Gamma]$, since $SSR$ is a forbidden triangle.

%Notice that both $S$ and $T$ form infinite cliques in $T(a)$ and that, by the same argument as in Observation \ref{ObsIsoRG}, they are nonforking in $T(a)$. Therefore, $R$ is algebraic in $T(a)$, as in any other case we would, by Ramsey's theorem, be able to find witnesses for the statement ``$R$ is nonforking in $T(a)$." By Proposition \ref{PropCliques} and Theorem \ref{ThmCGamma}, $T(a)$ is isomorphic to $C(\Gamma)$.
\end{proof}
\begin{proposition}
	There are no primitive homogeneous simple 3-graph in which all relations are unstable and only one is nonforking with $D(M)=(2,3,2)$ and $S\sim T$.
\label{Prop232Graph}
\end{proposition}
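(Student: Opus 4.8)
The plan is to dispose of the hypothesis $S\sim T$ by splitting, via Observation \ref{ObsTwoOutcomes}, into the two refinements $S\sim^S T$ and $S\sim^T T$ and reaching a contradiction in each. Throughout I would lean on Observation \ref{Obs232SRfree}, which for $D(M)=(2,3,2)$ gives $R(a)=S^3(a)$ and hence that $S(a)$ is $R$-free and $SSR$ is a forbidden triangle; in particular there are \emph{no} $S$-edges between $S(a)=S^1(a)$ and $R(a)=S^3(a)$. This last fact is the lever for the contradictions: by Observation \ref{IsoToM} every $R$-neighbourhood $R(b)$ is isomorphic to the primitive structure $M$, so I would locate $R(b)$ inside the $S$-distance stratification $\{a\},\,S(a),\,T(a)=S^2(a),\,R(a)=S^3(a)$ and show that the $S$-reduct of $R(b)$ is forced to be disconnected or to have a cut vertex, contradicting the fact that the $S$-reduct of $M$ is vertex-transitive of infinite valency and finite $S$-diameter, hence $2$-connected.

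First, the case $S\sim^S T$. Here Proposition \ref{PropStructure232} already pins down the local structure: $S(a)$ is the Random Graph and $T(a)\cong C^R(\Gamma)$ or $T(a)\cong K_n^R[\Gamma]$ for some $n\geq 2$. I would fix $b\in T(a)$ and examine $R(b)\cong M$. Since $a\in T(b)$ we have $a\notin R(b)$, and $R(b)$ decomposes as $(R(b)\cap S(a))\cup(R(b)\cap T(a))\cup(R(b)\cap R(a))$, the three pieces lying at $S$-levels $1,2,3$; the two outer pieces carry no $S$-edges between them. In the $C^R(\Gamma)$ case the middle piece $R(b)\cap T(a)$ is the single $R$-partner $b'$ of $b$ (classes have size $2$), so once both outer pieces are nonempty this one vertex $b'$ is a cut vertex of the $S$-reduct of $R(b)\cong M$ — the contradiction. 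In the $K_n^R[\Gamma]$ case the middle piece is infinite, but there the $S$-edges inside $T(a)$ occur only within the $S\vee T$-classes (an $R$-free subset of $K_n^R[\Gamma]$ lies in a single class), so I would refine the separation to the level of these classes, again extracting a disconnection of the $S$-reduct of a second neighbourhood.

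Second, the case $S\sim^T T$. Observation \ref{ObsTwoOutcomes} gives that $S$ and $T$ are unstable in $T(a)$, while Proposition \ref{PropCliques} forbids infinite $R$-cliques in $T(a)$. I would classify $T(a)$ as a simple unstable homogeneous $3$-graph with $S,T$ unstable and no infinite $R$-cliques: after checking $\age(\Gamma^{S,T})\subseteq\age(T(a))$, Observation \ref{ObsUnstableNoRCliques} leaves $C^R(\Gamma)$, $\Gamma[K_n^R]$ or $K_n^R[\Gamma]$, and the forbidden triangle $SSR$ excludes $\Gamma[K_n^R]$. One then reruns the second-neighbourhood/$S$-metric argument, using $\diam_T(M)=2$ (so $T(a)=T^1(a)$) together with $\diam_S(M)=3$ (which supplies the missing $S$-edges) to violate primitivity exactly as before.

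The hard part will be the $K_n^R[\Gamma]$ subcase (and the structurally identical portions of $S\sim^T T$): unlike $C^R(\Gamma)$, the middle stratum $R(b)\cap T(a)$ is infinite, so the clean single-cut-vertex contradiction is unavailable and the disconnection must be read off at the level of the $S\vee T$-classes, while carefully tracking which of $RST$ and $RRT$ belong to $\age(M)$ so that the outer strata are genuinely nonempty. Establishing those age memberships (from primitivity and the known shape of $T(a)$) and verifying that the class-wise $S$-structure really does separate the two outer strata is where the bookkeeping is delicate; I expect this to require the same multi-claim format already used for $D(M)=(2,3,3)$ in Lemma \ref{LemmmaNo233}.
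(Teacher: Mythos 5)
Your cut-vertex argument is sound for the $C^R(\Gamma)$ subcase, and it is genuinely different from the paper's treatment of that case (the paper instead takes $b\in S(a)$ and analyses how $S(b)$, $T(b)$, $R(b)$ partition $T(a)$ into unions of $R$-classes, finishing via Theorem \ref{ThmCGamma} and forbidden configurations). Your route works because $R(b)\cong M$ by Observation \ref{IsoToM}, the $S$-reduct of $M$ is connected of diameter $3$ and vertex-transitive, and a connected vertex-transitive graph of finite diameter has no cut vertex (if $v$ were one, pick a component $C$ of $G-v$ and $x\in C$ at maximal distance from $v$; any component of $G-x$ avoiding $v$ lies inside $C$ and its points are strictly farther from $v$ than $x$, a contradiction). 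The nonemptiness of the two outer strata, which you correctly flag as needing proof, is indeed available: $R(b)\cap R(a)\neq\varnothing$ is immediate from $\diam_R(M)=2$, and $R(b)\cap S(a)\neq\varnothing$ amounts to $RST\in\age(M)$, which follows from primitivity once $SSR$ is forbidden (if both $SSR$ and $RST$ were forbidden, then $S(u,v)$ would force $R(u)=R(v)$, yielding a proper nontrivial invariant equivalence relation). Your reduction of the $S\sim^T T$ case is also fine, since the resulting structure of $T(a)$ puts $\age(\Gamma^{S,T})$ inside $\age(M)$ and hence collapses that case back onto $S\sim^S T$.

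The genuine gap is the $K_n^R[\Gamma]$ subcase, and it is not bookkeeping: the connectivity strategy fails there structurally. For $b\in T(a)$ the middle stratum $R(b)\cap T(a)$ is the union of the $n-1$ classes other than that of $b$, each an infinite copy of $\Gamma$, and the constraints you have (no $S$-edges between $S(a)$ and $R(a)$, no $S$-edges between distinct $S\vee T$-classes of $T(a)$) do not prevent the $S$-reduct of $R(b)$ from being $2$-connected: the two outer strata can be joined by arbitrarily many disjoint $S$-paths running through a single class $D_i$. So there is no disconnection or cut vertex to extract, at the vertex level or ``at the level of the classes''--- an infinite separator contradicts nothing known about the $S$-reduct of $M$. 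What this case needs is a different idea, which is exactly what the paper's own Case 1 supplies: for $b\in S(a)$, the set $S(b)$ is $R$-free (Observation \ref{Obs232SRfree}), hence $S(b)\cap T(a)$ lies in a \emph{single} $S\vee T$-class of $T(a)$; the relation ``$S(b)$ and $S(b')$ meet the same class'' is then an $\aut(M/a)$-invariant, proper, nontrivial equivalence relation on $S(a)\cong\Gamma$, and since the only quantifier-free $2$-types over $a$ realised inside $S(a)$ are $S$ and $T$, neither of which (nor equality) defines such a relation on the Random Graph, homogeneity is contradicted. Without this, or some substitute for it, your proof leaves its hardest case open.
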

\begin{proof}
	From Proposition \ref{PropStructure232}, we know that $S(a)$ is isomorphic to the Random Graph and $T(a)$ is imprimitive, isomorphic to $C(\Gamma)$ or to $K_n^R[\Gamma]$. 

	Consider any $b\in S(a)$. By Observation \ref{Obs232SRfree}, there are no $S$-edges from $S(a)$ to $R(a)$, so $S(b)\setminus\{a\}\subset S(a)\cup T(a)$. Here our proof divides into two cases:

\begin{case}
	If $T(a)\cong K_n^R[\Gamma]$, then $S(b)\setminus\{a\}$ meets only one $S\vee T$-class in $T(a)$. Define an equivalence relation $E$ on $S(a)$ that holds for $b,b'\in S(a)$ if $S(b)$ and $S(b')$ meet the same $S\vee T$-class in $T(a)$. This equivalence relation is invariant under $\aut(M/a)$, but cannot be defined over $a$ without quantifiers, since the structure of $S(a)$ is that of the Random Graph. This contradicts homogeneity.
\end{case}
\begin{case}
	If $T(a)\cong C^R(\Gamma)$, then $S(b)$ contains at most one element from each $R$-class. First note that the indiscernible half-graphs witnessing $S\sim^ST$ imply that $S(b)\cap T(a)$ is an infinite set containing infinite $S$- and $T$-cliques, so it is an infinite and co-infinite subset of $S(b)\cap T(a)$. Similarly, an indiscernible half-graph witnessing $S\sim^TT$ implies that $T(b)\cap T(a)$ is infinite. Let $X$ be the set of vertices in $T(a)$ which are $R$-equivalent to an element of $S(b)\cap T(a)$.

\begin{claim}
	$R(b)\cap T(a)\neq\varnothing$.
\end{claim}
\begin{proof}
	By Remark \ref{RmkCompatibilityGraph}, we must have $R\sim S$ or $R\sim T$. Note that $R\sim^TS$ and $R\sim^ST$ both imply that $R(b)\cap T(a)\neq\varnothing$. Moreover, they imply that $R(b)\cap T(a)$ is infinite.

	It suffices then to prove that we cannot have $R\sim^SS$ or $R\sim^TT$. The first option is impossible as $RSS$ is a forbiden triangle. The second option is impossible as $R$ is stable in $T(a)$.
\end{proof}

\begin{claim}
	We have $X=T(b)\cap T(a)$ or $X=R(b)\cap T(a)$. The set $T(a)\setminus(X\cup S(b))$ is a union of $R$-classes in $T(a)$.
\label{ClaimThreePieces}
\end{claim}
\begin{proof}
	We know that $X\subset T(a)$ is disjoint from $S(b)\cap T(a)$, so it is a subset of $(R(b)\cup T(b))\cap T(a)$.

	Suppose for a contradiction that $X\cap T(b)\neq\varnothing$ and $X\cap R(b)\neq\varnothing$. Take $x\in X\cap R(b)$ and $y\in X\cap T(b)$, and let $x',y'$ be the elements in $S(b)\cap T(a)$ to which $x,y$ are $R$-equivalent in $T(a)$. 
\[
\includegraphics[scale=0.8]{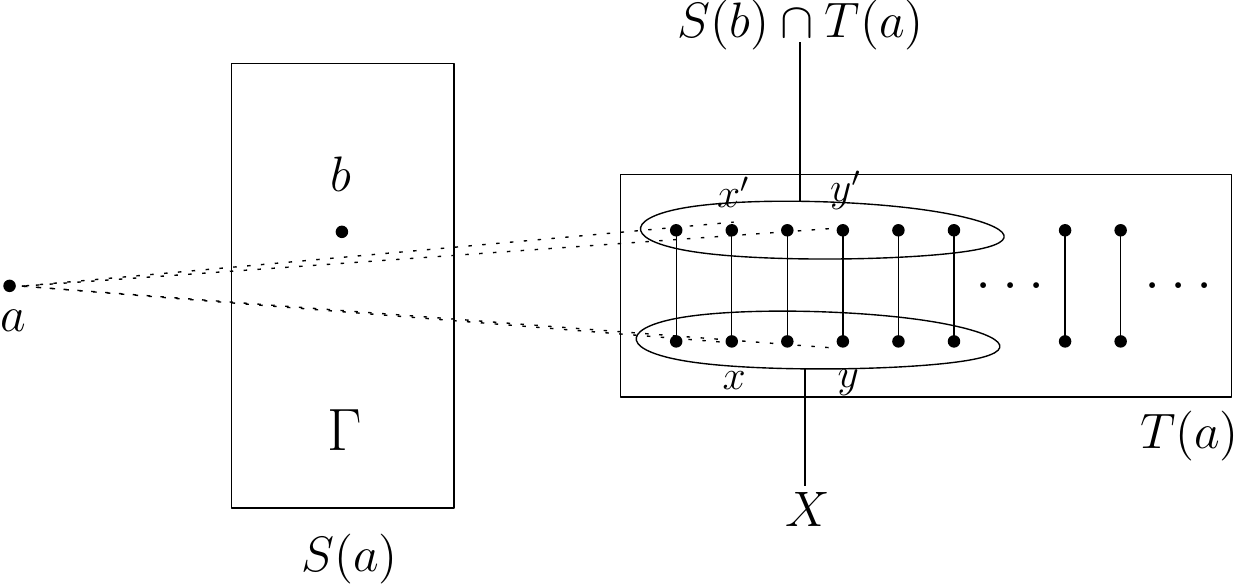}
\]
Then $\qftp(x'/ab)=\qftp(y'/ab)$, but their complete types differ as $x'$ satisfies the formula $\psi(z)=\exists c(R(b,c)\wedge T(a,c)\wedge R(c,z))$, which $y'$ does not satisfy. The second conclusion follows easily from this and $T(a)=(S(b)\cup T(b)\cup R(b))\cap T(a)$.
\end{proof}

Consider now $T(b)$ for the same $b\in S(a)$. Observation \ref{Obs232SRfree} implies that there are no $R$-edges from $S(a)$ to $R(a)$. From Claim \ref{ClaimThreePieces} we get two cases:
\begin{subcase}
	If $T(b)\cap T(a)$ is a union of $R$-classes, then all the elements of $T(b)\cap T(a)$ are already paired in $R$-classes (over $b$), so the elements completing the $R$-classes over $b$ of the $R$-free $T(b)\cap S(a)$ must be in $T(b)\cap R(a)$. The set $T(b)\cap S(a)$ is isomorphic to the Random Graph, so $T(b)\cap (Ra)\cup S(a))$ is a homogeneous imprimitive unstable 3-graph with finite $R$-classes. By Theorem \ref{ThmCGamma}, it should be isomorphic to $C(\Gamma)$, but this is impossible as there are no $S$-edges from $S(b)\cap S(a)$ and $S(b)\cap R(a)$.
\end{subcase}
\begin{subcase}
	If $R(b)\cap T(a)$ is a union of $R$-classes, then we have two $R$-free parts of $T(b)$, namely $T(b)\cap S(a)$ and $T(b)\cap T(a)$; the third part of $T(b)$, $T(b)\cap R(a)$ is not $R$-free. This follows from same argument as in Claim \ref{ClaimThreePieces}: one of $T(b)\cap R(a), T(b)\cap S(a)$, and $T(b)\cap T(a)$ is a union of $R$-classes over $b$, and it can only be $R(a)\cap T(b)$. Therefore the $R$-free parts are paired by $R$. 

There are no $S$-edges from $S(a)$ to $R(a)$, because the $S$-diameter of $M$ is 3 and $R(a)=S^3(a)$, and there are no $R$-edges from $T(b)\cap S(a)$ to $T(b)\cap R(a)$. This implies that $TTR$ embeds in $T(b)$, contradicting $T(b)\cong C(\Gamma)$.
\end{subcase}
\end{case}

This concludes our proof.
\end{proof}
\setcounter{case}{0}
\setcounter{subcase}{0}

For the next result, we need the following definition:
\begin{definition}\label{DefPseudoplane}
	A \emph{pseudoplane} is an incidence structure of points and lines which satisfies the following axioms:
\begin{enumerate}
	\item{There are infinitely many points on each line.}
	\item{There are infinitely many lines through each point.}
	\item{Any two lines intersect in only finitely many points.}
	\item{Any two points lie on only finitely many lines.}
\end{enumerate}
If $M$ is a structure and $\mathcal L$ is a definable family of infinite subsets of $M$, then $P=(M,\mathcal L)$ is a \emph{weak} pseudoplane if the following conditions are satisfied:
\begin{enumerate}
	\item{If $S\neq T\in\mathcal L$, then $|S\cap T|<\omega$.}
	\item{Each $p\in M$ lies in infinitely many elements of $\mathcal L$.}
\end{enumerate}
The weak pseudoplane $(M,\mathcal L)$ is \emph{homogeneous} (binary) if the underlying structure $M$ is homogeneous with respect to its (binary) language.
\end{definition}

The following theorem was proved by Simon Thomas in \cite{thomas1998nonexistence}.

\begin{theorem}
	There is no binary homogeneous weak pseudoplane.
\label{ThmThomas}
\end{theorem}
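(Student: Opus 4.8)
The plan is to derive a contradiction from the assumption that a binary homogeneous weak pseudoplane $(M,\mathcal L)$ exists, by playing the rigidity forced by homogeneity off against the incidence axioms. Since $M$ is homogeneous over a finite binary relational language it is $\omega$-categorical, so there are only finitely many $2$-types and every $\aut(M)$-invariant binary relation is quantifier-free definable. First I would introduce the \emph{collinearity} relation $C(x,y)$, holding iff some $\ell\in\mathcal L$ contains both $x$ and $y$. Because $\mathcal L$ is a definable family, $C$ is $\aut(M)$-invariant and hence, by homogeneity, a union of $2$-types; likewise the number of common lines through a collinear pair is determined by $\tp(x,y)$. Each line is then an infinite $C$-clique, two lines meet in a finite $C$-clique, and (axiom (2)) infinitely many such cliques pass through each point. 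So the whole problem reduces to: \emph{a homogeneous $\omega$-categorical binary structure cannot carry a definable family of infinite $C$-cliques with pairwise finite intersection such that each point lies on infinitely many of them.}

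The engine for the contradiction is a dichotomy obtained by applying the Lachlan--Woodrow Theorem \ref{LachlanWoodrow} to the collinearity graph. In the primitive, random-graph-like cases the key observation is that \emph{no infinite definable clique exists at all}: over a finite parameter set, a definable subset of the random graph (or of a Henson graph, or a transversal of a complete-multipartite structure) is never an infinite clique, since genericity forces non-collinear pairs inside any infinite definable set. This alone contradicts the lines being infinite definable cliques. In the remaining, imprimitive cases the relevant part of $C$ is an equivalence relation with infinite classes (the $I_\infty[K_\infty]$/block case). There, a line is an infinite definable subset of a homogeneous complete block, hence cofinite in its block; since a line is a $C$-clique it lives entirely inside one block, so the infinitely many lines through a fixed point all share a common block and any two of them meet in a cofinite, hence infinite, set, contradicting the finite-intersection axiom (1). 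Thus every branch of the dichotomy fails.

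The hard part will be legitimately invoking Lachlan--Woodrow, since a priori the collinearity structure is a genuine $n$-coloured binary structure and Theorem \ref{LachlanWoodrow} classifies only graphs. I would address this by localising: pass to the neighbourhood $C(a)$, which is again homogeneous and binary by the argument of Proposition \ref{PropHomNeigh}, and then to the collinear/non-collinear reduct, and verify that this reduct is homogeneous and that both ``infinite definable clique'' and ``finite pairwise intersection'' survive the passage. The second delicate point is bookkeeping of parameters: each line is definable over a finite set, and it is precisely the binarity of the language that forces collinearity and multiplicity of lines to be controlled by $2$-types rather than by longer tuples, which is why the theorem can genuinely fail for higher-arity homogeneous structures. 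Once these reductions secure a homogeneous graph to which Theorem \ref{LachlanWoodrow} applies, the no-infinite-definable-clique/forced-infinite-intersection dichotomy above closes the argument.
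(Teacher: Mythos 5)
The paper does not actually prove Theorem \ref{ThmThomas}; it quotes it from Simon Thomas \cite{thomas1998nonexistence}, so your proposal can only be judged against the statement itself --- and as it stands it has a fatal gap. Your preliminary reductions (that $C$ is $\aut(M)$-invariant, hence quantifier-free definable and a union of $2$-types, and that line multiplicities are controlled by $2$-types) are fine, but the entire engine of the proof is the application of the Lachlan--Woodrow Theorem \ref{LachlanWoodrow} to the collinearity graph $(M,C)$, and $(M,C)$ is a \emph{reduct} of $M$: reducts of binary homogeneous structures are in general not homogeneous, so Theorem \ref{LachlanWoodrow} says nothing about them. A counterexample lives inside this very paper: $M=K_\omega^R\times K_\omega^S$ is a homogeneous 3-graph (it appears in Lachlan's list, Theorem \ref{Lachlan3graphs}), and its collinearity reduct $(M,R\vee S)$ is the countable rook's graph, which is not homogeneous --- it contains infinite cliques (rows) and infinite independent sets (the diagonal), yet three pairwise non-adjacent vertices never have a common neighbour, so it is none of the graphs on the Lachlan--Woodrow list. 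Note that its maximal cliques, the rows and columns, are exactly the kind of definable family of ``lines'' you want to analyse, so this is not a peripheral pathology. Your proposed repair does not close the gap: Proposition \ref{PropHomNeigh} is proved for a \emph{single} predicate of the language (its proof needs $A\cong B$ to imply $aA\cong aB$, which fails when $C$ is a union of several $2$-types, since an isomorphism of subsets of $C(a)$ need not preserve which predicate each point satisfies towards $a$), and even where it applies you still must pass to the collinear/non-collinear reduct at the end, which is precisely the unjustified step.

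Independently of this, both branches of your dichotomy conflate definability in $M$ with definability in the reduct $(M,C)$. Lines are definable in $M$ but in general not in $(M,C)$, while the genericity and indiscernibility you invoke hold only for reduct-definable sets. In the primitive branch, ``genericity forces non-collinear pairs inside any infinite definable set'' is true for sets definable \emph{in the random graph}, but a line is a Boolean combination of $M$-neighbourhoods $R_i(c_j)$, and ruling out that such a set is an infinite $C$-clique is essentially the whole content of the theorem in that case --- you have restated it, not proved it. In the imprimitive branch, ``an infinite definable subset of a block is cofinite in its block'' is simply false for $M$-definable sets: in the structure $\mathcal B$ of Theorem \ref{ThmBstar} (or $\mathcal B_n^{S,T}$ of Proposition \ref{PropImprimitive3Graphs}), the blocks are the $R$-classes, any two of which span a random bipartite graph, so $S(a)\cap B$ is an infinite, co-infinite definable subset of the block $B$ whenever $a\notin B$; the blocks are indiscernible sets only in the reduct, where the lines are no longer definable. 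So neither branch produces a contradiction, and the argument needs a genuinely different engine rather than a patch.
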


\comm
\begin{proposition}
	Let $M$ be a primitive homogeneous simple 3-graph in which all relations are unstable and only one is nonforking. If $D(M)=(2,3,2)$ , then $S\not\sim T$.
\label{Prop232Graph}
\end{proposition}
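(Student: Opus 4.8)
The plan is to assume $S\sim T$ and reach a contradiction, so that $S\not\sim T$ as claimed. First I would refine the compatibility. In any indiscernible half-graph witnessing $S\sim T$, one horizontal clique lies inside a set $S(a_0)$ and the other inside a set $T(b_0)$; since $D(M)=(2,3,2)$ makes $S(a_0)$ $R$-free by Observation~\ref{Obs232SRfree}, and $T(b_0)$ contains no infinite $R$-clique by Proposition~\ref{PropCliques}, neither horizontal clique is an $R$-clique. Hence a horizontal clique of colour $S$ is present and we obtain $S\sim^ST$ (the residual possibility of two $T$-coloured horizontal cliques being eliminated by the analogous structural argument inside the $R$-free set $S(a_0)$). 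Now Proposition~\ref{PropStructure232} applies and pins the geometry down completely: $S(a)$ is the Random Graph in the colours $S,T$, while $T(a)$ is imprimitive and isomorphic to either $K_n^R[\Gamma]$ or $C^R(\Gamma)$.

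Next I would fix $b\in S(a)$ and analyse its neighbourhoods relative to $a$. By Observation~\ref{Obs232SRfree} there are no $S$-edges between $S(a)$ and $R(a)$ (as $R(a)=S^3(a)$ and $\diam_S(M)=3$), so $S(b)\setminus\{a\}\subseteq S(a)\cup T(a)$. In the case $T(a)\cong K_n^R[\Gamma]$ the blocks of the $S\vee T$-equivalence relation are the copies of $\Gamma$, and $S(b)$ meets exactly one of them; defining $b\mathrel{E}b'$ to hold when $S(b)$ and $S(b')$ meet the same block gives an $\aut(M/a)$-invariant equivalence relation on $S(a)$ that cannot be defined by a quantifier-free formula over $a$, because $S(a)\cong\Gamma$ is primitive. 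This contradicts homogeneity and disposes of the $K_n^R[\Gamma]$ case.

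The hard part will be the case $T(a)\cong C^R(\Gamma)$, whose $R$-classes have size $2$. Here $S(b)$ contains at most one vertex from each $R$-class of $T(a)$, and I would partition $T(a)$ into the three pieces $S(b)\cap T(a)$, $T(b)\cap T(a)$ and $R(b)\cap T(a)$. Comparing $R$-paired vertices that lie in different pieces through their quantifier-free versus complete types (the key homogeneity step) shows that exactly one of $T(b)\cap T(a)$, $R(b)\cap T(a)$ is a union of full $R$-classes, while the other is $R$-matched with $S(b)\cap T(a)$. I would then turn to $T(b)$, whose pieces $T(b)\cap S(a)$, $T(b)\cap T(a)$, $T(b)\cap R(a)$ are controlled by $S(a)\cong\Gamma$ together with the absence of $S$-edges between $S(a)$ and $R(a)$; tracking how the $R$-matching distributes across these pieces forces a triangle of type $TTR$ inside $T(b)$, which is impossible since $C^R(\Gamma)$ admits only $STR$ across distinct $R$-classes. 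The main obstacle is precisely this bookkeeping: keeping straight how the size-$2$ $R$-classes of $T(a)$ split among the $S$-, $T$- and $R$-neighbourhoods of $b$, and extracting the forbidden $TTR$ configuration from a quantifier-free/complete type mismatch.
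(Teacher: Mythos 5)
Your proposal reproduces the paper's own proof essentially step for step: reduce $S\sim T$ to $S\sim^ST$, invoke Proposition \ref{PropStructure232} to get $S(a)\cong\Gamma$ and $T(a)\cong K_n^R[\Gamma]$ or $C^R(\Gamma)$, dispose of the first case via the $\aut(M/a)$-invariant equivalence relation on $S(a)$ that cannot be quantifier-free definable over $a$, and dispose of the second via the decomposition of $T(a)$ into its $S(b)$-, $T(b)$- and $R(b)$-pieces, the quantifier-free-type versus complete-type mismatch, and the forbidden $TTR$ configuration inside $T(b)\cong C^R(\Gamma)$. The one thin point — which the paper shares, since it applies Proposition \ref{PropStructure232} under the bare hypothesis $S\sim T$ — is your parenthetical elimination of the case of two $T$-coloured horizontal cliques ``inside $S(a_0)$'': in that case the half-graph's initial segments sit inside $T$-neighbourhoods rather than in $S(a_0)$, so the elimination has to run through $T(a)$ (which is then unstable with no infinite $R$-cliques, hence imprimitive with $R$ or $S\vee T$ as its equivalence relation, and each alternative produces half-graphs with a horizontal $S$-clique), or equivalently through the dichotomy: either $S(a)$ is unstable, hence $\cong\Gamma$ and $S\sim^ST$ follows, or $S(a)$ is stable and one falls into the pseudoplane contradiction of Lemma \ref{LemmaNo232}.
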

\begin{proof}
	Suppose for a contradiction that we have $S\sim T$. We know from Proposition \ref{Prop232Tcliques} that the horizontal cliques in any indiscernible half-graph witnessing $S\sim T$ are of colour $T$. It follows that $S$ and $T$ are unstable in $T(a)$.
\begin{claim}
	$T(a)$ is primitive.
\end{claim}
\begin{proof}
	By instability of $S, T$, we know that $S,T,R\vee S, R\vee T$ are not equivalence relations on $T(a)$. If $R$ defines an equivalence relation on $T(a)$, then it has finite classes by Proposition \ref{PropCliques}, and is therefore isomorphic to $C(\Gamma)$. But in this case we would be able to find witnesses to $S\sim T$ with horizontal $S$-cliques, contradicting Proposition \ref{Prop232Tcliques}. If $S\vee T$ defines an equivalence relation, then its classes are infinite and isomorphic to the Random Graph, and again we contradict Proposition \ref{Prop232Tcliques}. 
\end{proof}
From this claim and the information we get from the half-graphs, we know that $T(a)$ is a 3-graph in which $S$ and $T$ are nonforking relations (meaning: the formulas $T(x,a)\wedge T(a,b)\wedge S(x,b)$ and $T(x,a)\wedge T(a,b)\wedge T(x,b)$ do not fork over $a$), so we can use the Independence Theorem to show that any finite $S,T$-structure can be embedded into $T(a)$; therefore, we can find an indiscernible half-graph for $S\sim T$ with horizontal $S$-cliques, contradicting Proposition \ref{Prop232Tcliques}.
\end{proof}

\ent
\begin{lemma}
		There are no primitive simple homogeneous 3-graphs $M$ in which all relations are unstable and only one is nonforking with $D(M)=(2,3,2)$.
\label{LemmaNo232}
\end{lemma}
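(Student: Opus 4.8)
The plan is to reduce, via the compatibility graph together with the results already proved for this diameter triple, to a single configuration of the unstable relations, and then to extract enough local structure from the neighbourhood $S(a)$ to contradict the non-existence of a binary homogeneous weak pseudoplane (Theorem \ref{ThmThomas}). First I dispose of the case in which the nonforking relation $R$ is stable, handled exactly as in Case 1 of Lemma \ref{LemmmaNo233}. So assume all three relations are unstable and $R$ is the unique nonforking relation. By Remark \ref{RmkCompatibilityGraph} the compatibility graph on $\{R,S,T\}$ is connected, and by Proposition \ref{Prop232Graph} we cannot have $S\sim T$; since a connected graph on three vertices missing the edge $S\sim T$ must contain both remaining edges, we conclude $R\sim S$, $R\sim T$ and $S\not\sim T$. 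Throughout I use that $S(a)$ is $R$-free (Observation \ref{Obs232SRfree}) and that $R(a)\cong M$ (Observation \ref{IsoToM}).

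The next step is to analyse $S(a)$. Being $R$-free, it is a simple homogeneous graph in the two colours $S,T$, so by the Lachlan--Woodrow Theorem \ref{LachlanWoodrow} and Remark \ref{RmkSimpleGraphs} it is either the Random Graph or of the form $I_m[K_n]$. If it were the Random Graph then $S$ and $T$ would be unstable inside $S(a)$, and a half-graph for $S$ there (its off-diagonal edges being $T$) would witness $S\sim T$, contradicting $S\not\sim T$. Hence $S(a)$ is stable and one of $S,T$ is an equivalence relation on it. Now I pin down the indiscernible half-graph witnessing $R\sim S$: since the only relation compatible with $S$ is $R$ (as $S\not\sim T$), its off-diagonal colour is $R$ by Observation \ref{IndiscernibleHalfGraphs}, so the half-graph has diagonal $S$, off-diagonal $R$, and horizontal cliques of some colours $P_x,P_y$. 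Using $R$-freeness of $S(a)$ I force $P_x=P_y=T$: if a horizontal clique were an $S$-clique, then embedding a short initial segment of the half-graph into an $S$-neighbourhood (as in Observation \ref{ObsIsoRG}) would place an off-diagonal $R$-edge inside the $R$-free $S(a)$; and neither horizontal clique can be an $R$-clique, since both lie inside $S$-neighbourhoods $\cong S(a)$. Consequently $S(a)$ contains infinite $T$-cliques, and reading off three vertices of the half-graph shows that the rainbow triangle $RST$ is realised.

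With $S(a)\cong I_m[K_n]$ carrying infinite $T$-cliques, I run a case analysis on its parameters. The degenerate shapes are eliminated quickly: $S(a)$ is not $S$-complete by Observation \ref{NotRComplete}, and it cannot have finite or finitely many classes, since then the local equivalence relation together with Theorem \ref{ThmCGamma} and Corollary \ref{CorFiniteClasses} would produce, inside a forking-relation neighbourhood, infinite cliques of a colour barred by Proposition \ref{PropCliques}. This leaves the decisive case $S(a)\cong I_\omega[K_\omega]$, in which one of $S,T$ is an equivalence relation with infinitely many infinite classes on the definable set $S(a)$. I then globalise: as $a$ ranges over $M$, the infinite monochromatic cliques arising as these classes form a definable family $\mathcal L$ of infinite subsets of $M$; each vertex $v$ lies in $S(a)$ for infinitely many $a$ and hence on infinitely many members of $\mathcal L$, while two such cliques meet finitely because $M$ is primitive (so the colour in question is not a global equivalence relation) and $R(a)$ contains no infinite cliques of the forking colours. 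This exhibits $M$ as a binary homogeneous weak pseudoplane, contradicting Theorem \ref{ThmThomas}.

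The main obstacle is this final globalisation: turning the local equivalence relation on $S(a)$ into a genuine definable family of lines and verifying the two weak-pseudoplane axioms — in particular that two of the resulting infinite cliques intersect in only finitely many points. Controlling that intersection, and checking that every sub-case of the stable structure of $S(a)$ (according to which of $S$ and $T$ is the local equivalence relation, and the two cardinal parameters $m,n$) funnels either into the pseudoplane contradiction or into an earlier impossibility, is where the real work lies; by comparison, the reduction and the determination that the horizontal cliques have colour $T$ are routine.
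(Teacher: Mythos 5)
Your overall route is the paper's own: use Proposition \ref{Prop232Graph} together with the connectedness of the compatibility graph to get $S\not\sim T$, $R\sim S$, $R\sim T$; deduce that $S(a)$ is an $R$-free, stable, homogeneous $S,T$-graph with infinite cliques in the appropriate colours, hence carrying an equivalence relation with infinitely many infinite classes; and then contradict Theorem \ref{ThmThomas} by exhibiting a weak pseudoplane. Up to the structure determination of $S(a)$ your argument is sound and essentially matches the paper. But the final globalisation --- which you yourself flag as ``where the real work lies'' --- has a genuine gap, in two respects. First, you never decide which of $S,T$ is the equivalence relation on $S(a)$. The paper settles this with Proposition \ref{PropMultipartite}: if $T$ were the equivalence relation, then $S(a)$ would be $S$-complete-multipartite, so $M$ itself would be $S$-complete-multipartite, contradicting primitivity; hence $S$ is the equivalence relation. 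This is not cosmetic, because a line construction only makes sense for a colour that is an equivalence relation on every neighbourhood of its own colour, which you know only for $S$; if ``the colour in question'' were $T$, maximal $T$-cliques through a $T$-edge need not even be unique.

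Second, and more seriously, the family $\mathcal L$ you actually define --- the imprimitivity classes of $S(a)$ as $a$ ranges over $M$ --- genuinely fails the finite-intersection axiom, and no appeal to primitivity or to Proposition \ref{PropCliques} can repair this. Indeed, let $K$ be a maximal $S$-clique, let $a\neq a''$ be two points of $K$, and let $c\in K\setminus\{a,a''\}$. The class of $c$ in $S(a)$ is $K\setminus\{a\}$, while the class of $c$ in $S(a'')$ is $K\setminus\{a''\}$: these are distinct members of your family whose intersection $K\setminus\{a,a''\}$ is infinite. The paper's fix is to take the lines to be the maximal $S$-cliques themselves, $\ell(c,c')=\{c\}\cup c'/S^c$; since $S$ is an equivalence relation on every $S$-neighbourhood, the maximal $S$-clique containing a given $S$-edge is unique, so two distinct lines meet in at most one point. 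Likewise, the correct reason that every vertex lies on infinitely many lines is that $S(v)$ has infinitely many $S$-classes --- not, as you argue, that $v\in S(a)$ for infinitely many $a$, which by itself gives nothing. With these two corrections (and dropping your superfluous opening case, since the lemma already assumes all three relations unstable), your argument closes up into exactly the paper's proof.
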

\begin{proof}
	Suppose for a contradiction that $M$ is a primitive simple homogeneous 3-graph with $D(M)=(2,3,2)$. We know from Proposition \ref{Prop232Graph} that $S\not\sim T$. %It follows from Remark \ref{RmkCompatibilityGraph} that we must have $R\sim S$ and $R\sim T$. In $R\sim S$, then it must be the case that $R\sim^TS$ and $R\not\sim^SS$, because $SSR$ is a forbidden triangle. Let us analyse a set of witnesses for $R\sim T$.
\comm
\begin{claim}
	It is not the case that $R\sim^TT$.
\label{Claim2}
\end{claim}
\begin{proof}
	Suppose for a contradiction that we have witnesses for $R\sim^TT$. Our first claim is that $T(a)$ is primitive.

From $R\sim^TT$ we get that $R$ and $T$ are unstable in $T(a)$, so $R,T,R\vee S, T\vee S$ do not form equivalence relations on $T(a)$. Also, $R\vee T$ is not an equivalence in $T(a)$: if it were, then it would necessarily have infinite classes as both $R$ and $T$ are unstable and therefore non-algebraic; each class would be a homogeneous unstable graph, isomorphic to the Random Graph by the Lachlan-Woodrow Theorem. 

To prove that $S$ does not define an equivalence relation on $T(a)$, note first that if it did define an equivalence relation, then it would necessarily have infinitely many infinite classes. This is impossible as Theorem \ref{ThmCGamma} and Corollary \ref{CorFiniteClasses} would imply that $T(a)$ embeds infinite $R$-cliques, contradicting Proposition \ref{PropCliques}. The infinite $T$-cliques in $T(a)$ imply that if $S$ defines an equivalence relation on $T(a)$, then it has infinitely many classes. Therefore, $M$ interprets an infinite 3-graph in which $R,T$ are unstable, $S$ defines an equivalence relation with infinitely many infinite classes, and $R$ does not form infinite cliques. This is impossible as no such 3-graph is simple by Proposition \ref{PropImprimitiveInfClasses}. 

It follows then from $R\sim^TT$ that $M$ interprets (as $T(a)$) a primitive homogeneous 3-graph in which $R$ and $T$ are unstable and divide, $R\sim^TT$, $R$ does not form infinite cliques, and $S$ is nonforking. By Proposition \ref{PropNoSnotTnoCliquesR} (exchanging the roles of $S,T$), this is impossible as $M$ is simple.
\end{proof}
\ent

From $S\not\sim T$ it follows, in particular, that we cannot find half-graphs for $S$ within $S(a)$, so $S$ is stable in $S(a)$ (as is $T$), and $S(a)$ is a stable homogeneous graph realising $S$ and $T$ (indiscernible witnesses to $R\sim^ST$ imply that are infinite $S$-cliques in $M$, and therefore in $S(a)$; similarly, $R\sim^TS$ implies that $S(a)$ embeds infinite $T$-cliques). By the Lachlan-Woodrow Theorem, $S(a)$ is imprimitive. From the fact that $T$ forms infinite cliques in $S(a)$ and Proposition \ref{PropMultipartite}, we derive that $S$ is an equivalence relation on $S(a)$ with infinitely many infinite classes. 

Given two vertices $c,c'\in M$ with $S(c,c')$, define the \emph{line} $\ell(c,c')$ as $c\cup c'/S^c$, where $c'/S^c$ is the imprimitivity block of $c'$ in $S(c)$. It is clear that $\ell(c,c')$ is the maximal $S$-clique in $M$ containing $c$ and $c'$. Let $\mathcal L$ be the set $\{\ell(c,c'):c,c'\in M, S(c,c')\}$. 

\begin{claim}
$(M,\mathcal L)$ is a weak pseudoplane (see Definition \ref{DefPseudoplane}).
\end{claim}
\begin{proof}
We need to verify two conditions:
\begin{enumerate}
\item{Let $\ell,\ell'$ be distinct elements of $\mathcal L$. Then $|\ell\cap\ell'|\leq1$, because the lines are defined as maximal cliques.}
\item{The second condition in the definition of a weak pseudoplane (see Definition \ref{DefPseudoplane}) follows trivially from the fact (proved above) that $S(a)$ contains infinitely many infinite $S$-classes.}
\end{enumerate}
\end{proof}
We have reached a contradiction by Theorem \ref{ThmThomas}.
\end{proof}

The same argument, substituting $T$ for $S$, proves that there are no primitive homogeneous simple 3-graphs in which all three predicates are unstable and only one is nonforking with $D(M)=(2,2,3)$. Therefore, the only possibility for such a 3-graph is to have $D(M)=(2,2,2)$.

\subsection{Simple homogeneous 3-graphs with $D(M)=(2,2,2)$.}
This subsection deals with the most delicate cases in this chapter. The strategy is to prove $S\not\sim T$ first (Proposition \ref{PropOneIsStable} to Lemma \ref{lemmaST}). This reduces the compatibility graph to $R\sim^ST, R\sim^TS$, and the only possibility is for both $S(a)$ and $T(a)$ to be of the form $K_m^i[K_n^j[K_o^k]]$, with only the subindex corresponding to $R$ is finite. Most of the cases can be dealt with directly from easy results from Chapter \ref{ChapGenRes}.

\begin{proposition}
	Let $M$ be a simple unstable 3-graph such that $M$ does not embed infinite $R$-cliques. Then one of $R,S,T$ is stable. Moreover, the stable relation is either an equivalence relation or the complement of an equivalence relation.
\label{PropOneIsStable}
\end{proposition}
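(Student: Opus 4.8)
The plan is to prove the two assertions in turn: first that some relation is stable, then that any stable relation is an equivalence relation or the complement of one. Throughout I argue by contradiction and lean on the dichotomy primitive/imprimitive together with the fact that, by $\omega$-categoricity and quantifier elimination, any $\varnothing$-invariant equivalence relation on $M$ is the reflexive closure of a disjunction of the atomic relations, i.e.\ one of $R,S,T,R\vee S,R\vee T,S\vee T$.

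For the first assertion, suppose for contradiction that $R$, $S$ and $T$ are all unstable. If $M$ is imprimitive, pick a nontrivial invariant equivalence relation $E$; by the observation above $E$ is (the reflexive closure of) a single colour or of a union of two colours. In the first case that colour is a disjoint union of cliques, and in the second case the remaining colour holds exactly across $E$-classes and is therefore complete-multipartite; either way one colour induces a stable graph (Lachlan--Woodrow, Theorem \ref{LachlanWoodrow}, cf.\ Remark \ref{RmkSimpleGraphs}), contradicting the assumption that all three are unstable. If instead $M$ is primitive, then it is transitive, so there is a unique $1$-type; $R$ must divide, since otherwise the Independence Theorem and homogeneity would produce infinite $R$-cliques (as in Proposition \ref{PropTwoInfiniteCliques}), against the hypothesis. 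By simplicity at least one of $S,T$ is nonforking. If both are nonforking, Proposition \ref{PropImprimitivity} forces $M$ to be imprimitive, a contradiction. Hence exactly one is nonforking and we are in the two-forking-relations situation; after reading off the colours of the horizontal cliques of an indiscernible half-graph for $R$ (using Proposition \ref{PropCliques} to exclude the nonforking colour inside $R(a)$) the compatibility graph (Remark \ref{RmkCompatibilityGraph}) lands us in the hypotheses of Propositions \ref{PropNoSnotTnoCliquesR} and \ref{PropImprimitiveInfClasses}, both of which assert non-existence. In every case we reach a contradiction, so one of $R,S,T$ is stable.

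For the ``moreover'', let $P$ be a stable relation and recall the elementary graph-theoretic fact that a graph is a disjoint union of cliques exactly when it has no induced path on three vertices, and is complete-multipartite exactly when its complement has none. Thus it suffices to show that $(M,P)$ or its complement omits the induced $3$-path; equivalently, if $P$ fails to be an equivalence relation we must derive that its complement is one. Assume both fail: then $M$ contains witnesses $P(a,b)\wedge P(b,c)\wedge\neg P(a,c)$ and $\neg P(x,y)\wedge\neg P(y,z)\wedge P(x,z)$. The plan is to amalgamate these two three-vertex configurations repeatedly, using homogeneity and the Independence Theorem in the style of Proposition \ref{PairwiseIndep} and Proposition \ref{UnstableNonforking}, to embed arbitrarily long half-graphs for $P$; by compactness this yields an infinite half-graph, so $P$ has the order property, contradicting its stability. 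Alternatively one may pass to the graph reduct $(M,P)$ and invoke the Lachlan--Woodrow classification directly, once its homogeneity is established.

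The main obstacle is precisely this last construction in the ``moreover'': turning the two purely local configurations (an induced $P$-path and an induced $\neg P$-path) into an unbounded half-graph for $P$. The delicate point is arranging the realisations amalgamated at each step to have the same strong (hence, by lowness, Lascar strong) type so that the Independence Theorem applies, and preserving the half-graph pattern along the induction; packaging this as ``both induced $3$-paths present $\Rightarrow$ $P$ unstable'' is the one step that is not a direct citation. The reduct-plus-Lachlan--Woodrow route trades this for the separate task of proving that the reduct $(M,P)$ is homogeneous, which is where the stability of $P$ would have to be used.
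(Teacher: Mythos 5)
Your first part follows the paper's strategy (contradiction from ``all three unstable,'' split into imprimitive and primitive, with the imprimitive half killed by quantifier elimination, since an equivalence relation or its complement is stable), and the imprimitive half is fine. The primitive half, however, has a hole in its case analysis: of the two results you invoke, Proposition \ref{PropImprimitiveInfClasses} cannot apply at all, because its hypothesis --- that $S$ defines an equivalence relation with infinitely many infinite classes --- is incompatible with primitivity of $M$. What actually happens is this: the horizontal cliques of a half-graph for $R$ are indeed forced to be of colour $S$ (Proposition \ref{PropCliques} plus the absence of infinite $R$-cliques), but that half-graph may witness $R\sim^S T$ rather than $R\sim^S S$, and in that branch Proposition \ref{PropNoSnotTnoCliquesR} is silent. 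You then have to look at the compatibility of $S$ via Remark \ref{RmkCompatibilityGraph}: either $S\sim R$, which forces $R\sim^S S$ and Proposition \ref{PropNoSnotTnoCliquesR} finishes, or $S\sim T$, which forces $S\sim^S T$, and there the contradiction with primitivity comes from Proposition \ref{PropImprimitivity2} (this is the branch the paper handles inline as Claim \ref{ClaimSNotSimT}). So the skeleton is right and the gap is repairable, but as written one case is uncovered and one citation is spurious.

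The ``moreover'' is where the proposal genuinely fails, and it fails for a structural reason. Your key step --- both induced $3$-paths present implies $P$ unstable --- is exactly what you leave unproved, and it is not provable from the ingredients you allow yourself (homogeneity, simplicity, no infinite $R$-cliques): in the stable homogeneous $3$-graph $K_\omega^S\times K_n^R$ from Lachlan's list, every $R$-clique has size at most $n$, the predicate $T$ is stable, and yet both configurations occur (the triangle with edges $T,T,S$ on $(1,1),(2,2),(3,1)$ and the triangle with edges $R,S,T$ on $(1,1),(1,2),(2,2)$); likewise the reduct $(M,T)$ of that structure is not homogeneous, so your alternative route is blocked by the same example. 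Hence any amalgamation or reduct argument of the kind you sketch must use the instability of the other two relations in an essential way, and you give no indication of how. More to the point, the detour is unnecessary: the paper obtains the ``moreover'' by the very mechanism you already used in your imprimitive case --- once $M$ is known to be imprimitive, quantifier elimination makes the invariant equivalence relation a disjunction of atomic relations, so either a single predicate is an equivalence relation or the remaining predicate is the complement of one, and that predicate is \emph{the} stable relation, since two stable predicates would make all three (hence $M$) stable. Both assertions of the proposition therefore fall out of imprimitivity simultaneously; splitting them into two separate arguments, as you do, is what manufactures the obstacle you then cannot overcome.
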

\begin{proof}
	Suppose for a contradiction that all predicates are unstable; we will prove that any such $M$ would be imprimitive, so one of $R,S,T$ is an equivalence relation or the complement of an equivalence relation and is therefore stable. 

	If $\aut(M)$ acts primitively on $M$, then $R(x,a)$ divides over $\varnothing$, as otherwise the Independence Theorem and primitivity guarantee that $M$ embeds infinite $R$-cliques. By Lemma \ref{PropNoSimpleOneDividing}, one of $S$, $T$ divides. Let us say without loss of generality that $S$ divides and $T$ is nonforking. By Lemmas \ref{LemmmaNo233} and \ref{LemmaNo232}, we have $D(M)=(2,2,2)$, so for any $a\in M$ each of $R(a),S(a),T(a)$ is a homogeneous simple 3-graph (\ie all relations in the language are realised in each of these sets). By Proposition \ref{PropTwoInfiniteCliques}, both $S$ and $T$ form infinite cliques in $M$. By Proposition \ref{PropCliques}, $S(a)$ and $R(a)$ do not embed infinite $T$-cliques.
\begin{claim}
	$S\not\sim T$.
\label{ClaimSNotSimT}
\end{claim}
\begin{proof}
	Suppose for a contradiction that $S\sim T$ holds. Then we must have $S\sim^ST$ and $S\not\sim^TT$ because there are no infinite $T$-cliques in $S(a)$. From this it follows that $S$ and $T$ are unstable in $S(a)$, so $S(a)$ is a homogeneous simple unstable 3-graph not embedding infinite $R$- or $T$-cliques. By Proposition \ref{PropTwoInfiniteCliques}, $S(a)$ is imprimitive. 

By the instability of $S$, $T$ in $S(a)$, we know that $S$, $T$, $R\vee S$, $R\vee T$ do not define equivalence relations on $S(a)$, so this leaves us with two options for the equivalence relation on $S(a)$: it could be defined by $R$ or by $S\vee T$. The latter case is impossible because each class would be an infinite homogeneous simple unstable $ST$-graph, isomorphic to the Random Graph by the Lachlan-Woodrow Theorem, contradicting the fact that $S(a)$ does not embed infinite $T$-cliques. 

Similarly, if $R$ defines an equivalence relation on $S(a)$ then either $\aut(M/a)$ acts 2-transitively on $S(a)/R$, or it acts transitively, but not 2-transitively on $S(a)/R$. In the former case $S(a)\cong C(\Gamma^{S,T})$ and we can find infinite $T$-cliques in $S(a)$, and in the latter $M$ interprets a Henson graph (cf. the proof of Observation \ref{ObsInterpretedGraph}), contradicting simplicity.

We conclude $S\not\sim T$.
\end{proof}

It follows from Claim \ref{ClaimSNotSimT} and Remark \ref{RmkCompatibilityGraph} that $R\sim S$ and $R\sim T$ hold. By the same argument as before, we have $R\sim^SS$ and $R\not\sim^TS$, $R\sim^ST$ and $R\not\sim^TT$. From $R\sim^SS$ we get that $R,S$ are unstable in $S(a)$, so $S(a)$ is a simple unstable 3-graph not embedding infinite $R$- or $T$-cliques. By Proposition \ref{PropTwoInfiniteCliques}, $S(a)$ is imprimitive and one of $T,R\vee S$ defines an equivalence relation on $S(a)$. As before, $R\vee S$ cannot define an equivalence relation because its classes would be isomorphic to the Random Graph, contradicting that $R$ does not form infinite cliques. Therefore, $T$ defines an equivalence relation with finite classes on $S(a)$ and the same argument from the proof of Claim \ref{ClaimSNotSimT} proves the impossibility of this.

We have reached a contradiction. We conclude that $M$ is imprimitive, so one of $R,S,T$ is either an equivalence relation or its complement. In any case, one of the relations is stable.
\end{proof}
\begin{proposition}\label{PropStrST}
	Let $M$ be a primitive homogeneous simple 3-graph with $D(M)=(2,2,2)$ in which $S\sim T$ holds, and $S,T$ are forking relations. Then for any $a$ the sets $S(a)$, $T(a)$ are imprimitive and each is isomorphic to one of $C(\Gamma^{ST}), \Gamma^{ST}[K_n^R]$, or $K_n^R[\Gamma^{ST}]$. In particular $\age(\Gamma^{ST})\subset\age(M)$.
\end{proposition}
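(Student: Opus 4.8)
The plan is to study the two standalone neighbourhood structures $S(a)$ and $T(a)$, show that each embeds all finite $\{S,T\}$-graphs, and then read off the three possibilities from Observation \ref{ObsUnstableNoRCliques}. First I would fix the forking data. Since $S$ and $T$ divide and every simple theory has Morley sequences, $R$ is the unique nonforking relation; so by Observation \ref{IsoToM} we have $R(a)\cong M$, and since $R$ is nonforking and (by primitivity and Observation \ref{AlgClosure}) non-algebraic, $M$ embeds infinite $R$-cliques. By Proposition \ref{PropCliques}, neither $S(a)$ nor $T(a)$ embeds an infinite $R$-clique. Each of $S(a),T(a)$ is a homogeneous (Proposition \ref{PropHomNeigh}), $\omega$-categorical, simple structure (a $3$-graph, or a $2$-graph if $R$ is not realised there), simple because it is interpretable in $M$.

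Next I would exploit $S\sim T$. In any indiscernible half-graph witnessing $S\sim T$ the two horizontal cliques cannot be $R$-cliques: an initial segment $(a_ib_i)_{i\le n}$ of such a half-graph lies inside $S(v)$ (respectively $T(v)$) for a suitable later member $v$ of the sequence, so an $R$-horizontal clique would force arbitrarily large finite $R$-cliques, and hence (by homogeneity and $\omega$-categoricity) an infinite $R$-clique, inside some $S$- or $T$-neighbourhood, contradicting Proposition \ref{PropCliques}. Thus, as in Observation \ref{ObsTwoOutcomes}, we may assume without loss of generality that $S$ and $T$ are unstable in $S(a)$ and that both $S(a)$ and $T(a)$ embed infinite $S$-cliques; moreover the indiscernible half-graph itself embeds into $S(a)$ by the compactness-and-homogeneity argument of Observation \ref{ObsIsoRG}. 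Because $S(a)$ is homogeneous and embeds no infinite $R$-clique, a greedy-extension argument (all $R$-cliques of a fixed size lie in one orbit, so one extends to the next size iff all do) shows its $R$-clique number is bounded: $K_n^R\in\age(S(a))$ but $K_{n+1}^R\notin\age(S(a))$ for some $n$.

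The crux is to prove $\age(\Gamma^{ST})\subseteq\age(S(a))$. Here I would argue inside the standalone theory $\Th(S(a))$: the embedded half-graph shows that every vertex of $S(a)$ is $S$- and $T$-joined to infinite monochromatic cliques of $S(a)$, which, no infinite $R$-clique being available, are exactly the Morley sequences of $\Th(S(a))$; hence $S$ and $T$ are nonforking in $S(a)$ while $R$ forks. One then builds an arbitrary finite $\{S,T\}$-graph one vertex at a time, amalgamating the two nonforking one-point extensions by the Independence Theorem exactly as in Proposition \ref{NonforkingAmalgamation}, using lowness (Proposition \ref{CatSimpLow}) to replace equality of Lascar strong types by equality of strong types. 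This yields $\age(\Gamma^{ST})\subseteq\age(S(a))\subseteq\age(M)$, which is the ``in particular'' clause.

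Finally, $S(a)$ is a simple unstable homogeneous $3$-graph with $\age(\Gamma^{ST})\subseteq\age(S(a))$ and bounded $R$-clique number, so Observation \ref{ObsUnstableNoRCliques} gives that $S(a)$ is imprimitive and isomorphic to one of the three structures in the statement ($C(\Gamma^{ST})$, $\Gamma^{ST}[K_n^R]$, or $K_n^R[\Gamma^{ST}]$). For $T(a)$ I would feed $\age(\Gamma^{ST})\subseteq\age(M)$ into Observation \ref{ObsSTUnstable} to conclude that $S$ and $T$ are unstable in $T(a)$ as well, repeat the crux argument inside $T(a)$ to obtain $\age(\Gamma^{ST})\subseteq\age(T(a))$, and apply Observation \ref{ObsUnstableNoRCliques} once more. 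I expect the main obstacle to be the crux paragraph: justifying that $S$ and $T$ genuinely behave as nonforking relations relative to the base point inside the neighbourhood even though they fork in $M$, and then carrying out the amalgamation with the correct strong-type bookkeeping. This bookkeeping — in particular whether a finite equivalence relation such as $\overline{S\vee T}$ appears — is precisely what separates the three imprimitive outcomes.
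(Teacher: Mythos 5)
Your early steps (horizontal cliques of a half-graph for $S\sim T$ cannot be $R$-cliques, hence $S\sim^ST$ or $S\sim^TT$; the half-graph embeds into $S(a)$; $S(a)$ has bounded $R$-clique number) match the paper and are sound, and your endgame (quote Observation \ref{ObsUnstableNoRCliques} once $\age(\Gamma^{ST})\subseteq\age(S(a))$ is known) would be legitimate. The genuine gap is in your crux. You assert that the infinite monochromatic cliques of $S(a)$ ``are exactly the Morley sequences of $\Th(S(a))$'' and conclude that both $S$ and $T$ are nonforking there. What comes for free is much less: Morley sequences of $\Th(S(a))$ exist, are monochromatic cliques, and cannot be $R$-cliques, so \emph{at least one} of $S,T$ is nonforking in $\Th(S(a))$, and all infinite cliques of \emph{that one} colour are Morley sequences. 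An infinite clique of the other colour is a Morley sequence only if the other relation is nonforking --- which is exactly what you are trying to prove, so the claim is circular. The half-graph repairs this in one direction only: if the $S$-cliques are Morley sequences, then along the horizontal $S$-clique both $\{S(x,\alpha_i):i\in\omega\}$ and $\{T(x,\alpha_i):i\geq 1\}$ are finitely satisfiable, so $T$ is nonforking as well. But in the opposite case --- Morley sequences of $\Th(S(a))$ are $T$-cliques and $S$ forks in $\Th(S(a))$ --- you have no witness: $S\sim^ST$ guarantees only that \emph{one} horizontal clique is an $S$-clique, and the second horizontal clique may also be an $S$-clique, so nothing in your data exhibits a vertex $S$-related to a consistent family along a $T$-clique. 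Excluding this case is real work: one passes to an $S$-neighbourhood inside $S(a)$ (which would then contain only infinite $S$-cliques), applies Proposition \ref{PropTwoInfiniteCliques} to get imprimitivity, and rules out both candidate equivalence relations via the Lachlan--Woodrow Theorem and Theorem \ref{ThmCGamma}/Corollary \ref{CorFiniteClasses}. That is precisely the argument the paper packages as Proposition \ref{PropImprimitivity2} and invokes directly to get imprimitivity of $S(a)$, reading off the three structures (and only then the age inclusion) from the case analysis; your route was meant to bypass it but cannot.

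Two smaller points. First, your claim that $R$ forks in $\Th(S(a))$ is false in one of the admissible outcomes: in $K_n^R[\Gamma^{ST}]$ the formula $R(x,b)$ is non-algebraic and does not divide (every infinite indiscernible sequence lies in a single $S\vee T$-class, and any point of another class is $R$-related to all of it). This is harmless to your construction but shows the forking calculus inside the neighbourhood is more delicate than stated. Second, even granting that $S,T$ are nonforking in $\Th(S(a))$, you cannot cite Proposition \ref{NonforkingAmalgamation} as stated, since its hypothesis --- no $\varnothing$-definable finite equivalence relation --- fails exactly in the outcome $K_n^R[\Gamma^{ST}]$, where $S\vee T$ is a finite equivalence relation; the induction must be redone noting that every new vertex of the $\{S,T\}$-graph under construction is $S\vee T$-related to the previous ones, so all realisations can be chosen in a single class and hence of a single strong type. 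Your closing remark gestures at this bookkeeping but does not carry it out. The circularity described above is, however, the substantive defect: as written, $\age(\Gamma^{ST})\subseteq\age(S(a))$ is not proved.
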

\begin{proof}
	If $S\sim T$ holds, then, as $R$ is nonforking, we have that at least one of $S\sim^ST$ and $S\sim^TT$ holds. Suppose that $S\sim^ST$ holds, so $S,T$ are unstable in $S(a)$. Then $S(a)$ is an unstable 3-graph not embedding infinite $R$-cliques and with witnesses for $S\sim^ST$, so it is imprimitive by Proposition \ref{PropImprimitivity2}. If $R$ defines an equivalence relation on $S(a)$, then $S(a)$ is isomorphic to $C(\Gamma^{ST})$ or to  $\Gamma^{ST}[K_n^R]$. And if $S\vee T$ is an equivalence relation, then $S(a)\cong K_n^R[\Gamma^{ST}]$. 

Notice that any of these conclusions implies $\age(\Gamma^{ST})\subset\age(M)$, so $S\sim^TT$ also holds and we can carry out the same argument for $T(a)$.
\end{proof}

\begin{proposition}
	Let $M$ be a primitive simple 3-graph in which $R,S,T$ are unstable, $R(x,a)$ is nonforking over $\varnothing$, $S,T$ are forking, and $D(M)=(2,2,2)$. Then $R\not\sim^SS$ and $R\not\sim^TT$.
\label{PropRSRT}
\end{proposition}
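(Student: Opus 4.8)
Since the hypotheses are symmetric in $S$ and $T$, it suffices to prove $R\not\sim^S S$; the statement $R\not\sim^T T$ then follows by interchanging the two forking relations. I argue by contradiction: assuming $R\sim^S S$, I will force some $S$-neighbourhood $S(a)$ to contain an infinite $R$-clique, contradicting Proposition \ref{PropCliques} (as $S$ divides and $R$ is the unique nonforking relation, $S(a)$ contains no infinite $R$-clique). Unpacking the refined compatibility (Definition \ref{DefRefineComp}), $R\sim^S S$ supplies an indiscernible half-graph $(a_i,b_i)_{i\in\omega}$ witnessing $R\sim S$ in which one horizontal clique, say $\{a_i\}$, is an $S$-clique. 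Using the embedding trick of Observation \ref{ObsIsoRG}, each initial segment $\{a_i,b_i:i\le n\}$ lies inside $S(a_{n+1})$: the edges $a_{n+1}a_i$ are coloured $S$ because $\{a_i\}$ is an $S$-clique, and the edges $a_{n+1}b_i$ are coloured $S$ because they are the non-$R$ cross-edges of the half-graph. By homogeneity $S(a_{n+1})\cong S(a)$ and, by compactness, the whole half-graph embeds into $S(a)$; hence $R$ and $S$ are both \emph{unstable} in the homogeneous structure $S(a)$ (homogeneous by Proposition \ref{PropHomNeigh}). The symmetric choice in which $\{b_i\}$ is the $S$-clique is handled identically, capturing a shifted segment inside $S(b_m)$.

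I then run a structural dichotomy on $S(a)$. If $T$ is not realised in $S(a)$, then $S(a)$ is a homogeneous simple unstable graph in the two colours $R,S$, so it is the Random Graph by the Lachlan--Woodrow Theorem \ref{LachlanWoodrow}, which has infinite $R$-cliques --- a contradiction. If $T$ is realised, then $S(a)$ is a genuine simple unstable $3$-graph with no infinite $R$-clique, so Proposition \ref{PropOneIsStable} applies: one relation is stable and is an equivalence relation or the complement of one. As $R$ and $S$ are unstable in $S(a)$, the stable relation is $T$, and either $T$ or $R\vee S$ is an equivalence relation on $S(a)$. The option $R\vee S$ is excluded, since its classes would be homogeneous unstable $\{R,S\}$-graphs, i.e. copies of the Random Graph, again yielding infinite $R$-cliques. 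Hence $T$ is an equivalence relation on $S(a)$. This appeal to Proposition \ref{PropOneIsStable} is the linchpin: it forces imprimitivity of $S(a)$ and thereby sidesteps a separate (and at this stage unavailable) treatment of a primitive $S(a)$ with two forking relations.

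It remains to eliminate the equivalence relation $T$ on $S(a)$ by its class structure. If the $T$-classes are finite, then by Theorem \ref{ThmCGamma} and Corollary \ref{CorFiniteClasses} (with $T$ playing the role of the finite equivalence relation and $R,S$ the transversal colours) $S(a)\cong C(\Gamma^{R,S})$ or $S(a)\cong\Gamma^{R,S}[K_n^T]$, both of which embed infinite $R$-cliques. If there are finitely many infinite classes, each class is a homogeneous unstable $\{R,S\}$-graph, hence the Random Graph; here the remaining horizontal clique of the witnessing half-graph cannot be a $T$-clique (there are only finitely many classes), so all its edges are coloured $R$ or $S$ and the half-graph is confined to a single class, producing infinite $R$-cliques. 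If there are infinitely many infinite classes, Proposition \ref{PropImprimitiveInfClasses}, applied after relabelling $T\mapsto S$ and $S\mapsto T$, rules the configuration out directly. In every case we contradict Proposition \ref{PropCliques}, so $R\not\sim^S S$, and by symmetry $R\not\sim^T T$.

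\textbf{Main obstacle.} I expect the delicate points to be, first, verifying that the $R\sim^S S$ half-graph embeds into $S(a)$ with all the correct colours --- this is exactly where the refined superscript notation, as opposed to bare compatibility $R\sim S$, is essential, since it pins down the colour of a horizontal clique and places the configuration inside an $S$-neighbourhood --- and second, controlling the colours of the two horizontal cliques in the class-size analysis so that the witnessing half-graph can be confined to a single $T$-class when that is needed. The reduction through Proposition \ref{PropOneIsStable} is what keeps the case analysis finite and avoids circularity with the two-forking-relation results of the current section.
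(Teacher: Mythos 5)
Your proof follows the paper's strategy almost step for step: embed the half-graph witnessing $R\sim^SS$ into $S(a)$ via the argument of Observation \ref{ObsIsoRG}, note that $S(a)$ embeds no infinite $R$-cliques by Proposition \ref{PropCliques}, force imprimitivity of $S(a)$, rule out $R\vee S$ as the invariant equivalence relation (its classes would be Random Graphs), and then eliminate $T$ as an equivalence relation by analysing its classes. Your two substitutions are harmless: where the paper gets imprimitivity (and hence the $T$-versus-$R\vee S$ dichotomy) from Proposition \ref{PropImprimitivity1}, you use Proposition \ref{PropOneIsStable}; where the paper finishes the infinitely-many-infinite-classes case with Proposition \ref{PropUnstabInfCliques}, you use the relabelled Proposition \ref{PropImprimitiveInfClasses}. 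Both of your citations precede this proposition in the text and their hypotheses are verified by $S(a)$, so there is no circularity; you also do not need the paper's separate Henson-graph step for the non-2-transitive quotient, since Proposition \ref{PropImprimitiveInfClasses} handles that situation internally through its $RST$ dichotomy.

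The genuine problem is your middle case, ``finitely many infinite $T$-classes.'' Once $T$ is the equivalence relation on $S(a)$, its classes are $T$-cliques; they are emphatically not ``homogeneous unstable $\{R,S\}$-graphs'' --- you have carried over the description of the $R\vee S$-classes from the case you had already excluded. Likewise, the claim that the second horizontal clique ``cannot be a $T$-clique because there are only finitely many classes'' is backwards: an infinite $T$-clique sits inside a single $T$-class, and in this case the classes are infinite, so nothing rules it out; and a half-graph all of whose edges are $R$ and $S$ can never be ``confined to a single class'' of $T$, so that part of the argument has no content here. The case does collapse, but for a different and much simpler reason, which is exactly the paper's one-line argument: the horizontal $S$-clique you embedded into $S(a)$ is an infinite $S$-clique, and an $S$-clique meets each $T$-class in at most one point, so $T$ must have infinitely many classes. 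Hence ``finitely many infinite classes'' is vacuous and you pass directly to your final case, where the relabelled Proposition \ref{PropImprimitiveInfClasses} finishes the job. With that one repair your proof is correct.
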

\begin{proof}
	We will prove only $R\not\sim^SS$. Suppose for a contradiction that $R\sim^SS$ holds, so $R$ and $S$ are compatible in $S(a)$. Then $S(a)$ is imprimitive by Proposition \ref{PropImprimitivity1}, as it does not embed infinite $R$-cliques, so one of $R\vee S$ or $T$ is an equivalence relation on $S(a)$. The relation $R\vee S$ is not an equivalence relation on $S(a)$, as its classes would be isomorphic to the Random Graph. Therefore $T$ is an equivalence relation on $S(a)$.

	The $T$-classes cannot be finite by Theorem \ref{ThmCGamma} and Corollary \ref{CorFiniteClasses}. And if $\aut(M/a)$ does not act 2-transitively on $S(a)/T$, then the same argument as in Claim \ref{ClaimInterpretsHenson} proves that $M$ interprets a Henson graph, contradicting the simplicity of $M$. Since $S$ forms infinite cliques, $T$ must have infinitely many classes. But this contradicts the fact that $S(a)$ does not embed infinite $R$-cliques, by Proposition \ref{PropUnstabInfCliques}.

	The proof for $R\not\sim^TT$ is similar.
\end{proof}
%\begin{proposition}
%	Let $M$ be a primitive simple 3-graph in which $R,S,T$ are unstable and $R(x,a)$ is the only nonforking relation, with $D(M)=(2,2,2)$. If $S\sim T$, then $S(a)$ and $T(a)$ are unstable and imprimitive, each isomorphic to one of $C(\Gamma), \Gamma^{ST}[K_n^R]$ or $K_n^R[\Gamma^{ST}]$. In particular, $\age(\Gamma^{S,T})\subset\age(S(a)),\age(T(a))$.
%\label{PropStrST}
%\end{proposition}
%\begin{proof}
%	Since $R$ is nonforking, we have that $S\sim T$ implies $S\sim^ST$ or $S\sim^TT$. Suppose without loss of generality $S\sim^ST$. Then $S(a)$ is an unstable 3-graph that does not embed infinite $R$-cliques. By Proposition \ref{PropImprimitivity2}, $S(a)$ is imprimitive. From this and $S\sim^ST$ it follows that one of $R$ or $S\vee T$ defines an equivalence relation on $S(a)$. If $R$ defines an equivalence relation, then $S(a)$ is isomorphic to $C(\Gamma)$ or $\Gamma[K_n^R]$, by Theorem \ref{ThmCGamma} and Corollary \ref{CorFiniteClasses}. And if $S\vee T$ defines an equivalence relation, then each class is isomorphic to the Random Graph, by the Lachlan-Woodrow Theorem. In any case we get $\age(\Gamma^{ST})\subset\age(S(a))$ and therefore $S\sim^TT$, so we can carry out the same argument for $T(a)$.
%\end{proof}

Proposition \ref{PropStrST} leaves us with six cases to analyse under $S\sim T$ (nine in principle, but we can eliminate three of them by the symmetry of the hypotheses on $S$ and $T$), listed in table \ref{TableCases}.

\begin{table}[!h]
\centering
\caption{Possible structures under $S\sim T$}
\begin{tabular}{ccc} 
\hline\hline 
Case & $S(a)$ & $T(a)$ \\ [0.5ex]
\hline 
I & $C(\Gamma^{ST})$ & $\Gamma^{ST}[K_n^R]$ \\
II & $C(\Gamma^{ST})$ & $K_n^R[\Gamma^{ST}]$ \\
III & $\Gamma^{ST}[K_n^R]$ & $K_m^R[\Gamma^{ST}]$ \\
IV & $C(\Gamma^{ST})$ & $C(\Gamma^{ST})$ \\
V & $\Gamma^{ST}[K_n^R]$ & $\Gamma^{ST}[K_m^R]$ \\ 
VI&$K_n^R[\Gamma^{ST}]$&$K_m^R[\Gamma^{ST}]$\\[1ex]
\hline 
\label{TableCases} 
\end{tabular}
\end{table}

We can eliminate Case I easily by considering any $c\in T(a)$, and noticing that its $S$-neighbourhood contains a copy of $K_{2,2}$ in which the two sides of the partition are $R$-edges and the edges are of colour $S$ (or $T$). This is contradicts homogeneity because $C(\Gamma)$ does not embed that graph. Case III can be eliminated in a similar manner: for any $b\in S(a)$ the set $T(b)$ contains complete bipartite graphs of the same kind (\ie two dosjoint $R$-cliques with all other edges of colour $S$), and this does not embed in $T(a)$ (these arguments apply whether $R$ is stable or not). Case II is more complicated.

\begin{proposition}
	There are no primitive simple homogeneous 3-graphs $M$ in which  $R$ is the only nonforking relation, $S\sim T$, $R\sim S$ (if $R$ is unstable), $S(a)\cong C(\Gamma^{ST})$ and $T(a)\cong K_n^R[\Gamma^{ST}]$.
\label{PropCaseII}
\end{proposition}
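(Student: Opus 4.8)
The plan is to exploit the sharp difference between the two prescribed neighbourhoods: in $S(a)\cong C(\Gamma^{ST})$ the reflexive closure of $R$ is an equivalence relation all of whose classes have size $2$, whereas in $T(a)\cong K_n^R[\Gamma^{ST}]$ the relation $R$ is the ``different block'' relation and the blocks are infinite. First I would record the basic data: since $R$ is the only nonforking relation it forms infinite cliques, so $K_m^R\in\age(M)$ for every $m$, while $S$ and $T$ both divide, and by Observation \ref{IsoToM} we have $R(a)\cong M$. Then I would isolate the two features of $C(\Gamma^{ST})$ that drive the argument: it embeds no $K_3^R$, and for any $R$-edge $\{x,x'\}$ of $C(\Gamma^{ST})$ every other vertex $w$ is non-$R$ to both $x$ and $x'$ and receives opposite colours from $x$ and $x'$ (the ``twist''). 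By contrast, in $K_n^R[\Gamma^{ST}]$ an $R$-edge $\{x,x'\}$ occupies two distinct blocks, so every other vertex is $R$-related to at least one of $x,x'$: the opposite behaviour.

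Next I would observe that every $R$-edge of $M$ has a common $S$-neighbour. Indeed $SSR\in\age(M)$, witnessed by $a$ together with any $R$-edge inside $S(a)$; since all $R$-edges of $M$ are conjugate under $\aut(M)$ by homogeneity, every $R$-edge acquires such a neighbour (and likewise a common $T$-neighbour, from $TTR\in\age(M)$ via $T(a)$). Now I fix an $R$-edge $\{x,x'\}$ inside $T(a)$ with $x,x'$ in distinct blocks and choose a common $S$-neighbour $c$; as no vertex of $K_n^R[\Gamma^{ST}]$ is $S$-adjacent to two vertices in different blocks, necessarily $c\notin T(a)$, so $c\in S(a)\cup R(a)$. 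In $S(c)\cong C(\Gamma^{ST})$ the pair $\{x,x'\}$ is a full $R$-class, so the twist applies; but for every $v\in T(a)\setminus\{x,x'\}$ at least one of the colours of $vx$, $vx'$ is $R$, which the twist forbids. Hence no such $v$ lies in $S(c)$, and I obtain the rigid conclusion $S(c)\cap T(a)=\{x,x'\}$.

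The final step is to convert this rigidity into a contradiction by passing to $R(c)\cong M$ or to $T(c)\cong K_n^R[\Gamma^{ST}]$ and reading off the decomposition $T(a)=\{x,x'\}\sqcup(R(c)\cap T(a))\sqcup(T(c)\cap T(a))$ (no vertex of $T(a)$ outside $\{x,x'\}$ is $S$-related to $c$). Splitting on whether $R(a,c)$ or $S(a,c)$ holds, in the first case $a$ is a vertex of the copy $R(c)\cong M$, so $R(c)\cap T(a)$ is exactly the $T$-neighbourhood of $a$ inside that copy, hence again isomorphic to $K_n^R[\Gamma^{ST}]$; comparing this copy of the block structure with the block partition of $T(a)$, and using that $c$ is $S$-related to exactly the two vertices $x,x'$ (one in each of two distinct blocks), pins down $c$'s colour on whole blocks and should yield either a $K_3^R$ inside some $S$-neighbourhood, a colour-graph on an $R(c)$-piece that is disconnected (contradicting the primitivity of $R(c)\cong M$), or two vertices with equal quantifier-free but distinct complete type over $\{a,c\}$, contradicting homogeneity.

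I expect this last step to be the main obstacle: the twist pins down $S(c)\cap T(a)$ cleanly, but extracting the contradiction means tracking $c$'s colours block by block across $T(a)$ and $T(c)$, and the bookkeeping genuinely depends on the position of $c$ relative to $a$ and on the value of $n$. In particular the degenerate case $n=2$ (where $K_2^R[\Gamma^{ST}]$ embeds no $K_3^R$, so the $K_3^R$-based contradictions are unavailable) will need separate treatment, presumably by combining the twist with $S\sim^S T$ and $S\sim^T T$ to force the complete-bipartite configuration $H_S$ (two $R$-edges joined completely in colour $S$) into a single $S$-neighbourhood, which $C(\Gamma^{ST})$ forbids.
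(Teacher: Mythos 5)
Your rigidity lemma is correct, and it is a genuinely different mechanism from the paper's. The paper fixes $c\in T(a)$, decomposes $S(c)\cong C(\Gamma^{ST})$ over the pieces $S(a)$, $T(a)$, $R(a)$, shows by homogeneity that one of $S(c)\cap S(a)$, $S(c)\cap R(a)$ must be a union of $R$-classes of $S(c)$, and in either case builds an array of parameters witnessing TP2 for the formula $R(x,a)\wedge S(x,b)$, contradicting simplicity. You instead play the size-$2$ $R$-classes of $C(\Gamma^{ST})$ against the block structure of $K_n^R[\Gamma^{ST}]$, and your conclusion $S(c)\cap T(a)=\{x,x'\}$ for any common $S$-neighbour $c$ of an $R$-edge $\{x,x'\}\subset T(a)$ is sound. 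In fact your twist gives more than you use: $c\in S(a)$ is outright impossible, since then $a\in S(c)$ would be $S$-related to exactly one of $x,x'$, whereas $T(a,x)\wedge T(a,x')$ holds; so every common $S$-neighbour lies in $R(a)$, and by homogeneity $S(c)\cap T(a)$ is an $R$-edge for \emph{every} $c\in R(a)$.

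The genuine gap is the endgame: no contradiction is ever derived, and the mechanisms you gesture at do not work as stated. The step ``pins down $c$'s colour on whole blocks'' is unjustified: the vertices of the block of $T(a)$ containing $x$, other than $x$ itself, are not conjugate over $\{a,c,x,x'\}$ (their edge to $x$ can be $S$ or $T$), so homogeneity does not force $R(c)$ or $T(c)$ to swallow blocks whole, and there is no apparent reason it should. The ``disconnected colour-graph on an $R(c)$-piece'' device is the one the paper uses in Lemma \ref{LemmmaNo233}, where it is powered by a diameter-$3$ separation that kills all edges of a given colour between two pieces; here $D(M)=(2,2,2)$ and no such separation is available. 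And the triple ``should yield either\ldots or\ldots or\ldots'' is speculation, not an argument. So the proof stops exactly where you predicted it would be hard.

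For what it is worth, your reduction can be pushed to a contradiction, but it needs a further idea and still splits on $n$, which confirms that the gap is substantive rather than cosmetic. Since all common $S$-neighbours of all $R$-edges of $T(a)$ lie in $R(a)$, their fibres form an $\aut(M/a)$-invariant partition of $R(a)$; because $R(a)\cong M$ (Observation \ref{IsoToM}) is primitive, either the partition has one class, making the edge $\{x,x'\}$ invariant over $a$ and contradicting $\acl(a)=\{a\}$ (Observation \ref{AlgClosure}), or it is a partition into singletons, giving an $\aut(M/a)$-equivariant bijection between $R(a)$ and the set of $R$-edges of $T(a)$; for $n\geq3$ one pulls back the proper nontrivial invariant equivalence relation ``same pair of blocks'' and contradicts primitivity of $R(a)$. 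Exactly as you feared, $n=2$ escapes this argument (there is only one pair of blocks) and needs separate, nontrivial treatment; your $H_S$ idea aims at the right forbidden configuration, but you give no route to placing $H_S$ together with a common $S$-neighbour inside $\age(M)$. As submitted, the proposition is not proved.
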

\begin{proof}
	Suppose for a contradiction that $M$ satisfies all the conditions in the statement. We will prove that there is a formula with the TP2. 

	Consider any $c\in T(a)$. Then $S(c)$ consists of an $R$-free subset of $T(a)$, isomorphic to $\Gamma^{ST}$, and two non-empty subsets $X\subset S(a), Y\subset R(a)$. The sets $S(c)\cap T(a)$ and $S(c)\cap S(a)$ are nonempty because $D(M)=(2,2,2)$, so the triangles $SST, TTS$ are in $\age(M)$. Note that $S(a)\cong C(\Gamma^{ST})$ implies that the triangle $RST$ embeds into $M$, so $S(c)\cap R(a)\neq\varnothing$.

	By homogeneity, $S(c)\cong S(a)$, so one of $X$, $Y$ is a union of $R$-classes in $S(c)$, while the other is $R$-free (homogeneity excludes the possibility of one of $X,Y$ containing both a full $R$-class and an unpaired element). We eliminate these cases in the following claims.
\begin{claim}\label{Claimtp21}
	It is not the case that $S(c)\cap R(a)$ is a union of $R$-classes over $c$ if $R(a)\cap S(c)$ is infinite.
\end{claim}
\begin{proof}
	Suppose for a contradiction that $S(c)\cap R(a)$ is a union of $R$-classes over $c$. The set $S(c)\cap T(a)$ is infinite and isomorphic to the Random Graph, and is in definable bijection (via $R$) with $S(c)\cap S(a)$. 

	Since $S(c)\cap R(a)$ is a union of $R$-classes and $S(c)\cap S(a)$ is $R$-free, then the following structure on four vertices is a minimal forbidden configuration.
\[
\includegraphics[scale=0.7]{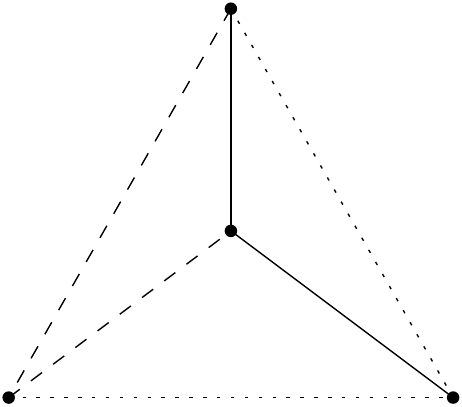}
\]

Clearly, $\age(\Gamma^{S,T})\subset\age(M)$, so in particular we can find witnesses $(a_ib_i)_{i\in\omega}$ to $T\sim^T_TS$. The sequence $\Phi=(a_ib_i)_{i\in\omega}$ also witnesses that the formula $R(x,a)\wedge S(x,b)$ 2-divides over $\varnothing$.

	Since $S(c)\cap R(a)$ is a union of $R$-classes over $c$, we know by the structure of $S(a)$ and homogeneity that there are no $R$-edges from $S(c)\cap R(a)$ to $S(c)\cap S(a)$, and there are edges of colours $S$ and $T$ between $S(c)\cap R(a)$ and $S(c)\cap S(a)$ (this follows from the fact that the triangles $SSR$, $TTR$ are forbidden in $C(\Gamma)$. 

\[
\includegraphics[scale=0.7]{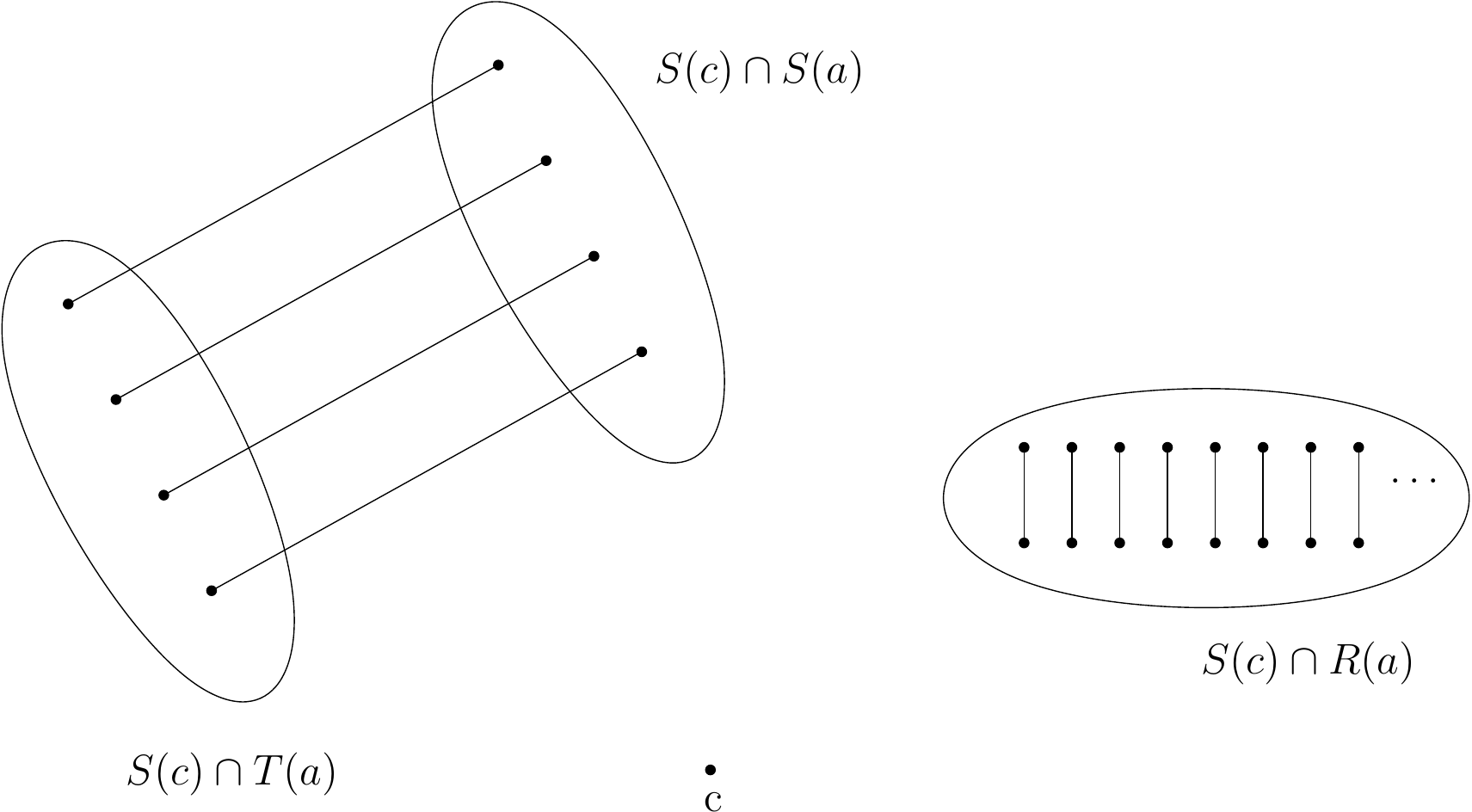}
\]

(Only $R$-edges are shown in the diagram.)

	Given an element $p\in S(c)\cap(R(a)\cup S(a))$, its orbit under $\aut(M/ac)$ is either $S(c)\cap R(a)$ or $S(c)\cap S(a)$. Therefore, $\aut(M/ac)$ acts without finite orbits on $p\in S(c)\cap(R(a)\cup S(a))$ and we can find infinitely many distinct $S$-edges in $S(c)\cap(R(a)\cup S(a))$. Take any infinite sequence $\Sigma$ of $T$-edges in that set. Then $\Sigma$ is spans an $R$-free structure.

	Now we can use the fact that $\age(\Gamma^{ST})\subset\age(M)$ to find an array of parameters $a_i^j,b_k^l$ ($i,j,k,l\in\omega$) such that $\{a_i^j b_k^j:i,k\in\omega\}$ is isomorphic to $\Phi$, and for any $f:\omega\rightarrow\omega$ the set $\{a_{f(i)}^i b_{f(i)}^i:i\in\omega\}$ is isomorphic to $\Sigma$. We conclude that $R(x,a)\wedge S(x,b)$ has the TP2.
\end{proof}

Note that if $R$ is unstable, then since $R\sim^TS$ (by Proposition \ref{PropRSRT}), the set $S(c)\cap R(a)$ is infinite, so Claim \ref{Claimtp21} proves in particular that $S(c)\cap R(a)$ is not a union of $R$-classes in $S(c)$ if $R$ is unstable. And if $R$ is stable and $S(c)\cap R(a)$ is a finite union of $R$-classes in $S(c)$, then we have only the instability $S\sim T$, and can find indiscernible isomorphic half-graphs $X,Y$ witnessing it in $R(a)$ and $S(c)$. There is, by homogeneity, a sequence $(\sigma_i:i\in\omega)$ in $\aut(M)$ that takes increasingly large intial segments of $Y$ to $X$, so by closedness of $\aut(M)$ there is some $\sigma\in\aut(M)$ taking $Y$ to $X$. Then $Y\subset T(\sigma(c))$ and $Y\subset R(\sigma(a))$, so $T(c)\cap R(a)$ is infinite by homogeneity, and Claim \ref{Claimtp21} is also valid when $R$ is stable.

\begin{claim}
	It is not the case that $S(c)\cap S(a)$ is a union of $R$-classes over $c$.
\end{claim}
\begin{proof}
	The proof is similar to that of Claim \ref{Claimtp21}, but in this case the minimal forbidden configuration is 
\[
\includegraphics[scale=0.7]{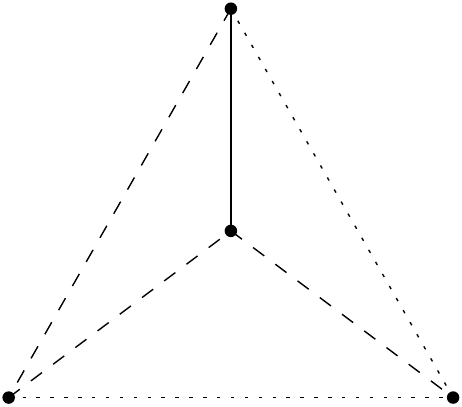}
\]

So there are no $R$-edges from $S(c)\cap T(a)$ to $S(c)\cap S(a)$ and this last set is a union of $R$-classes over $c$. Again, $S(c)\cap S(a)$ is infinite as a consecuence of $R\sim^T S$ if $R$ is unstable, and of $S\sim^ST$ if $R$ is stable, and $R(x,a)\wedge S(x,b)$ 2-divides as witnessed by a sequence of $T$-edges witnessing $S\sim^TT$ in which both monochromatic cliques are of colour $T$. The same argument as in Claim \ref{Claimtp21} proves that we can find an infinite sequence $R$-free sequence of $T$-edges $(a_ib_i)_i\in\omega$ such that $\{R(x,a_i)\wedge S(x,b_i):i\in\omega\}$ is consistent. Now using $\age(\Gamma^{ST})\subset\age(M)$, we can prove the TP2 for $R(x,a)\wedge S(x,b)$.
\end{proof}

We have reached a contradiction as homogeneity implies that one of $S(c)\cap S(a)$ or $S(c)\cap R(a)$ is a union of $R$-classes over $c$.
\end{proof}

\begin{remark}
	The same argument, with the appropriate modifications, shows that there are no primitive homogeneous simple 3-graphs $M$ in which $R,S,T$ are unstable $S\sim T$, $R\sim T$, $T(a)\cong C(\Gamma^{ST})$, and $S(a)\cong K_n^R[\Gamma^{ST}]$. In all cases we find the forbidden structures from the claims and can complete the same arguments.
\label{RmkOtherCases}
\end{remark}
Remark \ref{RmkOtherCases} implies the following:
\begin{proposition}
 There are no primitive homogeneous simple unstable 3-graphs in which only $R$ is nonforking, with $S\sim T$, such that $S(a)\cong C(\Gamma^{ST})$ and $T(a)\cong K_n^R[\Gamma^{ST}]$.
\end{proposition}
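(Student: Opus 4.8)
The plan is to observe that the present statement is exactly Proposition~\ref{PropCaseII} with the hypothesis ``$R\sim S$ (if $R$ is unstable)'' deleted, so that the only new work is the case in which $R$ is unstable and $R\not\sim S$. First I would dispose of everything else: if $R$ is stable, the parenthetical clause of Proposition~\ref{PropCaseII} is vacuous and that proposition applies verbatim; and if $R$ is unstable with $R\sim S$, Proposition~\ref{PropCaseII} applies directly. Since $S\sim T$ forces $S$ and $T$ to be unstable, in the remaining case all three predicates are unstable, so by Remark~\ref{RmkCompatibilityGraph} the compatibility graph on $\{R,S,T\}$ is connected; as $R\not\sim S$, this leaves $R\sim T$.

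Next I would pin down the shape of an indiscernible half-graph witnessing $R\sim T$: one of its two horizontal cliques is contained, after deleting a finite initial segment, in a set $T(v)\cong T(a)\cong K_n^R[\Gamma^{ST}]$, which embeds no infinite $R$-clique; combined with $R\not\sim^TT$ from Proposition~\ref{PropRSRT}, this forces that clique to be an $S$-clique, \ie $R\sim^ST$. This is precisely the input needed to run the argument behind Remark~\ref{RmkOtherCases}: taking $c\in T(a)$ and decomposing $S(c)\cong S(a)\cong C(\Gamma^{ST})$ into its $R$-free part lying in $T(a)$ together with the two pieces $S(c)\cap S(a)$ and $S(c)\cap R(a)$, homogeneity forces exactly one of these last two to be a union of $R$-classes over $c$, while $R\sim^ST$ guarantees that the decisive piece is \emph{infinite}. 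One then exhibits the two forbidden four-vertex configurations of the claims in Proposition~\ref{PropCaseII} and, using $\age(\Gamma^{ST})\subseteq\age(M)$, assembles the array $a_i^j,b_k^l$ witnessing that $R(x,a)\wedge S(x,b)$ (or its $S\leftrightarrow T$ mirror) has TP2, contradicting the simplicity of $M$. Observation~\ref{IsoToM} is used throughout to transfer the structure of $M$ to the neighbourhoods being analysed.

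The main obstacle is exactly this adaptation. In Proposition~\ref{PropCaseII} the infiniteness of the decisive intersection was supplied by $R\sim^TS$, available because $R\sim S$; here that compatibility is absent and the same infiniteness must instead be recovered from $R\sim T$ via the clique-colour analysis above and Proposition~\ref{PropRSRT}. Once the horizontal-clique colours are fixed and the two minimal forbidden configurations are recognised as the mirror images of those in the original proof, the construction of the TP2 array---the consistency of $\{R(x,a_i)\wedge S(x,b_i):i\in\omega\}$ along every path together with the $2$-inconsistency of the columns---is routine and introduces no idea beyond Proposition~\ref{PropCaseII} and the $S\leftrightarrow T$ symmetry of the setting.
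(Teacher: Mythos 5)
Your reduction is correct, and it is actually more complete than what the paper itself records. The paper derives the present statement in one line from Remark \ref{RmkOtherCases}; but that remark is precisely the $S\leftrightarrow T$ mirror image of Proposition \ref{PropCaseII} (in both of them the unstable relation compatible with $R$ is the one whose neighbourhood is $C(\Gamma^{ST})$), so the two statements together cover only your cases ``$R$ stable'' and ``$R$ unstable with $R\sim S$''. The residual case you isolate --- $R$ unstable, $R\not\sim S$, hence $R\sim T$ by Remark \ref{RmkCompatibilityGraph}, so that the $R$-compatible relation is the one whose neighbourhood is $K_n^R[\Gamma^{ST}]$ --- is not a literal instance of either statement, and your way of closing it (the horizontal cliques of an $R\sim T$ half-graph cannot be $R$-cliques because arbitrarily large pieces of them lie inside $T$-neighbourhoods $\cong K_n^R[\Gamma^{ST}]$, and cannot be $T$-cliques by Proposition \ref{PropRSRT}, whence $R\sim^ST$; then rerun the TP2 machinery) is the right one. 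So you follow the paper's strategy, but you supply a step the paper leaves implicit.

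Two points to tighten when writing this out. First, your colour analysis pins down only the horizontal clique that sits inside a single $T$-neighbourhood, but the infiniteness of $S(c)\cap R(a)$ for $c\in T(a)$ needs the clique on the \emph{other} side to be an $S$-clique: in the half-graph with $R(a_i,b_j)$ iff $i\leq j$ and $T(a_i,b_j)$ iff $i>j$, one takes $a=a_k$, $c=b_0$, and then $S(c)\cap R(a)\supseteq\{b_j:j\geq k\}$ requires $\{b_j:j\in\omega\}$ to be an $S$-clique; fortunately the same two-pronged argument applies to it (its initial segments lie in the sets $T(a_i)$, so it is not an $R$-clique, and Proposition \ref{PropRSRT} rules out colour $T$). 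Second, the second claim of Proposition \ref{PropCaseII} needs $S(c)\cap S(a)$ infinite, and the $R\sim^ST$ half-graph cannot supply this, since its two $S$-cliques are joined only by $R$- and $T$-edges; here you should instead invoke $S\sim^ST$, which holds because $T(a)\cong K_n^R[\Gamma^{ST}]$ gives $\age(\Gamma^{ST})\subseteq\age(M)$ --- the paper uses exactly this source of infiniteness ``if $R$ is stable'', but that argument nowhere uses the stability of $R$. With these two emendations your plan goes through as stated.
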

Note that the arguments in Claim \ref{Claimtp21} depend only on $S(c)\cap T(a)$ being $R$-free, isomorphic to the Random Graph, and on the structure of $S(a)$. This means that we can apply the same methods to eliminate Case IV.
\begin{proposition}
	There are no primitive simple homogeneous 3-graphs $M$ in which $R,S,T$ are unstable, $S\sim T$, $S(a)\cong C(\Gamma^{ST})$ and $T(a)\cong C(\Gamma^{ST})$.
\end{proposition}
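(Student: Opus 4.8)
The plan is to reduce this case to Proposition \ref{PropCaseII}, whose proof, as noted in the remark immediately preceding the statement, uses only two features of the configuration: that $S(c)\cap T(a)$ is $R$-free and isomorphic to the Random Graph, and the internal shape of $S(a)\cong C(\Gamma^{ST})$. Suppose for a contradiction that such an $M$ exists, fix any $a\in M$ and any $c\in T(a)$. Since $D(M)=(2,2,2)$ we have $SST,TTS\in\age(M)$, while $S(a)\cong C(\Gamma^{ST})$ forces $RST\in\age(M)$; hence the three sets $S(c)\cap R(a)$, $S(c)\cap S(a)$, $S(c)\cap T(a)$ into which $S(c)$ is partitioned by the relation to $a$ are all nonempty. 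By transitivity and homogeneity, $S(c)\cong S(a)\cong C(\Gamma^{ST})$.

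The only genuinely new step is to verify the two features on which the argument of Proposition \ref{PropCaseII} rests. Because $T(a)\cong C(\Gamma^{ST})$ has $R$-classes of size $2$, Observation \ref{NotCompBipart} (with $n=2$, so $r=1$) shows that $c$ is $S$-related to exactly one vertex of each $R$-class of $T(a)$ other than its own; therefore $S(c)\cap T(a)$ meets each $R$-class of $T(a)$ in at most one point and is $R$-free. Moreover, as an $S$-neighbourhood inside $C(\Gamma^{ST})$, the image of $S(c)\cap T(a)$ in the underlying random graph is a vertex-neighbourhood, hence again isomorphic to $\Gamma^{ST}$; so $S(c)\cap T(a)$ is $R$-free and isomorphic to the Random Graph, exactly as in Case II.

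With these two features in hand the remainder is identical to Proposition \ref{PropCaseII}. Since $S(c)\cong C(\Gamma^{ST})$ has $R$-classes of size $2$ and $S(c)\cap T(a)$ is $R$-free, homogeneity forces exactly one of $S(c)\cap S(a)$, $S(c)\cap R(a)$ to be a union of $R$-classes over $c$ while the other is $R$-free (the $R$-partners of the vertices in $S(c)\cap T(a)$ being distributed among these two pieces). I would then apply Claim \ref{Claimtp21} and its companion claim verbatim: in the first configuration $S(c)\cap R(a)$ is infinite (by $R\sim^TS$ when $R$ is unstable, cf.\ Proposition \ref{PropRSRT}, and by the closure argument given there when $R$ is stable), and each configuration yields the relevant minimal forbidden four-point structure together with an $R$-free sequence of $T$-edges, so that the formula $R(x,a)\wedge S(x,b)$ has TP2, contradicting the simplicity of $M$.

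The main obstacle is thus confined to the middle paragraph: checking that the piece $S(c)\cap T(a)$ is $R$-free and isomorphic to the Random Graph when $T(a)\cong C(\Gamma^{ST})$ rather than $K_n^R[\Gamma^{ST}]$. Because the TP2 witnesses in Claim \ref{Claimtp21} are assembled only from this piece and from the internal structure of $S(a)\cong C(\Gamma^{ST})$, they are insensitive to the precise isomorphism type of $T(a)$, and no further case analysis is required.
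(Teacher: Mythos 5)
Your overall route is exactly the paper's: the paper's entire proof of this proposition is ``by the same arguments as in Proposition~\ref{PropCaseII}'', licensed by the remark that Claim~\ref{Claimtp21} uses only that $S(c)\cap T(a)$ is $R$-free and isomorphic to the Random Graph, plus the structure of $S(a)\cong C(\Gamma^{ST})$. You correctly isolate those two facts as the only things needing verification, and your $R$-freeness argument (Observation~\ref{NotCompBipart} with $n=2$, $r=1$, i.e.\ the matching structure of $C(\Gamma^{ST})$) is fine.

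However, your justification of the second fact is wrong as written. In $C(\Gamma^{ST})$ the $S$-neighbourhood of a vertex $c$ meets \emph{every} $R$-class except the class of $c$ in exactly one point, so its image under the quotient map to the underlying random graph $\Gamma$ is all of $V(\Gamma)$ minus one vertex --- not a vertex-neighbourhood. Moreover, the induced $S,T$-structure on this transversal is not an induced subgraph of $\Gamma$: writing the classes as $\{v_{2n},v_{2n+1}\}$ with $c=v_{2m}$, two points of $S(c)\cap T(a)$ are $S$-related iff the $\Gamma$-edge relation between their classes \emph{agrees with whether the two points have the same parity}; that is, the structure is $\Gamma$ minus a vertex, Seidel-switched about the set of classes not $\Gamma$-adjacent to $w_m$. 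That this switched graph is again the Random Graph is true but requires an argument: either verify the extension property directly (when hunting for a witness, add $w_m$ to the constraint set so as to control the parity class of the witness), or --- closer to the paper's toolkit --- note that $S(c)\cap T(a)$ is homogeneous by Proposition~\ref{PropHomNeigh} applied inside $T(a)$, simple, $R$-free, and unstable by Observation~\ref{ObsSTUnstable} (since $\age(\Gamma^{ST})\subset\age(C(\Gamma^{ST}))$, as all-even-parity transversals realise every finite $S,T$-graph), whence Theorem~\ref{LachlanWoodrow} identifies it as the Random Graph. With that repair, the remainder of your argument coincides with the paper's and goes through.
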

\begin{proof}
	By the same arguments as in Proposition \ref{PropCaseII}.
\end{proof}
\begin{proposition}
	There are no primitive simple unstable homogeneous 3-graphs $M$ in which only $R$ is nonforking, $S\sim T$, $S(a)\cong\Gamma^{ST}[K_n^R]$ and $T(a)\cong\Gamma^{ST}[K_m^R]$.
\end{proposition}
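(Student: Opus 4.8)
The plan is to pin down the two parameters first and then extract a contradiction from the fact that, after this normalisation, the two neighbourhoods become isomorphic. Throughout we are in the case $D(M)=(2,2,2)$, so by Observation \ref{IsoToM} we have $R(a)\cong M$, and since $R$ is the unique nonforking relation the Independence Theorem embeds infinite $R$-cliques into $M$. The quantitative input is Proposition \ref{PropCliques}: as $S$ and $T$ divide, neither $S(v)$ nor $T(v)$ contains an infinite $R$-clique, and more precisely, since $S(v)\cong\Gamma^{ST}[K_n^R]$ and $T(v)\cong\Gamma^{ST}[K_m^R]$, every $R$-clique $K$ of $M$ satisfies $|K\cap S(v)|\le n$ and $|K\cap T(v)|\le m$ for every $v$.

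The first real step is to show $n=m$. Fix $c\in T(a)$, so that $S(c)\cong\Gamma^{ST}[K_n^R]$ by homogeneity. The set $S(c)\cap T(a)$ is precisely the $S$-neighbourhood of $c$ computed inside $T(a)\cong\Gamma^{ST}[K_m^R]$, hence a union of $R$-classes of $T(a)$, each an $R$-clique of size $m$. These are $R$-cliques lying inside $S(c)$, whose $R$-cliques have size at most $n$; therefore $m\le n$. Running the symmetric argument with $b\in S(a)$ and the set $T(b)\cap S(a)$ (a union of size-$n$ $R$-classes of $S(a)$, now viewed as $R$-cliques inside $T(b)\cong\Gamma^{ST}[K_m^R]$) yields $n\le m$, so $n=m$ and, up to isomorphism, $S(a)\cong T(a)$. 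This is exactly why the local ``forbidden-in-one-neighbourhood, present-in-the-other'' trick used for Cases I and III cannot be repeated here: $S(a)$ and $T(a)$ have the same age, so the contradiction must be global.

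The pivot is the single triangle $RST$. By transitivity and the diameter hypothesis one has $RST\in\age(M)$ if and only if there is an $R$-edge joining $S(a)$ to $T(a)$, the witnessing triangle being $a,b,c$ with $S(a,b)$, $T(a,c)$, $R(b,c)$. I would split on this. Suppose first $RST\notin\age(M)$. Using $R(a)\cong M$ to see that $RRR,RRS,RRT,RSS,RTT$ as well as all monochromatic and two-coloured triangles already occur, the only forbidden triangle is the rainbow $RST$, so every member of $\age(M)$ is rainbow-free. I would then contradict the amalgamation property of $\age(M)$ directly: one produces a base $A$ with one-point extensions $B=A\cup\{b\}$ and $C=A\cup\{c\}$ in $\age(M)$ such that over three points of $A$ the vertex $b$ realises colours $(R,R,S)$ and $c$ realises $(S,T,T)$; then the pairs $(R,S)$, $(R,T)$, $(S,T)$ force the edge $bc$ to avoid $T$, $S$ and $R$ respectively, so no colour of $bc$ gives a rainbow-free amalgam. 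Here $C$ can be taken inside a copy of $\Gamma^{ST}[K_n^R]$ (hence certainly in $\age(M)$); the delicate point, which has to be settled separately, is membership of $B$, which carries an $RRS$-triangle and so is \emph{not} a substructure of any neighbourhood — this must be certified inside $R(b)\cong M$.

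If instead $RST\in\age(M)$, I expect the contradiction to come from simplicity rather than amalgamation, in the spirit of Claim \ref{Claimtp21}: the configuration ``$v$ is $S$-adjacent to an $R$-clique of size $n+1$'' is forbidden (this is just $|K\cap S(v)|\le n$), and combined with $\age(\Gamma^{ST})\subseteq\age(M)$ and the abundance of infinite $R$-cliques this is the raw material for a tree property of the second kind for a formula built from $S(x,y)$ evaluated along $R$-cliques. \emph{I expect this branch to be the main obstacle.} The difficulty is structural rather than cosmetic: the very bound $|K\cap S(v)|\le n$ that makes a row (an infinite $R$-clique) inconsistent also obstructs the naive placement of the array, since two large $R$-cliques can be joined entirely neither by $S$ nor by $T$ (that would again force a vertex to be $S$- or $T$-adjacent to an oversized $R$-clique). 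The array therefore has to be arranged with care — most plausibly with single-vertex columns drawn from pairwise disjoint $R$-cliques and with the cross-column edges chosen inside a copy of $\Gamma^{ST}$ so that every vertical transversal is $R$-free and admits a common $S$-neighbour — and it is the simultaneous consistency of all vertical paths, against the $(n+1)$-inconsistency of the rows, that requires the genuine work and contradicts the simplicity of $\Th(M)$.
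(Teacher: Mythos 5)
Your opening step coincides with the paper's: $n=m$ holds because an $R$-class of $T(a)$ lying inside $S(c)$ is an $R$-clique of $S(c)\cong\Gamma^{ST}[K_n^R]$, giving $m\le n$, and symmetrically $n\le m$. After that, however, the proposal stops being a proof. You split on whether $RST\in\age(M)$, and neither branch is closed. In the branch $RST\notin\age(M)$, the amalgamation-failure scheme needs the one-point extension $B$ (a vertex realising $(R,R,S)$ over the base) to lie in $\age(M)$; as you yourself note, $B$ contains an $RRS$-triangle, so it embeds in no $S$- or $T$-neighbourhood, and the hypotheses only describe $S(a)$, $T(a)$ and (via Observation \ref{IsoToM}) $R(a)$ --- they say nothing about how $R(b)$ and $S(b)$ are wired together, which is exactly what certifying $B$ would require. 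In the branch $RST\in\age(M)$ --- which you identify as the main case --- you only conjecture that a TP2 array can be assembled and explicitly defer ``the genuine work'' of arranging it. So the central contradiction is missing in both cases; what you have is a case division plus a plan, not a proof.

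The gap is avoidable, because the paper's own argument needs neither an AP failure nor TP2 and never splits on $RST$. After $n=m$, fix $b\in S(a)$; by homogeneity $S(b)\cong\Gamma^{ST}[K_n^R]$, so $S(b)$ is partitioned into $R$-classes of size $n$, and note $a\in S(b)$. Then: (i) $S(b)\cap S(a)$ is a union of such classes (it is the $S$-neighbourhood of $b$ computed inside $S(a)$); (ii) no class lies inside $R(a)$, since a full class in $R(a)$ together with $a$ would be a $K_{n+1}^R$ inside $S(b)$ --- from which the paper concludes that $S(b)\cap T(a)$ is \emph{not} a union of $R$-classes over $b$; (iii) nevertheless $S(b)\cap T(a)$ \emph{contains} a full class: the auxiliary structure $H$ consisting of an $S$-edge $vw$ together with a $K_n^R$ contained in $S(v)\cap T(w)$ embeds into $T(a)\cong\Gamma^{ST}[K_n^R]$ itself (take three $R$-classes whose quotient triangle in $\Gamma^{ST}$ is $S,S,T$), hence $H\in\age(M)$, and mapping the edge $vw$ onto $ba$ by homogeneity plants a $K_n^R$ inside $S(b)\cap T(a)$. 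Now (ii) and (iii) produce two vertices of $S(b)\cap T(a)$ with the same quantifier-free type over $ab$ whose $R$-classes in $S(b)$ behave differently with respect to $T(a)$ (one contained in $T(a)$, one not), and since any automorphism over $ab$ fixes $T(a)$ setwise and permutes the classes, this contradicts homogeneity. The methodological point your proposal misses is that every auxiliary configuration can be certified inside a single neighbourhood $S(a)$ or $T(a)$ --- precisely the data the hypotheses hand you --- whereas both of your branches hinge on configurations (an $RRS$-containing amalgamation base, or an array of large $R$-cliques) that no single neighbourhood contains.
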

\begin{proof}
	Suppose for a contradiction that $M$ is a homogeneous 3-graph satisfying all the conditions in the statement. 

	First note that $n=m$. If $n<m$ then for any $c\in T(a)$ there are $R$-cliques of size $m$ in $S(c)\cap T(a)$, contradicting homogeneity. The same argument proves that $m$ cannot be smaller than $n$.

	Now consider $b\in S(a)$ and $S(b)$. The structure of $S(a)$ implies that $S(b)\cap S(a)$ is a union of infinitely many $R$-classes in $S(a)$, and in fact isomorphic to $S(a)$. Note that $S(b)\cap R(a)$ cannot be a union of $R$-classes over $b$, since a full $R$-class over $b$ in $R(a)$ would mean that $S(b)$ embeds $K_{n+1}^R$, impossible by homogeneity. From this it follows that $S(b)\cap T(a)$ is not a union of $R$-classes over $b$.

	But now note that $S(b)\cap T(a)$ should embed $K_n^R$, since the structure consisting of two vertices $v,w$ joined by an $S$-edge and $c_1,\ldots,c_n$ forming an $R$-clique, with $T(w,c_i)$ and $S(v,c_i)$ is in $\age(M)$ because it can be embedded in $S(a)$ or $T(a)$. We have reached a contradiction.

\comm
	From this observation and the structure of $T(a)$, it follows that $S(c)\cap T(a)\cong S(a)$, and in particular is an infinite union of $R$-classes over $c$. 

The set $S(c)\cap S(a)$ is also infinite because $\age(\Gamma^{ST})\subset\age(M)$, so in particular we can find any finite $ST$-graph $G$ such that there are two special vertices $g_1,g_2$ satisfying $T(g_1,g_2)$ and $S(x,g_1), S(x,g_2)$ for all $x\neq g_1,g_2$ in $G$. But $S(c)\neq S(a)$ because otherwise we would be able to define an equivalence relation $Q$ on $M$. Clearly, $S(c)\cap S(a)$ cannot be $R$-free, since $S(c)$ is isomorphic to $S(a)$ and $S(c)\cap T(a)$ is a union of classes.

Moreover, $S(c)\cap S(a)$ is a union of $R$-classes, as it is a proper subset of $S(a)$ and if it were not a union of $R$-classes then we could find elements $u,v\in S(a)\setminus S(c)$ such that there exists $u'\in S(c)\cap S(a)$ with $R(u,u')$ and the $R$-class of $v$ does not meet $S(c)\cap S(a)$. In this case we have $\qftp(u/ac)=\qftp(v/ac)$ but $\tp(u/ac)\neq\tp(v/ac)$, contradicting homogeneity. From this it follows that each of $S(c)\cap S(a), S(c)\cap T(a)$, and $S(c)\cap R(a)$ is a union of $R$-classes. 

Finally, $S(c)\cap R(a)$ is also infinite because $R\sim^TS$ witnesses the existence of an infinite $T$-clique in $S(c)\cap R(a)$. Therefore there are no $R$-edges between any two of $S(c)\cap S(a)$, $S(c)\cap T(a)$ and $S(c)\cap R(a)$. %By the same argument, there are no $R$-edges between any two of $S(c)\cap S(a), S(c)\cap R(a)$, and $S(c)\cap T(a)$. 
This means that the following six configurations are forbidden in $M$:
\[
\includegraphics[scale=0.7]{StableForking8.pdf}
\]

	The hypotheses already give us that the triangle $RST$ is forbidden in $S(a)$ and $T(a)$, as $\aut(M/a)$ does not act 2-transitively on $S(a)/R$. Therefore, the following two are also forbidden:
\[
\includegraphics[scale=0.7]{StableForking9.pdf}
\]

Clearly, the structure
\[
\includegraphics[scale=0.7]{StableForking10.pdf}
\]
is in $\age(M)$, as the triangle $SST$ can be embedded in $T(a)$. Analysing the structures $C_1$ and $\alpha$, we conclude that the amalgamation problem 
\[
\includegraphics[scale=0.7]{StableForking11.pdf}
\]
has $R$ as a forced, \ie unique, solution ($T(b,c)$ gives $C_1$, and $S(b,c)$ gives $\alpha$). Similarly from $C_4$ and $\beta$ we get that 
\[
\includegraphics[scale=0.7]{StableForking12.pdf}
\]
has $R$ as a forced solution; call that structure $D$. From $C_1$ and $C_3$ we find that the problem
\[
\includegraphics[scale=0.7]{StableForking13.pdf}
\]
has $T$ as a forced solution. It follows that the Amalgamation Property fails as the problem
\[
\includegraphics[scale=0.7]{StableForking14.pdf}
\]
has no solution in $\age(M)$: by our preceding arguments, the structures on $a_0,a_1,a_2,b$ and $a_0,a_1,a_2,c$ are both in $\age(M)$, the former being $B$ and the latter $D$, but $b,c,a_0,a_1$ has forced solution $R(b,c)$ and $b,c,a_1,a_2$ has forced solution $T(b,c)$. Homogeneity fails.
\ent
\end{proof}
\begin{proposition}
	There are no primitive simple homogeneous 3-graphs $M$ in which $S,T$ are unstable, $R$ is the only nonforking relation, $S\sim T$, $S(a)\cong K_n^R[\Gamma^{ST}]$ and $T(a)\cong K_m^R[\Gamma^{ST}]$.
\label{PropLastCaseSsimT}
\end{proposition}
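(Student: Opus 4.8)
The plan is to argue by contradiction: suppose such an $M$ exists. Throughout I would use that $R(a)\cong M$ (Observation \ref{IsoToM}), that $D(M)=(2,2,2)$, that $\age(\Gamma^{ST})\subseteq\age(M)$, and the description of $K_n^R[\Gamma^{ST}]$: here $S\vee T$ is an equivalence relation with $n$ infinite classes (the \emph{blocks}), $R$ is complete $n$-partite, and the maximal $R$-cliques have size exactly $n$; in particular $2\le n,m<\omega$ (finiteness because $S(a),T(a)$ contain no infinite $R$-clique by Proposition \ref{PropCliques}, and $n,m\ge2$ because $R$ is realised inside both $S(a)$ and $T(a)$ as $D(M)=(2,2,2)$). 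Inspecting the triangles induced in $S(a)$, $T(a)$ and in $\{a\}\cup S(a)$, $\{a\}\cup T(a)$ shows that $\age(M)$ contains every triangle on colours $S,T$ together with $RRR,RRS,RRT,RSS,RTT$, and that the membership of the triangle $RST$ is the one fact not yet forced. The whole argument turns on this triangle.

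First I would show $RST\in\age(M)$. If not, then there are no $R$-edges between $S(a)$ and $T(a)$ and no $T$-edges between $R(a)$ and $S(a)$, since either would complete an $RST$-triangle through $a$. Fix $u\in S(a)$ and consider $T(u)\cong K_m^R[\Gamma^{ST}]$. The absence of $T$-edges from $S(a)$ to $R(a)$ gives $T(u)\cap R(a)=\varnothing$, so $T(u)\subseteq S(a)\cup T(a)$; both $T(u)\cap S(a)$ and $T(u)\cap T(a)$ are nonempty, witnessed by the triangles $SST$ and $STT$ through $a,u$. Fix $z\in T(u)\cap S(a)$. For every $v\in T(u)\cap T(a)$ the pair $z,v$ has one endpoint in $S(a)$ and one in $T(a)$, hence is not an $R$-edge, so $z$ and $v$ share a block of $T(u)$; the same holds for any two elements of $T(u)\cap S(a)$. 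Thus $T(u)$ is contained in a single block, hence is $R$-free, contradicting $T(u)\cong K_m^R[\Gamma^{ST}]$ with $m\ge2$. Therefore $RST\in\age(M)$ and every triangle is realised in $M$.

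It remains to obtain a contradiction once $RST\in\age(M)$, and this is the heart of the matter. The obstruction is now a $4$-point configuration rather than a triangle: since $S\vee T$ is an equivalence relation on each $S(a')$, no vertex can be joined by $S$ to all three vertices of an $RSS$-, $RST$- or $RTT$-triangle, and symmetrically (from the structure of $T(a')$) no vertex can be joined by $T$ to such a triangle. Equivalently, an $RSS$-, $RST$- or $RTT$-triangle has neither a common $S$-neighbour nor a common $T$-neighbour. I would turn these forbidden configurations into a violation of simplicity exactly as in Proposition \ref{PropCaseII}: working with the formula $R(x,a)\wedge S(x,b)$, I would use an indiscernible half-graph produced from $\age(\Gamma^{ST})\subseteq\age(M)$ to see that this formula $2$-divides (the $2$-inconsistency being precisely the ``no common $S$-neighbour'' statement for the relevant triangle), and then use $\age(\Gamma^{ST})\subseteq\age(M)$ together with the Independence Theorem to build a two-dimensional array of parameters whose rows are $2$-inconsistent but all of whose transversals are consistent. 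This is the tree property of the second kind for $R(x,a)\wedge S(x,b)$, contradicting the simplicity of $M$. A purely combinatorial alternative is to imitate the amalgamation-failure analysis already used in this section for the symmetric case $S(a)\cong\Gamma^{ST}[K_n^R]$: the forbidden $4$-point structures force unique completions of certain $4$-point amalgamation problems, and these forced completions collide in a $5$-point problem, so the amalgamation property — and hence homogeneity — fails.

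The hard part, and the only genuinely delicate step, is this last one: assembling the ``no common $S$- or $T$-neighbour'' constraints into an honest TP2-array (checking simultaneously the $k$-inconsistency of the rows and the consistency of every transversal) or into an unsolvable amalgamation diagram. The triangle bookkeeping and the elimination of the case $RST\notin\age(M)$ are routine; all of the difficulty is concentrated in converting the rigidity of the block structure into a contradiction with simplicity.
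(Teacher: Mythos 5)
Your two preliminary steps are correct: the argument that $RST\in\age(M)$ (the paper gets this for free from $R\sim^ST$ when $R$ is unstable, so your derivation is in fact slightly more general), and the identification of the forbidden configurations coming from the block structure of the neighbourhoods (no vertex can be $S$-related, or $T$-related, to all three vertices of a triangle with exactly one $R$-edge). But the proof stops exactly where the proposition begins: you yourself concede that converting these constraints into a TP2 array or an amalgamation failure is ``the only genuinely delicate step,'' and that step is never carried out. That step \emph{is} the statement.

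Moreover, the plan you sketch for it would not go through as written. First, the $2$-inconsistency: a common solution $x$ of two instances $R(x,a_i)\wedge S(x,b_i)$, $R(x,a_j)\wedge S(x,b_j)$ has only two $S$-neighbours and two $R$-neighbours among the parameters, so it can never realise a ``common $S$-neighbour of a triangle'' configuration \emph{among the parameters}; the forbidden configuration has to be taken as the triangle $\{x,a_i,b_i\}$ with apex $b_j$, which works only for a half-graph of a very particular shape (the $b_i$ forming an $S$-clique and $S(a_i,b_j)$ holding for $i<j$), a choice you never specify. That part is repairable. What is not repairable by the means you invoke is the consistency half of TP2: you must exhibit an $R$-free pattern of pairs every copy of which in $M$ admits a common solution, $R$-related to the $a$'s and $S$-related to the $b$'s. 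Any such solution introduces $R$-edges into the configuration, so its existence does \emph{not} follow from $\age(\Gamma^{ST})\subseteq\age(M)$; and since $S$ and $T$ both fork here, the Independence Theorem cannot produce it either. In Proposition \ref{PropCaseII} this step succeeded only because the geometry of $C(\Gamma^{ST})$ supplied a concrete witness (the vertex $a$ itself solves the instances given by $T$-edges between $S(c)\cap R(a)$ and $S(c)\cap S(a)$); here you supply no witness, and the paper's own proof shows the structure is too rigid to contain one. The paper in fact avoids TP2 entirely in this case: it fixes $b\in S(a)$, proves (Claims \ref{ClaimClasses1}--\ref{ClaimClasses4}) that the $S\vee T$-class of $T(b)$ meeting $S(a)$ is exactly $T(b)\cap S(a)$, then (Claim \ref{ClaimClasses5} and the closing argument) that every $S\vee T$-class of $T(b)$ lies inside a single one of $S(a),T(a),R(a)$, whence there are no $S$- or $T$-edges between $T(b)\cap T(a)$ and $T(b)\cap S(a)$, contradicting $\age(\Gamma^{ST})\subseteq\age(M)$. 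Your forbidden configurations are the right raw material, but the block-alignment analysis that turns them into a contradiction is missing, and it is the heart of the proof.
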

\begin{proof}
	Suppose for a contradiction that $M$ is a primitive simple homogeneous 3-graph as in the statement. 

Given any $a\in M$, let $b\in S(a)$ and $c\in T(b)\cap S(a)$. By homogeneity, $T(b)\cong T(a)$.

	The set $T(b)\cap S(a)$ is contained in one $S\vee T$-class over $b$, by the structure of $S(a)$. We use the notation $x/P^y$, where $x,y$ are vertices of $M$ and $P$ is a formula defining an equivalence relation in $T(y)$, to denote the $P$-class of $x$ in $T(y)$.

\begin{claim}
	If $c/(S\vee T)^b\cap T(a)\neq\varnothing$, then for any $d\in T(b)$ such that $R(c,d)$ holds we have $d/(S\vee T)^b\subset R(a)$.
\label{ClaimClasses1}\end{claim}
\begin{proof}
	If the claim were false, we would be able to $d\in c/(S\vee T)^b$ and $d'\in T(b)\cap T(a)$ such that $R(d,d')$. These two elements have the same quantifier-free type over $ab$, but there is no $\sigma\in\aut(M/ab)$ taking $d$ to $d'$, since $d$ satisfies the existential formula $\varphi(y)=\exists x(T(x,b)\wedge S\vee T(y,x)\wedge T(b,y))$ (where $c$ is the quantified $x$), but $d'$ does not satisfy such a formula (see the illustration below, assuming $T(c,d)$).
\[
\includegraphics[scale=0.7]{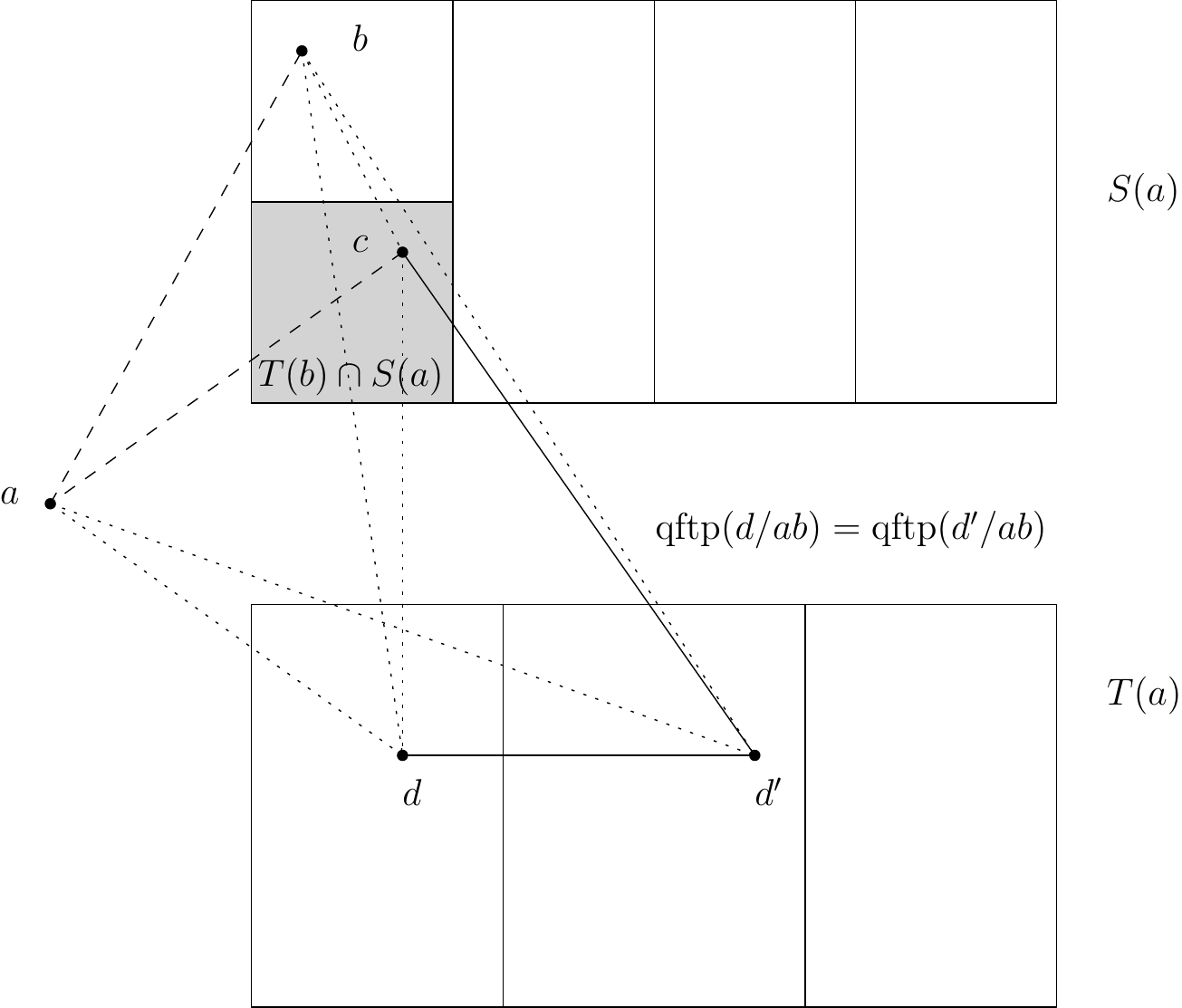}
\]
\end{proof}
By the same argument, 
\begin{claim}
	If $c/(S\vee T)^b\cap R(a)\neq\varnothing$, then for any $d\in T(b)$ such that $R(c,d)$ holds we have $d/(S\vee T)^b\subset T(a)$.
\label{ClaimClasses2}\hfill$\Box$
\end{claim}
From Claims \ref{ClaimClasses1} and \ref{ClaimClasses2}, it follows that $c/(S\vee T)^b\cap T(a)=\varnothing$ or $c/(S\vee T)^b\cap R(a)or\varnothing$, as otherwise $T(b)$ would consist of only one $S\vee T$-class, contradicting homogeneity as $S\vee T$ is a proper equivalence relation on $T(a)$. What happens if $c/(S\vee T)^b$ extends to only one of $T(a),R(a)$?
\begin{claim}
	$c/(S\vee T)^b\cap R(a)=\varnothing$.
\label{ClaimClasses3}
\end{claim}
\begin{proof}
	Suppose that $c/(S\vee T)^b\cap R(a)\neq\varnothing$. Then, by Claim \ref{ClaimClasses2} $c/(S\vee T)^b\cap T(a)=\varnothing$ and the other $S\vee T$-classes of $T(b)$ are contained in $T(a)$. It follows in particular that there are no $S$- or $T$-edges from $T(b)\cap T(a)$ to $T(b)\cap S(a)$. In particular, the structure
\[
\includegraphics[scale=0.7]{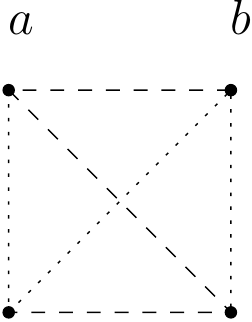}
\]
is forbidden, contradicting the fact (implied by $S(a)\cong K_n^R[\Gamma]$) that $\age(\Gamma^{ST})\subset\age(M)$.
\end{proof}
\begin{claim}
	$c/(S\vee T)^b\cap T(a)=\varnothing$.
\label{ClaimClasses4}
\end{claim}
\begin{proof}
	Arguing as in Claim \ref{ClaimClasses3}, we find that if $c/(S\vee T)^b\cap T(a)\neq\varnothing$, then there are no $S$- or $T$-edges from $T(b)\cap S(a)$ to $T(b)\cap R(a)$ or from $T(b)\cap T(a)$ to $T(b)\cap R(a)$. In particular, the following two structures are forbidden:
\[
\includegraphics[scale=0.7]{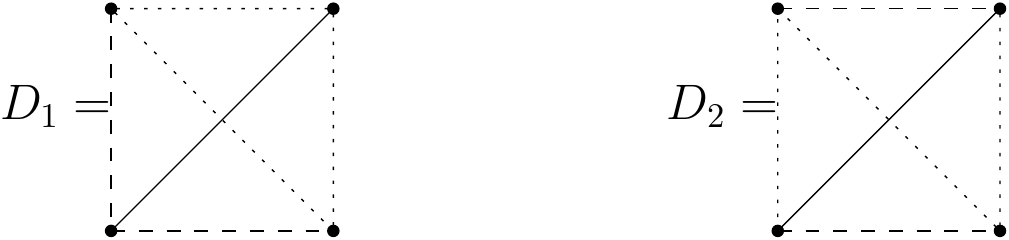}
\]
This is impossible as we have three unstable relations, so by Remark \ref{RmkCompatibilityGraph} at least one of $R\sim S$, $R\sim T$ holds. By Proposition \ref{PropRSRT}, one of $R\sim^ST,R\sim^TS$ holds as $R\not\sim^SS$ and $R\not\sim^TT$ and $R$ is the only nonforking relation. But $D_1$ is in the age of an indiscernible half-graph witnessing $R\sim^TS$ and $D_2$ is in the age of the half-graph for $R\sim^ST$.
\end{proof}
From Claims \ref{ClaimClasses1} to \ref{ClaimClasses4}, we conclude that $c/(S\vee T)^b=T(b)\cap S(a)$.
\begin{claim}
	There are at least three $S\vee T$-classes in $T(a)$.
\label{ClaimClasses5}
\end{claim}
\begin{proof}
	We know that $S\vee T$ is a proper equivalence relation in $T(a)$. Suppose for a contradiction that there are only two $S\vee T$-classes in $T(a)$. Then the $S\vee T$-class in $T(b)$ of any $d$ with $R(d,c)$ meets both $R(a)$ and $T(a)$ (it meets $R(a)$ because $T\sim^SR$, and it meets $T(a)$ because $\age(\Gamma^{ST})\subset\age(M)$). This implies that there are no $S$- or $T$-edges from $T(b)\cap S(a)$ to $T(b)\cap T(a)$ or from $T(b)\cap S(a)$ to $T(b)\cap R(a)$; in particular, this implies that the structure
\[
\includegraphics[scale=0.7]{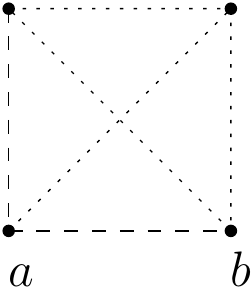}
\]
is forbidden, contradicting $\age(\Gamma^{ST})\subset\age(M)$.
\end{proof}
The same argument from Claim \ref{ClaimClasses5} shows that there are no $S\vee T$-classes $C$ in $T(b)$ such that $C\cap T(a)\neq\varnothing$ and $C\cap R(a)\neq\varnothing$. From this we conclude that each $S\vee T$-class in $T(b)$ is contained in one of $S(a),T(a),R(a)$. But this implies that there are no $S$- or $T$- edges from $T(b)\cap T(a)$ to $T(b)\cap S(a)$, again contradicting $\age(\Gamma^{ST})\subset\age(M)$.
\end{proof}
Propositions \ref{PropStrST} to \ref{PropLastCaseSsimT} prove 
\begin{lemma}\label{lemmaST}
	There are no primitive homogeneous simple 3-graphs in which all relations are unstable and only $R$ is nonforking satisfying $S\sim T$.
\hfill$\Box$\end{lemma}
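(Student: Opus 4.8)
The plan is to argue by contradiction: assume a primitive homogeneous simple 3-graph $M$ exists with $R,S,T$ all unstable, $R$ the unique nonforking relation, and $S\sim T$. The entire argument is driven by Proposition \ref{PropStrST}, which I would invoke at the outset: under these hypotheses the compatibility $S\sim T$ forces $\age(\Gamma^{ST})\subset\age(M)$ and pins down each of $S(a)$ and $T(a)$ (for arbitrary $a$) to be imprimitive and isomorphic to one of the three graphs $C(\Gamma^{ST})$, $\Gamma^{ST}[K_n^R]$, or $K_n^R[\Gamma^{ST}]$. This collapses the problem to finitely many configurations for the pair $(S(a),T(a))$: nine in principle, but by the symmetry of the hypotheses in $S$ and $T$ only the six listed in Table \ref{TableCases} need to be treated, and the lemma follows once all six are eliminated.

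Next I would dispatch the two ``mismatched shape'' cases (I and III) by a direct homogeneity obstruction. In each, one neighbourhood carries its $R$-cliques as the blocks of a wreath product while the other is $C(\Gamma^{ST})$ or places its $R$-cliques on the opposite side. Choosing a vertex $c$ in the first neighbourhood and inspecting $S(c)$ (respectively $T(b)$), which by homogeneity must be isomorphic to the prescribed graph, one exhibits a copy of $K_{2,2}^S$ whose two parts are $R$-edges: this small $\{R,S\}$-configuration is forced to embed but cannot embed in $C(\Gamma)$. This is the cleanest part of the argument and needs no forking-theoretic input.

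The substantial cases are II, IV, V, and VI. For Cases II and IV (where $S(a)\cong C(\Gamma^{ST})$) I would follow Proposition \ref{PropCaseII}: examine $S(c)$ for $c$ in the other neighbourhood, use homogeneity to force exactly one of $S(c)\cap S(a)$ and $S(c)\cap R(a)$ to be a union of $R$-classes over $c$, verify that this piece is infinite (using $R\sim^TS$, which holds by Proposition \ref{PropRSRT} together with Remark \ref{RmkCompatibilityGraph} when $R$ is relevant, or a closure-of-$\aut(M)$ argument otherwise), and then exploit $\age(\Gamma^{ST})\subset\age(M)$ to thread a two-dimensional array of parameters $a_i^j,b_k^l$ realising TP2 for the formula $R(x,a)\wedge S(x,b)$, contradicting simplicity; Case IV follows by the same scheme. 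Cases V and VI I would close by the finer analysis of how the $R$-classes (in the $\Gamma^{ST}[K_n^R]$ shape) or the $S\vee T$-classes (in the $K_n^R[\Gamma^{ST}]$ shape) of a neighbourhood $T(b)$ distribute among the three parts $S(a),T(a),R(a)$ of $M$, as in Proposition \ref{PropLastCaseSsimT}: in every admissible distribution the requirement that certain $\{S,T\}$-configurations lie in $\age(\Gamma^{ST})\subset\age(M)$ collides with the forced absence of cross-part $S$- and $T$-edges, with Case V additionally yielding an explicit failure of amalgamation.

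The main obstacle is the TP2 construction underlying Case II. The delicate point is the simultaneous bookkeeping: one must guarantee that the rows of the array are $2$-inconsistent for $R(x,a)\wedge S(x,b)$ while every diagonal $\{a_{f(i)}^i b_{f(i)}^i:i\in\omega\}$ spans an $R$-free configuration along which the corresponding instances remain consistent. Securing both conditions at once, rather than either in isolation, depends on first establishing precisely which intersection is a union of $R$-classes and on the rich embedding supplied by $\age(\Gamma^{ST})\subset\age(M)$; it is in reconciling the across-row inconsistency with the down-branch consistency that the real care is needed.
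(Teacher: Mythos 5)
Your proposal is correct and follows the paper's own proof essentially step for step: Proposition \ref{PropStrST} reduces the problem to the six cases of Table \ref{TableCases}, Cases I and III fall to the homogeneity obstruction given by a $K_{2,2}$ whose parts are $R$-edges, Cases II and IV to the TP2 argument of Proposition \ref{PropCaseII}, and Cases V and VI to the analysis of how $R$-classes (respectively $S\vee T$-classes) of a neighbourhood distribute among $S(a)$, $T(a)$, $R(a)$, culminating in Proposition \ref{PropLastCaseSsimT}. The only cosmetic deviation is your remark that Case V ends in an explicit amalgamation failure, whereas the paper closes that case with a direct contradiction ($S(b)\cap T(a)$ must, yet cannot, embed $K_n^R$); this does not alter the substance of the argument.
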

\begin{corollary}\label{CorTwoInstabilities}
	If $M$ is a primitive homogeneous simple 3-graph in which all relations are unstable and only $R$ is nonforking, then $S\not\sim T$, $R\not\sim^SS$, $R\not\sim^TT$, $R\sim^TS$, $R\sim^ST$. In particular, there are no such graphs in which $R$ is stable.
\end{corollary}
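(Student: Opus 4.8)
The plan is to assemble the results of this subsection into the stated compatibility picture and then read off the stable case as a degenerate instance. Throughout I work under the standing hypotheses: $M$ is primitive, homogeneous, simple, all of $R,S,T$ are unstable, and $R$ is the unique nonforking relation (so $S,T$ divide over $\varnothing$). First I would record that the diameter triple is forced to be $(2,2,2)$: the remark following Observation \ref{IsoToM} gives $\diam_R(M)=2$, and Lemmas \ref{LemmmaNo233} and \ref{LemmaNo232} (with the symmetric version of the latter disposing of $(2,2,3)$) eliminate every other admissible triple. This is exactly what licenses the use of Proposition \ref{PropRSRT} below.

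The first conclusion $S\not\sim T$ is precisely Lemma \ref{lemmaST}. Next, since all three predicates are unstable, the compatibility graph on $\{R,S,T\}$ has no isolated vertices (Remark \ref{RmkCompatibilityGraph}); with the edge $ST$ now absent, the only way for $S$ and $T$ to fail to be isolated is $R\sim S$ and $R\sim T$. Proposition \ref{PropRSRT} then supplies $R\not\sim^SS$ and $R\not\sim^TT$, so conclusions one through three, together with the bare compatibilities $R\sim S$ and $R\sim T$, are in hand.

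The remaining step is to refine $R\sim S$ to $R\sim^TS$, and symmetrically $R\sim T$ to $R\sim^ST$; this is the one place where a short argument is needed rather than a citation, and I expect it to be the main (if modest) obstacle. Using Proposition \ref{IndiscerniblePairs} I would fix an indiscernible half-graph $(a_i,b_i)_{i\in\omega}$ witnessing $R\sim S$, so that $R(a_i,b_j)$ holds for $i\leq j$ and $S(a_i,b_j)$ holds for $j<i$, with the two horizontal cliques $\{a_i\}$, $\{b_i\}$ monochromatic of colours $P_A,P_B$ by indiscernibility. The key observation is that $\{a_i:i>0\}\subseteq S(b_0)$, so it is an infinite $P_A$-clique sitting inside the dividing-neighbourhood $S(b_0)$; by Proposition \ref{PropCliques} (applied to the dividing relation $S$ and the nonforking relation $R$) this neighbourhood contains no infinite $R$-clique, whence $P_A\neq R$, while $R\not\sim^SS$ forces $P_A\neq S$. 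Hence $P_A=T$, which is exactly $R\sim^TS$. Replacing $S$ by $T$ throughout and invoking $R\not\sim^TT$ in place of $R\not\sim^SS$ yields $R\sim^ST$, completing the five asserted facts.

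Finally, for the ``in particular'' clause, suppose instead that $R$ were the stable nonforking relation with $S,T$ unstable and forking. Then $L^u(\Th(M))=\{S,T\}$, and the no-isolated-vertices property of Remark \ref{RmkCompatibilityGraph} forces $S\sim T$. The diameter reduction above applies verbatim (Observation \ref{IsoToM} needs only that $R$ is the unique nonforking relation, and the stable-$R$ branches of Lemmas \ref{LemmmaNo233} and \ref{LemmaNo232} handle the triples other than $(2,2,2)$), so $D(M)=(2,2,2)$; then Proposition \ref{PropStrST}, whose hypotheses require only that $S,T$ fork and $S\sim T$, fixes the structure of $S(a)$ and $T(a)$, and the six cases of Table \ref{TableCases} are ruled out by the subsequent propositions, which are proved so as to include the stable-$R$ subcases (see, e.g., the explicit stable branch in the proof of Proposition \ref{PropCaseII}). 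Since $S\sim T$ is thereby impossible, no such graph with $R$ stable exists. The only genuinely computational point in the whole argument is the refinement paragraph; everything else is bookkeeping over lemmas already established.
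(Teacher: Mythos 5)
Your proof is correct and takes essentially the same route as the paper, whose entire proof of this corollary is the citation of Remark \ref{RmkCompatibilityGraph}, Proposition \ref{PropRSRT}, and Lemma \ref{lemmaST}. The material you add --- the diameter reduction to $(2,2,2)$ that licenses Proposition \ref{PropRSRT}, the Proposition \ref{PropCliques} argument pinning down the colours of the horizontal cliques to get $R\sim^TS$ and $R\sim^ST$, and the observation that the stable-$R$ branches of the supporting lemmas yield the ``in particular'' clause --- is exactly the glue the paper leaves implicit in its one-line proof.
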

\begin{proof}
	By Remark \ref{RmkCompatibilityGraph}, Proposition \ref{PropRSRT}, and Lemma \ref{lemmaST}.
\end{proof}

We have seen that $S\sim T$ implies that both $S(a)$ and $T(a)$ are unstable 3-graphs. Is it possible to have $S(a)$ and $T(a)$ stable?

\begin{observation}
	If $M$ is a primitive homogeneous simple 3-graph in which $R,S,T$ are unstable, only $R$ is nonforking, and $S(a),T(a)$ are stable 3-graphs, then each of $S(a),T(a)$ is of the form $K^i_m[K^j_n[K^k_o]]$, where $\{i,j,k\}=\{R,S,T\}$ and only the index from $m,n,o\in\omega+1$ corresponding to $R$ is finite.
\label{ObservationStableNeighbourhoods}
\end{observation}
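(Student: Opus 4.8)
The plan is to run the two stable neighbourhoods $S(a)$ and $T(a)$ through Lachlan's classification of stable homogeneous $3$-graphs (Theorem~\ref{Lachlan3graphs}) and to eliminate every entry except the iterated wreath product by comparing the clique numbers of the three colours. First I would record that $S(a)$ and $T(a)$ are infinite: since $S$ and $T$ are unstable they are non-algebraic, so neither neighbourhood is finite. By Proposition~\ref{PropHomNeigh} each is a homogeneous $3$-graph, and they are stable by hypothesis, so Theorem~\ref{Lachlan3graphs} applies; being infinite, each is isomorphic to one of items $6$--$11$ of that list.

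The engine of the proof is the following clique profile, which I would establish for $S(a)$ (the case of $T(a)$ being identical by the symmetry of the hypotheses in $S$ and $T$): the colour $R$ has finite clique number in $S(a)$, while $S$ and $T$ both have infinite clique number there. That $R$ forms no infinite clique in $S(a)$ is immediate from Proposition~\ref{PropCliques}, since $S$ and $T$ divide and $R$ is the unique nonforking relation. For the infinite $S$-clique, note that the horizontal $S$-clique supplied by $R\sim^ST$ (Corollary~\ref{CorTwoInstabilities}) is an infinite $S$-clique of $M$; deleting the basepoint from such a clique through a vertex and using transitivity of $M$ places an infinite $S$-clique inside $S(a)$. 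For the infinite $T$-clique I would read off an indiscernible half-graph witnessing $R\sim^TS$, whose pairs have type $R$ and whose horizontal $T$-clique $\{\alpha_i\}$ meets the $S$-neighbourhood of a suitable vertex $\beta$ of the half-graph cofinitely (reversing the order of the indiscernible sequence if the $T$-clique sits on the other side); then $S(\beta)$ contains an infinite $T$-clique, and transitivity again transports this into $S(a)$.

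With this profile in hand the classification collapses. In items $6$ and $9$ (built over the pentagon $P^i$) and items $7$ and $8$ (built over the $9$-point graph $Q^i$), the two colours other than the blow-up colour have clique number bounded by the clique number of the \emph{fixed finite} graph $P^i$ or $Q^i$, so at most one colour can have infinite clique number; this contradicts the presence of infinite cliques in two colours. For the product $K_m^i\times K_n^j$ of item $10$ the colours $i,j,k$ have clique numbers $m$, $n$ and $\min(m,n)$, so the number of colours with \emph{finite} clique number is always $0$, $2$ or $3$, and never exactly $1$ — again incompatible with the profile. Hence $S(a)$ and $T(a)$ are forced into item $11$, i.e.\ $K^i_m[K^j_n[K^k_o]]$, whose colours $i,j,k$ carry the independent clique numbers $m,n,o$. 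Since exactly the colour $R$ has finite clique number, the index attached to $R$ is finite and the other two are $\omega$, which is precisely the assertion.

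I expect the main obstacle to be the clique-number bookkeeping that isolates item $11$: one must compute the clique number of every colour in each Lachlan structure, and in particular notice the arithmetic peculiarity of the product — that its ``both differ'' colour has clique number $\min(m,n)$ — which is exactly what prevents the product from ever exhibiting a single finite clique number. The second delicate point is locating the required infinite $S$- and $T$-cliques \emph{inside} the neighbourhoods $S(a)$ and $T(a)$ rather than merely in $M$; this is handled by extracting one auxiliary vertex from the indiscernible half-graph and appealing to transitivity, but it is where the compatibilities $R\sim^ST$ and $R\sim^TS$ of Corollary~\ref{CorTwoInstabilities} do the real work.
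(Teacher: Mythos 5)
Your proof is correct and takes essentially the same route as the paper's: Corollary \ref{CorTwoInstabilities} supplies the compatibilities $R\sim^ST$ and $R\sim^TS$, from which infinite $S$- and $T$-cliques are placed inside $S(a)$ and $T(a)$, Proposition \ref{PropCliques} excludes infinite $R$-cliques there, and Lachlan's Theorem \ref{Lachlan3graphs} is then consulted. The paper compresses your clique-number bookkeeping for items 6--11 (including the $\min(m,n)$ point for $K_m^i\times K_n^j$) into the phrase ``follows from Theorem \ref{Lachlan3graphs} by inspection''; your write-up simply makes that inspection, and the half-graph extraction behind it, explicit.
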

\begin{proof}
	By Remark \ref{CorTwoInstabilities}, $R\sim^ST$ and $R\sim^TS$ are the only instabilities witnessed in $M$, and from this it follows that $S(a)$ and $T(a)$ embed infinite $S$- and $T$-cliques. The observation follows from Theorem \ref{Lachlan3graphs} by inspection.
\end{proof}

Observation \ref{ObservationStableNeighbourhoods} implies that there are, in principle, 36 cases to analyse under the hypothesis of stability for $S(a)$ and $T(a)$. We can eliminate 20 of these cases using only Proposition \ref{PropMultipartite}, since in $K_n^i[K_m^j[K_o^k]]$ (where $\{i,j,k\}=\{R,S,T\}$) the relations $k$ and $j\vee k$ are equivalence relations. In other words, $S(a)$ cannot be of the form $K_\omega^S[K^i_n[K^j_m]]$, where $\{i,j\}=\{R,T\}$, and similarly $T(a)$ is not of the form $K_\omega^T[K^{i'}_n[K^{j'}_m]]$ ($\{i',j'\}=\{R,S\}$). Of the sixteen remaining cases, twelve (those in which $R\vee S$ or $R\vee T$ are the coarsest equivalence relations in $S(a)$, $T(a)$) can be eliminated by looking at the set of forbidden triangles in $S(a)$, $T(a)$ and in $T(b)\cap S(a)$ or $S(c)\cap T(a)$, where $b\in S(a)$ and $c\in T(a)$. For example, if $S(a)\cong K_\omega^T[K_n^R[K_\omega^S]]$ and $T(a)\cong K_\omega^S[K_\omega^T[K_m^R]]$, then $T(b)\cap S(a)$ contains triangles $RRS$, which are forbidden in $T(a)$, contradicting homogeneity.

This leaves us with the following four cases, listed in Table \ref{table2}.
\begin{table}\label{table2}
\centering
\caption{Cases with $S(a),T(a)$ stable.}
\begin{tabular}{ccc} 
\hline\hline 
Case & $S(a)$ & $T(a)$ \\ [0.5ex]
\hline 
I & $K_n^R[K_\omega^S[K_\omega^T]]$ & $K_n^R[K_\omega^S[K_\omega^T]]$ \\
II & $K_n^R[K_\omega^S[K_\omega^T]]$ & $K_n^R[K_\omega^T[K_\omega^S]]$ \\
III & $K_n^R[K_\omega^T[K_\omega^S]]$ & $K_n^R[K_\omega^S[K_\omega^T]]$ \\
IV & $K_n^R[K_\omega^T[K_\omega^S]]$ & $K_n^R[K_\omega^T[K_\omega^S]]$\\
\hline 
\label{TableCases} 
\end{tabular}
\end{table}

Cases I, III, IV are easily eliminated via Theorem \ref{ThmThomas} because the maximal $S$- (or $T$-) cliques containing an $S$- ($T$-) edge form the lines of a weak pseudoplane. This leaves only one case under the assumption of stability for $S(a),T(a)$, namely $S(a)\cong K_n^R[K_\omega^S[K_\omega^T]]$ and $T(a)\cong K_m^R[K_\omega^T[K_\omega^S]]$.

\begin{proposition}\label{propfinalcase}
	Let $M$ be a primitive simple unstable 3-graph in which $R,S,T$ are unstable relations and $R$ is the only nonforking relation. Then $S(a)$ and $T(a)$ are isomorphic to unstable 3-graphs.
\end{proposition}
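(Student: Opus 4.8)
The plan is to argue by contradiction, supposing that $S(a)$ or $T(a)$ is stable, and to run the whole argument down to a single surviving configuration. By Corollary~\ref{CorTwoInstabilities} the only compatibilities realised in $M$ are $R\sim^ST$ and $R\sim^TS$, and these already force both $S(a)$ and $T(a)$ to embed infinite $S$- and $T$-cliques; since $S\not\sim T$, any half-graph internal to a neighbourhood must witness $R\sim^ST$ or $R\sim^TS$, and I would use this to show that stability of one of $S(a),T(a)$ forces the other into the same stable shape, so that I may assume both are stable. Then Observation~\ref{ObservationStableNeighbourhoods}, together with Proposition~\ref{PropCliques} (a forking neighbourhood contains no infinite $R$-clique, so the $R$-index is finite) and Proposition~\ref{PropMultipartite}, reduces the problem — after the case elimination performed in the discussion preceding this proposition — to the four cases displayed above. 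Three of them (the ones in which $S(a)$ and $T(a)$ have the \emph{same} middle factor) are discarded because the maximal monochromatic $S$- or $T$-cliques through an edge form the lines of a binary homogeneous weak pseudoplane, contradicting Theorem~\ref{ThmThomas}. The entire proof thus reduces to refuting the surviving configuration
\[
S(a)\cong K_n^R[K_\omega^S[K_\omega^T]],\qquad T(a)\cong K_m^R[K_\omega^T[K_\omega^S]].
\]

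To attack this last case I would first determine $\age(M)$. In $S(a)$ both $T$ and $S\vee T$ are equivalence relations, and in $T(a)$ both $S$ and $S\vee T$ are; reading off the admissible triangles of the two neighbourhoods and adjoining the base vertex $a$ shows that every one of the ten triangle types lies in $\age(M)$ — in particular $RSS$ and $RTT$ (two like-coloured neighbours of $a$ joined by $R$) and, from the half-graph witnessing $R\sim^TS$, also $RST$. Hence all minimal forbidden structures have at least four vertices, so the contradiction must be extracted from a four-point configuration, exactly in the spirit of Proposition~\ref{PropCaseII}.

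The heart of the argument is then the analysis of a second neighbourhood. Fixing $c\in T(a)$ (so $a\in T(c)$ and $S(c)\cong K_n^R[K_\omega^S[K_\omega^T]]$), I would decompose $S(c)$ as the disjoint union of $S(c)\cap S(a)$, $S(c)\cap T(a)$ and $S(c)\cap R(a)$, observing that $S(c)\cap T(a)$ is exactly the remainder of the $S$-class of $c$ inside $T(a)$, an infinite pure $S$-clique, hence a transversal of distinct innermost $T$-blocks lying in a single $S\vee T$-block of $S(c)$. Using the homogeneity of $M$ and the rigidity of the block structure of $K_n^R[K_\omega^S[K_\omega^T]]$, I would argue that each of the three pieces is forced to be a union of blocks of one of the two equivalence relations of $S(c)$, so that all cross-edges of a prescribed colour between two of the pieces are absent; this yields a minimal forbidden four-vertex graph. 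The contradiction should then come either from confronting that forbidden graph with a four-point configuration that must embed (built from the structures of $S(a),T(a)$ and the indiscernible half-graphs of $R\sim^ST$, $R\sim^TS$), or from assembling a two-dimensional array — rows supplied by one half-graph, making each row $2$-inconsistent, and vertical transversals made consistent by the admissible $R$-free $S,T$-configurations inside $S(a)$ — that exhibits a formula of the form $R(x,u)\wedge S(x,v)$ with the tree property of the second kind, contradicting simplicity.

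The main obstacle is precisely this final elimination. The symmetry that dispatched the other three cases through one weak pseudoplane breaks down here because $S$ is the middle factor of $S(a)$ while $T$ is the middle factor of $T(a)$, so there is no single definable family of infinite cliques meeting pairwise finitely. One is therefore pushed into the more delicate four-point argument, and — unlike Proposition~\ref{PropCaseII} — one no longer has $\age(\Gamma^{S,T})\subset\age(M)$ (since $S\not\sim T$), so the array cannot be filled freely from a random $S,T$-graph and must instead be built entirely from the two available half-graphs while taking care never to manufacture an infinite $R$-clique inside a neighbourhood. The genuinely fiddly steps are identifying which piece of $S(c)$ is the block-union, pinning down the resulting forbidden four-vertex configuration, and verifying that the half-graphs leave enough room to complete the array.
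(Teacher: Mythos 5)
Your overall route is the paper's route: assume a neighbourhood is stable, invoke Observation \ref{ObservationStableNeighbourhoods} together with Proposition \ref{PropCliques} and Proposition \ref{PropMultipartite} to get the wreath-product forms, reduce to the four tabulated cases, kill three of them with Theorem \ref{ThmThomas}, and then analyse a second neighbourhood in the surviving configuration $S(a)\cong K_n^R[K_\omega^S[K_\omega^T]]$, $T(a)\cong K_m^R[K_\omega^T[K_\omega^S]]$. Two remarks on the reduction: your parenthetical description of the three discarded cases (``same middle factor'') is wrong — Case III of the table has different middle factors and is nevertheless eliminated, because there the inner factor of $S(a)$ is $S$ and the inner factor of $T(a)$ is $T$, so maximal monochromatic cliques through an edge are unique; the surviving case is precisely the one where for each colour $P\in\{S,T\}$ the middle factor of $P(a)$ is $P$ itself, which is what you correctly say later. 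Your handling of the mixed case (one neighbourhood stable, one unstable) is only asserted, but the paper elides this too and only ever uses the disjunctive form of the statement, so I do not count it against you.

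The genuine gap is that you never derive the contradiction in the surviving case: everything after ``the heart of the argument'' is a menu of two candidate strategies, not a proof, and you yourself list the decisive steps as unresolved. Moreover, one of your two options — the TP2 array in the style of Proposition \ref{PropCaseII} — is blocked for exactly the reason you flag: that argument needs $\age(\Gamma^{S,T})\subset\age(M)$ to fill the array, and here $S\not\sim T$ (Corollary \ref{CorTwoInstabilities}) rules that out, so that route does not close. The paper's actual finish is elementary and entirely structural, with no forbidden-configuration census and no TP2: for $b\in S(a)$ it decomposes $T(b)$, $S(b)$, $R(b)$ against $a$'s partition, notes that $T(b)\cap S(a)$ is the $T$-class of $b$ in $S(a)$ and that $R(b)\cap T(a)$ and $S(b)\cap T(a)$ contain infinite $S$- and $T$-cliques (from $R\sim^ST$ and $R\sim^TS$); then the quantifier-free-type-versus-type argument you sketch shows that $R(b)\cap T(a)$ must be disjoint from the unions $X,Y$ of $S$-classes of $T(a)$ met by $T(b)$ and $S(b)$, whence $T(b)\cap T(a)$ and $S(b)\cap T(a)$ are unions of $S$-classes of $T(a)$. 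Since $S(b)\cong S(a)\cong K_n^R[K_\omega^S[K_\omega^T]]$, the only way $S(b)\cap T(a)$ can be such a union is to be a single $S$-class, i.e. a pure $S$-clique — contradicting the fact that it contains infinite $T$-cliques. That closing chain (disjointness from $X$ and $Y$, the forced block-union structure, and the clash with the internal structure of $S(b)$) is the entire content of the proposition's proof beyond the recap, and it is exactly what your proposal leaves open.
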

\begin{proof}
By the paragraph preceding the statement, it suffices to eliminate the case where $S(a)\cong K_n^R[K_\omega^S[K_\omega^T]]$ and $T(a)\cong K_m^R[K_\omega^T[K_\omega^S]]$.

Consider $b\in S(a)$ and $T(b)$. We have $T(b)=(T(b)\cap S(a))\cup(T(b)\cap T(a))\cup(T(b)\cap R(a))$.

By the structure of $S(a)$, $T(b)\cap S(a)$ is the $T$-class of $b\in S(a)$, excluding $b$ itself. By homogeneity, $T(b)\cong T(a)$, so each element of an infinite $T$-clique in $T(b)$ is contained in an $S$-class over $b$. 

Since the $S$-diameter of $M$ is 2, the triangle $TTS$ is in $\age(M)$ and $T(b)\cap S(a)\neq\varnothing$. And since the triangle $RST$ is also in $\age(M)$ (this follows from $R\sim^ST$), we have $R(b)\cap T(a)\neq\varnothing$. By $R\sim^TS$, $R(b)\cap T(a)$ is infinite and contains infinite $T$-cliques; by $R\sim^ST$, $R(b)\cap T(a)$ also contains infinite $S$-cliques. A similar argument proves that $S(b)\cap T(a)$ is infinite and contains infinite $S$- and $T$-cliques.

Let $X$ be the union of $S$-classes $C$ in $T(a)$ such that there are some $c\in T(b)$ and $d\in C$ with $T(d,b)\wedge S(c,d)$, and let $Y$ be the union of $S$-classes $C$ in $T(a)$ such that there are $c\in T(b)$ and $d\in C$ with $S(d,b)\wedge S(c,d)$.Then either $R(b)\cap T(a)$ is contained in $X$ or is disjoint from $X$, and likewise with $Y$. Otherwise, we would be abel to find $d\in T(b)\cap T(a)$ and $e\in R(b)\cap T(a)$ with equal quantifier-free types over $ab$ but distinct types over $ab$.

From this it follows that $R(b)\cap T(a)$ is disjoint from both $X$ and $Y$. By homogeneity, $S(b)\cap T(a)$ and $T(b)\cap T(a)$ are unions of $S$-classes in $T(a)$; but the structure of $S(a)$ must be isomorphic to that of $S(b)$. The only possible way for $S(b)\cap S(a)$ to be a union of $S$-classes in $T(a)$ is for it to consist of exactly one class. But we know that $S(b)\cap T(a)$ contains infinite $T$-classes, so we have reached a contradiction.
\end{proof}

\begin{corollary}\label{CorFinal}
	There are no primitive simple unstable 3-graphs $M$ in which $R,S,T$ are unstable relations and $R$ is the only nonforking relation. 
\end{corollary}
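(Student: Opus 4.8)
The plan is to argue by contradiction and to funnel everything proved in this chapter into an impossible configuration inside the neighbourhood $S(a)$. First I would collect the reductions already available. Observation \ref{IsoToM} gives $R$-diameter $2$, and Lemmas \ref{LemmmaNo233} and \ref{LemmaNo232}, together with the symmetric elimination of $D(M)=(2,2,3)$, force $D(M)=(2,2,2)$. Corollary \ref{CorTwoInstabilities} then yields $S\not\sim T$, $R\sim^{S}T$ and $R\sim^{T}S$, while Proposition \ref{propfinalcase} tells us that in any surviving $M$ the neighbourhoods $S(a)$ and $T(a)$ are \emph{unstable} $3$-graphs. So the only configuration left to refute is the one in which both neighbourhoods are unstable, and the whole burden of the corollary is to show this cannot occur.

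The key move is to study $S(a)$ as a structure in its own right. By Proposition \ref{PropHomNeigh} it is homogeneous, it is simple because it is interpretable in $M$, it is unstable by Proposition \ref{propfinalcase}, and by Proposition \ref{PropCliques} it embeds no infinite $R$-clique; moreover $S\not\sim T$ is inherited from $M$ since any half-graph inside $S(a)$ is a half-graph in $M$. I would then apply Proposition \ref{PropOneIsStable} to $S(a)$: one of its three relations must be stable and must be an equivalence relation or the complement of one, so $S(a)$ is imprimitive. Because the compatibility graph of $S(a)$ has no isolated vertices (Remark \ref{RmkCompatibilityGraph}) and $S\not\sim T$, the two unstable relations of $S(a)$ are forced to be a compatible pair among $R,S,T$.

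The heart of the proof is then a case analysis on which relation (or complement) is the stable equivalence relation on $S(a)$, showing that every possibility collapses. If $R$ is the equivalence relation, its classes are finite (no infinite $R$-clique), so by Theorem \ref{ThmCGamma} and Corollary \ref{CorFiniteClasses} $S(a)\cong C(\Gamma^{ST})$ or $\Gamma^{ST}[K_n^R]$; but these realise $S\sim T$ inside $S(a)$, contradicting $S\not\sim T$. If $S$ (or $T$) is the equivalence relation, finite classes contradict Theorem \ref{ThmCGamma}/Corollary \ref{CorFiniteClasses} by producing an infinite $R$-clique, infinitely many infinite classes are killed outright by Proposition \ref{PropImprimitiveInfClasses} (two unstable relations $R,T$ across the classes, no infinite $R$-clique), and finitely many infinite classes make the whole of $S(a)$ stable, against Proposition \ref{propfinalcase}. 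Finally, if a complementary relation $S\vee T$, $R\vee S$ or $R\vee T$ is the equivalence relation, the no-$R$-clique hypothesis again forces finitely many classes or else a class that is a Random Graph in two of the colours; by the Lachlan--Woodrow Theorem \ref{LachlanWoodrow} this either makes $S(a)$ stable or produces an infinite $R$-clique or a copy of $\Gamma^{ST}$ (hence $S\sim T$). Each branch contradicts one of the four robust facts about $S(a)$, and the analysis of $T(a)$ is identical by the symmetry of $S$ and $T$, so no such $M$ exists.

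I expect the last paragraph to be the main obstacle. The delicate bookkeeping is in the complementary cases $R\vee S$, $R\vee T$, $S\vee T$: there one must verify carefully that the instability of $S(a)$ cannot hide inside a single class without forcing a Random Graph in colours that include $R$ (hence an infinite $R$-clique) or in colours $S,T$ (hence $S\sim T$), and must correctly separate the finite-class from the infinite-class subcases so that exactly the right ones reduce to Proposition \ref{PropImprimitiveInfClasses}. Once this is pinned down, the contradiction is uniform: an unstable, simple, homogeneous $3$-graph with no infinite $R$-clique and with $S\not\sim T$ simply has no consistent imprimitive form, which is precisely what the accumulated results of this chapter forbid.
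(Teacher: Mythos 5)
Your overall route is the same as the paper's (funnel everything into $S(a)$ and $T(a)$, which are unstable by Proposition \ref{propfinalcase}, apply Proposition \ref{PropOneIsStable}, and do a case analysis on the stable equivalence relation), and most of your branches are fine — but one branch is dismissed with a claim that is false, and it is precisely the branch that cannot be waved away. When $S$ (or $T$) defines an equivalence relation on $S(a)$ with \emph{finitely many infinite} classes, you assert that this ``makes the whole of $S(a)$ stable, against Proposition \ref{propfinalcase}.'' It does not. The paper's own Proposition \ref{PropImprimitive3Graphs} and the structures $\mathcal B_n^{i,j}$ of Theorem \ref{ThmClassification} are counterexamples: a homogeneous simple 3-graph in which one predicate defines an equivalence relation with $2\leq m<\omega$ infinite classes, each pair of classes inducing a Random Bipartite Graph in the other two colours, is \emph{unstable} — across two classes one finds half-graphs witnessing $R\sim T$ with both horizontal cliques of colour $S$, i.e. $R\sim^S_ST$. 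Such a structure also has no infinite $R$-cliques (an $R$-clique meets each class at most once), no witnesses of $S\sim T$, $R\sim^SS$ or $R\sim^TT$, and its only instability is $R\sim^ST$, which Corollary \ref{CorTwoInstabilities} explicitly permits. So none of your ``four robust facts'' about $S(a)$ excludes $S(a)\cong\mathcal B_m^{RT}$; this case is genuinely open in your proof, and it is not a marginal one, since these structures survive into the final classification.

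The case can be closed, but only by using the ambient $M$, not internal properties of $S(a)$ alone. Corollary \ref{CorTwoInstabilities} gives $R\sim^TS$ in $M$: an indiscernible half-graph $(u_i,v_i)_{i\in\omega}$ with $R(u_i,v_j)$ iff $i\leq j$, $S(u_i,v_j)$ for $i>j$, and a horizontal clique of colour $T$. Reading off the $S$-edges, some vertex is $S$-related to arbitrarily large finite $T$-cliques (either $u_n$ to $\{v_0,\ldots,v_{n-1}\}$, or $v_0$ to $\{u_1,\ldots,u_n\}$, depending on which side is the $T$-clique); by transitivity, every $S(a)$ embeds $K_n^T$ for all $n$, hence, being homogeneous, an infinite $T$-clique. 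But if $S$ defined an equivalence relation on $S(a)$ with $m<\omega$ classes, every $T$-clique in $S(a)$ would be a partial transversal of the classes and so have at most $m$ elements — contradiction. The symmetric argument from $R\sim^ST$ handles $T(a)$. With this inserted, your remaining branches (finite classes via Theorem \ref{ThmCGamma} and Corollary \ref{CorFiniteClasses}, infinitely many infinite classes via Proposition \ref{PropImprimitiveInfClasses}, and the complement cases) do go through. As a point of comparison: your case split is actually finer than the paper's own proof, which asserts that the stable relation on $S(a)$ must be $R$; as you correctly observe, $R$ is unstable inside $S(a)$, so the stable relation there is $S$ or $T$, and the finitely-many-infinite-classes configuration above is exactly the configuration that any complete argument has to confront.
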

\begin{proof}
By Proposition \ref{propfinalcase}, $S(a)$ or $T(a)$ is unstable. If $S(a)$ is unstable, then, by Corollary \ref{CorTwoInstabilities}, one of $R\sim^TS$ or $R\sim^ST$ holds in each of $S(a)$, $T(a)$. We know by Proposition{PropOneIsStable} that there is a stable relation in $S(a), T(a)$, which in each case must be $R$, which is either an equivalence relation or the complement of one. If $R$ is the complement of an equivalence relation, then each class is infinite and isomorphic to the Random Graph. But then we find $S\sim T$, contradicting Corollary \ref{CorTwoInstabilities}, so $R$ must be an equivalence relation and each of $S(a)$, $T(a)$ is an imprimitive homogeneous unstable 3-graph in which $R$ defines an equivalence relation with finite classes. But then $R$ is stable in $S(a),T(a)$, so they don't embed half-graphs witnessing $R\sim^TS$ or $R\sim^ST$, a contradiction again.
\end{proof}

Now we can conclude:

\begin{theorem}\label{ThmStableForking}
	Let $M$ be a homogeneous primitive simple unstable 3-graph. Then if some relation forks, it is stable.
\end{theorem}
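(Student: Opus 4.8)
The plan is to recast the statement as the assertion that \emph{no relation of $M$ is simultaneously unstable and forking}, and to argue by contradiction: suppose $M$ carries an unstable forking relation. Since $M$ is simple there exist Morley sequences, so at least one relation is nonforking and hence there are either one or two forking relations; moreover an unstable relation always has a compatible partner (Observation~\ref{IndiscernibleHalfGraphs}, Remark~\ref{RmkCompatibilityGraph}), so at least two relations are unstable. I would then split on the number of forking relations.

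First, the case of a single forking relation $R$, which must be the unstable one. Then the remaining two relations are nonforking, and Proposition~\ref{UnstableNonforking} (two nonforking relations force every nonforking relation to be unstable) makes all three relations unstable while $R$ is the unique divider. This is precisely the configuration ruled out by Lemma~\ref{PropNoSimpleOneDividing}, so this case cannot occur, and a lone forking relation is therefore stable.

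Second, and substantially, the case of two forking relations $S,T$ with a single nonforking relation $R$; here the aim is to show the configuration is impossible altogether. By Observation~\ref{IsoToM} we have $R(a)\cong M$, whence $\diam_R(M)=2$ and $D(M)$ is one of $(2,3,3)$, $(2,3,2)$, $(2,2,3)$, $(2,2,2)$. Lemma~\ref{LemmmaNo233} eliminates $(2,3,3)$ and Lemma~\ref{LemmaNo232} together with its $S\leftrightarrow T$ mirror eliminates $(2,3,2)$ and $(2,2,3)$, leaving $D(M)=(2,2,2)$. For that triple I would first dispose of the compatibility $S\sim T$: Proposition~\ref{PropStrST} forces each of $S(a),T(a)$ into the short list $C(\Gamma^{ST})$, $\Gamma^{ST}[K_n^R]$, $K_n^R[\Gamma^{ST}]$, and the finitely many resulting neighbourhood pairs of Table~\ref{TableCases} are knocked out one at a time in Propositions~\ref{PropCaseII}--\ref{PropLastCaseSsimT}, giving Lemma~\ref{lemmaST}; since these arguments are phrased to apply whether $R$ is stable or unstable, both the stable-$R$ subcase and the fully unstable subcase are covered. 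With $S\not\sim T$ secured, Corollary~\ref{CorTwoInstabilities} pins down the only surviving compatibility pattern, namely $R\sim^S T$ and $R\sim^T S$ with all three relations unstable, and the structural analysis of the neighbourhoods—excluding the weak-pseudoplane shapes via Theorem~\ref{ThmThomas} and the wreath-product shapes $K_n^R[K_\omega^i[K_\omega^j]]$ by homogeneity and amalgamation obstructions—culminates in Corollary~\ref{CorFinal}. Thus two forking relations are impossible.

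Combining the two cases, if some relation of $M$ forks then there is exactly one forking relation and it is stable, which is the claim. The main obstacle is the $(2,2,2)$ analysis, and inside it the proof that $S\not\sim T$ (Lemma~\ref{lemmaST}): pinning down the exact isomorphism types of $S(a)$ and $T(a)$ from Lachlan's classification (Theorem~\ref{Lachlan3graphs}) and Lachlan--Woodrow (Theorem~\ref{LachlanWoodrow}), and then extracting contradictions by comparing induced structures on intersections of the form $T(b)\cap S(a)$, is where essentially all the combinatorial work concentrates; the final all-unstable reduction then rests on recognizing forbidden homogeneous weak pseudoplanes and on delicate forced-amalgamation arguments within the neighbourhoods.
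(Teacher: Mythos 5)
Your proposal takes essentially the same route as the paper's own proof: the same contradiction setup and split on the number of forking relations, with Lemma \ref{PropNoSimpleOneDividing} (via Proposition \ref{UnstableNonforking}) disposing of the single-forking case, and the diameter-triple analysis (Lemmas \ref{LemmmaNo233} and \ref{LemmaNo232}, then Lemma \ref{lemmaST} and Corollaries \ref{CorTwoInstabilities}, \ref{CorFinal} for $D(M)=(2,2,2)$) disposing of the two-forking case. The paper's proof of Theorem \ref{ThmStableForking} is precisely this assembly of the chapter's results, so your reconstruction matches it.
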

\begin{proof}
We have proved that it is not possible to have one (Lemma \ref{PropNoSimpleOneDividing}) or two forking unstable relations (Corollaries \ref{CorTwoInstabilities}, \ref{CorFinal}). Thus, either we have all relations unstable and nonforking, or the forking relation is stable.
\end{proof}

\chapter{Homogeneous Simple 3-graphs of \SU-rank 1}\label{ChapRank1}
\setcounter{equation}{0}
\setcounter{theorem}{0}
\setcounter{case}{0}
\setcounter{subcase}{0}

This chapter contains more general results than the others. It deals with simple binary homogeneous structures of \SU-rank 1, so in particular we do not restrict ourselves to undirected edges or any particular number of colours. 

Under the assumption of \SU-rank 1, $\tp(a/B)$ forks over $A\subset B$ iff $a\in\acl(B)\setminus\acl(A)$, and algebraic closure on an \SU-rank 1 structure induces a pregeometry.

\subsection{The primitive case}
\begin{proposition}
\label{aclab}
	Let $M$ be a binary homogeneous structure with supersimple theory of \SU-rank 1 such that $\aut(M)$ acts primitively on $M$. Then $\acl(a,b)=\{a,b\}$.
\end{proposition}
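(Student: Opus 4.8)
The plan is to show that no element outside $\{a,b\}$ can lie in $\acl(a,b)$, by amalgamating types with the Independence Theorem and then exploiting that the language is binary. We may assume $a\neq b$, since the case $a=b$ is immediate. As $M$ is homogeneous over a finite relational language it is $\omega$-categorical, and it is primitive by hypothesis, so Observation \ref{AlgClosure} gives $\acl(x)=\{x\}$ for every singleton $x$. Because $M$ has $\SU$-rank $1$, forking over $\varnothing$ is algebraicity: for a single element $u$ and a set $V$, $u\indep[\varnothing]V$ iff $u\notin\acl(V)$ (here $\acl(\varnothing)=\varnothing$ by transitivity). In particular $b\notin\acl(a)=\{a\}$ yields $a\indep[\varnothing]b$.

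First I would suppose toward a contradiction that there is some $c\in\acl(a,b)$ with $c\notin\{a,b\}$. Then $c\notin\acl(a)=\{a\}$ and $c\notin\acl(b)=\{b\}$, so both $\tp(c/a)$ and $\tp(c/b)$ are nonforking over $\varnothing$. Now I would apply the Independence Theorem over $A=\varnothing$ with the two sides being $B=\{a\}$ and $C=\{b\}$, amalgamating the two nonforking extensions $\tp(c/a)$ and $\tp(c/b)$ of the \emph{same} element $c$. Since both sides use $c$ itself, the hypothesis $\Lstp(c/\varnothing)=\Lstp(c/\varnothing)$ is automatic, and the remaining hypotheses are exactly $a\indep[\varnothing]b$ and the two nonforking conditions just noted. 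This produces an element $c'$ with $c'\indep[\varnothing]ab$ realising $\tp(c/a)\cup\tp(c/b)$.

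The key step is then the \emph{binary collapse}: because the language is binary and $M$ eliminates quantifiers, the complete type $\tp(c'/ab)$ is determined by the two-element types $\tp(c'/a)$ and $\tp(c'/b)$ together with $\tp(ab)$. No atomic formula relates all three of $a,b,c'$ simultaneously, so the quantifier-free type of $\{a,b,c'\}$, and hence $\tp(c'/ab)$, is fixed by the relations each of $a,b$ bears to $c'$. As these agree with the corresponding relations for $c$, we obtain $\tp(c'/ab)=\tp(c/ab)$. But $c\in\acl(a,b)$ forces $\SU(c/ab)=0$, whereas $c'\indep[\varnothing]ab$ forces $\SU(c'/ab)=\SU(c')=1$; since $\SU$-rank is a function of the type, this is a contradiction. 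Therefore no such $c$ exists and $\acl(a,b)=\{a,b\}$.

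I expect the main obstacle to be making the final binary collapse fully precise, that is, justifying cleanly that in a binary homogeneous structure the type of a point over a pair is recovered from its types over each coordinate, and confirming that all hypotheses of the Independence Theorem genuinely hold. The latter all reduce to the triviality of singleton algebraic closure (Observation \ref{AlgClosure}) and the $\SU$-rank $1$ identification of forking with algebraicity, so once the binary argument is nailed down the rest is routine bookkeeping with ranks.
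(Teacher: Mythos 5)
Your proof is correct and follows essentially the same route as the paper's: assume a spurious algebraic point $c$ over $\{a,b\}$, amalgamate $\tp(c/a)$ and $\tp(c/b)$ via the Independence Theorem using $a\indep[\varnothing]b$ and the rank-1 identification of forking with algebraicity, then use the binary language to collapse the amalgamated type to $\tp(c/ab)$ and contradict the independence of the new realisation. The one small difference is that you take $c$ itself as the realisation on both sides, making the Lascar strong type hypothesis trivial, whereas the paper takes auxiliary realisations $c',c''$ and argues via primitivity and finite rank (equivalently, lowness) that all elements share the same Lascar strong type; your shortcut is legitimate and slightly cleaner.
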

\begin{proof}
	Suppose not. Then there is $c\in\acl(ab)\setminus(\acl(a)\cup \acl(b))$ and $a\indep[\varnothing] b$, since by primitivity (Observation \ref{AlgClosure}) $\acl(a)=a$. By primitivity, there is only one strong type of elements over $\varnothing$, and since the rank is finite, this implies that all elements are of the same Lascar strong type. So we have $\Lstp(a)=\Lstp(b)$. Take two elements $c', c''$ realising $\tp(c/a)$ and $\tp(c/b)$ respectively. Note that $c'\indep[\varnothing]a$ and $c''\indep[\varnothing]b$. 

	Therefore we can apply the Independence Theorem to produce $d\models\Lstp(a)\cup\tp(c/a)\cup\tp(c/b)$ with $d\indep[\varnothing]ab$. Since the language is binary, $\tp(d/ab)=\tp(c/ab)$ (an algebraic type), so $d\in\acl(ab)$ which contradicts $d\indep[\varnothing]ab$. 
\end{proof}
A stronger statement is:
\begin{proposition}\label{algclosure}
	Under the hypotheses of \ref{aclab}, $\acl(A)=\bigcup_{a\in A}\acl(a)=A$.
\end{proposition}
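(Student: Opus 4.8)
The plan is to prove the stronger equality $\acl(A)=A$ for every \emph{finite} $A\subseteq M$; the case of arbitrary $A$ then follows from the finite character of algebraic closure, and the middle equality $\bigcup_{a\in A}\acl(a)=A$ is immediate from Observation \ref{AlgClosure}, which gives $\acl(a)=\{a\}$. So everything reduces to showing that the algebraic closure of a finite set is trivial, and I would do this by induction on $|A|$. The cases $|A|=1$ and $|A|=2$ are precisely Observation \ref{AlgClosure} and Proposition \ref{aclab}.

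For the inductive step, assume $\acl(A')=A'$ whenever $|A'|\le n$ (with $n\ge2$) and let $A=\{a_1,\ldots,a_{n+1}\}$. Suppose towards a contradiction that some $c\in\acl(A)\setminus A$. I would split $A$ as $\{a_1\}$ together with $A_0=\{a_2,\ldots,a_{n+1}\}$. Distinct elements satisfy $a_i\indep a_j$ (since $\acl(a_j)=\{a_j\}$ and, in \SU-rank $1$, forking is algebraic), and primitivity rules out nontrivial $\varnothing$-definable finite equivalence relations, so Proposition \ref{PairwiseIndep} yields $a_1\indep A_0$. Because $c\notin A$ we have $c\ne a_1$ and, by the induction hypothesis applied to $A_0$, $c\notin\acl(A_0)=A_0$; hence both $\tp(c/a_1)$ and $\tp(c/A_0)$ are nonalgebraic, that is, nonforking over $\varnothing$.

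Now I would invoke the Independence Theorem, mimicking the proof of Proposition \ref{aclab}. Since $M$ is primitive of finite rank there is a unique Lascar strong type over $\varnothing$ (lowness, via Corollary \ref{LascarTypes}, gives $\Lstp=\stp$, and primitivity gives a single strong type), so the two nonforking extensions $\tp(c/a_1)$ and $\tp(c/A_0)$ of this unique $1$-type can be amalgamated: there is $d$ realising $\tp(c/a_1)\cup\tp(c/A_0)$ with $d\indep A$. The key final step is to upgrade agreement of the $2$-types to agreement of the whole type over $A$: because $d$ realises $\tp(c/a_i)$ for every single $i$ and the language is binary, homogeneity forces $\tp(d/A)=\tp(c/A)$. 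As $\tp(c/A)$ is algebraic, this makes $d\in\acl(A)$, contradicting $d\indep A$ (here $\acl(\varnothing)=\varnothing$ by primitivity and infiniteness, so nonforking over $\varnothing$ genuinely means nonalgebraic). This contradiction completes the induction.

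The main obstacle is the binary-language step in the last paragraph: one must be sure that the correct $2$-types $\tp(d,a_i)=\tp(c,a_i)$ for each $i$, together with the fixed type of the tuple $A$, really pin down $\tp(d/A)$. This is exactly where binarity and quantifier elimination enter, and it is the analogue of the single sentence ``since the language is binary, $\tp(d/ab)=\tp(c/ab)$'' in Proposition \ref{aclab}. I would also take care to verify all hypotheses of the Independence Theorem (nonforking of both types and coincidence of Lascar strong types over $\varnothing$) and to confirm that $\acl(\varnothing)=\varnothing$, so that $d\indep A$ genuinely contradicts $d\in\acl(A)$.
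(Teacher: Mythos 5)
Your proof is correct and follows essentially the same route as the paper's: induction on $|A|$ with Observation \ref{AlgClosure} and Proposition \ref{aclab} as base cases, splitting off one element, amalgamating the two nonforking restrictions of the algebraic type via the Independence Theorem (using the unique Lascar strong type from primitivity and lowness), and then using binarity to force $\tp(d/A)=\tp(c/A)$, contradicting algebraicity. The only cosmetic differences are that you obtain $a_1\indep A_0$ via Proposition \ref{PairwiseIndep} where the paper gets it directly from the rank-$1$ hypothesis together with the induction hypothesis, and your added remark on finite character, neither of which changes the argument.
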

\begin{proof}
	We prove this by induction on $|A|$. The case $|A|=1$ is true by primitivity and $|A|=2$ is Proposition \ref{aclab}. Now suppose that the result holds for sets of cardinality $k$, and let $A=\{a_1,\ldots a_{k+1}\}$.

	Suppose that the equality does not hold, and take $b\in \acl(A)\setminus\bigcup_{a\in A}\acl(a)$. By the rank 1 assumption, $a_{k+1}\indep[\varnothing] A_0$, where $A_0=A\setminus\{a_{k+1}\}$. Now take $b_0$ realizing $\tp(b/A_0)$ and $b_1$ realizing $\tp(b/a_{k+1})$. By the induction hypothesis, $b_0\indep[\varnothing]A_0$ and $b_1\indep[\varnothing]a_{k+1}$. By primitivity, $\Lstp(b_0)=\Lstp(b_1)$ (over the empty set), so we can apply the Independence Theorem to get a $\beta\models\tp(b_0/A_0)\cup\tp(b_1/a)$ with $\beta\indep[\varnothing]A$. By rank 1, $\beta$ is not algebraic over $A$.

	But $\tp(\beta/A)=\tp(b/A)$; indeed, $\tp(\beta/A_0)=\tp(b/A_0)$, which implies that $\tp(\beta/\alpha)=\tp(b/\alpha)$ for all $\alpha\in A_0$, and also $\tp(\beta/a)=\tp(b/a)$. Since the language is binary, this implies that $\tp(\beta/A)=\tp(b/A)$. This is a contradiction (because $b$ is algebraic over $A$.)
\end{proof}

	Let $D(\bar x)$ denote the formula expressing that the elements of the tuple $\bar x$ are all different. Recall that the theory of the Random Graph is axiomatised by the set of sentences $\{\phi_{n,m}:n,m\in\omega\}$, where $\phi_{n,m}$ is 
\begin{small}
$$\forall v_1,\ldots,v_n\forall w_1,\ldots,w_m(D(v_1,\ldots,v_n,w_1,\ldots,w_m)\rightarrow\exists x(\bigwedge_{1\leq i\leq n}R(x,v_i)\wedge\bigwedge_{1\leq j\leq m}\neg R(x,w_j))$$ 
\end{small}
When phrased as ``whenever $V_1$ and $V_2$ are finite disjoint sets of vertices in $G$, there exists a vertex $v$ such that for all $v_1\in V_1$ and $v_2\in V_2$ the formula $R(v,v_1)\wedge\neg R(v,v_2)$ holds in $G$," the axiom schema $\phi_{n,m}$ is known as \emph{Alice's restaurant axiom}. 

	We will assume for the rest of this section that $M$ is a binary relational structure, homogeneous in a language $L=\{R_1,\ldots,R_n\}$, and that each 2-type over $\varnothing$ of distinct elements is isolated by one of the relations in the language. Our aim is to show that supersimple primitive binary homogeneous structures are very similar to the random graph, in the sense that we can prove analogues of Alice's restaurant axioms in them. As in other proofs in this chapter, at the core of the argument is the Independence Theorem.

\begin{theorem}
	Let $M$ be a countable relational structure homogeneous in the binary language $L=\{R_1,\ldots,R_n\}$, and assume that each complete 2-type over $\varnothing$ is isolated by one of the $R_i$. Suppose that $R_1,\ldots,R_m$ are symmetric relations and $R_{m+1},\ldots,R_n$ are antisymmetric. If $M$ is primitive and $Th(M)$ is supersimple of \SU-rank 1, then for any collection $\{A_1,\ldots,A_m,A_{m+1},A'_{m+1},\ldots,A_n,A'_n\}$ of pairwise disjoint finite sets of elements from $M$ there exists $v\in M$ such that \[M\models \bigwedge_{i\in\{1,\ldots m\}}(\bigwedge_{v_i\in A_i}R_i(v,v_i))\wedge\bigwedge_{i\in\{m+1,\ldots,n\}}(\bigwedge_{v_i\in A_i}R_i(v,v_i)\wedge\bigwedge_{w_i\in A_i'}R_i(w_i,v))\]
\label{PrimitiveAlice}
\end{theorem}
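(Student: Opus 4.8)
The plan is to realise the prescribed $1$-type over the finite parameter set by a one-point-at-a-time induction, gluing partial realisations with the Independence Theorem. Write $B = A_1 \cup \cdots \cup A_m \cup A_{m+1} \cup A'_{m+1} \cup \cdots \cup A_n \cup A'_n$. Since these sets are pairwise disjoint, the conclusion prescribes, for every $b \in B$, exactly one directed $2$-type between $v$ and $b$, and because the language is binary these $2$-types jointly determine a single complete $1$-type $p$ over $B$ (by quantifier elimination, i.e.\ homogeneity). Thus it suffices to show that $p$ is realised in $M$.

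First I would record the independence-theoretic facts that make everything work, all of which are already in hand. Since $\SU(M)=1$, forking is algebraic: $\tp(a/C)$ forks over $\varnothing$ iff $a \in \acl(C)$, using $\acl(\varnothing)=\varnothing$ from primitivity (Observation \ref{AlgClosure}). By Proposition \ref{algclosure}, $\acl(C)=C$ for every finite $C$, so any element outside a finite set $C$ is independent from $C$ over $\varnothing$; in particular distinct elements are pairwise independent and disjoint finite sets are independent. Primitivity also yields a unique $1$-type, hence a unique strong type, over $\varnothing$, and by Corollary \ref{LascarTypes} (lowness) strong type and Lascar strong type coincide, so any two elements share the same Lascar strong type over $\varnothing$.

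Now the induction on $|B|$. The base case $|B|\le 1$ amounts to realising a single prescribed directed relation to one point $a$: each relation occurring with a nonempty side is realised, and since all elements have the unique $1$-type, both $R_i(a)$ and $\{x: R_i(x,a)\}$ are nonempty for every $a$, so homogeneity supplies the required $v$. For the inductive step, fix $a \in B$ and put $B' = B \setminus \{a\}$. By the induction hypothesis choose $c' \models p\upharpoonright B'$ and $c'' \models p\upharpoonright \{a\}$. Both are new points (they are $R_i$-related, hence unequal, to the parameters), so $\tp(c'/B')$ and $\tp(c''/a)$ are nonforking extensions of the unique $1$-type over $\varnothing$, and $\Lstp(c'/\varnothing)=\Lstp(c''/\varnothing)$ by the previous paragraph. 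Since $B' \indep[\varnothing] a$, the Independence Theorem produces $v \indep[\varnothing] B$ with $\tp(v/B')=\tp(c'/B')$ and $\tp(v/\{a\})=\tp(c''/\{a\})$. By binarity these two restrictions determine $\tp(v/B)$, and each agrees with $p$ on its domain, so $v \models p$, completing the induction.

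I expect no deep obstacle, since the substantive content — $\acl(C)=C$ on finite sets (Proposition \ref{algclosure}), uniqueness of the strong type, lowness, and the Independence Theorem — is already established. The points needing care are bookkeeping: keeping the directed $2$-types of the antisymmetric relations $R_{m+1},\dots,R_n$ straight (each contributes two distinct $2$-types, realised through $A_i$ and $A'_i$ respectively), and invoking binarity at exactly the right moment to conclude that matching the two restrictions of $\tp(v/B)$ forces agreement with all of $p$.
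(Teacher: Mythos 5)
Your proof is correct and is essentially the paper's own argument: the paper likewise reduces to one prescribed relation per parameter, uses $\acl(C)=C$ (Proposition \ref{algclosure}) to get independence of disjoint finite parameter sets, invokes primitivity plus lowness for the unique Lascar strong type, and then iterates the Independence Theorem one parameter at a time, with binarity ensuring the amalgamated restrictions determine the full type. The only cosmetic difference is that you organise the iteration as induction on $|B|$ (peeling off one point), whereas the paper first reduces to $|A_i|=1$ and iterates over the indices $i$, remarking that larger sets follow the same way.
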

\begin{proof}
To prove this, we use Proposition \ref{algclosure} and the Independence Theorem. We may assume that all the $A_i, A'_i$ are of the same size, and will prove this proposition for $|A_i|=1$ (it will be clear that the same argument can be iterated for larger sets). By Proposition \ref{algclosure}, $a_1\indep a_2$ if $a_1\neq a_2$, and for any $A,B,C$, $A\indep[C]B$ if $(A\setminus C)\cap(B\setminus C)=\varnothing$. Let $A_i=\{a_i\}$ and $A'_{j}=\{a'_j\}$ for $m+1\leq j\leq n$, and assume all the $a_i$ are different and therefore pairwise independent. Then by homogeneity, there exist $b_i$ with $R_i(a_i,b_i)$, and $\tp(b_i/a_i)$ does not fork over $\varnothing$. By primitivity, $\Lstp(b_1)=\Lstp(b_2)$, so we can apply the Independence Theorem and find $b_{12}\models\Lstp(b_1)\cup\tp(b_1/a_1)\cup\tp(b_2/a_2)$ satisfying $b_{12}\indep a_1a_2$. 

Now we have $b_{12}\indep a_1a_2$ and we know $a_1a_2\indep a_3$ and $\tp(b_3/a_3)$ does not fork over $\varnothing$. Also, by primitivity $\Lstp(b_3)=\Lstp(b_{12})$ and we can apply the independence theorem again. Iterating this process, we find $\alpha\models\Lstp(b_1)\cup\tp(b_1/a_1)\cup\ldots\cup\tp(b_n/a_n)$ independent from $a_1,\ldots,a_m,a_{m+1},a'_{m+1},\ldots,a_n, a'_n$. 
\end{proof}

\subsection{Finite equivalence relations}\label{SubsectFER}
If $M$ is a transitive, imprimitive rank 1 structure in which all the definable equivalence relations have infinite classes, then it follows from the rank hypothesis that each of the equivalence relations has finitely many classes. From homogeneity and transitivity it follows that if $E$ is a definable equivalence relation on $M$ and $\neg E(a,b)$, then $a/E$ and $b/E$ are homogeneous structures with the same age, and each has fewer definable equivalence relations than $M$. By $\omega$-categoricity, there are only finitely many definable equivalence relations, so that $M$ is in fact the union of finitely many primitive homogeneous structures (which are the equivalence classes of the finest definable equivalence relation on $M$ with infinite classes) in which all invariant equivalence relations have finite classes. Our next goal is to describe how two classes of a finite equivalence relation in a rank 1 binary homogeneous structure can relate to each other.

The archetypal example of an imprimitive simple unstable binary homogeneous structure with a finite equivalence relation is the Random Bipartite Graph. It is the Fra\"iss\'e limit of the family of all bipartite graphs with a specified partition or equivalence relation; it is not homogeneous as a graph, but is homogeneous in the language $\{R,E\}$, where $E$ is interpreted as an equivalence relation. To axiomatise this theory, it suffices to express that $E$ is an equivalence relation with exactly two infinite classes, $R$ is a graph relation, and that for any finite disjoint subsets $A_1,A_2$ of the same $E$-class there exists a vertex $v$ in the opposite class such that $R(v,a)$ holds for all $a\in A_1$ and $\neg R(v,a')$ holds for all $a'\in A_2$.

If $A,B$ are different classes of the finest definable finite equivalence relation $E$ on $M$, we will say that a relation $R$ holds \emph{transversally} or \emph{across} $A, B$ if there exist $a\in A$ and $b\in B$ such that $R(a,b)\vee R(b,a)$. Relations which hold transversally for some pair of $E$-classes are refered to as \emph{transversal relations}. Notice that by homogeneity any relation holding across $E$-classes does not hold within a class, and vice-versa. By quantifier elimination and our assumption on the disjointness of the binary relations, $E$ is defined by a disjunction of atomic formulas $\bigvee_{i\in I}R_i(x,y)$ for some $I\subset\{1,\ldots n\}$. Therefore, the transversal relations are those in $L\setminus \{R_i:i\in I\}$. We assume that each 2-type of distinct elements is isolated by a relation in the language; therefore, each relation is either symmetric or antisymmetric.

Given two $E$-classes $A,B$, if only one symmetric relation $R$ holds across $A,B$ then we say that $R$ is \emph{complete bipartite} in $A,B$, for the reason that if we forget the structure within the classes, what we obtain is a complete bipartite graph. All other relations are \emph{null} across $A,B$ in this case, \ie not realised across these classes. 

If $D$ is an antisymmetric relation realised across $A,B$, we say that the ordered pair of classes $(A,B)$ is \emph{directed for} $D$ if all the $D$-edges present in $A\cup B$ go in the same direction, that is, if either $\forall(c,c'\in A\cup B)(D(c,c')\rightarrow c\in A\wedge c'\in B)$ or $\forall(c,c'\in A\cup B)(D(c,c')\rightarrow c\in B\wedge c'\in A)$. A dramatic example of a $D$-directed pair of $E$-classes is when $\forall a\in A\forall b\in B(D(a,b))$. We adopt the convention that if $(A,B)$ is directed for $D$, then the $D$-edges go from $A$ to $B$. If $(A,B)$ is not directed for any $D$, then we say that $(A,B)$ is an \emph{undirected} pair of $E$-classes. 

\begin{observation}
	Let $M$ be a binary homogeneous imprimitive transitive relational structure in which there are proper nontrivial invariant equivalence relations with infinite classes. Let $E$ be the finest such equivalence relation in $M$. If $(A,B)$ is a directed pair of equivalence classes for some $D\in L$, then no symmetric relations are realised across $A,B$ and for all antisymmetric relations $D'$ in the language realised across $A,B$, either $(A,B)$ or $(B,A)$ is directed for $D'$.
\label{DirectedPairsClasses}
\end{observation}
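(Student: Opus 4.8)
The plan is to reduce both conclusions to a single principle: \emph{if there is an automorphism of $M$ interchanging the classes $A$ and $B$, then no antisymmetric relation can be directed across $A,B$}, which contradicts the standing hypothesis that $(A,B)$ is directed for $D$. Indeed, suppose $\sigma\in\aut(M)$ satisfies $\sigma(A)=B$ and $\sigma(B)=A$. Since $D$ is realised across $A,B$ and, by our convention, all its transversal edges point from $A$ to $B$, pick $a'\in A,\ b'\in B$ with $D(a',b')$. As $\sigma$ preserves $D$ and the invariant relation $E$, the pair $\sigma(a')\in B,\ \sigma(b')\in A$ satisfies $D(\sigma(a'),\sigma(b'))$, exhibiting a $D$-edge pointing from $B$ to $A$; because $D$ is antisymmetric, this genuinely contradicts the directedness of $(A,B)$ for $D$. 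Thus the whole argument comes down to producing such a swap under each of the two hypotheses.

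For the first conclusion I would argue by contradiction. Suppose a symmetric relation $S$ is realised across $A,B$, so that $S(a,b)$ holds for some $a\in A,\ b\in B$, and hence $S(b,a)$ by symmetry. Since each $2$-type of distinct elements is isolated by a single relation of the language, the map $a\mapsto b,\ b\mapsto a$ preserves the quantifier-free type of the pair and is therefore a partial isomorphism. By homogeneity it extends to an automorphism $\sigma$, and invariance of $E$ forces $\sigma(A)=\sigma(a/E)=b/E=B$ and likewise $\sigma(B)=A$. So $\sigma$ swaps the classes, and the principle above yields the contradiction.

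For the second conclusion, suppose an antisymmetric relation $D'$ is realised across $A,B$ but $(A,B)$ is \emph{not} directed for $D'$. Then transversal $D'$-edges occur in both directions: there are $a_1\in A,\ b_1\in B$ with $D'(a_1,b_1)$ and $a_2\in A,\ b_2\in B$ with $D'(b_2,a_2)$. Both ordered pairs $(a_1,b_1)$ and $(b_2,a_2)$ realise the same $2$-type, namely the one in which $D'$ points from the first coordinate to the second, so $a_1\mapsto b_2,\ b_1\mapsto a_2$ is a partial isomorphism; extending it by homogeneity gives $\sigma\in\aut(M)$ with $\sigma(a_1)=b_2\in B$ and $\sigma(b_1)=a_2\in A$, whence $\sigma(A)=B$ and $\sigma(B)=A$. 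Again $\sigma$ swaps the classes, contradiction.

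The argument uses essentially only homogeneity together with the invariance of $E$ under $\aut(M)$; transitivity, supersimplicity, and $\SU$-rank $1$ play no direct role here and merely fix the ambient setting. Accordingly I do not expect a genuine obstacle. The one point requiring care is verifying in each case that the candidate swap is a partial isomorphism, but this is immediate from the standing assumption that every $2$-type of distinct elements is isolated by a single symmetric or antisymmetric relation, so that an ordered pair of distinct elements is determined precisely by which relation holds between them and in which direction.
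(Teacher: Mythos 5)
Your proof is correct and takes essentially the same approach as the paper: in both parts, homogeneity (plus the standing assumption that each $2$-type is isolated by a single relation) produces an automorphism interchanging $A$ and $B$, which invariance of $E$ shows is incompatible with $(A,B)$ being directed for $D$. If anything, your treatment of the antisymmetric case is marginally cleaner, since the paper phrases the two oppositely-directed $D'$-edges as sharing a vertex $b\in B$, an inessential extra assumption that your version avoids.
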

\begin{proof}
	The first assertion follows from the fact that if $R(a,b)$ for some symmetric relation $R$, where $a\in A$ and $b\in B$, then by homogeneity there would exist an automorphism taking $a\rightarrow b$ and $b\rightarrow a$, which is impossible by invariance of $E$ and the fact that $(A,B)$ is directed for $D$. Similarly, if for some directed relation $D'$ we had $a,a'\in A$ and $b\in B$ with $D'(a,b)\wedge D'(b,a')$ then by homogeneity there would exist an automorphism of $M$ taking $ab$ to $ba'$, again impossible since $(A,B)$ is directed for $D$. 
\end{proof}

\begin{observation}
	Let $M$ be a binary homogeneous imprimitive transitive relational structure with supersimple theory of \SU-rank 1 in which there are proper nontrivial invariant equivalence relations with infinite classes. Let $E$ be the finest such equivalence relation in $M$, and assume that $\aut(M)$ acts primitively on each $E$-class. If $a_1,\ldots,a_n$, $n\geq2$, are distinct $E$-equivalent elements of $M$, then $a_1\indep a_2,\ldots,a_n$.
\label{IndepInClass}
\end{observation}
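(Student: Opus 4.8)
The plan is to run the inductive scheme of Proposition \ref{PairwiseIndep}, driven by the Independence Theorem, but adapted to the fact that here $E$ itself is a $\varnothing$-definable \emph{finite} equivalence relation (the \SU-rank $1$ hypothesis forces the finest equivalence relation with infinite classes to have only finitely many classes). This means the hypothesis of Proposition \ref{PairwiseIndep} fails outright, so the two facts it supplies for free---pairwise independence and coincidence of (Lascar) strong types---must be re-established by hand inside a single class. Throughout, let $K=a_1/E$ be the common $E$-class of the $a_i$; note that by homogeneity of $M$ the induced structure on $K$ is itself homogeneous (an automorphism of $M$ carrying one finite subset of $K$ to another preserves $E$, hence stabilises $K$ and restricts to an automorphism of $K$), and by hypothesis it is primitive and $\omega$-categorical.

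First I would settle the base case $n=2$, that is, $a_1\indep a_2$ for distinct $E$-equivalent $a_1,a_2$. Since the theory has \SU-rank $1$, forking is algebraic, so $a_1\indep a_2$ is equivalent to $a_1\notin\acl(a_2)$ (transitivity gives $\acl(\varnothing)=\varnothing$). Observation \ref{AlgClosure} applied to the primitive $\omega$-categorical structure $K$ yields $\acl_K(a_2)=\{a_2\}$, and the orbits of $\aut(M)_{a_2}$ on $K$ coincide with those of $\aut(K)_{a_2}$ (both are determined, via quantifier elimination, by the single relation holding between $a_2$ and the point), so $\acl(a_2)\cap K=\{a_2\}$. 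As $a_1\in K\setminus\{a_2\}$, this gives $a_1\notin\acl(a_2)$, hence $a_1\indep a_2$.

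The decisive lemma is that any two elements of $K$ have the same strong type over $\varnothing$, and therefore, by lowness (Corollary \ref{LascarTypes}), the same Lascar strong type. To prove it, let $F$ be any $\varnothing$-definable finite equivalence relation. Then $F\cap K^2$ is an equivalence relation on $K$ invariant under the setwise stabiliser of $K$, which acts primitively; so $F\cap K^2$ is either equality on $K$ or all of $K^2$. It cannot be equality, since $K$ is infinite while $F$ has only finitely many classes and so cannot separate all points of $K$. Hence $F\cap K^2=K^2$, i.e. $K$ is contained in a single $F$-class. As $F$ was arbitrary, any $b,c\in K$ satisfy $\stp(b)=\stp(c)$, hence $\Lstp(b)=\Lstp(c)$.

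With these ingredients the induction mirrors Proposition \ref{PairwiseIndep}. Assuming the claim for at most $n_0$ elements, suppose for contradiction that $a_1,\dots,a_{n_0+1}$ are distinct and $E$-equivalent but $\tp(a_1/a_2,\dots,a_{n_0+1})$ forks. The inductive hypothesis makes $\tp(a_1/a_{n_0+1})$ and $\tp(a_1/a_2,\dots,a_{n_0})$ nonforking extensions of $\tp(a_1)$, and gives $a_{n_0+1}\indep a_2,\dots,a_{n_0}$. Picking $b\models\tp(a_1/a_{n_0+1})$ and $c\models\tp(a_1/a_2,\dots,a_{n_0})$, both lie in $K$ (each type places its realisation in the class of an $a_i$, all of which equal $K$), so $\Lstp(b)=\Lstp(c)$ by the lemma. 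The Independence Theorem then produces $a'\models\Lstp(b)\cup\tp(a_1/a_{n_0+1})\cup\tp(a_1/a_2,\dots,a_{n_0})$ with $a'\indep a_2,\dots,a_{n_0+1}$; and since the language is binary, $\tp(a'/a_2,\dots,a_{n_0+1})=\tp(a_1/a_2,\dots,a_{n_0+1})$, contradicting that $a_1$ forks while $a'$ does not. I expect the strong-type lemma to be the main obstacle: the rest is a transcription of Proposition \ref{PairwiseIndep}, but because $E$ is a finite equivalence relation one must argue directly that a single class supports a unique strong type, which is exactly where primitivity of the class and the rank-$1$-forced finiteness of the number of classes are both essential.
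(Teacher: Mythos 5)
Your proof is correct and follows essentially the same route as the paper's: induction on $n$, with the base case coming from primitivity of the action on the class (no algebraicity inside $K$) and the inductive step from the Independence Theorem plus the binary-language identification $\tp(a'/a_2,\ldots,a_{n_0+1})=\tp(a_1/a_2,\ldots,a_{n_0+1})$, which contradicts the forking (algebraicity) of the latter type. The only difference is one of detail: where the paper merely asserts that the two realisations ``are of the same strong type,'' you prove it, observing that any $\varnothing$-definable finite equivalence relation restricted to an $E$-class is invariant under the setwise stabiliser and hence, by primitivity and infiniteness of the class, must be all of $K^2$ --- a worthwhile filling-in of a step the paper leaves implicit.
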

\begin{proof}
	We proceed by induction on $n$. For the case $n=2$, let $a_1,a_2$ be distinct elements of $M$, $E(a_1,a_2)$. In the situation described, each of the relations that imply $E$ is non-algebraic, since otherwise the action of $\aut(M)$ on $a_1/E$ would not be primitive. It follows that the relation isolating $\tp(a_1a_2)$ is nonforking, so $a_1\indep a_2$.
	
	Now suppose that any $k$ distinct $E$-equivalent elements of $M$ are independent. Suppose for a contradiction that $a_1,\ldots,a_{k+1}$ are pairwise independent $E$-equivalent elements of $M$, and $a_{k+1}\notindep a_1,\ldots a_k$. By the induction hypothesis, $a_1\indep a_2,\ldots,a_k$, $a_{k+1}\indep a_1$ and $a_{k+1}\indep a_2,\ldots,a_k$. Let $b_1\models\tp(a_{k+1}/a_1)$ and $b_2\models\tp(a_{k+1}/a_2,\ldots,a_k)$; these are nonforking extensions of the unique 1-type over $\varnothing$ to $a_1$ and $a_2,\ldots,a_k$, and are of the same strong type. Therefore, by the Independence Theorem, there exists $c$ satisfying $\tp(a_{k+1}/a_1)\cup\tp(a_{k+1}/a_2,\ldots,a_k)$ in the same class as $a_{k+1}$, independent (\ie non-algebraic) from $a_1,\ldots,a_k$. But then $\tp(c/a_1,\ldots,a_k)=\tp(a_{k+1}/a_1,\ldots,a_k)$ because the language is binary, which is impossible as the type on the left-hand side of the equality is non-algebraic, while the other one is algebraic. 
\end{proof}

Given a pair of $E$-classes $A,B$, denote the set of nonforking transversal relations realised in $A\cup B$ by $\mathcal I(A,B)$. If $(A,B)$ is a directed pair of classes, then $\mathcal I^*(A,B)$ is the set of nonforking relations $D$ realised in $A\cup B$ such that $D(a,b)$ for some $a\in A, b\in B$. Note that for directed pairs, $\mathcal I(A,B)=\mathcal I^*(A,B)\cup\mathcal I^*(B,A)$.

\begin{proposition}
	Let $M$ be a binary homogeneous imprimitive transitive relational structure with supersimple theory of \SU-rank 1 in which there are proper nontrivial invariant equivalence relations with infinite classes. Let $E$ be the finest such equivalence relation in $M$, and assume that $\aut(M)$ acts primitively on each $E$-class. Suppose that $(A,B)$ is a $D_1$-directed pair of $E$-classes. Enumerate $\mathcal I^*(A,B)=\{D_1,\ldots, D_n\}$ and $\mathcal I^*(B,A)=\{Q_1,\ldots,Q_m\}$. Then for all finite disjoint $V_1,\ldots,V_n\subset B$ and $W_1,\ldots,W_m\subset A$ there exist $c\in A$ and $d\in B$ such that $D_i(c,v)$ holds for all $v\in V_i$ ($1\leq i\leq n$) and $Q_j(w,d)$ holds for all $w\in W_j$ ($1\leq j\leq m$).
\end{proposition}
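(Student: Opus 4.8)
The plan is to mimic the proof of Theorem \ref{PrimitiveAlice}, building the required vertices by a finite iteration of the Independence Theorem. First I would note that the two conclusions do not interact: producing $c\in A$ with the prescribed $D_i$-edges to the $V_i$, and producing $d\in B$ with the prescribed $Q_j$-edges to the $W_j$, are the same problem with the roles of $A$ and $B$ (and of $\mathcal I^*(A,B)$ and $\mathcal I^*(B,A)$) interchanged. So it suffices to construct $c$, and $d$ follows by the symmetric argument. I would then reduce to absorbing one target vertex at a time: enumerate $\bigcup_i V_i$ as $v_1,\ldots,v_N$, with each $v_k$ carrying the label $i(k)$ of the set $V_{i(k)}$ it lies in, and incorporate the $v_k$ one by one.

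For the construction I would fix an anchor $a_0\in A$ and impose the extra requirement $E(c,a_0)$, which is what forces $c$ into the class $A$: without it the conditions $D_{i(k)}(c,v_k)$ only place $c$ in \emph{some} class from which these directed relations reach $B$, which need not be $A$. The parameter set is then $P=\{a_0,v_1,\ldots,v_N\}$, with $a_0\in A$ and all $v_k\in B$. The key input is that $P$ is $\varnothing$-independent: the $v_k$ are pairwise independent by Observation \ref{IndepInClass} (they lie in the single class $B$), $a_0\indep v_k$ for each $k$ because the transversal relations in $\mathcal I(A,B)$ are nonforking, and under the $\SU$-rank $1$ hypothesis pairwise independence upgrades to joint independence, in the spirit of Observations \ref{IndepInClass} and \ref{ObsIndepClasses}. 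Moreover, since $\aut(M)$ acts primitively on each $E$-class and the theory is low (Corollary \ref{LascarTypes}), all elements of $A$ share one Lascar strong type over $\varnothing$, and every type realized across an $E$-edge or a transversal nonforking edge is a nonforking extension of the unique $1$-type.

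With these facts I would run the induction. At stage $k$ I have $c$ with $E(c,a_0)$ and $D_{i(\ell)}(c,v_\ell)$ for $\ell\le k$ and $c\indep a_0 v_1\cdots v_k$, and a fresh $c'\in A$ realizing the single requirement $D_{i(k+1)}(c',v_{k+1})$ with $c'\indep v_{k+1}$. Since $v_{k+1}\indep a_0 v_1\cdots v_k$ and $c,c'$ carry the same Lascar strong type of an $A$-vertex over $\varnothing$, the Independence Theorem produces $c''\models\Lstp(c)\cup\tp(c/a_0 v_1\cdots v_k)\cup\tp(c'/v_{k+1})$ with $c''\indep a_0 v_1\cdots v_{k+1}$. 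Binarity forces $\tp(c''/a_0 v_1\cdots v_{k+1})$ to be determined by its $2$-types, so $c''$ satisfies $E(c'',a_0)$ and $D_{i(\ell)}(c'',v_\ell)$ for all $\ell\le k+1$; in particular $c''\in A$. After $N$ steps this yields the desired $c$, and the mirror-image construction (using $\mathcal I^*(B,A)$, with the $Q_j$-edges oriented from $B$ to $A$) yields $d$.

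The main obstacle I anticipate is the verification that $P$ is genuinely $\varnothing$-independent, since the presence of the finite equivalence relation $E$ is exactly the setting in which pairwise independence may fail to imply joint independence: Proposition \ref{PairwiseIndep} is unavailable because $E$ is a $\varnothing$-definable finite equivalence relation. The work therefore concentrates on showing, under $\SU$-rank $1$ (where forking is algebraicity), that no element of $P$ is algebraic over the rest — for the $v_k$ this is Observation \ref{IndepInClass}, while for the anchor $a_0$ one must rule out $a_0\in\acl(v_1,\ldots,v_N)$ by appealing to the genericity of the infinite class $A$ over the finite subset $\{v_k\}\subset B$. A secondary point is the bookkeeping ensuring that each amalgamated vertex lands in $A$ rather than in a conjugate class sharing the same transversal profile, which is precisely the function of the anchor $a_0$.
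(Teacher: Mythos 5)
Your core engine is the same as the paper's: an iterated application of the Independence Theorem absorbing one target vertex at a time, with independence of the parameters coming from Observation \ref{IndepInClass}, nonforking of each single requirement coming from $D_i\in\mathcal I^*(A,B)$, and equality of Lascar strong types coming from primitivity of the classes plus lowness (Corollary \ref{LascarTypes}); the paper organises this as a double induction (one vertex per relation first, then total size), but that is cosmetic. The genuine difference is how the amalgam is forced into the class $A$. The paper uses no anchor: the Independence Theorem as stated there makes the realisation satisfy $\Lstp(\bar b)$ as well, and since $E$ has finitely many classes (rank 1) and $\aut(M)$ acts primitively on each class, the strong type over $\varnothing$ of an element of $A$ already determines its $E$-class, so every amalgam lands in $A$ automatically — your worry that it could drift to a conjugate class is unfounded. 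Your anchor $a_0$ with the extra formula $E(x,a_0)$ is a legitimate substitute, but it is precisely what creates the one flaw in your write-up: the claim that $a_0\indep v_k$ holds ``because the transversal relations in $\mathcal I(A,B)$ are nonforking'' is wrong as stated, since $\mathcal I(A,B)$ consists \emph{by definition} only of the nonforking transversal relations, and the relation actually linking $a_0$ to a given $v_k$ may be a forking (e.g.\ algebraic, directed) one that simply fails to belong to $\mathcal I(A,B)$. You flag this yourself at the end, and the repair you sketch does work: by $\omega$-categoricity $\acl(v_1,\ldots,v_N)$ is finite while $A$ is infinite, so one may choose $a_0\in A\setminus\acl(v_1,\ldots,v_N)$; in rank 1 this yields $a_0\indep v_1\cdots v_N$, and then symmetry and transitivity of forking, combined with Observation \ref{IndepInClass} (which already gives \emph{joint}, not merely pairwise, independence of the $v_k$), produce the required $v_{k+1}\indep a_0v_1\cdots v_k$ at every stage. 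So your proof is correct modulo that repair; what the comparison shows is that the anchor buys explicitness but is redundant — the Lascar strong type does its job for free — and it is the sole source of the extra verification your argument needs and the paper's does not.
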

\begin{proof}
We will prove only that for all finite disjoint $V_1,\ldots,V_n\subset B$ there exists $c\in B$ such that $D_i(c,v)$ holds for all $v\in V_i$; the same argument produces the $d$ from the statement. 

We proceed by induction on $k=|V_1|+\ldots+|V_n|$, with an inner induction argument. If $k=n$, so $V_i=\{b_i\}$ then by Observation \ref{IndepInClass} we have $b_1\indep b_2$. There exist $a,a'\in A$ such that $D_1(a,b_1)\wedge D_2(a',b_2)$; since $D_1$ and $D_2$ are nonforking relations, $a\indep b_1$ and $a'\indep b_2$, and since $a,a'$ are $E$-equivalent, they have the same strong type. By the Independence Theorem, there exists $c_{12}\in A$ such that $c_{12}\indep b_1b_2$ and $D_1(c_{12},b_1)\wedge D_2(c_{12},b_2)$. Now suppose that for $t\leq n-1$, we can find $c_{1\ldots t}\indep a_1,\ldots, a_t$ such that $D_1(c_{1\dots t},b_1)\wedge\ldots\wedge D_t(c_{1\ldots t}, b_t)$. Given distinct $b_1,\ldots, b_{t+1}$ with $t+1\leq n$, it follows from Observation \ref{IndepInClass} that $b_{t+1}\indep b_1,\ldots,b_t$. By the induction hypothesis, there exists $c_{1\ldots t}\indep b_1,\ldots,b_t$ satisfying $\bigwedge_{i=1}^t D_i(c_{1\ldots t},b_i)$; and we know that there exists $c_{t+1}\in A$ such that $D_{t+1} (c_{t+1},b_{t+1})$. Since $D_{t+1}$ is nonforking, $c_{t+1}\indep b_{t+1}$, and by the Independence Theorem, there exists $c_{1\ldots t+1}\indep b_1,\ldots,b_{t+1}$ such that $\bigwedge_{i=1}^{t+1}D_i(c_{1\ldots t+1},b_i)$. This concludes, by induction, the case $k=n$. The same argument proves the inductive step on $k$.
\end{proof}

By the same argument, we can prove:
\begin{proposition}
	Let $M$ be a binary homogeneous imprimitive transitive relational structure with supersimple theory of \SU-rank 1 in which there are proper nontrivial invariant equivalence relations with infinite classes. Let $E$ be the finest such equivalence relation in $M$, and assume that $\aut(M)$ acts primitively on each $E$-class. Suppose that $(A,B)$ is an undirected pair of $E$-classes, $\mathcal I(A,B)=\{R_1,\ldots,R_k\}\cup\{D_1,\ldots, D_s\}$, where each $R_i$ is symmetric and each $D_j$ is antisymmetric. Then for all finite disjoint subsets $V_1,\ldots,V_k,W_1,\ldots,W_s,W'_1,\ldots,W'_s\subset B$ there exists $c\in A$ such that $R_i(c,v)$ for all $v\in V_i$, $D_j(c,w)$ for all $w\in W_j$, and $D_j(w,c)$ for all $w\in W_j'$.
\label{ComplicatedAlice}
\end{proposition}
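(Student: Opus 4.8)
The plan is to follow the scheme of the preceding proposition, replacing the single orientation of each $D_i$ used there by the two orientations available across an undirected pair and treating the symmetric relations $R_i$ in the same breath. Fix the target class $A$ and write $\alpha$ for the common Lascar strong type over $\varnothing$ of its elements. Two facts about $\alpha$ drive everything. First, since $\aut(M)$ acts primitively on $A$, any $\varnothing$-definable finite equivalence relation restricts on the infinite set $A$ to equality or to the full relation, and the former is impossible because such a relation has only finitely many classes; hence every $\varnothing$-definable finite equivalence relation is constant on $A$, and together with lowness (Proposition \ref{CatSimpLow} and Corollary \ref{LascarTypes}) this shows that all elements of $A$ realise a single $\alpha=\Lstp(a/\varnothing)$, and likewise those of $B$ a single $\beta$. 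Second, the finitely many $E$-classes constitute the $\varnothing$-definable finite set $M/E\subseteq\acl^{\mathrm{eq}}(\varnothing)$, so every Lascar strong automorphism over $\varnothing$ fixes each $E$-class setwise; consequently \emph{any} element realising $\alpha$ lies in $A$. This is the device that keeps class membership under control.

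Next I would record the existence of witnesses. Let $b\in B$ and let $D_j\in\mathcal I(A,B)$ be antisymmetric. Since $(A,B)$ is undirected, $D_j$ is realised from $A$ to $B$, say $D_j(a_0,b_0)$ with $a_0\in A$, $b_0\in B$; as $\Lstp(b_0/\varnothing)=\beta=\Lstp(b/\varnothing)$, some Lascar strong automorphism $\tau$ over $\varnothing$ carries $b_0$ to $b$, and by the first paragraph $\tau$ fixes $A$ setwise, so $a:=\tau(a_0)\in A$ satisfies $D_j(a,b)$. The opposite orientation of $D_j$ and the symmetric relations $R_i$ are handled identically. Because every relation in $\mathcal I(A,B)$ is nonforking, each such witness moreover satisfies $a\indep b$. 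Thus for any single target $b\in B$ and any one prescribed constraint there is a witness in $A$ independent from $b$; this is the base case.

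For the inductive claim I would prove: for every finite $U\subseteq B$ carrying prescribed constraints (each $u\in U$ assigned one of $R_i(c,u)$, $D_j(c,u)$, $D_j(u,c)$) there is $c\in A$ realising all of them with $c\indep U$. The induction is on $|U|$, the base case being the previous paragraph. For the step, split $U=U_1\sqcup U_2$ into nonempty pieces; by Observation \ref{IndepInClass} distinct $E$-equivalent elements are mutually independent, so $U_1\indep U_2$. The inductive hypothesis gives $u,v\in A$ with $u\indep U_1$, $v\indep U_2$ realising the respective constraints, whence $\tp(u/U_1)$ and $\tp(v/U_2)$ are nonforking over $\varnothing$; since $u,v\in A$ we have $\Lstp(u/\varnothing)=\alpha=\Lstp(v/\varnothing)$. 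The Independence Theorem then yields $c\models\alpha\cup\tp(u/U_1)\cup\tp(v/U_2)$ with $c\indep U_1U_2=U$. As the language is binary, $\tp(c/U)$ is determined by its restrictions to $U_1$ and $U_2$, so $c$ satisfies every constraint, and $c\models\alpha$ forces $c\in A$. Applying this with $U=V_1\cup\dots\cup V_k\cup W_1\cup\dots\cup W_s\cup W'_1\cup\dots\cup W'_s$ and the constraints of the statement produces the desired $c$.

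The one genuinely delicate point, exactly as in the preceding proposition, is keeping the constructed element inside the prescribed class $A$ at every invocation of the Independence Theorem, and ensuring that the initial witnesses already lie in $A$: the amalgam is only guaranteed to realise $\Lstp=\alpha$, so the whole argument rests on the observation that $\Lstp$ over $\varnothing$ pins down the $E$-class, which in turn uses primitivity on classes, lowness, and the finiteness of $M/E$. The remaining bookkeeping — carrying the two orientations of each antisymmetric $D_j$ together with the symmetric $R_i$ through the induction — is routine once the witness-existence step is in place.
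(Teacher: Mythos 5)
Your proof is correct and takes essentially the same route as the paper: the paper deduces this proposition ``by the same argument'' as the preceding one, namely induction on the number of constraints using Observation \ref{IndepInClass}, the nonforking of the transversal relations, and the Independence Theorem, with the binary language combining the amalgamated types. The only differences are presentational — you split $U$ into two arbitrary pieces rather than peeling off one element at a time, and you make explicit a point the paper leaves implicit, namely that primitivity on classes, lowness, and the finiteness of $M/E$ make the Lascar strong type over $\varnothing$ pin down the $E$-class, which is exactly what guarantees that the witnesses and the amalgamated element land in $A$.
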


We remark here that if all the relations are symmetric, Proposition \ref{ComplicatedAlice} says that a nonforking transversal relation $R$ occurs across a pair of $E$-classes $A,B$ in one of three ways, namely:
\begin{enumerate}
	\item{Complete, that is, only one relation is realised across $A,B$,}
	\item{Null, so $R$ is not realised in $A\cup B$}
	\item{Random bipartite: it satisfies that given two disjoint nonempty finite subsets $V,V'$ of $A$ ($B$), there is a vertex $v$ in $B$ ($A$) that is $R$-related to all vertices from $V$ and to none from $V'$}
\end{enumerate}

The results in this section tell us exactly what to expect from binary supersimple homogeneous structures of \SU-rank 1. Even though we did not phrase it as a list of structures, Proposition \ref{ComplicatedAlice} is essentially a classification result for imprimitive binary homogeneous structures of \SU-rank 1 in which one of the relations defines an equivalence relation with infinite classes. Our next proposition is, in the same sense, a classification of unstable imprimitive simple 3-graphs (language $\{R,S,T\}$, all relations symmetric and irreflexive, each pair of distinct vertices realises exactly one of them) in which one of the predicates defines a finite equivalence relation. This result is of interest in the final sections of this chapter; we make implicit use Proposition \ref{CatSimpLow}:

\begin{proposition}
	Let $M$ be a transitive simple unstable homogeneous 3-graph in which $R$ defines an equivalence relation with $m<\omega$ classes. Then $M$ has supersimple theory of \SU-rank 1, the structure induced on each pair of classes is isomorphic to the Random Bipartite Graph, and for all $k\leq m$ and all $k$-sets of $R$-classes $X$, any $S,T$-graph of size $k$ is realised as a transversal to $X$.
\label{PropImprimitive3Graphs}
\end{proposition}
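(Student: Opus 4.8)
The plan is to combine Koponen's theorem with a careful forking analysis and then build the generic structure by repeated applications of the Independence Theorem, exactly in the style of Theorem \ref{ThmCGamma} and Proposition \ref{ComplicatedAlice}. By Theorem \ref{Koponen}, $\Th(M)$ is already supersimple of finite \SU-rank, and by transitivity there is a single $1$-type over $\varnothing$; the $m$ $R$-classes are precisely the strong types over $\varnothing$, which by lowness (Corollary \ref{LascarTypes}) coincide with the Lascar strong types. The first key observation is that, because there are only $m<\omega$ classes, any infinite $\varnothing$-indiscernible sequence of singletons must lie inside a single class (by pigeonhole two of its members share a class, and indiscernibility then forces all cross-pairs to realise one $2$-type), hence is an $R$-clique; since the language is binary, all such cliques have the same type, namely that of a Morley sequence over $\varnothing$. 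Thus Morley sequences over $\varnothing$ are exactly the infinite $R$-cliques, and $R$ is a nonforking relation.

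Next I would show that $S$ and $T$ are also nonforking, which is the heart of the argument. Since $M$ is unstable and $R$ is an equivalence relation (hence stable), one of $S,T$ is unstable, say $S$; by Proposition \ref{IndiscerniblePairs} extract an indiscernible half-graph $(a_i,b_i)_{i\in\omega}$ for $S$, so $S(a_i,b_j)$ holds iff $i\le j$. By the previous paragraph $(a_i)$ and $(b_i)$ are $R$-cliques lying in two distinct classes $C_a\neq C_b$, so every cross-edge with $i>j$ is forced to be $T$. Now $a_0$ is $S$-related to the entire Morley sequence $(b_j)_{j\ge0}$, so $\{S(x,b_j):j\in\omega\}$ is consistent and $S(x,b)$ does not divide over $\varnothing$; symmetrically $b_0$ is $T$-related to the whole $R$-clique $(a_i)_{i\ge1}$, so $T$ does not divide either. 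Hence $R,S,T$ are all nonforking relations (by homogeneity a single nonforking instance of a $2$-type makes the whole type nonforking), both $S$ and $T$ are unstable, and a neighbourhood count gives $|S(a)\cap C'|,|T(a)\cap C'|\in\{0,\infty\}$ for any other class $C'$ (a finite nonzero neighbourhood would make the relation stable). In particular $\acl(a)=\{a\}$ and no $2$-type of distinct elements forks.

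From here \SU-rank $1$ follows. I would prove by induction on $|B|$ that $a\notin\acl(B)$ implies $a\indep B$: writing $B=B'\cup\{b\}$ with $B'\indep b$ (induction) and $a\indep B'$, $a\indep b$ (no $2$-type forks), the Independence Theorem amalgamates $\tp(a/B')$ and $\tp(a/b)$ — the common Lascar strong type condition is automatic, since it is the same element $a$ — to produce $a'\indep B$ with the same $2$-types to each element of $B$, and binary quantifier elimination gives $\tp(a'/B)=\tp(a/B)$, whence $a\indep B$. Consequently every non-algebraic extension of the $1$-type is nonforking, so no forking extension has positive rank; as $M$ is infinite, $\SU(M)=1$. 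Moreover any set of distinct elements is $\varnothing$-independent.

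The remaining structural claims reduce to showing every pair of classes is \emph{mixed} (both $S$ and $T$ occur across it) and then invoking the Independence Theorem. For mixedness, note that by homogeneity all $S$-edges form one $\aut(M)$-orbit and all $T$-edges another; since the pair $(C_a,C_b)$ above carries both an $S$-edge and a $T$-edge, mapping its $S$-edge onto any $S$-edge of another pair $(C,C')$ yields an isomorphism of the induced structures carrying the $T$-edge into $C\cup C'$, so $(C,C')$ is also mixed, while a pair carrying only $T$-edges would, by mapping a $T$-edge, be forced to contain an $S$-edge, a contradiction. As every pair of distinct classes carries an $S$- or a $T$-edge, every pair is mixed, and Observation \ref{ObsTransRE} (applied to $S$) gives $2$-transitivity on $M/R$. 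Finally, to realise an arbitrary $S,T$-graph of size $k\le m$ as a transversal to a $k$-set of classes, I build it one vertex at a time: at each step the prescribed relations to the already-placed (mutually independent) vertices are each individually realisable because every pair is mixed, and they are amalgamated over the independent partial transversal by the Independence Theorem (the candidate vertices all lie in the target class, hence share its Lascar strong type), with binary quantifier elimination guaranteeing the full pattern. The case $k=2$ says each pair of classes is the Random Bipartite Graph, and matching ages identifies $M$ with the corresponding Fra\"iss\'e limit. The step I expect to be most delicate is the forking analysis in the second paragraph — extracting from a single half-graph the simultaneous nonforking of both transversal relations — since \SU-rank $1$ and everything downstream depend on it.
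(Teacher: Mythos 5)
Your proposal is correct, and its skeleton is essentially the paper's: Morley sequences over $\varnothing$ are identified with infinite $R$-cliques; an indiscernible half-graph whose rows are $R$-cliques in two distinct classes shows that both $S$ and $T$ are realised across a pair of classes and are nonforking; the $R$-classes serve as the (Lascar) strong types; and iterated Independence Theorem amalgamations yield the Random Bipartite structure on pairs and the transversal realisations (the paper outsources the bipartite step to Proposition \ref{ComplicatedAlice}, which you re-derive inline). The one genuine divergence is the \SU-rank computation. The paper argues directly that no non-algebraic formula $\varphi(x,\bar a)$ divides: any $\varnothing$-indiscernible sequence of singletons sits inside a single class, and pigeonhole on the finitely many types over $\bar a$ produces an infinite consistent subsequence. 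You instead prove by induction, via the Independence Theorem and binarity (in the style of Propositions \ref{PairwiseIndep} and \ref{algclosure}), that every element not algebraic over a finite set is independent from it. Your route needs a little more bookkeeping --- the inductive step silently assumes $b\notin\acl(B')$, and the exceptional case must be discharged with Lemma \ref{Casanovas520} --- but it buys a stronger, reusable conclusion (finite sets of pairwise non-algebraic points are independent), which is exactly what your vertex-by-vertex transversal construction consumes; the paper's argument is shorter but gives only the rank statement. One local justification you should repair: before 2-transitivity on $M/R$ is known, ``a finite nonzero neighbourhood would make the relation stable'' is not the right reason for $|S(a)\cap C'|\in\{0,\infty\}$, since instability of $S$ need not localise to a given pair of classes; the correct argument is that a finite nonzero intersection would make $S(x,b)$ divide along the Morley sequence enumerating $C'$, contradicting the nonforking of $S$ that you have already established. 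Your extraction of nonforking for both $S$ and $T$ from a single half-graph (above- and below-diagonal edges) is sound and is a small economy over the paper, which treats the two relations separately.
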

\begin{proof}
	The first assertion follows easily from transitivity (only one 1-type $q_0$ over $\varnothing$) and the fact that if $\varphi(x,\bar a)$ is a formula not implying $x=a_i$ for some $a_i\in\bar a$, then $\varphi$ does not divide over $\varnothing$, so the only forking extensions to the unique 1-type over $\varnothing$ are algebraic. To see this, consider any such $\varphi(x,\bar a)$. We may assume that $\varphi$ is not algebraic, as in that case we would already know that any extension of $q_0$ implying $\varphi$ is algebraic and so of \SU-rank 0. Let $c$ realise this formula, $c\not\in\bar a$. We wish to prove that $c\indep\bar a$; by simplicity, this is equivalent to proving $\bar a \indep c$. 

	Let $\varphi'(\bar x,c)$ be the formula isolating $\tp(\bar a/c)$. Consider any $\varnothing$-indiscernible sequence $I=(c_i:i\in\omega)$ such that $c\in I$. This is an infinite sequence contained in the $R$-class of $c$. Colour the elements of $I$ according to the types they realise over $\bar a$. Since $\bar a$ is finite, there are only finitely many colours, and by the pigeonhole principle there is an infinite monochromatic subset $I'$ of $I$. Then we have $I'\equiv_c I$ and $I'$ is indiscernible over $\bar a$, so $\varphi(x,\bar a)$ does not divide over $\varnothing$ and the \SU-rank of $q_0$ (and therefore $M$) is 1.

	The relation $R$ is clearly stable in $M$, so $S$ and $T$ must be unstable. By instability, there are parameters $a_i$, $b_i$ ($i\in\omega$) such that $S(a_i,b_j)$ holds iff $i\leq j$. Since $R$ is stable, we have $T(a_i,b_j)$ for all $j<i$ in this sequence of parameters. If we consider the $a_ib_i$ as pairs of type $S$ and colour the pairs of distinct pairs in the sequence by the type they satisfy over $\varnothing$, then using Ramsey's theorem we can extract an infinite $\varnothing$-indiscernible sequence of pairs, which we also call $a_i, b_i$. By indiscernibility, the new $a_i$ and $b_i$ form monochromatic cliques, which are of colour $R$ because there are no other infinite monochromatic cliques in $M$. This proves that $S$ and $T$ are realised as transversals to any pair of $R$-classes. By homogeneity, all pairs of classes are isomorphic.

	The relation $R$ is clearly nonforking in $M$. By instability, both $S$ and $T$ are non-algebraic, so for any $a\in M$ the sets $S(a)$ and $T(a)$ contain infinite $R$-cliques. It follows that $S$ and $T$ are nonforking transversal relations, so by Proposition \ref{ComplicatedAlice} the structure on any pair of $R$-classes is isomorphic to the Random Bipartite Graph. Using the Independence Theorem, we can embed any $S,T$-graph of size $k$ as a transversal to a union of $k$ $R$-classes, for any $k\leq m$.
\end{proof}

The structures isolated by Proposition \ref{PropImprimitive3Graphs} consist of a finite number $n$ of infinite $R$-cliques, with $S$ and $T$ realised randomly between them. Let us call the structure with $n$ infinite $R$-classes $\mathcal B_n^{ij}$, where $i,j\in\{R,S,T\}$ are the unstable relations (these structures will appear again in Theorem \ref{ThmClassification}).
\chapter{The Rank of Homogeneous Simple 3-graphs}\label{ChapPrimitive}
\setcounter{equation}{0}
\setcounter{theorem}{0}
\setcounter{case}{0}
\setcounter{subcase}{0}

In this chapter we will find all the primitive simple unstable 3-graphs and those imprimitive ones with finitely many infinite classes, but the main result is that primitive homogeneous simple 3-graphs cannot have $\SU$-rank 2 or higher. The proof of this fact consists of two parts: first we prove that there are no such structures of rank 2, and then prove by induction that there are no structures of any higher finite rank. It was recently proved by Koponen that binary homogeneous simple structures are supersimple and have finite $\SU$-rank, so this is enough. 

\section{The Rank of Primitive Homogeneous Simple 3-graphs}\label{SectRank2}

	We will prove in this section that all primitive simple unstable 3-graphs have \SU-rank 1. From this and some basic results from Chapter \ref{ChapGenRes}, it will follow that the only such 3-graph is the random 3-graph (the Fra\"iss\'e limit of the class of all finite 3-graphs).

	Let $M$ be a simple homogeneous unstable 3-graph. Of the three relations $R,S,T$ (all of which are realised in $M$), we assume that $R$ is stable and forking, and $S,T$ are nonforking. This assumption (not needed in the proof of Theorem \ref{NoRank2Graphs} below) is justified by Theorem \ref{ThmStableForking}. Given any $a\in M$, consider $R(a)$. This is a definable set of rank at most 1 by our assumptions on $R$ and the rank of $M$. What is the structure of $R(a)$? 

The main theorem of this chapter is:

\begin{theorem*}\label{MainThm}
Let $M$ be a primitive simple homogeneous 3-graph. Then the theory of $M$ is of \SU-rank 1.
\end{theorem*}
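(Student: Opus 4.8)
The plan is to combine the finiteness of the rank with a neighbourhood-descent argument based on the unique stable forking relation, after first settling the two boundary values of $\SU(M)$. Since $M$ is primitive it is transitive, so there is a unique $1$-type over $\varnothing$; as $M$ is infinite this type is non-algebraic, whence $\SU(M)\geq 1$. By Koponen's Theorem \ref{Koponen}, $\Th(M)$ is supersimple of finite $\SU$-rank, so it suffices to prove $\SU(M)\leq 1$. I would also record at the outset that $M$ is \emph{unstable}: an infinite stable homogeneous $3$-graph is one of the infinite structures in Lachlan's list (Theorem \ref{Lachlan3graphs}), each of which is a proper composition or product and therefore carries a nontrivial invariant equivalence relation, contradicting primitivity. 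By Theorem \ref{ThmStableForking} and the results of Chapter \ref{ChapStableForking}, we are then in one of two situations: either all three relations are nonforking (and, by Proposition \ref{UnstableNonforking}, unstable), or exactly one relation, say $R$, is stable and forking while $S,T$ are nonforking and unstable.

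The all-nonforking case is immediate. If none of $R,S,T$ forks then, by primitivity, there are no $\varnothing$-definable finite equivalence relations, so Proposition \ref{NonforkingAmalgamation} (with an empty set of forking relations) shows that $\age(M)$ is the age of a random $\{R,S,T\}$-structure; since all three relations are realised, $M$ is the generic $3$-graph, for which forking is purely algebraic and $\SU(M)=1$.

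So assume $R$ is stable and forking. The engine of the argument is that the $R$-neighbourhood of a point has strictly smaller rank: for $b\in R(a)$ the type $\tp(b/a)$ forks over $\varnothing$, so by supersimplicity $\SU(b/a)<\SU(b/\varnothing)=\SU(M)$, and hence the structure induced on $R(a)$, which is homogeneous by Proposition \ref{PropHomNeigh}, has $\SU$-rank strictly below that of $M$. Moreover, by Observation \ref{NoInfiniteCliques}, $R(a)$ contains no infinite $S$- or $T$-clique, so, being infinite, its only infinite monochromatic cliques are $R$-cliques. I would then argue by induction on $n=\SU(M)$, the claim being that no primitive simple unstable homogeneous $3$-graph has rank $n\geq 2$. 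Given such an $M$, the homogeneous structure $R(a)$ has rank at most $n-1$ and omits infinite $S$- and $T$-cliques; running through its possibilities with the Lachlan--Woodrow Theorem \ref{LachlanWoodrow}, the classification of imprimitive structures with finite classes (Theorem \ref{ThmCGamma}, Corollary \ref{CorFiniteClasses}) and with finitely many infinite classes (Proposition \ref{PropImprimitive3Graphs}), and the diameter bound of Observation \ref{diam}, each admissible shape either forces an infinite $S$- or $T$-clique inside $R(a)$, which is a contradiction, or forces a lower-rank primitive simple $3$-graph to which the inductive hypothesis applies. Neighbourhood-reflection arguments in the spirit of Observation \ref{IsoToM} are used to transfer structural information between $R(a)$ and $M$ and to exploit primitivity.

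The main obstacle is the base case $n=2$ (the content of Theorem \ref{NoRank2Graphs}): here $R(a)$ has rank at most $1$ but may be a genuinely rich imprimitive rank-$1$ object, such as a finite union of infinite $R$-cliques with random transversal $S,T$-edges as in Proposition \ref{PropImprimitive3Graphs}, or a cover like $C(\Gamma)$, or $\Gamma[K_m^R]$. The difficulty is to show that no such local picture can be glued, consistently with homogeneity and primitivity, into a global rank-$2$ structure. I expect this to require a lengthy case analysis organised by which relations are unstable in $R(a)$, by the diameter triple of $M$, and by the compatibility graph of the unstable relations, tracking how the imprimitivity blocks of $R(a)$ propagate through $M$ until either a forbidden configuration or an infinite $S$- or $T$-clique in some $R$-neighbourhood is produced. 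Once the base case and the inductive step are in hand we obtain $\SU(M)\leq 1$, and together with $\SU(M)\geq 1$ this yields $\SU(M)=1$.
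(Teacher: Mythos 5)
Your overall frame does match the paper's: $\SU(M)\geq 1$ is immediate, Koponen's Theorem \ref{Koponen} gives finiteness, Theorem \ref{ThmStableForking} splits the problem into the all-nonforking case (where $M$ is the generic 3-graph, of rank 1) and the case of a single stable forking relation $R$, and the paper likewise runs an induction on rank whose base case is the nonexistence of rank-2 primitive examples, with $R(a)$ --- homogeneous, of smaller rank, omitting infinite $S$- and $T$-cliques --- as the descent object. But your proposal stops exactly where the theorem's real content begins. The dichotomy you propose for the case analysis --- each admissible shape of $R(a)$ ``either forces an infinite $S$- or $T$-clique inside $R(a)$\ldots or forces a lower-rank primitive simple 3-graph to which the inductive hypothesis applies'' --- is not exhaustive: the surviving shape is the one in which $R$ itself defines an equivalence relation on $R(a)$ with finitely many infinite classes, i.e.\ $R(a)$ is a finite union of infinite $R$-cliques, possibly with random transversal $S,T$-structure as in Proposition \ref{PropImprimitive3Graphs}. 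That shape produces neither an infinite $S$- or $T$-clique in $R(a)$ nor any primitive lower-rank structure, and it is precisely the case that consumes essentially all of Chapter \ref{ChapPrimitive}; your proposal addresses it only with the expectation of ``a lengthy case analysis,'' which is not a proof.

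Concretely, the missing content is the incidence-geometry argument. From the finite-class equivalence relation on $R(a)$ the paper interprets a semilinear space whose lines are the maximal $R$-cliques (Proposition \ref{Semilinear3Graph}, Lemma \ref{LemmaSemilinear}), so that every point lies on finitely many infinite lines and two points lie on at most one line; it then eliminates this configuration by a case split on the $R$-diameter (2 or 3), whose engines are Macpherson's theorem that a homogenizable structure cannot interpret an infinite generalised quadrangle (Theorem \ref{DugaldQuadrangle}, used in Observation \ref{Quadrangle}), Neumann's lemma (Theorem \ref{ThmNeumann}), and repeated counting arguments showing that the hypothesised configurations force at least four invariant, mutually exclusive relations on pairs from $S(a)$ or $T(a)$, whereas a 3-graph has only three 2-types available (Lemmas \ref{NoDiam2} and \ref{NoDiam3}). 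The inductive step is also organised differently from your sketch: the paper never feeds a lower-rank structure into the induction hypothesis directly; it uses the hypothesis only to rule out $R(a)$ being primitive (if it were, it would have rank 1, contradicting Proposition \ref{Year2} because $R$ remains stable in $R(a)$), concludes that any primitive example of rank $\geq 2$ is semilinear (Lemma \ref{Inductive}), and then reuses the rank-free semilinear analysis verbatim. The organising data you name for the case analysis --- the diameter triple and the compatibility graph of unstable relations --- are the tools of Chapter \ref{ChapStableForking}, not of this argument, and there is no evident way to close the base case with them alone.
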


	To prove this theorem, we prove first 
\begin{theorem*}\label{NoRank2Graphs}
	There are no simple primitive homogeneous 3-graphs of \SU-rank 2.
\end{theorem*}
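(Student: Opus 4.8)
The plan is to assume $M$ is a primitive simple homogeneous $3$-graph with $\SU(M)=2$ and derive a contradiction, working throughout with the unique $1$-type guaranteed by transitivity and with $\acl(a)=\{a\}$ (Observation \ref{AlgClosure}). First I would dispose of the degenerate configuration: every type carries a Morley sequence, so at least one relation is nonforking, and if \emph{all} of $R,S,T$ are nonforking then the Independence Theorem yields the analogues of Alice's restaurant axioms (Theorem \ref{PrimitiveAlice}), making $M$ the random $3$-graph, which has $\SU$-rank $1$. Hence some relation forks; call it $R$ (stable by Theorem \ref{ThmStableForking}, though the rank bookkeeping below does not use stability). Since $R$ is non-algebraic by primitivity and $R(a,b)$ is a forking extension of the generic type, $\SU(b/a)=1$, so $R(a)$ is a rank-$1$ definable set and, by Proposition \ref{PropHomNeigh}, a homogeneous $3$-graph in its own right.

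The technical core is a rank computation via the Lascar inequalities (Theorem \ref{LascarIneqs}). Take $a\indep b$ (e.g. $S(a,b)$ with $S$ nonforking), so $\SU(ab)=\SU(a)\oplus\SU(b)=2\oplus 2=4$ by inequality (4). For any common $R$-neighbour $c\in R(a)\cap R(b)$ we have $\SU(c/a)=\SU(c/b)=1$, hence $\SU(ab/c)\le\SU(a/bc)\oplus\SU(b/c)\le 1\oplus 1=2$ and therefore $\SU(abc)\le\SU(ab/c)\oplus\SU(c)\le 2\oplus 2=4$; but inequality (1) also gives $\SU(abc)\ge\SU(c/ab)+\SU(ab)=\SU(c/ab)+4$, forcing $\SU(c/ab)=0$, i.e. $c\in\acl(ab)$. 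Since $\acl(ab)$ is finite, \emph{any two $\SU$-independent points have only finitely many common $R$-neighbours}. This is the main constraint I would exploit.

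Next I would pin down $R(a)$. By Proposition \ref{PropCliques} it contains no infinite clique in a nonforking colour, so by Ramsey its only infinite cliques are $R$-cliques; in particular $R(a)$ cannot be primitive, since a primitive rank-$1$ neighbourhood would satisfy the analogue of Alice's axioms (Theorem \ref{PrimitiveAlice}) and so embed infinite cliques in every realised colour, contradicting Proposition \ref{PropCliques} or Observation \ref{NotRComplete}. Analysing the invariant equivalence relation on the imprimitive $R(a)$ with the rank-$1$ classification, the no-infinite-clique constraint should force $R$ \emph{itself} to define that equivalence relation with infinite classes: a nonforking colour cannot, as its classes would be infinite monochromatic cliques, and a finite-class shape is excluded because the imprimitive finite-class $3$-graphs all embed infinite cliques in two colours (Theorem \ref{ThmCGamma}, Corollary \ref{CorFiniteClasses}).

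With $R$ an equivalence relation on every neighbourhood, the maximal $R$-cliques of $M$ become genuine lines: there is a unique one through each $R$-edge, so two distinct lines meet in at most one point, and each line is infinite. If $R(a)$ has infinitely many classes then every point lies on infinitely many lines, and the family of maximal $R$-cliques is a binary homogeneous weak pseudoplane (Definition \ref{DefPseudoplane}), exactly as in Lemma \ref{LemmaNo232}, contradicting Thomas's Theorem \ref{ThmThomas}. I expect the real obstacle to be the remaining case, where $R(a)$ has only finitely many infinite classes (so $R(a)\cong\mathcal B_k^{ST}$ in the notation of Proposition \ref{PropImprimitive3Graphs}) and the pseudoplane degenerates. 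Here I would combine the finite-common-neighbour lemma with homogeneity: the instability witnessed inside $R(a)$ produces a half-graph for $S,T$ whose horizontal cliques are infinite $R$-cliques, and reconciling this with $S,T$ nonforking and with the finiteness of common $R$-neighbourhoods should either collapse $\SU(M)$ to $1$ or violate homogeneity. Discharging this degenerate case cleanly, and verifying that no other imprimitive shape of $R(a)$ escapes the rank-$1$ classification, is where the bulk of the work lies.
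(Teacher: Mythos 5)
Your setup coincides with the paper's: a forking relation $R$ (stable by Theorem \ref{ThmStableForking}), $R(a)$ of rank $1$, $R$ defining an equivalence relation on $R(a)$, and maximal $R$-cliques viewed as lines of a semilinear space (Lemma \ref{LemmaSemilinear}). Your Lascar-inequality computation is correct and is in fact a cleaner route than the paper's to the fact that two independent points have only finitely many common $R$-neighbours (the paper derives this from Observation \ref{NoTriangles} plus finiteness of the number of lines). Note, however, that your case split is slightly off: the ``infinitely many classes'' branch you propose to kill with Thomas's Theorem \ref{ThmThomas} is vacuous, since a transversal to infinitely many $R$-classes inside $R(a)$ would contain an infinite $S$- or $T$-clique by Ramsey, contradicting Proposition \ref{PropCliques}; so only finitely many classes can occur, and then the incidence structure is \emph{not} a weak pseudoplane (Definition \ref{DefPseudoplane} requires infinitely many lines through each point), so Thomas's theorem is unavailable exactly where you would need it.

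The genuine gap is the one you flag yourself: the case of finitely many lines through each point is not ``the degenerate case'' --- it \emph{is} the theorem, and your proposal ends where the paper's proof begins. Nothing in your two tools (finite common neighbourhoods, homogeneity) produces the contradiction by itself. What the paper actually does there is a long elimination split by $R$-diameter (Lemmas \ref{NoDiam2} and \ref{NoDiam3}): it invokes Macpherson's non-interpretability of infinite generalised quadrangles (Theorem \ref{DugaldQuadrangle}, via Observation \ref{Quadrangle}) to bound how many lines through $a$ the neighbourhood $R(c)$ of a point $c\in R^2(a)$ can meet, uses Neumann's lemma (Theorem \ref{ThmNeumann}, via Proposition \ref{PropLotsOfTrans}) to manufacture disjoint isomorphic transversals, and then, configuration by configuration, exhibits at least four mutually exclusive $\aut(M/a)$-invariant relations on pairs from $S(a)$ or $T(a)$ --- impossible because a homogeneous $3$-graph has only three $2$-types. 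None of this counting machinery is present, or even hinted at, in your proposal. A second, smaller gap: your identification $R(a)\cong\mathcal B_k^{S,T}$ presupposes that $R(a)$ is unstable, but the stable shapes from Lachlan's list (e.g.\ $K_m^T[K_n^S[K_\omega^R]]$ or $K_m^S\times K_\omega^R$) satisfy every constraint you have derived and must be eliminated separately; that is precisely the content of Propositions \ref{MataCasiTodos} and \ref{SmallIntersection} and of Lemma \ref{NoDiam3}. So the proposal is a faithful reconstruction of the paper's reductions, plus one nice new lemma, but it does not contain a proof of the statement.
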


This last result is proved by arguing first that $R$ defines an equivalence relation on $R(a)$ with finitely many classes; we use the imprimitivity blocks of the $R$-neighbourhoods to define an incidence structure. This incidence structure is a semilinear space. The analysis divides into two main cases, depending on the $R$-diameter of the 3-graph; most of the work goes into proving the non-existence of primitive homogeneous simple 3-graphs of \SU-rank 2 and $R$-diameter 2. The case with $\diam_R(M)=3$ is considerably easier.

	The proof of the first of these theorems rests on the possibility of defining the semilinear space. We use this observation to start an inductive argument on the rank of the structure, and the second theorem is the basis for induction in that proof.

For most of the chapter, we will assume that the \SU-rank of $\Th(M)$ is 2; by Theorem \ref{ThmStableForking}, the elements in a primitive simple homogeneous 3-graph satisfy the statement ``if $\tp(a/B)$ divides over $A\subseteq B$, then dividing is witnessed by a stable formula". In statements where the language is $\{R,S,T\}$, we assume that $R$ is a forking relation ($R(a,b)$ implies $\tp(a/b)$ divides over $\varnothing$), and therefore stable. In view of Lachlan's classification of stable homogeneous 3-graphs (see Theorem \ref{Lachlan3graphs}), we may suppose that $\Th(M)$ is unstable. Since any Boolean combination of stable formulas is stable, it follows that both $S$ and $T$ are unstable, therefore nonforking. Statements for the language $\{R_1,\ldots,R_n\}$ may be more general and refer to homogeneous $n$-graphs. Note that if all relations are nonforking then a primitive structure $M$ is random in the sense that all its minimal forbidden structures are of size 2 (examples: the Random Graph, Random $n$-edge-coloured graphs), by the Independence Theorem argument used in the proof of Theorem \ref{PrimitiveAlice}.

	Recall that for any relation $P$ and tuple $\bar a$, $P(\bar a)=\{\bar x\in M| P(\bar a, \bar x)\}$. We sometimes refer to this set as the $P$-neighbourhood of $\bar a$. In Definition \ref{DefnGraph}, we defined an $n$-graph to be a structure $(M, R_1,\ldots,R_n)$ in which each $R_i$ is binary, irreflexive and symmetric; also, we assume that for all distinct $x,y\in M$ exactly one of the $R_i$ holds and $n\geq2$. Finally, if $M$ is a homogeneous $n$-graph, we assume that for each $i\in\{1,\ldots,n\}$ there exist $a_i,b_i\in M$ such that $R_i(a_i,b_i)$ holds in $M$. 

By simplicity, forking and dividing coincide, so in our statements and arguments we usually prove or use dividing instead of forking. We assume that all relations in the language are realised in $M$.

At this point, we know that there are two possibilitiese for the structure of 3-graphs of rank 2 with relations $R,S,T$: either $(M,R)$ has diameter 2, or it has diameter 3. In the latter case, since $\aut(M)$ preserves the $R$-distance, for any $a\in M$ the sets $S(a)$ and $T(a)$ correspond to $R$-distance 2 and 3 from $a$, so $\aut(M,R)=\aut(M,R,S,T)$. 

\section{Semilinear 3-graphs of \SU-rank 2}\label{SectRank2}

	In this section we establish the basis for the induction that will eventually yield the main result of this chapter, namely that all primitive homogeneous simple 3-graphs have $\SU$-rank 1. By Theorem \ref{ThmStableForking}, we may assume that the forking relation $R$ is stable. We cannot have more than one forking relation because we assume that each relation in the language isolates a 2-type, so by Theorem \ref{ThmStableForking}, if we had two forking relations then both would be stable, which would imply that the third (which is equivalent to the negation of the other two) is also a stable relation, and the theory of the homogeneous 3-graph would be stable; and by Theorem \ref{Lachlan3graphs} (due to Lachlan), there are no primitive stable 3-graphs. Here we start a case-by-case analysis of these graphs.

\begin{observation}
	If $M$ is a primitive simple $\omega$-categorical relational structure of \SU-rank 2, and $R$ is a forking relation, then $R(a)$ is a set of rank 1.
	\label{Rank1}
\end{observation}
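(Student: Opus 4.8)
The plan is to combine the definition of $\SU$-rank with the two structural facts available in this setting: that a forking relation strictly drops the rank, and that primitivity makes algebraic closure trivial. I read the conclusion ``$R(a)$ is a set of rank $1$'' as the assertion that every complete type over $a$ containing the formula $R(a,x)$ has $\SU$-rank exactly $1$, so that the definable set $R(a)$ has $\SU$-rank $1$. First I would record the baseline rank of a single element: since $M$ is primitive it is transitive, so by $\omega$-categoricity there is a unique $1$-type over $\varnothing$, and its $\SU$-rank is the rank of the theory, which is assumed to be $2$. Hence $\SU(b/\varnothing)=2$ for every $b\in M$.

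Next come the two bounds, both applied to an arbitrary $b\in R(a)$. For the upper bound, because $R$ is a (symmetric) forking relation, $R(a,b)$ gives that $\tp(b/a)$ forks over $\varnothing$; by Definition \ref{DefSU} a forking extension strictly decreases $\SU$-rank, so $\SU(b/a)<\SU(b/\varnothing)=2$, whence $\SU(b/a)\leq 1$. For the lower bound, Observation \ref{AlgClosure} tells us that primitivity together with $\omega$-categoricity forces $\acl(a)=\{a\}$; since $R$ is irreflexive we have $b\neq a$, so $b\notin\acl(a)$ and $\tp(b/a)$ is non-algebraic, giving $\SU(b/a)\geq 1$. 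Combining the two bounds yields $\SU(b/a)=1$ for every $b\in R(a)$, and therefore $R(a)$ has rank $1$.

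I do not anticipate a serious obstacle here. The only points that require a little care are the bookkeeping guaranteeing $\SU(b/\varnothing)=2$ (namely that the rank of $M$ is carried by the unique $1$-type, which is where transitivity is used) and the observation that the rank of the \emph{set} $R(a)$ is witnessed uniformly, since every realized type over $a$ extending $R(a,x)$ has been shown to have rank exactly $1$. Everything else is a direct application of the rank axioms of Definition \ref{DefSU} together with the two cited facts, so the argument should be short.
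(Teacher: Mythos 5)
Your proof is correct and follows essentially the same route as the paper: the upper bound $\SU(b/a)\leq 1$ comes from the forking hypothesis dropping the rank below 2, and the lower bound comes from ruling out algebraicity via Observation \ref{AlgClosure}. The paper's own proof is just a terser version of exactly this argument, so no further comment is needed.
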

\begin{proof}
Given any $a\in M$, $R(a)$ is a set of rank at most 1. If it were of rank 0, then the set of solutions of $R(x,a)$ would be finite, and therefore any element satisfying it would be in the algebraic closure of $a$, impossible by Observation \ref{AlgClosure}. Therefore, the rank of $R(a)$ is 1. 
\end{proof}

\begin{proposition}\label{EmbedsAllRComplete}
	Suppose that $M$ is a simple primitive homogeneous $R,S,T$-graph, the formula $R(x,a)$ forks, and $S,T$ are unstable, nonforking relations. Then $M$ embeds $K_n^R$ for all $n\in\omega$.
\end{proposition}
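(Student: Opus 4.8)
The plan is to examine a single $R$-neighbourhood $R(a)$ and extract an infinite monochromatic clique from it by Ramsey's theorem; the exclusion of the nonforking colours will then force that clique to be $R$-coloured, which is more than enough.

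First I would check that $R(a)$ is infinite. Since $M$ is primitive and $\omega$-categorical, Observation \ref{AlgClosure} gives $\acl(a)=\{a\}$. The set $R(a)$ is a union of orbits of $\aut(M/a)$; it is nonempty because $R$ is realised in $M$ and $M$ is transitive, and it omits $a$ because $R$ is irreflexive. A finite orbit over $a$ would lie in $\acl(a)=\{a\}$, so $R(a)$ cannot be finite.

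Next, by Proposition \ref{PropCliques} (equivalently Observation \ref{NoInfiniteCliques}), since $R$ forks and $S,T$ are nonforking, $R(a)$ contains no infinite $S$-clique and no infinite $T$-clique. Now colour the pairs from the infinite set $R(a)$ by the relation ($R$, $S$, or $T$) holding between them. Ramsey's theorem yields an infinite monochromatic subset $X\subseteq R(a)$. The colour of $X$ cannot be $S$ or $T$ by the preceding step, so $X$ is an infinite $R$-clique, and in particular $M$ embeds $K_n^R$ for every $n\in\omega$.

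There is no serious obstacle here; the two points that genuinely use the hypotheses are the infiniteness of $R(a)$, which rests on primitivity via $\acl(a)=\{a\}$, and the clique-exclusion, which is precisely the content of Proposition \ref{PropCliques}. It is worth observing that neither the instability of $S,T$ nor the stability of $R$ is actually needed for this statement: the nonforking of $S,T$ together with the forking of $R$ already suffices.
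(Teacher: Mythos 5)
Your proof is correct and follows essentially the same route as the paper's: the paper also combines Observation \ref{AlgClosure} (primitivity forces $R(a)$ to be infinite, else it would be algebraic) with Ramsey's theorem and Observation \ref{NoInfiniteCliques} to rule out infinite $S$- and $T$-cliques in $R(a)$, leaving an infinite $R$-clique. The only difference is presentational — the paper phrases it as a contradiction with $K_n^R$-freeness, while you argue directly — and your closing remark that instability of $S,T$ is never used is accurate for the paper's proof as well.
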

\begin{proof}
Being $K_n^R$-free would either force $R(a)$ to be algebraic, contradicting primitivity by Observation \ref{AlgClosure}, or contradict, by Ramsey's Theorem, Observation \ref{NoInfiniteCliques}.
\end{proof}

Note that if $M$ is a simple 3-graph in which $R$ is stable, then $R$ is still a stable relation in the (homogeneous, simple) structure $R(a)$, since any model of the theory of $R(a)$ can be defined in a model of the original theory, and therefore witnesses for instability in $R(a)$ theory would also witness instability in the original theory. 

What can we say about $R(a)$? We will show in the next section that the action of $\aut(M/a)$ is imprimitive on $R(a)$, and that the vertices together with the imprimitivity blocks of their neighbourhoods form a semilinear space. In our argument, we will use Lachlan's classification of stable homogeneous 3-graphs (Theorem \ref{Lachlan3graphs}).

We summarise some properties of some of the infinite stable homogeneous 3-graphs in the table in page \pageref{table}. We present only those structures that may appear as $R(a)$ in a primitive homogeneous 3-graph.

\begin{table}[h]\label{table}
\caption{Some stable homogeneous 3-graphs}
\centering
\begin{tabular}{ccc}
\hline\hline
Structure&Equivalence relations&U-rank\\
$P^R[K_{\omega}^R]$&$R$&1\\
$K_{\omega}^R[Q^R]$&$S\vee T$&1\\
$Q^R[K_{\omega}^R]$&$R$&1\\
$K_{\omega}^R[P^R]$&$S\vee T$&1\\
$K_{\omega}^R\times K_n^S$&$R,S$&1\\
$K_{\omega}^R\times K_n^T$&$R,T$&1\\
$K_{\omega}^R[K_n^S[K_p^T]]$&$S\vee T,T$&1\\
$K_{\omega}^R[K_n^T[K_p^S]]$&$S\vee T,S$&1\\
$K_m^S[K_{\omega}^R[K_p^T]]$&$T\vee R,T$&1\\
$K_m^T[K_{\omega}^R[K_p^S]]$&$S\vee R,S$&1\\
$K_m^S[K_n^T[K_{\omega}^R]]$&$R\vee T,R$&1\\
$K_m^T[K_n^S[K_{\omega}^R]]$&$R\vee S,R$&1\\
\hline
\end{tabular}
\end{table}

\subsection{Lines}\label{lines}
In this subsection we define the main tool that we will use to eliminate candidates to be primitive homogeneous 3-graphs of \SU-rank 2, a family of definable sets we call \emph{lines}. Thus we interpret an incidence structure in $M$ in which lines are infinite and each point belongs to a finite number of lines. It is tempting to try to see this structure as a pseudoplane and use a general result of Simon Thomas on the nonexistence of binary omega-categorical pseudoplanes (see \cite{thomas1998nonexistence}), but our incidence structure falls short of being a pseudoplane or even a weak pseudoplane, which is what Thomas uses in his proof. It is a semilinear space (see Definition \ref{DefSemilinear}), which under some conditions also qualifies as a generalised quadrangle (cf. Observation \ref{Quadrangle}, see the paragraph preceding it for the definition of generalised quadrangle).

\begin{notation}
If the $R$-diameter of $M$ is 3, we adopt the convention that $S(a)$ and $T(a)$ correspond to $R^2(a)$ and $R^3(a)$ (cf. the paragraph after Proposition \ref{Year2}). Note that in $R$-diameter 3, the triangle $RRT$ is forbidden, and therefore the $R$-neighbourhood of any vertex $a$ is an $R,S$-graph, stable by the stability of $R$. 
	\label{Diam2Diam3}
\end{notation}

\begin{definition}
A semilinear space $S$ is a nonempty set of elements called \emph{points} provided with a collection of subsets called \emph{lines} such that any pair of distinct points is contained in at most one line and every line contains at least three points. 
\label{DefSemilinear}
\end{definition}

\begin{remark}
	{\rm As we have mentioned before, these structures are related to weak pseudoplanes. Given a structure $M$ and a definable family $\mathcal B$ of infinite subsets of $M$, the incidence structure $P=(M,\mathcal B)$ is a weak pseudoplane if for any distinct $X,Y\in\mathcal B$ we have $|X\cap Y|<\omega$ and each $p\in M$ lies in infinitely many elements of $\mathcal B$. The connection between our semilinear spaces and weak pseudoplanes is, then, that a semilinear space interpreted (\ie the lines form a definable family of subsets of $M$) in a homogeneous structure in which each line is infinite and each point lies in infinitely many lines is a weak pseudoplane. In all the semilinear spaces that we will encounter in this chapter, lines are infinite and each point belongs to finitely many lines.}
\end{remark}

The rest of this chapter consists of a study of the properties of a semilinear space definable in homogeneous primitive 3-graphs of \SU-rank greater than or equal to 2. 

\begin{proposition}
	Let $M$ be an infinite 3-graph such that $Aut(M)$ acts transitively on $M$, $R(a)$ is infinite for $a\in M$, and $R$ defines an equivalence relation on $R(a)$ with finitely many equivalence classes. Denote by $\ell(a,b)$ the maximal $R$-clique in $M$ containing the $R$-edge $ab$. Then $(M,\mathcal{L})$, where $\mathcal{L}=\{\ell(a,b):M\models R(a,b)\}$, is a semilinear space.
\label{Semilinear3Graph}
\end{proposition}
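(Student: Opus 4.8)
The plan is to check the two defining conditions of a semilinear space from Definition \ref{DefSemilinear} directly: that any two distinct points lie on at most one line, and that every line contains at least three points (the point set is just $M$, which is nonempty since $M$ is infinite, and $\mathcal L\neq\varnothing$ since $R$ is realised). Everything rests on one structural observation that I would establish first: \emph{for every $R$-edge $ab$ the maximal $R$-clique containing it is unique}. Writing $[b]_a$ for the $R$-equivalence class of $b$ inside $R(a)$, I claim $\ell(a,b)=\{a\}\cup[b]_a=\{b\}\cup[a]_b$. Since $R$ is an equivalence relation on $R(a)$, the class $[b]_a$ is an $R$-clique and $a$ is $R$-adjacent to each of its members, so $\{a\}\cup[b]_a$ is an $R$-clique containing the edge $ab$. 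Conversely, if $K$ is any $R$-clique with $a,b\in K$, then every $c\in K\setminus\{a\}$ is $R$-adjacent to $a$ (hence lies in $R(a)$) and $R$-adjacent to $b$ (hence is $R$-equivalent to $b$ within $R(a)$, \ie $c\in[b]_a$); thus $K\subseteq\{a\}\cup[b]_a$, and maximality of $K$ forces equality. This gives both the uniqueness and the symmetric description.

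With uniqueness in hand, the first semilinear axiom is immediate. If distinct points $p,q$ both lie on lines $\ell,\ell'$, then, being distinct members of a common $R$-clique, they are $R$-adjacent, so $pq$ is an $R$-edge; and $\ell,\ell'$ are then both maximal $R$-cliques containing $pq$, whence $\ell=\ell'=\ell(p,q)$ by the uniqueness just proved. So two distinct points determine at most one line.

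For the second axiom I would show that every line is infinite, which a fortiori gives at least three points. Each line has the form $\{a\}\cup C$ for some $R$-class $C$ of $R(a)$. Since $R(a)$ is infinite but splits into only finitely many $R$-classes, at least one class, and hence at least one line, is infinite. To propagate infinitude to all lines I would use that any $R$-edge can be mapped to any other by an automorphism of $M$: such an automorphism sends maximal $R$-cliques to maximal $R$-cliques, hence $\ell(a,b)$ to $\ell(a',b')$, so all lines lie in a single orbit of $\aut(M)$ and therefore have equal cardinality. As one line is infinite, all are, and the axiom follows.

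The main obstacle is exactly this last step, namely ruling out a two-point line, \ie a singleton class $[b]_a=\{b\}$. Mere vertex-transitivity of $\aut(M)$ on $M$ does not suffice: it fixes only the multiset of class sizes inside a given $R(a)$, and leaves open the possibility that a point sits on one infinite line together with one two-point line, which is consistent both with transitivity and with there being finitely many classes. The resolution is to invoke the homogeneity of $M$ (equivalently, transitivity of $\aut(M)$ on $R$-edges), which forces all lines to be isomorphic and hence equicardinal; combined with the existence of one infinite line this upgrades every line to an infinite one. I would therefore flag explicitly that it is edge-transitivity, rather than mere vertex-transitivity, that drives the ``at least three points'' clause, and note that in the homogeneous setting of this chapter it is automatically available.
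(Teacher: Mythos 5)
Your proof is correct, and on the two points where the paper's own argument is sound---the uniqueness of the maximal $R$-clique through an $R$-edge, and the deduction of the ``at most one line'' axiom from that uniqueness---your argument coincides with the paper's (your uniqueness step is in fact cleaner: the paper's aside about the existence of a third point of the clique $K$ plays no role in the inclusion $K\subseteq\{a\}\cup b/R^a$). The genuine divergence is in the ``at least three points'' axiom, and there your extra care is warranted: the paper dispatches this clause in one line, ``by transitivity of $M$ and the fact that $R$ forms infinite cliques within $R(a)$,'' i.e.\ it tacitly assumes that \emph{every} $R$-class of $R(a)$ is infinite, while the stated hypotheses (vertex-transitivity, $R(a)$ infinite, finitely many classes) only give \emph{one} infinite class by pigeonhole. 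As you suspected, vertex-transitivity cannot propagate this to all lines: on $\mathbb{Z}\times\{0,1\}$, let $R$ hold between pairs agreeing in exactly one coordinate and let $S,T$ split the remaining pairs according to whether the first coordinates are at distance $1$; this is a vertex-transitive 3-graph satisfying every stated hypothesis in which $\ell\bigl((0,0),(0,1)\bigr)$ is a two-point line, so the proposition is false as literally stated. Two repairs are available: the one the paper implicitly uses---in Lemma \ref{LemmaSemilinear} the proposition is only ever invoked after Proposition \ref{RDefinesEqRel} has established finitely many \emph{infinite} classes, so one should read ``infinite classes'' into the hypothesis---or yours, transitivity of $\aut(M)$ on $R$-edges (automatic under homogeneity), which puts all lines in one orbit and so makes them all infinite. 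The first keeps the statement purely combinatorial at the cost of a stronger hypothesis; yours keeps the weaker hypothesis at the cost of assuming homogeneity, which every application in the chapter satisfies. Either way, you identified a real gap that the paper's proof glosses over.
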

\begin{proof}
	We start by justifying our use of \emph{the} when we said that $\ell(a,b)$ is ``the maximal $R$-clique in $M$ containing the $R$-edge $ab$." Since we chave $R(a,b)$, we know that $b\in R(a)$, so it is an element of one of the finitely many classes of $R$ in $R(a)$. Let $b/R^a$ denote the $R$-equivalence class of $b$ in $R(a)$; then $\{a\}\cup b/R^a$ is an infinite clique containing $a,b$. We claim that any clique containing $a,b$ is a subset of $\{a\}\cup b/R^a$. To see this, let $K$ be an $R$-clique containing $a,b$, and let $x\neq a,b\in K$. Such an $x$ exists because $R$ partitions an infinite set into finitely many subsets. Since $K$ is a clique, we have that $x\in R(a)$, and as $R$ defines an equivalence relation on $R(a)$ and $R(x,b)$ holds, we have that $x\in b/R^a$. Therefore, $x\in\{a\}\cup b/R(a)$ and $\ell(a,b)$ denotes this set. 

	So we have that two distinct points (vertices) belong to at most one element of $\mathcal{L}$. Any line contains at least three points, by transitivity of $M$ and the fact that $R$ forms infinite cliques within $R(a)$. 
\end{proof}

\begin{definition}
	A 3-graph is \emph{semilinear} if it satisfies the hypotheses of Proposition \ref{Semilinear3Graph}. In particular, whenever we refer to a semilinear 3-graph in this chapter we assume that points are incident with only finitely many lines.
\end{definition}

\begin{definition}\label{DefLines}
	If $M$ is a semilinear 3-graph and $R(a,b)$ holds in $M$, then $\ell(a,b)$ is the imprimitivity block in $R(a)$ to which $b$ belongs, together with the vertex $a$. Equivalently, it is the largest $R$-clique in $M$ containing $a$ and $b$. We refer to these sets as \emph{lines}.
\end{definition}

We have introduced semilinear 3-graphs because a good deal of the analysis of homogeneous primitive 3-graphs of \SU-rank 2 depends more on this combinatorial property than on any simplicity or rank assumptions. The next two results establish that anything we prove about semilinear 3-graphs is also true of homogeneous primitive 3-graphs of \SU-rank 2.

\begin{observation}
	Suppose $M$ is a primitive homogeneous simple 3-graph of \SU-rank 2, where $R$ is a forking relation, $S,T$ are nonforking, and $a\in M$. Then $R(a)$ is imprimitive.
	\label{R(a)Imprimitive}
\end{observation}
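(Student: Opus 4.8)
The plan is to argue by contradiction: assume that $R(a)$ is primitive and derive a contradiction by pinning down the possible isomorphism types of $R(a)$ via the known classifications. First I would record three structural facts. By Proposition \ref{PropHomNeigh}, $R(a)$ is itself a homogeneous $m$-graph for some $m\le 3$, and it is simple of \SU-rank $1$ by Observation \ref{Rank1}. By Proposition \ref{EmbedsAllRComplete} together with transitivity of $M$ and $\omega$-categoricity, $R(a)$ embeds $K_n^R$ for every $n$, hence contains an infinite $R$-clique. By Observation \ref{NoInfiniteCliques}, since $S,T$ are nonforking and $R$ forks, $R(a)$ contains no infinite $S$- or $T$-cliques. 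Thus $R(a)$ admits infinite monochromatic cliques in exactly one colour, namely $R$; in particular $R$ is always realised in $R(a)$, so when $m=2$ the realised pair is $\{R,S\}$ or $\{R,T\}$.

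I would first dispose of the degenerate case $m=1$: the single realised relation must be the colour forming infinite cliques, i.e.\ $R$, so $R(a)$ is $R$-complete, contradicting Observation \ref{NotRComplete} (which applies because $M$ is primitive). This forces $m\ge 2$, so $R(a)$ realises at least two colours.

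Next I would split on the stability of $R(a)$. If $R(a)$ is unstable, then it is a primitive simple unstable homogeneous structure realising at least two colours: when $m=3$, Proposition \ref{PropTwoInfiniteCliques} forces $R(a)$ to embed infinite monochromatic cliques in at least two colours, and when $m=2$ the same follows from the Lachlan--Woodrow Theorem \ref{LachlanWoodrow} and Remark \ref{RmkSimpleGraphs}, since the only primitive simple unstable homogeneous graph is the Random Graph, which has infinite cliques in both colours. Either way this contradicts the fact that $R(a)$ has infinite cliques in the single colour $R$. If instead $R(a)$ is stable, then it is a primitive infinite stable homogeneous $m$-graph with $m\ge 2$; but Lachlan--Woodrow shows that the only primitive stable homogeneous graphs are complete or edgeless (a single colour), contradicting $m=2$, while Theorem \ref{Lachlan3graphs} shows that there are no primitive infinite stable $3$-graphs, contradicting $m=3$. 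In every case the primitivity assumption is refuted, so $R(a)$ is imprimitive.

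The main obstacle I anticipate is bookkeeping rather than a deep difficulty: one must check that the constraint ``infinite cliques only in colour $R$'' genuinely eliminates every primitive candidate on both the Lachlan--Woodrow list (for $m=2$) and Lachlan's list of stable $3$-graphs (for $m=3$), and that the interpretability of $R(a)$ in $M$ legitimately transfers simplicity and finite rank so that these classifications apply. The essential content, however, is clean: both classification theorems leave no primitive structure whose infinite monochromatic cliques are confined to a single colour.
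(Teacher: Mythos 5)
Your proof is correct and follows essentially the same route as the paper's: the paper splits on the $R$-diameter of $M$ (2 or 3), which amounts to exactly your split on whether $R(a)$ realises three or only two predicates, and both arguments then eliminate every primitive candidate using Lachlan--Woodrow, Lachlan's classification of stable 3-graphs, and the fact that $R(a)$ can only contain infinite cliques of colour $R$. The only cosmetic difference is that in the unstable three-colour case the paper invokes Proposition \ref{Year2} where you invoke Proposition \ref{PropTwoInfiniteCliques}; both deliver the same contradiction with Observation \ref{NoInfiniteCliques}.
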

\begin{proof}
	If the $R$-diameter is 2, then all three predicates are realised in $R(a)$. By Proposition \ref{Rank1}, $R(a)$ is a 3-graph of rank 1, so it cannot be primitive and unstable by Proposition \ref{Year2}, as it would embed infinite $S$-cliques, contradicting Observation \ref{NoInfiniteCliques}. And by Lachlan's Theorem \ref{Lachlan3graphs}, $R(a)$ cannot be primitive and stable (see the table of 3-graphs without infinite $S$- or $T$-cliques in page \pageref{table}).

	If the $R$-diameter is 3, then $R(a)$ is a homogeneous $RS$-graph. It follows from the Lachlan-Woodrow Theorem \ref{LachlanWoodrow} and simplicity that $R(a)$ is isomorphic to $I_m[K_\omega]$ or to $I_\omega[K_n]$ ($m,n\in\omega$). 
\end{proof}

\begin{proposition}
	If $M$ is a homogeneous simple primitive 3-graph of \SU-rank 2, then $R$ defines an equivalence relation on $R(a)$ with finitely many infinite classes.
\label{RDefinesEqRel}
\end{proposition}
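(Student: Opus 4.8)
The plan is to reduce the statement to a classification of the induced structure on $R(a)$ and then read off the conclusion. By Observation \ref{Rank1} the set $R(a)$ has \SU-rank $1$, and by Observation \ref{R(a)Imprimitive} it is imprimitive; since $\aut(M/a)$ acts transitively on $R(a)$ and $R(a)$ is homogeneous (Proposition \ref{PropHomNeigh}), it is a transitive homogeneous simple $3$-graph of rank $1$. Two further features pin it down: it contains infinite $R$-cliques — an infinite $R$-clique of $M$ exists by Proposition \ref{EmbedsAllRComplete} and compactness, and by transitivity one passes through any fixed vertex, so minus that vertex it sits inside an $R$-neighbourhood — while by Observation \ref{NoInfiniteCliques} it contains no infinite $S$- or $T$-clique. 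Finally $R$, being stable in $M$, is stable in $R(a)$. The argument now splits according to whether $\diam_R(M)=3$ or $\diam_R(M)=2$.

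When $\diam_R(M)=3$ the set $R(a)$ is an $\{R,S\}$-graph (the triangle $RRT$ is forbidden), so the Lachlan--Woodrow Theorem \ref{LachlanWoodrow} applies. Simplicity excludes the Henson graphs, while the Random Graph and the readings $I_\omega[K_n]$, $K_m[I_\omega]$ (in which $S$ acquires infinite cliques) are excluded by the no-infinite-$S$-clique constraint and $I_\omega[K_\omega]$ by rank $1$, leaving $R(a)\cong I_m[K_\omega]$ with $R$ the within-block (clique) relation; here $R$ is an equivalence relation with $m$ infinite classes, as required. The one remaining reading $K_\omega[I_n]$, in which $R$ becomes complete multipartite on $R(a)$, is killed by Proposition \ref{PropMultipartite}: it would force $M$ itself to be $R$-complete-multipartite, contradicting primitivity.

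When $\diam_R(M)=2$ all three relations are realised in $R(a)$, and I would invoke Lachlan's classification of stable homogeneous $3$-graphs (Theorem \ref{Lachlan3graphs}) in the stable case and Proposition \ref{PropImprimitive3Graphs} in the unstable case. The guiding principle is that $R$ is an equivalence relation on $R(a)$ precisely when $R$ occurs at the innermost level of the composition or product describing $R(a)$. In the surviving structures from the table on page \pageref{table} in which $K_\omega^R$ is innermost — for instance $P^R[K_\omega^R]$, $Q^R[K_\omega^R]$, $K_\omega^R\times K_n^S$, $K_m^S[K_n^T[K_\omega^R]]$, and in the unstable case the Random-Bipartite-type graph $\mathcal B_n^{ST}$ (where $R$ must be the equivalence-relation colour, since $R$ is stable while $S,T$ are not) — the relation $R$ is an equivalence relation with finitely many infinite classes. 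The structures in which $R$ is outermost, namely the compositions $K_\omega^R[\,\cdot\,]$, make $R(a)$ itself $R$-complete-multipartite and are eliminated by Proposition \ref{PropMultipartite} exactly as above.

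The hard part is excluding the structures in which $R$ sits at a \emph{middle} level, the archetype being $K_m^S[K_\omega^R[K_p^T]]$ and its $S\leftrightarrow T$ mirror. These pass every test used so far — rank $1$, imprimitive, stable, with infinite $R$-cliques and no infinite $S$- or $T$-clique — yet $R$ fails to be transitive on $R(a)$: its connected components are the $\neg S$-classes, inside each of which $R$ is complete multipartite, so Proposition \ref{PropMultipartite} does not apply directly because $(R(a),R)$ is disconnected. To rule these out I would compare $R(b)$ for $b\in R(a)$ with $R(a)$ through homogeneity ($R(b)\cong R(a)$), tracking how the finitely many $R$-components of $R(b)$ must match across the edge $ab$; the two induced partitions are incompatible and produce a forbidden four-point configuration, contradicting homogeneity. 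This is the step that foreshadows, and is cleanest in, the line/semilinear-space formalism of the next subsection. Once these cases are removed, $R$ is an equivalence relation on $R(a)$, and by the rank-$1$ hypothesis its classes are infinite and finite in number, which completes the proof.
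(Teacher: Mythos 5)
Your skeleton follows the paper's proof quite closely (imprimitivity of $R(a)$ via Observation \ref{R(a)Imprimitive}, the split on $\diam_R(M)$, then Lachlan--Woodrow/Lachlan plus Proposition \ref{PropMultipartite} and Observation \ref{NoInfiniteCliques}), and your diameter-3 case and the ``$R$ innermost''/``$R$ outermost'' cases in diameter 2 are correct and essentially the paper's argument. But there is a genuine gap exactly where you admit the difficulty lies: the exclusion of $R(a)\cong K_m^S[K_\omega^R[K_p^T]]$ and its mirror $K_m^T[K_\omega^R[K_p^S]]$ is never actually carried out. You assert that comparing $R(b)$ with $R(a)$ across an $R$-edge makes ``the two induced partitions incompatible'' and produces ``a forbidden four-point configuration,'' but if one performs that comparison the guessed mechanism does not materialise. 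Writing $\tau_b$ for the $T$-block of $b$ in $R(a)$ and $\tau'_a$ for the $T$-block of $a$ in $R(b)$, the comparison \emph{forces} certain edges rather than forbidding them: for $u\in\tau_b\setminus\{b\}$ and $v\in\tau'_a\setminus\{a\}$ one gets $R(u,v)$ (look inside $R(x)$ for any $x$ that is $R$-related to $a,b,u,v$), and every four-point configuration so produced embeds consistently in $K_m^S[K_\omega^R[K_p^T]]$. What does close the case is a further step of a different character: from the forced edges one shows that $v\in\tau'_a\setminus\{a\}$ is $R$-related to \emph{all} of $b$'s superblock $B_b$, hence the set $T(a)\cap R(b')=\tau'_a\setminus\{a\}$ depends only on the superblock of $b'$ in $R(a)$; since $\diam_R(M)=2$ puts every vertex of $T(a)$ into some $R(b')$ with $b'\in R(a)$, this covers $T(a)$ by at most $m$ sets of size $p-1$, so $T(a)$ would be finite, contradicting $\acl(a)=\{a\}$ (Observation \ref{AlgClosure}); symmetrically $S(a)$ would be finite in the mirror case. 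Without this (or some equivalent) argument, your proof is incomplete at its acknowledged crux, and deferring it to the semilinear-space formalism would be circular, since Proposition \ref{Semilinear3Graph} and Lemma \ref{LemmaSemilinear} presuppose precisely the conclusion of this proposition.

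For comparison, the paper disposes of this case under the heading ``$R\vee T$ (resp.\ $R\vee S$) defines an equivalence relation on $R(a)$'': it argues that each class, being a homogeneous $R,T$-graph with no infinite $T$-cliques, must be $K_n^T[K_\omega^R]$, rejecting only the alternative $K_n^R[K_\omega^T]$. You should be aware that this is terser than it ought to be: Theorem \ref{LachlanWoodrow} also admits the complete multipartite reading $K_\omega^R[K_p^T]$, which has no infinite $T$-cliques, and taking the classes of that form yields exactly your middle structure $K_m^S[K_\omega^R[K_p^T]]$ --- which the paper's own table of candidates for $R(a)$ lists. So your instinct is sound and in fact sharper than the paper's write-up on this point: the middle structures cannot be eliminated by any property internal to $R(a)$ (they are rank-1, imprimitive, stable, with infinite $R$-cliques and no infinite $S$- or $T$-cliques), and one genuinely needs an argument about how $R(a)$ sits inside $M$, such as the covering argument above. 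The gap is that you identified the right battlefield but did not fight the battle.
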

\begin{proof}
We know from Observation \ref{R(a)Imprimitive} that $R(a)$ is imprimitive. By quantifier elimination and our assumption that exactly one of $R,S,T$ holds for any pair of vertices in $M$, to show that $R$ defines an equivalence relation on $R(a)$, an invariant equivalence relation on $R(a)$ is defined by a disjunction of at most two predicates from $L$. Our two main cases depend on the $R$-diameter of $M$.

\begin{case}
If $\diam_R(M)=3$, then $R(a)$ is a homogeneous $R,S$-graph, which must be stable since $R$ is stable and in which both $R$ and $S$ are realised, by Observation \ref{NotRComplete}. The formula $S(x,y)$ does not define an equivalence relation on $R(a)$ by Proposition \ref{PropMultipartite}. Therefore, $R$ is an equivalence relation on $R(a)$ and by Observation \ref{NoInfiniteCliques}, this equivalence relation has finitely many classes, each of which is infinite by homogeneity and the fact that $R(a)$ is an infinite set.
\end{case}
\begin{case}
If $\diam_R(M)=2$, then all predicates are realised in $R(a)$. 

By Proposition \ref{PropMultipartite} the relation $S\vee T$ does not define an equivalence relation. 

If $R\vee S$ defines an equivalence relation on $R(a)$, then it must have finitely many classes as any transversal to $R\vee S$ is a $T$-clique and $T$ does not form infinite cliques in $R(a)$. Each $R\vee S$-class in $R(a)$ is a homogeneous graph, so by the Lachlan-Woodrow Theorem \ref{LachlanWoodrow} it must be of the form $K_n^S[K_\omega^R]$, since $K_n^R[K_\omega^S]$ is impossible because $S$ forms infinite cliques in it. It follows that $R(a)$ is isomorphic to $K_m^T[K_n^S[K_\omega^R]]$, and $R$ defines an equivalence relation on $R(a)$ with $m\times n$ infinite classes (see the table on page \pageref{table}). The same argument shows that if $R\vee T$ defines an equivalence relation on $R(a)$, then $R$ is also an equivalence relation there, with finitely many infinite classes.

If $S$ defines an equivalence relation on $R(a)$, then it is a stable relation on $R(a)$, its classes are finite, and $R(a)$ is a stable 3-graph of one of the forms 6-11 from Lachlan's Theorem \ref{Lachlan3graphs}. We can eliminate all those stable graphs in which $S\vee T$, $R\vee S$, or $R\vee T$ defines an equivalence relation, since we have already dealt with those cases. In all other cases (see the table on page \pageref{table}), $R$ defines an equivalence relation with finitely many infinite classes.
\end{case}
\end{proof}
\setcounter{case}{0}

Observation \ref{R(a)Imprimitive} and Proposition \ref{RDefinesEqRel} tell us that in simple homogeneous primitive 3-graphs of \SU-rank 2 the forking predicate $R$ defines an equivalence relation on $R(a)$ with finitely many infinite classes. We summarise this in a lemma for easier reference:

\begin{lemma}
	Primitive homogeneous simple 3-graphs of \SU-rank 2 are semilinear. The lines of the semilinear space are infinite and each point is incident with finitely many lines. 
\label{LemmaSemilinear}
\end{lemma}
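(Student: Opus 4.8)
The plan is to assemble this statement from the results already proved in this section, since the substantive work has been carried out in Observation \ref{R(a)Imprimitive} and Proposition \ref{RDefinesEqRel}; what remains is to verify that the hypotheses of Proposition \ref{Semilinear3Graph}, together with the extra clause in the definition of a semilinear 3-graph, are all met. Accordingly this final lemma is essentially a synthesis rather than a fresh argument.

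First I would pin down the forking behaviour. A primitive simple homogeneous 3-graph of \SU-rank $2$ cannot have all of $R,S,T$ nonforking, for then the Independence Theorem argument (as in the proof of Theorem \ref{PrimitiveAlice}) would make $M$ the random 3-graph, which has \SU-rank $1$; nor can it have two forking relations, since by Theorem \ref{ThmStableForking} both would then be stable and hence so would the third, contradicting the nonexistence of primitive stable 3-graphs (Theorem \ref{Lachlan3graphs}). So exactly one relation forks; call it $R$, and by Theorem \ref{ThmStableForking} it is stable, while $S,T$ are nonforking and unstable.

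Next I would check the three conditions of Proposition \ref{Semilinear3Graph}. Transitivity of $\aut(M)$ on $M$ is immediate from primitivity. That $R(a)$ is infinite follows from Observation \ref{Rank1}, which gives that $R(a)$ has rank $1$; equivalently, a finite $R(a)$ would lie in $\acl(a)=\{a\}$ by Observation \ref{AlgClosure}, a contradiction. Finally, that $R$ restricts to an equivalence relation on $R(a)$ with finitely many classes is precisely the content of Proposition \ref{RDefinesEqRel}. These three facts give, via Proposition \ref{Semilinear3Graph}, that $(M,\mathcal{L})$ is a semilinear space, with lines $\ell(a,b)=\{a\}\cup b/R^a$.

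It then remains to record the two quantitative properties in the statement. The lines are infinite because each one is the union of $\{a\}$ with an $R$-class of $R(a)$, and those classes are infinite by Proposition \ref{RDefinesEqRel}. Each point $a$ lies on finitely many lines because the lines through $a$ correspond bijectively to the $R$-classes of $R(a)$, of which there are finitely many by the same proposition; this simultaneously verifies the extra clause in the definition of a semilinear 3-graph, so that $M$ is indeed a semilinear 3-graph in the sense required. I do not expect a genuine obstacle here: the only delicate step, the case analysis distinguishing $R$-diameter $2$ from $R$-diameter $3$ and ruling out the various stable and unstable candidates for $R(a)$, has already been dispatched in Observation \ref{R(a)Imprimitive} and Proposition \ref{RDefinesEqRel}, so this lemma is purely a matter of collecting those conclusions.
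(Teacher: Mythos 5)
Your proof is correct and follows essentially the same route as the paper: the paper's proof of Lemma \ref{LemmaSemilinear} is likewise a short synthesis citing primitivity (via Observation \ref{AlgClosure}) for transitivity and the infinitude of $R(a)$, and Observation \ref{R(a)Imprimitive} together with Proposition \ref{RDefinesEqRel} for the finite partition of $R(a)$ into infinite $R$-classes, which feeds into Proposition \ref{Semilinear3Graph}. Your opening paragraph on the forking pattern (exactly one forking relation, necessarily stable) merely reconstructs the standing assumptions the paper establishes in the chapter's introductory discussion via Theorems \ref{ThmStableForking} and \ref{Lachlan3graphs}, so it is a harmless addition rather than a different argument.
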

\begin{proof}
	By primitivity, none of the relations $R,S,T$ is algebraic (cf. Observation \ref{AlgClosure}), so $R(a)$ is infinite. The transitivity of the 3-graph follows trivially from primitivity. Observation \ref{R(a)Imprimitive} and Proposition \ref{RDefinesEqRel} prove that (the reflexive closure of) $R$ is an equivalence relation on $R(a)$ with finitely many infinite classes.
\end{proof}

We have defined a semilinear space over a homogeneous structure, but there is no reason for it to be homogeneous as a semilinear space. This observation differentiates our work from Alice Devillers' study of homogeneous semilinear spaces (see \cite{DevillersUltrahomogeneous2000}). 

In Devillers' formulation, a semilinear space is a two-sorted structure with one sort for points and another for lines; it is homogeneous if the usual condition on the extensibility of local isomorphisms between finite configurations of points and lines is satisfied. 

Our semilinear space is defined in a primitive homogeneous simple 3-coloured graph. It is clear that we have two types of non-collinear points, corresponding to $S$- and $T$-edges in the coloured graph. If the diameter of the graph is 2, then we will see that $n=|R(c)\cap R(a)|$ and $m=|R(d)\cap R(a)|$ are not necessarily equal for $c\in S(a)$ and $d\in T(a)$, even though $ac$ and $ad$ are isomorphic as incidence structures. Any automorphism of the semilinear space extending the isomorphism $a\mapsto a, c\mapsto d$ would necessarily take $R(c)\cap R(a)$ to $R(d)\cap R(a)$, impossible. Thus, we cannot expect our linear spaces to be homogeneous in the sense of Devillers.

We will use the semilinear space to analyse the structure of \SU-rank 2 graphs.

Any two distinct vertices belong to at most one line and two distinct lines intersect in at most one vertex. Any given vertex belongs only to a finite number of lines, each of which is infinite. As a consequence:

\begin{observation}
	Suppose that $M$ is a semilinear 3-graph and $a\in M$. Then for all $d\in R^2(a)$ and $\ell$ a line through $a$, $|R(d)\cap\ell|<2$.
	\label{NoTriangles}
\end{observation}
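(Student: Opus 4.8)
The plan is to argue by contradiction, exploiting the fact that the relation $R$ restricted to any neighbourhood is an equivalence relation. This is built into the definition of a semilinear 3-graph for $R(a)$, and by the transitivity of $\aut(M)$ the same holds for $R(b)$ at every vertex $b$; that transfer is what makes the whole argument go through.

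First I would unpack the hypotheses. Since $d\in R^2(a)$, the vertex $d$ is at $R$-distance exactly $2$ from $a$, so in particular $\neg R(a,d)$ and $d\neq a$. A line $\ell$ through $a$ is a maximal $R$-clique containing $a$, so any two distinct elements of $\ell$ are $R$-adjacent. Suppose toward a contradiction that $|R(d)\cap\ell|\geq 2$, and choose distinct $b,c\in\ell$ with $R(d,b)$ and $R(d,c)$.

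The key step is to relocate the argument to the neighbourhood of $b$. I would first verify that $a,c,d$ all lie in $R(b)$: from $R(d,b)$ we get $d\in R(b)$; since $b,c$ are distinct elements of the $R$-clique $\ell$ we have $R(b,c)$, hence $c\in R(b)$; and since $a,b\in\ell$ with $a\neq b$, we have $R(a,b)$, so $a\in R(b)$. Here $a\neq b$ must hold, because $a=b$ would give $R(a,d)$ from $R(d,b)$, contradicting $d\in R^2(a)$. Now I apply semilinearity: $R$ is an equivalence relation on $R(b)$. Because $a$ and $c$ are distinct points of the $R$-clique $\ell$ (again $a\neq c$, else $R(a,d)$ follows from $R(d,c)$), we have $R(a,c)$, so $a$ and $c$ lie in the same $R$-class of $R(b)$; and $R(c,d)$ places $c$ and $d$ in the same class. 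By transitivity of this equivalence relation $a$ and $d$ lie in the same class, i.e.\ $R(a,d)$ holds — contradicting $\neg R(a,d)$.

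I do not expect any genuine obstacle; the only delicate points are the degenerate cases $a=b$ and $a=c$, both immediately excluded since either would force $R(a,d)$ against $d\in R^2(a)$. Equivalently, one could phrase the contradiction geometrically: the $R$-triangle $\{b,c,d\}$ forces $d$ onto the unique line $\ell(b,c)$, which coincides with $\ell$ since $a,b,c$ are collinear, so $d\in\ell$ and hence $R(a,d)$. The equivalence-relation formulation is the most economical and is the one I would write up.
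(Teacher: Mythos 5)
Your proof is correct, and it takes a somewhat different (and more self-contained) route than the paper's. The paper works inside $R(d)$: from $R(b_1,b_2)$ it concludes that $b_1,b_2$ lie on a common line through $d$, so the two distinct points $b_1,b_2$ lie on both $\ell(a,b_1)$ and $\ell(d,b_1)$, contradicting the semilinear-space axiom that two distinct lines meet in at most one point (the lines are distinct because $\neg R(a,d)$, a point the paper leaves implicit). You instead work inside $R(b)$ and never invoke lines in your main argument: you use only the defining property of a semilinear 3-graph --- that the reflexive closure of $R$ is an equivalence relation on each neighbourhood, transferred from $R(a)$ to $R(b)$ by vertex-transitivity --- and derive $R(a,d)$ by transitivity of that relation, contradicting $d\in R^2(a)$. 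The two arguments rest on the same underlying fact (an $R$-edge lies in a unique maximal $R$-clique precisely because $R$ is transitive on neighbourhoods), but yours is the more elementary formulation, bypassing the derived incidence geometry entirely, and it also disposes of the degenerate cases $a=b$ and $a=c$ explicitly, which the paper glosses over. Your closing geometric reformulation (the $R$-triangle $\{b,c,d\}$ forces $d\in\ell(b,c)=\ell$, hence $R(a,d)$) is essentially the paper's argument with the contradiction relocated from the line-intersection axiom to $\neg R(a,d)$.
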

\begin{proof}
	If we had two different points $b_1,b_2$ on $\ell\cap R(d)$, then as we have $R(b_1,b_2)$ we get that $b_1,b_2$ belong to the same line through $d$. But then $b_1,b_2\in\ell(a,b_1)\cap\ell(d,b_1)$, contradicting the fact, obvious from Definition \ref{DefSemilinear} that the intersection of two distinct lines in a semilinear space is either empty or a singleton.
\end{proof}

The situation in primitive semilinear 3-graphs is essentially different from that in primitive structures of \SU-rank 1. Compare our next observation with Proposition \ref{aclab}.
\begin{observation}
	Let	$M$ be a primitive homogeneous semilinear 3-graph. If the $R$-distance between $a$ and $b$ is 2, then $\acl(a,b)\neq\{a,b\}$.
\end{observation}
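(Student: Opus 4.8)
The plan is to exhibit a nonempty finite set definable over $\{a,b\}$ that is disjoint from $\{a,b\}$; every element of such a set lies in $\acl(a,b)$, which immediately gives $\acl(a,b)\neq\{a,b\}$. The natural candidate is the set $X=R(a)\cap R(b)$, defined by the formula $R(a,x)\wedge R(b,x)$ and hence definable over $\{a,b\}$. Since the $R$-distance from $a$ to $b$ is $2$, there is some $c$ with $R(a,c)\wedge R(c,b)$, so $X\neq\varnothing$; and by irreflexivity of $R$ any such $c$ satisfies $c\neq a$ and $c\neq b$, so $X\cap\{a,b\}=\varnothing$.

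The heart of the argument is to show that $X$ is finite, using semilinearity. First I would record that no line through $a$ contains $b$: if $b$ lay on a line $\ell\ni a$, then $\ell$ would be an $R$-clique containing both $a$ and $b$, forcing $R(a,b)$ and contradicting $d_R(a,b)=2$. Now take any $c\in X$. Then $\ell(a,c)$ is a line through $a$ and $\ell(c,b)$ is a line through $b$, and these two lines are distinct, since if they coincided then $a,b,c$ would lie on a common $R$-clique, again forcing $R(a,b)$. By the incidence axiom for semilinear spaces (Definition \ref{DefSemilinear}), two distinct lines meet in at most one point, so $c$ is recovered as the unique intersection point $\ell(a,c)\cap\ell(c,b)$.

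Consequently the assignment $c\mapsto(\ell(a,c),\ell(c,b))$ is an injection from $X$ into the set of pairs whose first coordinate is a line through $a$ and whose second is a line through $b$. Because $M$ is a semilinear $3$-graph, each point is incident with only finitely many lines, so there are finitely many lines through $a$ and finitely many through $b$; hence the target set of pairs is finite and $X$ is finite. Therefore $X$ is a nonempty finite $\{a,b\}$-definable set, so $X\subseteq\acl(a,b)$, and since $X$ is disjoint from $\{a,b\}$ we conclude $\acl(a,b)\neq\{a,b\}$.

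I expect the only point requiring genuine care to be the verification that the two lines $\ell(a,c)$ and $\ell(c,b)$ are distinct (equivalently, that $a$ and $b$ are non-collinear), since the finiteness bound collapses if a connecting point could lie on a single line through both $a$ and $b$; this is precisely where the hypothesis $d_R(a,b)=2$ enters. Everything else is routine bookkeeping with the incidence axioms of the semilinear space and the observation that a finite $\{a,b\}$-definable set is contained in the algebraic closure of $\{a,b\}$.
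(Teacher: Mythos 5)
Your proof is correct and is essentially the paper's argument: both exhibit $R(a)\cap R(b)$ as a nonempty, $\{a,b\}$-definable set disjoint from $\{a,b\}$ and use semilinearity (finitely many lines through each point, two distinct points on at most one common line) to conclude it is finite, hence contained in $\acl(a,b)$. The only difference is bookkeeping in the finiteness step: the paper observes that each line through $a$ contains at most one common $R$-neighbour of $a$ and $b$, whereas you inject the set into the finite set of pairs of lines through $a$ and through $b$; these are the same incidence argument.
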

\begin{proof}
	The vertices $a$ and $b$ belong to a finite number of lines. Since the $R$-distance from $a$ to $b$ is 2, there exists at least one element $c\in R(a)$ such that $R(c,b)$ holds. There is at most one such $c$ in any line through $a$. These points are algebraic over $a,b$ and distinct from them.
\end{proof}

Observation \ref{NoTriangles} implies that the lines of the semilinear space interpreted in a semilinear 3-graph do not form triangles.

The sets $R(a),S(a),T(a)$ are homogeneous in the language $L$, so having the same type over $a$ is equivalent to being in the same orbit under $\aut(M/a)$. Therefore, we cannot have more than 2 nested $a$-invariant/definable proper nontrivial equivalence relations in any of them, as we would need more than 3 types of edges to distinguish them. For the same reason, the number of lines through $a$ that $R(c)$ meets for $c\in R^2(a)$ is invariant under $a$-automorphisms (which fix the set of lines through $a$) as $c$ varies in an $a$-orbit. 

\subsection{The nonexistence of primitive homogeneous 3-graphs of $R$-diameter 2 and \SU-rank 2}\label{subsectRDiam2}

We know by Lemma \ref{LemmaSemilinear} that finitely many lines are incident with any vertex $a\in M$ in a primitive simple homogeneous 3-graph of \SU-rank 2. Recall from subsection \ref{lines} that that two lines intersect in at most one point (by Observation \ref{NoTriangles} or by the definition of a semilinear space). The main question to ask is: if $a$ and $b$ are not $R$-related, how many lines containing $a$ can the $R$-neighbourhood of $b$ meet?

\begin{proposition}
	Let $M$ be a homogeneous primitive semilinear 3-graph with simple theory in which $S$ and $T$ are nonforking predicates, and suppose that $\diam_R(M)=2$. If for every $b\in R(a)$ and each line $\ell$ through $b$ other than $\ell(a,b)$ we have that $\ell\cap S(a)$ and $\ell\cap T(a)$ are both nonempty, then either $\ell\cap S(a)$ and $\ell\cap T(a)$ are both infinite, or one of them is of size 1 and the other is infinite.
\label{Intersections}
\end{proposition}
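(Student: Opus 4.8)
The plan is to work inside the $R$-neighbourhood of $b$ and to read the dichotomy off the classification of rank-$1$ homogeneous $3$-graphs. First I would fix $a$, $b\in R(a)$ and a line $\ell$ through $b$ with $\ell\neq\ell(a,b)$, and record the elementary geometric fact that $\ell\cap R(a)=\{b\}$: if some $c\in\ell\setminus\{b\}$ satisfied $R(a,c)$, then $\{a,b,c\}$ would be an $R$-clique (since $b,c\in\ell$), forcing $a$ onto the maximal $R$-clique $\ell$ through $b,c$ and hence $\ell=\ell(a,b)$, a contradiction. Thus $\ell\setminus\{b\}\subseteq S(a)\cup T(a)$, and since lines are infinite (Lemma \ref{LemmaSemilinear}) at least one of $\ell\cap S(a)$ and $\ell\cap T(a)$ is infinite; the real content of the statement is therefore that whichever one is finite must be a singleton.

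The key move is to pass to $R(b)$. Since $a$ and every point of $L:=\ell\setminus\{b\}$ lie in $R(b)$, and $S,T$ are computed identically in $M$ and in the induced substructure $R(b)$, I would observe that $\ell\cap S(a)$ and $\ell\cap T(a)$ are exactly the $S$- and $T$-neighbourhoods of the single vertex $a$ inside the class $L$. By transitivity $R(b)\cong R(a)$, so by Proposition \ref{RDefinesEqRel} the relation $R$ is an equivalence relation on $R(b)$ with finitely many infinite classes, these classes being precisely the lines through $b$; in particular $A_0:=\ell(a,b)\setminus\{b\}$ (the class of $a$) and $L$ are two distinct infinite classes. The question now becomes purely local: how can a single vertex $a$ of one $R$-class of the rank-$1$ homogeneous $3$-graph $R(b)$ be joined by $S$ and by $T$ to another class $L$, given that by hypothesis both colours occur between $A_0$ and $L$?

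To answer this I would split on the stability of $R(b)$. If $R(b)$ is unstable, Proposition \ref{PropImprimitive3Graphs} applies and the structure induced on any pair of $R$-classes is the Random Bipartite Graph, so $a$ has infinitely many $S$-neighbours and infinitely many $T$-neighbours in $L$, giving the ``both infinite'' alternative. If $R(b)$ is stable, then by Lachlan's Theorem \ref{Lachlan3graphs} it is one of the finitely many stable $3$-graphs in which $R$ is an equivalence relation, namely (see the relevant rows of the table on page \pageref{table}) $P^R[K_\omega^R]$, $Q^R[K_\omega^R]$, the two wreath products $K_m^S[K_n^T[K_\omega^R]]$ and $K_m^T[K_n^S[K_\omega^R]]$, and the two direct products $K_\omega^R\times K_n^S$ and $K_\omega^R\times K_n^T$. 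In the pentagon and wreath cases the join between two $R$-classes is monochromatic, so one of $\ell\cap S(a)$, $\ell\cap T(a)$ would be empty, contradicting the hypothesis; these are thereby excluded. The only surviving possibilities are the direct products, where by inspection of $K_\omega^R\times K_n^j$ each vertex has exactly one neighbour of colour $j$ and infinitely many neighbours of the remaining colour in any other class, which is precisely the ``one of size $1$, the other infinite'' alternative.

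The main obstacle is the stable case. One must use homogeneity of $R(b)$ to guarantee that the colour pattern seen by $a$ on $L$ is the uniform between-class pattern of the whole structure, and then verify the bookkeeping over Lachlan's list that, among the six admissible stable structures, the only source of a finite nonzero $S$- or $T$-valency is a direct product, where that valency is forced to equal $1$. The unstable case and the geometric preliminaries are routine; the delicate points are this case analysis and the observation that the nonemptiness hypothesis is exactly what eliminates the monochromatic (hence empty) patterns, leaving the random-bipartite and grid patterns as the only survivors.
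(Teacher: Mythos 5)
Your proof is correct, but it takes a genuinely different route from the paper's. The paper argues directly and geometrically: supposing $1<|\ell\cap S(a)|<\omega$, it uses the nonforking of $S$ (via a Morley-sequence/non-dividing argument) to show that $S(a)$ contains infinite $R$-cliques through every $R$-edge of $S(a)$; then two points $c,c'\in\ell\cap S(a)$ lie both on $\ell$ and on the line $\ell(c,d)$ spanned by such a clique, where $d$ is a clique point off $\ell$, so two distinct lines share two points, contradicting the semilinear axiom. You instead identify the full isomorphism type of $R(b)$: it is a homogeneous simple 3-graph in which $R$ is an equivalence relation with finitely many infinite classes, so either it is unstable and Proposition \ref{PropImprimitive3Graphs} makes every between-class join a Random Bipartite Graph (both intersections infinite), or it is stable and Lachlan's list leaves only the monochromatic-join structures (excluded by the nonemptiness hypothesis) and the direct products $K_\omega^R\times K_n^S$, $K_\omega^R\times K_n^T$ (one intersection a singleton, the other infinite); your enumeration of the admissible stable structures is complete. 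What your approach buys: it never uses the nonforking hypothesis on $S$ and $T$, only simplicity of the ambient structure, and it extracts strictly more information --- the possible structures of $R(b)$ --- which the paper only recovers later and piecemeal (Observation \ref{OneOrbit}, Proposition \ref{MataCasiTodos}). What the paper's proof buys: brevity and self-containment, needing only the semilinear axiom plus one dividing argument and no classification theorems. Two citation slips, both harmless: Lemma \ref{LemmaSemilinear} and Proposition \ref{RDefinesEqRel} are stated under an \SU-rank-2 hypothesis that is not assumed here; the facts you actually need (lines are infinite, and $R$ is an equivalence relation on $R(b)$ with finitely many infinite classes) follow from the definition of a semilinear 3-graph together with homogeneity and transitivity, so you should appeal to the definition rather than to those two results.
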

\begin{proof}
	Clearly, at least one of $\ell\cap S(a)$ and $\ell\cap T(a)$ is infinite. Suppose for a contradiction that $1<|\ell\cap S(a)|<\omega$. The formula $S(x,a)$ does not divide over $\varnothing$; therefore, for any indiscernible sequence $(a_i)_{i\in\omega}$ the set $\{S(x,a_i):i\in\omega\}$ is consistent by simplicity. In particular when $R(a_0,a_1)$ holds. Therefore, $S(a)$ embeds infinite $R$-cliques and by homogeneity every $R$-related pair in $S(a)$ is in one such clique. 
\[
\includegraphics[scale=0.8]{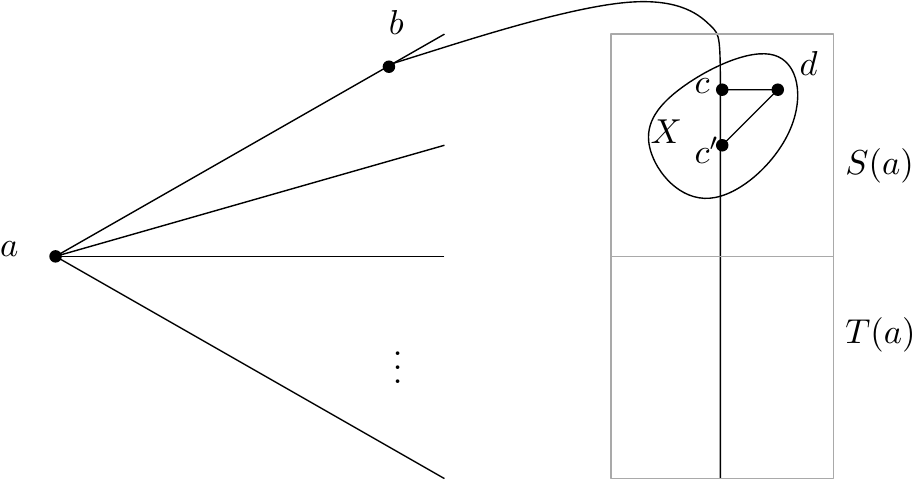}
\]

Take any $c,c'\in\ell\cap S(a)$, and let $X$ be an infinite $R$-clique in $S(a)$ containing them. Consider $d\in X\setminus\ell$; $X\subset\ell(c,d)$ and $b\notin\ell(c,d)$, and the same is true of $c'$. But both belong to $\ell(b,c)$. Therefore, there are two points which lie on two different lines, contradiction.
\end{proof}

Our next observation is crucial to proving that there are no homogeneous 3-graphs of rank 2 and diameter 2. We mentioned before that the incidence structure interpreted in $M$ by the lines and vertices is close to being a generalised quadrangle. Recall that a generalised quadrangle (see \cite{thas2004symmetry}) is an incidence structure of points and lines with possibly infinite parameters $s$ and $t$ satisfying:
\begin{enumerate}
	\item{any two points lie on at most one line,}
	\item{any line is incident with exactly $s+1$ points, and any point with exactly $t+1$ lines, and}
	\item{if $x$ is a point not incident with a line $L$, then there is a unique point incident with $L$ and collinear with $x$.}
\end{enumerate}

In \cite{macpherson1991interpreting}, Macpherson proves:
\begin{theorem}
	Let $M$ be a homogenizable structure. Then it is not possible to interpret in $M$ any of the following:
\begin{enumerate}
	\item{an infinite group,}
	\item{an infinite projective plane,}
	\item{an infinite generalised quadrangle, or}
	\item{an infinite Boolean algebra.}
\end{enumerate}
\label{DugaldQuadrangle}
\end{theorem}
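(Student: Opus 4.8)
The plan is to attack this through the growth rate of the orbit-counting function, which is the natural invariant for $\omega$-categorical structures and the one best behaved under interpretation. For a structure $N$, write $f_n(N)$ for the number of orbits of $\aut(N)$ on $n$-element subsets of $N$; by Theorem \ref{RyllNardzewski} this is finite, and in a homogeneous presentation it counts isomorphism types of $n$-element substructures. The first step is a \emph{transfer lemma}: if $N$ is interpreted in $M$ by a $d$-dimensional interpretation, then an $n$-element subset of $N$ is represented by at most $dn$ elements of $M$, and (up to finitely many choices coming from the defining equivalence relation) its orbit under the induced action is determined by the orbit of that representing set. Since $\aut(N)$ is at least as large as the group induced from $\aut(M)$, this yields $f_n(N)\le c\,f_{dn}(M)$ for constants $c,d$ depending only on the interpretation, so any upper bound on the growth of $f_n(M)$ descends to $N$.

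The second step is to bound $f_n$ for a homogenizable $M$. If $M$ is a reduct of a structure homogeneous in a finite relational language of maximum arity $r$, then an $n$-element substructure is determined by a finitely-valued colouring of its subsets of size at most $r$, of which there are $O(n^r)$; hence $\log f_n(M)=O(n^r)$. Combined with the transfer lemma, \emph{every} structure interpretable in a homogenizable structure satisfies $\log f_n=O(n^r)$ for some fixed $r$, i.e. $\log f_n$ is bounded by a fixed polynomial in $n$. The strategy is then to contradict this by showing each of the four listed objects forces super-polynomial growth of $\log f_n$.

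For an atomless Boolean algebra this is immediate and decisive: an $n$-element subset is classified up to automorphism by which of its $2^n$ Boolean regions are nonzero, so $\log f_n$ grows like $2^n$, well beyond any polynomial. The remaining three cases are where the genuine difficulty lies, and I expect them to be the main obstacle, because a crude subset count for an infinite group, a projective plane, or a generalised quadrangle need not on its own exceed the polynomial ceiling. The feature to exploit instead is that each of these geometries carries a \emph{definable coordinatising pairing}: left translation supplies a sharply transitive definable family of bijections of a group; a projective plane has the definable map sending two points to their unique common line (and dually); a generalised quadrangle has the definable projection of a point onto a non-incident line, which is exactly condition~(3) of its definition. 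The plan is to use such a pairing to manufacture a rapidly branching definable family of infinite subsets — a configuration of pseudoplane type — and then to run a \emph{dimension-weighted} growth count, tallying configurations together with the coordinates witnessing the pairing, which does break the polynomial bound.

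The hard part, therefore, is not any single calculation but producing one growth invariant that is provably subcritical for every homogenizable $M$ yet supercritical for each of the group, plane, and quadrangle; establishing the lower bound requires careful bookkeeping of how the unique-intersection (resp. sharp-transitivity, resp. projection) axioms force independent choices at each stage of the branching family. This is also the natural point of contact with Theorem \ref{ThmThomas}: the non-existence of a binary homogeneous weak pseudoplane is the shadow, in arity two and without the interpretation layer, of the same phenomenon, and I would expect the general argument to specialise to a self-contained reproof of that statement.
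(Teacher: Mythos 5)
First, a point of comparison: the paper does not prove Theorem \ref{DugaldQuadrangle} at all. It is quoted as a known result of Macpherson \cite{macpherson1991interpreting} and used as a black box (in Observation \ref{Quadrangle}). So there is no internal proof to measure your attempt against; it has to stand on its own as a proof of Macpherson's theorem.

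Your general framework is correct as far as it goes. The transfer lemma is fine: if $N$ is interpreted in $M$ with dimension $d$, then any two $n$-sets of classes with $\aut(M)$-conjugate representing $dn$-tuples lie in the same orbit of the induced group, and $\aut(N)$ contains that induced group, so $f_n(N)\leq (dn)!\,f_{dn}(M)$, which is harmless at this scale. (The parenthetical ``finitely many choices coming from the defining equivalence relation'' is not right --- classes may be infinite --- but the inequality does not need it.) The ceiling $\log f_n(M)=O(n^r)$ for reducts of homogeneous structures of arity $r$ is also correct, and it genuinely finishes case (4): for the atomless Boolean algebra the orbit of an $n$-set is determined by the zero/nonzero pattern on the $2^n$ atoms of the generated subalgebra, so $\log f_n\approx 2^n$, which eventually beats $O(n^r)$ for every fixed $r$.

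The gap is in cases (1)--(3), and it is not just that you left them as ``the hard part'': the strategy you announce for them provably cannot succeed. Take $G$ to be an infinite-dimensional vector space over $\mathbb F_2$, an infinite $\omega$-categorical group. The $\aut(G)={\rm GL}(G)$-orbit of an ordered $n$-tuple is determined by the subspace of $\mathbb F_2^n$ of linear dependencies it satisfies, so (base-$2$ logarithms) $\log f_n(G)\sim n^2/4$. The random graph, which is homogeneous in a binary language, has $\log f_n\sim n^2/2$. Thus there exists an infinite $\omega$-categorical group whose orbit growth is strictly \emph{slower} than that of a homogenizable structure; consequently no invariant computed from the orbit-counting sequence --- however ``dimension-weighted'', and whether applied to $M$ or to its powers --- can be simultaneously subcritical for every homogenizable structure and supercritical for every infinite $\omega$-categorical group. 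Concretely, your contradiction scheme $f_n(G)\leq c\,f_{dn}(M)$ is simply satisfied when $M$ is a binary homogeneous structure and $G$ is this vector space; counting does not forbid such an interpretation, yet the theorem does. The same ceiling problem occurs for projective planes and generalised quadrangles, whose natural incidence presentations give no lower bound beyond $2^{O(n^2)}$, and for which you provide no lower bound at all. Any correct proof of cases (1)--(3) must therefore detect something finer than growth --- for instance, the local-to-global property that in a homogeneous structure of arity $r$ the type of an $n$-tuple is determined by the types of its $r$-subtuples, a property that fails badly in a vector space (the type of $(e_1,\ldots,e_m,\sum_{i\in S}e_i)$ with $|S|\geq r$ is not determined by its $r$-subtuples) --- and your proposal contains no mechanism of this kind.
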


\begin{observation}
	If $M$ is homogeneous primitive semilinear 3-graph and $\diam_R(M)=2$, then it is not the case that for all $b\in R^2(a)$ the set $R(b)$ intersects all lines containing $a$.
	\label{Quadrangle}
\end{observation}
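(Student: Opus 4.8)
The plan is to argue by contradiction and show that, under the stated hypothesis, the semilinear space $(M,\mathcal L)$ of Lemma \ref{LemmaSemilinear} would be an infinite generalised quadrangle interpretable in $M$, contradicting Macpherson's Theorem \ref{DugaldQuadrangle}. So suppose that for every $b\in R^2(a)$ the set $R(b)$ meets every line through $a$. Since $M$ is primitive, hence transitive, this property is automorphism-invariant, so it holds with any point in place of $a$: for every point $p$ and every $x\in R^2(p)$, the set $R(x)$ meets every line through $p$. Note that two points are collinear (lie on a common line) if and only if they are $R$-adjacent, since every $R$-edge lies in a unique maximal $R$-clique.

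I would then verify the three axioms for $(M,\mathcal L)$, taking points to be vertices of $M$, lines to be maximal $R$-cliques, incidence to be membership, and collinearity to be the relation $R$. Axiom (1), that two points lie on at most one line, is immediate from semilinearity (Lemma \ref{LemmaSemilinear}). For axiom (2), Lemma \ref{LemmaSemilinear} gives that every line is infinite (so $s+1=\omega$) and that each point lies on only finitely many lines, a number that is constant by transitivity (so $t+1<\omega$); the \emph{possibly infinite} parameters in the definition accommodate $s=\infty$. The content is in axiom (3): given a point $x$ and a line $L$ with $x\notin L$, there must be a unique point of $L$ collinear with $x$. Uniqueness is free: if $x$ were $R$-adjacent to distinct $p_0,p_1\in L$, then $\{x,p_0,p_1\}$ would be an $R$-clique, hence contained in a single line, forcing that line to be $L$ and so $x\in L$, a contradiction (this is the no-triangle property underlying Observation \ref{NoTriangles}). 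For existence, pick any $p_0\in L$; if $R(x,p_0)$ we are done, and otherwise $x\in R^2(p_0)$ because $\diam_R(M)=2$, so the hypothesis applied at $p_0$ guarantees $R(x)\cap L\neq\varnothing$.

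Having checked the axioms, I would observe that $(M,\mathcal L)$ is infinite and interpretable in $M$: lines are coded by the imprimitivity blocks of the $R$-neighbourhoods (equivalently, by $R$-adjacent pairs of points modulo ``lies on the same line''), and incidence is quantifier-free definable. Since $M$ is homogeneous, it is homogenizable, so Theorem \ref{DugaldQuadrangle} forbids the interpretation of an infinite generalised quadrangle, and we reach the desired contradiction.

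I expect the only delicate points to be bookkeeping ones: confirming that the family of lines is genuinely interpretable, so that Macpherson's theorem applies, and matching up the two halves of axiom (3) — existence coming precisely from the hypothesis together with $\diam_R(M)=2$, and uniqueness coming for free from the semilinearity of the incidence structure. No simplicity or rank input is needed beyond what is already packaged into Lemma \ref{LemmaSemilinear}.
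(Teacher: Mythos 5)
Your proof is correct and takes exactly the same route as the paper: the paper's (very terse) proof simply asserts that under the negated conclusion the vertices and lines of the semilinear space form an infinite generalised quadrangle and invokes Macpherson's Theorem \ref{DugaldQuadrangle}. Your write-up supplies the details the paper leaves implicit --- uniqueness in axiom (3) via the no-triangle property of Observation \ref{NoTriangles}, existence via the hypothesis transferred by transitivity together with $\diam_R(M)=2$, and the interpretability of the line set --- so it is a faithful expansion rather than a different argument.
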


\begin{proof}
	In this case, the incidence structure interpreted in $M$ with lines of the form $\ell(x,y)$  and vertices as points is a generalised quadrangle with infinite lines and as many lines through a point as $R$-classes in $R(a)$, contradicting Theorem \ref{DugaldQuadrangle}.
\end{proof}

The following observation will help us find different points $c,c'$ in $S(a)$ or $T(a)$ such that $R(c)$ and $R(c')$ meet the same lines through $a$. Recall that given a subset $B$ of $M$, the group of all automorphisms of $M$ fixing $B$ setwise is denoted by $\aut(M)_{\{B\}}$.

\begin{observation}
	Let $M$ be a primitive homogeneous semilinear 3-graph of $R$-diameter 2. Let $X$ be a set of lines incident with $a$. Then $\aut(M/a)_{\{\bigcup X\}}$ acts transitively on $\ell\setminus\{a\}$ for all $\ell\in X$.
	\label{TransOnLines}
\end{observation}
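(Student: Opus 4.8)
The plan is to use the homogeneity of $M$ to move $b$ to $b'$ by an automorphism that is forced to preserve $\bigcup X$. Recall from Proposition \ref{RDefinesEqRel} and Lemma \ref{LemmaSemilinear} that the lines through $a$ are exactly the sets $\{a\}\cup K$, where $K$ ranges over the finitely many infinite $R$-classes of $R(a)$; thus $X$ is a finite set of such lines and $\bigcup X=\{a\}\cup\bigcup_{\ell\in X}(\ell\setminus\{a\})$. Fix $\ell\in X$ and $b,b'\in\ell\setminus\{a\}$; it suffices to produce $\sigma\in\aut(M)$ with $\sigma(a)=a$, $\sigma(\bigcup X)=\bigcup X$ and $\sigma(b)=b'$. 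Note that any automorphism fixing $a$ permutes the lines through $a$, and that once $\sigma(b)=b'$ with $b,b'\in\ell$ the line $\ell$ is automatically preserved, since $\sigma(\ell)$ is the unique line through $a$ and $b'$. In particular, when $X=\{\ell\}$ the local isomorphism $a\mapsto a,\ b\mapsto b'$ already extends to the desired $\sigma$, so the whole difficulty lies in simultaneously preserving the \emph{other} lines of $X$.

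First I would handle the remaining lines by augmenting the local isomorphism. For each $m\in X\setminus\{\ell\}$ I would choose a vertex $c_m\in m\setminus\{a\}$ having the \emph{same} colour to $b$ and to $b'$ (either $S(b,c_m)\wedge S(b',c_m)$ or $T(b,c_m)\wedge T(b',c_m)$; note that $c_m$ and $b$ lie on distinct lines through $a$, so they are not $R$-related). The partial map fixing $a$ and every $c_m$ and sending $b\mapsto b'$ is then a local isomorphism: the only pairs whose colour could change are those of the form $(b,c_m)$, and these are matched with $(b',c_m)$ by the choice of $c_m$, while every pair inside $\{a\}\cup\{c_m:m\in X\setminus\{\ell\}\}$ is fixed. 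By homogeneity of $M$ this extends to $\sigma\in\aut(M)$. Since $\sigma$ fixes $a$ and the point $c_m\in m$, it fixes each $m\in X$ setwise (the image of $m$ is the unique line through $a$ and $c_m$), and it fixes $\ell$ as above; hence $\sigma(\bigcup X)=\bigcup X$, so $\sigma\in\aut(M)_{\{\bigcup X\}}$ with $\sigma(b)=b'$, yielding transitivity.

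The main obstacle is precisely the existence of the equal-colour vertices $c_m$: I must rule out that $b$ and $b'$ colour some line $m\in X$ \emph{complementarily}, i.e. that $S(b)\cap m=T(b')\cap m$ and $T(b)\cap m=S(b')\cap m$. The extreme case is when $b$ is $S$-complete and $b'$ is $T$-complete to $m$, in which no automorphism fixing $m$ setwise can send $b$ to $b'$ at all; this shows that one cannot in general keep every line of $X$ fixed and that the setwise stabiliser $\aut(M)_{\{\bigcup X\}}$ is genuinely what is needed. I would resolve this by appealing to the semilinear structure established earlier: Proposition \ref{Intersections}, together with Observations \ref{NoTriangles} and \ref{Quadrangle}, constrains how $S(b)$ and $T(b)$ meet a line through $a$, and I expect it to force each such line to meet both $S(b)$ and $S(b')$ (so a common colour exists and the fixed-point construction applies), and in the exceptional configurations to force $b$ and $b'$ to have matching colour-patterns across $X$ up to a permutation of the lines of $X$ that the setwise stabiliser realises. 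Verifying that these constraints actually exclude the complementary and complete colourings on the lines of $X$ is the heart of the argument, and is where the rank, simplicity and $\diam_R(M)=2$ hypotheses must be used.
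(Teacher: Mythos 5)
Your reduction is exactly the paper's: for each line $m\in X\setminus\{\ell\}$ choose a point $c_m$ having the same colour to $b$ as to $b'$; the map fixing $a$ and the $c_m$ and sending $b\mapsto b'$ is then a local isomorphism, its extension given by homogeneity stabilises every line of $X$ setwise (each $m$ is the unique line through $a$ and the fixed point $c_m$, and $\ell=\ell(a,b)\mapsto\ell(a,b')=\ell$), and transitivity follows. Your bookkeeping for this part is correct, and letting the common colour vary with $m$ (the paper takes a single colour for all lines) is a harmless refinement.

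However, you do not prove the claim on which everything hinges: that the common-colour points $c_m$ exist, i.e.\ that $b,b'$ cannot colour a line $m\in X$ complementarily ($S(b)\cap m=T(b')\cap m$ and $T(b)\cap m=S(b')\cap m$). You call this ``the heart of the argument'' and only say you \emph{expect} it to follow from Proposition \ref{Intersections} and Observations \ref{NoTriangles} and \ref{Quadrangle}; that is a genuine gap, and those results cannot close it: they describe how lines and $R$-neighbourhoods meet $S(a)$ and $T(a)$, i.e.\ the interaction of $R(a)$ with $R^2(a)$, whereas the complementary-colouring question is entirely internal to $R(a)$, concerning how two lines through $a$ colour one another. (Rank and simplicity are likewise not what is needed; the observation is stated for semilinear 3-graphs and the paper's proof uses neither.) The paper argues inside $R(a)$ instead: if each pair of lines through $a$ realises only one of $S,T$ across it, any transversal choice of points works and the claim is trivial; otherwise both colours occur across pairs of lines, and since lines are infinite, pigeonhole yields an $R$-edge whose endpoints have the same colour to a point on another line, so one of the triangles $RSS$, $RTT$ is realised in $R(a)$; homogeneity (all $R$-edges of $R(a)$ form one orbit over $a$, as do ordered pairs of distinct lines in this case) then transports such a configuration to the given $b,b'$ and each $\ell_i$. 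If you want a self-contained exclusion of the complementary case, note that for fixed $m$ the relation ``$(x,y)$ colours $m$ complementarily'' is a triangle-free graph on $\ell\setminus\{a\}$: writing $f(x)=S(x)\cap m$, an edge means $f(y)$ is the complement of $f(x)$ in $m\setminus\{a\}$, and a triangle would force $f(x)$ to equal its own complement. By homogeneity, a single complementary pair anywhere in $R(a)$ would force every $R$-edge of $\ell$ to colour one of the finitely many other lines complementarily, so the complete graph on the infinite set $\ell\setminus\{a\}$ would be covered by finitely many triangle-free graphs, contradicting Ramsey's theorem.
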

\begin{proof}
	This is trivial if only one of $S,T$ is realised in the union of two lines through $a$, so assume that all types of edges are realised in the union of two lines through $a$. 

Note that at least one of $RSS, RTT$ is realised in $R(a)$. Assume without loss of generality that $RSS$ is realised in $R(a)$. Let $b,b'$ be elements of $R(a)$ satisfying $R(b,b')$. Enumerate the lines in $X$ as $\ell_1,\ldots,\ell_k$, and assume $b,b'\in\ell_k\setminus\{a\}$. 
\[
\includegraphics[scale=0.7]{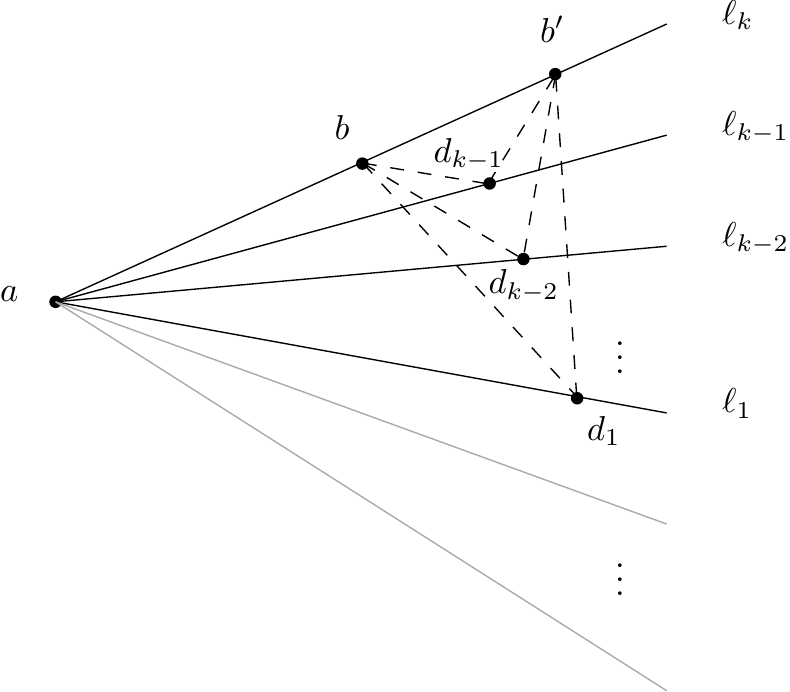}
\]
We can find elements $d_1\in\ell_1,\ldots,d_{k-1}\in\ell_{k-1}$ such that $S(b,d_i)\wedge S(b',d_i)$ for $i\in\{1,\ldots,k-1\}$, so $\tp(b/a,d_1,\ldots,d_{k-1})=\tp(b'/a,d_1,\ldots,d_{k-1})$. By homogeneity, there is an automorphism of $M$ fixing $a,d_1,\ldots,d_{k-1}$ (and therefore fixing $\bigcup X$ setwise) taking $b$ to $b'$.
\end{proof}

If only one of $S,T$ is realised in the union of two lines through $a$, then each pair of $R$-classes in $R(a)$ is isomorphic to a complete bipartite graph (the parts of the partition are $R$-cliques and the edges are of colour $S$ or $T$), so we have two orbits of pairs of lines through $a$. 

\begin{observation}
	Let $M$ be a primitive homogeneous simple semilinear 3-graph of $R$-diameter 2. If in $R(a)$ all relations are realised in the structure induced on a pair of lines through $a$, and there are $m$ lines through $a$, then there is only one orbit of $k$-sets of lines over $a$, for all $k\leq m$.
	\label{OneOrbit}
\end{observation}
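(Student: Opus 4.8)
The plan is to prove the stronger statement that $\aut(M)_a$ acts transitively on ordered $k$-tuples of distinct lines through $a$; transitivity on $k$-sets follows at once. Throughout I work inside the $R$-neighbourhood $R(a)$, which is homogeneous by Proposition \ref{PropHomNeigh}, simple (being definable in $M$), and on which $R$ is an equivalence relation with $m$ infinite classes by Proposition \ref{RDefinesEqRel}; by Lemma \ref{LemmaSemilinear} its $R$-classes are exactly the lines through $a$. First I would record that $R(a)$ is transitive: any two $b,b'\in R(a)$ give isomorphic one-edge substructures $\{a,b\}\cong\{a,b'\}$ (both are $R$-edges), so homogeneity of $M$ produces $\sigma\in\aut(M)_a$ with $\sigma(b)=b'$; hence $\aut(M)_a$ is transitive on $R(a)$ and \emph{a fortiori} on the $m$ lines, which is the case $k=1$.

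The heart of the argument is the claim that across any $k\le m$ distinct lines $\ell_1,\dots,\ell_k$ there is a transversal $b_1\in\ell_1,\dots,b_k\in\ell_k$ (with $b_i\neq a$) forming an $S$-clique. Granting this, the proof finishes quickly: given two ordered $k$-tuples of lines, realise $S$-clique transversals $\bar b$ and $\bar b'$ across each; the structures induced on $\{a\}\cup\bar b$ and $\{a\}\cup\bar b'$ are isomorphic via $a\mapsto a$, $b_i\mapsto b_i'$ (in each, $a$ is joined to every $b_i$ by $R$ and the $b_i$ form an $S$-clique), so homogeneity of $M$ extends this to $\sigma\in\aut(M)_a$, and then $\sigma(\ell_i)=\sigma(\ell(a,b_i))=\ell(a,b_i')=\ell_i'$.

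To prove the claim I would split on the stability of $R(a)$. If $R(a)$ is unstable, then it is a transitive simple unstable homogeneous $3$-graph on which $R$ is an equivalence relation with finitely many classes, so Proposition \ref{PropImprimitive3Graphs} applies and any $S,T$-graph of size $k$---in particular the $S$-clique---is realised as a transversal to any $k$ of the classes. If $R(a)$ is stable, I would invoke Lachlan's classification (Theorem \ref{Lachlan3graphs}) together with the list of admissible neighbourhoods on page \pageref{table}: among the stable $3$-graphs on which $R$ is an equivalence relation, the only ones in which \emph{both} $S$ and $T$ occur between a given pair of $R$-classes (as the present hypothesis demands) are the grids $K_\omega^R\times K_n^S$ and $K_\omega^R\times K_n^T$. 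In either grid the lines are the columns, and a suitable partial row (respectively, a set of points with pairwise distinct first and second coordinates) is an $S$-clique meeting each prescribed column exactly once.

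The main obstacle is precisely this uniform production of a single common transversal type. The naive route---moving one line to another while fixing the rest, and seeking for a fixed pair $c\in\ell$, $c'\in\ell'$ common $S$-neighbours in every other line simultaneously---runs into a genuine simultaneity problem, which is why it is cleaner to manufacture the entire $S$-clique transversal in one step. This is painless when $R(a)$ is unstable, since there the neighbourhood is essentially random between classes (Proposition \ref{PropImprimitive3Graphs}); but it cannot be extracted from the Independence Theorem in the stable case, because there $S$ is a forking, indeed algebraic, relation (its classes in the grid are finite). That is exactly why the stable case must be isolated and settled by direct inspection of the two grids delivered by Theorem \ref{Lachlan3graphs}.
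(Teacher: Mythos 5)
Your proof is correct and follows essentially the same route as the paper's: the same case split on whether $R(a)$ is stable or unstable, with Lachlan's classification (plus the hypothesis that both $S$ and $T$ appear across a pair of lines) pinning $R(a)$ down to the two grids in the stable case, Proposition \ref{PropImprimitive3Graphs} supplying monochromatic transversal cliques in the unstable case, and homogeneity over $a$ finishing the argument. The only cosmetic differences are that you prove the stronger transitivity on ordered $k$-tuples and insist on $S$-clique transversals in both grids (via diagonals in the $T$-grid), where the paper is content with monochromatic transversal cliques of whichever colour is available.
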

\begin{proof}
	There are two cases, depending on whether we can find witnesses to the instability of $S,T$ within $R(a)$.

	If $R(a)$ is a stable structure, then it is isomorphic to $K_m^S\times K_\omega^R$ or to $K_m^T\times K_\omega^R$, by Lachlan's Theorem \ref{Lachlan3graphs}, Observation \ref{NoInfiniteCliques}, and the hypothesis that all relations are realised in the structure induced on a pair of incident lines. In any of these structures there are monochromatic transversal cliques of size $m$, so the observation follows by invariance and homogeneity.

	If we can find witnesses to the instability of $S,T$ within $R(a)$, then $R(a)$ is isomorphic to a simple unstable homogeneous 3-graph in which $R$ defines a finite equivalence relation. By Proposition \ref{PropImprimitive3Graphs}, we can embed transversal monochromatic cliques, and again the observation follows by invariance and homogeneity.
\end{proof}

Notice that if for some element $b\in R(a)$ and some line $\ell$ through $b$ different from $\ell(a,b)$ the sets $\ell\cap S(a)$ and $\ell\cap T(a)$ are both nonempty, then by homogeneity we can transitively permute the lines through $b$ whilst fixing $ab$, and therefore all lines through any $b\in R(a)$, except $\ell(a,b)$, meet both orbits over $a$ in $R^2(a)$. Furthermore, the size of the intersections does not change and is either 1 or infinite, with at least one of them infinite. To put it differently, if one line through $b$ (not $\ell(a,b)$) is almost entirely contained (the point $b$ is assumed to be in $R(a)$) in $S(a)$, then each line is almost entirely contained in one orbit. Now we prove that all lines that meet $R^2(a)$ meet both $S(a)$ and $T(a)$.

We will need the following well-known fact from permutation group theory (see, for example, 2.16 in \cite{cameron1990oligomorphic}) to strengthen Observation \ref{TransOnLines} :

\begin{theorem}
Let $G$ be a permutation group on a countable set $\Omega$, and let $A,B$ be finite subsets of $\Omega$. If $G$ has no finite orbits on $\Omega$, then there exists $g\in G$ with $Ag\cap B=\varnothing$.
\label{ThmNeumann}
\end{theorem}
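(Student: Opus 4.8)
The plan is to convert the existence of a separating element into a statement about coset coverings of $G$ and then apply B.H. Neumann's covering lemma. Enumerate $A=\{a_1,\dots,a_p\}$ and note that $g\in G$ satisfies $Ag\cap B=\varnothing$ exactly when $a_ig\notin B$ for every $i$; equivalently, $g$ lies outside the ``bad set''
$$\mathcal D=\bigcup_{i=1}^{p}\bigcup_{b\in B}\{\,g\in G: a_ig=b\,\}.$$
It therefore suffices to prove that $\mathcal D\neq G$.

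First I would identify each block $\{g:a_ig=b\}$ as a coset. Writing $G_{a_i}=\{g:a_ig=a_i\}$ for the point stabiliser, this block is either empty (when $b$ does not lie in the orbit $a_iG$) or, after fixing some $g_0$ with $a_ig_0=b$, equal to the right coset $G_{a_i}g_0$, since $a_ig=a_ig_0$ is equivalent to $gg_0^{-1}\in G_{a_i}$. Hence $\mathcal D$ is a union of at most $p\cdot|B|$ right cosets of the subgroups $G_{a_1},\dots,G_{a_p}$. The hypothesis of no finite orbits enters through orbit--stabiliser: $[G:G_{a_i}]$ equals the cardinality of the orbit $a_iG$, which is infinite for each $i$. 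So $\mathcal D$ is a union of finitely many cosets of subgroups of \emph{infinite} index.

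The final step is to invoke Neumann's covering lemma: a group is never the union of finitely many cosets of subgroups each of infinite index (equivalently, in any covering of $G$ by finitely many cosets, those coming from infinite-index subgroups are redundant). This yields $\mathcal D\neq G$, and any $g\in G\setminus\mathcal D$ then witnesses $Ag\cap B=\varnothing$. Since the covering lemma is not among the results assumed above, I would either cite it or prove it by a separate induction on the number of cosets, grouping the cosets according to their subgroup and using that an infinite-index subgroup always possesses a coset disjoint from any prescribed finite family of its own cosets.

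The main obstacle is precisely this covering lemma, and it is worth recording why it cannot be sidestepped by a naive induction on $|A|$. The tempting inductive step would first separate $A\setminus\{a\}$ from $B$ and then displace $a$ off $B$ using the pointwise stabiliser of the points already placed; but the pointwise stabiliser of a finite set may have finite orbits even when $G$ has none (the pointwise stabiliser of a single point can be trivial yet fix further points), so the displacement step can fail. Neumann's lemma avoids this by reasoning globally with indices rather than with iterated stabilisers, which is why the coset-covering formulation is the right route.
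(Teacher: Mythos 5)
Your proposal is correct. The paper does not actually prove this statement: it quotes it as a known fact, citing 2.16 of Cameron's \emph{Oligomorphic Permutation Groups}, where it appears as P.~M.~Neumann's separation lemma. Your argument is precisely the classical proof behind that citation: the set of ``bad'' elements is a union of at most $|A|\cdot|B|$ right cosets of point stabilisers $G_{a_i}$, orbit--stabiliser converts the no-finite-orbits hypothesis into the statement that each $G_{a_i}$ has infinite index, and B.~H.~Neumann's covering theorem (a group is not a finite union of cosets of infinite-index subgroups) shows these cosets cannot exhaust $G$. Your identification of each block $\{g:a_ig=b\}$ as either empty or the coset $G_{a_i}g_0$ is accurate for the right action used here, and you are right both to flag the covering theorem as an external ingredient needing citation or proof, and to note why the naive induction on $|A|$ via pointwise stabilisers breaks down (a point stabiliser may have finite orbits, e.g.\ in a regular action, even when $G$ has none). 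The only thing to add is the citation itself: either B.~H.~Neumann's original result or the treatment in Cameron's book suffices.
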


\begin{proposition}
	Suppose that $M$ is a primitive simple homogeneous semilinear 3-graph with $m$ lines through each point, and let $X=\{\ell_i:1\leq i\leq k\}$ be a set of lines through $a\in M$. Then for any transversal $A$ to the $k$ lines in $X$, there exists a transversal $B$ to $X$ such that $B\cong A$ and $B\cap A=\varnothing$.
\label{PropLotsOfTrans}
\end{proposition}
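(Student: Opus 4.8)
The plan is to realise $B$ as the image of $A$ under a single automorphism of $M$ that fixes $a$ and each line of $X$, and to produce that automorphism by a no-finite-orbits argument. Concretely, I would set $G=\aut(M/a)_{\{\bigcup X\}}$, the group of automorphisms of $M$ fixing $a$ and stabilising setwise the set $\bigcup X$ of points on the lines of $X$, and let it act on the countable set $\Omega=\bigcup X\setminus\{a\}$. Writing $A=\{a_1,\dots,a_k\}$ with $a_i\in\ell_i\setminus\{a\}$, we have $A\subseteq\Omega$.

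First I would pin down the $G$-orbits on $\Omega$. Since $M$ is semilinear, two distinct lines through $a$ meet only in $a$ (Definition \ref{DefSemilinear}), so $\Omega$ is the disjoint union of the sets $\ell_i\setminus\{a\}$ for $1\le i\le k$. By Observation \ref{TransOnLines}, $G$ acts transitively on each $\ell_i\setminus\{a\}$; applying this to both $g$ and $g^{-1}$ shows that every $g\in G$ maps $\ell_i\setminus\{a\}$ onto itself, and hence fixes each $\ell_i$ setwise. Therefore the $G$-orbits on $\Omega$ are exactly the sets $\ell_i\setminus\{a\}$, each of which is infinite because lines are infinite (Lemma \ref{LemmaSemilinear}). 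In particular $G$ has no finite orbits on $\Omega$.

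Now I would invoke Neumann's lemma (Theorem \ref{ThmNeumann}) with the single finite set $A\subseteq\Omega$ playing the role of both $A$ and $B$: there exists $g\in G$ with $Ag\cap A=\varnothing$. Setting $B:=Ag$ finishes the argument. Indeed, $g\in\aut(M)$, so $g$ restricts to an isomorphism of $A$ onto $B$ and thus $B\cong A$; because $g$ fixes each $\ell_i$ setwise and carries $a_i\in\ell_i\setminus\{a\}$ to $a_ig\in\ell_i\setminus\{a\}$, the set $B$ meets each line of $X$ in exactly one point and so is again a transversal to $X$; and $B\cap A=\varnothing$ by the choice of $g$.

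There is no genuine obstacle once this machinery is assembled: the whole content is the identification of the correct group action, after which Observation \ref{TransOnLines} and the infiniteness of the lines supply precisely the no-finite-orbits hypothesis that Theorem \ref{ThmNeumann} requires. The only point that demands a little care is checking that the chosen $g$ fixes each individual line of $X$ setwise, rather than merely permuting the lines of $X$ among themselves, since that is what guarantees $B$ is still a transversal; this is exactly the per-line transitivity asserted in Observation \ref{TransOnLines}.
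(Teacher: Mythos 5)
Your route is exactly the paper's: the paper's entire proof of this proposition reads that it is a direct consequence of Theorem \ref{ThmNeumann} and Observation \ref{TransOnLines}, and your write-up is precisely that assembly, with the correct identification of the acting group $G=\aut(M/a)_{\{\bigcup X\}}$ and of the no-finite-orbits hypothesis.

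However, one step in your argument is wrong as stated: the claim that every $g\in G$ maps each $\ell_i\setminus\{a\}$ onto itself does not follow from Observation \ref{TransOnLines}. That observation is purely existential --- for any two points of $\ell_i\setminus\{a\}$ \emph{some} element of $G$ moves one to the other --- and it places no constraint on an arbitrary element of $G$; indeed $G$ may contain automorphisms permuting the lines of $X$ nontrivially (when the induced structure on $\ell_i\cup\ell_j$ is symmetric, homogeneity can produce such an automorphism), so ``applying it to $g$ and $g^{-1}$'' is a non sequitur. What is true, and is all you need, is weaker: every $g\in G$ \emph{permutes} the lines of $X$. For $b\in\ell_i\setminus\{a\}$ we have $g(b)\in\bigcup X\setminus\{a\}$, hence $g(b)$ lies on a unique $\ell_j\in X$ (distinct lines through $a$ meet only in $a$), and since lines are the maximal $R$-cliques and $g$ fixes $a$, $g(\ell_i)=\ell(a,g(b))=\ell_j$. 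This weaker fact suffices at both places where you invoked the stronger one: (i) for Neumann's lemma you only need that no orbit is finite, and every $G$-orbit on $\Omega$ contains a full set $\ell_i\setminus\{a\}$ by Observation \ref{TransOnLines}, hence is infinite; (ii) $B=Ag$ still meets each line of $X$ in exactly one point, since a permutation of the lines carries transversals to transversals. So your closing remark --- that merely permuting the lines of $X$ would not guarantee $B$ is a transversal --- has it backwards: permutation is enough, and it is also all that can be guaranteed. With that local repair your proof is complete and coincides with the paper's.
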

\begin{proof}
This is a direct consecuence of Theorem \ref{ThmNeumann} and Observation \ref{TransOnLines}.
\end{proof}

\begin{proposition}
	Let $M$ be a simple homogeneous primitive semilinear 3-graph of $R$-diameter 2, in which all predicates are realised in the structure induced on a pair of incident lines, and $a\in M$. Then for all $b\in R(a)$, each line $\ell\neq\ell(a,b)$ through $b$ meets both $S(a)$ and $T(a)$.
	\label{Intersections2}
\end{proposition}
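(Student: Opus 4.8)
The plan is to argue by contradiction, ruling out the possibility that some line through $b$ lies entirely in one of the two orbits $S(a)$, $T(a)$. So suppose there are $b\in R(a)$ and a line $\ell\neq\ell(a,b)$ through $b$ with, say, $\ell\cap T(a)=\varnothing$. Any point $x\in\ell\setminus\{b\}$ is $R$-adjacent to $b$, and if it lay in $R(a)$ then $a,b,x$ would be an $R$-triangle, forcing $x\in\ell(a,b)$ and hence $\ell=\ell(a,b)$; so $\ell\setminus\{b\}\subseteq R^2(a)=S(a)\cup T(a)$, and therefore $\ell\setminus\{b\}\subseteq S(a)$. Such a line is not bichromatic, so by the dichotomy recorded in the paragraph preceding the statement (if one line through a point of $R(a)$ meets both orbits then all of them do), \emph{no} line through any point of $R(a)$ other than the line joining it to $a$ is bichromatic: every such line is contained entirely in $S(a)$ or entirely in $T(a)$.

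The key step is to transfer this to the internal structure of the neighbourhood. Fix $b\in R(a)$; since $R(a,b)$ we have $a\in R(b)$, and by homogeneity $R(b)\cong R(a)$, which by Observation \ref{R(a)Imprimitive} and Proposition \ref{RDefinesEqRel} is imprimitive with $R$ an equivalence relation whose classes are exactly the lines through $b$. The class of $a$ inside $R(b)$ is the set of common $R$-neighbours of $a$ and $b$ (that is, $\ell(a,b)$ minus $b$); every \emph{other} class is a line through $b$ lying in $R^2(a)$, hence by the previous paragraph is contained in $S(a)$ or in $T(a)$. In other words, inside the $3$-graph $R(b)$ the sets $S(a)\cap R(b)$ and $T(a)\cap R(b)$ — the $S$- and $T$-neighbourhoods of the vertex $a$ — are unions of $R$-classes. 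Since $R(b)\cong R(a)$ is homogeneous and transitive, every vertex has the same type, so the conclusion holds of every vertex: any two $R$-related points of $R(a)$ have exactly the same $S$-neighbours and the same $T$-neighbours. Equivalently, one of $R\vee S$, $R\vee T$ is an equivalence relation on $R(a)$.

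This contradicts the hypothesis. If $R\vee S$ is an equivalence relation on $R(a)$, then for any two distinct $R$-classes (lines) $\ell_1,\ell_2$ through $a$ the colour of a cross pair is constant: for $x,x'\in\ell_1$ the relation $R(x,x')$ forces $x,x'$ to have equal $S$- and $T$-neighbours, and similarly for $\ell_2$, so every pair from $\ell_1\times\ell_2$ carries the same colour, necessarily $S$ or $T$ but not both. Thus only one of $S,T$ is realised on a pair of incident lines, against the standing assumption of the proposition; the case of $R\vee T$ is symmetric. (One can reach the same conclusion by classification: under the hypothesis, Theorem \ref{Lachlan3graphs}, Observation \ref{NoInfiniteCliques} and the fact that $R$ is an equivalence relation on $R(a)$ force $R(a)$ to be one of $K_\omega^R\times K_n^S$, $K_\omega^R\times K_n^T$, or a Random-bipartite type as in Proposition \ref{PropImprimitive3Graphs}, in each of which every line visibly meets both orbits.)

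The delicate point — and the main obstacle — is the exactness needed in the first two paragraphs. I must use the dichotomy to know that a single \emph{truly} monochromatic line (one orbit missed entirely) forces \emph{all} lines to be truly monochromatic, rather than merely ``almost'' contained in one orbit; the latter would already give the desired conclusion, but only the exact statement makes the $S$- and $T$-neighbourhoods in $R(b)$ \emph{unions} of $R$-classes, which is what licenses the passage to an equivalence relation on $R(a)$. This in turn rests on the transitivity of the action on $R(b)$ and on the dichotomy, both already available.
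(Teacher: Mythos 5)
Your proof is correct, and it takes a genuinely different and considerably shorter route than the paper's. Both arguments open the same way: via the dichotomy recorded before the statement, a single line missing an orbit forces every line through every $b\in R(a)$ other than $\ell(a,b)$ to lie entirely inside $S(a)$ or entirely inside $T(a)$. But from there the paper works in the orbit $S(a)$: it first reduces to the case of at least three lines through $a$, invokes Observation \ref{Quadrangle} (which rests on Macpherson's Theorem \ref{DugaldQuadrangle} on generalised quadrangles) to get $k=|R(c)\cap R(a)|<m$, and then runs a case analysis on $k$ (using Observation \ref{OneOrbit} and Proposition \ref{PropLotsOfTrans}, i.e.\ Neumann's lemma), in each case manufacturing at least four $\aut(M/a)$-invariant, mutually exclusive relations on pairs from $S(a)$ --- too many $2$-types over $a$ for a homogeneous $3$-graph. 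You instead turn the monochromatic-line hypothesis \emph{inward}: it says exactly that $S(a)\cap R(b)$ and $T(a)\cap R(b)$ are unions of $R$-classes of $R(b)$; vertex-transitivity of the homogeneous structure $R(b)$ (Proposition \ref{PropHomNeigh}) propagates this property from the vertex $a$ to every vertex, and $R(b)\cong R(a)$ transfers it to $R(a)$, whence all cross-pairs between any two lines through $a$ carry a single colour. That contradicts the standing hypothesis that all predicates are realised on a pair of incident lines --- directly, with no type-counting, no quadrangle theorem, no Neumann's lemma, and no use of simplicity; it also works uniformly in the number of lines, so the paper's preliminary reduction to $m\geq 3$ becomes unnecessary.

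One side remark in your write-up is inaccurate, though harmless. The equal-neighbours property (``$R$-related points of $R(a)$ have the same $S$- and $T$-neighbours in $R(a)$'') is \emph{not} equivalent to ``one of $R\vee S$, $R\vee T$ is an equivalence relation on $R(a)$'': with four or more lines, the constant cross-colours can be assigned (say $S$ between classes $1,2$ and $2,3$, $T$ between $1,3$ and $3,4$ and $1,4$, $S$ between $2,4$) so that neither disjunction is transitive. Fortunately your final paragraph never actually uses that framing: the reasoning displayed there derives the monochromatic cross-pairs from the equal-neighbours property alone, so the conditional ``If $R\vee S$ is an equivalence relation\dots'' should simply be deleted; the argument underneath it is unconditional and complete.
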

\begin{proof}
	First note that it is not possible to have $R(b)\cap S(a)=\varnothing$ or $R(b)\cap T(a)=\varnothing$. To see this, suppose for a contradiction that $R(b)\cap S(a)=\varnothing$; moving $b$ by homogeneity within $R(a)$, it follows that $R(b')\cap S(a)=\varnothing$ for all $b'\in R(a)$, contradicting the assumption that vertices in $S(a)$ are at $R$-distance 2 from $a$. Similarly, $R(b)\cap T(a)\neq\varnothing$. Therefore, this proposition can only fail if we have at least 3 lines through $a$. 

Suppose for a contradiction that there are $m\geq3$ lines incident with $a$ and for all $b\in R(a)$ and $\ell\neq\ell(a,b)$ through $b$, $\ell\setminus\{b\}\subset S(a)$ or $\ell\setminus\{b\}\subset T(a)$. By Observation \ref{Quadrangle}, we may assume that $k=|R(c)\cap R(a)|<m$ for all $c\in S(a)$. We define two binary relations and a binary function on $S(a)$: for $c,c'\in S(a)$, $E(c,c')$ holds if $R(c)$ and $R(c')$ meet the same lines through $a$, and $C(c,c')$ holds if there exists $b\in R(a)$ such that $b,c,c'$ are collinear. Given two elements $x,y\in S(a)$, let $\#(x,y)$ denote the number of $R$-classes in $R(a)$ that $R(x)$ and $R(y)$ meet in common, that is $\#(x,y)=|\{z\in R(x)\cap R(a):\exists w(w\in R(y)\cap R(a)\wedge( R(w,z)\vee w=z))\}|$. 

\begin{case}
If $k=1$, then there are at least four types of unordered pairs of vertices in $S(a)$. We prove this assertion as follows: let $b_c$ denote the unique element in $R(c)\cap R(a)$ for $c\in S(a)$. The relations $\hat P(c,c')$ that hold if $P(b_c, b_{c'})$ is true ($P\in \{R,S,T\}$) are invariant and imply that $b_c,c,c'$ are not collinear. It follows from the assumption that for all $\ell\neq\ell(a,b)$ through $b\in R(a)$ the set $\ell\setminus\{b\}$ is contained in $S(a)$ or in $T(a)$ and the first paragraph of this proof that $C$ is also realised in $S(a)$. That gives us too many types of distinct unordered pairs of elements in $S(a)$.
\end{case}
\begin{case}
If $2\leq k<m$, then we have two subcases:
\begin{subcase}
If $m-k\geq2$, then we can find at least five types of unordered pairs of elements in $S(a)$. The proof is as follows: Observation \ref{OneOrbit} implies that $E$ is a nontrivial proper equivalence relation on $R(a)$ and that it has $m\choose k$ classes. Now we claim that there are at least four types of $E$-inequivalent elements in $S(a)$. By Observation \ref{OneOrbit} there exist pairs of elements $c,c'\in S(a)$ with $\neg E(c,c')\wedge\#(c,c')=k-1$. Using Proposition \ref{PropLotsOfTrans}, we can find pairs which additionally satisfy $R(c)\cap R(c')\cap R(a)\neq\varnothing$ and $R(c)\cap R(c')\cap R(a)=\varnothing$. We can follow the same argument in the case $\#(c,c')=k-2$ to find two more types of unordered pairs of distinct elements from $S(a)$, giving a total of at least five.
\end{subcase}
\begin{subcase}
Suppose then that $m-k=1$, so $E$ has $m$ equivalence classes. There is at least one line through $b$ almost entirely contained in $T(a)$, so we are left with at most $m-2$ lines through $b$ distinct from $\ell(a,b)$ which may meet $S(a)$. 
\begin{claim}
	$R(b)$ meets $m-1$ $E$-classes in $S(a)$.
	\label{ClaimMeetsAllClasses}
\end{claim}
\begin{proof}
	We know that $R(b)\cap S(a)\neq\varnothing$. Let $c\in R(b)\cap S(a)$. By hypothesis, $|R(c)\cap R(a)|=m-1$. Let $X$ denote $R(c)\cap R(a)$. By Observation \ref{OneOrbit}, we can find $a$-translates of $X_i$, $i\leq m-1$, to any of the $m-1$ sets of $m-1$ lines through $a$ that include the $R$-class to which $b$ belongs. And by the transitivity of $\aut(M/a)$ on $R(a)$ we can find translates $Y_i$ in those sets of lines such that $b\in Y_i$. By homogeneity, each of the automorphisms taking $X$ to $Y_i$ moves $c$ to a new $E$-class. 

Clearly, $R(b)$ does not meet the $E$-class of elements whose $R$-neighbourhoods meet all the lines in $R(a)$ except $\ell(a,b)$.
\end{proof}

As the lines are infinite and $E$ has only finitely many classes, for each line $\ell$ through $b$ that meets $S(a)$ there is at least one $E$-class that contains infinitely many elements of $\ell$. Since we have at most $m-2$ lines through $b$ that meet $S(a)$ and $R(b)$ meets $m-1$ $E$-classes, there is at least one line that meets more than one $E$-class. Note that if a line $\ell$ meets more than one $E$-class, then the intersection of $\ell$ with each of the $E$-classes it meets is infinite, by homogeneity as elements in each class have the same type over $ab$ and at least one of the intersections of $\ell$ with an $E$-class is infinite. 

Again by homogeneity (we can permute the lines over $b$ that meet $S(a)$ whilst fixing $ab$), each line through $b$ that meets $S(a)$ meets more than one class. 

As $k\geq 2$, there exist $b_1,b_2\in R(a)$ such that for some $c\in S(a)$ we have $\ell(b_i,c)\setminus\{b_i\}\subset S(a)$ ($i=1,2$). Take $c'\in\ell(b_1,c)\cap S(a)$ and $c''\in\ell(b_2,c)\cap S(a)$, both distinct from $c$ and $E$-equivalent to $c$. Such elements exist because the intersections of lines through $b_i$ with the $E$-class of $c$ are infinite, by homogeneity and the fact that at least one of the intersections is infinite, so we have $E(c,c')\wedge R(c,c')$. Also, $c'$ and $c''$ are not $R$-related (because the lines of the semilinear space do not form triangles, cf. Observation \ref{NoTriangles}), but are $E$-equivalent since we have $E(c,c')$ and $E(c,c'')$, so at least one of $E(c',c'')\wedge S(c'c,'')$ and $E(c',c'')\wedge T(c',c'')$ is realised. This gives us at least two types of $E$-equivalent pairs.

Now we will show that there are at least two types of $E$-inequivalent pairs. By Proposition \ref{PropLotsOfTrans}, we can find pairs of $E$-inequivalent elements with no common $R$-neighbours in $R(a)$ and also pairs of $E$-inequivalent elements with common $R$-neighbours in $R(a)$. Again, we get at least four types of unordered pairs of distinct elements from $S(a)$.
\end{subcase}
\end{case}
\end{proof}
\setcounter{case}{0}
\setcounter{subcase}{0}
\begin{proposition}
	Let $M$ be a primitive homogeneous semilinear 3-graph with simple theory and $R$-diameter 2, and assume that $R$ is a forking relation and $S,T$ are nonforking. Then each element is incident with at least three lines.
\label{atleast3}
\end{proposition}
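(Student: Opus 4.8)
The plan is to rule out the two degenerate possibilities, one line and two lines through a point; the first is immediate and the second carries the bulk of the work.

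First I would dispose of the single-line case. If $a$ were incident with exactly one line, then $R(a)$ would consist of a single $R$-class, so $\{a\}\cup R(a)$ would be an $R$-clique and $R(a)$ would be $R$-complete; this is impossible in a primitive homogeneous $3$-graph by Observation \ref{NotRComplete}. Since lines are infinite (Lemma \ref{LemmaSemilinear}) there is always at least one line through $a$, so from now on I assume, towards a contradiction, that $a$ is incident with exactly two lines $\ell_1,\ell_2$, that is, that $R(a)$ has exactly two $R$-classes $B_1=\ell_1\setminus\{a\}$ and $B_2=\ell_2\setminus\{a\}$.

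The next step sets up the combinatorics. Since $\diam_R(M)=2$, all three predicates are realised in $R(a)$ (Proposition \ref{RDefinesEqRel}), and as the edges inside each $B_i$ are of colour $R$, the $S$- and $T$-edges of $R(a)$ must run between $B_1$ and $B_2$. Thus all predicates are realised on the unique pair of incident lines through $a$, so the hypothesis of Proposition \ref{Intersections2} is met: every line through a point $b\in R(a)$ other than $\ell(a,b)$ meets both $S(a)$ and $T(a)$, each intersection being of size $1$ or infinite by Proposition \ref{Intersections}. Transitivity of $\aut(M/a)$ on each of $S(a)$ and $T(a)$ (immediate from homogeneity, since any two vertices joined to $a$ by the same colour have the same type over $a$) shows that the quantities $k_S:=|R(c)\cap R(a)|$ for $c\in S(a)$ and $k_T:=|R(c)\cap R(a)|$ for $c\in T(a)$ are well defined constants; by Observation \ref{NoTriangles} each is at most $2$, and since $d_R(a,c)=2$ forces a common neighbour, each is at least $1$.

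The crux, and the step I expect to be the main obstacle, is to turn $m=2$ into a contradiction, and I would split on the values of $k_S,k_T\in\{1,2\}$. When an orbit has $k=1$, say each $c\in S(a)$ has a unique $R$-neighbour $b_c\in R(a)$, I would count $\aut(M/a)$-invariant binary relations on the homogeneous $3$-graph $S(a)$: the three relations comparing $b_c$ and $b_{c'}$ by the colour of the edge between them, together with the collinearity relation recording whether $c,c'$ lie on a common line meeting $R(a)$, exactly as in the case $k=1$ of Proposition \ref{Intersections2}. These furnish strictly more than three distinguishable types of pairs, contradicting the fact that $S(a)$ is a $3$-graph with only three $2$-types. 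The genuinely delicate subcase is $k_S=k_T=2$: then every vertex outside a line meets that line in exactly one point, so the points and lines of $M$ form a generalised quadrangle in which each point lies on exactly two lines — a \emph{thin} quadrangle (a grid). This is precisely the situation in which the interpretability obstruction of Macpherson's Theorem \ref{DugaldQuadrangle}, invoked through Observation \ref{Quadrangle}, need not bite, so I cannot simply cite it. Instead I would argue directly that in such a grid all non-collinear pairs lie in a single orbit of the ambient symmetry, while in $M$ the non-$R$-related pairs split into the two distinct colours $S$ and $T$, and then show that no such splitting is compatible with the homogeneity of $M$ by exhibiting an isomorphism between the canonical four-point $R$-rectangles carrying an $S$-diagonal and a $T$-diagonal. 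Either way the assumption of exactly two lines collapses, and with the one-line case already excluded, every point is incident with at least three lines. The two points requiring the most care will be verifying that the type count of the $k=1$ case survives the passage from $m\ge 3$ (where Proposition \ref{Intersections2} runs it) down to $m=2$, and handling the grid subcase without recourse to Theorem \ref{DugaldQuadrangle}; everything else is bookkeeping with the semilinear structure of Lemma \ref{LemmaSemilinear} and Observation \ref{NoTriangles}.
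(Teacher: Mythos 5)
Your skeleton matches the paper's up to the case analysis (Observation \ref{NotRComplete} for the one-line case; the reduction to all three predicates being realised on the unique pair of lines through $a$; Propositions \ref{Intersections} and \ref{Intersections2} to control how the second line $\ell^b$ through each $b\in R(a)$ meets $S(a)$ and $T(a)$), but both of your case arguments have holes, and the first is fatal. In the case $k_S=1$, your count of invariant relations on $S(a)$ exceeds three only if the fourth relation --- collinearity, equivalently $b_c=b_{c'}$ --- is actually \emph{realised}, and that requires some $b\in R(a)$ to have at least two $R$-neighbours in $S(a)$. Proposition \ref{Intersections} explicitly allows $|\ell^b\cap S(a)|=1$ (with $\ell^b\cap T(a)$ infinite), and in that sub-case no two distinct elements of $S(a)$ share their unique $R$-neighbour, so exactly the three relations $\hat R,\hat S,\hat T$ are realised: three mutually exclusive invariant relations on a set with three $2$-types is no contradiction at all. (In Case 1 of Proposition \ref{Intersections2}, which you cite as your model, collinearity is realised only because that proof assumes, for contradiction, that whole lines lie inside a single orbit; that assumption is not available here.) This sub-case is precisely where the paper switches tools, and it is the only place the forking hypotheses and supersimplicity enter: when $k_S=1$ and $|R(b)\cap S(a)|=1$ for all $b$, the map $c\mapsto b_c$ is an $a$-definable bijection between $S(a)$ and $R(a)$, which is impossible because \SU-rank is invariant under definable bijections while $R(a)$ has strictly smaller rank than $S(a)$ ($R$ forks, $S$ does not; this is why the paper flags, in Section \ref{SectHigher}, that this proposition needs the theory to be ranked). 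Your proposal never uses the hypothesis that $R$ forks and $S,T$ do not, and no amount of type-counting can replace it in the bijection sub-case. Note also that the paper deploys its own counting arguments (the $Q/F$ and $W/\hat{P}$ counts on $T(a)$) only after its Claim secures that the relevant collinear pairs exist --- exactly the check your write-up skips.

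Your second case, $k_S=k_T=2$, is a sketch rather than a proof. You are right to be uneasy about citing Observation \ref{Quadrangle}/Theorem \ref{DugaldQuadrangle} here: a quadrangle with two lines per point is a grid, and grids genuinely are interpretable in homogeneous structures (the maximal monochromatic cliques of $K_\omega^R\times K_\omega^S$ form one), so thickness matters --- this is a fair criticism, and it touches the paper's own proof, which invokes Observation \ref{Quadrangle} in exactly this two-line situation. But your replacement does not work as stated. ``All non-collinear pairs lie in a single orbit of the ambient symmetry'' is a statement about the automorphism group of the bare incidence structure, which is larger than $\aut(M)$; there is no a priori conflict between homogeneity of $M$ and the colours $S,T$ refining one orbit of that larger group. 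Likewise, a rectangle whose diagonals are both $S$ and a rectangle whose diagonals are both $T$ are non-isomorphic coloured configurations, so homogeneity imposes nothing on them, while a rectangle carrying one diagonal of each colour admits no colour-preserving map swapping its diagonals; in neither situation do you obtain two isomorphic finite substructures of $M$ that cannot be matched by an automorphism. Closing this case requires an actual argument --- either a careful count of invariant relations on $S(a)$ when every element of $R^2(a)$ meets both lines, or the interpretation of some structure that is provably forbidden in a simple homogeneous $3$-graph --- and the proposal supplies neither.
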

\begin{proof}
Note first that if in any pair of lines through $a$ only two of the predicates in the language are realised, then we get the result automatically because $R,S,T$ are realised in $R(a)$ by the diameter 2 hypothesis. So we may assume that in the structure induced by $M$ on a pair of lines through $a$ all predicates are realised.

By Observation \ref{NotRComplete}, each vertex belongs to at least two lines.

If $R(a)$ has exactly two imprimitivity blocks, then by homogeneity for any $b\in R(a)$ the set $R(b)$ consists of two infinite $R$-cliques as well, one of which is $\ell(a,b)\setminus\{b\}$. Therefore, $R(b)\cap R^2(a)$ is an infinite $R$-clique, and by Proposition \ref{Intersections2}, $R(b)\cap R^2(a)$ meets both $S(a)$ and $T(a)$, as by the diameter 2 hypothesis both $S$ and $T$ are realised in $R(a)$.

\begin{claim}
	For all $b\in R(a)$, $\ell^b\cap S(a)$ and $\ell^b\cap T(a)$ are infinite.
\end{claim}
\begin{proof}
Suppose that each vertex is incident with two lines. Then for all $b\in R(a)$, there is a unique line through $b$ that meets $R^2(a)$; let $\ell^b$ denote that line, for each $b\in R(a)$.

Proposition \ref{Intersections} tells us that either $\ell^b\cap S(a)$ and $\ell^b\cap T(a)$ are both infinite, or one of them is of size 1 and the other is infinite. As this line is uniquely determined for each $b\in R(a)$, if we had, say $|\ell^b\cap S(a)|=1$ and $|R(c)\cap R(a)|=1$ for all $c\in S(a)$, then this would establish a definable bijection between $R(a)$ and $S(a)$. This is impossible as the rank of $R(a)$ is lower than that of $S(a)$. 

Therefore, in orded to establish the claim, we need to eliminate the case where $|\ell^b\cap S(a)|=1$ and $|R(c)\cap R(a)|=2$. 

By Observation \ref{Quadrangle}, if these conditions are satisfied then $|R(d)\cap R(a)|=1$ for all $d\in T(a)$. Given any $c\in S(a)$, the set $R(c)$ consists of two infinite $R$-cliques by homogeneity; one vertex from each of these two cliques, belongs to $R(a)$. 

Therefore, for any $c\in S(a)$, all relations in the language are realised in $R(c)\cap T(a)$. Define $Q(d,d')$ on $T(a)$ to hold if there exists $c\in S(a)$ such that $R(d,c)\wedge R(d',c)$ (see figure below).
\[
\includegraphics[scale=0.8]{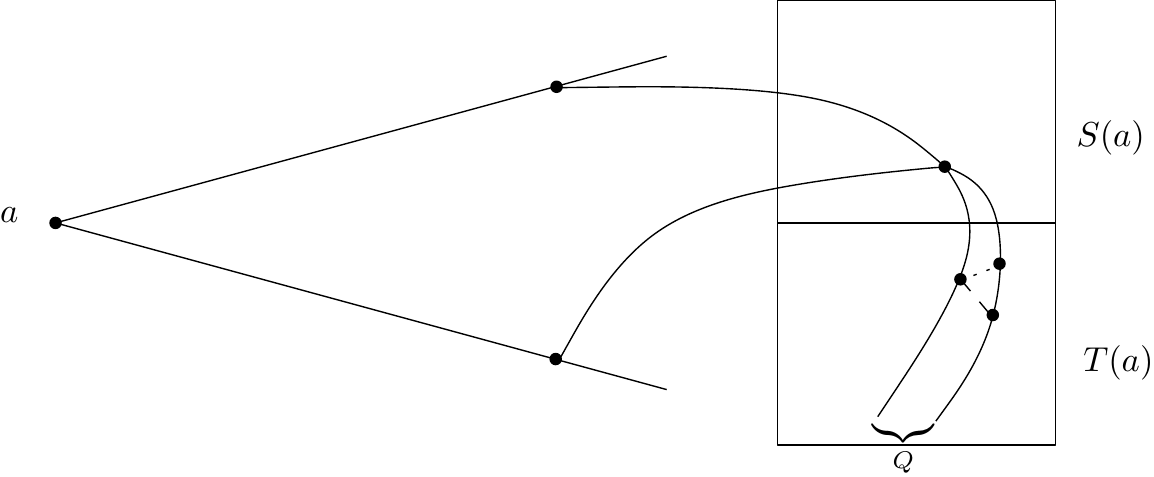}
\]

We claim that $Q\wedge R$, $Q\wedge S$, $Q\wedge T$ are realised in $T(a)$. The reason is that both lines through $c$ are almost entirely contained in $T(a)$: $c$ and the two vertices in $R(c)\cap R(a)$ are the only elements of $R(c)$ not in $T(a)$, since any other element of $R(c)\cap S(a)$ would be forced to be an element of $R(b_1)$ or of $R(b_2)$, contradicting $R(b)\cap S(a)=1$ for all $b\in R(a)$. Our claim follows from the transitivity of $\aut(M/c)$ on $R(c)$ and Theorem \ref{ThmNeumann}. 

Now, since $S$ does not divide over $\varnothing$ in $M$ we must have the triangle $SSR$ in $\age(M)$ (otherwise, $S$ would divide, as witnessed by an $\varnothing$-indiscernible sequence $(e_i)_{i\in\omega}$ with $R(e_0,e_1)$). Notice that we have an additional $a$-definable equivalence relation $F$ on $T(a)$ with two classes, $F(d,d')$ holds if $R(d)$ and $R(d')$ meet the same line through $a$. If $Q$ and $F$ were satisfied simultaneously by a pair from $T(a)$ then $F\wedge R$, $F\wedge S$, $F\wedge T$ (realised because $R,S,T$ are realised in the union of any two incident lines), and $\neg F$ already give us too many relations on $T(a)$. And if they are not simultaneously realised by any pair, then any $F$-equivalent pair is $Q$-inequivalent, so this together with the three relations from the preceding paragraph give us four types of unordered pairs of distinct elements from $T(a)$.
\end{proof}

By Observation \ref{Quadrangle}, we may also assume that $|R(c)\cap R(a)|=1$ for all $c\in T(a)$.

Consider the relation $W(x, y)$ on $T(a)$ that holds if there exists a $b\in R(a)$ such that $R(b,x)\wedge R(y,b)$. This is clearly a symmetric and reflexive relation, and if $W(x,y)$ and $W(y,z)$, then there exist $b,b'\in R(a)$ such that $R(x,b)\wedge R(y,b)$ and $R(y,b')\wedge R(z,b')$. The hypothesis that $|R(c)\cap R(a)|=1$ for all $c\in R^2(a)$ implies $b=b'$, as they are both $R$-related to $y$ and in $R(a)$. Therefore, $x,y,z$ are all collinear with $b$ and $W(x,z)$. Given a vertex $c\in T(a)$ denote by $b_c$ the unique element of $R(c)\cap R(a)$, and define $\hat P(c,c')$ on $T(a)$ if $P(b_c,b_{c'})$ holds for $P\in\{R,S,T\}$. This gives us at least four types of unordered pairs of distinct elements in $T(a)$: $W$-equivalent and three types of $W$-inequivalent pairs (corresponding to $\hat R,\hat S,\hat T$).
\end{proof}

\begin{observation}
	If $M$ is a primitive homogeneous simple semilinear 3-graph with $\diam_R(M)=2$ in which any point $a$ is incident with at least three lines, then it is not possible for all $c\in R^2(a)$ to satisfy $|R(c)\cap R(a)|=1$.
	\label{EachTouchesOne}
\end{observation}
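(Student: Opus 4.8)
The plan is to argue by contradiction, assuming that $|R(c)\cap R(a)|=1$ for every $c\in R^2(a)$. Writing $b_c$ for the unique element of $R(c)\cap R(a)$, this gives an $a$-definable map $\pi\colon R^2(a)\to R(a)$, $c\mapsto b_c$. Since $R^2(a)=S(a)\cup T(a)$ and each of $S(a),T(a)$ is a homogeneous $3$-graph (Proposition~\ref{PropHomNeigh}), I would run the argument inside one of them, say $T(a)$, and reach a contradiction by producing four pairwise disjoint, $\aut(M/a)$-invariant, nonempty classes of unordered pairs. This suffices because $M$ is homogeneous and both endpoints of such a pair lie in $T(a)$: the isomorphism type of the configuration $\{a,x,y\}$ is then determined by the single colour of $\{x,y\}$, so $\aut(M/a)$ has at most three orbits on unordered pairs from $T(a)$, and four disjoint invariant classes cannot all be nonempty.

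The four classes I would use are $W$, defined by $W(x,y)$ iff $b_x=b_y$ (the fibres of $\pi$), together with $\hat R,\hat S,\hat T$, where for $b_x\neq b_y$ the pair is placed in $\hat P$ according to which of $R,S,T$ holds between $b_x$ and $b_y$ in $R(a)$. These conditions are $a$-invariant, mutually exclusive and exhaustive, so the whole problem reduces to realising each of them in $T(a)$. The first key step is to show that $\pi|_{T(a)}$ is onto $R(a)$: since $\diam_R(M)=2$ each element of $T(a)$ has an $R$-neighbour in $R(a)$, so $R(b)\cap T(a)\neq\varnothing$ for some $b\in R(a)$, hence (by transitivity of $\aut(M/a)$ on $R(a)$) for every $b$; and the hypothesis $|R(c)\cap R(a)|=1$ forces $b_c=b$ for each such $c$, giving surjectivity. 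As $R(a)$ realises all three relations (because $\diam_R(M)=2$) and each line supplies two distinct collinear points, I can pick pairs $b_1,b_2\in R(a)$ of each colour and pull them back through $\pi$ to obtain pairs in $T(a)$ of types $\hat R$, $\hat S$ and $\hat T$.

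It remains to realise $W$, i.e.\ to find a fibre of $\pi|_{T(a)}$ of size at least two. Here I would use rank: if $\pi|_{T(a)}$ were injective it would be an $a$-definable bijection between $T(a)$ and $R(a)$, which is impossible since the $\SU$-rank is invariant under definable bijections while $\SU(T(a))=2$ (as $T$ is nonforking) and $\SU(R(a))=1$ (Observation~\ref{Rank1}). Thus some fibre has at least two elements and $W$ is realised, completing the contradiction. The main obstacle is precisely this bookkeeping --- arranging that $\pi|_{T(a)}$ is surjective but not injective and that its image meets pairs of all three colours. The useful observation is that the rank inequality $\SU(T(a))=2>1=\SU(R(a))$ disposes of non-injectivity at once and lets me avoid splitting into the cases where three, respectively only two, predicates occur between a pair of lines through $a$: once surjectivity is extracted from transitivity and the diameter hypothesis, both cases are handled uniformly.
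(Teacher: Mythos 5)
Your proof is correct in the context in which the paper applies this observation, and its skeleton --- producing more pairwise disjoint, nonempty, $\aut(M/a)$-invariant classes of unordered pairs inside a neighbourhood than the three $2$-types that quantifier elimination permits --- is exactly the paper's counting technique; in fact your four classes $W,\hat R,\hat S,\hat T$ are literally the ones the paper uses in the final paragraph of the proof of Proposition \ref{atleast3}. Where you genuinely diverge is in how the fibre class $W$ is realised. The paper's own proof of Observation \ref{EachTouchesOne} does this combinatorially: under the assumption $|R(c)\cap R(a)|=1$, the sets $R(b)\cap R^2(a)$ for $b\in R(a)$ partition $R^2(a)$, and since every point lies on at least three lines, each fibre contains at least two infinite lines; after invoking Proposition \ref{Intersections} to decide whether to work in $S(a)$ or in $T(a)$, the fibres there are infinite, so the fibre-equivalence is realised for free (the paper even splits it into collinear and non-collinear pairs, obtaining five invariant classes). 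You instead realise $W$ by rank: an injective $\pi|_{T(a)}$ would be an $a$-definable bijection between sets of different $\SU$-rank. This buys a shorter proof with no case distinction between $S(a)$ and $T(a)$ --- and, notably, you never use the ``at least three lines'' hypothesis at all --- but it costs hypotheses that are not in the statement of the observation: supersimplicity (Koponen's Theorem \ref{Koponen} supplies this), that $T$ is nonforking, that $R$ is forking, and that the rank is exactly $2$ (your $\SU(T(a))=2$, $\SU(R(a))=1$). These are the chapter's standing assumptions for the rank-$2$ analysis, so your argument is sound where the observation is proved and first used.

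One caution. The observation is deliberately phrased for semilinear $3$-graphs, with no rank hypothesis, because Section \ref{SectHigher} re-uses it verbatim for primitive graphs of arbitrary finite rank at least $2$; the paper's purely combinatorial proof transfers unchanged, whereas your exact rank values do not. Your argument survives the generalisation after a small rewrite: with $T$ nonforking and $R$ forking one has $\SU(T(a))=\SU(M)>\SU(R(a))$, since a forking extension strictly decreases ordinal-valued $\SU$-rank, and this inequality is all the bijection argument needs. So there is no gap in the intended application, but if your proof replaced the paper's, Observation \ref{EachTouchesOne} would have to be added to the list of results (currently Propositions \ref{atleast3} and \ref{MataCasiTodos}) that the paper flags as invoking supersimplicity, with the specific value of the rank irrelevant.
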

\begin{proof}
	In this case, the sets $R(b)\cap R^2(a)$, $b\in R(a)$, partition the set of maximal rank $R^2(a)$ into infinitely many infinite parts, each consisting of at least 2 infinite $R$-cliques. By Proposition \ref{Intersections}, at least one of $S(a)$ and $T(a)$ is partitioned into infinitely many infinite $R$-cliques by the family of sets $\ell\setminus\{b\}$, where $b\in R(a)$ and $\ell$ is a line through $b$ not containing $a$. We may assume it is $S(a)$.

	Define the relation $Q(c,c')$ on $S(a)$ to hold if there exists $b\in R(a)$ such that $R(b,c)\wedge R(b,c')$ holds. We claim that $Q$ is an equivalence relation. It is clearly symmetric and reflexive. Now suppose $Q(x,y)\wedge Q(y,z)$. Then there exist $b,b'\in R(a)$ such that $R(b,x)\wedge R(b,y)$ and $R(b',y)\wedge R(b',z)$, so $b=b'$ since $|R(y)\cap R(a)|=1$. Therefore, $Q(x,z)$ holds.

	A $Q$-equivalence class consists of a finite number $m>1$ of $R$-cliques, because we assume that at least three lines are incident with $a$. Define the binary relations $\hat P(c,c')$ to hold if $\neg Q(c,c')$ and $P(b,b')$, where $\{b\}=R(c)\cap R(a)$, $\{b'\}=R(c')\cap R(a)$ and $P\in\{R,S,T\}$. This gives 3 types of $Q$-inequivalent pairs, plus at least two more types of $Q$-equivalent pairs (collinear and not collinear), so we have too many types of unordered pairs of elements from $S(a)$.
\end{proof}

By Observation \ref{NoInfiniteCliques}, not all $R$-free structures can be embedded into $R(a)$. If $R(a)$ is a stable 3-graph, then it must be of one of the forms 6-11 in Theorem \ref{Lachlan3graphs}, as all the others are finite. Observation \ref{NoInfiniteCliques} implies that only one of $m,n,p$ is $\omega$ (and the corresponding superindex is $R$).

The sets of relations realised with endpoints in different classes of an equivalence relation partition the set of types of pairs of classes in a homogeneous binary structure. In our case, there can be no more than 2 types of pairs of $R$-classes in $R(a)$. This is implicitly used in the proof of our next result:

\begin{proposition}
	There are no primitive simple homogeneous 3-graphs of $R$-diameter 2 such that all relations are realised in the union of any two maximal $R$-cliques in $R(a)$.
\label{MataCasiTodos}
\end{proposition}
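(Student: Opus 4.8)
The plan is to run a counting argument inside the homogeneous neighbourhoods $S(a)$ and $T(a)$, exploiting the fact that a homogeneous $3$-graph has at most three complete $2$-types of distinct elements. By Proposition \ref{PropHomNeigh} each of $S(a),T(a)$ is homogeneous in $\{R,S,T\}$, and by homogeneity of $M$ any two unordered pairs of elements of $S(a)$ that induce, together with $a$, isomorphic $3$-graphs lie in a single orbit of $\aut(M/a)$. Since such a pair has only three possible quantifier-free types over $a$ (the edge $cc'$ is $R$, $S$, or $T$; both $ac$ and $ac'$ are $S$), there are at most three $\aut(M/a)$-orbits of unordered pairs of distinct elements of $S(a)$, and likewise for $T(a)$. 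The whole proof reduces to exhibiting, in one of these two neighbourhoods, four pairwise disjoint nonempty $\aut(M/a)$-invariant families of such pairs, which forces at least four orbits and is the desired contradiction.

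First I would fix notation. By Lemma \ref{LemmaSemilinear} the structure is semilinear with some finite number $m$ of lines through each point, and by Proposition \ref{atleast3} we have $m\geq 3$. For $c\in S(a)$ write $k_S=|R(c)\cap R(a)|$ and for $d\in T(a)$ write $k_T=|R(d)\cap R(a)|$; these are $\aut(M/a)$-invariant, and by Observation \ref{NoTriangles} each counts the lines through $a$ that the corresponding $R$-neighbourhood meets, so $1\leq k_S,k_T\leq m$. Observation \ref{EachTouchesOne} forbids $k_S=k_T=1$, and Observation \ref{Quadrangle} forbids $k_S=k_T=m$. After possibly exchanging $S$ and $T$, I may therefore work in a neighbourhood, say $S(a)$, with $k_S\geq 2$; and if that neighbourhood has $k_S=m$, then by Observation \ref{Quadrangle} the other one has $k<m$, so up to this swap I reduce to the generic situation $2\leq k<m$, leaving only the boundary value $k=1$ (possible for the non-chosen neighbourhood) to be treated separately.

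In the generic case $2\leq k<m$ I would use the same invariant relations as in Proposition \ref{Intersections2}: the equivalence relation $E(c,c')$ holding when $R(c)$ and $R(c')$ meet exactly the same lines through $a$; the collinearity relation recording whether $c,c'$ share a line through some point of $R(a)$; and, for $E$-inequivalent pairs, the overlap count $\#(c,c')$ of lines met in common together with the relation ``$R(c)\cap R(c')\cap R(a)\neq\varnothing$''. By Observation \ref{OneOrbit} there is a single orbit of $k$-sets of lines through $a$, so $E$ has $\binom{m}{k}>1$ classes and every overlap pattern is realised; by Proposition \ref{PropLotsOfTrans} I can moreover realise, for a fixed overlap, both a common $R(a)$-neighbour and none, while Propositions \ref{Intersections} and \ref{Intersections2} guarantee that the relevant line-intersections are infinite so that collinear $E$-equivalent pairs actually occur. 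Assembling these, among $E$-equivalent pairs one finds both a collinear and a non-collinear type, and among $E$-inequivalent pairs at least two further types distinguished by $\#$ or by the common-neighbour relation, giving four invariant families; as in \ref{Intersections2} this may require splitting on whether $m-k\geq 2$ or $m-k=1$. The residual value $k=1$ is handled by a parallel count: each $c$ then has a unique $b_c\in R(a)$, the invariant relations $\hat R,\hat S,\hat T$ given by $P(b_c,b_{c'})$ realise three types, and the relation ``$b_c=b_{c'}$'' (nonempty since by \ref{Intersections2} the $m-1\geq 2$ lines through $b$ other than $\ell(a,b)$ all meet $S(a)$) gives a fourth.

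The main obstacle I foresee is not the counting bound itself but the bookkeeping of the case analysis on the pair $(k_S,k_T)$ and, within each case, the verification that the four exhibited families are simultaneously nonempty and genuinely $\aut(M/a)$-inequivalent. This is precisely where Observation \ref{OneOrbit} and Proposition \ref{PropLotsOfTrans} do the essential work, certifying that the required transversal and overlap configurations really occur in $M$ rather than being accidentally empty; the boundary subcase $m-k=1$, where few lines are available to carry the witnesses, is the most delicate and will likely need the sharpest use of these two results.
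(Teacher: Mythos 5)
You follow the same strategy as the paper: bound the number of $\aut(M/a)$-orbits on unordered pairs from $S(a)$ (or $T(a)$) by three, then exhibit four disjoint nonempty invariant families of pairs. Your boundary case $k=1$ is essentially the paper's Case I (same intersection point; same line at different points; $\hat S$; $\hat T$) and is correct. The gap is in your generic case $2\leq k<m$: the nonemptiness of your first family, ``$E$-equivalent and collinear'', is not justified by the results you cite, and in fact can fail. Proposition \ref{Intersections} only says that a line $\ell$ through $b\in R(a)$ with $\ell\neq\ell(a,b)$ meets $S(a)$ in either exactly one or infinitely many points; it is a live possibility that \emph{every} such line meets $S(a)$ in exactly one point, with all the infinite intersections lying in $T(a)$. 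In that configuration no two distinct points of $S(a)$ are collinear through any point of $R(a)$ (a line witnessing collinearity would meet $S(a)$ twice), so your first family is empty and your count produces only three types --- no contradiction. This is not a negligible corner: it is exactly the ``perfect matching'' configuration that the paper isolates in its subcase $m-k=1$ (the structure on a pair of lines through $a$ is a matching in one colour, and $R(c)\cap R(a)$ is a transversal clique of the matching colour), and it is consistent with Observations \ref{NoTriangles}, \ref{OneOrbit}, \ref{Quadrangle}, \ref{EachTouchesOne} and Propositions \ref{Intersections}, \ref{Intersections2}, \ref{PropLotsOfTrans}. Switching to $T(a)$ does not repair this, since there $k_T$ may equal $1$ or $m$, where your generic machinery does not apply.

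The paper's proof is organised so that it never needs your first family. When $m-k\geq 2$ it uses $E$ itself as one family and extracts the other three from $E$-\emph{in}equivalent pairs: $P_2$ (pairs whose neighbourhoods meet $k+2$ lines in total) and the two refinements of $P_1$ by the common-neighbour relation $Q(c,c')$ given by $R(c)\cap R(c')\cap R(a)\neq\varnothing$, both realised via Proposition \ref{PropLotsOfTrans}. Only when $m-k=1$ (so $P_2$ is unavailable) does it split $E$-equivalent pairs by $Q$, and there it must confront the matching configuration head-on: if $E(c,c')$ always implies $R(c)\cap R(c')\cap R(a)=\varnothing$, then the matching colour, say $T$, is algebraic on $R(a)$, and $c\mapsto R(c)\cap R(a)$ is a definable bijection between $S(a)$ and the set of transversal $K_{m-1}^T$'s in $R(a)$; this contradicts the invariance of \SU-rank under definable bijections, since algebraicity of $T$ forces the set of such cliques to have the rank of $R(a)$, which is strictly smaller than that of $S(a)$. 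This rank argument is one of the only two places in the chapter where supersimplicity is genuinely used (as the paper notes in Section \ref{SectHigher}), and your proposal contains no substitute for it; without one, or some other way of excluding the matching configuration, your proof cannot be completed.
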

\begin{proof}
	By Observations \ref{Quadrangle} and \ref{EachTouchesOne}, we have two cases to analyse:
\begin{case}
	For some $c\in R^2(a)$, $|R(c)\cap R(a)|=1$. By homogeneity, this is true for all the elements of the orbit of $c$ under the action or $\aut(M/a)$. Without loss of generality, assume $S(a,c)$. We can define $E(x,y)$ on $S(a)$ if $R(x)$ and $R(y)$ meet the same line through $a$, and refine this equivalence relation with $E'(x,y)$ if they meet the same line at the same point. These two are equivalence relations, and $E'(x,y)\rightarrow E(x,y)$. For $E$-inequivalent pairs, since both $S$ and $T$ are realised in $R(a)$, we can define $\hat S(x,y)$ and $\hat T(x,y)$ if $S$ (respectively, $T$) holds between the elements of the intersections $R(x)\cap R(a)$ and $R(y)\cap R(a)$. Notice that both $\hat S$ and $\hat T$ are realised, as any element in $R(a)$ has a neighbour in $S(a)$. We have too many 2-types of distinct elements over $a$, since $E'(x,y)\wedge x\neq y, E(x,y)\wedge \neg E'(x,y), \hat S(x,y)\wedge\neg E(x,y), \hat T(x,y)\wedge\neg E(x,y)$ are all realised.\label{MCT1}
\end{case}
\begin{case}
	For no element $b$ of $R^2(a)$ does $|R(b)\cap R(a)|=1$ hold. Then, without loss of generality, the elements of $S(a)$ satisfy $|R(b)\cap R(a)|=k$, where $1<k<m$. Define $E(c,c')$ on $S(a)$ if $R(c)$ and $R(c')$ meet the same lines through $a$. There are three subcases to analyse:
\begin{subcase}
	If $m-k\geq3$, then define $P_i(c,c')$ on $S(a)$ for $0\leq i\leq\min\{k,m-k\}$ to hold if $R(c)\cup R(c')$ meet a total of $k+i$ lines through $a$. The $P_i$ are invariant under $\aut(M/a)$ and mutually exclusive; therefore all cases with $\min\{k,m-k\}\geq3$ are impossible, as we would get at least four types of pairs of distinct elements in $S(a)$. This leaves us with only one more possible case, namely $m-k\geq3, k=2$, since the case $m-k\geq3, k=1$ is covered in Case \ref{MCT1}

	Suppose then that $m-k\geq3$ and $k=2$. We claim that there are two types of pairs satisfying $P_1$. Let $\{b,b'\}=R(c)\cap R(a)$ for some $c\in S(a)$, and take any line $\ell$ through $a$ not including $b$ or $b'$. By homogeneity, there exists a $b''\in\ell$ satisfying the same relation with $b'$ as $b$. Therefore, there exists $c'\in S(a)$ satisfying $P_1(c,c')$ and the relation $Q(c,c')$ defined by $\exists x(R(a,x)\wedge R(c,x)\wedge R(c',x))$. Using Proposition \ref{Intersections2}, we can find pairs $d,d'$ in $S(a)$ satisfying $P_1(d,d')$ and $R(d)\cap R(d')\cap R(a)=\varnothing$. Therefore, we have at least four types of pairs of distinct elements from $S(a)$, as the relations $E$, $P_1\wedge Q$, $P_1\wedge\neg Q$, $P_2$ are all realised.
\label{CasedefsP}
\end{subcase}
\begin{subcase}
	Suppose $m-k=1$. By Proposition \ref{TransOnLines}, there exist unordered pairs of distinct elements satisfying $E$ in $S(a)$, and $P_1$ (defined as in Case II\ref{CasedefsP}) is realised by homogeneity and Observation \ref{OneOrbit}. 

Notice that there are two types of pairs satisfying $P_1(c,c')$, namely those with $R(c)\cap R(c')\cap R(a)=\varnothing$, and those with $R(c)\cap R(c')\cap R(a)\neq\varnothing$. Both are realised by Proposition \ref{PropLotsOfTrans}.

	This leaves us with two possibilities: for distinct $c,c'\in S(a)$, either $E(c,c')$ implies $R(c)\cap R(c')\cap R(a)=\varnothing$ (this can happen if the structure on any pair of lines through $a$ is that of a perfect matching and $R(c)$ picks a transversal clique of the matching colour), or we can have $E(c,c')\wedge R(c)\cap R(c')\cap R(a)\neq\varnothing$. In the latter case, we have found four types of pairs of unordered distinct elements from $S(a)$.

	Therefore, assume that $E(c,c')$ implies $R(c)\cap R(c')\cap R(a)=\varnothing$ for all $c\neq c'$ in $S(a)$. We claim that this can only happen in the situation described before, namely if the structure on two lines is that of a matching and for all pairs $b,b'\in R(c)\cap R(a)$, the edge $bb'$ is of the colour of the matching predicate, say $T$. This claim follows from the argument of Proposition \ref{TransOnLines}: if for some edge $bb'$ in $R(c)\cap R(a)$ we were able to find some $b''$ collinear with $b'$ such that $bb''$ and $bb'$ are of colour $T$, then by homogeneity we could find a $c'$ $E$-equivalent to $c$ with $b\in R(c)\cap R(c')\cap R(a)$. 

	It follows that in the situation we are considering $T$ is an algebraic predicate in $R(a)$ and the set of $K_{m-1}^T$ in $R(a)$ is in definable bijection with $S(a)$ by the function taking a $T$-clique $\bar c$ to the unique element of $\bigcap\{R(c):c\in\bar c\}\cap S(a)$. This is impossible, since the rank of $S(a)$ is greater than that of the set of $T$-cliques in $R(a)$, as $T$ is algebraic.
\end{subcase}
\begin{subcase}
	If $m-k=2$, then the relations $E, P_1, P_2$ defined in Case \ref{CasedefsP} are realised in $S(a)$. As in Case \ref{CasedefsP}, there are two types of pairs $c,c'$ satisfying $P_1$: some with $R(c)\cap R(c')\cap R(a)\neq\varnothing$ and some with $R(c)\cap R(c')\cap R(a)=\varnothing$, by the same argument as in Case {CasedefsP}.
\end{subcase}
\end{case}
\end{proof}
\setcounter{case}{0}
\setcounter{subcase}{0}

Proposition \ref{MataCasiTodos} eliminates all cases where $R(a)$ is unstable, as in this case for some infinite $R$-cliques $A,B$ in $R(a)$ the induced structure is isomorphic to the Random Bipartite Graph. But Proposition \ref{MataCasiTodos} also covers some stable cases (for example, if $S$ or $T$ is a perfect matching on the union of the two $R$-cliques). The only cases that remain are those in which $R(a)$ is stable and the induced structure on any pair of $R$-cliques in $R(a)$ is isomorphic to a complete bipartite graph, that is, those cases in which for all pairs of lines $\ell_1,\ell_2$ through $a$ and all $(b_1,b_2),(c_1,c_2)\in\ell_1\times\ell_2$, $\tp(b_1b_2)=\tp(c_1c_2)$. In all of these cases, $R(a)$ is stable. 

\begin{proposition}
	Let $M$ be a homogeneous primitive semilinear 3-graph of $R$-diameter 2 with finitely many lines through each point. If all types of pairs are realised in $R(a)$, but not in any pair of lines through $a$, then it is not possible for any $c\in R^2(a)$ to satisfy $|R(c)\cap R(a)|>3$.
	\label{SmallIntersection}
\end{proposition}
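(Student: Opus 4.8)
The plan is to argue by contradiction and, as in Propositions \ref{Intersections2} and \ref{MataCasiTodos}, to count quantifier-free $2$-types of pairs of vertices inside a neighbourhood. By Proposition \ref{PropHomNeigh} each of $S(a)$ and $T(a)$ is a homogeneous $3$-graph, hence transitive and admitting at most three $2$-types of distinct unordered pairs (the colours $R,S,T$); producing a fourth invariant, realised, mutually exclusive relation on $S(a)$ (or $T(a)$) will be the contradiction. Since $\diam_R(M)=2$ we have $R^2(a)=S(a)\cup T(a)$, so it suffices to run the argument separately for $c\in S(a)$ and for $c\in T(a)$; I describe the case $c\in S(a)$.

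First I would pin down $R(a)$. By Lemma \ref{LemmaSemilinear} and Proposition \ref{RDefinesEqRel}, $R$ is an equivalence relation on $R(a)$ with finitely many infinite classes, and by hypothesis all three colours occur in $R(a)$ but no pair of lines through $a$ realises all of them. Lachlan's Theorem \ref{Lachlan3graphs} together with Observation \ref{NoInfiniteCliques} then forces $R(a)\cong K_m^i[K_n^j[K_\omega^R]]$ with $\{i,j\}=\{S,T\}$ and $m,n\geq 2$ (otherwise one of $S,T$ would be absent from $R(a)$); the lines through $a$ are the $mn$ maximal $R$-cliques, and the quotient $R(a)/R\cong K_m^i[K_n^j]$ carries the induced $S,T$-colouring of pairs of lines.

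Next I would translate $|R(c)\cap R(a)|$ into line data. By Observation \ref{NoTriangles} each line through $a$ meets $R(c)$ in at most one point, so $k:=|R(c)\cap R(a)|$ is exactly the number of lines through $a$ met by $R(c)$, and $R(c)\cap R(a)$ is an $R$-free transversal to those $k$ lines whose $S,T$-structure is read off from $K_m^i[K_n^j]$. The map $a\mapsto a,\,c\mapsto c'$ between $S$-edges is a local isomorphism, so by homogeneity $\aut(M/a)$ is transitive on $S(a)$; hence $k$ and the isomorphism type of the configuration $R(c)\cap R(a)$ are constant over $S(a)$. Assuming $k\ge 4$, I would then build the contradicting relations on $S(a)$ from: the coincidence equivalence relation $E$ (``$R(c)$ and $R(c')$ meet the same lines through $a$''), its refinement $E'$ (``\dots at the same points''), the overlap invariants such as the number of lines met by $R(c)\cup R(c')$ and $|R(c)\cap R(c')\cap R(a)|$, and the colours $\hat S,\hat T$ that the quotient induces between the blocks met by $c$ and by $c'$. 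The essential point is that, because the present hypothesis is the negation of the hypothesis of Observation \ref{OneOrbit}, there is more than one orbit of $k$-sets of lines over $a$, so the $\hat S$ versus $\hat T$ distinction between configurations is genuinely available; and Observation \ref{TransOnLines} with Proposition \ref{PropLotsOfTrans} (via Theorem \ref{ThmNeumann}) lets me slide transversals to realise pairs with prescribed overlap patterns---disjoint line-sets, line-sets meeting in a point, and line-sets sharing lines but no point. With $k\ge4$ there is enough room in the configuration for four of these to be realised and pairwise exclusive.

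The main obstacle will be the last step's bookkeeping. Exactly which overlap and colour patterns are realisable depends on the internal $\hat S/\hat T$-pattern of the $k$ lines and on how they are distributed among the $m$ blocks of $K_m^i[K_n^j]$, so a contradiction requires a case analysis on $k$ and on this block distribution---together with Observation \ref{Quadrangle} to exclude $R(c)$ meeting every line---mirroring the subcase structure of Proposition \ref{MataCasiTodos}, the new leverage being precisely the extra colour distinction the present hypothesis provides. Verifying that in every such case at least four inequivalent $2$-types of pairs from $S(a)$ (respectively $T(a)$) appear is where the real work lies, and it is what forces $k\le 3$.
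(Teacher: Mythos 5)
Your outline follows the paper's general strategy (contradiction by exhibiting too many invariant, mutually exclusive relations on pairs from $S(a)$), but it stops exactly where the proof has to be closed: you list candidate invariants ($E$, $E'$, overlap counts, $\hat S$/$\hat T$) and then defer ``the real work'' to an unexecuted case analysis on $k$, on cross-colour patterns, and on block distributions. That case analysis is not only missing --- it is unnecessary, and the step that makes it unnecessary is the one idea your proposal lacks. The hypothesis that no pair of lines through $a$ realises all types means the cross-colour between two lines is constant, so \emph{any two transversals to the same set of lines are isomorphic over $a$}. Fix $c\in S(a)$ with $A=R(c)\cap R(a)$ transversal to $k>3$ lines (Observation \ref{NoTriangles}). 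For each $i\in\{0,\ldots,k-1\}$ choose a transversal $B$ to the same $k$ lines with $|A\cap B|=i$ (possible since lines are infinite); the line-by-line bijection $A\to B$ fixing $a$ is an isomorphism, so it extends to $\sigma\in\aut(M/a)$, which fixes each of the $k$ lines setwise and hence carries $c$ to some $c'\in S(a)$ with $E(c,c')$ and $|R(c)\cap R(c')\cap R(a)|=i$. Thus the $k\geq 4$ relations $P_i(c,c')\equiv E(c,c')\wedge|R(c)\cap R(c')\cap R(a)|=i$ are all realised, invariant over $a$, and mutually exclusive, while $S(a)$ (a homogeneous 3-graph, Proposition \ref{PropHomNeigh}) admits at most three 2-types of distinct pairs. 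This single observation kills every configuration at once; no bookkeeping over colour patterns is needed.

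A second, independent problem: your structural reduction is incorrect. The hypotheses of Proposition \ref{SmallIntersection} do not force $R(a)\cong K_m^i[K_n^j[K_\omega^R]]$. By Lachlan's Theorem \ref{Lachlan3graphs} and Observation \ref{NoInfiniteCliques}, $R(a)$ could equally be $P^R[K_\omega^R]$ or $Q^R[K_\omega^R]$: in both, all three colours occur in $R(a)$, yet every pair of lines carries a single cross-colour (the pentagon, respectively $K_3\times K_3$, structure lives on the quotient of lines), so both satisfy your hypotheses while their line-quotients are not of the form $K_m^i[K_n^j]$. Your plan, which routes everything through that quotient description, would therefore omit cases even if the deferred analysis were carried out; the argument above is indifferent to the isomorphism type of $R(a)$ and covers these cases automatically.
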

\begin{proof}
	\begin{claim} 
		If $\tp(ac)=\tp(ac')$, then $\tp(R(c)\cap R(a))=\tp(R(c')\cap R(a))$.
		\label{Claim1}
	\end{claim}
		\begin{proof}
			By homogeneity, there exists an automorphism $\sigma\in\aut(M/a)$ taking $c\mapsto c'$; this automorphism takes $R(c)\cap R(a)$ to $R(c')\cap R(a)$. 
		\end{proof}
	\begin{claim}
		Under the hypotheses of Proposition \ref{SmallIntersection}, the isomorphism type of $R(c)\cap R(a)$ for any $c\in R^2(a)$ depends only on the set of lines through $a$ that $R(c)$ meets.
		\label{Claim2}
	\end{claim}
		\begin{proof}
			By Observation \ref{NoTriangles}, the set $R(c)\cap R(a)$ is transversal to a set of $k$ lines through $a$, and by the hypotheses of Proposition \ref{SmallIntersection}, all transversals to the same set of $k$ lines are isomorphic.
		\end{proof}

Now suppose that for some $c\in S(a)$ we have $|R(c)\cap R(a)|>3$. By Claim \ref{Claim1}, the intersections of the $R$-neighbourhood of any two elements of $S(a)$ with $R(a)$ are isomorphic; let $E$ be the (not necessarily proper) equivalence relation on $S(a)$ that holds for elements that meet the same set of lines through $a$. Claim \ref{Claim2} says that if $A=R(c)\cap R(a)$ for some $c\in S(a)$ and we take any other set $B$ transversal to the same set of $k>3$ lines then there exists an automorphism taking $A$ to $B$ over $a$ that moves $c$ to an $E$-equivalent element of $S(a)$. Therefore, the $a$-invariant relations $P_i(c,c')$ holding if $E(c,c')\wedge |R(c)\cap R(a)|=i$ for $i\in\{0,\ldots,k-1\}$ are all realised. As $k\geq4$, this gives us too many invariant relations on pairs over $a$. This completes the proof of Proposition \ref{SmallIntersection}.
\end{proof}

\begin{lemma}
	There are no homogeneous primitive 3-graphs of \SU-rank 2 and $R$-diameter 2.
	\label{NoDiam2}
\end{lemma}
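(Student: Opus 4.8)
The plan is to suppose for contradiction that such an $M$ exists and to combine the structural results of this subsection with a final count of types in a neighbourhood. By Lemma \ref{LemmaSemilinear} the graph $M$ is semilinear, with infinite lines and finitely many lines through each point, and by Proposition \ref{atleast3} there are at least three lines through every point. Proposition \ref{MataCasiTodos} rules out every configuration in which all three relations are realised across a single pair of lines; together with Observation \ref{NoInfiniteCliques} and Lachlan's classification (Theorem \ref{Lachlan3graphs}) this confines $R(a)$ to the stable $3$-graphs in which the reflexive closure of $R$ is the line equivalence relation and each pair of lines is complete bipartite in a single colour, namely $P^R[K_\omega^R]$, $Q^R[K_\omega^R]$, $K_m^S[K_n^T[K_\omega^R]]$ and $K_m^T[K_n^S[K_\omega^R]]$. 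Since $\diam_R(M)=2$ forces all three relations to occur inside $R(a)$, the line graph of $R(a)$ must carry both an $S$- and a $T$-edge, so in fact there are at least four lines through $a$, the minimum being attained by $K_2^S[K_2^T[K_\omega^R]]$.

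In each surviving case all three relations are realised in $R(a)$ but none inside a single pair of lines, so Proposition \ref{SmallIntersection} applies and gives $|R(c)\cap R(a)|\leq 3$ for every $c\in R^2(a)$. By Observation \ref{EachTouchesOne} not every element of $R^2(a)$ meets exactly one line through $a$; as $|R(c)\cap R(a)|$ is constant on $\aut(M/a)$-orbits and $R^2(a)=S(a)\cup T(a)$, I may assume without loss of generality that $k:=|R(c)\cap R(a)|\in\{2,3\}$ for all $c\in S(a)$. Observation \ref{Quadrangle}, through Macpherson's Theorem \ref{DugaldQuadrangle}, excludes the generalised quadrangle and hence forces $k<m$; combined with $m\geq 4$ this leaves exactly the two numerical possibilities $k=2$ and $k=3$.

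The heart of the proof is then a counting of $\aut(M/a)$-orbits on unordered pairs of distinct elements of $S(a)$. Because $M$ is a homogeneous $3$-graph and every $c\in S(a)$ satisfies $S(a,c)$, the type of $\{a,c,c'\}$ over $a$ is determined by the colour of the edge $cc'$ alone; there are therefore at most three such orbits, and exhibiting four pairwise disjoint nonempty $\aut(M/a)$-invariant relations on $S(a)$ is a contradiction. By Observation \ref{NoTriangles} the set $R(c)\cap R(a)$ is a partial transversal meeting each of exactly $k$ lines through $a$ in a single point, so I can define invariant relations on $S(a)$ recording (i) whether $R(c)$ and $R(c')$ meet the same lines through $a$, (ii) the number of lines they meet in common, and (iii) whether a commonly met line is met at the same point. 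Proposition \ref{PropLotsOfTrans} and the transitivity of $\aut(M/a)$ on the points of a line (Observation \ref{TransOnLines}) guarantee that the relevant transversals — disjoint ones, and ones forced to pass through a prescribed point — actually exist, and a direct inspection shows that already for $k=2$ the overlap counts $0,1,2$ together with one point-sharing refinement produce four disjoint invariant relations; the case $k=3$ only makes more overlap patterns available.

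I expect this final count to be the main obstacle. One must check, uniformly across the three shapes of line graph (a pentagon, a copy of $K_3\times K_3$, or the wreath pattern of $K_m^S[K_n^T]$) and in the tightest numerical cases ($m=4$, $k\in\{2,3\}$), that the constructed relations are genuinely realised and pairwise disjoint, which is exactly where the realisation statements of Proposition \ref{PropLotsOfTrans} and Observation \ref{TransOnLines} are needed. The remaining cases with larger $m$ or $k$ are strictly easier, since additional lines only enlarge the supply of distinguishable overlap configurations.
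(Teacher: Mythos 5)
Your proposal follows the paper's proof of Lemma \ref{NoDiam2} almost step for step: the same chain of Lemma \ref{LemmaSemilinear}, Proposition \ref{atleast3}, Proposition \ref{MataCasiTodos}, Proposition \ref{SmallIntersection}, Observation \ref{EachTouchesOne} and Observation \ref{Quadrangle} reduces to $k=|R(c)\cap R(a)|\in\{2,3\}$ for $c\in S(a)$, and the contradiction is then sought, exactly as in the paper, by exhibiting at least four disjoint nonempty $\aut(M/a)$-invariant relations on unordered pairs from $S(a)$, against the bound of three 2-types that homogeneity allows. Your preliminary classification of $R(a)$ (pentagon, $K_3\times K_3$, or wreath line graph) and the bound $m\geq4$ are correct additions, but the paper does not need them globally.

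The genuine gap is precisely at the step you defer to ``direct inspection''. For $k=2$ you claim that the overlap counts $0,1,2$ together with a point-sharing refinement are all realised; this fails for one of the shapes on your own list. If $R(a)\cong K_m^S[K_2^T[K_\omega^R]]$ and the sets $R(c)\cap R(a)$ for $c\in S(a)$ are transversals to the $T$-pairs of lines (the groups of size $2$), then any two distinct pairs of lines of the same colour are disjoint, so overlap count $1$ is never realised: your four relations collapse to three ($E$ with a shared point, $E$ with no shared point, and $\neg E$), which is perfectly consistent with there being exactly three 2-types, and no contradiction follows without a further idea (for instance realising $R(c)\cap R(a)=R(c')\cap R(a)$ with $c\neq c'$, or ruling out this configuration by analysing $T(a)$ as well). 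For comparison: the paper's Case 1 ($k=3$) avoids this issue entirely by working inside a single $E$-class, using the observation that $R(c)\cap R(a)$ is a finite homogeneous graph and hence a monochromatic triangle, so that \emph{every} transversal to the same three lines is isomorphic to it over $a$; this realises $I_0,I_1,I_2$ and, with $\neg E$, gives four relations without ever needing two same-coloured pairs of lines to share a line. Your treatment of $k=3$ (``more overlap patterns'') is vaguer than this. The paper's Case 2 ($k=2$), on the other hand, makes the same leap you do (``similarly, two with $\neg E(c,c')$''), so what you have reproduced is the published argument including its thinnest step; the verification you postpone is not routine, and it is exactly where additional work is required.
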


\begin{proof}
	We know by Proposition \ref{atleast3} that the number $m$ of lines through $a$ is greater than or equal to 3, and that all types of pairs are realised in $R(a)$, but not in any pair of lines through $a$ (Proposition \ref{MataCasiTodos}). By Proposition \ref{SmallIntersection}, for all $c\in R^2(a)$ we have $|R(c)\cap R(a)|\leq 3$. Assume that $k=\max\{|R(c)\cap R(a)|,|R(d)\cap R(a)|\}$, where $c\in S(a)$ and $d\in T(a)$.

\begin{case}
First we prove that $k=3$ is impossible. Let $E(c,c')$ be the equivalence relation on $S(a)$ that holds if $R(c)$ and $R(c')$ meet the same lines through $a$. The key observation in this case is that the graph induced on $R(c)\cap R(a)$ is a finite homogeneous graph of size 3, so it must be a monochromatic triangle (see also Gardiner's classification \cite{gardiner1976homogeneous} of finite homogeneous graphs).

We start by arguing that $E$ is always a proper equivalence relation on $S(a)$ if $k=3$. By the preceding paragraph, $R(c)\cap R(a)$ is a complete graph in $S$ or $T$. If $E$ were universal in $S(a)$, then it follows either that there are only three lines through $a$ (impossible as in that case one of the predicates would not be realised in $R(a)$), or, assuming without loss that $R(c)\cap R(a)$ is isomorphic to $K_3^S$, that $R(a)$ is isomorphic to $K_m^T[K_n^S[K_\omega^R]]$. In the latter case, we must have $n=3$ because otherwise we could move by homogeneity the $K_3^S$ corresponding to $R(c)\cap R(a)$ to another set of 3 lines in the same $R\vee S$-class and find $E$-inequivalent elements. Finally, if $m>1$ then again we have that $E$ is a proper equivalence relation, depending on which $R\vee S$-class in $R(a)$ the set $R(c)$ meets. We reach a contradiction in any case; $E$ is a proper equivalence relation on $R(a)$.

Suppose for a contradiction that for $c\in S(a)$ we have $|R(c)\cap R(a)|=3$. Since $E$ is a proper equivalence relation, we have at least 4 invariant and exclusive relations on $S(a)$: $E$-inequivalent and three ways to be $E$-equivalent, as we can define $I_i(c,c')$ on $S(a)$ to hold if $E(c,c')$ and $|R(c)\cap R(c')\cap R(a)|=i$ for $i\in\{0,1,2\}$ (these relations are realised because the intersection of the $R$-neighbourhoods of $c$ and $a$ is a complete monochromatic graph, so any two transversals to the lines that $R(c)$ meets are isomorphic); this already gives us too many invariant relations on pairs from $S(a)$.
\end{case}
\begin{case}
Assume $\max\{|R(c)\cap R(a)|,|R(d)\cap R(a)|\}\leq 2$ ($c\in S(a), d\in T(a)$). By Observation \ref{EachTouchesOne} and Proposition \ref{atleast3}, it must be equal to 2. Suppose that the maximum is reached in $S(a)$. The equivalence relation $E(c,c')$ that holds on $S(a)$ if $R(c)$ and $R(c')$ meet the same lines through $a$ is proper: since $m\geq3$ and $k=2$, we can use homogeneity to move an element of $R(c)\cap R(a)$ to any line not containing any elements of $R(c)\cap R(a)$; this automorphism moves $c$ to an element of $S(a)$ that is not $E$-equivalent with $c$. Therefore we have at least four types of pairs on $S(a)$: two satisfying $E(c,c')$ (one with $R(c)\cap R(c')\cap R(a)$ empty, the other with $R(c)\cap R(c')\cap R(a)$ nonempty), and, similarly, two with $\neg E(c,c')$.
\end{case}
	We have exhausted the list of possible cases. The conclusion follows.
\end{proof}

\subsection{The nonexistence of primitive homogeneous 3-graphs of $R$-diameter 3 and \SU-rank 2}\label{subsectRDiam3}

By homogeneity, if the $R$-diameter of the graph is 3, then, since $R$-distance is preserved under automorphisms, if there are $a,b,c$ such that $S(a,c)\wedge R(a,b)\wedge R(b,c)$, then all pairs $c,c'$ with $S(c,c')$ consist of vertices at $R$-distance 2; and similarly $T(a)$ would be the set of vertices at $R$-distance 3 from $a$. From this point on, we will follow the conventions $S(a)=R^2(a)$ and $T(a)=R^3(a)$. 

The situation in diameter 3 is considerably simpler than in diameter 2, as the sets $S(a)$ and $T(a)$ are more clearly separated. The first thing to notice is that if the $R$-diameter of $M$ is 3, then $RRT$ is a forbidden triangle, as $T$ corresponds to $R$-distance 3. 

\begin{proposition}
	Suppose that $M$ is a semilinear homogeneous primitive 3-graph of $R$-diameter 3 and that each point $a$ is incident with $m<\omega$ lines. Then it is not possible for any $b\in S(a)$ to be collinear with $m$ elements from $R(a)$.
	\label{Intersection3}
\end{proposition}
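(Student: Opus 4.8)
The plan is to argue by contradiction, first upgrading the hypothesis about a single vertex to a statement about the whole orbit $S(a)$, and then showing that this makes the vertices at $R$-distance $3$ from $a$ unreachable. So suppose some $b_0\in S(a)=R^2(a)$ is collinear with $m$ elements of $R(a)$. The quantity $|R(b)\cap R(a)|$ is invariant under $\aut(M/a)$, and $S(a)$ is a single $\aut(M/a)$-orbit (all its elements have the same edge-colour to $a$, hence the same type over $a$ by homogeneity), so it will follow that \emph{every} $b\in S(a)$ is collinear with $m$ elements of $R(a)$.

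First I would pin down the local structure forced by this. By Observation \ref{NoTriangles}, for each line $\ell$ through $a$ we have $|R(b)\cap\ell|\le 1$; as there are exactly $m$ lines through $a$ (the $R$-classes of $R(a)$, by Proposition \ref{RDefinesEqRel}) and $a\notin R(b)$, being collinear with $m$ elements of $R(a)$ forces $R(b)\cap R(a)=\{c_1,\dots,c_m\}$ to be a transversal meeting each line $\ell_i$ through $a$ in exactly one point $c_i$. The key observation is that the $c_i$ lie on $m$ \emph{distinct} lines through $b$: if $c_i,c_j$ with $i\neq j$ lay on a common line through $b$, that line would be an $R$-clique containing both, forcing $R(c_i,c_j)$; but $c_i,c_j$ belong to different $R$-classes of $R(a)$, so $\neg R(c_i,c_j)$. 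Since $b$ is incident with exactly $m$ lines (Lemma \ref{LemmaSemilinear}), these are therefore \emph{all} of its lines, and so every line through $b$ meets $R(a)$.

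From here the contradiction is immediate. Every $p\in R(b)$ lies on some line through $b$, hence is collinear with some $c_i\in R(a)$; as $c_i\in R(a)$ this puts $p$ at $R$-distance at most $2$ from $a$, so $p\notin R^3(a)=T(a)$. Thus $R(b)\cap T(a)=\varnothing$, and by the first paragraph this holds for \emph{every} $b\in S(a)$. But $\diam_R(M)=3$ supplies a vertex $d\in T(a)=R^3(a)$, and on a minimal $R$-path $a,x_1,x_2,d$ the vertex $x_2$ lies in $R^2(a)=S(a)$ and satisfies $R(x_2,d)$, i.e.\ $d\in R(x_2)\cap T(a)\neq\varnothing$ — contradicting $R(x_2)\cap T(a)=\varnothing$.

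The only genuinely delicate point is the reduction in the first paragraph: one must be certain that ``collinear with $m$ points of $R(a)$'' propagates across the entire orbit $S(a)$, which rests on $S(a)$ being a single $\aut(M/a)$-orbit and on $|R(b)\cap R(a)|$ being an automorphism invariant. Everything afterwards is forced by the semilinear structure and the diameter-$3$ conventions ($S(a)=R^2(a)$, $T(a)=R^3(a)$, $RRT$ forbidden). I would also note at the outset that $m\ge 2$, since otherwise $R(a)$ would be $R$-complete, contradicting Observation \ref{NotRComplete}; however, unlike the diameter-$2$ arguments, the reasoning above does not require $m$ to be large.
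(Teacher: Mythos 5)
Your proof is correct and is essentially the same argument as the paper's: the $m$ pairwise non-collinear points of $R(b)\cap R(a)$ occupy all $m$ lines through $b$, leaving no line available to reach $R^3(a)$, while homogeneity and $\diam_R(M)=3$ force some vertex of $S(a)$ to have an $R$-neighbour in $R^3(a)$. The paper compresses your orbit-propagation and geodesic steps into the single assertion that, by homogeneity, $b$ itself is $R$-adjacent to an element of $R^3(a)$, and then counts $R$-connected components of $R(b)$ ($m+1$ versus $m$); your write-up just makes that homogeneity step explicit.
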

\begin{proof}
	The $R$-neighbourhood of $b$ has $m$ $R$-connected components by transitivity. But by homogeneity and diameter 3, $b$ is adjacent to some element of $R^3(a)$. Therefore, if $R(b)$ meets each line through $a$, then $R(b)$ has at least $m+1$ $R$-connected components, contradicting homogeneity.
\end{proof}
\begin{proposition}
	Let $M$ be a semilinear homogeneous primitive 3-graph with $\diam_R(M)=3$ and $m<\omega$ lines through each point, and let $k$ denote $|R(b)\cap R(a)|$ for any $b\in S(a)$. Then $k=1$.
	\label{equalsk}
\end{proposition}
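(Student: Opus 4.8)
The plan is to fix $a\in M$ and $b\in S(a)=R^2(a)$, set $k=|R(b)\cap R(a)|$, and argue that $k\ge 2$ is impossible. First I would pin down the range of $k$. If two common neighbours $x\neq x'$ of $a$ and $b$ lay on one line through $a$, then $R(x,x')$ would hold and, together with $R(b,x)\wedge R(b,x')$, would force $b$ onto that line (a maximal $R$-clique), contradicting $b\in R^2(a)$; this is essentially Observation \ref{NoTriangles}, and it shows that the $k$ points of $R(b)\cap R(a)$ occupy $k$ distinct lines through $a$, so $k\le m$. Proposition \ref{Intersection3} rules out $k=m$, leaving $1\le k\le m-1$. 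Since all elements of $S(a)$ realise the same type over $a$, the stabiliser $\aut(M)_{\{a\}}$ is transitive on $S(a)$ and on the $m$ lines through $a$; hence $k$ does not depend on $b$ and every $k$-element set of lines occurs as the set of lines met by $R(b)$ for some $b\in S(a)$.

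The decisive finiteness input is that $S(a)$ supports very few $2$-types over $a$. By Proposition \ref{PropHomNeigh} the structure induced on $S(a)$ is homogeneous, and for distinct $b,b'\in S(a)$ the isomorphism type of $\{a,b,b'\}$ is determined by the colour of $bb'$ alone, since $ab$ and $ab'$ are both $S$-edges. Consequently $\aut(M)_{\{a\}}$ has at most three orbits on unordered pairs of distinct elements of $S(a)$; equivalently, as remarked after Observation \ref{NoTriangles}, there cannot be three nested proper nontrivial $a$-invariant equivalence relations on $S(a)$. The whole argument then consists in assuming $k\ge 2$ (so $m\ge 3$) and manufacturing four pairwise-incompatible $\aut(M)_{\{a\}}$-invariant relations on $S(a)$, each realised, to contradict this bound.

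The invariants I would use are the equivalence relation $E$ given by ``$R(b)$ and $R(b')$ meet the same lines through $a$'' (proper and nontrivial because $\binom{m}{k}>1$ when $2\le k\le m-1$), the count $j(b,b')=|R(b)\cap R(b')\cap R(a)|$ of shared neighbours, and collinearity of $b$ and $b'$. Using homogeneity, transitivity of $\aut(M)_{\{a\}}$ on the lines through $a$, and Neumann's Theorem \ref{ThmNeumann} in the guise of Proposition \ref{PropLotsOfTrans} to move transversals off one another, I would realise for $k\ge 2$: a collinear pair with $j=1$ (two points of a line through some $x\in R(a)$ that lies in $S(a)$ off $x$); a non-collinear pair with $j=1$ (points on two different lines through a common $x$, chosen so their remaining neighbours differ); a pair with $j=0$ (disjoint projections to $R(a)$); and a fourth configuration separating these further. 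Since these relations are invariant and mutually exclusive, their simultaneous realisation exhibits at least four orbits on pairs of $S(a)$, the desired contradiction.

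The main obstacle is securing that fourth, independent configuration, and this is exactly where the case $k=2$ becomes delicate. The natural candidate is a pair with $j=2$, i.e. two distinct elements of $S(a)$ sharing both of their $R(a)$-neighbours $x_1,x_2$; but such a pair exists only if some $S$-related pair $\{x_1,x_2\}\subseteq R(a)$ has more than one common neighbour in $S(a)$, that is, only if the projection $b\mapsto R(b)\cap R(a)$ is non-injective. I expect the resolution to split here: if the projection is non-injective one obtains the $j=2$ pair and finishes the count directly, whereas if it is injective then every $S$-edge of $R(a)$ has a unique common neighbour in $S(a)$, an over-determined incidence that I would rule out by pushing the type count one step further --- for instance by distinguishing a collinear $j=0$ pair (lying on a line through $b$ that misses $R(a)$, one of which must exist because some line through $b$ runs into $T(a)=R^3(a)$) from the collinear $j=1$ pairs above --- mirroring the endgame of the $R$-diameter $2$ analysis (Lemma \ref{NoDiam2}) and, where a generalised quadrangle genuinely appears, Macpherson's Theorem \ref{DugaldQuadrangle} as in Observation \ref{Quadrangle}. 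Once $k\ge 2$ is excluded in both sub-cases, $k=1$ follows.
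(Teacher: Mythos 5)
Your global strategy coincides with the paper's: bound the number of $\aut(M/a)$-orbits on unordered pairs from $S(a)$ by three (the colour of the edge $cc'$ determines the type of the pair over $a$), then assume $k\ge2$ and exhibit four mutually exclusive, realised, $a$-invariant relations. Your preliminary reductions ($k\le m$ via Observation \ref{NoTriangles}, $k<m$ via Proposition \ref{Intersection3}) are exactly the paper's, and for $k\ge3$ your configurations do produce four types (the paper does this with the relations $P_i$, $0\le i<k$, together with $\neg E$, giving $k+1\ge4$ types). But your proof does not close at $k=2$, and this is a genuine gap, not a loose end. Your candidate fourth relation, a pair with $j=2$, exists only when the projection $b\mapsto R(b)\cap R(a)$ is non-injective; and in the injective sub-case the ``collinear $j=0$ pair'' you fall back on is not known to exist, because a line through $b$ that misses $R(a)$ need not contain a second point of $S(a)$ --- nothing proved so far prevents every such line from lying in $\{b\}\cup T(a)$, and this is precisely the configuration that the subsequent diameter-3 analysis (Lemma \ref{NoDiam3}) has to take seriously. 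The further fallbacks are also unavailable: Observation \ref{Quadrangle} is a $\diam_R(M)=2$ statement, and with $2=k<m$ the incidence structure is not a generalised quadrangle at all (a point $b\in S(a)$ is collinear with no point of any line through $a$ that $R(b)$ misses, violating the quadrangle axiom), so Theorem \ref{DugaldQuadrangle} cannot be invoked; and the rank-comparison endgames supporting Lemma \ref{NoDiam2} (Propositions \ref{atleast3}, \ref{MataCasiTodos}) use supersimplicity, which the present proposition does not assume.

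The paper closes $k=2$ with no injectivity split by crossing $E$ with the intersection count rather than with collinearity: its four relations are $E\wedge j=1$, $E\wedge j=0$, $\neg E\wedge\bigl(R(c)\cap R(c')\cap R(a)\neq\varnothing\bigr)$ and $\neg E\wedge\bigl(R(c)\cap R(c')\cap R(a)=\varnothing\bigr)$. All four are realised because in $R$-diameter 3 only $R$ and $S$ occur inside $R(a)$, so every pair of lines through $a$ induces a complete $S$-bipartite graph and hence all transversals to a fixed set of lines are isomorphic over $a$; this is also the correct justification of your claim that every $k$-set of lines occurs as the trace of some $R(b)$ (the conclusion of Observation \ref{OneOrbit} ``for free''), which does not follow from mere transitivity on the lines as you suggest. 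Given this, Proposition \ref{PropLotsOfTrans} (Neumann's Theorem \ref{ThmNeumann}) produces overlapping and disjoint transversals at will, realising all four relations and finishing the count. Replacing your collinearity-based fourth configuration by this $E$-based splitting repairs your argument without any extra cases.
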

\begin{proof}
	By Proposition \ref{Intersection3}, $k<m$. The main point here is that we get the conclusion of Observation \ref{OneOrbit} for free in this situation, as the intersection of any pair of lines through $a$ with $R(a)$ is isomorphic to a complete bipartite graph (edges given by $S$, non-edges given by $R$). We can define an equivalence relation $E$ on $S(a)$ holding for $c,c'$ if $R(c)$ and $R(c')$ meet the same lines through $a$. By Proposition \ref{Intersection3} and homogeneity, $E$ is a nontrivial proper equivalence relation on $S(a)$ with $m\choose k$ classes. Notice that for any $E$-equivalent $c,c'$, the isomorphism types of $R(c)\cap R(a)$ and $R(c')\cap R(a)$ are the same over $a$, and in fact are the same as the isomorphism type of any set transversal to $k$ lines. Therefore, we can define $P_i(c,c')$ for $0\leq i< k$ if $E(c,c')\wedge |R(c)\cap R(c')\cap R(a)|=i$. All of these relations are realised by homogeneity, and invariant over $a$. This implies $k\leq 2$. 

	Now we eliminate the case $k=2$. If $|R(c)\cap R(a)|=2$ for $c\in S(a)$, then by Proposition \ref{Intersection3} we have at least 3 lines through $a$, and the relation $E$ defined in the preceding paragraph is a proper nontrivial equivalence relation. By the same argument, there are at least two types of $E$-equivalent pairs, plus at least two types of $E$-inequivalent pairs, depending on whether the intersections of their $R$-neighbourhoods meet $R(a)$ or not. The conclusion follows.
\end{proof}

The situation is similar to what we had in diameter 2 after Observation \ref{Quadrangle}, but we have the additional information $|R(b)\cap R(a)|=1$ for $b\in S(a)$.

\begin{proposition}
	Let $M$ be a semilinear primitive homogeneous 3-graph of $R$-diameter 3 and $m<\omega$ lines through each point. Then $m=2$.
\label{twolines}
\end{proposition}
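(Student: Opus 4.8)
The plan is to assume $m\ge 3$ and reach a contradiction by exhibiting four pairwise disjoint, nonempty, $\aut(M)_{\{a\}}$-invariant binary relations on $S(a)$. This is impossible: if $b,b'\in S(a)$ then $S(a,b)$ and $S(a,b')$ are forced, so by homogeneity $\tp(bb'/a)$ is determined by the single edge-colour of $bb'$, and there are at most three $2$-types over $a$ of pairs from $S(a)$. Four disjoint nonempty invariant relations would need at least four such types. Note that the argument is purely combinatorial, using only semilinearity, $\diam_R(M)=3$, Proposition \ref{equalsk}, and the finiteness of the number of lines.

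First I would record the local structure around a point $c\in R(a)$. Because $\diam_R(M)=3$ the triangle $RRT$ is forbidden, so every $R$-neighbour of $c$ lying off the line $\ell(a,c)$ is at $R$-distance exactly $2$ from $a$; hence $R(c)\cap T(a)=\varnothing$, and $R(c)\cap S(a)$ is precisely the union of the $m-1$ lines through $c$ other than $\ell(a,c)$, each with the point $c$ deleted. By Proposition \ref{equalsk} each $b\in S(a)$ has a \emph{unique} $R$-neighbour in $R(a)$, so the map $\pi\colon S(a)\to R(a)$ sending $b$ to that neighbour is definable over $a$, and $\pi^{-1}(c)=R(c)\cap S(a)$. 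In particular every $c\in R(a)$ lies in the image of $\pi$, since $m-1\ge 1$ of its lines meet $S(a)$.

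Next I would define on $S(a)$ the $a$-invariant equivalence relation $Q(b,b')$ iff $\pi(b)=\pi(b')$, and split its complement according to whether $\pi(b),\pi(b')$ lie on the same line through $a$ (call this $\hat R$) or on different lines through $a$ (call this $\hat S$); since $R(a)$ realises no $T$-edge, these exhaust $\neg Q$. The key observation is that a single $Q$-class $R(c)\cap S(a)$ consists of $m-1$ infinite $R$-cliques (the lines through $c$ inside $S(a)$) with $S$ holding between distinct cliques, so for $m\ge 3$ both an $R$-edge and an $S$-edge occur \emph{inside} one $Q$-class, giving $Q\wedge R$ and $Q\wedge S$ nonempty. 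Meanwhile $\hat R$ is realised by taking distinct $c,c'$ on one line through $a$ and lifting them through $\pi$, and $\hat S$ by taking $c,c'$ on different lines. Thus $Q\wedge R$, $Q\wedge S$, $\hat R$, $\hat S$ are four disjoint nonempty $a$-invariant relations on $S(a)$, the desired contradiction. Combined with $m\ge 2$ from Observation \ref{NotRComplete}, this yields $m=2$.

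I expect the only delicate point to be the local structure step, namely verifying $R(c)\cap T(a)=\varnothing$ and pinning down $\pi^{-1}(c)$ as the union of exactly $m-1$ lines through $c$; once that is in hand the production of the four invariant relations is routine. The realisation of $Q\wedge S$ is exactly where the hypothesis $m\ge 3$ is used, since it requires at least two distinct lines through $c$ inside $S(a)$, and this is the heart of the contradiction.
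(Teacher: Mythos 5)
Your proof is correct and is essentially the paper's own argument: the paper also uses Proposition \ref{equalsk} to project $S(a)$ onto $R(a)$ and, for $m\geq 3$, produces the same four disjoint nonempty $a$-invariant relations (there phrased as the nested equivalence relations $F=E_1\wedge R\subsetneq E_1\subsetneq E_2$, whose pair-classes coincide with your $Q\wedge R$, $Q\wedge S$, $\hat R$, $\hat S$), contradicting the bound of three 2-types over $a$. Your explicit verification that non-$R$ pairs inside a $Q$-class must be $S$-related (since both points lie at $R$-distance $\leq 2$) is a nice sharpening of a step the paper leaves implicit, but the route is the same.
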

\begin{proof}
	By Proposition \ref{equalsk}, for any $b\in S(a)$ we have $|R(b)\cap R(a)|=1$. Let $m$ denote the number of lines through $a$. We know by Observations \ref{NotRComplete} and \ref{R(a)Imprimitive} that $m\geq 2$. Now suppose for a contradiction that $m\geq 3$. Define $E_1,E_2$ on $S(a)$ by 
\begin{equation*}
E_1(c,c')\leftrightarrow R(c)\cap R(a)=R(c')\cap R(a)\\
\end{equation*}
\begin{equation*}
E_2(c,c')\leftrightarrow R(b,b')\vee b=b'
\end{equation*}
where $\{b\}=R(c)\cap R(a)$ and $\{b'\}=R(c')\cap R(a)$. The relation $E_2$ holds iff $R(c)$ and $R(c')$ intersect the same line through $a$; $E_1$ holds iff they meet $R(a)$ at the same point. There are $m$ $E_2$-classes and each of them contains infinitely many $E_1$-classes. Since $m\geq3$ and the $R$-diameter of $M$ is 3, each $E_1$-class contains at least two infinite disjoint cliques, corresponding to the lines through a particular $b\in R(a)$. Therefore, we can define an invariant $F(c,c')$ if $E_1(c,c')\wedge R(c,c')$, breaking each $E_1$-class into finitely many $R$-cliques. 

We have only three 2-types over $a$ in $S(a)$, corresponding to $R,S,T$, but we need at least four invariant relations for these three nested equivalence relations. 
\end{proof}
\begin{lemma}
	There are no primitive homogeneous 3-graphs of \SU-rank 2 and $R$-diameter 3.
	\label{NoDiam3}
\end{lemma}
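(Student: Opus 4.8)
The plan is to push the structural analysis one step further than Propositions~\ref{twolines} and~\ref{equalsk} and then run the same kind of $2$-type count over $a$ that drove the earlier diameter-$3$ arguments. By Proposition~\ref{twolines} each point lies on exactly two lines, and by Proposition~\ref{equalsk} every $b\in S(a)=R^2(a)$ has a unique $R$-neighbour in $R(a)$; call it the \emph{parent} $\pi(b)$ of $b$. Since $\diam_R(M)=3$ forbids the triangle $RRT$, Observation~\ref{R(a)Imprimitive} together with $m=2$ forces $R(a)$ to be two infinite $R$-cliques (the two lines $\ell_1,\ell_2$ through $a$ with $a$ removed), all cross pairs being coloured $S$. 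Thus $R(a)$ carries exactly the two lines through $a$, and $\pi$ is a well-defined, $a$-invariant map from $S(a)$ onto $R(a)$ whose fibres are the infinite $R$-cliques $m_c\setminus\{c\}$, $c\in R(a)$.

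The counting engine is the observation (via Proposition~\ref{PropHomNeigh}) that $\aut(M/a)$ acts on $S(a)$ as a homogeneous automorphism group, so it has exactly three orbits on pairs from $S(a)$, one for each colour $R,S,T$. Consequently every $a$-invariant binary relation on $S(a)$ must be a union of colour-classes; producing an $a$-invariant relation that properly splits a colour, or a fourth mutually exclusive $a$-invariant relation, will be a contradiction. First I would record the $a$-invariant relations coming from $\pi$: ``same parent'' (contained in $R$, since a fibre is an $R$-clique), ``distinct parents lying on the same line through $a$'' (i.e.\ $R(\pi(b),\pi(b'))$), and ``parents on different lines through $a$'' (i.e.\ $S(\pi(b),\pi(b'))$, as cross pairs in $R(a)$ are $S$). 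These three are $a$-invariant and mutually exclusive, and each is realised because each line through $a$ has infinitely many points and there are two lines.

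The decisive input is the finite diameter. Two points $b,b'\in S(a)$ whose parents lie on \emph{different} lines through $a$ are joined, along the tree of lines, only by the path $b-\pi(b)-a-\pi(b')-b'$ of $R$-length $4$; since $\diam_R(M)=3$ there must be a genuine shortcut, i.e.\ either a common $R$-neighbour of $b,b'$ or a length-$3$ geodesic between them. I would analyse this shortcut. A common neighbour $z$ cannot lie in $R(a)$ (it would force $z=\pi(b)=\pi(b')$, contradicting distinct parents), and any shortcut collinear with two of the already-placed vertices runs into Observation~\ref{NoTriangles} (the lines form no triangles). Tracking the colour of $bb'$ forced by the shortcut against the parent-relations above then yields either a non-collinear $R$-triangle---impossible by semilinearity (Lemma~\ref{LemmaSemilinear}, Observation~\ref{NoTriangles})---or two pairs of the same colour but with different parent-relations, splitting a colour-class of $S(a)$ and contradicting the three-orbit bound of the previous paragraph. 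Equivalently, one may phrase the whole configuration by noting that $(M,R)$ is the line graph of a triangle-free, locally infinite homogeneous graph, and such a graph cannot have finite $R$-diameter.

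I expect the shortcut analysis of the last paragraph to be the main obstacle: the hypothesis $\diam_R(M)=3$ does not tell us \emph{which} shortcut occurs, so the argument must enumerate the ways a cross-line pair of $S(a)$ can be brought within $R$-distance $3$ and check that each either collapses two distinct lines (violating semilinearity) or introduces a distinguishing $a$-invariant relation on $S(a)$. Once every case is shown to overshoot the three available $2$-types over $a$ (or to contradict Observation~\ref{NoTriangles}), the assumption that a primitive homogeneous $3$-graph of \SU-rank $2$ and $R$-diameter $3$ exists is refuted, completing Lemma~\ref{NoDiam3} and, together with Lemma~\ref{NoDiam2}, the proof that there are no such graphs of \SU-rank~$2$.
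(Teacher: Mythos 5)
Your setup coincides with the paper's (exactly two lines through each point by Proposition~\ref{twolines}, a unique parent $\pi(b)\in R(a)$ for each $b\in S(a)$ by Proposition~\ref{equalsk}, and the bound of three $\aut(M/a)$-orbits on pairs), but the place where you locate the contradiction is wrong, and this is a genuine gap. The three parent-relations on $S(a)$ (same fibre; distinct parents on the same line through $a$; parents on different lines) are three invariant, mutually exclusive, realised cells, and since there are exactly three of them they simply absorb the three colours: same-fibre pairs are $R$, and the other two cells get $S$ and $T$ in one of two ways (the forbidden triangle $RRT$ pins down the mixed $(R(a),S(a))$-pairs, but leaves both assignments on $S(a)$ open). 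Either assignment yields exactly three $2$-types over $a$ on $S(a)$ and creates no forbidden triangle among points at distance at most $2$ from $a$, so no invariant relation can ``split a colour-class'' there. Your shortcut analysis confirms rather than refutes this: the common neighbour (or intermediate vertex of a length-$3$ geodesic) of such a pair cannot lie in $R(a)$, cannot lie on the parents' lines, and cannot lie in $S(a)$ at all --- its own parent would have to sit on a line that misses $R(a)$ --- so it lies in $T(a)=R^3(a)$, where it imposes constraints but no contradiction visible from $S(a)$. The paper's proof accordingly moves to $T(a)$: it observes that each line through $d\in T(a)$ meets $S(a)$ in at most one point, splits into cases according to which of $S,T$ holds between fibres in the same $E_2$-class and according to whether $|R(d)\cap S(a)|$ is $1$ or $2$, and in each case manufactures at least four mutually exclusive invariant relations on pairs from $T(a)$ (same parent in $S(a)$, parents $E_1$- or $E_2$-related, collinearity with a point of $S(a)$, number of fibres met by $R(e)\cup R(e')$), overshooting the three available $2$-types. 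That $T(a)$-level counting is the entire content of the lemma, is absent from your proposal, and is exactly the step you yourself flag as unresolved.

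Your fallback claim --- that $(M,R)$ is the line graph of a triangle-free, locally infinite homogeneous graph and that such a line graph ``cannot have finite $R$-diameter'' --- is false and cannot rescue the argument. The complete bipartite graph $K_{\omega,\omega}$ is homogeneous, triangle-free and locally infinite, yet its line graph (the rook's graph, i.e.\ the $R$-reduct pattern of $K_\omega\times K_\omega$) has diameter $2$; the incidence graph of an infinite projective plane is triangle-free and locally infinite and its line graph (the flag graph) has diameter exactly $3$. So the purely combinatorial configuration you reach is realisable with finite diameter, and only the homogeneity of $M$ as a $3$-graph --- applied to pairs from $R^3(a)$, as the paper does --- can eliminate it.
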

\begin{proof}
	We know by Propositions \ref{equalsk} and \ref{twolines} that under the hypotheses of this proposition we have $|R(c)\cap R(a)|=1$ for all $c\in S(a)$ and there are exactly two lines through each point in $M$. So far, the main characters in our analysis have been $R(a)$ and $R^2(a)$. Now the structure on $R^3(a)$ will also come into play. The structure of $S(a)$ in diameter 3 and a single element in $|R(a)\cap R(c)|$ consists, by Proposition \ref{twolines}, of two $E_2$-classes, each divided into infinitely many $E_1$-classes ($R$-cliques), where $E_1,E_2$ are as in the proof of Proposition \ref{twolines}. We have two subcases:
\begin{case}
	Suppose that $S$ holds between $E_1$-classes contained in the same $E_2$-class. Take $d\in T(a)$. The set $R(d)\cap R^2(a)$ meets each $E_1$-class in at most one vertex and one $E_2$ class ($T$ holds across $E_2$-classes; if $R(d)\cap R^2(a)$ met both $E_2$-classes, then the triangle $RRT$ would be realised, contradicting our assumption that $T(a)=R^3(a)$). Therefore, we can define an equivalence relation on $T(a)$ with two classes: define 
\begin{equation*}
F(d,d')\leftrightarrow\exists(c,c'\in S(a))(c\in R(d)\cap S(a)\wedge c'\in R(d')\cap S(a)\wedge E_2(c,c'))
\end{equation*}
So $F(d,d')$ holds iff $R(d)$ and $R(d')$ meet the same $E_2$-class in $R^2(a)$. We have a further subdivision into cases, depending on how many $E_1$-classes $R(d)$ meets:
\begin{subcase}
	If $|R(d)\cap R^2(a)|=1$, then we can define on $T(a)$ two more equivalence relations:
\begin{equation*}
	F'(e,e')\leftrightarrow E_1(c,c')
\end{equation*}
\begin{equation*}
	F''(e,e')\leftrightarrow R(e)\cap S(a)=R(e')\cap S(a)
\end{equation*}
where $\{c\}=R(e)\cap S(a)$ and $\{c'\}=R(e')\cap S(a)$. The condition $|R(d)\cap R^2(a)|=1$ ensures that these relations are transitive. Clearly, $F''\rightarrow F'\rightarrow F$; and as there are two lines through any vertex, $F$ is a proper nontrivial equivalence relation. To prove that $F'$ and $F''$ are both realised and different, take any $c\in S(a)$. There are two lines incident with it, one of which is its $E_1$-class, together with some point from $R(a)$; the other line, $\ell$, through $c$ is almost entirely contained in $T(a)$. Two points on $\ell\cap T(a)$ satisfy $F''$, and $F$-equivalent points in $T(a)$ on lines through different elements from $S(a)$ satisfy $F'\wedge\neg F''$ if the elements from $S(a)$ belong to the same $E_1$-class, and they satisfy $F\wedge\neg F'$ if the elements from $S(a)$ are $E_2$-equivalent and $S$-related.This gives us three nested invariant equivalence relations in $T(a)$. This rules out the possibility of $|R(d)\cap R^2(a)|=1$ in the situation of Case I(i). 
\end{subcase}
\begin{subcase}
	If $R(d)$ meets more than one $E_1$-class, then by homogeneity, since any vertex lies on two lines, it has to intersect exactly two of $E_1$-classes. Note that $R(d)\cap S(a)$ is contained in a single $E_2$-class, because the triangle $RRT$ is forbidden. Again, we find too many types realised on $T(a)$. For any pair $d,d'\in T(a)$, the number of $E_1$-classes that $R(d)\cup R(d')$ meets is invariant under $a$-automorphisms. Notice that it is not possible for $|(R'(d)\cup R(d'))\cap R^2(a)|$ to be 2, as in that case $d$ and $d'$ would belong to two different lines: by homogeneity, each element $c\in S(a)$ lies on two lines, one of which is its $E_1$-class; therefore, if $d,d'\in T(a)$ are such that $R(d,c)\cap R(d',c)\neq\varnothing$, then $c,d,d'$ must be collinear. Define $F_1(d,d')$ on $T(a)$ if $R(d)$ and $R(d')$ meet the same two $E_1$-classes, and $P(d,d')$ if $R(d)\cap R(d')\cap S(a)\neq\varnothing$. There are pairs satisfying all of $F_1\wedge P, F_1\wedge\neg P, \neg F_1\wedge P, \neg F_1\wedge\neg P$, giving us four invariant relations on pairs from $T(a)$.
\end{subcase}
\end{case}
\begin{case}
If $T$ holds between $E_1$-classes contained in the same $E_2$-class, then $S$ holds between $E_2$-classes (as each $E_1$-class is an $R$-clique). Again, we have two subcases, depending on $|R(d)\cap S(a)|$ for $d\in T(a)$:
\begin{subcase}
If $|R(d)\cap S(a)|=1$ for $d\in T(a)$, then we can define an equivalence relation $E'(e,e')$ on $T(a)$ holding if $R(e)$ and $R(e')$ meet the same $E_2$-class in $S(a)$. We will show that we already have three invariant and mutually exclusive relations on unordered pairs in each of the $E'$ classes. Define $\hat R, \hat T$ on $T(a)$ by $\hat P(e,e')$ iff $P$ holds for the points in the intersection of $R(e)$ and $R(e')$ with $S(a)$ ($P\in\{R,T\}$), and $C(e,e')$ if $e,e'$ are collinear with some $c\in S(a)$, which happens if $R(e)\cap R(e')\cap S(a)\neq\varnothing$. We would need at least one more predicate to separate the $E'$-classes.
\end{subcase}
\begin{subcase}
And if $|R(d)\cap S(a)|=2$ for $d\in T(a)$, then the intersection with each $E_2$-class is of size one, as otherwise the triangle $RRT$ would be realised. Then we can count the total number of $E_1$-classes that $R(e)$ and $R(e')$ meet, which can be 4, 3, or 2. And in the cases where this number is 3 or 2, we have another two relations, depending on whether $R(e)\cap R(e')\cap S(a)$ is empty or not. Again, we find too many invariant and mutually exclusive relations on unordered pairs of distinct elements from $T(a)$. 
\end{subcase}
\end{case}
\end{proof}

We can now prove that no primitive homogeneous simple 3-graphs have \SU-rank 2.

\begin{theorem}\label{NoRank2}
	There are no homogeneous primitive simple 3-graphs of \SU-rank 2.
\end{theorem}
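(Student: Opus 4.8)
The plan is to derive the statement by contradiction, assembling the structural results proved earlier so that only the two diameter cases remain. Suppose $M$ is a primitive homogeneous simple 3-graph with $\SU(\Th(M))=2$. First I would pin down the configuration of the three relations. By Theorem \ref{Lachlan3graphs} every infinite stable homogeneous 3-graph is imprimitive (each structure listed there carries a nontrivial invariant equivalence relation), so $\Th(M)$ must be unstable. At least one relation is nonforking, since Morley sequences exist, and by the stable forking result Theorem \ref{ThmStableForking} every forking relation in $M$ is stable. If two of $R,S,T$ forked, both would be stable, and then the third relation, being the negation of their disjunction, would be a Boolean combination of stable formulas, hence stable, forcing $\Th(M)$ stable, a contradiction. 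Therefore exactly one relation forks; naming it $R$, I get that $R$ is stable and forking while $S,T$ are nonforking and unstable. Note that $R$ must genuinely fork, since a rank-$2$ theory cannot have all $2$-types nonforking: that would make $M$ random of rank $1$ by the Independence-Theorem argument in the proof of Theorem \ref{PrimitiveAlice}.

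Next I would fix the $R$-diameter. Since $(M,R)$ is $R$-connected by primitivity and $M$ is a homogeneous $3$-graph, Observation \ref{diam} gives $\diam_R(M)\le 3$. By Observation \ref{NotRComplete} the set $R(a)$ is not $R$-complete, so $\diam_R(M)\ge 2$; thus $\diam_R(M)\in\{2,3\}$.

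Finally I would invoke the two diameter lemmas. By Lemma \ref{LemmaSemilinear}, $M$ is semilinear with infinite lines, each point lying on finitely many lines, so the hypotheses of the diameter analyses are met. Lemma \ref{NoDiam2} rules out $\diam_R(M)=2$ and Lemma \ref{NoDiam3} rules out $\diam_R(M)=3$. Both possibilities being impossible, no such $M$ exists.

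The genuine difficulty of the whole development lies not in this theorem but in the two lemmas it quotes, and among those the diameter-$2$ case (Lemma \ref{NoDiam2}) is by far the hardest: there the semilinear space is close to a generalised quadrangle, and the elimination proceeds by a delicate count showing that the induced equivalence relations on $S(a)$ or $T(a)$ would force more than three $2$-types over $a$, contradicting homogeneity, with Macpherson's non-interpretability of infinite generalised quadrangles (Theorem \ref{DugaldQuadrangle}) and Thomas's theorem on weak pseudoplanes (Theorem \ref{ThmThomas}) as the crucial external inputs. For the present statement, however, the only real step is the bookkeeping above that confines us to $\diam_R(M)\in\{2,3\}$ and then the citation of the two lemmas.
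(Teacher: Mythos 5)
Your proof is correct and follows essentially the same route as the paper: the paper's own proof of Theorem \ref{NoRank2} simply observes via Observation \ref{diam} that $\diam_R(M)\in\{2,3\}$ and then cites Lemmas \ref{NoDiam2} and \ref{NoDiam3}. The extra bookkeeping you supply (instability of $\Th(M)$ via Theorem \ref{Lachlan3graphs}, exactly one forking relation which is stable via Theorem \ref{ThmStableForking}, and semilinearity via Lemma \ref{LemmaSemilinear}) is precisely the chapter's standing setup that makes those two lemmas applicable, so nothing is missing and nothing is genuinely different.
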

\begin{proof}
By Observation \ref{diam}, the diameter of a primitive homogeneous simple 3-graph of \SU-rank 2 is either 2 or 3. Lemmas \ref{NoDiam2} and \ref{NoDiam3} say that both situations are impossible.
\end{proof}

\section{Higher rank}\label{SectHigher}

We have now proved that there are no homogeneous primitive simple 3-graphs of \SU-rank 2. In this section, we see that result as the basis for an inductive argument on the rank of the theory, using Theorem \ref{ThmStableForking}. We remark that in the course of the proof of nonexistence of simple 3-graphs of rank 2, we only use the rank 2 hypothesis to prove that we can define in $M$ a semilinear space with finitely many lines through each point. Also, for most of the analysis simplicity suffices, and we require supersimplicity only in Propositions \ref{atleast3} and \ref{MataCasiTodos} (and, indirectly, Lemma \ref{NoDiam2} because the proof uses Propositions \ref{atleast3} and \ref{MataCasiTodos}); in these results we use the fact that the theory is ranked by \SU, but the specific value of its rank is irrelevant. 

Therefore, if we prove that simple homogeneous 3-graphs of rank 3 or greater are semilinear with finitely many lines through each point, then the rest of the argument from Section \ref{SectRank2} is valid in higher rank.

\begin{lemma}
	Let $M$ be a homogeneous primitive supersimple 3-graph of \SU-rank $k\geq2$. Then $M$ is semilinear.
\label{Inductive}
\end{lemma}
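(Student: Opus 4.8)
The plan is to run the stated induction on the rank $k$, and at the inductive step to obtain semilinearity by reproving, for arbitrary $k$, the two facts that drove the rank-$2$ analysis: that $R(a)$ is imprimitive (the analogue of Observation \ref{R(a)Imprimitive}) and that it is $R$ that witnesses this imprimitivity (the analogue of Proposition \ref{RDefinesEqRel}). Throughout I assume the induction hypothesis that there is no primitive simple homogeneous 3-graph of \SU-rank $j$ with $2\le j<k$; the base case $k=2$ is Theorem \ref{NoRank2}. First the reductions: since $M$ is primitive and infinite it is unstable, because by Lachlan's Theorem \ref{Lachlan3graphs} every infinite stable homogeneous 3-graph is imprimitive; by Theorem \ref{ThmStableForking} there is a unique forking relation, which is stable, so name it $R$, with $S,T$ nonforking and unstable. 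By Observations \ref{diam} and \ref{NotRComplete} the $R$-diameter of $M$ is $2$ or $3$. Fix $a$. By Observation \ref{AlgClosure} the set $R(a)$ is infinite; by Proposition \ref{PropHomNeigh} it is a homogeneous 3-graph, vertex-transitive (all singletons are isomorphic substructures) and, being interpretable in $M$, simple; by Observation \ref{NoInfiniteCliques} it contains no infinite $S$- or $T$-clique, and $R$ stays stable in it. Finally, since $R(x,a)$ forks, every element of $R(a)$ has \SU-rank $<k$ over $a$, so $R(a)$ has rank at most $k-1$. It suffices to prove that $R$ defines an equivalence relation on $R(a)$ with finitely many infinite classes: then the maximal $R$-cliques are the lines, each point lies on finitely many of them by $\omega$-categoricity, they are infinite by Proposition \ref{EmbedsAllRComplete}, and semilinearity follows from Proposition \ref{Semilinear3Graph}.

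In $R$-diameter $3$ the triangle $RRT$ is forbidden, so $R(a)$ is an $\{R,S\}$-graph; by the Lachlan--Woodrow Theorem \ref{LachlanWoodrow}, simplicity, and the absence of infinite $S$-cliques it must be $I_m[K_\omega^R]$ with $m<\omega$, and we are done. The work is in $R$-diameter $2$, where $R(a)$ is a full homogeneous simple 3-graph. The single place the rank-$2$ proof used \SU-rank $2$ was to force $R(a)$ to have rank $1$ and then quote its classification, and this is exactly the step I would replace by the induction. I claim $R(a)$ is imprimitive. If it were primitive then, being a full 3-graph, it is unstable (no infinite primitive stable 3-graph exists, by Theorem \ref{Lachlan3graphs}) of rank $\le k-1$; rank $1$ is impossible because Proposition \ref{Year2} would make $R$ unstable in $R(a)$, contradicting the stability of $R$, and rank $j$ with $2\le j\le k-1$ is impossible by the induction hypothesis. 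Hence $R(a)$ is imprimitive.

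It remains to pin the invariant equivalence relation down to $R$, which I would do as in Proposition \ref{RDefinesEqRel}: by quantifier elimination it is a disjunction of one or two of $R,S,T$. The disjunction $S\vee T$ is impossible, since it would make $R(a)$, and hence by Proposition \ref{PropMultipartite} $M$ itself, $R$-complete-multipartite, contradicting the primitivity of $M$; an $S$- or $T$-class cannot be infinite, as it would be an infinite $S$- or $T$-clique; and the remaining bookkeeping with the Lachlan--Woodrow Theorem is meant to leave $R$ as an equivalence relation with finitely many infinite classes. The genuinely new point, and the one I expect to be the main obstacle, is that the absence of infinite $S$- and $T$-cliques does not by itself fix the orientation of the Lachlan blow-ups: structures such as $K_m^T[K_\omega^R[K_n^S]]$ with $n\ge 2$ satisfy every constraint listed so far (no infinite $S$- or $T$-clique, $R$ stable, an invariant equivalence relation $R\vee S$ with finitely many classes) yet have $R$ \emph{non-transitive}, so that distinct maximal $R$-cliques through $a$ can meet in more than one point and no semilinear space results. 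These middle-$\omega$ structures must be excluded as genuine $R$-neighbourhoods, and I would attempt this by homogeneity: for $b\in R(a)$ the set $R(b)\cap R(a)$ is (almost) one $R\vee S$-class of $R(b)\cong R(a)$, and $a$ must appear inside $R(b)$ as a vertex $R$-related to that entire class while simultaneously respecting the relations between $a$ and the $S$-block of $b$; tracking these positions for $b$ and for an $S$-twin $b'$ of $b$, and comparing with the forbidden triangles of $M$ (in particular that $RRT\notin\age(R(a))$ in $R$-diameter $2$), should force a contradiction.

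I expect this last exclusion to be the delicate heart of the argument and to require running it uniformly through all the symmetric sub-cases (the mirror structure $K_m^S[K_\omega^R[K_n^T]]$ and the versions where $S$ rather than $R\vee S$ is the relevant relation), together with the observation that an unstable imprimitive $R(a)$ would, via the structure theory of Chapter \ref{ChapImprimitiveFC}, force both nonforking relations — in particular $R$ — to be unstable in $R(a)$, which is absurd, so that $R(a)$ is in fact stable and lies on Lachlan's list (Theorem \ref{Lachlan3graphs}), where inspection finishes the identification of $R$. Once $R$ is shown to be an equivalence relation on $R(a)$ with finitely many infinite classes, $M$ is semilinear, and since the remaining lemmas of Section \ref{SectRank2} use only simplicity and the fact that the theory is \SU-ranked (never the specific value), they apply without change, completing the induction.
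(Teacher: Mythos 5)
Your proposal follows the paper's proof of Lemma \ref{Inductive} essentially step for step: the same induction on the rank with Theorem \ref{NoRank2} as the base case, the same direct treatment of $R$-diameter 3, and, in diameter 2, the same argument that $R(a)$ cannot be primitive (stable primitive is impossible by Lachlan's Theorem \ref{Lachlan3graphs}, unstable primitive of rank 1 is impossible by Proposition \ref{Year2} because $R$ remains stable in $R(a)$, and intermediate ranks are excluded by the induction hypothesis), followed by the case analysis of Proposition \ref{RDefinesEqRel} to pin the invariant equivalence relation on $R(a)$ down to $R$. The paper compresses this last step into the single sentence ``the same arguments as in Proposition \ref{RDefinesEqRel} show that $R$ is an equivalence relation''; up to that point your argument and the paper's coincide.

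Where you diverge is that you refuse to take that citation at face value, and your worry is legitimate: the published proof of Proposition \ref{RDefinesEqRel} does not dispose of the case you name. In its Case 2, when $R\vee S$ defines an equivalence relation on $R(a)$, the paper argues that each class ``must be of the form $K_n^S[K_\omega^R]$, since $K_n^R[K_\omega^S]$ is impossible because $S$ forms infinite cliques in it''; but the Lachlan--Woodrow Theorem \ref{LachlanWoodrow} allows a third shape for an infinite homogeneous $R,S$-graph with no infinite $S$-cliques, namely $K_\omega^R[K_n^S]$ (infinitely many $S$-blocks of finite size $n\geq 2$, complete in $R$ between blocks), so $R(a)\cong K_m^T[K_\omega^R[K_n^S]]$ is untouched by the stated reason. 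Proposition \ref{PropMultipartite} kills it only when $m=1$ (then $R(a)$ is $R$-complete-multipartite), and $m\geq 2$ is in fact forced by $R$-diameter 2; in this structure $R$ realises $RRS$ inside $R(a)$, so it is not an equivalence relation there and no semilinear space of lines results. So you have put your finger on a genuine lacuna --- but one that the paper shares, rather than one you introduced. The defect of your proposal as a proof is that you do not close it: ``should force a contradiction'' and ``I expect this last exclusion to be the delicate heart'' are statements of intent, not arguments. A workable route, close to what you gesture at, starts from the observation that all $n$ points of the $S$-block of $b$ in $R(a)$ have the same $R$-neighbourhood inside $R(a)$, and that the block equals $\{b\}\cup(S(b)\cap R(a))$: if this block is independent of which $R$-neighbourhood of $b$ it is computed in, it is an $\aut(M/b)$-invariant finite set of size $n\geq 2$ inside $\acl(b)$, contradicting primitivity via Observation \ref{AlgClosure}; if it is not, one must analyse the quadruples $a,b,c,d$ with $R(a,b)$, $S(a,c)$, $R(b,c)$, $R(a,d)$, $S(b,d)$ and the triangles $RSS$, $RST$ they generate. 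Until an argument of this kind is written out, your proof --- and, at this precise point, the paper's --- is incomplete.
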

\begin{proof}
	Independently of the rank, if $\diam_R(M)=3$, then $R(a)$ is a stable $RS$-graph. It cannot be primitive by Theorem \ref{LachlanWoodrow}. And $S$ is not an equivalence relation by Proposition \ref{PropMultipartite}; therefore, $R$ is an equivalence relation on $R(a)$ with finitely many infinite classes (by Observation \ref{NoInfiniteCliques}).

	So we need only worry about those cases with $\diam_R(M)=2$. We proceed by induction on $k$ (common induction, as opposed to transfinite induction, suffices by Koponen's Theorem \ref{Koponen}). The case $k=2$ corresponds to Lemma \ref{LemmaSemilinear}. Suppose that up to $k\geq3$, we know that there are no primitive homogeneous simple 3-graphs of \SU-rank $k-1$ (for $k=3$, this is the content of Theorem \ref{NoRank2}). 

If we are given a homogeneous primitive simple 3-graph of \SU-rank $k+1$ and $R$-diameter 2, then we may assume by Theorem \ref{ThmStableForking} that $S$ and $T$ are nonforking predicates, so we know that $R(a)$ is a simple homogeneous 3-graph of rank at most $k$. It follows that $R(a)$ is either imprimitive or of rank 1 as a structure in its own right (it could have a higher rank as a subset of $M$ due to external parameters). If $R(a)$ is imprimitive, the same arguments as in Proposition \ref{RDefinesEqRel} show that $R$ is an equivalence relation; by Observation \ref{NoInfiniteCliques}, it has finitely many classes. 

Now we argue that $R(a)$ is not primitive. By the induction hypothesis, if $R(a)$ were primitive, then its rank would be 1. 

The structure on $R(a)$ cannot be stable, as in that case it would be one of Lachlan's infinite stable 3-graphs from Theorem \ref{Lachlan3graphs}, all of which are imprimitive. 

And $R(a)$ cannot be isomorphic to a primitive unstable 3-graph of rank 1, as by Proposition \ref{Year2} primitivity contradicts the stability of $R$. Therefore, $R(a)$ is imprimitive and $R$ defines an equivalence relation on $R(a)$ with finitely many classes, by Observation \ref{NoInfiniteCliques}. This proves the proposition for all natural numbers $k\geq 3$.
\end{proof}

Lemma \ref{Inductive} tells us that we can define a semilinear space on $M$ just as we did in subsection \ref{lines}. The analysis from subsections \ref{subsectRDiam2} and \ref{subsectRDiam3} translates verbatim to this more general setting, as the rank hypothesis was used there only to ensure that $M$ interprets a semilinear space. As a consequence,

\begin{theorem}
	Let $M$ be a primitive simple homogeneous 3-graph. Then the theory of $M$ is of \SU-rank 1.
\label{NoHigherRank}\hfill$\Box$
\end{theorem}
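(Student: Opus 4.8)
The plan is to argue by contradiction, excluding \SU-rank $\geq 2$; combined with the trivial lower bound this pins the rank at exactly $1$. By Koponen's Theorem \ref{Koponen}, $\Th(M)$ is supersimple of finite \SU-rank, so only finitely many ranks are in play and it suffices to rule out rank $\geq 2$. That the rank is at least $1$ is immediate: by Observation \ref{AlgClosure} one has $\acl(a)=\{a\}$ in a primitive $\omega$-categorical structure, so the unique $1$-type over $\varnothing$ is non-algebraic and hence of rank $\geq 1$ (rank $0$ would force $M$ finite).

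First I would reduce to the configuration around which the whole chapter is organised. Since the infinite stable homogeneous $3$-graphs in Lachlan's Theorem \ref{Lachlan3graphs} are all imprimitive, a primitive $M$ must be unstable. If every relation were nonforking, the Independence-Theorem argument of Theorem \ref{PrimitiveAlice} would make $M$ the random $3$-graph, which has rank $1$; so under the assumption of rank $\geq 2$ there is a forking relation, and by Theorem \ref{ThmStableForking} that relation is stable. Relabelling, I take $R$ to be the stable forking relation and $S,T$ the nonforking (unstable) ones.

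Next I would extract the combinatorial skeleton. By Lemma \ref{Inductive}, every primitive homogeneous supersimple $3$-graph of rank $\geq 2$ is semilinear: $R$ induces a finite equivalence relation with infinite classes on each $R(a)$, and the maximal $R$-cliques form the lines of a semilinear space in which lines are infinite and each point lies on finitely many lines. This is the only place the rank hypothesis is genuinely consumed. With semilinearity in hand, a diameter dichotomy finishes the argument: by Observation \ref{diam} the $R$-diameter of $M$ is at most $3$, and since $R(a)$ is not $R$-complete (Observation \ref{NotRComplete}) it is at least $2$, so $\diam_R(M)\in\{2,3\}$. Lemma \ref{NoDiam3} excludes $R$-diameter $3$ and Lemma \ref{NoDiam2} excludes $R$-diameter $2$. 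Crucially, the proofs of these two lemmas invoke only semilinearity together with supersimplicity (to produce Morley sequences and to forbid infinite $S$- or $T$-cliques in $R(a)$ via Observation \ref{NoInfiniteCliques}), and never the numerical value of the rank; so they apply verbatim at every rank $\geq 2$, delivering the contradiction.

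The hard part, and what the bulk of the chapter is devoted to, is the $R$-diameter $2$ case behind Lemma \ref{NoDiam2}. There one must first bound $|R(c)\cap R(a)|$ for $c$ at $R$-distance $2$ from $a$ (Propositions \ref{Intersections2} and \ref{SmallIntersection}), using the generalised-quadrangle obstruction of Macpherson's Theorem \ref{DugaldQuadrangle} through Observation \ref{Quadrangle}, together with Neumann's Lemma \ref{ThmNeumann} to slide transversals off one another. One then repeatedly manufactures four or more $\aut(M)$-invariant relations on $S(a)$ or $T(a)$, contradicting the fact that a homogeneous $3$-graph realises only three $2$-types over a point. Everything in the final theorem beyond this is bookkeeping layered on Lemmas \ref{Inductive}, \ref{NoDiam2} and \ref{NoDiam3}; the genuinely new step in passing from Theorem \ref{NoRank2} (rank $2$) to arbitrary rank is precisely the uniformity observation that these diameter arguments used only the semilinear space supplied by Lemma \ref{Inductive} and the existence of an \SU-rank, not its value.
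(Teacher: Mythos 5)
Your proposal is correct and follows essentially the same route as the paper: Koponen's Theorem \ref{Koponen} for finiteness of rank, reduction via Theorem \ref{ThmStableForking} to a stable forking relation $R$, Lemma \ref{Inductive} to obtain semilinearity at every rank $\geq 2$, and then the key observation that the diameter-dichotomy Lemmas \ref{NoDiam2} and \ref{NoDiam3} consume only semilinearity and the existence of an \SU-rank, never its numerical value. This is precisely the argument of Section \ref{SectHigher}, so nothing further is needed.
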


As a consequence of Theorems \ref{ThmStableForking} and \ref{NoHigherRank}, all the homogeneous simple unstable primitive 3-graphs of finite \SU-rank have rank 1. We know from Chapter \ref{ChapRank1} that those are random, and that in the case of imprimitive structures with finitely many classes, the transversal relations in a pair of classes are null, complete or random. This leaves us only with imprimitive structures of rank 2.

\chapter{Simple 3-graphs of \SU-rank 2}\label{ChapRank2}
\setcounter{equation}{0}
\setcounter{theorem}{0}
\setcounter{case}{0}
\setcounter{subcase}{0}

We proved in Chapter \ref{ChapPrimitive} that all primitive homogeneous 3-graphs with simple theory have $\SU$-rank 1. Therefore, all such 3-graphs with rank higher than 1 are imprimitive. Since the invariant equivalence relation is, by quantifier elimination, definable as a disjunction of atomic formulas and formulas defining equivalence relations are stable, we are left with two cases:
\begin{enumerate}
	\item{Either a single predicate, say $R$ defines an equivalence relation with infinitely many infinite classes, or}
	\item{The disjunction of the unstable predicates $S,T$ defines an equivalence relation with infinitely many infinite classes}
\end{enumerate}

Note that the case in which the disjunction of a stable predicate and an unstable predicate defines an equivalence relation does not occur, since the complement of an equivalence relation is also a stable predicate. 

\begin{remark}
	The $\SU$-rank of a homogeneous simple unstable 3-graph $M$ cannot be higher than 2, since that would mean that $M$ is imprimitive and the equivalence relation $E$ has infinitely many infinite classes, each of which is of rank at least 2, implying again imprimitivity and thus stability (we used at least one predicate to define $E$ and one more to define the finer equivalence relation; it follows that all predicates are stable).
\end{remark}

Two structures are easy to isolate:

\begin{proposition}\label{PropInfInf1}
	Let $M$ be an unstable homogeneous 3-graph in which the disjunction of two unstable predicates $S,T$ defines an equivalence relation with infinitely many infinite classes. Then each class is isomorphic to the Rado graph.
\end{proposition}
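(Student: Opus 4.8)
The plan is to fix a single $E$-class $C=b/E$, regard it as a structure in the two colours $S,T$, and prove it is an infinite homogeneous simple unstable graph; the conclusion then follows at once from the Lachlan--Woodrow Theorem \ref{LachlanWoodrow}. First I would record the basic shape of a class. Since $S\vee T$ is the equivalence relation $E$ and any two distinct vertices realise exactly one of $R,S,T$, inside $C$ every pair of distinct vertices is joined by $S$ or by $T$; thus $C$ is a complete graph two-coloured by $\{S,T\}$, i.e. a graph with edge relation $S$. I would also note that $M$ is transitive: in a relational language without unary predicates any two singletons are isomorphic one-element substructures, so by homogeneity they lie in one orbit; by invariance of $E$ the induced action of $\aut(M)$ on $M/E$ is transitive and all $E$-classes are isomorphic. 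As the hypothesis gives infinitely many \emph{infinite} classes, $C$ is infinite.

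Next I would establish that $C$ is homogeneous as an $\{S,T\}$-structure, using the argument already appearing in Claim \ref{ClaimTrick} and Proposition \ref{PropHomNeigh}: given isomorphic finite $A,A'\subseteq C$, homogeneity of $M$ yields $\sigma\in\aut(M)$ with $\sigma A=A'$; if $A\neq\varnothing$ then $\sigma$ sends a vertex of $C$ to a vertex of $C$, so by invariance of $E$ it fixes $C$ setwise, and $\sigma\restriction C$ is the desired automorphism of $C$ (the empty case is trivial). Since $C=b/E$ is $b$-definable, the structure induced on $C$ is interpretable in $M$ and so has simple theory, $M$ being simple by the standing assumption of the chapter.

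The key step will be the instability of $C$. Because $S$ implies $E$, any half-graph witnessing the instability of $S$ in $M$ has all of its vertices $E$-related and therefore lies entirely inside one class $C_0$; hence $S$ is unstable in $C_0$, and by transitivity $S$ is unstable in every $E$-class. (Equivalently, if some class were stable then, by Lachlan--Woodrow, $S$ or $T$ would restrict to an equivalence relation on it, i.e. one of the triangles $SST$, $STT$ would be absent from $\age(M)$; as $S,T$ both imply $E$, transitivity within classes would propagate to all of $M$, making $S$ or $T$ a global equivalence relation and contradicting its instability.) I would also note that both $S$ and $T$ are realised inside each class: both are realised in $M$ and both imply $E$, so by transitivity each class carries genuine edges of both colours and is not monochromatic.

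Finally I would apply the Lachlan--Woodrow Theorem \ref{LachlanWoodrow} together with Remark \ref{RmkSimpleGraphs}: $C$ is an infinite homogeneous unstable graph with simple theory, and the only such graph is the Random (Rado) graph, since the graphs $I_m[K_n]$ are stable and the Henson graphs are not simple. Therefore every $E$-class is isomorphic to the Rado graph. The hard part will be the instability step — precisely, making rigorous that the instability witnesses for $S$ (and $T$) cannot straddle distinct classes — and it is exactly here that the hypothesis that the equivalence relation is $S\vee T$ (confining both unstable relations to the classes) is indispensable.
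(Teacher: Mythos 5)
Your proof is correct and follows essentially the same route as the paper's: the class is homogeneous as an $\{S,T\}$-graph because the automorphisms of $M$ supplied by homogeneity fix the class setwise, the instability witnesses for $S$ and $T$ are trapped inside a single class since both predicates imply the equivalence relation, and the Lachlan--Woodrow Theorem (with simplicity ruling out the Henson graphs) then identifies each class with the Rado graph. Your write-up is simply more explicit than the paper's about transitivity of $M$, the interpretability (hence simplicity) of a class, and the exclusion of the stable and non-simple alternatives in the Lachlan--Woodrow list.
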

\begin{proof}
	Let $C$ be an $S\vee T$-class and consider two finite isomorphic substructures $A,B$ of $C$. By homogeneity of $M$, there exists an automorphism of $C$ taking $A$ to $B$ (namely, the restriction to $C$ of the automorphism of $M$ that exists by homogeneity, which clearly fixes $C$). 

	Clearly, any half-graph witnessing the instability of $S$ and $T$ is contained in a $S\vee T$-class, so it follows that each class is isomorphic to the Rado graph by the Lachlan-Woodrow Theorem. The structure is therefore $K_\omega^R[\Gamma]$.
\end{proof}

\begin{proposition}\label{PropInfInf2}
	Let $M$ be an unstable homogeneous simple 3-graph in which $R$ defines an equivalence relation with infinitely many infinite classes, and suppose that in the union of any two classes only two predicates are realised. Then $M\cong\Gamma[K_\omega^R]$.
\end{proposition}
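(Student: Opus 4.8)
The plan is to run the infinite-class analogue of the argument behind Corollary \ref{CorFiniteClasses}. First I would unpack the hypothesis. Inside any single $R$-class every pair is an $R$-edge, so $R$ is always one of the predicates realised in a union $A\cup B$ of two distinct classes. Since across distinct classes $R$ never holds, while every cross pair must receive some colour, the assumption that only two predicates occur in $A\cup B$ forces exactly one of $S,T$ to be realised across $A,B$, and that colour must then occur on \emph{every} cross pair. Hence the structure induced on $A\cup B$ is $K_\omega^R$ on each side together with a complete bipartite graph of a single colour between them (as anticipated in the discussion preceding the statement).

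Next I would analyse the action on the quotient $M/R$. As all $R$-classes are infinite cliques and singletons are pairwise isomorphic, homogeneity makes $M$ vertex-transitive, so $\aut(M)$ acts transitively on $M/R$. It does not act $2$-transitively: both $S$ and $T$ are realised in $M$ by our standing assumption, and since neither holds inside a class each is realised across some pair of classes; as automorphisms preserve colours, the $S$-coloured and $T$-coloured pairs of classes lie in distinct orbits. This places us exactly under the hypotheses of Observation \ref{ObsInterpretedGraph}, so the graph interpreted on $M/R$ by $\hat S,\hat T$ is homogeneous. Being interpretable in the simple structure $M$ it is simple, and it is unstable, because any half-graph witnessing the instability of $S$ (or of $T$) is built from cross-edges only, each joining two distinct classes, and therefore projects to a half-graph for $\hat S$ (resp.\ $\hat T$) in $M/R$. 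By the Lachlan--Woodrow Theorem \ref{LachlanWoodrow}, together with Remark \ref{RmkSimpleGraphs} (which rules out the stable imprimitive graphs and the non-simple Henson graphs), the only possibility is $M/R\cong\Gamma$.

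Finally I would reconstruct $M$ from this data. Fix an isomorphism $\phi\colon M/R\to\Gamma$, viewing $\Gamma$ as a $3$-graph whose edges carry the cross-colour $S$ and whose non-edges carry $T$, so that $\phi$ matches $\hat S,\hat T$ with edge and non-edge. Each $R$-class is a copy of $K_\omega^R$ and so admits a bijection onto the fibre $\{x\}\times\omega$ of $\Gamma[K_\omega^R]$; assembling these fibrewise bijections with $\phi$ yields a bijection $M\to V(\Gamma[K_\omega^R])$. This map is an isomorphism: inside a class both sides are $R$-cliques, while between the classes over $x,y$ both sides are complete bipartite in the single colour dictated by the $\Gamma$-relation between $x$ and $y$, which $\phi$ preserves. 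Hence $M\cong\Gamma[K_\omega^R]$. I expect the only steps needing genuine care to be the instability transfer to the quotient and the resulting identification $M/R\cong\Gamma$ via Lachlan--Woodrow; once that is in place, the fibrewise reconstruction is routine and the whole argument is a direct infinite-class analogue of Corollary \ref{CorFiniteClasses}.
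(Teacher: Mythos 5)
Your proposal is correct and follows essentially the same route as the paper: the paper's proof is precisely an appeal to the quotient-graph argument of Observation \ref{ObsInterpretedGraph}, followed by the simplicity/instability transfer to $M/R$ and the Lachlan--Woodrow identification $M/R\cong\Gamma$, from which $M\cong\Gamma[K_\omega^R]$ is read off fibrewise. Your write-up merely makes explicit the steps (transitivity but non-$2$-transitivity on $M/R$, projection of half-graphs, fibrewise reconstruction) that the paper leaves implicit.
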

\begin{proof}
This follows from the same argument used in Observation \ref{ObsInterpretedGraph}
\end{proof}

\section{The remaining case}
Propositions \ref{PropInfInf1} and \ref{PropInfInf2} leave us with the work of classifying those homogeneous simple unstable 3-graphs in which $R$ defines an equivalence relation with infinitely many infinite classes and the induced action of $\aut(M)$ on $M/R$ is 2-transitive. As in Chapter \ref{ChapImprimitiveFC}, we start our analysis by identifying the structure induced on a pair of classes. 

The argument from Observation \ref{NotCompBipart} implies that if we fix some vertex $a$, then $S(a)\cap C\neq\varnothing$ and $T(a)\cap C\neq\varnothing$ for all $R$-classes $C$ not containing $a$. 

\begin{proposition}\label{PropPerfectMatching}
	Let $M$ be a homogeneous simple unstable 3-graph in which $R$ defines an equivalence relation with infinitely many infinite classes and $\aut(M)$ acts 2-transitively on $M/R$. If $S(a)\cap C$ is finite for some class $C$ not containing $a$, then the structure induced by $M$ on a pair of distinct classes $C,D$ is a perfect matching.
\end{proposition}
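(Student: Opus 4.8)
The plan is to first pin down the global forking picture and reduce the statement to the claim that $r:=|S(a)\cap C|$ equals $1$. By the paragraph preceding the statement (the argument of Observation \ref{NotCompBipart}), every class $C\neq a/R$ satisfies $S(a)\cap C\neq\varnothing$ and $T(a)\cap C\neq\varnothing$, and by $2$-transitivity on $M/R$ together with homogeneity the number $r$ is a well-defined constant, independent of $a$ and of $C$; symmetry of $S$ makes the bipartite $S$-graph between any two classes $r$-regular and vertex-transitive on each side (the stabiliser $\aut(M)_{\{A\}}\cap\aut(M)_{\{C\}}$ acts transitively on $A$ and on $C$, exactly as in Observation \ref{ObsTransOnClasses}, whose proof does not use finiteness of the classes). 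A rank computation via Theorem \ref{LascarIneqs} shows that this regime forces $S$ to fork: for $x\in S(a)\cap C$ one has $\SU(x/a)=\SU(x/a,C)+\SU(C/a)=0+1=1<2=\SU(x)$, since $x\in\acl(a,C)$ when $S(a)\cap C$ is finite, and $\SU(C/a)=1$ by additivity applied to the rank-$2$ structure with rank-$1$ classes. Hence $T$ is the nonforking relation and $|T(a)\cap C|=\omega$; the target is to prove $r=1$, which is precisely the assertion that the $S$-edges across $C\cup D$ form a perfect matching (all remaining cross-pairs being $T$).

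Next I would establish a dichotomy for the $r$-regular $S$-graph between two classes $A,C$: it is either a perfect matching or a disjoint union of complete bipartite blocks $K_{r,r}^S$. Fixing $a\in A$ with $S(a)\cap C=\{c_1,\dots,c_r\}$ and running the shared-neighbour analysis of Corollary \ref{CorExistsPartition} and Observation \ref{ObsPartition}, the relation $a\sim a'$ iff $S(a)\cap C=S(a')\cap C$ must be an equivalence relation, so that (as in the finite-class case) for any $a,a'$ either $S(a)\cap C=S(a')\cap C$ or these $r$-sets are disjoint; the first alternative yields the complete bipartite blocks $K_{r,r}^S$, the second (with $r=1$) the matching. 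The point to verify here is that no intermediate overlap pattern survives homogeneity, by exhibiting the usual distinguishing existential formula when $0<|S(a)\cap C\cap S(a')\cap C|<r$, contradicting $\qftp(a'/a c_1\dots c_r)$-indistinguishability.

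Finally I would eliminate the block structure with $r\geq 2$, forcing $r=1$. A union of $K_{r,r}^S$-blocks induces on each class $C$ a proper nontrivial $\aut(M)_{\{A\},\{C\}}$-invariant equivalence relation (``$c,c'$ lie in the same $S$-block toward $A$'') whose classes are the finite sets $S(c)\cap A$ of size $r$; since each $R$-class carries no structure beyond being an $R$-clique and the induced action on it is primitive, this contradicts the triviality of algebraic closure and absence of invariant finite-class equivalence relations (Observations \ref{AlgClosure} and \ref{ObsNoFER}), exactly as the size-$2$ conclusion was forced in Lemma \ref{Lemma2}. Ruling out these blocks is the main obstacle: the finite-class arguments used the boundedness of the classes, so the infinite-class version must instead lean on the rank-$1$ structure of a single class and the $2$-transitivity of the action on $M/R$ to produce either the forbidden invariant equivalence relation or one 2-type too many over a pair $a,c$. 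Once $r=1$ is obtained, the $S$-edges between $C$ and $D$ are a perfect matching and the remaining cross-pairs are $T$-edges, which is the desired description.
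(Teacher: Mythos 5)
Your reduction of the statement to $r:=|S(a)\cap C|=1$ and your opening moves are fine: constancy of $r$, transitivity of the two-class stabiliser on each class, and the conclusion that $S$ forks while $T$ does not (modulo the sloppiness of treating the class $C$ as a parameter, this is a correct $M^{\rm eq}$ computation). The gap is in the core of the argument, the dichotomy ``equal-or-disjoint $S$-neighbourhoods'' and the elimination of $K_{r,r}^S$-blocks; neither is actually proved, and the tools you cite cannot prove them. For the dichotomy, there is no ``usual distinguishing existential formula'': inside $M$ every pair $a,a'$ lying in a common class realises the single $2$-type $R(x,y)$, so homogeneity of $M$ says nothing about how the finite sets $S(a)\cap C$ and $S(a')\cap C$ overlap for a \emph{fixed} class $C$ (the automorphism moving one $R$-pair to another need not fix $C$), and a configuration $(a,a',c_1,\ldots,c_r)$ with partial overlap is not isomorphic to one with full or empty overlap, so no two isomorphic substructures with different behaviour are exhibited --- ``$\sim$ is an equivalence relation, so the sets are equal or disjoint'' is a non sequitur. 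The finite-class analogue of this step (Case 2 of Lemma \ref{Lemma1} and Corollary \ref{CorExistsPartition}) was proved by applying the Independence Theorem to nonforking $S$-types; but your own first step shows that here $S$ \emph{forks}, so that machinery is precisely what is unavailable. Your block-elimination step suffers similarly: Observation \ref{AlgClosure} requires $M$ primitive, Observation \ref{ObsNoFER} and Lemma \ref{Lemma2} require finite classes (and Lemma \ref{Lemma2}'s proof again runs the Independence Theorem on nonforking $S$), and asserting that ``the induced action on the class is primitive'' is circular, since the hypothetical blocks are exactly an imprimitivity system for that action. The one cited argument that does survive infinite classes is Observation \ref{ObsPartition} (an automorphism fixing $a$ and $b$ fixes both classes setwise, so three or more blocks give a homogeneity violation via common $S$-neighbours inside $A$), but you invoke it in the dichotomy step rather than where its argument is actually needed.

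The paper avoids all of this with one idea your outline is missing: it first proves that the induced structure on $C\cup D$ is itself a \emph{homogeneous} 3-graph. Finiteness of $\bigcup_{a\in A}S(a)\cap D$ lets one attach to any finite $R$-clique a vertex of the opposite class $T$-related to all of it, reducing every isomorphism between finite subsets to one involving a cross-edge, and such an isomorphism extends to an automorphism of $M$ that preserves $\{C,D\}$ setwise. Once the pair is homogeneous it is stable (finite $S$-degree forbids half-graphs, $R$ is an equivalence relation, $T$ is the complement of $R\vee S$), and Lachlan's classification (Theorem \ref{Lachlan3graphs}) immediately gives $C\cup D\cong K_\omega^R\times K_2^S$, i.e.\ a perfect matching. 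Homogeneity of the pair is exactly what would make your overlap analysis trivial, since in a homogeneous pair the $2$-types are quantifier-free and the intersection behaviour of $S$-neighbourhoods is uniform; without it, your combinatorial plan cannot get off the ground in the forking-$S$ regime. If you want to salvage your route, prove homogeneity of $C\cup D$ first --- and at that point reading off Lachlan's list is shorter than redoing the neighbourhood analysis by hand.
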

\begin{proof}
	We claim that the structure induced on $C,D$ is homogeneous. Consider two isomorphic finite subsets $A,B$ of $C\cup D$. If $S$ or $T$ is realised in $A$, then the usual argument (see Proposition \ref{PropInfInf1}) proves that there exists an automorphism of $C\cup D$ taking $A$ to $B$. 

And if $A\subset C,B\subset D$ are $R$-cliques of the same size, then  we can find vertices $v_A, v_B$ in the opposite side of $A,B$ such that $T(v_A,a)$ and $T(v_B,b)$ for all $a\in A, b\in B$. This follows from the fact that, by homogeneity, the sets $\bigcup_{a\in A}S(a)\cap D$ is finite and $D$ is infinite. Now $Av_A$ and $Bv_B$ are isomorphic and we can apply the same argument as in the preceding paragraph. The same idea works when $A,B$ are subsets of the same class.

Thus $C\cup D$ is a homogeneous stable (because the $S$-neighbourhood of a vertex in $C\cup D$ is finite and $R$ is an equivalence relation) 3-graph in which $R$ is an equivalence relation with two infinite classes and no disjunction of two predicates is an equivalence relation. Going through Lachlan's list, we conclude that $C\cup D\cong K_\omega^R\times K_2^S$.
\end{proof}

In Chapter \ref{ChapImprimitiveFC} we proved that the only ``interesting" imprimitive homogeneous simple unstable 3-graph has the property that $S$ (and $T$) form perfect matchings in the union of any two classes. In that case, since the classes are of size 2 if one of the predicates matches the classes, then so does the other. One could imagine that an analogue with infinite classes would have either $S$ or $T$ as a matching between any two classes, possibly embedding all $S,T$-graphs as transversals. Fortunately, no such monster exists:

\begin{proposition}\label{PropIntersectionTypes}
	Let $M$ be an imprimitive homogeneous simple unstable 3-graph in which $R$ defines an equivalence relation with infinitely many infinite classes, and suppose that both $S$ and $T$ are realised in a pair of classes. If a pair of distinct classes $C,D$ embed a half-graph for $S,T$, then for any finite disjoint $A,B\subset C$ of the same size there exist $d,d'\in D$ such that $\tp(d/A)=\tp(d'/\sigma(B))$ for some permutation $\sigma$.
\end{proposition}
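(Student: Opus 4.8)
The plan is to reduce the statement to a numerical matching problem and then to realise the matching pattern by a vertex of the prescribed class $D$ using the Independence Theorem. Since $A=\{a_1,\dots,a_n\}\subseteq C$ is an $R$-clique, every $d\in D$ is joined to each $a_i$ by $S$ or $T$, and by homogeneity the type $\tp(d/A)$ over the \emph{set} $A$ depends only on the integer $|S(d)\cap A|$: permuting the elements of $A$ is an automorphism of the induced structure fixing $d$. Thus the desired equality $\tp(d/A)=\tp(d'/\sigma(B))$ for a suitable matching bijection $\sigma$ is equivalent to the equality of $S$-counts $|S(d)\cap A|=|S(d')\cap B|$, with $\sigma$ pairing the $S$-neighbours of $d$ in $A$ with those of $d'$ in $B$. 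So it suffices to produce $d,d'\in D$ realising one common value $k\in\{0,\dots,n\}$ of the $S$-count, over $A$ and over $B$ respectively; I would aim at the value $k=0$, i.e. vertices of $D$ that are $T$-complete on $A$ and on $B$.

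First I would fix a reference configuration. By Proposition \ref{IndiscerniblePairs} and the hypothesis that $C,D$ embed a half-graph for $S,T$, extract an indiscernible half-graph $(\alpha_i,\beta_i)_{i\in\omega}$ lying inside $C\cup D$, with $\alpha_i\in C$, $\beta_i\in D$ and $S(\alpha_i,\beta_j)$ iff $i\le j$; then $\beta_0$ is $T$-related to all of $\alpha_1,\dots,\alpha_{2n}$, which are mutually independent over a basepoint, so a $T$-complete vertex of $D$ on a $2n$-clique of $C$ is already realised. To transport this to the arbitrary clique $A$ I would amalgamate local choices: the consequence of Observation \ref{NotCompBipart} recorded just before the statement gives, for each $a_i$, a vertex of $D$ that is $T$-related to $a_i$, and since we are in the unstable (half-graph) case rather than the matching case of Proposition \ref{PropPerfectMatching}, the set $T(a_i)\cap D$ is infinite. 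Fixing a basepoint $c_0\in C\setminus(A\cup B)$ and a point $d_0\in D$, and putting $E=\acl(c_0d_0)$, the elements of $C$ become pairwise independent over $E$ because the class $C$ is a pure $R$-clique and hence carries a degenerate rank-$1$ geometry; by Proposition \ref{PairwiseIndep} applied over $E$ they are mutually independent. Lowness (Proposition \ref{CatSimpLow} and Corollary \ref{LascarTypes}) ensures strong type equals Lascar strong type, so I may iterate the Independence Theorem, amalgamating the extensions asserting ``$T(w,a_i)$'' of the generic type of $D$ over $E$, to obtain a single $w\in D$ with $T(w,a_i)$ for all $i$. Running the same construction for $B$ yields $d,d'\in D$, each $T$-complete on the respective clique, and any bijection $\sigma$ matching $A$ to $B$ completes the proof.

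The hard part is ensuring that the amalgamated vertex lands in the \emph{given} class $D$ rather than in a generic (new) $R$-class: membership in $D$ is a forking condition, dropping the $\SU$-rank from $2$ to $1$, so it cannot be imposed as one more nonforking factor, and an automorphism carrying the reference clique $\{\alpha_i\}$ onto $A$ need not fix $D$ setwise — this is exactly the failure of full homogeneity of the pair $C\cup D$ on same-class $R$-cliques flagged in Remark \ref{trivialremark}, so $\beta_0$ cannot be moved to $D$ by homogeneity alone. I would resolve this by running the amalgamation \emph{inside} the rank-$1$ structure carried by the class: the common base type is ``$w$ generic in $D$ over $E$'' (pinned by $R(w,d_0)$ and by a fixed relation of $w$ to $c_0$, so that all local types share their restriction to $E$), and the conditions $T(w,a_i)$ are then nonforking over $E$ precisely because, within the rank-$1$ set $D$, forking equals algebraicity and $T(a_i)\cap D$ is infinite. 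Verifying that these local extensions genuinely share a Lascar strong type over $E$, and that the degenerate-geometry independence of the $a_i$ survives over the enlarged base $E$, is the delicate point; it is here, and only here, that the half-graph hypothesis (rather than mere instability somewhere in $M$) is used, the sequence $(\alpha_i,\beta_i)$ serving as the concrete witness that the targeted $T$-complete pattern is consistent with lying in a $D$-class.
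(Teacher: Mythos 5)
Your reduction to matching $S$-counts is fine, and your device for controlling where the amalgamated vertex lands --- building $R(w,d_0)$ into the common base type over $E=\acl(c_0d_0)$, so that any realisation produced by the Independence Theorem automatically lies in $D$ --- is a genuinely different idea from the paper's and is sound as far as the \emph{output} vertex is concerned. The trouble is that the two hypotheses you must feed into the Independence Theorem are never secured, and both fail for the very reason you yourself flag as ``the hard part''. First, mutual independence of the given points $a_1,\dots,a_n$ over $E$: the inference ``$C$ is a pure $R$-clique, hence carries a degenerate rank-$1$ geometry'' is a non sequitur; the induced structure on $C$ being a clique says nothing about $\acl$ over parameters containing $d_0$, and nothing proved at this stage of the classification rules out, say, $a_i\in\acl(c_0d_0a_j)$. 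To make the $a_i$ pairwise independent over $E$ you would want to choose $d_0\in D$ independent from $A\cup B$ over $c_0$; but a nonforking extension of $\tp(d_0/c_0)$, though realised in $M$ by saturation, need not be realised \emph{in} $D$, since $D$ is not definable over $c_0$ --- the class-preservation problem again, now on the input side. Second, and more seriously, the Lascar-strong-type hypothesis: the amalgamation needs all local witnesses $w_i$ to realise one common type over $E$ (indeed over $\acl^{eq}(E)$), i.e.\ you need $w_i\in D$ satisfying simultaneously $T(w_i,a_i)$, a fixed relation to $c_0$, and fixed relations to the remaining elements of $\acl(c_0d_0)$. Producing a vertex of the \emph{given} class $D$ with prescribed relations to two or more prescribed points of $C$ is an instance of the proposition you are proving, so this step is circular. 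The half-graph only witnesses that such configurations occur somewhere in $C\cup D$; it does not anchor them at the given $a_i$ and $c_0$, and by Remark \ref{trivialremark} unanchored homogeneity cannot do it either.

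The paper avoids forking machinery altogether and solves the anchoring problem by induction on $|A|$: the inductive hypothesis supplies $e\in D$ that is $S$-complete over the first $n$ points of $A'$, and if $e$ is not already $S$-related to $a_{n+1}$, the indiscernible half-graph supplies, inside $C\cup D$, both the configuration realised by $A'\cup\{e\}$ (namely $\alpha_1,\dots,\alpha_{n+1},\beta_n$, with $\beta_n$ $S$-complete over the first $n$ and $T$-related to $\alpha_{n+1}$) and the target configuration ($\beta_{n+1}$, $S$-complete over all $n+1$). An automorphism matching these two finite configurations sends $\beta_n\mapsto e$, hence fixes $D$ setwise, hence carries $\beta_{n+1}$ to a vertex of $D$ that is $S$-complete over $A'$. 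That anchored-automorphism step is exactly the mechanism your Independence-Theorem setup presupposes but cannot supply; to salvage your route you would first have to prove the two-point case (a vertex of $D$ with prescribed relations to two prescribed points of $C$) by such an anchored homogeneity argument --- at which point the induction finishes the job and the forking calculus becomes unnecessary.
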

\begin{proof}
Note first that the existence of a half-graph for $S,T$ in $C\cup D$ implies that $S(c)\cap D$ and $T(c)\cap D$ are infinite (as are the corresponding neighbourhoods in $C$ for any vertex in $D$). 

We proceed by induction on the size of $A$. For $|A|=1$, this follows from the fact that $S(a)\cap E\neq\varnothing$ for all classes $E$ not including $a$, so there exist vertices in $D$ which are $S$-related to $A$, $B$. 

Now suppose that up to $|A|=n$ we can find vertices in $D$ connected to $A$ and $B$ only by $S$-edges, and let $a_{n+1},b_{n+1}$ be vertices in subsets $A',B'$ of $C$ of size $n+1$. We know that there exist subsets $X$ isomorphic to $A'$ such that there exists a vertex in $D$ that is $S$-related to all the elements of $X$, namely any vertex in the $D$-side of a half-graph embedded in $C\cup D$ with index larger than any of the first $n+1$ elements of the half-graph. The result follows by homogeneity.
\end{proof}
\begin{corollary}\label{CorHomPairs}
	Let $M$ be an imprimitive homogeneous simple unstable 3-graph in which $R$ defines an equivalence relation with infinitely many infinite classes, and suppose that both $S$ and $T$ are realised in a pair of classes. The structure induced on a pair of classes is homogeneous.
\end{corollary}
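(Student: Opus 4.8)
Looking at this statement, I need to prove that the structure induced on a pair of $R$-classes $C, D$ is homogeneous, under the assumption that both $S$ and $T$ are realised in that pair.

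The plan is to use the standard transfer-of-homogeneity argument that appears repeatedly in this chapter (as in Proposition \ref{PropInfInf1}, Proposition \ref{PropPerfectMatching}, and the claim inside Observation \ref{ObsInterpretedGraph}). Given two isomorphic finite substructures $A, B$ of $C \cup D$, I want to produce an automorphism of $M$ that carries $A$ to $B$ and fixes $C \cup D$ setwise; its restriction then witnesses homogeneity of $C \cup D$. The only subtlety is guaranteeing that the isomorphism $A \to B$ extends to an automorphism of $M$ that respects the pair $\{C, D\}$, rather than moving these classes elsewhere.

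First I would dispose of the easy case: if $A$ contains an $S$- or $T$-edge (equivalently, if $A$ meets both classes via a non-$R$ edge, or more generally if the isomorphism already forces the two classes to their counterparts), then by homogeneity of $M$ the partial isomorphism extends to some $\sigma \in \aut(M)$; since an $S$- or $T$-edge lies transversally across the two classes and $R$ is an invariant equivalence relation, $\sigma$ must send $C$ and $D$ to $C$ and $D$ (in some order), so $\sigma$ fixes $C \cup D$ setwise and restricts to the desired automorphism. The hard part — and the reason Proposition \ref{PropIntersectionTypes} and Corollary \ref{CorHomPairs} were set up — is the case where $A$ and $B$ are $R$-cliques contained in a single class (say $A, B \subset C$), so the isomorphism $A \to B$ carries no information pinning down $D$. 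Here I would invoke Proposition \ref{PropIntersectionTypes}: because $C \cup D$ embeds a half-graph for $S, T$, for the finite sets $A, B \subset C$ there exist vertices $d, d' \in D$ with $\tp(d/A) = \tp(d'/B)$ (up to the permutation matching the isomorphism $A \to B$). Then $A \cup \{d\}$ and $B \cup \{d'\}$ are isomorphic finite substructures whose union now contains an $S$- (or $T$-) edge crossing into $D$, so the easy case applies and yields $\sigma \in \aut(M)$ fixing $C \cup D$ setwise and taking $A \cup \{d\}$ to $B \cup \{d'\}$; restricting $\sigma$ to $C \cup D$ gives an automorphism of $C \cup D$ sending $A$ to $B$.

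The main obstacle is precisely the $R$-clique case, and the key technical input is Proposition \ref{PropIntersectionTypes}, which supplies the auxiliary vertices $d, d'$ that "anchor" the isomorphism to the correct class $D$; I would also need to check the symmetric situation where $A, B$ lie in $D$ rather than $C$, and the mixed situation where $A$ straddles both classes but consists only of $R$-edges within each — all of these reduce to the same anchoring trick since $S(c) \cap D$ and $T(c) \cap D$ are infinite for every $c$ (as noted in the proof of Proposition \ref{PropIntersectionTypes}). Once the anchoring is available, the argument is a routine application of the homogeneity of $M$ together with the invariance of $R$, exactly as in Proposition \ref{PropPerfectMatching}.
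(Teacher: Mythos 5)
Your proof is correct and follows essentially the same route as the paper's: the easy case where $A$ or $B$ contains an $S$- or $T$-edge (so the extending automorphism fixes $\{C,D\}$ setwise by invariance of $R$), and the hard case of $R$-cliques inside a single class, resolved by using Proposition \ref{PropIntersectionTypes} to anchor each clique to the opposite class before applying homogeneity of $M$. (Your ``mixed'' case is vacuous: any set meeting both classes automatically contains a cross-class $S$- or $T$-edge, so it falls under the easy case.)
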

\begin{proof}
	Fix two classes $C,D$, and consider two finite isomorphic subsets $A\subset C\cup D, B\subset C\cup D$. If either of $A, B$ includes an $S$- or $T$-edge, then the isomorphism extends to an automorphism of $M$, which fixes $C\cup D$ setwise $C,D$ (and therefore its restriction to $C\cup D$ is an automorphism of the induced structure on $C\cup D$).

	Now if each of $A,B$ is a subset of $C$ or $D$, we can use Proposition \ref{PropIntersectionTypes} to find an element in the opposite class $S$-related to $A$ and to $B$ and apply the same argument as before to the extended subsets. 
\end{proof}
\begin{theorem}\label{ThmNoNewStr}
	There is no homogeneous unstable 3-graph $M$ in which $R$ defines an equivalence relation with infinitely many infinite classes and the structure induced by $M$ on any pair of distinct classes is $K_\omega^R\times K_2^T$. 
\end{theorem}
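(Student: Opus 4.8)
The plan is to first extract the combinatorial skeleton, then use the rank-$2$ and simplicity hypotheses to pin down forking, and finally run a case analysis on the homogeneous graph induced on a $T$-neighbourhood. Since the structure on every pair of classes is $K_\omega^R\times K_2^T$, the relation $T$ is a \emph{perfect matching} between any two $R$-classes: $|T(a)\cap D|=1$ and $|S(a)\cap D|=\infty$ for every class $D\neq a/R$. By Observation \ref{ObsTransRE} the induced action of $\aut(M)$ on $M/R$ is $2$-transitive, and because $S$ is quantifier-free definable from $R$ and $T$, instability of $M$ is equivalent to instability of $T$; thus $T$ is unstable.

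Next I would compute forking. Fix $a$, a class $D\neq a/R$, let $d$ be the unique $T$-match of $a$ in $D$, and pick $d'\in D$ with $d'\neq d$, so that $R(d,d')$ and $S(a,d')$. Since $d$ is the unique $T$-neighbour of $a$ in $d'/R$, we have $d\in\dcl(a,d')$, hence $\SU(d/ad')=0$. Feeding this into the Lascar inequalities (Theorem \ref{LascarIneqs}) gives $\SU(d'/a)=\SU(dd'/a)$ and $\SU(dd'/a)\ge \SU(d'/ad)+\SU(d/a)$; combined with $\SU(d'/a)\le\SU(d')=2$, $\SU(d/a)\ge 1$ and $\SU(d'/ad)\ge 1$ this forces $\SU(d/a)=1$. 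Therefore the $T$-type over $a$ is a forking extension of the generic type, so \emph{$T$ forks}; as $R$ forks too and some relation must be non-forking, \emph{$S$ is the unique non-forking relation}. The crucial consequence is the following principle: an indiscernible infinite $S$-clique is a Morley sequence (Observation \ref{MorleySqn}), so by the characterisation of dividing \emph{no vertex can be $T$-adjacent to an infinite $S$-clique} — otherwise $T(x,m)$ would be consistent along a Morley sequence and $T$ would not fork.

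Now I would analyse $T(a)$. By the matching property $T(a)$ is a transversal meeting each class once, and by Proposition \ref{PropHomNeigh} it is a homogeneous graph (edges $T$, non-edges $S$); being interpretable in $M$ it is simple, so by the Lachlan--Woodrow Theorem \ref{LachlanWoodrow} and Remark \ref{RmkSimpleGraphs} it is the Random Graph, some $I_m[K_n]$, or a complete multipartite graph. If $T(a)$ contains an infinite $S$-clique (Random Graph, or $I_\omega[K_n]$ with $n<\omega$) the forking principle is violated, a contradiction. If $T(a)=K_\omega$ is $T$-complete then Observation \ref{sop} makes $\bar T$ an equivalence relation, so $T$ is stable, contradicting instability. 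If $T(a)\cong K_\omega[I_n]$ is $T$-complete-multipartite, then — after checking that $M$ is $T$-connected (its $T$-connectivity relation is an invariant equivalence relation containing $T$, and none of $T$, $T\vee R$, $T\vee S$ is transitive, so it must be all of $M$) — Proposition \ref{PropMultipartite} forces $M$ itself to be $T$-complete-multipartite, making $\neg T$ an equivalence relation and $T$ stable, again a contradiction.

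The remaining, and hardest, case is $T(a)\cong I_m[K_\omega]$ with $m\ge2$: here $T(a)$ has no infinite $S$-clique, so the forking principle gives nothing, and $T$ is stable \emph{on} $T(a)$. The plan is to exploit that in $I_m[K_\omega]$ every $T$-clique lies inside a unique maximal $T$-clique, so maximal $T$-cliques behave like \emph{lines}, any two $T$-related points lying on exactly one. Take an indiscernible half-graph $(a_i,b_i)_{i\in\omega}$ witnessing the instability of $T$, with $T(a_i,b_j)$ iff $i\le j$; a short argument shows all the $a_i$ and all the $b_j$ lie in distinct classes. Since $a_0$ is $T$-adjacent to the whole clique $\{b_j\}\subseteq T(a_0)$ and $T(a_0)$ has no infinite $S$-clique, the $b_j$ must be pairwise $T$, i.e.\ an infinite $T$-clique lying on a single line $\ell\ni a_0$. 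But then $a_1$ is $T$-adjacent to the infinitely many collinear points $b_1,b_2,\dots\in\ell$, which forces $a_1\in\ell$ and hence $T(a_1,b_0)$ — contradicting $S(a_1,b_0)$. This closes the last case, and with it the theorem. The main obstacle is precisely this $I_m[K_\omega]$ case, where neither the forking principle nor the multipartite lemma applies, and one must first recover the line structure from the maximal $T$-cliques and then play it against an indiscernible half-graph.
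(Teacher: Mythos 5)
Your proof does not establish the theorem as stated, because it leans essentially on hypotheses that the statement does not contain. Theorem \ref{ThmNoNewStr} is about an arbitrary homogeneous \emph{unstable} 3-graph with the given pair structure; it does not assume simplicity, let alone supersimplicity of \SU-rank 2, yet you announce at the outset that you will ``use the rank-$2$ and simplicity hypotheses.'' Concretely, three steps fail without them: (i) the assertion $\SU(d')=2$ presupposes an ordinal-valued rank and the bound $\SU(M)\leq 2$ (which in the paper is a remark about \emph{simple} unstable 3-graphs, resting on Koponen's Theorem \ref{Koponen}); (ii) ``some relation must be non-forking'' and your forking principle (indiscernible $S$-cliques are Morley sequences, and dividing is witnessed along Morley sequences) are simplicity facts — and, as a secondary slip, Observation \ref{MorleySqn} which you cite for this is stated for \emph{primitive} structures, whereas $M$ here is imprimitive, so you would in any case need Proposition \ref{PairwiseIndep} together with a check that no disjunction of $R,S,T$ defines a finite $\varnothing$-definable equivalence relation; (iii) your Lachlan--Woodrow case analysis of $T(a)$ discards the generic $K_n$-free graphs and their complements by declaring $T(a)$ simple, which again presupposes $M$ simple. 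Without simplicity, every case in which $T(a)$ contains an infinite $S$-clique (your case (a)) is untouched. What you have is a proof of the weaker statement with ``simple unstable'' in place of ``unstable'' — adequate for the corollaries that follow in this chapter, but not for the theorem as written.

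The gap matters because the theorem really is true in the stated generality, and the paper proves it with no model theory at all: it is a direct consequence of Observation \ref{ObsEasyCase} with the roles of $S$ and $T$ exchanged. Instability rules out $T$ being an equivalence relation (otherwise $M\cong K_\omega^R\times K_\omega^T$, which is stable); so there are $b,c,d$ in distinct classes with $T(b,c)\wedge T(c,d)\wedge S(b,d)$. Letting $d'$ be the $T$-match of $b$ in $d/R$ and $c'$ the $T$-match of $d'$ in $c/R$, the element $c'$ has the same quantifier-free type over $\{b,c,d\}$ as any third element $c''$ of $c/R$, but a different complete type, since being a match-of-a-match of $b$ is invariant over $\{b,d\}$; homogeneity then forces $|c/R|=2$, contradicting infiniteness of the classes. (Observation \ref{ObsEasyCase} is stated for finite classes, but its proof uses only that some class has at least three elements.) It is worth noting that the purely combinatorial parts of your argument are sound and genuinely nice — in particular the final $I_m[K_\omega]$ case, where the uniqueness of lines in the semilinear space given by Proposition \ref{Semilinear3Graph} (applied to $T$) collides with an indiscernible half-graph, needs no simplicity at all; the steps that cannot be salvaged in the stated generality are exactly (i)--(iii) above.
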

\begin{proof}
\comm
Since $R$ defines an equivalence relation, any such $M$ would satisfy $S\sim T$ by instability. Note that $S\sim^RT$ is impossible because it implies that $T(a)$ contains $R$-edges. It follows that either $S\sim^ST$ or $S\sim^TT$ holds. Any of them implies $SST,STT\in\age(M)$. Since $T$ is a perfect matching between two classes, the triangles $RST$ and $SSR$ are also in $\age(M)$.

The forbidden triangles of $M$ are $TTR$ (because $T$ is a perfect matching between two $R$-classes), $RRT$, and $RRS$ (because $R$ is transitive). Note that in any amalgamation problem of one-point extensions $B,C$ of $A$, if there is an element $a\in A$ such that $R(b,a)\wedge T(c,a)$ then the edge $S(b,c)$ is forced by the forbidden triangles $TTR, RRT$. If the amalgamation property holds for $\age(M)$, then the following two structures are also in $\age(M)$ (they are the only possible solution to the amalgamation problem of $b,c$ over the bottom edge): 
\[
\includegraphics[scale=0.8]{Rank206.pdf}
\]
But now if we try to amalgamate $U,U'$ over the common $K_3^S$ we obtain the following problem without a solution in $\age(M)$:
\[
\includegraphics[scale=0.8]{Rank207.pdf}
\]
Contradicting the amalgamation property.
\ent
This is a direct consequence of Observation \ref{ObsEasyCase}.
\end{proof}
\begin{corollary}\label{Corollary1}
The only simple homogeneous 3-graph in which $R$ defines an equivalence relation with infinitely many infinite classes and $T$ is a perfect matching between any two distinct $R$-classes is $K_\omega^R\times K_\omega^T$.
\end{corollary}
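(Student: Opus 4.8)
The plan is to derive this corollary from Theorem~\ref{ThmNoNewStr} together with Lachlan's classification of stable homogeneous $3$-graphs (Theorem~\ref{Lachlan3graphs}). Before proving uniqueness I would check that $K_\omega^R\times K_\omega^T$ does satisfy the hypotheses: there the $R$-classes are the columns (vertices sharing a second coordinate), so $R$ is an equivalence relation with $\omega$ many classes, each of size $\omega$; between two columns the $T$-edges join $(a,b_1)$ to $(a,b_2)$, so $T$ restricts to a bijection and is a perfect matching, the remaining cross-pairs all carrying $S$. Thus the statement is non-vacuous and it remains to show every such $M$ is isomorphic to it.

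So let $M$ be a simple homogeneous $3$-graph in which $R$ is an equivalence relation with infinitely many infinite classes and $T$ is a perfect matching between any two distinct $R$-classes. The first step is to identify the structure induced on a pair of classes $C,D$. No cross-pair can carry $R$; the matching uses exactly one $T$-edge at each vertex, and since $C$ and $D$ are infinite the matching omits infinitely many cross-pairs, which must therefore all be $S$-edges. Hence the substructure on $C\cup D$ is exactly $K_\omega^R\times K_2^T$.

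The second step is to eliminate instability. Were $M$ unstable, it would be a homogeneous unstable $3$-graph in which $R$ is an equivalence relation with infinitely many infinite classes and the induced structure on every pair of distinct classes is $K_\omega^R\times K_2^T$, which is precisely the configuration excluded by Theorem~\ref{ThmNoNewStr}. This contradiction forces $M$ to be stable, so $M$ appears in Lachlan's list of infinite stable homogeneous $3$-graphs.

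The final step is to read off the answer from Theorem~\ref{Lachlan3graphs}, keeping only those entries in which a single predicate $R$ is an equivalence relation with infinitely many infinite classes and $T$ is a matching across classes. In each of $P^i[K_m^i]$, $Q^i[K_m^i]$, $K_m^i[Q^i]$, $K_m^i[P^i]$ the only single-predicate equivalence relations have either finitely many classes or finite classes; in the iterated wreath product $K_m^i[K_n^j[K_p^k]]$ a single predicate that is an equivalence relation is complete or null (never a matching) across any pair of classes. All of these are discarded, and the sole surviving family is $K_m^i\times K_n^j$ with $R$ the equivalence relation; this forces $m=n=\omega$, and since $T$ must be the matching colour we are left with $M\cong K_\omega^R\times K_\omega^T$. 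I expect the main obstacle to be exactly this bookkeeping over Lachlan's list: for each entry one must determine which predicate can be an equivalence relation with infinitely many infinite classes and verify that $T$ is a perfect matching rather than a complete or null relation between classes. The argument is routine given the definitions of the product and wreath operations, but needs care.
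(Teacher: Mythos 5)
Your proposal is correct and follows exactly the route the paper intends (the paper states this corollary without proof, immediately after Theorem~\ref{ThmNoNewStr}): note that the induced structure on each pair of classes is $K_\omega^R\times K_2^T$, invoke Theorem~\ref{ThmNoNewStr} to force stability, and then scan Lachlan's list (Theorem~\ref{Lachlan3graphs}) to see that $K_\omega^R\times K_\omega^T$ is the only surviving entry. Your bookkeeping over the wreath-product and $P^i$/$Q^i$ families is accurate, so the write-up supplies precisely the details the paper leaves implicit.
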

\begin{corollary}\label{Corollary2}
	Let $M$ be an imprimitive homogeneous simple unstable 3-graph in which $R$ defines an equivalence relation with infinitely many infinite classes, and suppose that the induced action of $\aut(M)$ on $M/R$ is 2-transitive. Then the structure induced by $M$ on the union of any two distinct classes $C,D$ is isomorphic to the Random Bipartite Graph.
\end{corollary}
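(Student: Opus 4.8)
The plan is to reduce the statement to the finite-class classification already obtained in Proposition \ref{PropImprimitive3Graphs}, applied not to $M$ but to the structure induced on a single pair of classes. Since $M$ is imprimitive with infinitely many infinite $R$-classes it has $\SU$-rank $2$, so the rank-$1$ results of Section \ref{SubsectFER}, in particular Proposition \ref{ComplicatedAlice}, do not apply to $M$ itself. The point to exploit is that any \emph{pair} of classes, viewed as a structure in its own right, is a $2$-class structure of the kind classified in Chapter \ref{ChapImprimitiveFC}.

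First I would fix two distinct $R$-classes $C,D$ and let $N$ denote the $3$-graph induced by $M$ on $C\cup D$. By the $2$-transitivity hypothesis and Remark \ref{Rmk2}, both $S$ and $T$ are realised across $C,D$, so $N$ is a genuine $3$-graph ($R$ being realised inside each infinite class). By Corollary \ref{CorHomPairs}, $N$ is homogeneous; since the language is binary and has no unary predicates, homogeneity forces $\aut(N)$ to act transitively on $C\cup D$. Moreover $C\cup D=c/R\cup d/R$ is definable in $M$ over any $c\in C$, $d\in D$, so $\Th(N)$ is simple, and in $N$ the relation $R$ is an equivalence relation with exactly two infinite classes.

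The crucial step is to show that $N$ is \emph{unstable}, and for this I would first rule out the matching configurations. If $S(c)\cap D$ (or $T(c)\cap D$) were finite for some $c$, then by Proposition \ref{PropPerfectMatching} the structure on a pair of classes would be $K_\omega^R\times K_2^S$ (respectively $K_\omega^R\times K_2^T$); but Theorem \ref{ThmNoNewStr}, together with the symmetry between $S$ and $T$, says that no unstable $M$ with $R$-classes as here can have such a pair, contradicting the instability of $M$. Hence $S(c)\cap D$ and $T(c)\cap D$ are infinite for every $c$. Now if $N$ were stable, then by Lachlan's Theorem \ref{Lachlan3graphs} it would be one of the listed stable $3$-graphs; inspecting that list, the only stable homogeneous $3$-graphs in which $R$ is an equivalence relation with two infinite classes and both $S$ and $T$ occur across them are $K_\omega^R\times K_2^S$ and $K_\omega^R\times K_2^T$, precisely the matchings just excluded (in each, one cross-relation has neighbourhoods of size $1$). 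Therefore $N$ is unstable.

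Thus $N$ is a transitive, simple, unstable, homogeneous $3$-graph in which $R$ defines an equivalence relation with $2<\omega$ classes, so Proposition \ref{PropImprimitive3Graphs} applies to $N$: its theory is supersimple of $\SU$-rank $1$ and the structure induced on each pair of its classes is isomorphic to the Random Bipartite Graph. Since $N$ has exactly two classes, that single pair is all of $N=C\cup D$, which is the desired conclusion. I expect the main obstacle to be exactly the instability step: one must check that excluding the matchings via Theorem \ref{ThmNoNewStr} really leaves no stable possibility, which is why the explicit pass through Lachlan's list (rather than an appeal to a bipartite analogue of the Lachlan--Woodrow theorem, which is not available in this setting) is the safest route.
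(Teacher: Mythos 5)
Your proposal is correct as a derivation from the paper's stated results, but it takes a genuinely different route from the paper's own proof, so a comparison is in order. The paper first argues (from Theorem \ref{ThmNoNewStr} and Corollary \ref{Corollary1}) that a half-graph for $S,T$ transversal to $C,D$ --- that is, $S\sim^R_RT$ --- lives inside $C\cup D$, and then verifies the extension axioms of the Random Bipartite Graph \emph{directly}: the half-graph supplies a ``template'' vertex that is $S$-related to $n_A$ points and $T$-related to $n_B$ points of an $R$-clique in $C$, and Corollary \ref{CorHomPairs} transports that template onto the given sets $A,B$. You instead verify that the induced structure $N$ on $C\cup D$ satisfies all the hypotheses of Proposition \ref{PropImprimitive3Graphs} (transitive, simple, unstable, homogeneous, $R$ an equivalence relation with finitely many --- here two --- classes) and then quote its conclusion. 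Your instability step is the real novelty: excluding the matchings via Proposition \ref{PropPerfectMatching} and Theorem \ref{ThmNoNewStr}, and then inspecting Lachlan's list (Theorem \ref{Lachlan3graphs}) to see that $K_\omega^R\times K_2^S$ and $K_\omega^R\times K_2^T$ are the \emph{only} stable homogeneous 3-graphs with two infinite $R$-classes and both $S,T$ realised across them; the inspection is right (the wreath products $K_m^i[K_n^j[K_p^k]]$ with two infinite $R$-classes realise only one predicate across the classes, and the $P$- and $Q$-based structures never have exactly two $R$-classes). Your reduction buys reuse of the rank-1 Independence Theorem machinery packaged in Proposition \ref{PropImprimitive3Graphs}, and replaces the paper's rather terse assertion that half-graphs appear in $C\cup D$ with a clean case analysis; the paper's direct verification buys independence from the rank-1 chapter.

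The one soft spot is the order in which you invoke Corollary \ref{CorHomPairs}. As \emph{stated} it applies at the outset, but its proof in the paper runs through Proposition \ref{PropIntersectionTypes}, whose hypothesis is precisely that $C\cup D$ embeds a half-graph for $S,T$; without that hypothesis the key step (finding a vertex of $D$ that is $S$-related to an arbitrary finite $R$-clique in $C$) is unjustified --- it is exactly what fails in the a priori conceivable configuration where $C$ and $D$ split into halves $C_1,C_2,D_1,D_2$ with $S$ complete between $C_i$ and $D_i$ and $T$ complete between $C_i$ and $D_{3-i}$. The paper's proof produces the half-graph \emph{before} appealing to Corollary \ref{CorHomPairs}; in your proof, homogeneity of $N$ is what yields instability (via Lachlan), and instability of $N$ is what would yield the half-graph, so unwinding the citations produces a circle. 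This defect is inherited from the paper's statement of Corollary \ref{CorHomPairs}, which omits the hypothesis its proof uses, rather than being an error of yours; still, a self-contained write-up should first establish $S\sim^R_RT$ in $C\cup D$ (from the instability of $M$ together with your matching exclusion) and only then invoke Corollary \ref{CorHomPairs} and your Lachlan inspection.
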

\begin{proof}
It follows from Theorem \ref{ThmNoNewStr} and Corollary \ref{Corollary1} that there must be half-graphs present in the union of $C$ and $D$, that is, $S\sim^R_RT$. We will prove that the axioms of the Random Bipartite Graph hold for $C\cup D$. Let $A,B$ be two finite disjoint subsets of $C$; we wish to prove that there exists $d\in D$ such that $S(d,a)$ for all $a\in A$ and $T(d,b)$ for all $b\in B$.

Let $n_A$ and $n_B$ be the sizes of $A,B$, and let $(\alpha_i)_{i\in\omega}\subset C$ and $(\beta_i)_{i\in\omega}\subset D$ be a half-graph for $S,T$ in $C\cup D$. Consider the first $n_A+n_b+1$ elements of the sequence $(\alpha_i)_{i\in\omega}$: the vertex $\beta_{n_A+1}$ is such that $S(\alpha_i,\beta_{n_A+1})$ for all $i\leq n_A$, and $T(\alpha_i,\beta_{n_A+1})$ for all $i>n_A$. Now we can apply Corollary \ref{CorHomPairs} to find an element in $D$ that satisfies over $A\cup B$ the same type as $\beta{n_A+1}$ over $\alpha_1,\ldots,\alpha_{n_A+n_b+1}$.
\end{proof}
\begin{corollary}\label{Corollary3}
	If $M$ is an imprimitive homogeneous simple unstable 3-graph in which $R$ defines an equivalence relation with infinitely many infinite classes, and such that $S$ and $T$ are realised in any pair of classes. Then $M$ embeds infinite cliques in all colours.
\end{corollary}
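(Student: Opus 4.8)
The plan is to produce an infinite monochromatic clique in each of the three colours. For $R$ this is immediate, since the $R$-classes are infinite and so any class is an infinite $R$-clique; the substance is therefore to exhibit infinite $S$-cliques and infinite $T$-cliques, which (as an $S$- or $T$-edge joins vertices in distinct classes) must be transversals meeting infinitely many classes. I would first record the standing facts. Because $S$ and $T$ are realised across \emph{every} pair of classes, Observation \ref{ObsTransRE} gives that $\aut(M)$ acts $2$-transitively on $M/R$, and Corollary \ref{Corollary2} identifies the structure induced on any two distinct classes as the Random Bipartite Graph; in particular $S(a)\cap C$ and $T(a)\cap C$ are both infinite for every class $C\neq a/R$. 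Moreover $M$ carries no proper nontrivial $\emptyset$-definable \emph{finite} equivalence relation: by quantifier elimination the only proper nontrivial invariant equivalence relation is $R$, which has infinitely many classes, and no disjunction involving the unstable relations $S,T$ is an equivalence relation. Hence there is a unique $1$-type and a unique strong type over $\emptyset$, which by lowness (Corollary \ref{LascarTypes}) is also the unique Lascar strong type.

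The key step is to show that \emph{both} $S$ and $T$ are nonforking. Here $\SU(M)=2$ (the equivalence relation $R$ forks, and the rank cannot exceed $2$ in the present case). Working in $M^{\mathrm{eq}}$, fix $a$ and any $b\notin a/R$, and write $[b]=b/R$. Since both $\hat S$ and $\hat T$ hold of every pair of classes, the induced structure on $M/R$ carries no nontrivial binary relation, so $M/R$ is a pure infinite set, of $\SU$-rank $1$; as there are infinitely many classes, $[b]\notin\acl(a)$ and thus $\SU([b]/a)\geq 1$. Using Corollary \ref{Corollary2}, infinitely many elements of $[b]$ are $S$-related (resp.\ $T$-related) to $a$, and by the homogeneity of the pair $(a/R,[b])$ (Corollary \ref{CorHomPairs}) none of them is algebraic over $a,[b]$; hence $b\notin\acl(a,[b])$ and $\SU(b/a[b])\geq 1$. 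Since $[b]\in\dcl(b)$, the first Lascar inequality (Theorem \ref{LascarIneqs}) gives $\SU(b/a)=\SU(b[b]/a)\geq \SU(b/a[b])+\SU([b]/a)\geq 2$, so $\SU(b/a)=\SU(b)=2$ and $\tp(b/a)$ does not fork over $\emptyset$. Applying this to a $b$ with $S(a,b)$ and to a $b$ with $T(a,b)$ shows that $S$ and $T$ are both nonforking.

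With this established I would build an infinite $S$-clique one vertex at a time, exactly as in the proof of Theorem \ref{ThmCGamma}. Suppose $a_1,\dots,a_n$ form an $S$-clique in distinct classes. As $S$ is nonforking the $a_i$ are pairwise independent, so Proposition \ref{PairwiseIndep} (applicable since $M$ has no $\emptyset$-definable finite equivalence relation) yields $a_1\indep a_2,\dots,a_n$. Choosing realisations $b_0\models\tp(a_{n+1}/a_1)$ and $b_1\models\tp(a_{n+1}/a_2,\dots,a_n)$ of the (nonforking) $S$-edge type, which share the unique Lascar strong type over $\emptyset$, the Independence Theorem produces $a_{n+1}\indep a_1,\dots,a_n$ realising both; as the language is binary this determines $\tp(a_{n+1}/a_1,\dots,a_n)$, so $a_{n+1}$ is $S$-related to every $a_i$ and, being $S$-adjacent to them, lies in a new class. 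Thus $M$ embeds arbitrarily large finite $S$-cliques, and by $\omega$-categoricity and compactness an infinite one. The identical argument with $T$ in place of $S$ gives an infinite $T$-clique, and together with the infinite $R$-cliques this proves the corollary.

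The main obstacle I anticipate is the middle step, showing that both colours are nonforking. A Morley-sequence-plus-Ramsey argument cheaply yields an infinite clique in \emph{at least one} of $S,T$, but there is no automorphism of $M$ exchanging $S$ and $T$, so symmetry cannot be invoked for the other. It is precisely the $\SU$-rank computation in $M^{\mathrm{eq}}$ — crucially using, via Corollary \ref{Corollary2}, that each colour meets every foreign class infinitely often — that makes the conclusion uniform in the two colours and lets the Independence Theorem run in each case.
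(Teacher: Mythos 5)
Your proof is correct, but it takes a genuinely different route from the paper's. The paper argues by contradiction with two applications of Ramsey's theorem: an infinite transversal to the $R$-classes yields an infinite clique in one of $S,T$, say $S$; if there were no infinite $T$-cliques then $T$ would fork, making $S$ the unique nonforking relation, so $T(a)$ could contain no infinite $S$-cliques (such cliques would be Morley sequences witnessing non-dividing of $T$, in the spirit of Proposition \ref{PropCliques}); but $T(a)$ meets every class other than $a/R$, hence contains an infinite transversal admitting no infinite monochromatic clique in any colour, contradicting Ramsey. You instead prove the stronger statement that \emph{both} $S$ and $T$ are nonforking, via a rank computation in $M^{\mathrm{eq}}$: $\SU([b]/a)\geq 1$ and $\SU(b/a[b])\geq 1$ combine through the Lascar inequalities (Theorem \ref{LascarIneqs}) and the bound $\SU(M)\leq 2$ to give $\SU(b/a)=\SU(b)$, hence nonforking, for \emph{every} $b\notin a/R$; the Independence Theorem then builds the cliques directly. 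Your route is uniform in the two colours (no ``without loss of generality'', no double Ramsey) and yields as a by-product essentially the proposition the paper proves immediately after this corollary by a much longer forbidden-triangle/amalgamation argument. What it costs is dependence on the unnumbered remark at the start of the chapter bounding the rank of unstable homogeneous 3-graphs by $2$ -- an ingredient the paper's own proof of the corollary never needs -- and on imaginaries. Two minor imprecisions, neither fatal: first, the inference that $M/R$ is ``a pure infinite set'' is unjustified at this stage (higher-arity induced structure on $M/R$ has not been ruled out), but all you actually use is $[b]\notin\acl^{\mathrm{eq}}(a)$, which holds because all classes other than $a/R$ are conjugate under $\aut(M/a)$, as $S(a)$ meets each of them; second, non-algebraicity of $b$ over $a[b]$ needs only that $S(a)\cap [b]$ (respectively $T(a)\cap[b]$) is infinite, by Corollary \ref{Corollary2}, and that its elements are conjugate over $a$ by automorphisms which automatically fix $[b]$ setwise, rather than the full strength of Corollary \ref{CorHomPairs}.
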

\begin{proof}
	We know already that $M$ embeds infinite $R$-cliques, and by Ramsey's Theorem it must embed an infinite clique in at least one of $S,T$. Let us suppose without loss that it embeds infinite $S$-cliques. 

	Note that $R$ is the only equivalence relation, so in particular there are no equivalence relations with finitely many classes on $M$. This follows from instability, quantifier elimination, and Corollary \ref{Corollary2}: any such relation would be definable as a disjunction of atomic formulas, but we know by instability that neither $S$ nor $T$ define equivalence relations. Corollary \ref{Corollary2} implies that the triangle $RST$ is in $\age(M)$, so no disjunction of two predicates is an equivalence relation.

	If $M$ does not embed infinite $T$-cliques, then it must be the case that $T(x,a)$ forks, since otherwise the Independence Theorem would guarantee the existence of infinite $T$-cliques. From this it follows that the only nonforking relation in $M$ is $S$, so $T(a)$ does not embed infinite $S$-cliques. But $T(a)$ meets all classes except $a/R$, so there is an infinite transversal in $T(a)$. This contradicts Ramsey's Theorem.
\end{proof}
\begin{proposition}
	Let $M$ be a homogeneous simple unstable 3-graph in which $R$ defines an equivalence relation with infinitely many infinite classes, and suppose that between any two $R$-classes both $R$ and $S$ are realised. Then both $S$ and $T$ are nonforking.
\end{proposition}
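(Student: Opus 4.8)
The plan is to prove the statement by a direct $\SU$-rank computation, showing that for $P\in\{S,T\}$ and any $P$-edge $tt'$ one has $\SU(t/t')=\SU(t)$, so that $\tp(t/t')$ does not fork over $\varnothing$. First I would fix the ambient picture. Since $R$ is an equivalence relation with infinitely many classes, $R(x,t')$ divides over $\varnothing$ (a Morley sequence of singletons lands in pairwise distinct classes, making $\{R(x,c_i):i\}$ inconsistent), so $R$ is forking; by the Remark preceding Proposition~\ref{PropInfInf1} the theory has $\SU$-rank exactly $2$. The hypothesis that a symmetric relation other than $R$ occurs across every pair of distinct classes makes $\aut(M)$ act $2$-transitively on $M/R$ by Observation~\ref{ObsTransRE}; as the ``only one relation across classes'' situation is already settled in Proposition~\ref{PropInfInf2}, we are in the setting of Corollary~\ref{Corollary2}, so the structure induced on any two distinct classes $C,D$ is the Random Bipartite Graph. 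By the symmetry of $S$ and $T$ in the hypotheses it suffices to treat $T$.

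Write $C_t=t/R$, regarded as an imaginary element of $M/R$. The two rank facts I would record are $\SU(t)=2$ and $\SU(C_t)=1$: indeed $C_t\in\dcl(t)$, while $t\notin\acl(C_t)$ (the class is infinite) and $C_t\notin\acl(\varnothing)$ (there are infinitely many classes, none algebraic, by Observation~\ref{AlgClosure} applied in $M/R$); hence by the Lascar inequalities (Theorem~\ref{LascarIneqs}) $\SU(t)=\SU(t/C_t)+\SU(C_t)$ with both summands equal to $1$.

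Now take $t'$ with $T(t,t')$, so $t'\in C_{t'}$ with $C_{t'}\neq C_t$, and decompose $\SU(t/t')$ through $C_t$. First, $\SU(C_t/t')=1$: by monotonicity $\SU(C_t/t')\le\SU(C_t)=1$, and $C_t\notin\acl(t')$ because $2$-transitivity of the induced action on $M/R$ leaves infinitely many classes in the orbit of $C_t$ over $t'$, so the rank is nonzero. Second, $\SU(t/C_tt')=1$: given $C_t$ and $t'$, the element $t$ ranges over $\{x\in C_t: T(x,t')\}$, which by Corollary~\ref{Corollary2} (Random Bipartite Graph between $C_t$ and $C_{t'}$) is an infinite, co-infinite subset of the rank-$1$ class $C_t$; hence $t\notin\acl(C_tt')$ and the rank is exactly $1$. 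Applying the Lascar inequalities to $tC_t$ over $t'$ (using $C_t\in\dcl(t)$, so that both bounds coincide) gives $\SU(t/t')=\SU(tC_t/t')=\SU(t/C_tt')+\SU(C_t/t')=1+1=2=\SU(t)$. Thus $\tp(t/t')$ does not fork, so $T$ is nonforking; the identical argument with $S$ in place of $T$ completes the proof.

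The main obstacle is not the final arithmetic but the two rank-$1$ bookkeeping claims, and in particular the transfer of information between $M$ and the quotient $M/R$: I must verify that $C_t$ behaves as a genuine rank-$1$ imaginary, that it is non-algebraic over a single outside vertex $t'$ (this is exactly where $2$-transitivity on $M/R$ together with $\omega$-categoricity is used), and that the Random Bipartite description of Corollary~\ref{Corollary2} really forces $\{x\in C_t: T(x,t')\}$ to be infinite rather than finite. An alternative, more combinatorial route avoiding imaginaries would assume $T$ forking for a contradiction, note that then Morley sequences are $S$-cliques, so $T(a)$ contains no infinite $S$-clique, and then use the Random Bipartite structure together with homogeneity to build an infinite $S$-clique inside $T(a)$; this is closer to the style of Corollary~\ref{Corollary3}, but it requires knowing that the relevant three-class transversal configurations lie in $\age(M)$, which the rank computation sidesteps.
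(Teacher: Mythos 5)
Your proof is correct, but it takes a genuinely different route from the paper's. The paper argues by contradiction: assuming $S$ forks, Morley sequences in the unique $1$-type are $T$-cliques, so $S(a)$ embeds no infinite $T$-clique; it then analyses $S(a)$ as a homogeneous $3$-graph in which $R$ is an equivalence relation with infinitely many infinite classes, rules out the possibility that $S(a)$ is $T$-free by exhibiting an explicit failure of the amalgamation property, and finally invokes Corollary \ref{Corollary1} (if $S(a)$ is stable) or Corollary \ref{Corollary3} (if unstable) to produce infinite $T$-cliques inside $S(a)$, a contradiction either way. Your argument replaces this entire case analysis by a rank computation on the pair $(t,\,C_t)$ with $C_t=t/R$: the Lascar inequalities (Theorem \ref{LascarIneqs}) split $\SU(t)=2$ as $\SU(t/C_t)+\SU(C_t)=1+1$, and the Random Bipartite Graph structure from Corollary \ref{Corollary2} plus non-algebraicity of the class give $\SU(t/C_tt')=\SU(C_t/t')=1$, whence $\SU(t/t')=2=\SU(t)$ and the extension is nonforking. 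This buys symmetry between $S$ and $T$, no amalgamation diagrams, and a transparent accounting of where each hypothesis enters; what it costs is dependence on the rank-$2$ ceiling (the Remark opening Chapter \ref{ChapRank2}, which itself rests on the primitive classification of Chapter \ref{ChapPrimitive}) and on the standard but unstated fact that Definition \ref{DefSU}, Theorem \ref{LascarIneqs}, and the equivalence ``forking extension $=$ strict rank drop'' are valid for imaginaries such as $C_t$; the paper never works in $M^{\rm eq}$, though for supersimple theories this is routine.

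Two steps need patching. First, to see that $R(x,a)$ divides, quote an indiscernible sequence lying in pairwise distinct classes (extract one by Ramsey and compactness from any transversal); saying that a \emph{Morley} sequence meets distinct classes presupposes that $R$ forks, which is what is being proved. Second, and more substantively, $C_t\notin\acl(t')$ does not follow from $2$-transitivity of $\aut(M)$ on $M/R$ alone: that hypothesis concerns the setwise stabiliser of the class $C_{t'}$, whereas you need an infinite orbit under the point stabiliser $\aut(M)_{t'}$. The fix uses the paper's observation (stated just before Proposition \ref{PropPerfectMatching}) that $S(t')\cap C\neq\varnothing$ for every class $C\neq C_{t'}$: given classes $C,C'\neq C_{t'}$, choose $c\in S(t')\cap C$ and $c'\in S(t')\cap C'$; then $t'c\cong t'c'$, so homogeneity yields $\sigma\in\aut(M)_{t'}$ with $\sigma(C)=C'$ by invariance of $R$. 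Hence all classes other than $C_{t'}$ form a single infinite orbit over $t'$, so $C_t$ is not algebraic over $t'$. The same homogeneity argument (applied to pairs $u,v\in T(t')\cap C_t$) is what shows that the formula $T(x,t')\wedge x\in C_t$ isolates $\tp(t/C_tt')$, so that its infinitely many solutions really witness $t\notin\acl(C_tt')$.
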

\begin{proof}
	Since $R$ defines an equivalence relation with infinitely many infinite classes, the formula $R(x,a)$ divides. At least one of $S$ or $T$ is nonforking, since there must be Morley sequences in the only 1-type over $\varnothing$, so let us assume without loss that $T$ is nonforking. If the formula $S(x,a)$ divides, then $S(a)$ does not contain infinite $T$-cliques. We know from Corollary \ref{Corollary2} that the structure between any two classes is isomorphic to the Random Bipartite Graph, so $R$ defines an equivalence relation in $S(a)$ with infinitely many infinite classes, and since $S(a)$ is a structure interpreted in $M$, the theory of $S(a)$ is simple. 

	Note that if $S$ divides, then $T(a)$ is isomorphic to $M$. From this it follows that $\aut(M)_a$ acts 2-transitively on the set of $R$-classes in $S(a)$, so $S(a)$ is either $T$-free or both $S$ and $T$ are realised in $S(a)$ between any two classes. 

\begin{claim}
	$S(a)$ is not $T$-free.
\end{claim}
\begin{proof}
If $S(a)$ is $T$-free, then the triangle $SST$ is forbidden in $M$, which in particular implies that $S\not\sim^ST$, so $S\sim^R_RT$ is forced. From this we derive that there are three forbidden triangles in $M$, namely $RRT, RRS, SST$. Now we will prove that this is inconsistent with homogeneity.

We know from Corollary \ref{Corollary2} that $TTR, SSR$ are in $\age(M)$. 

\comm Since $S(a)$ is $T$-free and $R$ forms a non-trivial equivalence relation on $S(a)$, the structure on four vertices with five $S$-edges and one $R$-edge is also in $\age(M)$. By $S\sim^R_RT$, 
\[
\includegraphics[scale=0.8]{Rank201.pdf}
\]
is also in $\age(M)$. 
Therefore, in the following amalgamation problem $T$ is the only possible solution between $b,c$:
\[
\includegraphics[scale=0.8]{Rank202.pdf}
\]
\ent
And since $T(a)\cong M$ and $SSR\in\age(M)$, 
\[
C=\includegraphics[scale=0.8]{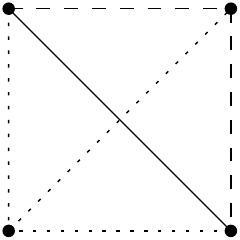}
\]
is also in $\age(M)$. Finally, it follows from $T(a)\cong M$ and $RST\in\age(M)$ that 
\[
B=\includegraphics[scale=0.8]{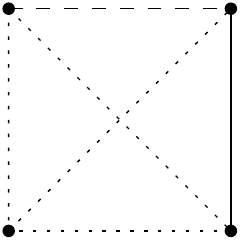}
\]
is in $\age(M)$. Now we amalgamate $B,C$ over a triangle $RTT$:
\[
\includegraphics[scale=0.8]{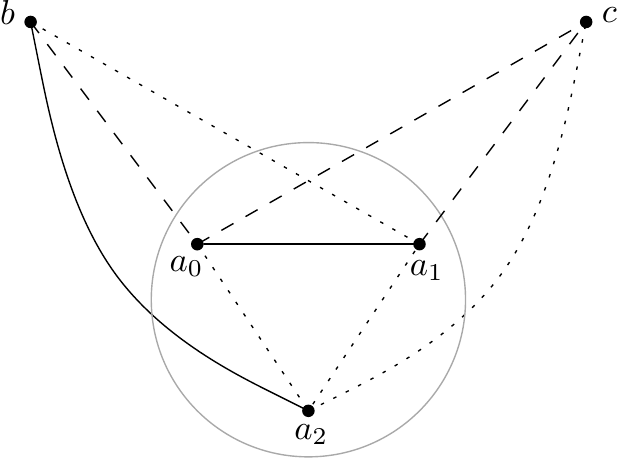}
\]
 We cannot have $R(b,c)$ because $R$ because $RRT$ would appear on $b,a_2,c$; and $T(b,c)$ would form $SST$ on the vertices $b,a_0,c$. Finally, $S(b,c)$ would form $SST$ on $b,c,a_1$. Therefore, the amalgamation problem has no solution in $\age(M)$, contradicting homogeneity. From this we conclude that both $S$ and $T$ are realised in $S(a)$.
\end{proof}
If $S(a)$ is isomorphic to a stable graph, it follows from Corollary \ref{Corollary1} that $S(a)\cong K_\omega^R\times K_2^S$, because each pair of classes is a stable structure. But then $S(a)$ embeds infinite $T$-cliques. 

So $S(a)$ is an unstable structure. By Corollary \ref{Corollary3}, $S(a)$ embeds infinite $T$-cliques. In any case, we have reached a contradiction. Therefore, $T$ is nonforking.
\end{proof}
\comm
\begin{proposition}\label{PropInfInter}
	Let $M$ be a homogeneous simple unstable 3-graph in which $R$ defines an equivalence relation with infinitely many infinite classes and $\aut(M)$ acts 2-transitively on $M/R$. Then for any vertex $a$ and $R$-class $C$ not containing $a$, both $S(a)\cap C$ and $T(a)\cap C$ are infinite.
\end{proposition}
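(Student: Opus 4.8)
The plan is to argue by contradiction, exploiting the dichotomy between "perfect matching" and "random bipartite" behaviour of a transversal relation that the preceding results already isolate. First I would record the easy preliminaries. Since $R$ is the equivalence relation, neither $S$ nor $T$ is realised inside an $R$-class, so both are transversal relations; and since all relations are realised in $M$ and $\aut(M)$ acts $2$-transitively on $M/R$, all pairs of distinct classes lie in a single orbit, so both $S$ and $T$ occur across every pair of distinct classes. In particular, by the argument from Observation \ref{NotCompBipart}, for any $a$ and any class $C$ not containing $a$ we already have $S(a)\cap C\neq\varnothing$ and $T(a)\cap C\neq\varnothing$. The content of the proposition is therefore to upgrade "nonempty" to "infinite".

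So I would suppose, for a contradiction, that $S(a)\cap C$ is finite for some class $C$ not containing $a$. Then Proposition \ref{PropPerfectMatching} applies directly, since its hypotheses are exactly $2$-transitivity of the induced action together with the finiteness of some $S(a)\cap C$; it forces the structure induced by $M$ on any pair of distinct $R$-classes to be a perfect matching in $S$, isomorphic to $K_\omega^R\times K_2^S$. At this point I would invoke Theorem \ref{ThmNoNewStr}, read with the roles of $S$ and $T$ interchanged: the statement is symmetric in the two unstable predicates, so no unstable homogeneous $3$-graph in which $R$ is an equivalence relation with infinitely many infinite classes can have all pairs of classes isomorphic to $K_\omega^R\times K_2^S$. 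Since $M$ is unstable by hypothesis, this is the desired contradiction, whence $S(a)\cap C$ is infinite; exchanging $S$ and $T$ throughout yields that $T(a)\cap C$ is infinite as well.

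A quicker route would simply quote Corollary \ref{Corollary2}: under these exact hypotheses the structure on $a/R\cup C$ is the Random Bipartite Graph, so both the $S$- and the $T$-neighbourhood of $a$ inside $C$ are infinite (indeed infinite and co-infinite) by that graph's defining extension axioms. Given the machinery already assembled, the proposition is essentially a corollary of the cited results, and I do not expect a substantive obstacle. The only point genuinely requiring care — the \emph{hard part}, such as it is — is the bookkeeping of the $S$/$T$ symmetry when transporting Theorem \ref{ThmNoNewStr} to the matching colour $S$, together with the sanity check that the object produced by Proposition \ref{PropPerfectMatching} really is $K_\omega^R\times K_2^S$ (that is, that $S$, and not $T$, is the matching colour).
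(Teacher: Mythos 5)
Your proof is correct, and it finishes by a genuinely different route than the paper's own proof. Both arguments begin identically: assume $S(a)\cap C$ is finite and apply Proposition \ref{PropPerfectMatching}, so that (using $2$-transitivity) every pair of distinct classes induces the perfect matching $K_\omega^R\times K_2^S$. But the paper does not then cite Theorem \ref{ThmNoNewStr}; it derives the contradiction by hand: finiteness of $S(a)\cap C$ makes $S(x,a)$ divide (witnessed by an $R$-clique) while $T$ is nonforking, the compatibility machinery of Chapter \ref{ChapStableForking} (via Proposition \ref{IndiscerniblePairs}) then forces $S\sim^S T$, and on the other hand $S(a)$ is an $R$-free simple $S,T$-graph with no infinite $T$-cliques, hence stable by the Lachlan-Woodrow Theorem; each of the two resulting alternatives (an infinite $S$-clique, which would make $S$ an equivalence relation and $M$ stable, or a union of finite $T$-cliques, which contradicts the instability of $S(a)$ implied by $S\sim^S T$) is impossible. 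Your single citation of Theorem \ref{ThmNoNewStr} with $S$ and $T$ relabelled replaces all of that case analysis, and it is legitimate: nothing in this chapter breaks the symmetry between the two non-equivalence-relation predicates, the theorem's hypotheses (instability, $R$ an equivalence relation with infinitely many infinite classes, all pairs of classes matchings) are exactly what you have produced, and neither Theorem \ref{ThmNoNewStr} nor Corollary \ref{Corollary2} depends on the present proposition, so the document order permits the citation without circularity. Your quicker route via Corollary \ref{Corollary2} is also sound and makes the statement an immediate consequence (in the Random Bipartite Graph every vertex has infinitely many neighbours of each colour in the opposite class). The trade-off: your argument is short and modular but leans on the later results of this chapter, whereas the paper's longer argument uses only material from earlier chapters, so it would survive a reordering in which this proposition preceded Theorem \ref{ThmNoNewStr} --- which is presumably why it was written that way, before being effectively superseded by Corollary \ref{Corollary2}.
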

\begin{proof}
We know by 2-transitivity that $S(a)\cap C$ and $T(a)\cap C$ are nonempty for any $C$ not containing $a$. Suppose for a contradiction that $S(a)\cap C$ is finite, so in particular $S(x,a)$ divides (witnessed by an $R$-clique) and $T$ is nonforking. This immediately implies that $T(a)\cap C$ is infinite.

By Proposition \ref{PropPerfectMatching}, $|S(a)\cap C|=1$ and $S(a)$ is $R$-free. Then in particular $S(x,a)$ divides, as witnessed by an $\varnothing$-indiscernible sequence of vertices that forms an infinite $R$-clique. 

We know that $S$ and $T$ are the only unstable relations in $M$, so we must have, in the notation from chapter \ref{ChapStableForking} $S\sim T$. By Proposition \ref{IndiscerniblePairs}, at least one of $S\sim^RT, S\sim^TT, S\sim^ST$ has witnesses in $M$. Of these three options, the first implies that $S(a)\cap C$ is infinite, and thus not $R$-free; the second one implies the existence of infinite $T$-cliques in $S(a)$, impossible since $T$ is the only nonforking predicate and therefore $T$-cliques are Morley sequences over $\varnothing$. Thus, $S\sim^ST$ must hold.

Note that $S(a)$ is a simple $S,T$-graph not embedding infinite $T$-cliques, and therefore it is stable. It follows from the Lachlan-Woodrow Theorem that $S(a)$ is either a union of finite $T$-cliques or an infinite $S$-clique.

If $S(a)$ is an $S$-clique, it follows from Observation \ref{sop} that for any $b\in S(a)$ we have $\{a\}\cup S(a)=\{b\}\cup S(b)$, so we can define an equivalence relation by $x\sim_Sy$ if $S(x,y)\vee x=y$. This relation is clearly symmetric and reflexive, and by transitivity of the action of $\aut(M)$, if $S(a,b)\wedge S(b,c)$, then $S(a,c)$ follows because $a,c\in S(b)$. Therefore $S$ is stable and so is the theory of $M$.

The only possibility is for $S(a)$ to be a union of finite $T$-cliques, but this is impossible since $S\sim^ST$ implies that $S(a)$ is unstable.

We conclude that $S(a)\cap C$ is infinite.
%If $S(a)\cap C$ is finite, it follows from Claim \ref{ClaimIntersection1} that $S(a)$ is a homogeneous imprimitive 3-graph in which $R$ defines an equivalence relation with finite classes and $T$ does not form infinite cliques. By our classification of homogeneous unstable 3-graphs with finite classes, $S(a)$ is a stable 3-graph. But this is also inconsistent with all the possible indiscernible half-graphs for $S\sim T$, as before. The same argument can be applied to $T$.
\end{proof}

\begin{proposition}
	Let $M$ be a homogeneous simple unstable 3-graph in which $R$ defines an equivalence relation with infinitely many infinite classes and $\aut(M)$ acts 2-transitively on $M/R$. Then $S$ and $T$ are nonforking.
\end{proposition}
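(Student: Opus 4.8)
The plan is to run the same contradiction as in the preceding Proposition, but organised so that $2$-transitivity does the work up front. First I would pin down the shape of a pair of classes: by Corollary \ref{Corollary2}, the $2$-transitivity of $\aut(M)$ on $M/R$ together with the instability of $M$ forces the structure induced on the union of any two distinct $R$-classes to be the Random Bipartite Graph; the degenerate perfect-matching configuration of Proposition \ref{PropPerfectMatching} is excluded because by Corollary \ref{Corollary1} it would make $M$ a stable product $K_\omega^R\times K_\omega^{j}$, against our standing assumption. In particular both $S$ and $T$ occur across every pair of distinct classes, and for each class $C\not\ni a$ the sets $S(a)\cap C$ and $T(a)\cap C$ are infinite.

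Next I would set up the dichotomy. Since $R$ has infinitely many classes, an $R$-clique is an $\varnothing$-indiscernible sequence witnessing that $R(x,a)$ divides, so $R$ is forking; and since a Morley sequence in the unique $1$-type over $\varnothing$ consists of pairwise independent elements, it cannot be an $R$-clique, so it is an $S$- or a $T$-clique and hence at least one of $S,T$ is nonforking. Assume without loss that $T$ is nonforking. Using that $M$ carries no $\varnothing$-definable finite equivalence relation (so Proposition \ref{PairwiseIndep} applies), every infinite indiscernible $T$-clique is a Morley sequence. It then remains to prove $S$ nonforking, and I would argue by contradiction: if $S(x,a)$ divides, then no vertex is $S$-joined to a Morley $T$-clique, so (after extracting an indiscernible, hence Morley, subclique) $S(a)$ contains no infinite $T$-clique. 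I would now study $S(a)$ in its own right: it is homogeneous (Proposition \ref{PropHomNeigh}), interpreted in $M$ and hence simple, and $R$ is an equivalence relation on it with infinitely many infinite classes (the sets $S(a)\cap C$). Moreover $T$ is then the only nonforking relation of $M$, so by Observation \ref{IsoToM} (whose proof does not use primitivity) $T(a)\cong M$, which feeds enough configurations into $\age(M)$ to analyse $S(a)$.

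Finally I would derive the contradiction by showing $S(a)$ embeds an infinite $T$-clique after all. The crux is to prove that $S(a)$ is neither $S$-free nor $T$-free across its classes, so that both $S$ and $T$ are realised in any pair of its $R$-classes; granting this, if $S(a)$ is unstable then Corollary \ref{Corollary3} applied inside $S(a)$ produces infinite cliques in every colour, and if $S(a)$ is stable then Lachlan's Theorem \ref{Lachlan3graphs} forces it to be a product $K_\omega^R\times K_\omega^{j}$, which again contains infinite $T$-cliques. Either way $S(a)$ embeds an infinite $T$-clique, contradicting the previous paragraph. Hence $S$ cannot divide, and with $T$ both relations are nonforking. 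The main obstacle is exactly the step that $S(a)$ is not $T$-free: this is the one place where the raw structural data does not suffice and one must produce a genuine amalgamation obstruction, combining the triangles $RRS$ and $RRT$ forbidden by transitivity of $R$ with the triangles that $T(a)\cong M$ and the Random Bipartite pairs force into $\age(M)$, as in the amalgamation diagram of the preceding Proposition.
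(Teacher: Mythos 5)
Your proposal follows essentially the same route as the paper's own proof: assume without loss that $T$ is nonforking, suppose $S$ divides so that $S(a)$ contains no infinite $T$-cliques, observe that $S(a)$ is a homogeneous simple 3-graph on which $R$ has infinitely many infinite classes and that $T(a)\cong M$, reduce everything to the claim that $S(a)$ is not $T$-free, and then get the contradiction from the stable/unstable dichotomy (Lachlan's Theorem \ref{Lachlan3graphs} forcing a product embedding infinite $T$-cliques, versus Corollary \ref{Corollary3} applied inside $S(a)$). The one step you flag but leave unexecuted — the amalgamation obstruction behind ``$S(a)$ is not $T$-free'' — is carried out in the paper exactly with the ingredients you name: $T$-freeness makes $RRT$, $RRS$, $SST$ forbidden and forces $S\sim^R_RT$, while $T(a)\cong M$ together with the Random Bipartite pairs puts enough four-vertex structures into $\age(M)$ that amalgamating two of them over an $RTT$ triangle admits no solution, contradicting homogeneity.
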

\begin{proof}
The formula $R(x,a)$ divides over $\varnothing$ because it defines one class of an equivalence relation with infinitely many infinite classes; we know by simplicity that there must be a nonforking predicate, since in simple theories there exist Morley sequences for all types. Let us suppose without loss that $T$ is nonforking.

If $S$ divides, then there are no infinite $T$-cliques in $S(a)$. Since $S(a)$ meets all classes except $a/R$, it follows that $S(a)$ contains an infinite $S$-clique; furthermore, by Proposition \ref{PropInfInter}, the intersection of $S(a)$ with any $R$-class is infinite. It follows that $S(a)$ is an infinite homogeneous structure in which $R$ defines an equivalence relation with infinitely many infinite classes.

We know that $S\sim T$ holds because $S,T$ are the only unstable predicates. Since $S(a)$ does not embed infinite $T$-cliques, $S\not\sim^TT$. Also note that by the same argument as in Observation \ref{IsoToM}, $T(a)\cong M$, so $\aut(M)_a$ acts 2-transitively on the set of classes not including $a/R$. From this it follows by invariance of $R$ that $\aut(M)_a$ also acts 2-transitively on $S(a)/R$. Therefore, either $S$ and $T$ are realised in any two distinct $R$-classes in $S(a)$, or only $S,R$ are realised in $S(a)$.

We know by the same argument as in the Claim that $\aut(M)_a$ acts 2-transitively on the set of $R$-classes of $S(a)$, so both $S$ and $T$ are realised in the union of any two $R$-classes in $S(a)$.

\begin{claim}
	$S\not\sim^ST$.
\end{claim}
\begin{proof}
	If $S\sim^ST$, then $S(a)$ embeds an infinite half-clique for $S,T$ and therefore $T$ is not algebraic in $S(a)$
\end{proof}
\end{proof}
\ent
\begin{corollary}\label{Corollary4}
	Let $M$ be a homogeneous simple unstable 3-graph in which $R$ defines an equivalence relation with infinitely many infinite classes, and suppose that between any two $R$-classes both $R$ and $S$ are realised. Then $M$ embeds all finite $S,T$-graphs. In particular, $S\sim^ST$ and $S\sim^TT$.
\end{corollary}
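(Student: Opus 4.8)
The plan is to read this off from the preceding proposition together with the random-amalgamation result Proposition~\ref{NonforkingAmalgamation}. By the proposition just proved, both $S$ and $T$ are nonforking; on the other hand $R$ defines an equivalence relation with infinitely many classes, so $R(x,a)$ divides (any $\varnothing$-indiscernible transversal to the classes makes $\{R(x,a_i):i\in\omega\}$ inconsistent), and hence $R$ is the \emph{only} forking relation of $M$. Thus $M$ is a binary homogeneous simple structure whose unique forking relation is $R$, and the first assertion will follow from Proposition~\ref{NonforkingAmalgamation} once I check its remaining hypothesis, namely that $M$ carries no $\varnothing$-definable finite equivalence relation.

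First I would dispose of that hypothesis exactly as in the proof of Corollary~\ref{Corollary3}. By quantifier elimination every invariant binary relation is a union of the $2$-types, so an invariant equivalence relation is one of $R,S,T,R\vee S,R\vee T,S\vee T$ or a trivial relation. Instability rules out $S$, $T$ and $S\vee T$ (the last fails transitivity as soon as one class has two elements and there is a second class). Since $S$ is transversal, the action on $M/R$ is $2$-transitive, so $T$ is transversal too, and the structure on a pair of classes is the Random Bipartite Graph by Corollary~\ref{Corollary2}; in particular $RST\in\age(M)$, and a witnessing triangle destroys the transitivity of both $R\vee S$ and $R\vee T$. Hence the only proper invariant equivalence relation is $R$, which has infinitely many classes, so there is no $\varnothing$-definable finite equivalence relation. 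Proposition~\ref{NonforkingAmalgamation} then shows that the $R$-free members of $\age(M)$ --- i.e.\ the finite $S,T$-graphs --- are the age of a random $\{S,T\}$-structure; as both $2$-element $S,T$-graphs occur, there are no forbidden $2$-element structures, so this is the full random graph and $\age(\Gamma^{S,T})\subseteq\age(M)$, which is the first claim.

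For the refinement I would work inside the copy of $\Gamma^{S,T}$ that $M$ now embeds. Using genericity (or compactness together with $\omega$-saturation of $M$) I can realise an infinite half-graph for $S$ in which both horizontal cliques are $S$-cliques; since the configuration is $R$-free, the edges below the diagonal are forced to be $T$, so this sequence witnesses $S\sim T$. Extracting an indiscernible half-graph by Proposition~\ref{IndiscerniblePairs} keeps the horizontal cliques $S$-coloured, because the extraction is to a subsequence, and this gives $S\sim^ST$. Replacing the $S$-cliques by $T$-cliques in the initial configuration yields $S\sim^TT$ in the same way.

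The load-bearing step is really Proposition~\ref{NonforkingAmalgamation}, so the only genuine work is the equivalence-relation bookkeeping of the second paragraph, which is routine once $RST\in\age(M)$ is in hand. The one point that needs care is the final clause: one must arrange the colour of the horizontal cliques \emph{at the finite stage} and verify that the Ramsey extraction of Proposition~\ref{IndiscerniblePairs} preserves it; everything else is an assembly of results already available in the chapter.
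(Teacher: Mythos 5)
Your proof is correct and follows essentially the same route as the paper: both arguments first establish that $R$ is the only nontrivial proper invariant equivalence relation (so there are no finite equivalence relations and, by lowness, a unique Lascar strong type over $\varnothing$), and then invoke the Independence Theorem amalgamation machinery --- the paper cites the ``usual argument'' of Theorem~\ref{PrimitiveAlice}, while you cite its packaged form, Proposition~\ref{NonforkingAmalgamation}, which amounts to the same iteration. Your explicit treatment of the final clause (building half-graphs with prescribed horizontal clique colours inside $\Gamma^{S,T}$ and checking that the Ramsey extraction of Proposition~\ref{IndiscerniblePairs} preserves them) fills in a step the paper leaves implicit, and it is carried out correctly.
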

\begin{proof}
We know from the proof of Corollary \ref{Corollary3} that $R$ is the only nontrivial proper equivalence relation on $M$. From this it follows that there is a unique strong type of singletons over $\varnothing$, which is Lascar strong because these theories are low. This allows us to apply the usual argument (see for example Theorem \ref{PrimitiveAlice}) to form an $S,T$-graph.
\end{proof}

Our goal now is to prove that any $S,T$-graph of size $n$ is realised as a transversal in any union of $n$ $R$-classes. To prove this it suffices, by homogeneity, to prove that any $n$ classes embed a $K_n^S$ (so $\aut(M)$ acts highly transitively on $M/R$).

\begin{proposition}\label{PropBasis}
	Let $M$ be a homogeneous simple unstable 3-graph in which $R$ defines an equivalence relation with infinitely many infinite classes, and suppose that between any two $R$-classes both $R$ and $S$ are realised. Then $\aut(M)$ acts 3-transitively on $M/R$.
\end{proposition}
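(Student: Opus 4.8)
The plan is to deduce $3$-transitivity from the standing $2$-transitivity of $\aut(M)$ on $M/R$ by the usual reduction: it suffices to show that the setwise stabiliser $\aut(M)_{\{A\},\{B\}}$ of two distinct classes $A,B$ acts transitively on the remaining classes. As signalled in the paragraph preceding the statement, the heart of the matter is to embed a transversal $K_3^S$ across any three prescribed classes; granting this, homogeneity finishes the argument, since any two transversal $K_3^S$'s are isomorphic as substructures and an isomorphism between them extends to an automorphism of $M$ which, $R$ being invariant, carries the three underlying classes of one triangle onto those of the other.

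First I would fix an anchor: by Corollary \ref{Corollary2} the pair $A\cup B$ is a Random Bipartite Graph, so I may fix $a\in A$, $b\in B$ with $S(a,b)$, and any two such edges are conjugate under $\aut(M)_{\{A\},\{B\}}$. For a class $C\neq A,B$ write $P_C\subseteq\{S,T\}^2$ for the set of pairs of colours $(\mathrm{col}(c,a),\mathrm{col}(c,b))$ realised by the $c\in C$; since the language is binary this pair is exactly $\tp(c/ab)$. The key reduction is that \emph{if $P_C\cap P_{C'}\neq\varnothing$, then $C$ and $C'$ lie in the same $\aut(M)_{\{A\},\{B\}}$-orbit}: choosing $c\in C$, $c'\in C'$ realising a common pattern gives $\tp(c/ab)=\tp(c'/ab)$, so $a\mapsto a,\ b\mapsto b,\ c\mapsto c'$ is a partial isomorphism whose extension $\sigma$ fixes $A$ and $B$ setwise (as $\sigma a\in A$, $\sigma b\in B$) and sends $C$ to $C'$.

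Next I would show that each of the four sets $D_{xy}=\{z:\mathrm{col}(z,a)=x\wedge \mathrm{col}(z,b)=y\}$ is non-empty of full $\SU$-rank $2$. This is the standard Independence-Theorem construction: since $S$ and $T$ are nonforking (the preceding proposition) and $a\indep b$ (distinct classes), for each pattern $xy$ I take witnesses $z_1$ with $\mathrm{col}(z_1,a)=x$ and $z_2$ with $\mathrm{col}(z_2,b)=y$, observe these are nonforking extensions of the unique $\varnothing$-type which, by lowness (Proposition \ref{CatSimpLow}) and the absence of non-trivial finite equivalence relations, share a Lascar strong type, and amalgamate them into $z$ with the prescribed colours to $a$ and $b$ and $z\indep ab$. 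Then $\SU(z/ab)=2$, and projecting by $\pi$ to $M/R$ the class $\pi(z)$ has $\SU(\pi(z)/ab)=1=\SU(M/R)$, so $\pi(z)$ is generic over $ab$; hence each $\pi(D_{xy})$ contains a generic class.

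Finally I would merge orbits. The classes generic over $\{a,b\}$ form a single $\aut(M)_{ab}$-orbit, by the unique-Lascar-strong-type together with the Independence Theorem in the manner used repeatedly in this chapter, so they share one pattern set $P_0$; since the previous step places a generic class in each $\pi(D_{xy})$, we obtain $P_0=\{S,T\}^2$. Every class $C\neq A,B$ realises at least one pattern, hence meets $P_0$, hence is equivalent to the generic class; thus $\aut(M)_{\{A\},\{B\}}$ is transitive on the remaining classes and $\aut(M)$ is $3$-transitive on $M/R$. The step I expect to be the main obstacle is exactly the passage from \emph{some} class to \emph{every} class: because membership in a fixed class is an $R$-condition and $R$ forks, the Independence Theorem cannot be forced to land its amalgam in a named class, so one cannot directly build $K_3^S$ on a prescribed triple. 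The rank-genericity detour above is designed precisely to sidestep this, and the delicate point requiring care, in the type-counting spirit of the rest of the chapter, is verifying that the classes generic over $\{a,b\}$ genuinely form one orbit, i.e.\ that no invariant family of exceptional classes can avoid a pattern.
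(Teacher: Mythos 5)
Your Claim A (classes sharing a pattern over the edge $ab$ lie in one $\aut(M)_{ab}$-orbit) and your Step 3 (each of the four patterns is realised by some point independent from $ab$) are both correct. The genuine gap is the opening assertion of your final step: \emph{``the classes generic over $\{a,b\}$ form a single $\aut(M)_{ab}$-orbit, by the unique-Lascar-strong-type together with the Independence Theorem.''} This does not follow from those tools. The four patterns are four \emph{distinct} nonforking extensions of the unique $1$-type to $ab$; in a simple unstable theory a Lascar strong type has many pairwise nonconjugate nonforking extensions (uniqueness of nonforking extensions is stationarity, a stable phenomenon), and the Independence Theorem only amalgamates two \emph{compatible} nonforking types into a common realisation — it can never show that two \emph{different} nonforking types are realised inside the same $R$-class, which is what ``one orbit'' requires. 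As you yourself observe, the amalgam cannot be forced into a named class because $R$ forks; but that observation applies equally to your merging step. Consequently nothing in your argument excludes an invariant splitting of the generic classes into, say, those whose pattern set is $\{(S,S),(T,T)\}$ and those whose pattern set is $\{(S,T),(T,S)\}$: all classes would be generic over $ab$, yet they would fall into two orbits and no transversal $K_3^S$ would exist across certain triples. Ruling out exactly this kind of parity configuration is the whole content of the proposition; it is what the paper's proof does, by reducing (via Proposition \ref{PropInfInf2}) to $S(a)\cong\Gamma[K_\omega^R]$ and deriving homogeneity contradictions separately in its ``aligned'' and ``opposite'' cases. So, as written, your Step 4 assumes the crux rather than proving it.

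The framework can, however, be completed by moving the base of the Independence Theorem from $\varnothing$ to a representative of the \emph{target} class, which defeats the obstacle you identified. Fix any class $C\neq A,B$ and $z\in C$. Since $z,a,b$ lie in three distinct classes they are pairwise independent, so by Proposition \ref{PairwiseIndep} $a\indep zb$, whence $a\indep[z]b$. By Corollary \ref{Corollary2} there exist $w_1,w_2\in C$ with $x(w_1,a)$ and $y(w_2,b)$ for any prescribed colours $x,y\in\{S,T\}$; the type $\tp(w_1/za)$ does not fork over $z$, because its realisations (the points of $C$ of colour $x$ to $a$) are infinite and pairwise conjugate over $za$, so $\SU(w_1/za)=1=\SU(w_1/z)$, and similarly for $\tp(w_2/zb)$. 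Moreover all elements of $C\setminus\{z\}$ have the same Lascar strong type over $z$: $\aut(M/z)$ has a single orbit on ordered pairs of distinct elements of $C\setminus\{z\}$ (every such pair realises only $R$), so the only $z$-invariant finite equivalence relation there is the universal one, and lowness (Corollary \ref{LascarTypes}) applies. The Independence Theorem over $z$ now produces $w\models\tp(w_1/za)\cup\tp(w_2/zb)$; both types contain $R(\cdot,z)$, so $w\in C$ and $w$ realises the pattern $(x,y)$. Hence $P_C=\{S,T\}^2$ for \emph{every} class $C$, and your Claim A immediately gives transitivity of the stabiliser on the remaining classes. This repaired argument is correct and is genuinely different from (indeed shorter than) the paper's contradiction argument; but the proposal as submitted does not contain it.
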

\begin{proof}
	It follows from Corollary \ref{Corollary4} that $S(a)$ and $T(a)$ are unstable 3-graphs, since $S\sim^ST$ and $S\sim^TT$, and from Corollary \ref{Corollary2}, that $R$ defines an equivalence relation with infinitely many infinite classes in $S(a),T(a)$. We wish to prove that $\aut(M)_a$ acts 2-transitively on the set of $R$-classes in $S(a)$.

	Suppose for a contradiction the action is not 2-transitive. Then the structure on any pair of $R$-classes in $S(a)$ must be that of a complete bipartite graph. By Proposition \ref{PropInfInf2} $S(a)\cong T(a)\cong\Gamma[K_\omega^R]$. We have two possibilities: either the structures are ``aligned" (\ie for pairs $R(c,c'), R(b,b')$ with $S(a,c)$, $T(a,c')$ and $S(a,b),T(a,b')$ we have $(S(b,c)\wedge S(b',c'))\vee(T(b,c)\wedge T(b',c'))$) or the opposite relation between two classes in $S(a)$ is realised between the corresponding classes in $T(a)$.

In the latter case, given any $c_1,c_2\in S(a)$ with $S(c_1,c_2)$ we can find a $d_1$ with $R(d_1,c_2)$ and $T(c_1,d_1)$ because $T(c_1)\cap d_1/R\neq\varnothing$ and we know that all the elements of $d_1/R\cap S(a)$ are $S$-related to $c_1$. Similarly, given $d_2$ in a third class with $S(d_2,d_1)$, there is $c_2$ satisfying $R(c_2,d_1)\wedge T(d_2,c_2)$, as in the following figure:
\[
\includegraphics[scale=0.8]{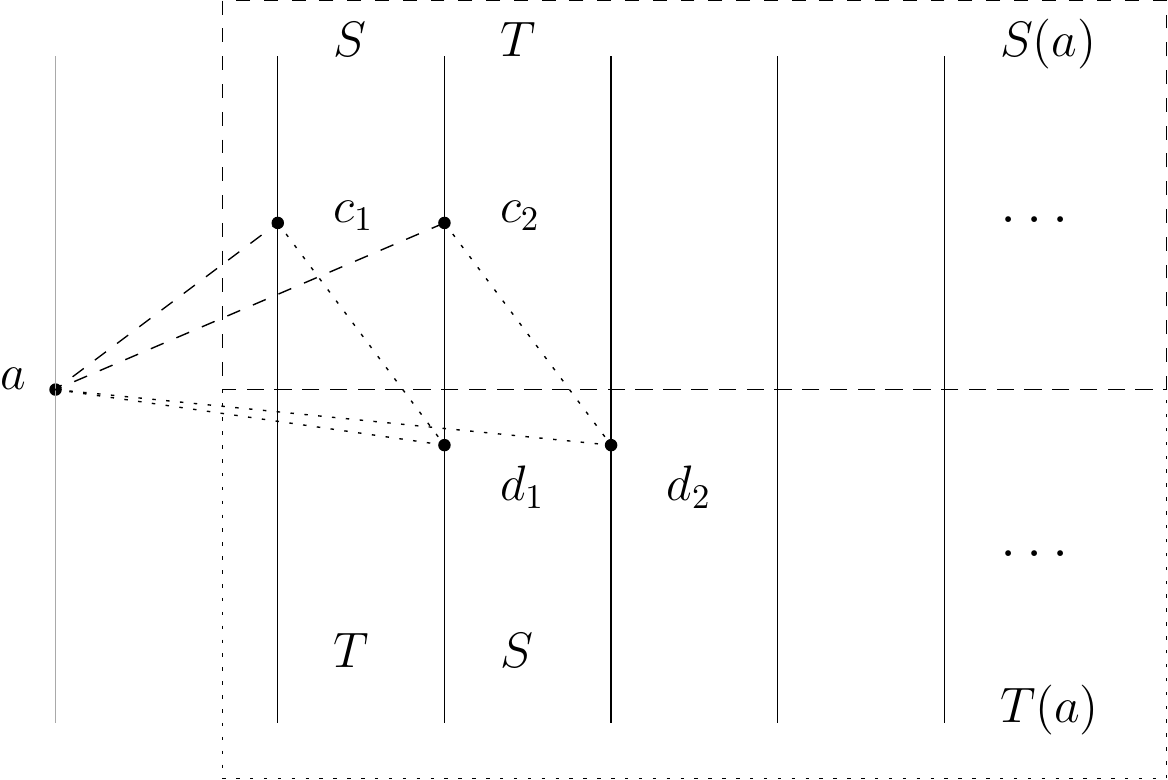}
\]

The pairs $c_1,d_1$ and $c_2,d_2$ are isomorphic over $a$, so there must be $\sigma\in\aut(M)_a$ such that $\sigma(c_1)=c_2, \sigma(d_1)=d_2$. But such automorphism would take $c_2$ to some $e'$ with $R(e',d_2)$, so the $S$-edge $c_1c_2$ would necessarily be taken to a $T$-edge, contradicting homogeneity.

This leaves us with the ``aligned" case. The same argument works here, but we need to justify the existence of the vertices $c_1,c_2,d_1,d_2$ slightly differently. Take three distinct classes $C,D,E$ not containing $a$, such that in $C\cap S(a)$ and $D\cap S(a)$ form an $S$-complete bipartite graph, and $D\cap S(a)$, $E\cap S(a)$ form a $T$-complete bipartite graph. We know that $C=(C\cap S(a))\cup (C\cap T(a))$ (and similar statements for $D,E$), and that each pair of classes is a Random Bipartite Graph. Let $d_0\in S(a)\cap D$ and $d_1\in T(a)\cap D$. By the extension axiom in the Random Bipartite Graph, there exists some $c_1\in C$ such that $S(d_0,c_1)\wedge T(c_1,d_0)$. This $c_1$ must be in $S(a)$, $C\cap T(a),D\cap T(a)$ are complete bipartite in the predicate $S$:
\[
\includegraphics[scale=0.8]{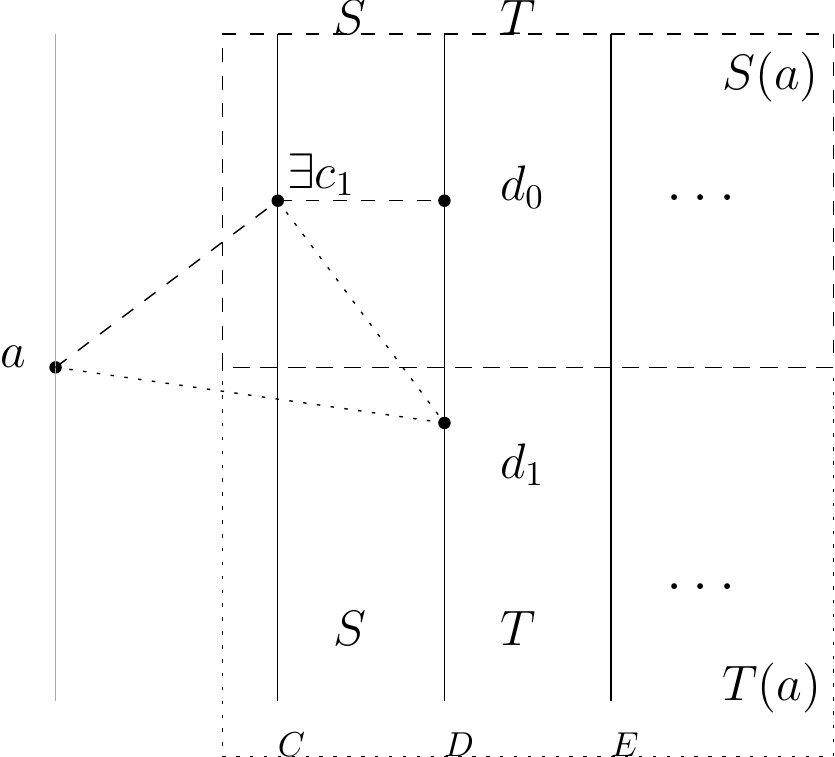}
\]
And between the classes $D,E$ that whose intersections with the neighbouhoods of $a$ form $T$-complete bipartite graphs, take $d_2,d_3\in T(a)\cap E$. As before, there exists $c_2\in D$ such that $T(d_3,c_2)\wedge S(d_2,c_2)$. This $c_2$ must be in $S(a)$.
\[
\includegraphics[scale=0.8]{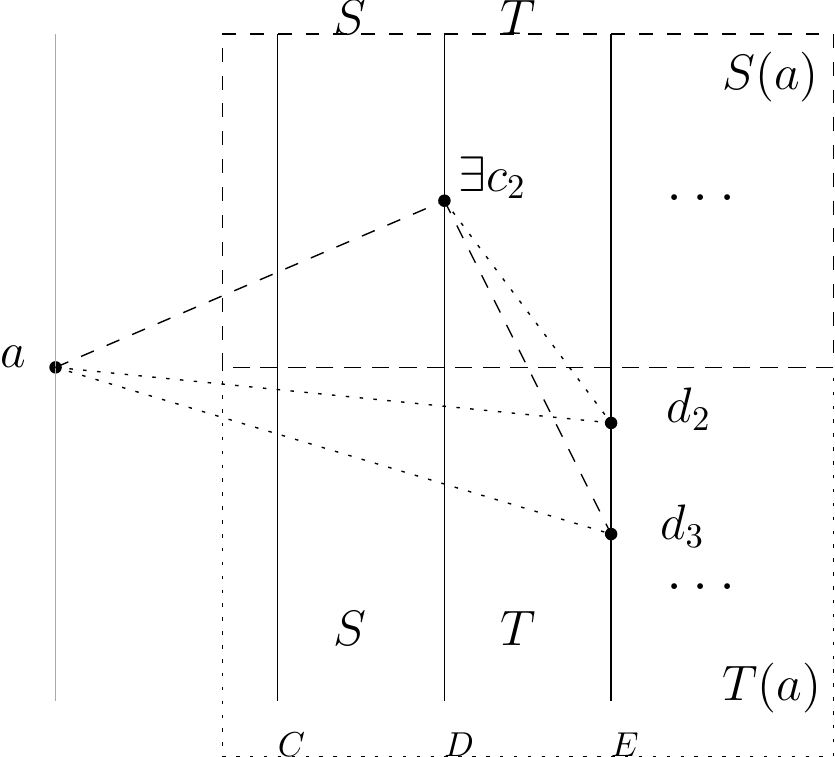}
\]
And we reach a contradiction as before.
\end{proof}

Proposition \ref{PropBasis} suggests a way to prove that $\aut(M)$ acts highly transitively on $M/R$: prove that for any finite $S$-clique $A$, $\aut(M)_A$ acts 2-transitively on the set of $R$-classes of $\bigcap_{a\in A} S(a)$. To do this, we first need to prove that indeed $R$ defines an equivalence relation on $\bigcap_{a\in A} S(a)$.

\begin{proposition}\label{PropIntersectionNbhds}
	Let $M$ be a homogeneous simple unstable 3-graph in which $R$ defines an equivalence relation with infinitely many infinite classes, and suppose that between any two $R$-classes both $R$ and $S$ are realised. Let $A=\{a_1,\ldots,a_n\}\subset M$ be a finite $S$-clique. Then $a_i\indep (A\setminus a_i)$ and $\bigcap_{1\leq i\leq n}S(a_i)$ is a simple unstable homogeneous 3-graph in which $R$ defines an equivalence relation with infinitely many infinite classes.
\end{proposition}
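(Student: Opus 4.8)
The plan is to treat the two assertions of the proposition separately: the independence statement is immediate from earlier work, while the structural statement is the substantive part and rests on the Independence Theorem.

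For $a_i\indep (A\setminus a_i)$: since $A$ is an $S$-clique and $S$ is non-forking in the present setting (the proposition preceding Corollary \ref{Corollary4}), every pair $a_i,a_j$ is $S$-related, hence $a_i\indep a_j$, so $A$ is pairwise independent. By Corollary \ref{Corollary3} the relation $R$ is the only proper nontrivial invariant equivalence relation on $M$ and it has infinitely many classes, so $M$ carries no $\varnothing$-definable \emph{finite} equivalence relation. Thus the hypotheses of Proposition \ref{PairwiseIndep} hold and it applies verbatim to give $a_i\indep (A\setminus a_i)$ for each $i$; in particular $A$ is a $\varnothing$-independent set in the sense of Observation \ref{MorleySqn}.

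For the structural statement write $N=\bigcap_i S(a_i)$. Homogeneity of $N$ follows from the neighbourhood argument of Proposition \ref{PropHomNeigh}: if $X\cong Y$ are finite subsets of $N$, then extending by the identity on $A$ gives an isomorphism $A\cup X\to A\cup Y$ (each $a_i$ is $S$-joined to all of $X$ and of $Y$, and the language is binary), which extends by homogeneity of $M$ to an automorphism fixing $A$ pointwise, hence stabilising $N$ setwise, and whose restriction carries $X$ to $Y$. Simplicity is automatic as $N$ is interpretable in $M$. That $N$ is a genuine $3$-graph and is unstable I would obtain from Corollary \ref{Corollary4}: $M$ embeds every finite $S,T$-graph $G$, and since $A\cup G$ (with $A$ an $S$-clique cross-$S$-joined to $G$) is $R$-free it too embeds; transporting by homogeneity the $S$-clique image onto $A$ places a copy of $G$ inside $N$. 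Hence $N$ contains $T$-edges and half-graphs, so $S,T$ are unstable there, and the only Boolean combination of predicates that can remain an equivalence relation on $N$ is $R$ itself.

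The heart of the matter is showing that $R$ has \emph{infinitely many infinite} classes on $N$. First I would check that $N$ meets infinitely many classes: by Part 1 and non-forking of $S$ the type $p(x)=\bigwedge_i S(x,a_i)$ is a consistent non-forking type over $A$, amalgamated exactly as in the proof of Theorem \ref{PrimitiveAlice}, and it isolates a complete type over $A$ by homogeneity; its realisations avoid the classes $a_i/R$, and since $M/R$ carries no structure beyond equality (the hypothesis gives $2$-transitivity via Observation \ref{ObsTransRE}), $p$ is realised in infinitely many distinct classes. Then, fixing a class $C$ with a point $b\in C\cap N$, I would build an infinite $R$-clique inside $C\cap N$ by an inner induction, at each stage adjoining $b'\in C$ with $S(b',a_i)$ for all $i$, using that $C\cup(a_i/R)$ is a Random Bipartite Graph (Corollary \ref{Corollary2}) and that $a_1/R,\dots,a_n/R,C$ are mutually independent (Observation \ref{ObsIndepClasses}). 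This furnishes infinitely many infinite classes of $N$, and combined with the previous two paragraphs completes the proof.

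The hard part will be precisely this last construction of many elements of $N$ inside a single class $C$. One wants to amalgamate the condition ``$R(x,b)$'' (keeping $x$ in $C$) with ``$\bigwedge_i S(x,a_i)$'', but the Independence Theorem does not apply over $\varnothing$ since the first condition \emph{forks} (it is exactly the forking relation). The fix is to amalgamate over a base point $b\in C$: over $b$ the condition of lying in $C$ becomes non-forking, because distinct elements of a class are mutually independent over a point of that class (the classes being pure sets on which $\aut(M)$ acts primitively, by the argument of Observation \ref{IndepInClass}), while ``$S$ to $A$'' stays non-forking over $b$ since $A\indep[b](C\setminus\{b\})$. With these two non-forking extensions of the unique type over $b$ having the same (Lascar) strong type, the Independence Theorem produces a common realisation in $C\cap N$; iterating yields the infinite clique and finishes the verification that $N$ is a simple unstable homogeneous $3$-graph in which $R$ is an equivalence relation with infinitely many infinite classes.
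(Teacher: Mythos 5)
Your first three paragraphs are essentially right and line up with the paper's own proof: the independence of the clique comes from Independence-Theorem amalgamation (Proposition \ref{PairwiseIndep}), homogeneity of $N=\bigcap_i S(a_i)$ from the neighbourhood argument, simplicity from interpretability, and instability together with the fact that $N$ meets infinitely many classes from Corollary \ref{Corollary4}. The gap is exactly where you place the weight: the construction of an \emph{infinite} class inside $N$, and your fix is circular. The Independence Theorem never establishes the consistency of a type; it merges two types that are already realised, and those types must agree over the base (their realisations must have the same Lascar strong type over $b$). Since your first type contains $R(x,b)$, your second type must be a realised type over $bA$ containing $R(x,b)\wedge\bigwedge_i S(x,a_i)$ --- that is, its realisation is already a second point of $C\cap N$. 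At the base step no such point is available, so the induction cannot start. The auxiliary claim $A\indep[b](C\setminus\{b\})$, offered without proof, hides the same circle: for $c_1\in C\cap N$ the type $\tp(c_1/bA)$ is isolated by $R(x,b)\wedge\bigwedge_i S(x,a_i)$, and since $\SU(c_1/b)=1$, nonforking of this type over $b$ is precisely the statement that the formula is non-algebraic, i.e.\ that $C\cap N$ is infinite --- the conclusion you are trying to reach. (Also, Observation \ref{IndepInClass} and Observation \ref{ObsIndepClasses}, which you invoke, are proved in the paper under rank-1 and finite-class hypotheses respectively, so they cannot be cited as they stand.)

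The paper gets around this without ever amalgamating the forking condition ``$x\in C$''. Over $\varnothing$ all relevant conditions are nonforking: one first builds a single vertex $c\models\bigwedge_i S(x,a_i)$ with $c\indep A$ (Independence Theorem over $\varnothing$), whence $A\indep c$ by symmetry of forking in simple theories. Consequently the formula $\varphi(\bar x,c)$ isolating $\tp(A/c)$ does not divide over $\varnothing$, so $\{\varphi(\bar x,c_i):i\in\omega\}$ is consistent for \emph{any} $\varnothing$-indiscernible sequence $(c_i)_{i\in\omega}$ of realisations of $\tp(c)$ --- and since all vertices realise the same $1$-type, one may take that sequence to be an infinite $R$-clique. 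A realisation is then an $S$-clique $A'\cong A$ whose common $S$-neighbourhood contains the entire clique $(c_i)_{i\in\omega}$, and homogeneity carries this infinite $R$-clique into $\bigcap_i S(a_i)$. The forking relation $R$ enters only through the \emph{choice} of indiscernible sequence, which is exactly the freedom that non-dividing over $\varnothing$ provides; this is the idea your proposal is missing.
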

\begin{proof}
	The first conclusion follows from the Independence Theorem and homogeneity (one can realise the type of the vertex over the smaller clique as an via the Independence Theorem). By simplicity, we also have $(A\setminus a_i)\indep a_i$, so for any $\varnothing$-indiscernible sequence $(c_i:i\in\omega)$ the set $\{\bigwedge_{j=1}^n S(x_j,c_i):i\in\omega\}$ is satisfiable. In particular when the $c_i$ form an infinite $R$-clique. There are infinitely many classes in $\bigcap_{1\leq i\leq n}S(a_i)$ as a consequence of Corollary \ref{Corollary4}; instability is also a consequence of Corollay \ref{Corollary4}.
\end{proof}

\begin{lemma}\label{HardLemma}
	Let $M$ be a homogeneous simple unstable 3-graph in which $R$ defines an equivalence relation with infinitely many infinite classes, and suppose that between any two $R$-classes both $R$ and $S$ are realised. Let $A=\{a_1,\ldots,a_n\}\subset M$ be a finite $S$-clique. Then $\aut(M)_A$ acts 2-transitively on the set of $R$-classes in $X_A=\bigcap_{1\leq i\leq n}S(a_i)$.
\end{lemma}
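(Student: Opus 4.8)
The plan is to argue by induction on $n=|A|$, reducing the general case to the single-vertex statement already contained in the proof of Proposition \ref{PropBasis}. For $n=1$ the set $X_A$ is just $S(a_1)$, and the assertion that $\aut(M)_{a_1}$ is $2$-transitive on $S(a_1)/R$ is exactly what is shown there. For the inductive step write $A=A'\cup\{a_n\}$ with $|A'|=n-1$ and put $Y=X_{A'}=\bigcap_{i<n}S(a_i)$. Since $a_n$ is $S$-adjacent to $a_1,\dots,a_{n-1}$ we have $a_n\in Y$, and $X_A=S(a_n)\cap Y$; thus $X_A$ is the $S$-neighbourhood of the vertex $a_n$ computed inside the structure $Y$. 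The idea is to apply Proposition \ref{PropBasis} to $Y$ in place of $M$, with $a_n$ in the role of $a$, and then transport the resulting automorphisms back up to $\aut(M)_A$.

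The first thing I would establish is a relative homogeneity statement: $\aut(M)_{A'}$ realises every isomorphism between finite substructures of $Y$. Indeed, if $P,Q\subseteq Y$ are isomorphic finite $3$-graphs, then, because every element of $A'$ is $S$-related to every vertex of $Y$ and $A'$ is itself an $S$-clique, the given isomorphism together with the identity on $A'$ is an isomorphism $A'P\cong A'Q$; by the homogeneity of $M$ it extends to some $\sigma\in\aut(M)$ fixing $A'$ pointwise, and $\sigma$ fixes $Y$ setwise because $Y$ is definable over $A'$. Hence $Y$ is a homogeneous $3$-graph all of whose finite-substructure isomorphisms are induced by $\aut(M)_{A'}$; in particular the $\aut(M)_{A'}$-orbits on tuples of $Y$ coincide with isomorphism types. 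Moreover, a lift of an isomorphism that additionally fixes $a_n$ lies in $\aut(M)_A$, so any conclusion about the action of the stabiliser of $a_n$ in $\aut(Y)$ can be read off as a conclusion about $\aut(M)_A$.

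Next I would check that $Y$ satisfies the hypotheses of Proposition \ref{PropBasis}. By Proposition \ref{PropIntersectionNbhds}, $Y$ is a homogeneous simple unstable $3$-graph in which $R$ is an equivalence relation with infinitely many infinite classes. Since $Y$ realises all three predicates and $R$ is the within-class relation, $S$ is realised across some pair of distinct $R$-classes of $Y$; the inductive hypothesis (applied to $A'$) says that $\aut(M)_{A'}$ is $2$-transitive on $Y/R$, which promotes this to: $S$ is realised across \emph{every} pair of distinct $R$-classes of $Y$. Thus $Y$ meets the standing hypothesis that the union of any two $R$-classes realises both $R$ and $S$, and the argument of Proposition \ref{PropBasis} applies to $Y$ at the vertex $a_n$, yielding that the stabiliser of $a_n$ in $\aut(Y)$ acts $2$-transitively on $(S(a_n)\cap Y)/R=X_A/R$. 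By the relative homogeneity of the previous paragraph these automorphisms are induced by elements of $\aut(M)_A$, so $\aut(M)_A$ is $2$-transitive on $X_A/R$, completing the induction.

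The main obstacle is the relative homogeneity step together with the careful transfer of hypotheses: everything hinges on recognising $X_A$ as the $S$-neighbourhood of a single vertex inside the smaller intersection $Y$, on verifying that $Y$ is genuinely a homogeneous $3$-graph of the same type as $M$ (so that Proposition \ref{PropBasis} is legitimately available for it), and on noting that it is precisely the inductive hypothesis --- $2$-transitivity for the $(n-1)$-clique $A'$ --- that upgrades ``$S$ occurs across some pair of classes of $Y$'' to the full hypothesis ``$S$ occurs across every pair'' needed to invoke Proposition \ref{PropBasis}. Once these are in place the argument is a clean descent to the base case.
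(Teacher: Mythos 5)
Your proof is correct, and it shares the paper's skeleton --- induction on $|A|$ with the argument of Proposition \ref{PropBasis} as base case --- but the inductive step is executed differently. The paper never relativises: writing $A^+=A^-\cup\{a_{n+1}\}$, it assumes $\aut(M)_{A^+}$ is not 2-transitive on $X_{A^+}/R$, deduces $X_{A^+}\cong\Gamma[K_\omega^R]$ via Proposition \ref{PropInfInf2}, and then re-runs the three-class configuration argument of Proposition \ref{PropBasis} inline: it picks classes $C,D,E$ with $(C\cup D)\cap X_{A^+}$ $S$-complete bipartite and $(D\cup E)\cap X_{A^+}$ $T$-complete bipartite, uses the random bipartite structure of pairs of classes of $X_{A^-}$ (supplied by the inductive hypothesis) to find witnesses in $X_{A^-}\setminus X_{A^+}$, and produces two pairs of the same type over $A^+$ that no automorphism of $M$ can match, contradicting homogeneity of $M$. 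You instead make the analogy ``$X_{A^+}$ is to $X_{A^-}$ as $S(a)$ is to $M$'' literal: your relative homogeneity lemma (finite isomorphisms inside $Y=X_{A'}$ lift to $\aut(M)_{A'}$, and to $\aut(M)_A$ when they fix $a_n$; a generalisation of Proposition \ref{PropHomNeigh}) lets you treat $Y$ as a bona fide homogeneous simple unstable 3-graph, verify the hypotheses of Proposition \ref{PropBasis} for it --- with the inductive hypothesis doing exactly the job of upgrading ``$S$ realised across some pair of classes of $Y$'' to ``across every pair'' --- and then quote that proposition for $Y$ at the vertex $a_n$. This buys modularity: you avoid duplicating the aligned/opposite case analysis, whereas the paper's inline version avoids having to check that the proof of Proposition \ref{PropBasis} uses nothing beyond its stated hypotheses. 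Two points you rely on deserve emphasis in a final write-up, though you do handle both: first, what you invoke is the content of the \emph{proof} of Proposition \ref{PropBasis} (2-transitivity of the stabiliser of $a$ on $S(a)/R$), not its stated conclusion (3-transitivity on $M/R$), which by itself would not yield automorphisms fixing $a_n$; second, the passage from $\aut(Y)_{a_n}$ back to $\aut(M)_A$ must go through finite configurations (choose representatives of the two pairs of classes and lift that finite isomorphism), since $\aut(Y)$ could a priori be larger than the group induced by $\aut(M)_{A'}$.
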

\begin{proof}
	We know by Corollary \ref{Corollary4} that both $S$ and $T$ are realised in $X$, and we know by Proposition \ref{PropIntersectionNbhds} that $X$ is a homogeneous simple unstable 3-graph in which $R$ defines an equivalence relation with infinitely many infinite classes. The same argument as in Proposition \ref{PropIntersectionNbhds} proves that $Y_A=\bigcap_{1\leq i\leq n}T(a_i)$ is the same type of structure. 

For $n=1$, the situation is exactly that of Proposition \ref{PropBasis}, so let us use that result as a basis for induction on $n$. Suppose that up to $|A|=n$, $\aut(M)_A$ acts 2-transitively on $X_A$. Let $A^+$ be an $S$-clique of size $n+1$, and denote a subset of $A^+$ of size $n$ by $A^-$. The induction hypothesis and Proposition \ref{PropInfInf2} imply that $X_{A^-}=\bigcap_{a\in A^-} S(a)$ is isomorphic to the Random Bipartite Graph. if $\aut(M)_{A^+}$ does not act 2-transitively on the set of $R$ classes of $X_{A^+}$, then $X_{A^+}$, is isomorphic to $\Gamma[K_\omega^R]$.

Choose any three $R$-classes $C,D,E$ represented in $X_{A^+}$ such that $(C\cup D)\cap X_{A^+}$ is $S$-complete bipartite and $(E\cup D)\cap X_{A^+}$ is $T$-complete bipartite. Let $c_0,c_1\in C\cap X_{A^+}$. Clearly, $X_{A^+}\subset X_{A^-}$, so there exists an element $d_1\in X_{A^-}$ such that $T(d_1,c_1)\wedge S(c_0,d_1)$. This $d_1$ must be in $X_{A^-}\setminus X_{A^+}$, since $(C\cup D)\cap X_{A^+}$ is $S$-complete bipartite.

\[
\includegraphics[scale=0.8]{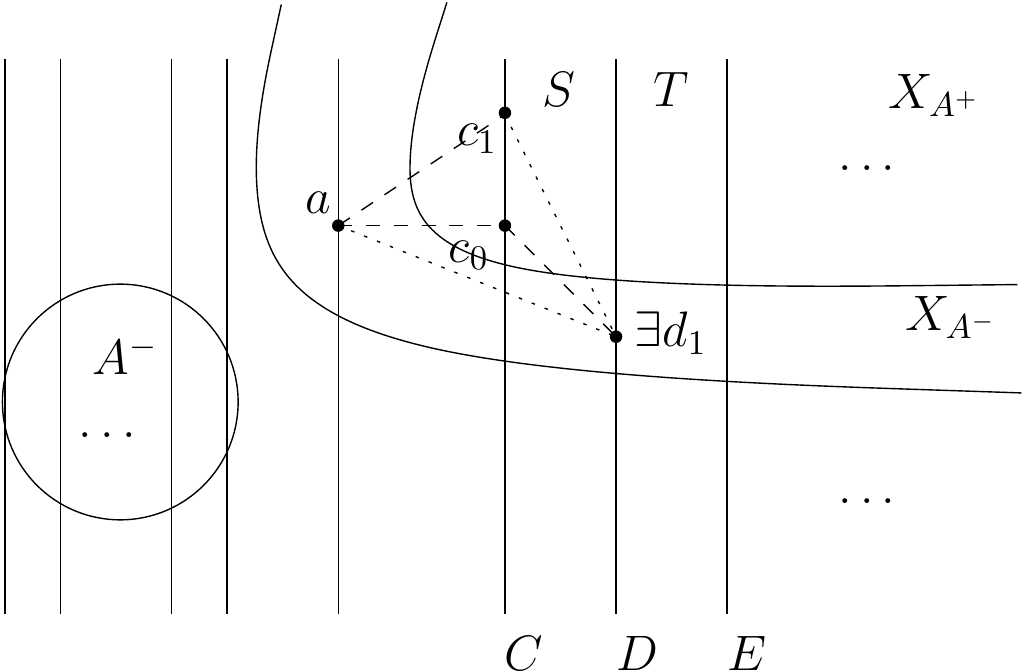}
\]

By a similar argument, one can find $d_2\in D\cap X_{A^+}$ and $e\in E\cap(X_{A^-}\setminus X_{A^+})$ such that $T(d_2,e)$. Now the pairs $c_1,d_1$ and $d_2,e$ have the same type over $A^+$, since they are $c_1$ and  $d_2$ are $S$-related to all vertices in $A^+$, $d_1$ and $e$ are $S$-related to all vertices in $A^-$, and $T(c_1,d_1), T(d_2,e)$. But an automorphism taking $c_1\mapsto d_2, d_1\mapsto e$ would take an $S$-edge in $X_{A^+}$ to a $T$-edge, contradicting homogeneity. We conclude that $\aut(M)_{A^+}$ acts 2-transitively on the set of $R$-classes in $X_{A^+}$. The result follows by induction.
\end{proof}

\begin{corollary}\label{Corollary5}
	Let $M$ be a homogeneous simple unstable 3-graph in which $R$ defines an equivalence relation with infinitely many infinite classes, and suppose that between any two $R$-classes both $R$ and $S$ are realised. Then the induced action of $\aut(M)$ on $M/R$ is highly transitive.
\end{corollary}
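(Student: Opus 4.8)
The plan is to reduce high transitivity to the combinatorial statement flagged just before Lemma \ref{HardLemma}: that every collection of $n$ distinct $R$-classes admits a transversal which is an $S$-clique $K_n^S$. Granting this, $n$-transitivity is immediate: given two $n$-tuples of distinct classes, I would choose a $K_n^S$-transversal in each, and match the $i$-th vertices; this is an isomorphism between two copies of $K_n^S$, hence by homogeneity of $M$ extends to an automorphism, which by invariance of $R$ carries the $i$-th class to the $i$-th class. So it suffices to prove the transversal statement for every $n$, which I would establish by induction on $n$.

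For the base cases $n\le 3$, some $S$-clique of each size exists by Corollary \ref{Corollary3} and Corollary \ref{Corollary4}, and Proposition \ref{PropBasis} gives $3$-transitivity, which spreads one instance to all triples (and hence to pairs and singletons). For the inductive step, given classes $C_1,\dots,C_{n+1}$, I would use the inductive hypothesis to fix an $S$-clique $A=\{a_1,\dots,a_n\}$ with $a_i\in C_i$, and observe that enlarging it to a $K_{n+1}^S$ transversal to $C_1,\dots,C_{n+1}$ amounts to finding $a_{n+1}\in C_{n+1}$ with $S(a_{n+1},a_i)$ for all $i$, that is, a point of $C_{n+1}\cap X_A$ where $X_A=\bigcap_i S(a_i)$. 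Thus everything reduces to a single claim: for an $S$-clique $A$ transversal to $C_1,\dots,C_n$ and any further class $C$, the set $C\cap X_A$ is nonempty.

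I expect this claim to be the main obstacle. The delicate point is that membership of $x$ in the prescribed class $C$ is the \emph{forking} condition $R(x,b)$, so one cannot simply amalgamate the $n$ nonforking conditions $S(x,a_i)$ via the Independence Theorem and thereby land in $C$; a direct amalgamation argument stalls here. I would circumvent this with a secondary induction along the chain of sub-cliques $A_k=\{a_1,\dots,a_k\}$, with $A_0=\varnothing$, $X_{A_0}=M$, and $X_{A_k}=S(a_k)\cap X_{A_{k-1}}$. By Proposition \ref{PropIntersectionNbhds} each $X_{A_k}$ is a homogeneous simple unstable $3$-graph in which $R$ is an equivalence relation with infinitely many infinite classes, and by Lemma \ref{HardLemma} $\aut(M)_{A_k}$ acts $2$-transitively on $X_{A_k}/R$; hence by Corollary \ref{Corollary2} both $S$ and $T$ occur between any two of its classes, so the consequence of Observation \ref{NotCompBipart} recorded just before Proposition \ref{PropPerfectMatching} applies \emph{inside} $X_{A_k}$: for the vertex $a_{k+1}\in X_{A_k}$, every class of $X_{A_k}$ other than that of $a_{k+1}$ meets $S(a_{k+1})$. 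Starting from the trivial fact that $C$ is represented in $X_{A_0}=M$, and noting that the class of $a_{k+1}$ in $X_{A_k}$ is $C_{k+1}\cap X_{A_k}\ne C\cap X_{A_k}$ (since $C\ne C_{k+1}$), this shows inductively that $C\cap X_{A_k}\ne\varnothing$ forces $C\cap X_{A_{k+1}}\ne\varnothing$, and at $k=n$ yields $C\cap X_A\ne\varnothing$.

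Assembling these pieces completes the outer induction: $M$ embeds $K_n^S$ as a transversal to any $n$ distinct classes for every $n$, whence $\aut(M)$ acts $n$-transitively on $M/R$ for every $n$, i.e. highly transitively. I would present the reduction and the nested induction as the backbone of the proof; the only subtlety requiring care is verifying that the structural hypotheses (homogeneity, $2$-transitivity of the induced action, and both $S,T$ transversal) genuinely transfer to each $X_{A_k}$, so that the ``a vertex meets every other class in $S$'' remark is legitimately available there.
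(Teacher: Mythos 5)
Your proposal is correct and follows essentially the same route as the paper: the paper's own proof likewise reduces high transitivity to the statement that every $n$-tuple of $R$-classes carries a transversal $K_n^S$ (derived from Lemma \ref{HardLemma}) and then concludes by homogeneity of $M$ together with invariance of $R$. The only difference is one of detail: the paper asserts the transversal-clique statement as an immediate consequence of Lemma \ref{HardLemma}, whereas you spell out the nested induction (via Proposition \ref{PropIntersectionNbhds}, Lemma \ref{HardLemma}, and Corollary \ref{Corollary2} applied inside each $X_{A_k}$) showing that any further class $C$ meets $\bigcap_{i} S(a_i)$ --- precisely the step the paper leaves implicit, and you are right that it cannot be obtained by a direct Independence Theorem amalgamation since landing in a prescribed class is a forking condition.
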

\begin{proof}
	By Lemma \ref{HardLemma}, any $n$-tuple of $R$-classes embeds a transversal $K_n^S$. Pick any two $n$-tuples of $R$-classes; each embeds a $K_n^S$, so by homogeneity there is an automorphism of $M$ taking one clique to the other (in any prescribed order). By invariance of $R$, this automorphism maps the one set of cliques to the other.
\end{proof}
The conditions we have isolated so far spell out the age of $M$: a simple inductive argument on $n$ following the lines of Observations \ref{ObsRFree} and \ref{ObsIndepClasses} proves:
\begin{theorem}\label{ThmAge}
	Let $M$ be a homogeneous simple unstable 3-graph in which $R$ defines an equivalence relation with infinitely many infinite classes, and suppose that between any two $R$-classes both $R$ and $S$ are realised. Then for all $n\in\omega$ and distinct $R$-classes $C_i$ in $M$ and disjoint finite $A_1^i,A_2^i\subset C_i$ there exists $x\in M$ such that for all $a^i\in A^i_1$ and $b^i\in A^i_2$ the relations $S(x,a^i_1), T(x,b^i_2)$ hold.\hfill$\Box$
\end{theorem}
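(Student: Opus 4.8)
The plan is to prove the statement by induction on the number $n$ of $R$-classes involved, fusing a partial solution over some of the classes with a partial solution over the rest by means of the Independence Theorem, exactly in the spirit of the passage from Observation~\ref{ObsRFree} to Observation~\ref{ObsIndepClasses}. Writing $A=\bigcup_{i}(A_1^i\cup A_2^i)$ for the set of parameters, I will in fact prove the slightly stronger statement that the witness $x$ can be chosen with $x\indep A$; carrying this extra independence through the induction is what makes the Independence Theorem applicable at each stage. The standing facts I will use are all available in this section: $S$ and $T$ are nonforking; the structure induced on any pair of distinct $R$-classes is the Random Bipartite Graph (Corollary~\ref{Corollary2}); and, since $R$ is the only invariant equivalence relation and $\Th(M)$ is low (Proposition~\ref{CatSimpLow}), there is a single $1$-type over $\varnothing$ which is a single Lascar strong type.

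For the base case $n=1$ we are given one class $C_1$ with disjoint finite $A_1^1,A_2^1$. Choosing any class $D\neq C_1$, Corollary~\ref{Corollary2} identifies the structure on $C_1\cup D$ with the Random Bipartite Graph, reading $S$-edges as edges and $T$-edges as non-edges. Its extension axiom then produces $x\in D$ with $S(x,a)$ for all $a\in A_1^1$ and $T(x,b)$ for all $b\in A_2^1$. Since $x$ and $A_1^1\cup A_2^1$ lie in the disjoint classes $D$ and $C_1$, Observation~\ref{ObsIndepClasses} together with monotonicity (Remark~\ref{rmksubsets}) gives $x\indep A_1^1A_2^1$, so the strengthened conclusion holds.

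For the inductive step, assume the stronger statement for $n$ and let distinct classes $C_1,\dots,C_{n+1}$ with sets $A_1^i,A_2^i$ be given. Put $A$ for the parameters lying in $C_1,\dots,C_n$ and $B=A_1^{n+1}\cup A_2^{n+1}\subseteq C_{n+1}$. The induction hypothesis yields $x_1$ realising the required relations over $A$ with $x_1\indep A$, so $\tp(x_1/A)$ does not fork over $\varnothing$; the base case yields $x_2$ realising them over $B$ with $x_2\indep B$, so $\tp(x_2/B)$ does not fork over $\varnothing$. As $A$ and $B$ sit inside disjoint unions of $R$-classes, Observation~\ref{ObsIndepClasses} and monotonicity give $A\indep B$, and $\Lstp(x_1/\varnothing)=\Lstp(x_2/\varnothing)$ by uniqueness of the Lascar strong type. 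The Independence Theorem now provides $x\models\Lstp(x_1/\varnothing)\cup\tp(x_1/A)\cup\tp(x_2/B)$ with $x\indep AB$. Because the language is binary and $\Th(M)$ eliminates quantifiers, $\tp(x/A)\cup\tp(x/B)$ determines $\tp(x/AB)$, whence $x$ is $S$-related to each $A_1^i$ and $T$-related to each $A_2^i$ for every $i\le n+1$; and $x\indep AB$ preserves the strengthened hypothesis, completing the induction.

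The only genuinely non-combinatorial point, and the place I expect to have to be careful, is the verification of the three hypotheses of the Independence Theorem: the independence $A\indep B$ and the nonforking of the two partial types $\tp(x_1/A)$ and $\tp(x_2/B)$. All three reduce to the single structural fact that a vertex in an $R$-class disjoint from a given parameter set is independent from that set, that is, to the present analogue of Observation~\ref{ObsIndepClasses} (whose proof goes through here precisely because $S$ and $T$ are nonforking); once this is in hand, the amalgamation step and the collapse of the two partial types via binarity are routine.
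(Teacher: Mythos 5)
Your overall plan is exactly the one the paper intends: the paper's entire "proof" is the sentence preceding the statement ("a simple inductive argument on $n$ following the lines of Observations \ref{ObsRFree} and \ref{ObsIndepClasses}"), and your base case via the Random Bipartite Graph (Corollary \ref{Corollary2}), the identification of Lascar strong types via lowness, the Independence Theorem amalgamation, and the collapse of the two partial types by binarity all implement that sketch faithfully.

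The gap is at the one step you yourself flag: the block independence $A\indep B$. You justify it by claiming that the "present analogue of Observation \ref{ObsIndepClasses}" holds because its proof "goes through here precisely because $S$ and $T$ are nonforking". It does not go through. Nonforking of $S,T$ only yields the analogue of Observation \ref{ObsRFree}, namely independence of \emph{$R$-free} sets; the passage in Observation \ref{ObsIndepClasses} from a transversal to the full union of classes is made via Lemma \ref{Casanovas520}, using that each class is contained in the algebraic closure of any one of its points --- and that uses finiteness of the $R$-classes, which is precisely what fails in this chapter. Moreover your parameter blocks $A$ and $B$ are not $R$-free (each $A_1^i\cup A_2^i$ spans $R$-edges), so Observation \ref{ObsRFree} cannot be applied to them directly, and no enumeration of $A\cup B$ is an independent sequence, so Lemma \ref{Casanovas514} is unavailable as well. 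Your proposed reduction to the singleton fact also does not suffice: by symmetry and transitivity, $B\indep A$ for $B=\{b_1,\ldots,b_k\}\subset C_{n+1}$ requires the relative independences $b_{j+1}\indep[b_1\ldots b_j]A$ over nonempty bases, which are not instances of that fact. These can be proved, but by a genuinely different argument; for instance, $\SU(b_{j+1}/b_1\ldots b_j)=1$, and since forking extensions strictly decrease \SU-rank in a supersimple theory it suffices to show that $\tp(b_{j+1}/Ab_1\ldots b_j)$ is non-algebraic, i.e.\ that infinitely many points of $C_{n+1}$ realise the prescribed pattern over $A$; if only finitely many did, they would all lie in $\acl(Ac)$ for every $c\in C_{n+1}$, and the Lascar inequalities together with $\SU(b_{j+1}/A)=\SU(b_{j+1})$ (which follows from your inductive hypothesis plus binarity) would force $C_{n+1}\subseteq\acl(Ab_{j+1})$, contradicting finiteness of algebraic closure in an $\omega$-categorical structure. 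With such a lemma your induction closes; without it, the appeal to Observation \ref{ObsIndepClasses} is a citation of a finite-class result in an infinite-class setting, and that is where the proof, as written, fails.
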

Let $\rm{Forb}(RRS,RRT)$ denote the family of all finite 3-graphs not embedding the triangles $RRS,RRT$.
\begin{corollary}\label{CorImprimInf}
	Let $M$ be a homogeneous simple unstable 3-graph in which $R$ defines an equivalence relation with infinitely many infinite classes, and suppose that between any two $R$-classes both $R$ and $S$ are realised. Then $\rm{Forb}(RRS,RRT)=\age(M)$.
\end{corollary}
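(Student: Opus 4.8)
The plan is to establish the two inclusions of $\age(M)=\mathrm{Forb}(RRS,RRT)$ separately, the reverse one by a one-vertex-at-a-time embedding argument whose only delicate point is the placement of a vertex inside an already-used $R$-class. The inclusion $\age(M)\subseteq\mathrm{Forb}(RRS,RRT)$ is immediate: $R$ is an equivalence relation on $M$, so $M$ embeds no triangle with two $R$-edges and a third edge of colour $S$ or $T$, i.e. neither $RRS$ nor $RRT$. For the reverse inclusion I would first note the structural meaning of omitting these two triangles: if $G\in\mathrm{Forb}(RRS,RRT)$ then the reflexive closure of $R$ is an equivalence relation on $G$, since $R(a,b)\wedge R(b,c)$ with $a\neq c$ forces $R(a,c)$ (an $S$- or $T$-edge $ac$ would complete a forbidden $RRS$ or $RRT$). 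Hence $V(G)$ partitions into $R$-cliques $D_1,\dots,D_k$ with only $S$- and $T$-edges between distinct classes, and it suffices to embed every such $G$ into $M$.

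I would build the embedding by induction on $|G|$, adding one vertex $v$ at a time and realising its prescribed $2$-type with each previously placed vertex; because the language is binary, realising all these $2$-types simultaneously pins down the full type of $v$ over the finite placed set, exactly as in the proof of Theorem \ref{ThmCGamma}. Write $Z$ for the already-placed vertices that must be $R$-related to $v$ (its partially built class), and $U$, $W$ for those that must be $S$- resp. $T$-related to $v$; the sets $Z,U,W$ sit in pairwise disjoint unions of $R$-classes of $M$. When $Z=\varnothing$ the new vertex goes into a fresh class and is supplied directly by Theorem \ref{ThmAge}. The work is in the case $Z\neq\varnothing$, where $v$ must land in the class $K=z/R$ of a chosen $z\in Z$ while satisfying the external constraints $S(x,U)$, $T(x,W)$; once $x\in K$ the remaining edges $R(x,Z)$ are automatic by transitivity.

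The hard part will therefore be the following within-class richness statement: for a class $K$, a point $z\in K$, and disjoint finite $U,W$ in classes other than $K$, there is $x\in K$ with $S(x,U)\wedge T(x,W)$. A naive application of the Independence Theorem over $\varnothing$ fails because the constraint $R(x,z)$ uses the forking relation $R$; and applying it over the base $z$ seems circular, since one of the two types to be amalgamated is itself an instance of the statement. I would break this by inducting on the number $r$ of distinct classes met by $U\cup W$. For $r=1$ the claim is exactly the extension axiom of the Random Bipartite Graph, available since each pair of distinct classes is random bipartite (Corollary \ref{Corollary2}). For the step, split off one class $C_{r+1}$: by the induction hypothesis there is $\bar b\in K$ handling the constraints from the first $r$ classes, and by the base case there is $\bar c\in K$ handling the constraints from $C_{r+1}$. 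Both $\bar b,\bar c$ are class-mates of $z$, so $\tp(\bar b/z)=\tp(\bar c/z)$; by lowness (Proposition \ref{CatSimpLow}, Corollary \ref{LascarTypes}) and the fact that $R$ is the only invariant equivalence relation (Corollary \ref{Corollary3}) this common type is a single strong, hence Lascar strong, type over $z$. The two parameter sets are independent over $z$ by Observation \ref{ObsIndepClasses}, and each of the two types is a nonforking extension of $\tp(\cdot/z)$ because $S,T$ are nonforking. The Independence Theorem then yields $x\in K$ realising all the $S$- and $T$-constraints simultaneously, completing the induction on $r$.

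With within-class richness in hand the $Z\neq\varnothing$ step is settled (choosing $x\in K$ distinct from the finitely many placed vertices, which is possible since there are infinitely many realisations), and the main induction on $|G|$ produces an embedding of an arbitrary $G\in\mathrm{Forb}(RRS,RRT)$ into $M$. This gives $\mathrm{Forb}(RRS,RRT)\subseteq\age(M)$, and together with the easy inclusion, $\age(M)=\mathrm{Forb}(RRS,RRT)$. The single genuine obstacle is the apparent circularity of the within-class extension, resolved by the auxiliary induction on the number of external classes.
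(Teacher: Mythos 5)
Your plan is sound in outline, and you have put your finger on something the paper passes over in silence: Theorem \ref{ThmAge}, as literally stated, only produces a vertex all of whose relations to the named parameters are $S$ or $T$, hence a vertex lying in a \emph{fresh} $R$-class; it can never place a second point inside an already-occupied class, so the corollary is not a formal consequence of that theorem alone. The paper's own proof is exactly that two-line deduction, with Theorem \ref{ThmAge} itself only sketched (whole classes amalgamated over $\varnothing$ ``following the lines of'' Observations \ref{ObsRFree} and \ref{ObsIndepClasses}, in the style of Theorem \ref{ThmCGamma}). Your reorganisation --- one vertex at a time, with a within-class extension lemma proved by a sub-induction on the number of external classes, amalgamating over a representative $z$ of the target class --- is therefore a genuinely different and more explicit route, and its skeleton is the right one.

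The genuine gap is in the hypotheses you feed to the Independence Theorem over $z$. You justify the parameter independence $P\indep[z]Q$ by Observation \ref{ObsIndepClasses} and Remark \ref{rmksubsets}, and the nonforking over $z$ of $\tp(b/zP)$ and $\tp(c/zQ)$ ``because $S,T$ are nonforking''. Neither justification is available in the present setting. Observation \ref{ObsIndepClasses} is proved in Chapter \ref{ChapImprimitiveFC} for \emph{finite} classes, and its proof uses finiteness essentially: it passes from independence of a transversal to independence of the unions of classes via Lemma \ref{Casanovas520}, i.e.\ via the fact that a class is contained in $\acl$ of any of its points, which fails when classes are infinite; no analogue is proved in Chapter \ref{ChapRank2}. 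Likewise, nonforking of the atomic types $S(x,a)$, $T(x,a)$ over $\varnothing$ says nothing about nonforking over $z$ of a joint type containing $R(x,z)$ together with many $S$- and $T$-constraints. Worse, these facts are not merely unproved but are of the same difficulty as what you are proving: by the Morley-sequence characterisation of dividing, any Morley sequence of copies of $Q$ over $z$ must spread over pairwise distinct classes (because $R(x,v)$ divides over $z$ whenever $v\notin z/R$), so verifying $c\indep[z]Q$ or $P\indep[z]Q$ amounts to solving multi-constraint extension problems of exactly the within-class kind; your sub-induction on the number of external classes does not break this circle, it only relocates it into the premises of each amalgamation step. (Your Lascar-strong-type claim over $z$ is correct, but it should be justified by the triviality of $z$-invariant finite equivalence relations on $z/R\setminus\{z\}$ --- all pairs there realise the same type over $z$ --- rather than by Corollary \ref{Corollary3}, which concerns $\varnothing$.) The circle can be closed, for instance, by invoking Koponen's Theorem \ref{Koponen} together with the Lascar inequalities (Theorem \ref{LascarIneqs}): finite $\SU$-rank makes rank additive, so one can carry through your vertex-by-vertex construction the invariant that the image built so far has maximal rank; that invariant yields the parameter independences over $z$, and the nonforking of the two types to be amalgamated is obtained by transporting, via homogeneity, realisations supplied by the induction hypothesis together with their independence. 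As written, however, the proposal does not go through.
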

\begin{proof}
	It is clear that $\age (M)\subset\rm{Forb}(RRS,RRT)$. The other half, $\rm{Forb}(RRS,RRT)\subset\age(M)$, is a consequence of Theorem \ref{ThmAge}.
\end{proof}

Condensing the main results above, we get:
\begin{theorem}\label{ThmBstar}
	Let $M$ be a homogeneous simple unstable 3-graph in which $R$ defines an equivalence relation with infinitely many infinite classes, and suppose that between any two $R$-classes both $S$ and $T$ are realised. Then each pair of classes is isomorphic to the Random Bipartite Graph, and each $n$-tuple of classes embeds all $S,T$-graphs of size $n$ as transversals. Furthermore, there is a up to isomorphism unique such structure.
\end{theorem}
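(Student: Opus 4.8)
The plan is to assemble this theorem from the results established immediately above, since its three assertions have each been prepared separately. The hypotheses here --- that $R$ is an equivalence relation with infinitely many infinite classes, and that both $S$ and $T$ are realised across any pair of classes --- are precisely those under which Corollaries \ref{Corollary2}, \ref{Corollary5}, and \ref{CorImprimInf} were proved. So the entire argument consists of combining these three facts with the uniqueness clause of Fra\"iss\'e's theorem, and the real content lies in the preceding development rather than in this statement. For the first assertion, that each pair of distinct $R$-classes is isomorphic to the Random Bipartite Graph, there is nothing to do beyond citing Corollary \ref{Corollary2}.

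For the second assertion I would argue as follows. Fix distinct $R$-classes $C_1,\dots,C_n$ and an arbitrary $S,T$-graph $G$ on $n$ vertices. Since $G$ contains no $R$-edges, it trivially omits both $RRS$ and $RRT$, so $G\in{\rm Forb}(RRS,RRT)=\age(M)$ by Corollary \ref{CorImprimInf}; hence $G$ embeds into $M$. Any such embedded copy is $R$-free, so its vertices lie in $n$ distinct $R$-classes $D_1,\dots,D_n$. By the high transitivity of the induced action on $M/R$ (Corollary \ref{Corollary5}) there is $\sigma\in\aut(M)$ carrying the ordered tuple $(D_1,\dots,D_n)$ to $(C_1,\dots,C_n)$; applying $\sigma$ moves the copy of $G$ to a transversal of $C_1,\dots,C_n$ of the prescribed isomorphism type. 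Alternatively, this is immediate from the extension axioms of Theorem \ref{ThmAge}, building the transversal one vertex at a time.

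The culminating step is uniqueness, and it is where the accumulated results must be read carefully. The point I would emphasise is that the chain leading to Corollary \ref{CorImprimInf} shows that the hypotheses \emph{alone} force $\age(M)={\rm Forb}(RRS,RRT)$, independently of which particular $M$ one starts from. Thus any two structures $M,M'$ satisfying the hypotheses are countable homogeneous structures with the same age, and so are isomorphic by the uniqueness clause of Fra\"iss\'e's theorem. The only subtlety is that ${\rm Forb}(RRS,RRT)$ genuinely is the age of such a structure --- that is, an amalgamation class --- but this is guaranteed by the existence of $M$ itself. Consequently I expect no residual obstacle here beyond correctly invoking the results already secured; the difficulty of this classification has been absorbed entirely into the lemmas and corollaries that precede the statement.

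\begin{proof}
	The first assertion is Corollary \ref{Corollary2}. For the second, let $C_1,\dots,C_n$ be distinct $R$-classes and let $G$ be any $S,T$-graph on $n$ vertices. As $G$ is $R$-free it omits $RRS$ and $RRT$, so $G\in{\rm Forb}(RRS,RRT)=\age(M)$ by Corollary \ref{CorImprimInf}, and $G$ embeds into $M$ with its vertices in $n$ distinct $R$-classes $D_1,\dots,D_n$. By Corollary \ref{Corollary5} the action of $\aut(M)$ on $M/R$ is highly transitive, so some $\sigma\in\aut(M)$ sends $(D_1,\dots,D_n)$ to $(C_1,\dots,C_n)$; then $\sigma(G)$ is a transversal of $C_1,\dots,C_n$ isomorphic to $G$. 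Finally, the preceding development shows that any $M$ as in the statement has $\age(M)={\rm Forb}(RRS,RRT)$ (Corollary \ref{CorImprimInf}); hence any two such structures share the same age, and being countable and homogeneous they are isomorphic by Fra\"iss\'e's theorem.
\end{proof}
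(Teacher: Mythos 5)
Your proof is correct and follows essentially the same route as the paper's: Corollary \ref{Corollary2} for the structure on pairs, an embedding of an arbitrary $S,T$-graph moved onto the prescribed classes via the high transitivity of Corollary \ref{Corollary5}, and uniqueness via Corollary \ref{CorImprimInf} together with Fra\"iss\'e uniqueness. You merely spell out (via $\mathrm{Forb}(RRS,RRT)$ and the explicit Fra\"iss\'e step) what the paper compresses into one-line citations.
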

\begin{proof}
	We already knew about the structure of a pair of classes. The second assertion follows from the fact that the age of the Random Graph is contained in the age of $M$, so every finite $S,T$-graph $G$ is realised as a transversal. Since there is only one orbit of finite tuples of $R$-classes, $G$ is realised in all the unions of classes of the same size. Uniqueness is Corollary \ref{CorImprimInf}.
\end{proof}

Let us call the structure from Theorem \ref{ThmBstar}, for lack of a better name, $\mathcal B$. We have proved:
\begin{theorem}\label{ThmClassification}
	The following is a list of all supersimple infinite transitive homogeneous $n$-graphs with $n\in\{2, 3\}$:
\begin{enumerate}
	\item{Stable structures:
		\begin{enumerate}
			\item{$I_\omega[K_n]$ or its complement $K_\omega[I_n]$ for some $n\in\omega+1$}
			\item{$P^i[K_m^i]$}
			\item{$K_m^i[Q^i]$}
			\item{$Q^i[K_m^i]$}
			\item{$K_m^i[P^i]$}
			\item{$K_m^i\times K_n^j$}
			\item{$K_m^i[K_n^j[K_p^k]]$}
		\end{enumerate}
	}
	\item{Unstable structures:
		\begin{enumerate}
			\item{Primitive structures:
				\begin{enumerate}
					\item{The random graph $\Gamma^{i,j}$}
					\item{The random 3-graph $\Gamma^{i,j,k}$}
				\end{enumerate}
			}
			\item{Imprimitive structures with infinite classes:
			\begin{enumerate}
				\item{$K_m^i[\Gamma^{j,k}]$, $m\in\omega+1$}
				\item{$\Gamma^{i,j}[K_\omega^k]$}
				\item{$\mathcal B_n^{i,j}$, $n\in\omega$, $n\geq2$}
				\item{$\mathcal B$}
			\end{enumerate}
			}
			\item{Imprimitive structures in which the equivalence relation has finite classes: 
			\begin{enumerate}
				\item{Structures in which both unstable predicates are realised across any two equivalence classes: $C^i(\Gamma^{j,k})$}
				\item{Structures in which only one of the unstable predicates is realised across any two equivalence classes: $\Gamma^{i,j}[K_n^k]$, $n\in\omega$.}
			\end{enumerate}
			}
		\end{enumerate}
	}
\end{enumerate}
\end{theorem}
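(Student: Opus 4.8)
The plan is to treat this theorem as the synthesis of everything proved in the preceding chapters rather than as a fresh argument: the real content lives in the nonexistence and uniqueness results already established, and what remains is to organize an exhaustive case split whose leaves are exactly the listed structures. Since $M$ is assumed transitive, it has a unique $1$-type over $\varnothing$, and by Koponen's Theorem~\ref{Koponen} its theory is supersimple of finite $\SU$-rank, so every case we produce automatically lands in the supersimple world. I would first dispatch $n=2$ directly from the Lachlan--Woodrow Theorem~\ref{LachlanWoodrow}, discarding the Henson graphs via Remark~\ref{RmkSimpleGraphs} (they are not simple) and keeping $I_\omega[K_n]$, its complement, and the Random Graph $\Gamma^{i,j}$. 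For $n=3$ the top-level split is stable versus unstable.

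In the stable case I would invoke Lachlan's classification (Theorem~\ref{Lachlan3graphs}), retain only the infinite members (items 6--11), and note that each is $\omega$-categorical and stable hence superstable, so all survive the supersimplicity requirement; these populate the stable entries of the list. In the unstable case I would split on primitivity. If $M$ is primitive, Theorem~\ref{NoHigherRank} forces $\SU$-rank $1$, whence forking is algebraic; by Observation~\ref{AlgClosure} no relation can be forking (that would contradict primitivity), so by Theorem~\ref{ThmStableForking} all relations are unstable and nonforking, and the Independence-Theorem argument of Theorem~\ref{PrimitiveAlice} shows $M$ is random, giving $\Gamma^{i,j}$ or $\Gamma^{i,j,k}$. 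If $M$ is imprimitive, quantifier elimination and the fact that equivalence relations are stable force the invariant equivalence relation $E$ to be either a single atomic predicate $R$ or the disjunction $S\vee T$ of the two unstable predicates (never a mixed disjunction, since the complement of an equivalence relation is stable). I would then branch on whether the classes of the finest such $E$ are finite or infinite. Finite classes are handled by Theorem~\ref{ThmCGamma} and Corollary~\ref{CorFiniteClasses}, yielding $C^i(\Gamma^{j,k})$ and $\Gamma^{i,j}[K_n^k]$. For infinite classes I would split further on the number of classes: finitely many classes gives $\SU$-rank $1$ and, via the transversal-relation analysis (Propositions~\ref{ComplicatedAlice} and~\ref{PropImprimitive3Graphs}), the structures $\mathcal B_n^{i,j}$ (random transversally) and the finite-$m$ instances of $K_m^i[\Gamma^{j,k}]$ (complete transversally); infinitely many infinite classes gives rank $2$, where $E=S\vee T$ yields $K_\omega^i[\Gamma^{j,k}]$ by Proposition~\ref{PropInfInf1}, $E=R$ with a non-$2$-transitive quotient yields $\Gamma^{i,j}[K_\omega^k]$ by Proposition~\ref{PropInfInf2}, and $E=R$ with a $2$-transitive quotient yields the unique structure $\mathcal B$ by Theorem~\ref{ThmBstar}.

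The main obstacle I expect is not any single deep step — each leaf is certified by a prior theorem — but rather establishing exhaustiveness of the branching and correctly matching each abstract case to a named structure. Concretely, the delicate points are: justifying the trichotomy on $E$ from quantifier elimination and stability of equivalence relations; bounding the rank at $2$ in the imprimitive-infinite-class case (the Remark opening Chapter~\ref{ChapRank2}, since a rank-$\geq 3$ structure would force all three predicates stable); and checking that the two parameter ranges of $K_m^i[\Gamma^{j,k}]$ (finite $m$ giving finitely many infinite classes of rank $1$, and $m=\omega$ giving rank $2$) are each accounted for exactly once. Once these reductions are written out, the body of the proof is a routine assembly of citations, and the theorem follows.
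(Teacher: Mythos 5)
Your proposal is correct and takes essentially the same route as the paper: the paper offers no separate argument for Theorem~\ref{ThmClassification} beyond the words ``We have proved:'', since the theorem is exactly the assembly of prior results along the same case tree you describe (Lachlan--Woodrow for $n=2$, Theorem~\ref{Lachlan3graphs} for the stable case, Theorem~\ref{NoHigherRank} plus Theorem~\ref{PrimitiveAlice} for the primitive unstable case, Theorem~\ref{ThmCGamma} and Corollary~\ref{CorFiniteClasses} for finite classes, Proposition~\ref{PropImprimitive3Graphs} for finitely many infinite classes, and Propositions~\ref{PropInfInf1}, \ref{PropInfInf2} and Theorem~\ref{ThmBstar} for infinitely many infinite classes). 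Your explicit justification of the branching -- the trichotomy on the invariant equivalence relation via quantifier elimination and stability of equivalence relations, and the rank bound of $2$ -- is precisely what the opening of Chapter~\ref{ChapRank2} supplies.
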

Where $\{i,j,k\}=\{R,S,T\}$ and $\mathcal B_n^{i,j}$ is the 3-graph consisting of $n$ copies of $K^R_\omega$ in which the structure on the union of any two maximal infinite $R$-cliques is isomorphic to the random bipartite graph, and all $S,T$-structures of size $k\leq n$ are realised transversally in the union of any $k$ $R$-classes.

\bibliographystyle{plain}
\bibliography{refs}

\printindex

\end{document}